\DeclareFontFamily{OT1}{pzc}{}
\DeclareFontShape{OT1}{pzc}{m}{it}{<-> s * [1.20] pzcmi7t}{}
\DeclareMathAlphabet{\mathpzc}{OT1}{pzc}{m}{it}
\newtheorem{theorem}{Theorem}[chapter]
\newtheorem*{theorem*}{Theorem}
\newtheorem*{``theorem"*}{``Very Inaccurate Theorem"}
\newtheorem{lemma}[theorem]{Lemma}
\newtheorem{proposition}[theorem]{Proposition}
\newtheorem{corollary}[theorem]{Corollary}
\newtheorem{definition}[theorem]{Definition}
\newcommand\at[2]{\left.#1\right|_{#2}}
\newenvironment{PROOF}[1]{{\flushleft {\bfseries Proof of #1}:}}{\hfill\ensuremath{\Box}}
\theoremstyle{definition}
\theoremstyle{remark}
\numberwithin{section}{chapter}
\numberwithin{equation}{chapter}
\newcommand{\brk}[1]{\left(#1\right)}
\newcommand{\Brk}[1]{\left[#1\right]}
\newcommand{\BRK}[1]{\left\{#1\right\}}
\newcommand{\mymat}[1]{\begin{matrix} #1 \end{matrix}}
\newcommand{\Cases}[1]{\begin{cases} #1 \end{cases}}
\newcommand{\bra}{\langle}
\newcommand{\ket}{\rangle}
\newcommand{\secref}[1]{Section~\ref{#1}}
\newcommand{\thmref}[1]{Theorem~\ref{#1}}
\newcommand{\defref}[1]{Definition~\ref{#1}}
\newcommand{\chapref}[1]{Chapter~\ref{#1}}
\newcommand{\propref}[1]{Proposition~\ref{#1}}
\newcommand{\lemref}[1]{Lemma~\ref{#1}}
\newcommand{\corrref}[1]{Corollary~\ref{#1}}
\newcommand{\beq}{\begin{equation}}
	\newcommand{\eeq}{\end{equation}}
\newcommand{\frakA}{\mathfrak{A}}
\newcommand{\frakG}{\mathfrak{G}}
\newcommand{\frakS}{\mathfrak{S}}
\newcommand{\frakT}{\mathfrak{T}}
\newcommand{\frakX}{\mathfrak{X}}
\newcommand{\frakn}{\mathfrak{n}}
\newcommand{\frakt}{\mathfrak{t}}
\newcommand{\frakC}{\mathfrak{C}}
\newcommand{\frakI}{\mathfrak{I}}
\newcommand{\frakP}{\mathfrak{P}}
\newcommand{\frakB}{\mathfrak{B}}
\newcommand{\bbC}{{\mathbb C}}
\newcommand{\bbE}{{\mathbb E}}
\newcommand{\bbF}{{\mathbb F}}
\newcommand{\bbG}{{\mathbb G}}
\newcommand{\bbJ}{{\mathbb J}}
\newcommand{\bbM}{{\mathbb M}}
\newcommand{\bbN}{{\mathbb N}}
\newcommand{\bbP}{{\mathbb P}}
\newcommand{\bbR}{{\mathbb R}}
\newcommand{\bbU}{{\mathbb U}}
\newcommand{\bbZ}{{\mathbb Z}}
\newcommand{\bbL}{\mathbb{L}}
\newcommand{\scrC}{\mathscr{C}}
\newcommand{\scrH}{\mathscr{H}}
\newcommand{\scrN}{\mathscr{N}}
\newcommand{\scrR}{\mathscr{R}}
\newcommand{\scrW}{\mathscr{W}}
\newcommand{\scrL}{\mathscr{L}}
\newcommand{\scrS}{\mathscr{S}}
\newcommand{\scrM}{\mathscr{M}}
\newcommand{\scrU}{\mathscr{U}}
\newcommand{\scrV}{\mathscr{V}}
\newcommand{\calG}{{\mathcal{G}}}
\newcommand{\calL}{{\mathcal{L}}}
\newcommand{\calP}{{\mathcal{P}}}
\newcommand{\calS}{{\mathcal{S}}}
\newcommand{\module}{\scrH}
\newcommand{\bQ}{\mathfrak{Q}}
\newcommand{\bA}{A}
\newcommand{\fbA}{\mathpzc{A}}
\newcommand{\fbC}{\mathpzc{C}}
\newcommand{\bD}{\mathfrak{A}}
\newcommand{\tbD}{\hat{\mathfrak{A}}}
\newcommand{\fbD}{\mathfrak{D}}
\newcommand{\bB}{\frakB}
\newcommand{\bC}{\mathfrak{C}}
\newcommand{\bP}{\mathfrak{G}}
\newcommand{\tbP}{\frakP}
\newcommand{\ttbP}{\tilde{\frakP}}
\newcommand{\cttbP}{\tilde{\mathpzc{P}}}
\newcommand{\trace}{\mathrm{tr}}
\renewcommand{\ker}{\operatorname{Ker}}
\newcommand{\image}{\operatorname{Im}}
\newcommand{\Rm}{\mathrm{Rm}}
\newcommand{\ord}{\operatorname{ord}}
\newcommand{\supp}{\operatorname{supp}}
\newcommand{\End}{{\operatorname{End}}}
\newcommand{\id}{{\operatorname{Id}}}
\newcommand{\Ah}{{\operatorname{A}}}
\newcommand{\Gh}{{\operatorname{G}}}
\newcommand{\MCh}{{\operatorname{MC}}}
\newcommand{\textand}{\quad\text{ and }\quad}
\newcommand{\Textand}{\qquad\text{ and }\qquad}
\newcommand{\overbar}[1]{\mkern 1.5mu\overline{\mkern-1.5mu#1\mkern-1.5mu}\mkern 1.5mu}
\newcommand{\tM}{\tilde{M}}
\newcommand{\dM}{{\partial M}}
\newcommand{\Nzero}{{\bbN_0}}
\newcommand{\tE}{\tilde{\E}}
\newcommand{\tF}{\tilde{\bbF}}
\newcommand{\tg}{\tilde{g}}
\newcommand{\E}{\bbE}
\newcommand{\g}{g}
\newcommand{\PtD}{\bbP^{\frakt}}
\newcommand{\PnD}{\bbP^{\frakn}}
\newcommand{\PttD}{\bbP^{\frakt\frakt}}
\newcommand{\PtnD}{\bbP^{\frakt\frakn}}
\newcommand{\PntD}{\bbP^{\frakn\frakt}}
\newcommand{\PnnD}{\bbP^{\frakn\frakn}}
\newcommand{\nabg}{\nabla^g}
\newcommand{\nabU}{\nabla}
\newcommand{\gD}{g_{\partial}}
\newcommand{\OP}{\mathrm{OP}}
\newcommand{\fbN}{\fbD}
\newcommand{\dU}{d_{\nabU}}
\newcommand{\delU}{\delta_{\nabU}}
\newcommand{\bHg}{\mathpzc{H}_g}
\newcommand{\Hg}{H_{g}}
\newcommand{\dg}{d_{\nabg}}
\newcommand{\deltag}{\delta_{\nabg}}
\newcommand{\dgV}{d_{\nabg}^V}
\newcommand{\deltagV}{\delta_{\nabg}^V}
\newcommand{\dertZero}{\at{\tfrac{d}{dt}}{t=0}}
\newcommand{\dersZero}{\at{\tfrac{d}{ds}}{s=0}}
\newcommand{\Def}{\delta_{g}^*}
\newcommand{\Defd}{\mathrm{Def}_{\gD}}
\newcommand{\Defh}{\mathrm{Def}_{\Ah_{g}}}
\newcommand{\Volume}{\mathrm{Vol}}
\newcommand{\pzcd}{\mathpzc{d}}
\newcommand{\pzcdel}{\text{\textit{\textbf{\textdelta}}}}
\newcommand{\starG}{\star_g}
\newcommand{\starGV}{\starG^V}
\newcommand{\G}{\bP}
\newcommand{\GV}{\bP_V}
\newcommand{\dr}{\partial_r}
\newcommand{\idr}{i_{\partial_r}}
\newcommand{\PttG}{\bbP^{\frakt\frakt}_g}
\newcommand{\PtnG}{\bbP^{\frakt\frakn}_g}
\newcommand{\PntG}{\bbP^{\frakn\frakt}_g}
\newcommand{\PnnG}{\bbP^{\frakn\frakn}_g}
\newcommand{\NN}{\mathrm{N}}
\newcommand{\DD}{\mathrm{D}}
\newcommand{\R}{\mathrm{R}}
\newcommand{\N}{\mathrm{N}}
\newcommand{\D}{\mathrm{D}}
\newcommand{\LkmAll}{\Lambda}
\newcommand{\GkmAll}{\calG}
\newcommand{\WkmAll}{\Omega}
\newcommand{\CkmAll}{\scrC}
\newcommand{\plLkmAll}{\Lambda}
\newcommand{\plWkmAll}{\Omega}
\newcommand{\plCkmAll}{\scrC}
\newcommand{\Lkm}[2]{\LkmAll^{#1,#2}(M)}
\newcommand{\Gkm}[2]{\GkmAll^{#1,#2}(M)}
\newcommand{\Wkm}[2]{\WkmAll^{#1,#2}(M)}
\newcommand{\Ckm}[2]{\CkmAll^{#1,#2}(M)}
\newcommand{\ixi}{i_{\xi^{\sharp}}}
\newcommand{\ixiV}{i^V_{\xi^{\sharp}}}
\newcommand{\plLkm}[2]{\plLkmAll^{#1,#2}(\dM)}
\newcommand{\plWkm}[2]{\plWkmAll^{#1,#2}(\dM)}
\newcommand{\plCkm}[2]{\plCkmAll^{#1,#2}(\dM)}
\newcommand{\dBianchi}{d_{g}}
\newcommand{\delBianchi}{\delta_g}
\newcommand{\dBianchiV}{d_g^V}
\newcommand{\delBianchiV}{\delta_g^V}
\newcommand{\DBianchi}{\pzcd_g}
\newcommand{\DelBianchi}{\pzcdel_g}
\newcommand{\calPG}{\calP_\calG}
\newcommand*\owedge{\mathpalette\@owedge\relax}
\newcommand*\@owedge[1]{%
	\mathbin{%
		\ooalign{%
			$#1\m@th\bigcirc$\cr
			\hidewidth$#1\m@th\wedge$\hidewidth\cr
		}%
	}%
}
\numberwithin{equation}{section}
\begin{document}
\frontmatter
\title{Hodge Theory for Linearized Boundary-Value Problems on General Geometric Structures}
	
\author{Roee Leder\footnote{roee.leder@mail.huji.ac.il}  \\ Institute of Mathematics \\ The Hebrew University \\ Jerusalem 9190401 Israel} 

\maketitle
\chapter*{\centering Abstract}
{
\makeatletter
\def\blfootnote{\gdef\@thefnmark{}\@footnotetext}
\makeatother
\blfootnote{\textit{2020 Mathematics Subject Classification.}
Primary 58J10, 35N25, 53C21; Secondary 58J32, 35S15, 58A14.}
}
We develop a framework that systematically casts solvability and uniqueness conditions for linearized overdetermined boundary-value problems into cohomological terms. The theory is designed to be applicable without assumptions on the underlying geometric structure and provides tools to study the resulting cohomology explicitly. To achieve this generality, we develop the notion of an elliptic pre-complex, which generalizes the machineries of Hodge theory to sequences of Douglis--Nirenberg systems in the pseudodifferential calculus of boundary-value problems that interact through Green's formulae, the notion of overdetermined ellipticity, and a condition we call the order-reduction property, which relaxes the rigid requirement that the sequence form a cochain complex. This property typically arises from linearized symmetries and constraints, as we demonstrate through several geometric examples that have long resisted analysis, including exterior covariant derivatives, the Killing and Hessian equations, and the Riemann curvature equations.

\tableofcontents
\mainmatter

\chapter{Introduction}
\label{chp:intro_exm}

\section{Background and motivation} 
\label{sec:opening} 
This work addresses overdetermined geometric boundary-value problems. Roughly speaking, these are systems of partial differential equations on a manifold with boundary that have more equations than unknowns. More accurately, they are covariant equations for an unknown section of a vector bundle, whose local representation exhibits this mismatch. Problems arising in differential geometry are often of this kind, since they are characterized by constraints and gauge symmetries that add extra equations to the main system \cite{Aub98}.  

Overdetermined systems are, in particular, not elliptic in the standard sense. The lack of ellipticity usually makes it difficult to identify conditions for their global solvability and uniqueness. This work aims to systemically resolve this difficulty for a large class of such linear systems, by drawing an analogy with the prototypical example provided by \emph{Hodge theory}. To illustrate the picture we aim to generalize, we begin by recalling the components of this example (e.g., \cite{Sch95b,Tay11b}):

Let $M$ be a compact smooth manifold with boundary, with pullback operators $\jmath^*:\Omega^{k}(M)\rightarrow\Omega^{k}(\dM)$ and exterior derivatives $d:\Omega^{k}(M)\rightarrow \Omega^{k+1}(M)$ acting on differential forms. By setting $\Omega^{k}_{\DD}(M):=\Omega^{k}(M)\cap\ker\jmath^*$, these operators fit into a \emph{cochain complex}:
\beq 
\begin{tikzcd}
\dots \arrow[r] &[-1.2em]
\Omega^{k-1}_{\DD}(M) \arrow[r, "d"] &
\Omega^{k}_{\DD}(M) \arrow[r, "d"] & 
\Omega^{k+1}_{\DD}(M) \arrow[r, "d"] &
\Omega^{k+2}_{\DD}(M) \arrow[r] &[-1.2em]
\dots
\end{tikzcd}
\label{eq:Hodge_complex} 
\eeq
We denote its cohomology by $\module_{\DD}^{\bullet}$. What Hodge theory brings to this purely differential-topological picture is the following: upon introducing a Riemannian metric $g$ on $M$, one can canonically identify $\module_{\DD}^{\bullet}$ with the $L^{2}$-cokernels of the overdetermined boundary-value problems 
\beq
\begin{aligned}
& d\omega = \eta, && \text{in } M,
\\
& \jmath^*\omega=0, 
\qquad && \text{on } \dM,
\end{aligned}
\label{eq:deRham}
\eeq
where $\omega\in\Omega^{k}(M)$ is the unknown and $\eta\in\Omega_{\DD}^{k+1}(M)$ is a prescribed form. This identification is made possible by exploiting the inherent gauge freedom of the equations \eqref{eq:deRham}, encoded by the complex \eqref{eq:Hodge_complex}: namely, if $\omega\in\Omega_{\DD}^{k}(M)$ is a solution, then $\omega+d\rho$ is also a solution for any $\rho\in\Omega^{k-1}_{\DD}(M)$. Upon introducing $\delta_{g}:\Omega^{k}(M)\rightarrow\Omega^{k-1}(M)$, the formal $L^2$-adjoint of the exterior derivative, the condition $\delta_{g}\omega=0$ provides a canonical mechanism for fixing this gauge, allowing us to write (up to the canonical isomorphism):  
\beq 
\module^{k}_{\D}=\ker(d,\delta_{g})\cap\Omega_{\DD}^{k}(M).
\label{eq:Hodge_iso}
\eeq 
This, in turn, yields the following statement concerning \eqref{eq:deRham}, which is the central subject of our generalization:
\begin{theorem*}[See, e.g., \cite{Sch95b}]
\label{thm:Hodge_cohomology}
Given a prescribed $\eta\in\Omega_{\DD}^{k+1}(M)$, the boundary-value problem \eqref{eq:deRham} admits a solution $\omega\in\Omega_{\DD}^{k}(M)$ satisfying $\delBianchi\omega=0$ if and only if
\[
d\eta = 0, \qquad \eta \perp_{L^2} \module_{\DD}^{k+1}.
\]
Any solution satisfying $\delBianchi\omega=0$ is unique modulo $\module_{\DD}^{k}$.
\end{theorem*} 
Since the obstructions to solvability and uniqueness of the problem \eqref{eq:deRham} are now encoded by the kernel of a partial differential equation parameterized by an arbitrary metric, Hodge theory thus paves the way for its analysis through a variety of tools from different disciplines. These include homological algebra, index theory, Bochner techniques, and methods from differential topology.

Motivated by the success of this cohomological picture, our objective is to extend it to other overdetermined geometric problems of the form suggested by \eqref{eq:deRham}; i.e., roughly outlined at this stage, those that fall within the general template:
\beq
\begin{aligned}
& A\omega = \eta \qquad && \text{in } M,
\\
& B\omega=0, 
\qquad && \text{on } \dM,
\end{aligned}
\label{eq:linear_problem}
\eeq
where $A$ is an overdetermined linear operator between sections of vector bundles over a manifold $M$, arising from some geometric problem, and $B$ is a boundary operator ``naturally'' associated with $A$. We shall provide a list of concrete examples momentarily in \secref{sec:main_results_examples_1}.

In analogy with \eqref{eq:deRham}, the first step toward such a generalization is to establish a mechanism for embedding the linear map $A$ into a cochain complex analogous to \eqref{eq:Hodge_complex}; namely, a sequence of linear maps $(A_{\bullet})$ satisfying
\beq
\image A_{\alpha} \subseteq \ker A_{\alpha+1}, \qquad \alpha = 0,1,2,\dots
\label{eq:A0A1_intro} 
\eeq
with $A_{\alpha}=A$ for some $\alpha$, and whose associated {cohomology modules},
\[
\module^{\alpha+1} \simeq \ker A_{\alpha+1} / \image A_{\alpha},
\]
encode certain structural features of the underlying space or, in some cases, vanish entirely. Indeed, with such a cochain complex in hand, one can at once identify the corresponding gauge freedom in \eqref{eq:linear_problem} as
\beq 
\rho\mapsto\omega+A_{\alpha-1}\rho, 
\label{eq:gauge_freedom}
\eeq 
as well as the solvability conditions, which become: 
\[
A_{\alpha+1}\eta=0 \textand \eta\,\bot\,\module^{\alpha+1},
\]
where the orthogonality is understood in an appropriate sense. Upon identifying another operator $G$ that acts as a gauge-fixing mechanism for \eqref{eq:gauge_freedom}, playing the same role as the adjoint $\delta_{g}$ in \eqref{eq:deRham}, the goal is to show that the cohomology modules of the complex naturally admit a further characterization analogous to that provided by Hodge theory in \eqref{eq:Hodge_iso}.

Driven by this picture, many studies have been devoted to constructing and analyzing such cochain complexes in a variety of geometric contexts. These include, but are certainly not limited to, direct generalizations of Hodge theory, such as the theories of elliptic complexes and Dirac operators \cite{AS68,Sch95b,AKM06,Tay11b,BB12,SS19}; the theory of compatibility operators for overdetermined systems \cite{Cal61,Gol67,Spe69,BE69,GG88A,DS96}; Bernstein--Gelfand--Gelfand (BGG) sequences and related studies \cite{Eas00,CSS01,CS09,AH21,CELM21,Hu26}; exterior differential systems \cite{BCGGG91}; and potential theory in the context of constrained problems \cite{DPR16,Bog19,GR22}.

The machinery developed in previous studies, however, is usually local in nature and available only under additional rigid assumptions on the underlying geometric structure, most often constant curvature, homogeneity, or K\"ahler geometry. Without such assumptions, it is often difficult to arrange for a sequence of operators arising from a geometric problem to form a cochain complex supporting a Hodge-like framework. We shall demonstrate this shortcoming extensively below.
\section{Main examples and results}
\label{sec:main_results_examples_1}
\subsection{Elliptic pre-complexes and the order reduction property} 

The circumstances alluded to above are ubiquitous in geometry: overdetermined geometric problems give rise to sequences of operators that form cochain complexes under certain geometric assumptions, but cease to do so once those assumptions are relaxed. To overcome this shortcoming, we develop here a generalized Hodge theory that canonically \emph{lifts} such sequences so as to recover a cochain complex, and hence the cohomological picture. 

The key idea behind the generalized Hodge theory is to weaken the classical notion of an \emph{elliptic complex} \cite{AS68,Tay11b} so that it applies more broadly, leading to the notion of an \emph{elliptic pre-complex}. In a nutshell, what makes this generalization possible is the observation that, in general geometries, while the compositions of consecutive operators in a given sequence do not vanish, they do result in appropriate lower-order terms involving curvature, thus manifesting what we call the \emph{order-reduction property}. Essentially, it amounts to weakening the requirement in \eqref{eq:A0A1_intro} to
\[
\ord(A_{k+1}A_{k})\leq \ord(A_{k}),
\]
where $\ord(\cdot)$ denotes the order of the differential operator $A_{k}$, or, more generally, of a pseudodifferential one. We emphasize that this property must be verified at the operator level and cannot be deduced merely from the vanishing of the symbol, highlighting the fact that the lower-order terms arise from curvature. This idea was already explored in prototypical form in our previous work \cite{KL23} and is fully developed here, accommodating a much larger class of examples that could not previously be treated within this framework. 

The theory is based on the pseudodifferential calculus of boundary-value problems \cite{Bou71,RS82,Gru96}, thereby avoiding local techniques that require assumptions on the underlying structure. This calculus is needed because the standard pseudodifferential calculus does not apply directly on manifolds with nonempty boundary. Since this subject is technical in nature, we defer its presentation and treatment to a separate expository section (\secref{sec:Hodge_intro}) and to the technical body of the work itself (\secref{sec:per_pseudo}).

Indeed, we shall lay out the generalized Hodge theory in its abstract form, along with a precise formulation of elliptic pre-complexes, the order-reduction property and the other required analytical aspects, in a separate expository chapter (\secref{sec:Hodge_intro}). Here, instead, we take a more case-study-oriented approach, presenting the machinery through several geometric examples that have long resisted analysis. Before doing so, a few remarks are in order:

\begin{enumerate}
\item We state the examples in streamlined form, deliberately avoiding several technical aspects. A detailed exposition of each, including further extensions and subtleties, is deferred to the more extensive overview \secref{sec:examples_intro} and the subsequent technical chapters.

\item The list is not exhaustive, either in scope or in sophistication, and is intended to illustrate the reach of the theory and the class of geometric problems it is designed to encompass.  

\item The examples studied are presented here as ends in themselves. In the next \secref{sec:outlook}, we shall elaborate on the common nonlinear geometric origin of these problems---namely, how they arise from the linearization of nonlinear problems---and discuss the utility of our theory in this context.

\item Throughout, we use standard typography (e.g., $A$) for the operators in the original sequence and script typography (e.g., $\mathpzc{A}$) for their pseudodifferential \emph{lifts}. We defer a precise discussion and characterization of the lifting procedure to our exposition of the abstract theory in \secref{sec:Hodge_intro}, while emphasizing here that it is specifically designed to correct the failure of the original sequence to form a cochain complex without imposing geometric assumptions.

\end{enumerate}
\subsection{Exterior covariant derivatives} 

Let $\bbU\rightarrow M$ be a Riemannian vector bundle, and let $\nabU$ be a connection on $\bbU$. The \emph{exterior covariant derivatives} and their adjoints,  
\[
\dU:\Omega^{k}(M;\bbU) \to \Omega^{k+1}(M;\bbU), \qquad \delU:\Omega^{k+1}(M;\bbU) \to \Omega^{k}(M;\bbU),
\]  
arise in various geometric and analytical contexts, including Bochner techniques \cite[Ch.~9]{Pet16}, gauge theory \cite[Ch.~1.4,6]{RS17}, \cite[App.~C.6]{Tay11b}, and harmonic maps \cite{EL83}. Like in the classical theory, these naturally give rise to boundary-value problems falling within the scope of \eqref{eq:linear_problem}:
\beq 
\begin{aligned}
& \dU\omega = \eta, && \text{in } M,\\
& \jmath^*\omega=0, 
\qquad && \text{on } \dM .
\end{aligned}
\label{eq:boundary_exterior1} 
\eeq 
When $\nabU$ is locally flat, namely, when its associated curvature endomorphism $R_{\nabU}\in\Omega^{2}(M;\End(\bbU))$ satisfies $R_{\nabU}=0$, the operators $\dU$ fit into a cochain complex analogous to the one in \eqref{eq:Hodge_complex}:
\beq 
\begin{tikzcd}
\dots \arrow[r] &[-1.2em]
\Omega^{k-1}_{\DD}(M;\bbU) \arrow[r, "\dU"] &
\Omega^{k}_{\DD}(M;\bbU) \arrow[r, "\dU"] & 
\Omega^{k+1}_{\DD}(M;\bbU) \arrow[r, "\dU"] &
\Omega^{k+2}_{\DD}(M;\bbU) \arrow[r] &[-1.2em]
\dots
\end{tikzcd}
\label{eq:exterior_complex} 
\eeq
It is then straightforward to observe that the problem admits a Hodge-theoretic interpretation analogous to that of \eqref{eq:deRham}. For a general connection $\nabU$, however, lower-order curvature terms prevent the sequence from being a cochain complex, and the Hodge-like theory breaks down. 

The machinery developed in this work shall remedy this failure for an arbitrary connection $\nabU$: 
\begin{theorem}
\label{thm:exterior_examples_1}
Let $\bbU\rightarrow M$ be a Riemannian vector bundle over a compact manifold with boundary, and let $\nabU$ be any connection on $\bbU$. Then the following holds:

\begin{enumerate}
\item \emph{(Lifted complex and cohomology)} There exists a cochain complex of continuous linear maps
\beq 
\begin{tikzcd}
\dots \arrow[r] &[-1.2em]
\Omega^{k-1}_{\DD}(M;\bbU) \arrow[r, "\pzcd_{\nabU}"] &
\Omega^{k}_{\DD}(M;\bbU) \arrow[r, "\pzcd_{\nabU}"] & 
\Omega^{k+1}_{\DD}(M;\bbU) \arrow[r, "\pzcd_{\nabU}"] &
\Omega^{k+2}_{\DD}(M;\bbU) \arrow[r] &[-1.2em]
\dots
\end{tikzcd}
\label{eq:exterior_complexlift} 
\eeq
coinciding with \eqref{eq:exterior_complex} when $R^{\nabU}=0$, and whose cohomology modules $\module^{\bullet}_{\DD}(\bbU)$ are given by:
\beq 
\begin{split} 
&\module^{0}_{\DD}(\nabU)=\ker(\nabU,(\cdot)|_{\dM}), \\
&\module^{1}_{\DD}(\nabU)=\ker(\dU,\delU)\cap\Omega^{1}_{\DD}(M;\bbU), \\
&\module^{k}_{\DD}(\nabU)=\ker(\dU,\pzcdel_{\nabU})\cap\Omega^{k}_{\DD}(M;\bbU), \qquad k\geq 2,
\end{split} 
\label{eq:exterior_cohomology}
\eeq 
where $\pzcdel_{\nabU}:\Omega^{k}(M;\bbU)\rightarrow\Omega^{k-1}(M;\bbU)$ is the formal $L^{2}$-adjoint of $\pzcd_{\nabU}$ in \eqref{eq:exterior_complexlift}. 

\item \emph{(Solvability and uniqueness)} Given $\eta\in\Omega^{k+1}_{\DD}(M;\bbU)$, the boundary-value problem \eqref{eq:boundary_exterior1} admits a solution $\omega\in\Omega^k_{\DD}(M;\bbU)$ satisfying $\pzcdel_{\nabU}\omega=0$ if and only if
\[
\pzcd_{\nabU}\eta=0, \qquad \eta\bot_{L^{2}} \module^{k+1}_{\DD}(\nabU).
\]
Any solution satisfying $\pzcdel_{\nabU}\omega=0$ is unique modulo $\module^{k}_{\DD}(\nabU)$.
\end{enumerate}
\end{theorem}
The proof of this theorem will be contained in the more general treatment of \secref{sec:exterior_intro} and \secref{sec:exterior_covariant_derivatives}. 

We remark that the expression in \eqref{eq:exterior_cohomology} is precise, in the sense that the cohomology modules are indeed expressible in terms of the operators in the original complex \eqref{eq:exterior_complex}. This is an important feature of the theory, one that is prominent in all the examples below and that we shall discuss in greater depth in the sequel, \secref{sec:outlook}, and, indeed, throughout this work.

The immediate implications of this expression are that $\scrH^{0}_{\DD}(\nabU)$ is nothing but the space of parallel sections vanishing on the boundary, which is identically zero when the boundary is nonempty. Likewise, $\scrH^{1}_{\DD}(\nabU)$ is the space of tangential harmonic sections, which can be shown to vanish using a Bochner technique under appropriate estimates on the interior curvature $R_{\nabU}$ and convexity assumptions on the boundary. For the higher-degree cohomology modules, we shall elaborate in the body of the work (\secref{sec:main_results_intro}) on how the kernel of the lifted adjoint $\pzcdel_{\nabU}$ can also be expressed in terms of $\delU$, although this is more involved.
\subsection{The Killing equation} 
Given a Riemannian metric $g$ on $M$, the Killing equation asks whether a prescribed symmetric tensor $\sigma\in S^{2}(M)$ can be written as
\beq 
\Def Y=\sigma , \qquad \text{in } M,
\label{eq:Killing_1} 
\eeq 
where $\Def:\frakX(M)\rightarrow S^{2}(M)$ is the Killing operator, namely, $\Def Y:=\tfrac12\calL_Yg$. In geometric terms, the question underlying this equation is whether a symmetric tensor field is an infinitesimal deformation of the metric generated by a vector field. 

One can supplement this problem with boundary conditions, most prominently the vanishing trace on the boundary, corresponding to the fact that the associated flow fixes the boundary:
\beq 
Y|_{\dM}=0, \qquad \text{on } \dM.
\label{eq:Killing_2} 
\eeq
Due to its significance, the Killing equation has been studied in many geometric and analytic contexts \cite{Cal61,Gol67,BE69,GG88A,DS96,CELM21,PSU23,KL23}. We shall provide more historical and geometric context in \secref{sec:KillingHessianIntro}. For the expository purposes here, it suffices to note that previous studies have found that, when the metric $g$ is K\"ahler or has constant curvature, or, more generally, when $(M,g)$ is a symmetric space, the Killing operator can be embedded into a cochain complex

\beq 
\begin{tikzcd}
0 \arrow[r] &[-1.2em]
\frakX_{\DD}(M) \arrow[r, "\Def"] &
S^{2}_{\DD}(M) \arrow[r, "\Hg"] & 
\scrC^{2,2}_{\DD}(M) \arrow[r, "\dBianchi"] &
\Omega_{\DD}^{3}(M;\Lambda^{2}T^*M) \arrow[r, "\dBianchi"] &
\cdots
\end{tikzcd}
\label{eq:Killing_complex} 
\eeq
where $\Ckm{2}{2}$ is the module of algebraic curvature tensors, $\Hg:S^{2}(M)\rightarrow\Ckm{2}{2}$ is a second-order differential operator, and the subscript $\DD$ indicates a specific choice of boundary data, which will be detailed in \secref{sec:KillingHessianIntro}. As above, such cochain complexes in turn yield Hodge-theoretic frameworks for the solvability and uniqueness of \eqref{eq:Killing_1}--\eqref{eq:Killing_2}. For a general metric, however, curvature terms again prevent the sequence \eqref{eq:Killing_complex} from being a cochain complex, and such analyses break down.

As in the case of exterior covariant derivatives (\thmref{thm:exterior_chomology_intro}), our theory provides the following theorem, which replaces \eqref{eq:Killing_complex} with a lifted complex acting on the same domains and valid for arbitrary background geometry.
\begin{theorem}
\label{thm:Dirichlet_Lie} 
Let $(M,g)$ be a compact Riemannian manifold with boundary. Then the following holds:

\begin{enumerate}
\item \emph{(Lifted complex and cohomology)} There exists a cochain complex of continuous linear maps
\beq 
\begin{tikzcd}
0 \arrow[r] &[-1.2em]
\frakX_{\DD}(M) \arrow[r, "\Def"] &
S^{2}_{\DD}(M) \arrow[r, "\mathpzc{H}_{g}"] & 
\scrC^{2,2}_{\DD}(M) \arrow[r, "\pzcd_{g}"] &
\Omega^{3}_{\DD}(M;\Lambda^{2}T^*M) \arrow[r, "\pzcd_{g}"] &
\cdots
\end{tikzcd}
\label{eq:Killing_complexlifted} 
\eeq
coinciding with \eqref{eq:Killing_complex} when $g$ is locally flat, so that its cohomology modules $\mathscr{K}^{\bullet}_{\DD}(g)$ are given by: 
\beq 
\begin{split} 
\mathscr{K}^{0}_{\DD}(g)&= \ker(\Def,(\cdot)|_{\dM}), \\
\mathscr{K}^{1}_{\DD}(g)&= \ker(\Hg,\delta_{g})\cap S^{2}_{\DD}(M), \\
\mathscr{K}^{2}_{\DD}(g)&= \ker(\dBianchi,\mathpzc{H}_{g}^*)\cap \scrC^{2,2}_{\DD}, \\
\mathscr{K}^{k}_{\DD}(g)&= \ker(\dBianchi,\pzcdel_{g})\cap \Omega_{\DD}^{k}(M;\Lambda^{2}T^*M), \qquad \text{for } k\geq 3,
\end{split}
\label{eq:Kcohomology}
\eeq 
where $\delta_{g}:S^{2}(M)\rightarrow\frakX(M)$ is tensor divergence and $\pzcdel_{g}$ is the adjoint of $\pzcd_{g}$. 

\item \emph{(Solvability and uniqueness)} Given $\sigma\in S^{2}_{\DD}(M)$, the boundary-value problem \eqref{eq:Killing_1}--\eqref{eq:Killing_2} admits a solution $Y\in\frakX_{\DD}(M)$ if and only if
\[
\mathpzc{H}_{g}\sigma=0, \qquad \sigma\,\bot_{L^{2}}\,\mathscr{K}^{1}_{\DD}(g).
\]
The solution is unique modulo $\mathscr{K}^{0}_{\DD}(g)$.
\end{enumerate}
\end{theorem}
The precise definition of the modules and the associated boundary conditions, as well as the proof, will be contained in the more general treatment of \secref{sec:KillingHessianIntro} and \secref{sec:KillingHessianBody}.

Again, we emphasize that the cohomology modules \eqref{eq:Kcohomology} are expressible in terms of the operators in the original sequence. To illustrate the implications of this in the present example, note that, when the boundary is nonempty, $\mathscr{K}_{\DD}^0(g)=\{0\}$ for any $g$, as there are no Killing fields that vanish on the boundary. Moreover, by means of a Bochner technique, it can be shown that $\mathscr{K}^1_{\DD}(g)=\{0\}$ under assumptions on the interior curvature and convexity of the boundary (see, e.g., the calculation for \cite[Thm.~5]{Led25B}). For the higher-degree modules, as in the case of the exterior covariant derivative, we shall elaborate further in \secref{sec:general_intro} on how the kernels of the lifted terms $\pzcdel_{g}$ and $\mathpzc{H}_{g}^*$ can be refined so as to be expressed in terms of the original adjoints $\delta_{g}$ and $\Hg^*$. This can also yield vanishing results in certain cases.

\subsection{The Hessian equation} 
Closely related to the Killing equation, the Hessian equation asks whether a prescribed symmetric tensor $\sigma$ can be written as
\beq 
\Hg f=\sigma ,
\label{eq:Hessian_1} 
\eeq 
where $\Hg:C^{\infty}(M)\rightarrow S^{2}(M)$ is the usual Riemannian Hessian acting on functions. The requirement that the gradient vector field vanishes identically on the boundary translates into the Cauchy data:
\beq 
f|_{\dM}=0, \qquad \dr f|_{\dM}=0 \qquad \text{on } \dM,
\label{eq:Hessian_2} 
\eeq 
where $\dr$ is the normal derivative on the boundary. Under restrictive curvature assumptions, such as a locally symmetric structure or suitable nondegeneracy of the curvature operator (cf.~\cite{BE69,Bry13a,Bry25} and references therein), the solvability conditions can be encoded by a cochain complex of the form
\beq 
\begin{tikzcd}
0 \arrow[r] &[-1.2em]
C^{\infty}_{\DD}(M) \arrow[r, "\Hg"] &
S^{2}_{\DD}(M) \arrow[r, "\dBianchi"] & 
\Omega^{2,1}_{\DD}(M;T^*M) \arrow[r, "\dBianchi"] & \cdots
\end{tikzcd}
\label{eq:Hessian_complex} 
\eeq
Once these assumptions are removed, however, curvature terms again obstruct the construction.

Analogously to the previous examples, we prove the following.
\begin{theorem}
\label{thm:Hessian_intro}
Let $(M,g)$ be a compact Riemannian manifold with boundary. Then the following holds:

\begin{enumerate}
\item \emph{(Lifted complex and cohomology)} There exists a cochain complex of continuous linear maps
\beq 
\begin{tikzcd}
0 \arrow[r] &[-1.2em]
C^{\infty}_{\DD}(M) \arrow[r, "\Hg"] &
S^{2}_{\DD}(M) \arrow[r, "\pzcd_{g}"] & 
\Omega^{2,1}_{\DD}(M;T^*M) \arrow[r, "\pzcd_{g}"] &
\cdots
\end{tikzcd}
\label{eq:Hessian_complexlifted} 
\eeq
coinciding with \eqref{eq:Hessian_complex} when $g$ is locally flat, so that its cohomology modules $\mathscr{G}^{\bullet}_{\DD}(g)$ satisfy
\[
\begin{split} 
&\mathscr{G}^{0}_{\DD}(g)= \ker(\Hg,(\cdot)|_{\dM},\partial_{r}(\cdot)|_{\dM}), \\&
\mathscr{G}^{1}_{\DD}(g)= \ker(\dBianchi,\Hg^*)\cap S^{2}_{\DD}(M),  \\
&\mathscr{G}^{k}_{\DD}(g)=\ker(\dBianchi,\pzcdel_{g})\cap \Omega_{\DD}^{k}(M;T^*M), \qquad \text{for } k\geq 2,
\end{split}
\]
where $\Hg^*:S^{2}(M)\rightarrow C^{\infty}(M)$ is the adjoint of $\Hg$ and $\pzcdel_{g}$ is the adjoint of $\pzcd_{g}$. 

\item \emph{(Solvability and uniqueness)} Given $\sigma\in S^{2}_{\DD}(M)$, the boundary-value problem \eqref{eq:Hessian_1}--\eqref{eq:Hessian_2} admits a solution $f\in C^{\infty}_{\DD}(M)$ if and only if
\[
\pzcd_{g}\sigma=0, \qquad \sigma\,\bot_{L^{2}}\,\mathscr{G}^{1}_{\DD}(g).
\]
The solution is unique modulo $\mathscr{G}^{0}_{\DD}(g)$.
\end{enumerate}
\end{theorem}
Again, the precise definition of the modules and the associated boundary conditions, as well as the proof, will be contained in the more general treatment of \secref{sec:KillingHessianIntro} and \secref{sec:KillingHessianBody}.

The same remark concerning the cohomology modules in the previous examples holds verbatim here. In particular, we observe that $\mathscr{G}^{0}_{\DD}(g)=\{0\}$ when the boundary is nonempty, by the same argument as for the Killing equation, and that $\mathscr{G}^{1}_{\DD}(g)=\{0\}$ under topological and convex boundary assumptions, since these tensors are, in particular, Codazzi tensors (i.e., satisfy $\dBianchi\sigma=0$), for which rigidity results have been widely studied (see, e.g., \cite{Bou81,Bes87}).

\subsection{Linearized Riemann curvature} 
For the next example, consider the correspondence between a Riemannian metric and its Riemann curvature tensor, which may be viewed as a map,
\[
g\mapsto \Rm_g:\scrM(M)\rightarrow\Ckm{2}{2}.
\] 
Here, $\scrM(M)$ is the manifold of all Riemannian metrics on $M$ (\cite{Ebi70,GM91,GMN92,CR13}). The \emph{Riemann curvature equations}~\cite{DY86,Bry13b,Bry15} are, as their name suggests, the nonlinear problem of prescribing the range of this map; i.e., given an algebraic curvature tensor $\mathrm{T}\in\Ckm{2}{2}$, the problem is to find a Riemannian metric $g \in \scrM(M)$ yielding 
\[
\Rm_g = \mathrm{T}, \qquad \text{in } M. 
\]
In the presence of a boundary, we supplement the interior equation with \emph{Cauchy data} \cite{And08,AH08,AH22}:
\[
\gD = \gamma, \qquad \Ah_g = \mathrm{K}, \qquad \text{in } \dM. 
\]
The prescribed boundary data consist of a Riemannian metric $\gamma \in \scrM(\dM)$ and a symmetric tensor field $\mathrm{K} \in S^{2}(\dM)$. Here, $\gD = \PttD g$ denotes the pullback of $g$ to the boundary, where
\[
\PttD: S^{2}(M) \to S^{2}(\dM)
\]
is the linear map representing the tangential projection of a symmetric tensor field onto the boundary. The nonlinear mapping
\[
g \mapsto \Ah_g: \scrM(M) \to S^{2}(\dM)
\]
assigns to each Riemannian metric $g$ the induced second fundamental form $\Ah_g \in S^2(\dM)$ of the boundary. Altogether, we obtain the nonlinear overdetermined boundary-value problem for an unknown $g\in\scrM(M)$:
\[
\begin{gathered}
\Rm_g =\mathrm{T} , \\ 
\gD =\gamma, \qquad 
\Ah_{g} =\mathrm{K} .
\end{gathered}
\]
Linearizing the interior equations at a fixed background metric $g\in\scrM(M)$ yields a linear second-order differential operator $\D\Rm_{g}:S^{2}(M)\rightarrow\Ckm{2}{2}$ given by \cite[p.~559]{Tay11b}:
\[
\D\Rm_g\sigma:=\dertZero\Rm_{g+t\sigma},
\qquad
\sigma\in S^2(M).
\]
Linearizing the boundary data as well yields a boundary-value problem for the linearized Riemann curvature equations that falls within the template in \eqref{eq:linear_problem}:
\beq 
\begin{aligned}
& \D\Rm_{g}\sigma = T \qquad && \text{in } M,
\\
& \PttD\sigma=0,  \qquad \D\Ah_{g}\sigma=0 
\qquad && \text{on } \dM,
\end{aligned}
\label{eq:Riem_1}
\eeq 
where $\sigma\mapsto\PttD\sigma\in S^{2}(\dM)$ and
$\D\Ah_g\sigma=\dertZero \Ah_{g+t\sigma}\in S^{2}(\dM)$ are the linearized boundary operators.

When the background metric has constant curvature, it is known that the operators governing this problem can be organized within a cochain complex:
\beq 
\begin{tikzcd}
0 \arrow[r] &[-1.2em]
\frakX_{\DD}(M) \arrow[r, "\Def"] &
S^{2}_{\DD}(M) \arrow[r, "\D\Rm_{g}"] & 
\scrC^{2,2}_{\DD}(M) \arrow[r, "\dBianchi"] &
\Omega_{\DD}^{3}(M;\Lambda^{2}T^*M) \arrow[r, "\dBianchi"] &\cdots
\end{tikzcd}
\label{eq:curvture_complex} 
\eeq
which yields, in turn, can be shown to yield a Hodge-theoretic framework \cite{Cal61,GG88A,Eas00,KL22,KL21a}. We shall elaborate on the precise definition of the modules and the boundary conditions in \secref{sec:prescribed_curvature_intro}. For a general metric with boundary, however, the candidate sequence is no longer a cochain complex, and this analysis fails.

Analogously to the other examples, we shall prove the following theorem. For simplicity and expository purposes, we assume in the statement here that $\Rm_{g}|_{\dM}=0$, while allowing the curvature tensor to be arbitrary in the interior. In \secref{sec:prescribed_curvature_intro}, we explain how this assumption may be relaxed.
\begin{theorem}
\label{thm:Riem_intro}
Let $(M,g)$ be a compact Riemannian manifold with boundary satisfying $\Rm_{g}|_{\dM}=0$. Then the following holds:

\begin{enumerate}
\item \emph{(Lifted complex and cohomology)} There exists a cochain complex of continuous linear maps
\beq 
\begin{tikzcd}
0 \arrow[r] &[-1.2em]
\frakX_{\DD}(M) \arrow[r, "\Def"] &
S^{2}_{\DD}(M) \arrow[r, "\mathpzc{D}\mathpzc{Rm}_{g}"] & 
\scrC^{2,2}_{\DD}(M) \arrow[r, "\pzcd_{g}"] &
\Omega_{\DD}^{3}(M;\Lambda^{2}T^*M) \arrow[r, "\pzcd_{g}"] &\cdots
\end{tikzcd}
\label{eq:curvture_complexlifted} 
\eeq
coinciding with \eqref{eq:curvture_complex} when $g$ is locally flat, so that its cohomology modules $\mathscr{B}^{\bullet}_{\DD}(g)$ satisfy
\[
\begin{split} 
&\mathscr{B}^{0}_{\DD}(g)= \ker(\Def,(\cdot)|_{\dM}), \\&
\mathscr{B}^{1}_{\DD}(g)= \ker(\D\Rm_{g},\delta_{g},\PttD,\D\Ah_{g}),  \\
&\mathscr{B}^{2}_{\DD}(g)=\ker(\dBianchi,\mathpzc{D}\mathpzc{Rm}_{g}^*)\cap \scrC^{2,2}_{\DD}(M).
\\ &\mathscr{B}^{k}_{\DD}(g)=\ker(\dBianchi,\pzcdel_{g})\cap \Omega^{k}_{\DD}(M;\Lambda^{2}T^*M), \qquad k\geq 3. 
\end{split}
\]

\item \emph{(Solvability and uniqueness)} Given $T\in\scrC^{2,2}_{\DD}(M)$, the boundary-value problem \eqref{eq:Riem_1} admits a solution $\sigma\in S^2_{\DD}(M)$ satisfying $\delta_{g}\sigma=0$ if and only if
\[
\pzcd_{g} T=0, \qquad T\,\bot_{L^{2}}\,\mathscr{B}^{2}_{\DD}(g).
\]
Any solution satisfying $\delta_{g}\sigma=0$ is unique modulo $\mathscr{B}^{1}_{\DD}(g)$.
\end{enumerate}
\end{theorem}
Again, the remarks concerning the cohomology modules from the previous examples hold verbatim here, although, distinctively, the expression for $\mathscr{B}^{2}_{\DD}(g)$ contains non-local terms. In this context, it is worthwhile to note that one can also formulate a similar picture for the linearized Ricci curvature equations
\[
\D\mathrm{Ric}_{g}\sigma=T, \qquad \text{in } M,
\]
which are an important subject of study in geometric analysis (e.g., \cite{DeT81,Ham84,And08,AH22,Hin24}) and are, naturally, closely related to the current example. The vanishing of the resulting cohomology modules in this picture becomes particularly rich from a geometric point of view, owing to the specific structure of the gauge and boundary conditions pertinent to Ricci curvature. In particular, in that setting one obtains vanishing results for the second cohomology module governing solvability. For this reason, and since the corresponding cohomological formulation requires additional structure and has further implications, we defer its treatment to a separate work \cite{Led25B}.

\section{Discussion and outlook} 
\label{sec:outlook} 
We make a few remarks regarding the scope and nature of the results laid out above.
\subsection{The explicit expressions for the cohomology} 
A key feature apparent in all examples above is that, as in classical Hodge theory, the cohomology modules of the lifted complexes admit explicit expressions in terms of the kernels of operators in the original sequence. These cohomological identities are not incidental, but rather constitute a structural feature of the theory. As in the classical Hodge isomorphisms \eqref{eq:Hodge_iso}, they pave the path to analyzing the resulting cohomology modules directly, with the aim of obtaining sharper solvability and uniqueness statements for the underlying problem by relating them to geometric properties. The resulting cohomology is therefore not merely abstract, but carries geometric significance, as already reflected by several vanishing statements referred to in the examples above.


\subsection{The analytical structure of the lifted complexes} 
Another technical remark concerns the nature of the operators in the lifted complexes. In the results above, they are described only as continuous maps between Fr\'echet spaces, since their more precise nature is not required for the precise statements of the results. However, as alluded to earlier, the operators in the lifted complexes are in fact constructed within a pseudodifferential calculus of boundary-value problems, namely, the Boutet de Monvel calculus, and differ from the original operators by zero-order terms in this calculus. In particular, the lifted operators admit well-behaved adjoints, which are used in the statements of all the theorems to identify and fix the inherent gauge of the problem, as is done in classical Hodge theory.

This nature is what stands behind the terminology ``lifting'', which is borrowed from other approaches to generalizing Hodge theory in a similar spirit (i.e., in which the composition of consecutive operators yields lower-order operations), although in an analytically different manner from ours \cite{KTT07,Wal15,SS19}. We provide a detailed comparison with these approaches in the body of the text (cf.~\secref{sec:proto_intro} and \secref{sec:comparison_theory}).

Since the theory is entirely based on this calculus, it naturally applies to a broader class of operators $A$ and $B$ in \eqref{eq:linear_problem} than is reflected in the examples and discussion above. In particular, it accommodates systems involving non-local terms that map boundary sections to interior sections, and vice versa, as well as terms of varying orders---known in the literature as Douglis--Nirenberg systems, which are encountered throughout geometric analysis \cite{DN55,Gru90,Kha23}. We shall develop the scope of the abstract theory to this extent in \secref{sec:general_intro}, and provide concrete examples of such systems in the more extensive treatment of \secref{sec:examples_intro}.

\subsection{Overdetermined boundary-value problems revisited} 
Apart from its purely analytic purpose, we expect the generalized Hodge theory to be useful for the analysis of nonlinear problems. Indeed, although the examples above are stated as linear boundary-value problems, they have a common origin: each arises from the linearization of a nonlinear overdetermined problem, whose linearized symmetries and constraints naturally give rise to a sequence of operators.


Let us make the common structural picture, underlying all examples in this work, more precise. Problems in geometry, geometric analysis, and the calculus of variations often reduce to finding a configuration $\gamma$ -- such as a metric tensor, a connection, or a gauge field -- that satisfies a nonlinear equation
\beq
\mathrm{F}_{\gamma} = 0, \qquad \gamma \in \scrU,
\label{eq:geoemtric_eq_intro}
\eeq
where $\scrU$ is an infinite-dimensional manifold modeled on section spaces over an underlying finite-dimensional manifold $M$. The nonlinear operator $\gamma\mapsto\mathrm{F}_{\gamma} : \scrU \to \scrV$ typically represents a geometric construct, such as the Riemann curvature tensor or an Euler--Lagrange operator, mapping into a target vector space or an infinite-dimensional vector bundle $\scrV$ \cite{Ham82}. 

As alluded to in the opening statement, such problems are often equipped with geometric constraints and gauge symmetries of the form
\beq 
\mathrm{F}_{\phi\cdot\gamma}=\phi\cdot\mathrm{F}_{\gamma}, \qquad \mathrm{G}_{\gamma}\mathrm{F}_{\gamma}=0.
\label{eq:F_constraints} 
\eeq 
where $\gamma\mapsto \mathrm{G}_{\gamma}$ is a nonlinear map, and $(\phi, \gamma) \mapsto \phi\cdot\gamma$ and $(\phi, V) \mapsto \phi\cdot V$ are Lie-group actions encoding the gauge symmetry. 

For many analytical approaches to such nonlinear problems, particularly those based on the implicit function theorem, a necessary first step is to study a linearized equation
\beq
A\omega
= \mathrm{D}_{\Gamma}\mathrm{F}_{\gamma}\omega
:= \mathrm{D}\mathrm{F}_{\gamma}\omega + \Gamma\omega,
\qquad \text{where} \qquad
\mathrm{D}\mathrm{F}_{\gamma}\omega
:= \dertZero \mathrm{F}_{\gamma_t},
\label{eq:derivative_intro}
\eeq
Here, $\gamma_t \in \scrU$ is a smooth path satisfying $\gamma_0 = \gamma$ and $\dertZero \gamma_t = \omega$, while the endomorphism $\omega \mapsto \Gamma\omega$ represents a linear error term, such as one arising from a connection on $\mathscr{V}$. Including this term broadens the scope of the implicit function theorem; see, for example, \cite[pp.~93--94, 213--216]{Ham82}. In the context of \eqref{eq:linear_problem}, the linear operator $A$ in \eqref{eq:derivative_intro} then corresponds to the interior operator in the template \eqref{eq:linear_problem}, while the equations $B\omega=0$ represents the boundary conditions dictated by the underlying structure of $\scrU$. 

By the chain rule, linearizing the constraints \eqref{eq:F_constraints} shows that, under the assumption $\mathrm{F}_{\gamma}=0$, one can embed $A_{1}=A$ into a cochain complex as in \eqref{eq:A0A1_intro}, where
\beq 
A_{0}:=\D(\phi\mapsto \phi\cdot\gamma), \qquad A_{2}:=\D\mathrm{G}_{\gamma}.
\label{eq:gauge_symmetries_sequence} 
\eeq 
However, when the background structure fails to satisfy $\mathrm{F}_{\gamma}=0$, the resulting sequence is no longer a cochain complex, and the analysis breaks down. As we demonstrate in \secref{sec:examples_intro}, all the sequences of operators presented in \secref{sec:main_results_examples_1} are essentially of this form, and their failure to be cochain complexes in general results from removing the background assumption $\mathrm{F}_{\gamma}=0$.

As surveyed in the examples above, existing techniques can sometimes address this failure by ``correcting'' the sequence into a cochain complex (of differential operators) while imposing additional restrictive assumptions on the background structure at the reference point $\gamma \in \scrU$, such as analyticity, homogeneity, or constant background curvature. In general geometry, however, to the best of our knowledge no technique is currently available for overcoming this failure. Returning to the nonlinear premise, these assumptions, in turn, limit the applicability of linear-to-nonlinear machinery, including perturbation theory, continuity methods, and implicit function theorems, to the original problem \eqref{eq:geoemtric_eq_intro}.

Our results and aims can thus be rephrased as follows: they provide a framework for studying linearized problems of the form \eqref{eq:geoemtric_eq_intro}--\eqref{eq:derivative_intro} without imposing restrictive background assumptions, allowing us to operate even when $\mathrm{F}_{\gamma} \neq 0$. It is this theory that we shall develop below.

\subsection{Structure of this work} 
Due to the length and technicality of this work, we found it prudent to assemble \chapref{chp:overview}, which remains expository and provides a detailed overview and summary of both the abstract framework (\secref{sec:Hodge_intro}) and the geometric examples to which it applies (\secref{sec:examples_intro}). Its purpose is to avoid redundancy and lack of clarity in the technical body of the text, and to make it easier to refer to the main results and ideas of this work.

The technical body of this work is then structured as follows.

In \chapref{chp:tech_setup}, we provide the necessary technical setup. In \secref{sec:Preliminaries}, we review the required analytical preliminaries, including the Boutet de Monvel calculus. In \secref{sec:Overdetermined}, we review overdetermined ellipticity in the Douglis--Nirenberg sense, introducing concepts and notation tailored to our needs.

In \chapref{chp:elliptic_pre_complexes}, we develop and prove all the results outlined in \secref{sec:Hodge_intro}.

In \chapref{chp:examples}, we provide the analysis underlying the examples outlined in \secref{sec:examples_intro}, including the proofs of the results in \secref{sec:main_results_examples_1} as particular cases.

\subsection*{Acknowledgments}
I am grateful to my advisor, Raz Kupferman, for his guidance and assistance in advancing this project. I also wish to thank Or Hershkovits for his kind support, and to Deane Yang for his constructive feedback and for suggesting to apply the theory to the Einstein equations, which led to a significant extension of its scope. Lastly, I extend my warm thanks to Rotem Assouline for many helpful discussions. 

This project was funded by the Israel Science Foundation Grant 560/22 and an Azrieli Graduate Fellowship from the Azrieli Foundation.  

\chapter{Overview and Summary}
\label{chp:overview}

As stated above, due to the length and technicality of this work, we found it prudent to assemble this chapter, which remains expository and provides a detailed overview and summary of both the abstract framework (\secref{sec:Hodge_intro}) and the geometric examples to which it applies (\secref{sec:examples_intro}). Its purpose is to avoid redundancy and lack of clarity in the technical body of the text, and to make it easier to refer to the main results and ideas of this work.

To that end, we focus here on demonstrating the applicability of the theory, stripped as much as possible of technical aspects, so others may recognize the framework as useful for their own applications. Accordingly, the emphasis in \secref{sec:Hodge_intro} is on presenting the theory in a sufficiently detailed yet black-box form, together with discussions of what makes the framework work. Similarly, in \secref{sec:examples_intro}, the emphasis is on the interaction between the theory and geometric aspects, rather than on technical details, which are deferred to later sections.

\section{Generalized Hodge theory: Overview}
\label{sec:Hodge_intro}

\subsection{Classical elliptic complexes}
As introduced in \secref{sec:main_results_examples_1}, the key idea behind this work's machinery is to weaken the classical notion of an {elliptic complex} \cite{AS68,Tay11b} so that it applies more broadly, leading to the notion of an {elliptic pre-complex}. For the introduction of our generalization, we first recall the notion of an elliptic complex, which unifies and abstracts the essential components of the classical Hodge theory for the de Rham complex. These components were recognized long ago \cite{AB67,AS68} and were later generalized to compact manifolds with boundary. Several expositions of elliptic complexes appear in the literature. We follow the exposition in \cite[Ch.~12.A]{Tay11b}, with certain elements adapted from \cite{Sch95b,KL23}, as this formulation is more suitable for our purposes; see also the discussion in \secref{sec:comparison_theory} for a comparison with other formulations.

The components can be summarized in the following diagram:

\beq
\begin{xy}
(-30,0)*+{0}="Em1";
(-30,-20)*+{0}="Gm1";
(0,0)*+{\Gamma(\E_0)}="E0";
(30,0)*+{\Gamma(\E_1)}="E1";
(60,0)*+{\Gamma(\E_2)}="E2";
(90,0)*+{\Gamma(\E_3)}="E3";
(100,0)*+{\cdots}="E4";
(0,-20)*+{\Gamma(\bbJ_0)}="G0";
(30,-20)*+{\Gamma(\bbJ_1)}="G1";
(60,-20)*+{\Gamma(\bbJ_2)}="G2";
(90,-20)*+{\Gamma(\bbJ_3)}="G3";
(100,-20)*+{\cdots}="G4";
{\ar@{->}@/^{1pc}/^{A_0}"E0";"E1"};
{\ar@{->}@/^{1pc}/^{A_0^*}"E1";"E0"};
{\ar@{->}@/^{1pc}/^{A_1}"E1";"E2"};
{\ar@{->}@/^{1pc}/^{A_1^*}"E2";"E1"};
{\ar@{->}@/^{1pc}/^{A_2}"E2";"E3"};
{\ar@{->}@/^{1pc}/^{A_2^*}"E3";"E2"};
{\ar@{->}@/^{1pc}/^0"Em1";"E0"};
{\ar@{->}@/^{1pc}/^0"E0";"Em1"};
{\ar@{->}@/_{0pc}/^{B_0}"E0";"G0"};
{\ar@{->}@/_{0pc}/^{B_1}"E1";"G1"};
{\ar@{->}@/_{0pc}/^{B_2}"E2";"G2"};
{\ar@{->}@/^{1pc}/^{B^*_0}"E1";"G0"};
{\ar@{->}@/^{1pc}/^{B^*_1}"E2";"G1"};
{\ar@{->}@/^{1pc}/^0"E0";"Gm1"};
{\ar@{->}@/^{1pc}/^{B^*_2}"E3";"G2"};
{\ar@{->}@/_{0pc}/^{B^*_3}"E3";"G3"};
\end{xy}
\label{eq:elliptic_complex_diagram}
\eeq
where $\E_\alpha \to M$ and $\bbJ_\alpha \to \dM$ are sequences of vector bundles over the interior and the boundary, respectively, and $\Gamma$ here denotes the global smooth sections functor.

The operators acting on the section spaces are, classically, differential operators, all of the same order, and are symbiotic with one another via \emph{Green's formulae}:  
\beq
\langle A_\alpha\psi, \eta \rangle_{L^2(M)} = \langle \psi, A_\alpha^*\eta \rangle_{L^2(M)} + \langle B_\alpha\psi, B^*_\alpha\eta \rangle_{L^2(\dM)},
\qquad
\psi \in \Gamma(\E_\alpha), \quad \eta \in \Gamma(\E_{\alpha+1}),
\label{eq:Green_elliptic_intro}
\eeq
where $\langle \cdot, \cdot \rangle_{L^2(M)}$ and $\langle \cdot, \cdot \rangle_{L^2(\dM)}$ denote $L^2$-pairings of interior and boundary sections, respectively, with respect to chosen Riemannian fiber metrics and volume forms.  

In addition, the following conditions are required:  
\begin{enumerate}[itemsep=0pt,label=(\alph*)]
\item The interior operators satisfy $A_{\alpha+1} A_{\alpha}=0$; that is, $(A_{\bullet})$ is a cochain complex.  
\item The boundary-value problem associated with $(D_\alpha^*D_\alpha, T_\alpha)$ is \emph{elliptic}, where $D_\alpha = A_{\alpha-1}^* \oplus A_\alpha$, so that $D_\alpha^*D_\alpha $ is the ``Laplacian": 
\[
D_\alpha^*D_\alpha = A_{\alpha-1}A_{\alpha-1}^* + A^*_\alpha A_\alpha,
\]
and where $T_\alpha$ is a boundary operator defined by one of the following:
\begin{itemize}
\item[$(\NN)$] $T_\alpha = B^*_{\alpha-1} \oplus B^*_\alpha A_\alpha$;
\item[$(\DD)$] $T_\alpha = B_{\alpha} \oplus B_{\alpha-1}A^*_{\alpha-1}$.
\end{itemize}
\label{eq:classical_ellipticity}
\end{enumerate}

Under these conditions, the elliptic complex is said to satisfy generalized \emph{Neumann conditions} (usually termed \emph{absolute} due to their classical correspondence with absolute cohomology) in the case of $(\NN)$, and generalized \emph{Dirichlet conditions} (similarly termed \emph{relative} due to the classical correspondence with relative cohomology) in the case of $(\DD)$; cf.\ \cite{Sch95b, Tay11a, Tay11b}.

The main result concerning elliptic complexes is an $L^{2}$-orthogonal, topologically direct decomposition of Fréchet spaces, the \emph{Hodge decomposition}. For Neumann conditions $(\NN)$, this is:
\beq
\Gamma(\bbE_{\alpha+1})= 
\lefteqn{\overbrace{\phantom{\image(A_{\alpha})\oplus \module_{\N}^{\alpha+1}}}^{\ker(A_{\alpha+1})}} \image(A_{\alpha}) \oplus \underbrace{\module_{\N}^{\alpha+1}\oplus \image(A^*_{\alpha+1}|_{\ker B_{\alpha+1}^*})}_{\ker(A_{\alpha}^*|_{\ker B_{\alpha}^*})}
\label{eq:Hodge_intro}
\eeq
where the finite-dimensional space $\module_{\N}^{\alpha+1} := \ker(A_{\alpha+1}, A_\alpha^*, B^*_\alpha)$ satisfies:
\beq
\module_{\N}^{\alpha+1} \simeq \ker(A_{\alpha+1})\big/\image(A_{\alpha}) \simeq \ker(A^*_{\alpha}|_{\ker B_{\alpha}^*})\big/\image(A^*_{\alpha+1}|_{\ker B_{\alpha+1}^*}).
\label{eq:cohomology_hodge_classic}
\eeq 
Distinctively, the cohomology of the complex $(A_{\bullet})$ is identified not only with that of the adjoint complex $(A^*_{\bullet})$, supplemented with its natural boundary conditions, but is shown to be isomorphic to the kernel of an elliptic boundary-value problem.

Analogously, for Dirichlet conditions $(\DD)$, the decomposition takes the form:
\beq
\Gamma(\bbE_{\alpha+1})= 
\lefteqn{\overbrace{\phantom{\image(A_{\alpha}|_{\ker B_{\alpha}})\oplus \module_{\DD}^{\alpha+1}}}^{\ker(A_{\alpha+1}|_{\ker B_{\alpha+1}})}} \image(A_{\alpha}|_{\ker B_{\alpha}}) \oplus \underbrace{\module_{\DD}^{\alpha+1}\oplus \image(A^*_{\alpha+1})}_{\ker(A_{\alpha}^*)}
\label{eq:Hodge_intro_dirichlet}
\eeq
where the finite-dimensional space $\module_{\DD}^{\alpha+1} := \ker(A_{\alpha+1}, A_\alpha^*, B_{\alpha+1})$ satisfies:
\beq 
\module_{\DD}^{\alpha+1} \simeq \ker(A_{\alpha+1}|_{\ker B_{\alpha+1}})\big/\image(A_{\alpha}|_{\ker B_{\alpha}}) \simeq \ker(A^*_{\alpha})\big/\image(A^*_{\alpha+1}).
\label{eq:cohomology_hodge_classic_dirichlet}
\eeq

In both cases, identifying the cohomology in terms of elliptic boundary-value problems is the most important feature of the theory, and paves the way for the application of machinery such as unique continuation, Weitzenböck formulae, and Bochner techniques to obtain estimates on the dimension of the cohomology modules and relate them to geometric features (cf. \cite{PRS08, Pet16}).

Conversely, from the perspective of the motivation in \secref{sec:opening}, these Hodge decompositions provide a \emph{cohomological formulation} of the solvability and uniqueness conditions for the boundary value problem $A_{\alpha}\omega = \eta$ \cite{Sch95b}, as described in \eqref{sec:opening}. 

However, despite the apparent generality of elliptic complexes, a cohomological formulation for problems beyond the de Rham complex is often out of reach. The reason is twofold. First, as alluded to in \secref{sec:opening}, constructing a sequence $(A_\bullet)$ that incorporates $A$ into a cochain complex is generally not possible. Second, geometric problems often fail to satisfy the ellipticity conditions, as they are typically overdetermined—again due to gauge invariance and geometric constraints. Specifically, in the diagram \eqref{eq:elliptic_complex_diagram}, $A_{\alpha}$ and $A_{\alpha-1}$ may not have the same order, implying that the associated Laplacian $D^*_{\alpha}D_{\alpha}$ fails to be elliptic even in the interior. Indeed, as aptly put in \cite[Ch.~12.A, p.~465]{Tay11b}:
\begin{quotation}
``With this sketch of elliptic complexes done, it is time to deliver the bad news. The regularity (i.e., ellipticity) hypothesis is rarely satisfied, other than for the de Rham complex.''
\end{quotation}
  
\subsection{Elliptic pre-complexes: The prototypical setting} 
\label{sec:elliptic_pre_complexes_intro}

To overcome these shortcomings, and extend the scope of Hodge theory, we examine sequences of mappings that interact in a manner that is weaker, yet more ubiquitous, than in elliptic complexes. 

For introductory purposes, and given its practical relevance in numerous examples---indeed, in all the examples presented in \secref{sec:opening}---we begin with a prototypical setting that most clearly demonstrates the main ideas behind our theory: a diagram of operators that shares the structure of \eqref{eq:elliptic_complex_diagram}, yet is weakened in several key respects. Our main result is that such a diagram can be ``lifted'' to a full-fledged complex, to which all the results of Hodge theory apply, with its cohomology admitting an explicit expression in terms of the original operators in the diagram.

Once the principal ideas and machinery are clear for this prototypical setting, we shall subsequently present a more general diagram, applicable to a much broader class of systems, including non-local terms and mappings not only between interior sections but also between boundary sections, and allowing varying orders and mixing between section spaces over the interior and the boundary---namely, Douglis--Nirenberg systems \cite{DN55,Gru90}.

\label{sec:proto_intro}
For the statement of our main definition of an \emph{elliptic pre-complex} (not to be confused with \emph{elliptic quasicomplexes}\footnote{We developed elliptic pre-complexes prior to becoming aware of the existing notion of \emph{elliptic quasicomplexes} \cite{KTT07, Wal15, SS19}. We are grateful to Sylvie Paycha, Elmar Schrohe, and Jörg Seiler for bringing these to our attention. Despite some conceptual similarities, the two notions differ in their analytical structure and applications. A detailed comparison of these frameworks is provided in \secref{sec:comparison_theory}, once the necessary technical tools and terminology have been introduced.}), let $A \mapsto \ord(A) \in \Nzero$ denote the correspondence between a differential operator and its order. We first state the definition as it stands, and then elaborate on the terminology and role of its various components:

\begin{definition}[Elliptic pre-complex --- Prototypical]
\label{def:elliptic_pseudo_complex_intro}
Consider a diagram of differential operators \eqref{eq:elliptic_complex_diagram}, each of order $m_{\alpha}:=\ord(A_{\alpha})$ (where we allow $m_{\alpha_1} \ne m_{\alpha_2}$ for $\alpha_1 \ne \alpha_2$), satisfying Green's formulae \eqref{eq:Green_elliptic_intro}. 

We say the diagram is an \emph{elliptic pre-complex} if the following conditions hold:
\begin{enumerate}[itemsep=0pt,label=(\alph*)]
\item $B_{\alpha}$ and $B_{\alpha}^*$ are \emph{normal systems of trace operators}.
\item The interior operators $(A_{\bullet})$ satisfy the \emph{order-reduction property}:
\[
\ord(A_{\alpha+1} A_{\alpha}) \leq \ord(A_{\alpha}). 
\]
\item One of the following systems is \emph{overdetermined elliptic}: 
\begin{itemize}[itemsep=0pt]
\item[$(\NN)$] \emph{(Neumann conditions)}: $A_{\alpha} \oplus A_{\alpha-1}^* \oplus B^*_{\alpha-1}$. 
\item[$(\DD)$] \emph{(Dirichlet conditions)}: $A_{\alpha} \oplus A_{\alpha-1}^* \oplus B_{\alpha}$.

In the $(\DD)$ case, the following \emph{boundary order reduction} must also be satisfied:
\[
\ker B_{\alpha-1} \subseteq \ker (B_{\alpha}A_{\alpha-1}).
\] 
\end{itemize}
In the $(\NN)$ case, we say the elliptic pre-complex is based on \emph{Neumann conditions}; in the $(\DD)$ case, we say it is based on \emph{Dirichlet conditions}. 
\end{enumerate}
\end{definition}

\subsubsection{Green's formulae and normality}
\label{sec:Generalized_green_intro}
The definition of a normal system of trace operators, in the context of Green's formula, is taken directly from \cite[Ch.~1.4]{Gru96}. Essentially, when we say that a boundary operator $B$ is a \emph{normal system of trace operators}, we mean that it can be decomposed as:
\beq 
B=\bigoplus_{j=0}^{r}B_{j}, \qquad r\in\Nzero,
\label{eq:system_of_trace_operators} 
\eeq 
where each $B_{j}$ is a differential boundary operator of order $j$ whose leading normal derivative coefficient on the boundary is a surjective morphism. This, in turn, implies by the trace theorem that the entire boundary operator $B$, along with all of its continuous Sobolev extensions, is surjective, and that its kernel is dense in the $L^{2}$ topology \cite[Ch.~1.4]{Gru96}.

This requirement extends the classical setting presented in \secref{sec:Hodge_intro}, where the ellipticity of the boundary-value problem \eqref{eq:classical_ellipticity} implicitly forces the boundary operators to be normal. Here, because we relax the ellipticity assumption, this normality condition must be imposed separately.

We remark that the role of surjectivity of boundary operators arising in Green's formulae has been studied extensively \cite{LM72, Gru96, Tay11a}, and their significance in those contexts motivated much of their role in our own theory.

\subsubsection{Overdetermined ellipticity} 
\label{sec:overdetermined_ellipticity_intro}
The limiting requirement of ellipticity in classical elliptic complexes is generalized here by the condition of \emph{overdetermined ellipticity}, which, in view of the discussion in \secref{sec:opening}, is better suited to accommodate problems arising in geometry.

To understand how overdetermined ellipticity differs from classical ellipticity, one may take two equivalent paths. The first proceeds via the principal symbol (a delicate object when operating with systems of varying order and on manifolds with boundary): for overdetermined ellipticity, the symbol is required to be injective, rather than bijective as in the elliptic case; this is classically known as the Lopatinskii--Shapiro condition \cite[pp.~239--240]{Hor03}. 

The second path avoids symbolic calculus and proceeds via the Fredholm properties of the continuous extensions of the system acting between Sobolev spaces. Specifically, at the functional-analytic level, overdetermined ellipticity of a system of the form $A_{1}\oplus A_{2}\oplus B$ (where $A_{i}$ are interior operators of order $m_{i}$ and $B$ is a system of trace operators as in \eqref{eq:system_of_trace_operators}) equates to the existence of a continuous Sobolev extension that is \emph{semi-Fredholm} (cf.~\cite[Ch.~4.5]{Kat80}); that is, for some $W^{s,p}$-Sobolev norm, an a priori estimate holds:
\beq  
\|\psi\|_{s,p} \leq C \bigl(\|A_1 \psi\|_{s-m_{1},p} + \|A_2 \psi\|_{s-m_{2},p} +\sum_{j=0}^{r} \|B_j \psi\|_{s - j -1+ 1/p,p} + \|\psi\|_0 \bigr),
\label{eq:Sobolev_estimate}
\eeq 
where $s \in \Nzero$ is a sufficiently large Sobolev exponent.

Once \eqref{eq:Sobolev_estimate} holds for some $s\in\Nzero$, it can in fact be shown to hold for every $s > r + 1/p-1$ and $1 < p < \infty$. 

Either way, overdetermined ellipticity offers considerably more flexibility than standard ellipticity; for instance, it allows $A_{1}$ and $A_{2}$ to be of different orders and to be considered separately rather than jointly as a ``Laplacian'' operator. This flexibility is not merely technical: as demonstrated in \secref{sec:opening}, in geometric problems, $A_{1}$ is often the actual linearized operator we wish to invert, while $A_{2}$ incorporates compatibility or gauge conditions. We shall see this explicitly in the examples studied below.

Within our framework, \defref{def:elliptic_pseudo_complex_intro} accommodates two distinct types of elliptic pre-complexes: those based on Neumann conditions (denoted $\NN$) and those based on Dirichlet conditions (denoted $\DD$). This nomenclature reflects the fact that these settings generalize the boundary conditions naturally associated with the elliptic boundary-value problems for the ``Laplacians'' in the classical theory.

\subsubsection{The order-reduction property}
\label{sec:order-reduction_intro}
Perhaps most importantly from the geometric point of view, the limiting requirement in the classical theory that the sequence of operators $(A_{\bullet})$ forms a cochain complex (i.e., $A_{\alpha+1}A_{\alpha}=0$) is replaced by a significantly weaker condition, which we call the \emph{order-reduction property}. Specifically, while the composition $A_{\alpha+1}A_{\alpha}$ is nominally an operator of order $m_{\alpha+1}+m_{\alpha}$ (where $m_{k} = \ord(A_{k})$), this property dictates that its actual order, perhaps unexpectedly, does not exceed $m_{\alpha}$. This naturally encompasses the case $A_{\alpha+1}A_{\alpha}=0$, but more generally allows for $A_{\alpha+1}A_{\alpha}\ne 0$, thereby considerably broadening the applicability of the theory.

In this context, we note that in the Dirichlet case in \defref{def:elliptic_pseudo_complex_intro}, an additional order-reduction property is also required on the boundary; specifically, $\ker B_{\alpha-1}\subseteq \ker (B_{\alpha}A_{\alpha-1})$. This must be separately imposed; in the classical setting, it is a consequence of the interior identity $A_{\alpha+1}A_{\alpha}=0$, Green's formula \eqref{eq:Green_elliptic_intro}, and ellipticity. 

We also remark that the order reduction properties in \defref{def:elliptic_pseudo_complex_intro} cannot, in general, be deduced solely by analyzing the principal symbols of the differential operators. A vanishing principal symbol for $A_{\alpha+1} A_{\alpha}$ merely indicates a cancellation of leading-order terms, which does not guarantee the strict bound on the order. Our analysis intrinsically requires these bounds to hold at the operator level.

From the perspective outlined in \secref{sec:opening}, the order-reduction property in our examples typically arises from geometric interactions between the systems in the sequence. In particular, it is usually the result of underlying curvature, manifesting naturally during the linearization of geometric symmetries and constraints, as demonstrated in detail in \secref{sec:examples_intro}.
\subsection{Main result: The lifted complex (prototypical)}
\label{sec:main_results_intro}
Our main theorem (again, stated here for the prototypical case of \defref{def:elliptic_pseudo_complex_intro}) states that every elliptic pre-complex $(\bA_{\bullet})$ can be \emph{lifted} to a fully fledged cochain complex, denoted in script by $(\fbA_{\bullet})$:

\begin{theorem}[Lifted complex --- Prototypical]
\label{thm:lifted_complexIntro}
Every elliptic pre-complex $(\bA_{\bullet})$ induces a sequence of continuous linear maps of Fréchet spaces
\[
\fbA_{\alpha+1} : \Gamma(\bbE_{\alpha+1}) \;\rightarrow\; \Gamma(\bbE_{\alpha+2}),
\]
\emph{uniquely} characterized by the following properties, depending on the conditions on which the elliptic pre-complex is based in \defref{def:elliptic_pseudo_complex_intro}:
\begin{enumerate}[label=(\roman*), itemsep=0.5em]
\item[$\mathrm{(\NN)}$] For Neumann conditions:
\beq 
\begin{aligned}
&(a) \qquad &&\fbA_{\alpha+1}\fbA_{\alpha} = 0 &&&& \text{identically,} \\
&(b) \qquad &&\fbA_{\alpha+1} = \bA_{\alpha+1} &&&& \text{on } \ker(\fbA_{\alpha}^*|_{\ker B^*_{\alpha}}).
\end{aligned}
\label{eq:lifted_A_intro1} 
\eeq 
\item[$\mathrm{(\DD)}$] For Dirichlet conditions:
\beq 
\begin{aligned}
&(a)\qquad &&\fbA_{\alpha+1}\fbA_{\alpha} = 0 &&&& \text{on } \ker B_{\alpha}, \\
&(b)\qquad &&\fbA_{\alpha+1} = \bA_{\alpha+1} &&&& \text{on } \ker\fbA^*_{\alpha}.
\end{aligned}
\label{eq:lifted_A_intro2} 
\eeq 
\end{enumerate}
\end{theorem}

The lifting of an elliptic pre-complex is carried out inductively: 

At the base of the induction, the characterizations in \eqref{eq:lifted_A_intro1} and \eqref{eq:lifted_A_intro2} dictate that $\fbA_{0}=\bA_{0}$ and $\fbA_{0}^*=\bA_{0}^*$, consistent with the convention $\fbA_{-1}=\bA_{-1}=0$ in the diagram \eqref{eq:elliptic_complex_diagram}.

From the base case onward, the notation $\fbA^*_{\alpha}$ denotes the formal adjoint of $\fbA_{\alpha}$. The adjoint is well-defined because, as a byproduct of the purely functional-analytic characterization in \eqref{eq:lifted_A_intro1} and \eqref{eq:lifted_A_intro2} (which uniquely determines the operators), we show that the lifted operators differ from the original ones by negligible terms within a pseudodifferential calculus for boundary-value problems \cite{Hor03, RS82, Gru90, Gru96}; namely, the \emph{Boutet de Monvel calculus} \cite{Bou71}. In a nutshell, this broader calculus is required because the standard pseudodifferential calculus does not apply to manifolds with boundary. The Boutet de Monvel calculus provides a complete extension that suits our purposes and ensures the lifted operators in \thmref{thm:lifted_complexIntro} admit adjoints. We provide a detailed exposition of this calculus, on its various aspects, in the body of the paper (\secref{sec:per_pseudo}).

The crux of the induction step can then be sketched as follows: when combined with overdetermined ellipticity, Green's formulae, and the normality of the corresponding boundary operators in \defref{def:elliptic_pseudo_complex_intro}, the induction hypothesis yields that $\fbA_{\alpha}$ induces what we call an \emph{auxiliary decomposition}. This takes the form of an $L^{2}$-orthogonal, topologically direct decomposition of Fréchet spaces (focusing, without loss of generality, on the Neumann case):
\[
\Gamma(\bbE_{\alpha+1}) = \image(\fbA_{\alpha})\oplus\ker(\fbA_{\alpha}^*|_{\ker B_{\alpha}^*}),
\]
where the direct summands are, in particular, closed subspaces in the Fréchet topology. The induction step then consists of showing that defining $\fbA_{\alpha+1}$ to be zero on $\image(\fbA_{\alpha})$ and equal to $\bA_{\alpha+1}$ on $\ker(\fbA_{\alpha}^*|_{\ker B_{\alpha}^*})$ indeed produces a system within the calculus, which in turn induces its own auxiliary decomposition. Once the induction is complete, the auxiliary decompositions refine into full Hodge decompositions, much as in the classical theory of \secref{sec:Hodge_intro}.

An important element in the proof is the following: systems within the Boutet de Monvel calculus are characterized by an \emph{order}, which quantifies the total number of derivatives, and a \emph{class}, which measures the number of normal derivatives at the boundary. In our framework, the order-reduction property provides that the ``lifting'' difference
\beq
\fbC_\alpha := \fbA_\alpha - \bA_\alpha
\label{eq:correction_intro}
\eeq
is a term of zero order and class. This is an indispensable part of the theory, from several reasons. First, as mentioned above, it guarantees that the adjoints $\fbA^*_\alpha$ exist and differ from $\bA_\alpha^*$ by a system of order and class zero:
\[
\fbC_{\alpha}^* = \fbA^*_{\alpha} - \bA_{\alpha}^*.
\] 
Consequently, since operators of order and class zero are $L^{2}$-continuous and thus integrate by parts without producing a boundary term, $\fbA_{\alpha}$ inherits the generalized Green's formula:
\beq
\bra \fbA_\alpha\psi,\eta\ket_{L^{2}(M)} = \bra\psi,\fbA_\alpha^*\eta\ket_{L^{2}(M)} + \bra B_\alpha\psi, B^*_\alpha\eta\ket_{L^{2}(\dM)},
\label{eq:generlized_Green_intro}
\eeq
leaving the boundary terms unchanged. 

Second, the fact that the lifting term is of order zero ensures that the perturbed systems corresponding to those in \defref{def:elliptic_pseudo_complex_intro} are also overdetermined elliptic, since semi-Fredholmness (i.e., estimates of the form \eqref{eq:Sobolev_estimate}) is stable under compact perturbations: 
\[
\begin{aligned}
\NN&:\qquad &&\fbA_{\alpha}\oplus\fbA_{\alpha-1}^*\oplus B^*_{\alpha-1} \\
\DD&:\qquad &&\fbA_{\alpha}\oplus\fbA_{\alpha-1}^*\oplus B_{\alpha} 
\end{aligned}
\]
Thus, as overdetermined elliptic systems, they yield approximate left-inverses that are also in the calculus. This will be relied upon heavily in the proof of the induction step, together with the property stating that any operator within the calculus possessing a continuous extension between spaces of $L^{2}$ sections must necessarily be of zero order and class.

Third, it is this specific form and these properties of the lifting terms that allow us to obtain an explicit expression for the cohomology of the lifted complex, as will become clear in the Hodge theories detailed below.

Before presenting the emergent Hodge theories from the lifting, we make a few remarks on the context and scope of this framework:
\begin{enumerate}[label=(\arabic*), itemsep=0.5em]
\item The terminology \emph{lifting} originates in other approaches to generalizing Hodge theory using the Boutet de Monvel calculus, albeit in a manner different from ours; we refer to \cite{KTT07,Wal15,SS19} and \secref{sec:comparison_theory} for a comparison.

\item The inductive structure suggests (as will become apparent in the body of the paper) that one can technically designate a distinct point $\alpha_0\in\Nzero$ in the diagram \eqref{eq:elliptic_pre_complex_diagram_intro} up to which the requirements of an elliptic pre-complex hold. The lifting and its associated constructions can then be carried out strictly up to that level, yielding the notion of an $\alpha_0$-\emph{elliptic pre-complex}. This is merely a technical distinction; for introductory purposes, we state our theorems here for diagrams in which all segments satisfy the full requirements.

\item The theory of elliptic pre-complexes holds verbatim for closed manifolds, where many of its technical aspects relax. This is essentially because, in the boundaryless setting, the Boutet de Monvel calculus reduces to the standard pseudodifferential setting, in which every operator admits an adjoint.  

\item The motivation for studying the theory on manifolds with boundary lies not only in its significantly richer structure, but also in its relevance to actual nonlinear boundary-value problems, as premised in \secref{sec:opening}. 

\item Since the ultimate goal is to address nonlinear problems, we also consider the case where the vector bundles and systems in the diagram \eqref{eq:elliptic_pre_complex_diagram_intro} are parameterized tamely and smoothly by a topological space. This study is technical in nature and builds upon the theory of tame families of linear maps presented by Hamilton in the context of the Nash--Moser inverse function theorem \cite{Ham82}.
\end{enumerate}

\subsubsection{Hodge theory for Neumann conditions}
\label{sec:Hodge_intro_NN}
\thmref{thm:lifted_complexIntro} applied in the $\NN$-case implies the existence of a cochain complex:
\beq
\begin{tikzcd}
\cdots \arrow[r, "\fbA_{\alpha-1}"] &  \Gamma(\bbE_\alpha) \arrow[rr, "\fbA_\alpha"] &  & \Gamma(\bbE_{\alpha+1}) \arrow[rr, "\fbA_{\alpha+1}"] &  & \Gamma(\bbE_{\alpha+2}) \arrow[r, "\fbA_{\alpha+2}"]  & \cdots
\end{tikzcd}
\label{eq:Neumann_complexIntroPro}
\eeq
In analogy with \eqref{eq:Hodge_intro}, this cochain complex gives rise to Hodge decompositions of Neumann type:
\begin{theorem}[Neumann Hodge decomposition]
\label{thm:hodge_like_lifted_complexIntro}
In the setting of \thmref{thm:lifted_complexIntro}, under Neumann conditions, every $\alpha\in\Nzero\cup\BRK{-1}$ yields an $L^2$-orthogonal, topologically direct compound decomposition
\beq
\Gamma(\bbE_{\alpha+1})=
\lefteqn{\overbrace{\phantom{\image(\fbA_\alpha)\oplus \module_{\N}^{\alpha+1}}}^{\ker(\fbA_{\alpha+1})}}
\image(\fbA_\alpha)
\oplus
\underbrace{\module_{\N}^{\alpha+1}\oplus \image(\fbA_{\alpha+1}^*|_{\ker B^*_{\alpha+1}})}_{\ker(\fbA_\alpha^*|_{\ker B^*_\alpha})},
\label{eq:HodgelikesmoothIntroPro}
\eeq
where the finite-dimensional space $\module_{\N}^{\alpha+1} := \ker(\fbA_{\alpha+1}, \fbA^*_\alpha, B^*_\alpha)$ satisfies:
\beq 
\module_{\N}^{\alpha+1} \simeq \ker(\fbA_{\alpha+1})\big/\image(\fbA_{\alpha})
\simeq
\ker(\fbA_{\alpha}^*|_{\ker B_{\alpha}^*})\big/\image(\fbA_{\alpha+1}^*|_{\ker B_{\alpha+1}^*}).
\label{eq:Neuamnn_cohomologyPro}
\eeq 
\end{theorem}
The explicit characterization of the lifting in \eqref{thm:lifted_complexIntro} then allows us to obtain reduced expressions for the adjoint and the cohomology, a distinctive feature of the theory alluded to in \secref{sec:opening}:

\begin{theorem}[Cohomological expressions]
\label{thm:coadjointN}
In the setting of \thmref{thm:hodge_like_lifted_complexIntro}, let 
\[
\cttbP_{\alpha-1} \colon \Gamma(\bbE_\alpha) \to \Gamma(\bbE_\alpha)
\] 
be the $L^2$-orthogonal projection onto $\ker(\fbA_{\alpha-1}^*|_{\ker{B_{\alpha-1}^*}})$, so $\id - \cttbP_{\alpha-1}$ is the projection onto $\image(\fbA_{\alpha-1})$. Then, for every $\alpha\leq \alpha_0$,   
\[
\fbA_\alpha = A_\alpha\cttbP_{\alpha-1} \Textand
\fbA_\alpha^* = \cttbP_{\alpha-1}A_\alpha^*, \qquad \text{on }\ker B_{\alpha}^* 
\]
and the cohomology can be rewritten in the following form:
\beq
\module_{\N}^{\alpha+1} = \ker(A_{\alpha+1}, \cttbP_{\alpha-1}A^*_\alpha, B_{\alpha}^*).
\label{eq:cohomology_groupsDIntroProN}
\eeq
\end{theorem}
We emphasize that the fact that the cohomology modules $\module_{\N}^{\alpha+1}$ can be expressed in terms of the original, non-lifted operators is an important feature of the theory. Without this property, the raw expression for the cohomology would be difficult to interpret. This characterization is a direct consequence of how the lifting is carried out in \thmref{thm:lifted_complexIntro}. 

We also remark that, for $\alpha=-1$ and $\alpha=0$, since $\cttbP_{-2}=0$ and $\cttbP_{-1}=\id$, the cohomologies there are expressible purely in terms of the original operators:
\beq 
\module^0_{\N}=\ker(A_0), \qquad \module^1_{\N}=\ker(A_1,A_0^*,B_0^*).
\label{eq:lower_cohomolgiesN}
\eeq 
In general, although \eqref{eq:cohomology_groupsDIntroProN} implies that it always holds that 
\[
\ker(A_{\alpha+1}, A_\alpha^*, B_{\alpha}^*) \subseteq \module^{\alpha+1}_{\N},
\]
there exist counterexamples to the reverse inclusion. 

A few remarks are now in order, which also apply to the Dirichlet case detailed below:
\begin{enumerate}[label=(\arabic*), itemsep=0.5em]
\item We show that the projections onto the various closed subspaces in \eqref{eq:HodgelikesmoothIntroPro} belong to the Boutet de Monvel calculus, and are of zero order and class as well. This fact allows, via a density and approximation argument, the derivation of $W^{s,p}$-Sobolev versions for every $1 < p < \infty$ and $s \in \Nzero$, in analogy with classical Hodge theory \cite{Sch95b, Tay11a}. For $s = 0$, these decompositions reduce to $L^{p}$-decompositions of section spaces. 

In this context, we remark that the study of $L^{p}$ Hodge decompositions is a subject with an extensive literature \cite{AKM06,HMP08,BB12}, typically treated in the highest functional-analytic generality or for the specific case of first-order operators. Here, we present these decompositions merely as byproducts of the fact that our operators lie in a pseudodifferential calculus that permits such results to follow.

\item As in the classical theory, the fact that the cohomology is expressed in terms of elliptic boundary-value problems can, under certain circumstances, be used to relate it to the underlying geometric structure, for example through unique continuation methods and Bochner techniques (cf.~\cite{PRS08} and \cite[Ch.~9]{Pet16}). Such relations can sometimes be established even in the presence of the non-local projection term in the general expression \eqref{eq:cohomology_groupsDIntroProN}.

We shall provide below, in \secref{sec:examples_intro}, some examples of this maneuver in connection with the examples laid out in \secref{sec:opening}. A much more profound realization of this undertaking is provided in our separate work \cite{Led25B}.

\item From the perspective of index theory, if a family of elliptic pre-complexes parameterized continuously by a topological space is \emph{finite}---in the sense that $\bA_{\alpha} = 0$ for $\alpha$ sufficiently large---it is shown that the \emph{Neumann Euler characteristic}  
\[
\mathscr{X}_{\N} = \sum_{\alpha} (-1)^{\alpha} \dim \module_{\N}^{\alpha}
\]
is constant as the parameter varies continuously.

\item We remark that, in this prototypical formulation, the Neumann version of the theory of elliptic pre-complexes was already developed to some extent in the preliminary study \cite{KL23}, although not under this terminology. The Dirichlet setting, however, was not treated there, nor was the Neumann theory developed as fully as it is here. The present paper therefore both completes and further generalizes this theory, treating the Dirichlet and Neumann settings together in order to demonstrate that they share essentially the same analytical heart, as in the theory of classical elliptic complexes.
\end{enumerate}
\subsubsection{Hodge theory for Dirichlet conditions}
\label{sec:Hodge_intro_DD}
Establishing the Hodge decompositions in the Dirichlet case follows lines similar to those in the Neumann case, with the necessary adaptations to account for the boundary kernels being absorbed into the range of the forward sequence rather than into that of its adjoint.

The starting point is the observation, arising as a byproduct of \thmref{thm:lifted_complexIntro} and the order-reduction property in the $\DD$ case \eqref{eq:D_boundary_reduction}, that one in fact has,
\[
\image(\fbA_{\alpha}|_{\ker B_{\alpha}}) \subseteq \ker B_{\alpha+1}.
\]
Hence, defining $\Gamma_{\D}(\bbE_\alpha) := \Gamma(\bbE_\alpha) \cap \ker B_\alpha$, we obtain the cochain complex (generalizing the sequence \eqref{eq:Hodge_complex} from Hodge theory):
\beq
\begin{tikzcd}
\cdots \arrow[r, "\fbA_{\alpha-1}|_{\ker B_{\alpha-1}}"] & \Gamma_{\D}(\bbE_\alpha) \arrow[rr, "\fbA_\alpha|_{\ker B_{\alpha}}"] &  & \Gamma_{\D}(\bbE_{\alpha+1}) \arrow[rr, "\fbA_{\alpha+1}|_{\ker B_{\alpha+1}}"] &  & \Gamma_{\D}(\bbE_{\alpha+2}) \arrow[r, "\fbA_{\alpha+2}|_{\ker B_{\alpha+2}}"] &  \cdots
\end{tikzcd}
\label{eq:Dirichlet_cochain_omplexIntroPro}
\eeq
As shall be explained in \secref{sec:examples_intro}, all the examples in \secref{sec:opening} fall within this pattern. In analogy with \eqref{eq:Hodge_intro_dirichlet}, we have:
\begin{theorem}[Dirichlet Hodge decomposition]
\label{thm:hodge_like_lifted_complexDIntro}
In the setting of \thmref{thm:lifted_complexIntro}, under Dirichlet conditions, every $\alpha\in\Nzero\cup\BRK{-1}$ yields an $L^2$-orthogonal, topologically direct compound decomposition
\beq
\Gamma(\bbE_{\alpha+1})=
\lefteqn{\overbrace{\phantom{\image(\fbA_\alpha|_{\ker B_\alpha})\oplus \module_{\D}^{\alpha+1}}}^{\ker(\fbA_{\alpha+1}|_{\ker B_{\alpha+1}})}}
\image(\fbA_\alpha|_{\ker B_\alpha})
\oplus
\underbrace{\module_{\D}^{\alpha+1}\oplus \image(\fbA_{\alpha+1}^*)}_{\ker(\fbA_\alpha^*)},
\label{eq:HodgelikesmoothDIntroPro}
\eeq
where the finite-dimensional space $\module_{\D}^{\alpha+1} := \ker(\fbA_{\alpha+1}, \fbA^*_\alpha, B_{\alpha+1})$ satisfies:
\[
\module_{\D}^{\alpha+1} \simeq \ker(\fbA_{\alpha+1}|_{\ker B_{\alpha+1}})\big/\image(\fbA_{\alpha}|_{\ker B_{\alpha}})
\simeq
\ker(\fbA^*_{\alpha})\big/\image(\fbA^*_{\alpha+1}).
\]
\end{theorem}

As in the Neumann case, the projections onto the various closed subspaces in \eqref{eq:HodgelikesmoothDIntroPro} belong to the pseudodifferential calculus. Hence, $W^{s,p}$-Sobolev versions hold for every $1<p<\infty$ and $s\in\Nzero$. 

Also, just as in the Neumann case, an important feature is that the cohomology modules reduce to an expression expressible in terms of the original operators: 

\begin{theorem}
\label{thm:coadjointD}
In the setting of \thmref{thm:hodge_like_lifted_complexDIntro}, let 
\[
\cttbP_{\alpha-1} \colon \Gamma(\bbE_\alpha) \to \Gamma(\bbE_\alpha)
\] 
be the $L^2$-orthogonal projection onto $\ker(\fbA_{\alpha-1}^*)$, so $\id - \cttbP_{\alpha-1}$ is the projection onto $\image(\fbA_{\alpha-1}|_{\ker B_{\alpha-1}})$. Then, for every $\alpha\leq \alpha_0$,  
\[
\fbA_\alpha = A_\alpha\cttbP_{\alpha-1} \Textand
\fbA_\alpha^* = \cttbP_{\alpha-1}A_\alpha^*,
\]
and the cohomology can be rewritten in the following form:
\beq
\module_{\D}^{\alpha+1} = \ker(A_{\alpha+1}, \cttbP_{\alpha-1}A^*_\alpha, B_{\alpha+1}).
\label{eq:cohomology_groupsDIntroPro}
\eeq
\end{theorem}
As in the Neumann case, for $\alpha=-1$ and $\alpha=0$, the cohomologies are expressible purely in terms of the original operators: 
\beq 
\module^0_{\D}=\ker(A_0,B_0), \qquad \module^1_{\D}=\ker(A_1,A_0^*,B_1).
\label{eq:lower_cohomolgies}
\eeq 
In general, although \eqref{eq:cohomology_groupsDIntroPro} implies that it always holds that 
\[
\ker(A_{\alpha+1}, A_\alpha^*, B_{\alpha+1}) \subseteq \module^{\alpha+1}_{\D},
\]
there exist counterexamples to the reverse inclusion. 

Similarly to the Neumann setting, if the elliptic pre-complex is finite and parameterized continuously by a topological space, then the \emph{Dirichlet Euler characteristic} 
\beq
\mathscr{X}_{\D}=\sum_{\alpha}(-1)^{\alpha}\dim{\module_{\D}^{\alpha}} 
\label{eq:D_Euler}
\eeq
is constant as the parameter varies continuously. 

\subsubsection{Cohomological formulations}
\label{sec:cohomological_formulation}
Using techniques similar to those in \cite{Sch95b,KL23}, one can use these Hodge decompositions to establish solvability and uniqueness for full non-homogeneous linear boundary-value problems associated with the systems in the lifted complex $(\fbA_{\bullet})$.  

For conciseness, and since our main aim is to obtain cohomological formulations for boundary-value problems involving the original operators in the elliptic pre-complex, we restrict attention here to homogeneous gauge and boundary conditions. Applications of these results to geometric problems, in the spirit of \secref{sec:opening}, are again deferred to \secref{sec:examples_intro}.

For Neumann conditions:
\begin{theorem}[Neumann cohomological formulation -- Prototypical]
\label{thm:NNintroPro}
Let $(\bA_{\bullet})$ be an elliptic pre-complex based on Neumann conditions. 
Given $\eta \in \Gamma(\bbE_{\alpha+1})$, the system
\[
\begin{aligned} 
&\bA_\alpha \psi = \eta,  && \qquad \text{in } M, \\
&B^*_{\alpha-1}\psi = 0 && \qquad \text{on } \dM
\end{aligned} 
\]
admits a solution $\psi \in \Gamma(\bbE_\alpha)$ satisfying $\fbA_{\alpha-1}^*\psi=0$ if and only if
\[
\fbA_{\alpha+1}\eta = 0, 
\qquad 
\eta \perp_{L^2} \module_{\N}^{\alpha+1}.
\]
Moreover, any solution satisfying $\fbA_{\alpha-1}^*\psi=0$ is unique modulo $\module_{\N}^{\alpha}$.
\end{theorem}

The proof follows directly from the decomposition \eqref{eq:HodgelikesmoothIntroPro}, using the relations $\fbA_{\alpha+1} \fbA_{\alpha} = 0$ and $\fbA_{\alpha} = \bA_{\alpha}$ on $\ker(\fbA_{\alpha-1}^*|_{\ker B^*_{\alpha-1}})$, together with the $L^{2}$-orthogonality of the decomposition. A corresponding Sobolev version is also available, obtained from the Sobolev versions of the Hodge decompositions, which remain $L^{2}$-orthogonal throughout the Sobolev grading.

The analog of the cohomological formulation for Dirichlet conditions is: 
\begin{theorem}[Dirichlet cohomological formulation -- Prototypical]
\label{thm:DDintroPro}
Let $(\bA_{\bullet})$ be an elliptic pre-complex based on Dirichlet conditions. 
Given $\eta \in \Gamma(\bbE_{\alpha+1})$, the system
\[
\begin{aligned} 
&\bA_\alpha \psi = \eta, && \qquad \text{in } M, \\
&B_\alpha\psi = 0 &&\qquad  \text{on } \dM
\end{aligned} 
\]
admits a solution $\psi \in \Gamma(\bbE_\alpha)$ satisfying $\fbA_{\alpha-1}^*\psi=0$ if and only if: 
\[
\fbA_{\alpha+1}\eta = 0, 
\qquad 
B_{\alpha+1} \eta = 0, 
\qquad 
\eta \perp_{L^2} \module_{\D}^{\alpha+1}.
\]
Moreover, any solution satisfying $\fbA_{\alpha-1}^*\psi=0$ is unique modulo $\module_{\D}^{\alpha}$.
\end{theorem}

The proof of \thmref{thm:DDintroPro} follows directly from the decompositions in \thmref{thm:hodge_like_lifted_complexDIntro}, invoking the relations $(\fbA_{\alpha+1}\oplus B_{\alpha+1})\fbA_{\alpha}=0$ on $\ker B_{\alpha}$ and $\fbA_{\alpha}=\bA_{\alpha}$ on $\ker(\fbA_{\alpha-1}^*)$. An implied Sobolev version is also valid.

\subsection{Elliptic pre-complexes: General setting} 
\label{sec:general_intro} 
Building on the prototypical setting, we now broaden our scope to encompass more general systems within the Boutet de Monvel calculus. Indeed, in geometric analysis, one encounters not just differential operators, but mappings from boundary sections to interior sections (e.g., in geometric inverse problems \cite{PSU23}), as well as non-local mappings between interior sections.

Because our main results in \secref{sec:main_results_intro} already rely on the Boutet de Monvel calculus, it is natural to develop this generalization within that framework. Accordingly, we recall the specifics of this calculus of pseudodifferential boundary-value problems more closely. 

Given vector bundles $\bbE, \bbF \to M$ and $\bbJ, \bbG \to \dM$ over the interior and boundary, respectively, the building blocks of the Boutet de Monvel calculus are \emph{Green operators}. These are linear systems taking the following matrix form:
\beq
\bD = \begin{pmatrix}
A_+ + G & K \\ 
T & Q 
\end{pmatrix}: 
\begin{matrix}\Gamma(\bbE) \\ \oplus \\ \Gamma(\bbJ)\end{matrix} \longrightarrow \begin{matrix}\Gamma(\bbF) \\ \oplus \\ \Gamma(\bbG)\end{matrix}.
\label{eq:Boutet_de_movel_intro} 
\eeq

The classes of non-local operators to which $A_+, G, K, T, Q$ belong are defined such that Green operators form an algebra closed under composition, adjunction, and inversion where applicable. We note that on a manifold with an empty boundary, $K, G, T$, and $Q$ are trivial, and $A_+=A$ reduces to a standard pseudodifferential operator.

Most prominently, the calculus extends to include systems in which $A_+, G, K, T$, and $Q$ have varying orders. These are referred to as \emph{Douglis--Nirenberg systems} \cite{DN55,RS82,Gru90,Kha23}, which are encountered throughout geometry and analysis, as demonstrated in the examples section below.  

To handle these systems more conveniently, we introduce the shorthand notation:
\[
\Gamma(\bbE;\bbJ) = \Gamma(\bbE) \oplus \Gamma(\bbJ)
\]
and denote this by $\Gamma(0;\bbJ)$ when $\bbE=\{0\}\times M$, or simply by $0$ if, in addition, $\bbJ=\{0\}\times\dM$.  

Consider now a diagram of Douglis--Nirenberg systems generalizing \eqref{eq:elliptic_complex_diagram}:
\beq
\begin{xy}
(-30,0)*+{0}="Em1";
(0,0)*+{\Gamma(\bbF_0;\bbG_0)}="E0";
(30,0)*+{\Gamma(\bbF_1;\bbG_{1})}="E1";
(60,0)*+{\Gamma(\bbF_2;\bbG_{2})}="E2";
(90,0)*+{\Gamma(\bbF_3;\bbG_{3})}="E3";
(101,0)*+{\cdots}="E4";
(-30,-25)*+{0}="Gm1";
(0,-25)*+{\Gamma(0;\bbL_0)}="G0";
(30,-25)*+{\Gamma(0;\bbL_1)}="G1";
(60,-25)*+{\Gamma(0;\bbL_2)}="G2";
(90,-25)*+{\Gamma(0;\bbL_3)}="G3";
(100,-25)*+{\cdots}="G4";
{\ar@{->}@/^{1pc}/^{\bD_{0}}"E0";"E1"};
{\ar@{->}@/^{1pc}/^{\bD_{0}^*}"E1";"E0"};
{\ar@{->}@/^{1pc}/^{\bD_{1}}"E1";"E2"};
{\ar@{->}@/^{1pc}/^{\bD_{1}^*}"E2";"E1"};
{\ar@{->}@/^{1pc}/^{\bD_{2}}"E2";"E3"};
{\ar@{->}@/^{1pc}/^{\bD_{2}^*}"E3";"E2"};
{\ar@{->}@/^{1pc}/^{0}"Em1";"E0"};
{\ar@{->}@/^{1pc}/^{0}"E0";"Em1"};
{\ar@{->}@/_{0pc}/^{\bB_0}"E0";"G0"};
{\ar@{->}@/_{0pc}/^{\bB_1}"E1";"G1"};
{\ar@{->}@/_{0pc}/^{\bB_2}"E2";"G2"};
{\ar@{->}@/_{0pc}/^{\bB_{3}}"E3";"G3"};
{\ar@{->}@/^{0pc}/^{0}"Em1";"Gm1"};
{\ar@{->}@/^{0.8pc}/^{\bB_{0}^*}"E1";"G0"};
{\ar@{->}@/^{0.8pc}/^{\bB_{1}^*}"E2";"G1"};
{\ar@{->}@/^{0.8pc}/^{\bB_{2}^*}"E3";"G2"};
{\ar@{->}@/^{0.8pc}/^{0}"E0";"Gm1"};
\end{xy}
\label{eq:elliptic_pre_complex_diagram_intro}
\eeq

where the notation $\Gamma(0;\bbL_{\alpha})$ on the range of $\bB_{\alpha}$ indicates that these boundary systems are permitted to take the form:
\beq
\begin{pmatrix} 0 & 0 \\  T & Q\end{pmatrix}.
\label{eq:system_boundary_intro} 
\eeq 
We remark that the diagram \eqref{eq:elliptic_complex_diagram} for elliptic complexes, as well as the prototypical setting introduced in \secref{sec:proto_intro}, can indeed be recovered from this more general diagram by substituting:  
\[
\bbF_{\alpha} = \bbE_{\alpha}, \qquad \bbG_{\alpha} = \{0\} \times \dM, \qquad \bbL_{\alpha} = \bbJ_{\alpha},
\]
and  
\beq
\bD_\alpha = \begin{pmatrix}
A_\alpha & 0 \\ 0 & 0 
\end{pmatrix}, \qquad 
\bD^*_\alpha = \begin{pmatrix}
A_\alpha^* & 0 \\ 0 & 0 
\end{pmatrix}, \qquad 
\bB_\alpha = \begin{pmatrix}
0 & 0 \\ B_\alpha & 0 
\end{pmatrix}, \qquad 
\bB^*_\alpha = \begin{pmatrix}
0 & 0 \\ B^*_\alpha & 0 
\end{pmatrix}.
\label{eq:particular_case_intro}
\eeq
We now outline how the various components of an elliptic pre-complex (\defref{def:elliptic_pseudo_complex_intro}) generalize to such diagrams. Unlike the prototypical \defref{def:elliptic_pseudo_complex_intro}, the precise definition in full generality requires further technical setup, which is provided in \chapref{chp:elliptic_pre_complexes}.

\subsubsection{Generalized Green's formulae and normality}
\label{sec:Generalized_green_intro2} 
Generalizing \eqref{eq:Green_elliptic_intro} and \defref{def:elliptic_pseudo_complex_intro}, the boundary system $\bB_\alpha$ is required to be \emph{normal}---that is, it is surjective and its kernel is dense in the $L^{2}$ topology \cite[Ch.~1.4]{Gru96}---and, together with $\bB_{\alpha}^*$, satisfies the following generalized Green's formula for all $\Psi \in \Gamma(\bbF_\alpha;\bbG_\alpha)$ and $\Theta \in \Gamma(\bbF_{\alpha+1};\bbG_{\alpha+1})$:  
\beq
\langle \bD_\alpha \Psi, \Theta \rangle_{L^{2}(\bbF_{\alpha+1};\bbG_{\alpha+1})} = \langle \Psi, \bD^*_\alpha \Theta \rangle_{L^{2}(\bbF_{\alpha};\bbG_{\alpha})} + \langle \bB_\alpha \Psi, \bB^*_\alpha \Theta \rangle_{L^{2}(0;\bbL_{\alpha})}.
\label{eq:Green_formula_intro}
\eeq
In this setting, $\bD_{\alpha}$ is referred to as an \emph{adapted Green system}, and $\bD^*_\alpha$ is its \emph{adapted adjoint}. 

We emphasize that, although we abuse notation, $\bD^*_\alpha$ and $\bB^*_{\alpha}$ are generally not the formal $L^2$-adjoints of $\bD_\alpha$ and $\bB_{\alpha}$, because true $L^2$-adjoints may not even exist. This is because the systems $\bD_\alpha$ and $\bB_{\alpha}$ are generally of nonzero class (e.g., when including differential operators in their bottom-left entry in \eqref{eq:Boutet_de_movel_intro}), which is an obstruction to the existence of such an adjoint within the calculus \cite[Ch.~1.2-1.3]{Gru96}. The existence of an \emph{adapted adjoint} $\bD^*_\alpha$, even for systems of nonzero class, is therefore made possible by allowing $\bB_\alpha$ and $\bB^*_\alpha$ to have a nonzero bottom-right entry as in \eqref{eq:system_boundary_intro}, unlike standard boundary operators that typically appear in Green's formulae.

When convenient, we refer to the diagram \eqref{eq:elliptic_pre_complex_diagram_intro} compactly as $(\bD_{\bullet})$, since the adapted adjoints and boundary systems are effectively determined by the primary sequence $(\bD_{\alpha})_{\alpha \in \Nzero}$.

\subsubsection{Overdetermined ellipticity} 
\label{sec:overdetermined_ellipticity_intro2} 
We again require \emph{overdetermined ellipticity}, now interpreted in the varying-order, or Douglis--Nirenberg, sense \cite{RS82, Gru90}.

As in the prototypical setting, our theory accommodates two types of elliptic pre-complexes, based on Neumann ($\NN$) or Dirichlet ($\DD$) conditions:
\beq
\begin{aligned}
&\NN:\qquad &&\bD_\alpha \oplus \bD^*_{\alpha-1} \oplus \bB^*_{\alpha-1} \qquad &&&&\text{is overdetermined elliptic}, \\
&\DD:\qquad &&\bD_\alpha \oplus \bD^*_{\alpha-1} \oplus \bB_\alpha \qquad &&&&\text{is overdetermined elliptic}.
\end{aligned}
\label{eq:overdetermined_intro}
\eeq

The overdetermined ellipticity of fully varying-order systems is most transparently understood via the semi-Fredholmness of their continuous Sobolev extensions, detailed in \secref{sec:overdetermined_douglas_nirenberg}. Validating this semi-Fredholmness is, however, more intricate than in the prototypical setting because Douglis--Nirenberg systems often lack a set of distinct, ``sharp'' uniform orders. Consequently, the system $\fbD_{\alpha}$ (and its adapted adjoint) may not be representable as a direct sum of operators with uniform orders, precluding the systematic identification of orders in \secref{sec:overdetermined_ellipticity_intro}.

The choice of specific orders for each component in a Douglis--Nirenberg system to allow a priori estimates of the form \eqref{eq:Sobolev_estimate} is classically known as the choice of \emph{weights}~\cite{DN55}. In \secref{sec:overdetermined_douglas_nirenberg}, we design a framework to systematically establish these a priori estimates by categorizing distinct classes of tuples of orders and classes for a given Douglis--Nirenberg system $\bD$. 

Addressing the definition, identification, and manipulation of these tuples is a central technical challenge of this work, ultimately allowing the resulting Hodge theories to assume the clean forms detailed below.

\subsubsection{Disrupted elliptic pre-complexes}
\label{sec:disrupted_intro} 
Technically speaking, there may be instances where overdetermined ellipticity fails on certain segments of the diagram \eqref{eq:elliptic_pre_complex_diagram_intro}, yet the machinery behind elliptic pre-complexes still applies, with most results remaining valid. We refer to such diagrams as \emph{disrupted elliptic pre-complexes}. They merit particular attention, as they arise concretely in some geometric examples (e.g., in the Ricci curvature equations; cf.~\cite{Led25B}). However, since their introduction is technical rather than fundamental, and in order to keep this exposition concise, we defer the detailed discussion to the examples section and the main body of the text (\secref{sec:variants}).
 
\subsubsection{The order-reduction property}
\label{sec:order-reduction_intro2} 
As introduced in \secref{sec:order-reduction_intro}, the rigid condition that $(\bD_{\bullet})$ forms a cochain complex is replaced by the \emph{order-reduction property}. Conceptually, as in \defref{def:elliptic_pseudo_complex_intro}, this amounts to the algebraic constraint that the composition is of lower order and class than nominally expected:
\beq
\begin{split}
&\operatorname{ord}(\bD_\alpha\bD_{\alpha-1}) \leq \operatorname{ord}(\bD_{\alpha-1}),\\
&\operatorname{class}(\bD_\alpha\bD_{\alpha-1}) \leq \operatorname{class}(\bD_{\alpha-1}).
\end{split} 
\label{eq:order_reduction_intro}
\eeq

More generally, however, the actual comparison of orders is delicate, since the operators composing the systems may have varying order, marking another technical challenge addressed in the main text.

For Dirichlet conditions, we also require the following order-reduction property on the boundary:
\beq
\DD:\qquad\bB_{\alpha}\bD_{\alpha-1}=0 \quad \text{ on } \ker\bB_{\alpha-1}.
\label{eq:D_boundary_reduction} 
\eeq 

Our previous remark holds verbatim: the order-reduction properties \eqref{eq:order_reduction_intro} and \eqref{eq:D_boundary_reduction} cannot generally be deduced via symbolic calculus alone. Indeed, this is further exacerbated by the fact that the varying orders of the systems make the symbol calculus a significantly more delicate apparatus, as alluded to above.  

\subsection{Main result: The lifted complex (General setting)}
Our main theorem cleanly generalizes \thmref{thm:lifted_complexIntro} to the abstract diagrams of \eqref{eq:elliptic_pre_complex_diagram_intro}. For the statement, we define the following spaces associated with any adapted Green system:
\[
\begin{aligned}
&\scrR(\bD) := \image \bD,                         &&\qquad \scrN(\bD) := \ker \bD, \\
&\scrR(\bD; \bB) := \image \left.\bD\right|_{\ker \bB},  &&\qquad \scrN(\bD, \bB) := \ker(\bD, \bB). 
\end{aligned}
\]

\begin{theorem}[Lifted complex -- General setting]
\label{thm:lifted_complexIntro2}
Every elliptic pre-complex $(\bD_{\bullet})$ induces a sequence of continuous linear maps of Fréchet spaces
\[
\fbD_{\alpha}:\Gamma(\bbF_{\alpha};\bbG_{\alpha}) \;\rightarrow\; \Gamma(\bbF_{\alpha+1};\bbG_{\alpha+1}),
\]
\emph{uniquely} characterized by the following properties:
\begin{enumerate}[label=(\roman*), itemsep=0.5em]
\item[$\mathrm{(\NN)}$] For Neumann conditions:
\begin{enumerate}[label=(\alph*), itemsep=0pt]
\item $\scrR(\fbD_{\alpha}) \subseteq \scrN(\fbD_{\alpha+1})$.
\item $\fbD_{\alpha+1} = \bD_{\alpha+1}$ on $\scrN(\fbD_{\alpha}^*, \bB^*_{\alpha})$.
\end{enumerate}
\item[$\mathrm{(\DD)}$] For Dirichlet conditions:
\begin{enumerate}[label=(\alph*), itemsep=0pt]
\item $\scrR(\fbD_{\alpha}; \bB_{\alpha}) \subseteq \scrN(\fbD_{\alpha+1})$.
\item $\fbD_{\alpha+1} = \bD_{\alpha+1}$ on $\scrN(\fbD^*_{\alpha})$.
\end{enumerate}
\end{enumerate}
As a byproduct of this characterization, the unique mappings $\fbD_{\alpha}$ are adapted Green systems, differing from the original systems by terms of zero order and zero class within the calculus. We refer to the resulting sequence collectively as $(\fbD_{\bullet})$, and call it \emph{the lifted complex} induced by $(\bD_{\bullet})$.
\end{theorem}

The discussion on the lifted complex following \thmref{thm:lifted_complexIntro}, alongside the results for the associated Hodge theories, holds essentially verbatim in this abstract setting, provided the relevant spaces are appropriately adapted. In fact, in the main body of the text, it is \thmref{thm:lifted_complexIntro2} that we prove directly, subsequently recovering \thmref{thm:lifted_complexIntro} and its corollaries via the substitutions \eqref{eq:particular_case_intro} (see \secref{sec:dirichelt_special_case} for the explicit translation).

In the case of \thmref{thm:lifted_complexIntro2}, executing the lifting procedure and establishing the general Hodge theory requires navigating through numerous subtleties arising from the arbitrary nature of the systems; addressing these technicalities is one of the main undertakings of this work.

\subsubsection{Hodge theories}
For the full statements of the Hodge theorems in this generality, see \thmref{thm:hodge_like_corrected_complex} and \thmref{thm:hodge_like_corrected_complexD} in the body of the text. At this stage, we provide a quick survey.
 
In the notation of \thmref{thm:lifted_complexIntro2}, under Neumann conditions, the resulting Hodge decomposition reads:
\beq
\Gamma(\bbF_{\alpha+1};\bbG_{\alpha+1})=
\lefteqn{\overbrace{\phantom{\scrR(\fbD_\alpha)\oplus \module_{\N}^{\alpha+1}}}^{\scrN(\fbD_{\alpha+1})}}
\scrR(\fbD_\alpha)
\oplus
\underbrace{\module_{\N}^{\alpha+1}\oplus \scrR(\fbD_{\alpha+1}^*;\bB^*_{\alpha+1})}_{\scrN(\fbD_\alpha^*,\bB^*_\alpha)},
\label{eq:HodgelikesmoothIntroGen}
\eeq
whereas under Dirichlet conditions, it reads:
\beq
\Gamma(\bbF_{\alpha+1};\bbG_{\alpha+1})=
\lefteqn{\overbrace{\phantom{\scrR(\fbD_\alpha;\bB_\alpha)\oplus \module_{\D}^{\alpha+1}}}^{\scrN(\fbD_{\alpha+1},\bB_{\alpha+1})}}
\scrR(\fbD_\alpha;\bB_\alpha)
\oplus
\underbrace{\module_{\D}^{\alpha+1}\oplus \scrR(\fbD_{\alpha+1}^*)}_{\scrN(\fbD_\alpha^*)}.
\label{eq:HodgelikesmoothDIntroGen}
\eeq
Moreover, generalizing the prototypical settings, the cohomology modules reduce to expressions involving the original systems in the diagram \eqref{eq:elliptic_pre_complex_diagram_intro}, as stated in \thmref{thm:coadjointN} and \thmref{thm:coadjointD}:
\beq
\module_{\N}^{\alpha+1}=\ker(\bD_{\alpha+1},\ttbP_{\alpha-1}\bD^*_\alpha,\bB^*_\alpha), \qquad \module_{\D}^{\alpha+1}=\ker(\bD_{\alpha+1},\ttbP_{\alpha-1}\bD^*_\alpha,\bB_{\alpha+1}),
\label{eq:cohomology_groupsIntro}
\eeq
where $\tbP_{\alpha-1}$ denotes the corresponding $L^{2}$ projection onto $\scrN(\fbD_{\alpha-1}^*)$ (resp.\ $\scrN(\fbD_{\alpha-1}^*;\bB_{\alpha-1}^*)$) in the decompositions \eqref{eq:HodgelikesmoothIntroGen} and \eqref{eq:HodgelikesmoothDIntroGen}. We reiterate that the specific Hodge theories detailed in \secref{sec:Hodge_intro_NN} and \secref{sec:Hodge_intro_DD} follow directly from this general framework through the substitutions \eqref{eq:particular_case_intro}.

Similarly, we state the resulting cohomological formulations for the associated boundary-value problems. In the statements, note that since the systems in the diagram \eqref{eq:elliptic_pre_complex_diagram_intro} mix boundary and interior sections, there is no longer a distinction between data prescribed on $M$ and data prescribed on $\dM$, unlike in the statements of the prototypical \thmref{thm:DDintroPro} and \thmref{thm:NNintroPro}. Instead, data are prescribed directly on the primary systems $\bD_{\alpha}$, while the homogeneous conditions on $\fbD_{\alpha-1}^*$ and the boundary systems serve as supplementary constraints enabling solvability and uniqueness:
\begin{theorem}[Neumann cohomological formulation -- General setting]
\label{thm:NNintro}
Let $(\bD_{\bullet})$ be an elliptic pre-complex based on Neumann conditions. 
Given $\Theta \in \Gamma(\bbF_{\alpha+1};\bbG_{\alpha+1})$, the boundary-value problem
\[
\begin{gathered} 
\bD_\alpha \Psi = \Theta, \\\fbD_{\alpha-1}^*\Psi= 0, \qquad \bB^*_{\alpha-1}\Psi = 0
\end{gathered}
\]
admits a solution $\Psi \in \Gamma(\bbF_\alpha;\bbG_\alpha)$ if and only if: 
\[
\fbD_{\alpha+1}\Theta = 0, 
\qquad 
\Theta \perp_{L^2} \module_{\N}^{\alpha+1}.
\]
The solution is unique modulo $\module_{\N}^{\alpha}$.
\end{theorem}

The analog for Dirichlet conditions is: 
\begin{theorem}[Dirichlet cohomological formulation -- General setting]
\label{thm:DDintro}
Let $(\bD_{\bullet})$ be an elliptic pre-complex based on Dirichlet conditions. 
Given $\Theta \in \Gamma(\bbF_{\alpha+1};\bbG_{\alpha+1})$, the boundary-value problem
\[
\begin{gathered} 
\bD_\alpha \Psi = \Theta, \\\fbD_{\alpha-1}^*\Psi = 0, \qquad \bB_\alpha\Psi = 0
\end{gathered}
\]
admits a solution $\Psi \in \Gamma(\bbF_\alpha;\bbG_\alpha)$ if and only if: 
\[
\fbD_{\alpha+1}\Theta = 0, 
\qquad 
\bB_{\alpha+1} \Theta = 0, 
\qquad 
\Theta \perp_{L^2} \module_{\D}^{\alpha+1}.
\]
The solution is unique modulo $\module_{\D}^{\alpha}$.
\end{theorem}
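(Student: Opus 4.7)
The plan is to mirror the proof sketch outlined for the Neumann version \thmref{thm:NNintro} but to replace the Neumann Hodge decomposition by its Dirichlet counterpart \eqref{eq:HodgelikesmoothDIntro}, together with the defining properties (ii)(a) and (ii)(b) of the corrected complex from \thmref{thm:corrected_complexIntro}. Throughout, the key mechanism is that (ii)(b) converts statements about the corrected operator $\fbD_\alpha$ into statements about the original operator $\bD_\alpha$ precisely on the subspace singled out by the gauge condition $\fbD_{\alpha-1}^*\Psi=0$.

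For the necessity direction, assume $\Psi$ is a solution satisfying both the boundary condition $\bB_\alpha\Psi=0$ and the gauge $\fbD_{\alpha-1}^*\Psi=0$. Since $\Psi\in\scrN(\fbD_{\alpha-1}^*)$, property (ii)(b) gives $\fbD_\alpha\Psi=\bD_\alpha\Psi=\Theta$. Applying $\fbD_{\alpha+1}$ and using the cochain property $\fbD_{\alpha+1}\fbD_\alpha=0$ yields $\fbD_{\alpha+1}\Theta=0$; using property (ii)(a), the fact that $\Psi\in\ker\bB_\alpha$ gives $\fbD_\alpha\Psi\in\scrN(\fbD_{\alpha+1},\bB_{\alpha+1})$, hence $\bB_{\alpha+1}\Theta=0$; and the generalized Green's formula \eqref{eq:generlized_Green_intro}, applied against any $\Phi\in\module_\D^{\alpha+1}$, kills both the interior term (since $\fbD_\alpha^*\Phi=0$) and the boundary term (since $\bB_\alpha\Psi=0$), giving $\Theta\perp_{L^2}\module_\D^{\alpha+1}$.

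For sufficiency, I would invoke \eqref{eq:HodgelikesmoothDIntro} twice. First, at level $\alpha+1$, the three hypotheses on $\Theta$ place it in $\scrN(\fbD_{\alpha+1},\bB_{\alpha+1})$ and orthogonal to $\module_\D^{\alpha+1}$, so $\Theta\in\scrR(\fbD_\alpha;\bB_\alpha)$ and some $\Psi'\in\Gamma(\bbF_\alpha;\bbG_\alpha)$ with $\bB_\alpha\Psi'=0$ satisfies $\fbD_\alpha\Psi'=\Theta$. To adjust $\Psi'$ to the gauge, apply \eqref{eq:HodgelikesmoothDIntro} at level $\alpha$ to write $\Psi'=\fbD_{\alpha-1}\eta+\Psi_2+\fbD_\alpha^*\xi$ with $\bB_{\alpha-1}\eta=0$ and $\Psi_2\in\module_\D^\alpha$, and set $\Psi:=\fbD_\alpha^*\xi\in\scrR(\fbD_\alpha^*)\subseteq\scrN(\fbD_{\alpha-1}^*)$. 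The first two summands are annihilated by $\fbD_\alpha$ (by the cochain property and the definition of $\module_\D^\alpha$) and by $\bB_\alpha$ (by property (ii)(a) applied to $\fbD_{\alpha-1}\eta$ and by $\Psi_2\in\module_\D^\alpha\subseteq\ker\bB_\alpha$), so $\fbD_\alpha\Psi=\Theta$ and $\bB_\alpha\Psi=0$. Since $\Psi\in\scrN(\fbD_{\alpha-1}^*)$, property (ii)(b) promotes $\fbD_\alpha\Psi=\Theta$ to $\bD_\alpha\Psi=\Theta$, as required.

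Uniqueness follows from the same promotion: the difference of two solutions lies in $\scrN(\fbD_{\alpha-1}^*)$, so by (ii)(b) it belongs to $\ker(\fbD_\alpha\oplus\fbD_{\alpha-1}^*\oplus\bB_\alpha)=\module_\D^\alpha$ by \eqref{eq:cohomology_groupsDIntro}. No single step here is technically demanding once \thmref{thm:hodge_like_corrected_complexDIntro} is in hand; the only point that requires genuine care is the coordinated double application of the Hodge decomposition, which is the standard device for simultaneously extracting a solution from the image and projecting onto the gauge-fixing summand. A Sobolev version of the statement is immediate from the $W^{s,p}$-analogues of the Dirichlet Hodge decomposition noted after \thmref{thm:hodge_like_corrected_complexDIntro}.
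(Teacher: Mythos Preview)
Your proof is correct and follows essentially the same route as the paper: the paper states that the result ``follows directly from the decomposition \eqref{eq:HodgelikesmoothDIntro}, invoking the relations $\fbD_{\alpha+1}\fbD_{\alpha}=0$ on $\ker\bB_{\alpha}$ along with $\fbD_{\alpha}=\bD_{\alpha}$ on $\scrN(\fbD_{\alpha-1}^*)$,'' which is exactly your two-step use of the compound Hodge decomposition together with properties (ii)(a)--(ii)(b). One small point of phrasing: in the Dirichlet setting the identity $\fbD_{\alpha+1}\fbD_{\alpha}=0$ holds only on $\ker\bB_{\alpha}$ (this is precisely (ii)(a), not an unrestricted cochain relation), so your appeal to ``the cochain property'' in the necessity direction should really just be the invocation of (ii)(a) that you already make in the next clause---but your use of it is correct since $\Psi\in\ker\bB_{\alpha}$.
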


\section{Examples: Overview}
\label{sec:examples_intro}
\subsection{Outline}
Now that the abstract theory has been laid down, we are in a position to revisit the examples listed in \secref{sec:main_results_examples_1}. We will also present further variants based on the distinction between Neumann and Dirichlet conditions, as well as consider examples of sequences consisting of fully fledged Douglis--Nirenberg systems.

As hinted in \secref{sec:main_results_examples_1}, the most immediate examples of elliptic pre-complexes are those of {exterior covariant derivatives}, generalizing the classical de Rham complex. These were already touched upon in the rudimentary work \cite{KL23}. Here we study them in the highest generality, including operations between boundary sections (i.e., $Q \ne 0$ and $T \ne 0$ in \eqref{eq:Boutet_de_movel_intro}), thereby illustrating several types of elliptic pre-complexes in their arguably simplest nontrivial forms.

Another archetypal example comprises the much-studied Killing and Hessian equations, which were our primary motivation for the original onset of elliptic pre-complexes and generalized Hodge theory in \cite{KL23}. These equations were classically treated within the scope of the \emph{Calabi complex}, and motivated much of the classical theory of compatibility complexes for overdetermined systems. They are also relevant in many problems arising in the calculus of variations and material science.

Finally, we present the {Riemann curvature equations}, the last example from \secref{sec:main_results_examples_1}, which is perhaps the richest of all the examples considered in this work. As stated in \secref{sec:main_results_examples_1}, the treatment of the Ricci curvature equations is deferred to a separate work \cite{Led25B}. 

Following the premise established in the introduction in \secref{sec:opening}, our goal in the examples here is to demonstrate how nonlinear geometric problems falling within the setting of \eqref{eq:geoemtric_eq_intro}--\eqref{eq:gauge_symmetries_sequence} can be linearized and formulated within elliptic pre-complexes, and how the order-reduction property arises from the linearization of geometric constraints and the enforcement of gauge invariance. To this end, we examine the nonlinear components of these problems in detail, even though they are at times somewhat removed from the linear analysis.

\subsection{Exterior covariant derivatives}
\label{sec:exterior_intro}
Now that we have the machinery of an elliptic pre-complex in hand, we begin by placing the setup around \thmref{thm:exterior_chomology_intro} in a broader and more accurate context. Let $\bbU \to M$ be a Riemannian vector bundle equipped with a connection $\nabla$, and let
\[
\Omega^{\alpha}(M;\bbU) = \Gamma(\Lambda^{\alpha}T^*M \otimes \bbU).
\]
denote the space of $\bbU$-valued differential forms. The {exterior covariant derivatives} and their adjoints,  
\[
\dU:\Omega^{\alpha}(M;\bbU) \to \Omega^{\alpha+1}(M;\bbU), \qquad \delU:\Omega^{\alpha+1}(M;\bbU) \to \Omega^{\alpha}(M;\bbU),
\]  
arise in various geometric and analytical contexts, including Bochner techniques \cite[Ch.~9]{Pet16}, gauge theory \cite[Ch.~1.4,6]{RS17}, \cite[App.~C.6]{Tay11b}, and harmonic maps \cite{EL83}.  

As stated before \thmref{thm:exterior_chomology_intro}, although the resulting sequence of operators (sometimes referred to as the \emph{twisted de Rham complex} \cite[p.~458]{RS17}) provides a natural generalization of the de Rham complex (corresponding to the case $\bbU = M \times \bbR$), it does not form an elliptic complex. To see this, we substitute in the classical diagram \eqref{eq:elliptic_complex_diagram} : 
\[
\bbE_{\alpha} = \Lambda^{\alpha}T^*M \otimes \bbU \qquad \bbJ_{\alpha} = \Lambda^{\alpha}T^*\dM \otimes \jmath^*\bbU,
\]
where $\jmath:\dM \hookrightarrow M$ is the inclusion and $\jmath^*\bbU \to \dM$ is the pullback bundle, together with  
\[
A_{\alpha} = \dU \qquad A_{\alpha}^* = \delU \qquad B_{\alpha} = \PtD \qquad B^*_{\alpha} = \PnD,
\]
where $\PtD:\Omega^{\alpha}(M;\bbU)\rightarrow\Omega^{\alpha}(\dM;\jmath^*\bbU)$ and $\PnD:\Omega^{\alpha}(M;\bbU)\rightarrow\Omega^{\alpha-1}(\dM;\jmath^*\bbU)$ are the tangential and normal projections of differential forms. Note that $\PtD$ coincides with the pullback operation on the form part, while on the vector-bundle part it is given by pullback to $\jmath^*\bbU$. These yield Green formulae of the form \eqref{eq:Green_elliptic_intro}: 
\beq
\bra \dU\omega, \eta\ket_{L^{2}(M)} = \bra\omega, \delU\eta\ket_{L^{2}(M)} + \bra \PtD\omega, \PnD\eta\ket_{L^{2}(\dM)},
\label{eq:Green's_formula_exterior} 
\eeq
together with the ellipticity of the Neumann boundary-value problems for the ``Laplacian''  
\[
D^*_{\alpha}D_{\alpha}= \dU\delU + \delU\dU.
\]

However, unless $\nabla$ is locally flat, the sequence $(d_{\nabla})$ does not form a cochain complex, since in general we have  
\beq
A_{\alpha+1}A_{\alpha}=\dU \dU = \mathrm{R}_{\nabla},
\label{eq:emergent_curvature_exterior_intro}
\eeq  
where $\mathrm{R}_{\nabla} \in \Omega^{2}(M;\End(\bbU))$ is the curvature endomorphism of the connection $\nabla$, which does not necessarily vanish.

Our key observation is that, in the context of the order-reduction property introduced in \secref{sec:order-reduction_intro} and \secref{sec:order-reduction_intro2}, the following identity does hold for every connection $\nabla$, extending \eqref{eq:emergent_curvature_exterior_intro}:
\beq
\begin{pmatrix} \dU & 0 \\ \PtD & -d_{\jmath^*\nabla} \end{pmatrix}  
\begin{pmatrix} \dU & 0 \\ \PtD & -d_{\jmath^*\nabla} \end{pmatrix}  
= \begin{pmatrix} \mathrm{R}_{\nabla} & 0 \\ 0 & \mathrm{R}_{\jmath^*\nabla} \end{pmatrix}.
\label{eq:order_reduction_exterior}
\eeq
Here each Green operator on the left is of order and class $1$, while the operator on the right is of order zero and class zero. 

Coupled with the overdetermined ellipticity of the usual de Rham system, this identity allows exterior covariant derivatives to accommodate various elliptic pre-complexes based on either Neumann or Dirichlet conditions, independently of the curvature of the connection $\nabla$. We survey some of these below. 

\subsubsection{Dirichlet picture}
\label{para:DD_exterior_intro}
An elliptic pre-complex based on Dirichlet conditions, in the prototypical sense \defref{def:elliptic_pseudo_complex_intro}, is obtained by setting: 
\[
\bA_\alpha:=\dU:\Omega^{\alpha}(M;\bbU)\rightarrow \Omega^{\alpha+1}(M;\bbU).
\]
The required order-reduction property follows directly from \eqref{eq:order_reduction_exterior}, whereas the overdetermined ellipticity conditions in \eqref{eq:overdetermined_intro} reduce to the overdetermined ellipticity of the system
\beq
\dU \oplus \delU \oplus \PtD
\label{eq:Dirichelt_exterior_intro}
\eeq
which is established as in classical Hodge theory (cf.~\secref{sec:exterior_covariant_derivatives}). 

\thmref{thm:lifted_complexIntro} then asserts that the lifted complex consists of a sequence of operators 
\[
\mathpzc{d}_{\nabla}:\Omega^{\alpha}(M;\bbU) \rightarrow \Omega^{\alpha+1}(M;\bbU),
\]
differing from $\dU$ by ``correction'' terms of order and class zero, and satisfying
\[
\mathpzc{d}_{\nabla}\mathpzc{d}_{\nabla}\omega=0 \textand \PtD\mathpzc{d}_{\nabla}\omega=0 \qquad \text{for} \qquad \omega\in\Omega^{\alpha}(M;\bbU)\cap\ker\PtD, 
\]
with adjoints $\pzcdel_{\nabla}:\Omega^{\alpha+1}(M;\bbU)\rightarrow \Omega^{\alpha}(M;\bbU)$ satisfying $\pzcdel_{\nabla}\pzcdel_{\nabla}=0$ identically. The corresponding Dirichlet cochain complex provided by \eqref{eq:Dirichlet_cochain_omplexIntroPro} is then precisely the one presented in \eqref{eq:exterior_complexlift} (after relabeling $\alpha$ and $k$). 

The cohomology modules of the lifted complex can now be identified in view of \thmref{thm:hodge_like_lifted_complexDIntro}, yielding the ones in \eqref{eq:exterior_cohomology} (again, after relabeling $\alpha$ and $k$). In particular, for $\alpha = 0$, since $\dU = \nabla$ on zero forms and $\PtD = |_{\partial M}$ is the restriction to the boundary:
\[
\module_{\D}^{0}(\nabU) = \ker(\dU,\PtD) = \ker(\nabla,|_{\partial M}) = \{0\},
\]
which is the (trivial) space of all $\nabla$-parallel fields vanishing at the boundary. 

For $k > 0$,
\[
\module_{\D}^{\alpha}(\nabU)  = \ker(\dU,\pzcdel_{\nabU},\PtD)
\]
may be nontrivial (e.g., harmonic forms tangent to the boundary in the case $\bbU = M \times \bbR$; see \cite{Sch95b}). In any case, \thmref{thm:DDintroPro} provides us with the proof of \thmref{thm:exterior_examples_1}. Here we write it more explicitly, and split it for $k=0$ and $k>0$. The first case reads:

\begin{theorem}
Let $\omega \in \Omega^1_{M;\bbU}$.  
The boundary-value problem
\[
\begin{aligned}
& \nabla s = \omega \quad && \text{in } M, \\
& s|_{\dM} = 0 \quad && \text{on } \dM
\end{aligned}
\]
admits a solution $s \in \Omega^{0}_{M;\bbU}$ if and only if
\[
\mathpzc{d}_{\nabla}\omega = 0, 
\qquad 
\PtD\omega = 0, 
\qquad 
\omega \perp \module^1_{\D}(\nabU) .
\]
The solution is unique.
\end{theorem}

For the higher-rank segments:

\begin{theorem}
Let $k>0$ and $\omega \in \Omega^{\alpha+1}(M;\bbU)$. The boundary-value problem 
\[
\begin{aligned}
& \dU \psi = \omega && \text{in } M, \\
& \PtD \psi = 0  && \text{on } \dM
\end{aligned}
\]
admits a solution $\psi \in \Omega^{\alpha}(M;\bbU)$ satisfying $\pzcdel_{\nabla}\psi = 0$ if and only if 
\[
\mathpzc{d}_{\nabla} \omega = 0, 
\qquad 
\PtD \omega = 0,
\qquad 
\omega \perp \module_{\D}^{\alpha+1}(\nabU).
\]
Any solution satisfying $\pzcdel_{\nabla}\psi = 0$ is unique modulo $\module^{\alpha}_{\D}(\nabU)$. 
\end{theorem}

\subsubsection{Neumann picture} 
\label{para:NN_exterior_intro}
One can obtain a Neumann elliptic pre-complex by setting $\bA_\alpha := \dU$ and selecting the natural Neumann boundary conditions arising from the Green's formula \eqref{eq:Green's_formula_exterior}. This is relatively straightforward, follows lines similar to the Dirichlet case above, and is essentially contained in \cite[Sec.~1.3]{KL23}. For the sake of variety, we shall repeat the analysis explicitly in the body of the text (\secref{sec:NN_exterior_intro}), and instead demonstrate here the Neumann picture in higher generality.

To that end, consider the following sequence, falling into the more general scope of \secref{sec:general_intro}:
\beq 
\bD_\alpha:= 
\begin{pmatrix} \dU & 0 \\ \PtD & -d_{\jmath^*\nabla} \end{pmatrix}:\mymat{\Omega^{\alpha}(M;\bbU)\\\oplus\\\Omega^{\alpha-1}{(\partial M;\jmath^*\bbU)}}\longrightarrow\mymat{\Omega^{\alpha+1}(M;\bbU)\\\oplus\\\Omega^{\alpha}{(\partial M;\jmath^*\bbU)}}
\label{eq:exterior_disjoint_intro}
\eeq 

For this sequence, again, the order-reduction property \eqref{eq:order_reduction_intro} is satisfied by virtue of \eqref{eq:order_reduction_exterior}. The Neumann overdetermined ellipticity conditions required in \eqref{eq:overdetermined_intro} can be shown to decouple into those of systems  
\[
\begin{pmatrix} \dU \oplus \delU & 0 \\ \PnD & 0 \end{pmatrix}, \qquad \begin{pmatrix} 0 & 0 \\ 0 & d_{\jmath^*\nabla} \oplus \delta_{\jmath^*\nabla} \end{pmatrix},
\]
which, as in the Dirichlet case, are established as in classical Hodge theory.

Applying \thmref{thm:lifted_complexIntro2} under Neumann conditions yields a lifted complex $(\fbD_\bullet)$, satisfying $\fbD_{\alpha+1} \fbD_\alpha = 0$ identically, and taking the form: 
\[
\begin{aligned}
\fbD_0 &=\begin{pmatrix} 
\dU & 0 \\ 
\PtD & 0
\end{pmatrix}=\begin{pmatrix} 
\nabla & 0 \\ 
|_{\partial M} & 0
\end{pmatrix}
&&  \\[1em]
\fbD_\alpha &= 
\begin{pmatrix} 
\mathpzc{d}_{\nabla} & -\mathpzc{k}_{\nabla} \\ 
\PtD-\mathpzc{c}_{\nabla} & - \mathpzc{d}_{\jmath^*\nabla} 
\end{pmatrix}, \qquad \alpha>0.
\end{aligned}
\]

Note that, since the systems $\bD_\alpha$ in the original sequence include operators taking values in boundary sections, these are lifted as well, yielding operators $\mathpzc{d}_{\jmath^*\nabla}$ and $\PtD-\mathpzc{c}_{\nabla}$, which differ from $d_{\jmath^*\nabla}$ and $\PtD$ by terms of order and class zero. In particular, the upper-right corner in the higher segments of the lifted complex contains an operator $\mathpzc{k}_{\nabla}$ of order zero. 

It can be shown that $(\psi;\lambda) \in \scrN(\fbD_{\alpha}^*; \bB_{\alpha}^*)$ amounts to the conditions:
\[
\pzcdel_{\nabla} \psi = \mathpzc{c}_{\nabla}^* \lambda, \qquad \mathpzc{k}^*_{\nabla} \psi = \pzcdel_{\jmath^* \nabla} \lambda, \qquad \PnD \psi = -\lambda.
\]
Hence, the cohomology modules $\module^{\alpha}_{\NN}(\bbU)$ of the lifted complex depend consist of smooth vector-valued forms satisfying:
\beq
\dU \psi = 0, \qquad \pzcdel_{\nabU} \psi = \mathpzc{c}_{\nabla}^* \lambda, \qquad \mathpzc{k}^*_{\nabla} \psi =\pzcdel_{\jmath^* \nabla} \lambda, \qquad \PtD \psi = d_{\jmath^* \nabla} \lambda, \qquad \PnD \psi = -\lambda.
\label{eq:cohomology_neumann_exterior} 
\eeq
For $\alpha = 0$ and $\alpha=1$, the cohomology depends solely on the original systems. So for $\alpha=0$, these conditions reduce to:
\[
\nabla \psi = 0, \qquad \psi|_{\dM} = 0,
\]
and thus it always holds that $\module^{0}_{\NN} = \BRK{0}$. 

For $\alpha=1$, we have: 
\[
\dU\psi=0, \qquad \delta_{\nabU}\psi=0, \qquad \PtD\psi=d_{\jmath^*\nabU}\lambda, \qquad \PnD\psi=-\lambda. 
\]
Which can yield vanishing theorems under curvature assumptions, using e.g., Bochner technique. For $\alpha > 0$, in general $\module^{\alpha}_{\NN}(\bbU) \ne \BRK{0}$. 

Regardless of the vanishing of the cohomology modules, in the most general case, \thmref{thm:NNintro} takes the following form, once we decouple the boundary and interior equations:
\begin{theorem}
\label{thm:exterior_chomology_intro}
Given $\omega \in \Omega^{\alpha+1}(M;\bbU)$ and $\rho \in \Omega^{\alpha}(\dM;\jmath^*\bbU)$, the boundary-value problem
\[
\begin{aligned}
&\dU \psi = \omega, \qquad \pzcdel_{\nabla} \psi = \mathpzc{c}_{\nabla}^* \lambda \quad && \text{in } M, \\
& \PtD \psi - d_{\jmath^*\nabla}\lambda = \rho, \quad 
\mathpzc{k}_{\nabla} \psi = \pzcdel_{\jmath^*\nabla} \lambda, \quad 
\PnD \psi = -\lambda \quad && \text{on } \dM
\end{aligned}
\]
admits a solution $(\psi;\lambda) \in \Omega^{\alpha}(M;\bbU) \oplus \Omega^{\alpha-1}(\dM;\jmath^*\bbU)$ if and only if
\[
\mathpzc{d}_{\nabla} \omega = \mathpzc{k}_{\nabla} \rho, 
\qquad 
\PtD \omega - \mathpzc{c}_{\nabla}\omega = \mathpzc{d}_{\jmath^*\nabla} \rho, 
\qquad 
(\omega;\rho)\perp \module^{\alpha+1}_{\NN}.
\]
The solution is unique modulo $\module^{\alpha}_{\NN}$.
\end{theorem}
\subsection{Killing and Hessian equations} 
\label{sec:KillingHessianIntro}
In the same spirit, we revisit the Killing and Hessian equations presented in \secref{sec:main_results_examples_1}. The results presented there correspond to the Dirichlet setting, and here we shall also present the Neumann setting. Much of the discussion in this section is an adaptation and reframing of the discussion and analysis in \cite{KL23}. We include it here for the sake of completeness, to show how our sharper results apply, and to demonstrate how it fits into the outline of our premise in \secref{sec:opening}.

Historically, the study of these problems in the Riemannian setting originates from a problem posed by Calabi \cite{Cal61} more than sixty years ago, which was the primary subject of \cite{KL23}:

\begin{quote}
Let $(M,g)$ be a compact Riemannian manifold, possibly with nonempty boundary. 
What are necessary and sufficient conditions for a symmetric $(2,0)$-tensor on $M$ to be in the range of the Killing operator? 
\end{quote}

In the context of our premise in \secref{sec:opening}, this becomes: given a symmetric tensor field $\sigma \in S^{2}(M)$, identify the solvability conditions for the \emph{Killing equation}: 
\beq
\sigma = \tfrac{1}{2}\,\calL_{Y}g, \qquad Y\in\frakX(M)
\label{eq:Saint_Venant_Problem} 
\eeq
where $Y \mapsto \calL_{Y}g : \frakX(M) \to S^{2}(M)$ is the Killing operator.

One can refine the problem further; by imposing $Y = \nabla f$, where $f \in C^{\infty}_{M}$ is a smooth scalar function, this becomes the \emph{Hessian equation} \cite{BE69,Bry13a,Bry25}:
\beq
\sigma = H_{g}f,
\label{eq:Hessian_Problem} 
\eeq
where $\Hg: C^{\infty}_{M} \to S^{2}(M)$ is the Riemannian Hessian operator. 

Calabi provided an answer for \eqref{eq:Saint_Venant_Problem} when $(M,g)$ is closed, simply-connected, and has constant sectional curvature. In particular, he commented at the end of his paper:

\begin{quotation}
``In a subsequent article... (the) theorem will be supplemented by an analogue of Hodge's theorem... satisfying globally certain elliptic systems of equations."
\end{quotation}

Such an article has never been published.

From the perspective of the premise in \secref{sec:outlook}, the Killing equation \eqref{eq:Saint_Venant_Problem} can be viewed as the linearization of the nonlinear problem of finding isometric immersions between two different metrics $g,h$ on $M$:
\beq 
\phi:(M,h)\to(M,g) \qquad \text{such that }\qquad \phi^*g=h.
\label{eq:isometric_imm}  
\eeq 
Indeed, if one takes a smooth path $\phi_{t}:M\to M$ of immersions (not necessarily isometric) with $\phi_0=\id$ and $\dertZero\phi_{t}=Y$ (on a manifold with nonempty boundary, this might not be the flow of $Y$ but rather $\phi_{t}=\exp(tY)$), and assumes a perturbative structure $h=g+t\sigma+o(t)$, then the equation \eqref{eq:isometric_imm} can be written as: 
\[
\phi_{t}^*g=g+t\sigma+o(t);
\] 
Linearizing both sides thus yields \eqref{eq:Saint_Venant_Problem}. We remark that, since many problems in the calculus of variations and materials science can be formulated as problems of finding isometric immersions, the system \eqref{eq:Saint_Venant_Problem} has been central in these fields (see, for instance, \cite{BE69,Gur72,CCGK07,Yav13,GLM13,DPR16,Bog19,KL22,GR22}).  

In the case of $(M,g)=(\Omega,e)$---namely, simply-connected Euclidean domains---it is well known that $\sigma=\calL_{Y}e$ if and only if $\sigma$ satisfies the \emph{compatibility condition}, 
\beq
\nabla\times\nabla\times\sigma=0,
\label{eq:curlcurlIntro}
\eeq
where $\nabla\times\nabla\times$ is the curl-curl operator, mapping symmetric $(2,0)$-tensors into $(4,0)$-tensors satisfying the Bianchi symmetries of algebraic curvatures, given in Euclidean coordinates by:
\beq 
(\nabla\times\nabla\times\sigma)_{ijkl}=\partial^2_{ik}\sigma_{jl}-\partial^2_{jk}\sigma_{il}-\partial^2_{il}\sigma_{jk}+\partial^2_{jl}\sigma_{ik}. 
\label{eq:curlcurlIntro2}
\eeq 
Thus, for $\Omega$ Euclidean and simply-connected, the compatibility condition \eqref{eq:curlcurlIntro} becomes the solvability condition for the Killing equation \eqref{eq:Saint_Venant_Problem}. A similar statement can be obtained for the Hessian equation \eqref{eq:Hessian_Problem}, once one replaces $\nabla\times\nabla\times$ with a vectorial version of the curl operator \cite{BE69, Bry13a}.

Over the years, several authors have, to some extent, generalized this compatibility condition to the Riemannian setting \cite{Cal61,Eas00,GG88A,CSS01,CELM21}. As we noted in the discussion in \secref{sec:main_results_examples_1}, all previous works recognize that, at the heart of the analysis, lies the search for a second-order linear differential operator acting on symmetric tensor fields and generalizing \eqref{eq:curlcurlIntro2}, which annihilates the range of $Y \mapsto \calL_Y g$. In other words, from the perspective laid out in \secref{sec:opening}, all attempts to resolve the Killing equation have funneled toward the search for a cochain complex:
\beq
\begin{tikzcd}
\frakX(M) \arrow[rr, "Y\mapsto\calL_Yg"] &  & S^{2}(M)  \arrow[rr, "\text{?}"] &  & \Ckm22,
\end{tikzcd}
\label{eq:saint_venant_complex_mission}
\eeq
where $\Ckm22$ denotes the space of $(4,0)$-tensor fields satisfying algebraic Bianchi symmetries, notation that will be clarified below. Such a complex, in cases where it has been constructed, is known in the literature as a \emph{Calabi complex} \cite{Gol67,GG88A,Eas00}. The difficulty in producing such a complex in general Riemannian geometries is a prime example of the challenge outlined in \secref{sec:opening} and \secref{sec:main_results_examples_1}. However, despite extensive research, to the best of our knowledge, no work has managed to remove the assumption that the underlying Riemannian manifold has, at the very least, a parallel curvature tensor.

In the literature, the Killing equation \eqref{eq:Saint_Venant_Problem} is the archetypal example of an overdetermined system of partial differential equations (\cite{Gol67,Spe69,BE69}; see also the encyclopedic entry \cite{DS96} and references therein). The study of such systems has a long history, and from the premise outlined in \secref{sec:opening}, the state of the art regarding the existence of a compatibility complex for a given operator $A$ is applicable only under quite restrictive conditions on its coefficients. For example, a differential operator with constant coefficients always admits a compatibility complex. 

The Killing operator, however, does not satisfy the required conditions in a general Riemannian manifold—particularly in those having a boundary or non-trivial topology, areas of study with scarce literature.

Here we resolve these problems entirely by casting both \eqref{eq:Saint_Venant_Problem} and \eqref{eq:Hessian_Problem} within elliptic pre-complexes based on Neumann conditions. To set up these complexes, following Calabi's original work, we proceed within the general algebraic framework of \emph{double forms}, which are the section spaces: 
\[
\Wkm{\alpha}{m}=\Gamma(\Lkm{\alpha}{m}), \qquad \Lkm{\alpha}{m}=\Lambda^k T^*M \otimes \Lambda^m T^*M, \qquad k,m \in \Nzero. 
\]
See also \cite{Cal61,Gra70,Kul72,KL21a}; we provide an appendix with a full introduction (\secref{sec:Appendix}). As Calabi realized, many objects in differential geometry can be cast as double forms. Riemannian metrics, Hessians of scalar functions, and Lie derivatives of the metric can be viewed as symmetric elements of $\Wkm11$; for higher-order examples, the Riemannian curvature tensor $\Rm_\g$ can be viewed as a symmetric element of $\Wkm22$. 

Most prominently, however, curvature tensors satisfy additional symmetries---namely, \emph{algebraic Bianchi identities}. For this purpose, Calabi introduced an algebraic symmetry that can be imposed on all double forms, generalizing this identity. This symmetry can be expressed as the kernel of a smooth bundle map $\G : \Lkm{\alpha}{m} \to \Lkm{\alpha+1}{m-1}$; let the section spaces be:
\[
\Ckm{\alpha}{m}=\Gamma(\ker\G).
\]
Following Calabi, we call double forms satisfying these symmetries \emph{Bianchi forms}. Notable examples of Bianchi forms include standard differential forms $\Ckm{\alpha}{0}$, symmetric tensor fields $\Ckm{1}{1}$ (which we earlier denoted, as in the literature, by $S^2_M$; we will interchange between these as convenient), and $(4,0)$-covariant tensor fields satisfying the algebraic Bianchi identity, $\Ckm{2}{2}$.

From the perspective of the setting in \secref{sec:exterior_intro}, these double forms are essentially vector-valued forms taking values in tensor products of the exterior algebra. Therefore, exterior covariant derivatives, introduced in \secref{sec:exterior_intro}, apply to them as well, yielding the operations:
\[
\dg : \Wkm{\alpha}{m} \to \Wkm{\alpha+1}{m}
\qquad \text{and} \qquad
\deltag : \Wkm{\alpha+1}{m} \to \Wkm{\alpha}{m}
\]
where the background connection arises from the Riemannian metric. We use these to define differential operators that act on the vector part via involution:
\[
\dgV : \Wkm{\alpha}{m} \to \Wkm{\alpha}{m+1}
\qquad \text{and} \qquad
\deltagV : \Wkm{\alpha}{m+1} \to \Wkm{\alpha}{m}
\]
where $\dgV \psi = (\dg \psi^T)^T$ and $\deltagV \psi = (\deltag \psi^T)^T$.

Although $\deltag$ preserves the Bianchi symmetry, $\dg$ does not, unless $k>m$. To this end, we introduce the \emph{Bianchi derivative}:
\[
\dBianchi = \calP_{g}\dg,
\]
where $\calP_{g}:\Wkm{\alpha}{m}\to \Wkm{\alpha}{m}$ is the tensorial projection onto $\Ckm{\alpha}{m}$. Since $\calP_{g}$ coincides with symmetrization when $k=m=1$, we find that $\dBianchi$ coincides with the Killing operator up to the musical isomorphism: 
\beq 
\dBianchi\omega=\tfrac{1}{2}\calL_{\omega^\sharp}g.
\label{eq:BianchiKilling}
\eeq 

With this noted, Calabi used these operations to rewrite the curl-curl operator from \eqref{eq:curlcurlIntro} and its adjoint as second-order operators acting on Bianchi forms, $\Hg : \Ckm{\alpha}{m} \to \Ckm{\alpha+1}{m+1}$ and $\Hg^* : \Ckm{\alpha+1}{m+1} \to \Ckm{\alpha}{m}$, defined by
\beq 
\Hg = \tfrac12(\dg \dgV + \dgV \dg) 
\qquad \text{and} \qquad 
\Hg^* = \tfrac12(\deltag \deltagV + \deltagV \deltag).
\label{eq:RiemcurlcurlIntro}
\eeq 
It can be shown that these operators satisfy integration by parts formulas involving both tensorial and first-order normal boundary operators, denoted by $B_{H}$ and $B_{H}^*$:
\[
\bra \Hg \psi, \eta \ket = \bra \psi, \Hg^* \eta \ket + \bra B_H \psi, B_H^* \eta \ket. 
\]
See again \secref{sec:second_orderBianchi} for a full elaboration. An explicit calculation shows that, as with the exterior covariant derivative, the iterated operators 
\[
\Hg \dBianchi, \qquad \dBianchi \Hg
\] 
which might be expected to be of order three, are in fact differential operators of order $1$---hence, in the context of \secref{sec:order-reduction_intro}, yielding order-reduction properties. These properties arise from geometric considerations and the fact that
\[
\dg\dg \quad \text{and} \quad \dg\dgV-\dgV\dg
\]
are tensorial expressions depending explicitly on curvature, as in \secref{sec:exterior_intro}. Alternatively, we will show in \secref{sec:prescribed_curvature_intro} how directly this follows from linearizing geometric identities, where $\Hg$ is recognized as the leading term in the variational formula for the Riemann curvature tensor.

The required Neumann overdetermined ellipticity is verified through a computation involving a generalization of the \emph{Lopatinskii--Shapiro condition}, for which we refer the reader to the body of the text (cf.~\secref{sec:KillingHessianBody}). 

Once this condition is established for every $1 \le m \le d$, we find that the following sequence constitutes an elliptic pre-complex based on Neumann conditions (\defref{def:elliptic_pseudo_complex_intro}):
\beq 
\begin{split} 
&\bA_{0} := \dBianchi : \Ckm{0}{m} \rightarrow \Ckm{1}{m} \\
&\bA_{1} := \dBianchi : \Ckm{1}{m} \rightarrow \Ckm{2}{m} \\
&\vdots \\
&\bA_{m-1} := \dBianchi : \Ckm{m-1}{m} \rightarrow \Ckm{m}{m} \\
&\bA_{m} := \Hg : \Ckm{m}{m} \rightarrow \Ckm{m+1}{m+1} \\
&\bA_{m+1} := \dBianchi : \Ckm{m+1}{m+1} \rightarrow \Ckm{m+2}{m+1} \\
&\vdots
\end{split} 
\label{eq:calabi_complex} 
\eeq
Calabi considered these sequences in his work \cite{Cal61} on manifolds with empty boundary and constant curvature; here, we refer to them as \emph{Bianchi complexes}. For the full diagram falling into the scope of \eqref{eq:elliptic_complex_diagram}, we refer to \secref{sec:KillingHessianBody}.

At this point, it is a matter of verification that, for $m=1$ and $m=0$, respectively, the Dirichlet version of this complex yields the ones outlined in \thmref{thm:Dirichlet_Lie} and \thmref{thm:Hessian_intro}, in analogy with the Dirichlet picture for exterior covariant derivatives. We shall see this in a broader context in the next example, \secref{sec:prescribed_curvature_intro}, and focus here, for the sake of variety, on the Neumann picture.

The Neumann version of the lifted complex \thmref{thm:lifted_complexIntro} associated with this family of complexes gives rise to a uniquely determined sequence of operators
\[
\begin{aligned}
&\DBianchi: \Ckm{\alpha}{m} \to \Ckm{\alpha+1}{m}  
&\qquad\qquad
&k=0,\dots,m-1 \\
&\bHg: \Ckm{m}{m} \to \Ckm{m+1}{m+1} \\
&\DBianchi: \Ckm{\alpha}{m+1} \to \Ckm{\alpha+1}{m+1}  
&\qquad\qquad
&k=m+1,\dots,d-1,
\end{aligned}
\]
satisfying identically
\[
\DBianchi\DBianchi=0 \qquad \DBianchi \bHg=0 \qquad \bHg\DBianchi=0,
\]
such that $\DBianchi-\dBianchi$ and $\bHg-\Hg$ are Green operators of order zero, vanishing on $\Ckm{0}{m}$. We denote the corresponding adjoints by $\DelBianchi$ and $\bHg^*$, with $\DelBianchi-\delBianchi$ and $\bHg^*-\Hg^*$ of order zero as well. Thus, for every $0\leq k,m\leq d$, there corresponds a Hodge theory, together with cohomological formulations corresponding to the overdetermined boundary-value problems, as detailed in \secref{sec:Hodge_intro_NN}. 

We now specialize to the cases where $m=0$ and $m=1$, and show how the resulting cohomological formulations of \thmref{thm:NNintroPro} for these pre-complexes correspond to the solvability and uniqueness conditions for the Hessian equation \eqref{eq:Hessian_Problem} and the Killing equation \eqref{eq:Saint_Venant_Problem}, respectively.

We start with the Bianchi complex associated with $m=0$. The first operator in the sequence in this case is $\Hg:\Ckm{0}{0}\to \Ckm{1}{1}$, and we recognize that the relevant cohomology modules become:
\[
\mathscr{G}^{0}_{\NN}(g)=\ker(\Hg) \quad \text{and} \quad \mathscr{G}^{1}_{\NN}(g)=\ker(\dBianchi,\Hg^*,B_{H}^*).
\]
Thus, the cohomological formulation \thmref{thm:NNintroPro} for this complex at $\alpha=1$ yields precisely the solvability and uniqueness conditions for \eqref{eq:Hessian_Problem} given an arbitrary background metric (which is the Neumann version of \thmref{thm:Hessian_intro})

\begin{theorem}
\label{thm:hessian}
Given $\sigma\in\Ckm{1}{1}$, the problem \eqref{eq:Hessian_Problem} admits a solution $f\in C^{\infty}_{M}=\Ckm{0}{0}$ if and only if
\[
\DBianchi\sigma=0, \qquad \sigma\,\bot\,\mathscr{G}^{1}_{\NN}(g).
\]  
The solution is unique modulo $\mathscr{G}^{0}_{\NN}(g)$.
\end{theorem}

The uniqueness clause in the characterization of \thmref{thm:lifted_complexIntro} implies that the operator $\DBianchi$ coincides with the usual exterior covariant derivative when $g$ is Euclidean, as \eqref{eq:calabi_complex} in this case is a fully fledged cochain complex. Thus, in a Euclidean domain, the Poincaré Lemma implies $\ker(\dBianchi,\Hg^*,B_{H}^*)=\{0\}$, while $\ker(\Hg)$ is exactly the space of affine functions.

We proceed with the Bianchi complex associated with $m=1$. In this case, the first operator in the sequence coincides with the Killing operator once we recognize $\frakX(M)\simeq \Ckm{0}{1}$ through the musical isomorphism and $\Ckm{1}{1}=S^{2}(M)$. We therefore recognize that the relevant cohomology modules become:
\[
\mathscr{K}^{0}_{\NN}(g)=\ker(\dBianchi)=\ker(\Def) \quad \text{and} \quad \mathscr{K}_{\NN}(g)=\ker(\Hg,\delta_{g},\PnD_{g}),
\]
for which \thmref{thm:NNintroPro} becomes (the Neumann version of \thmref{thm:Dirichlet_Lie}): 

\begin{theorem}
\label{thm:Killing}
Given $\sigma\in \Ckm{1}{1}$, the problem \eqref{eq:Saint_Venant_Problem} admits a solution if and only if
\[
\bHg\sigma=0, \qquad \sigma\,\bot\,\mathscr{K}_{\NN}(g).
\]  
The solution is unique modulo $\mathscr{K}^{0}_{\NN}(g)$. 
\end{theorem}

As in the Hessian complex, the classical theorem for simply-connected Euclidean domains together with the uniqueness of the complex imply that $\bHg=\Hg$ and $\ker(\Hg,\delta_{g},\PnD_{g})=\{0\}$, consistent with the classical compatibility condition discussed in \eqref{eq:curlcurlIntro}. As for the zero cohomology, the kernel of the Killing operator $\ker(\dBianchi)=\ker(\Def)$ is known to be generically trivial on a closed manifold (cf.~\cite{Ebi70}).
\subsection{Riemann curvature equations}
\label{sec:prescribed_curvature_intro} 
Here we develop the setup underlying \thmref{thm:Riem_intro} more elaborately. The onset is the same overdetermined boundary-value problem derived there for an unknown $g\in\scrM(M)$, which falls at once into the scope outlined in \secref{sec:outlook} by writing: 
\beq
\begin{gathered}
\Rm_g -\mathrm{T}=0 , \\ 
\gD-\gamma =0 , \qquad 
\Ah_{g} -\mathrm{K}=0 .
\end{gathered}
\label{eq:prescribed_curvatureIntro}
\eeq

We shall now analyze the gauge symmetries and constraints of \eqref{eq:prescribed_curvatureIntro} within the framework of double forms. The boundary-value problem \eqref{eq:prescribed_curvatureIntro} possesses two natural symmetry groups: one associated with the interior equation and another associated with the boundary conditions.

The symmetry group for the boundary conditions in \eqref{eq:prescribed_curvatureIntro} consists of all boundary diffeomorphisms $\varphi:\dM\rightarrow\dM$, reflecting the invariance of the correspondences $g \mapsto \gD$ and $g \mapsto \Ah_g$ under pullback. Specifically, by naturality of the Levi-Civita connection, if $\tilde{\varphi}: M \to M$ is any diffeomorphism such that $\tilde{\varphi}|_{\dM} = \varphi$, then
\beq
\varphi^* \gD = \PttG \tilde{\varphi}^* g, \qquad \varphi^* \Ah_g = \Ah_{\tilde{\varphi}^* g}.
\label{eq:boundary_symmetries_intro} 
\eeq
The symmetry group associated with the interior equation consists of all diffeomorphisms $\phi: M \to M$ that fix the boundary, i.e., $\phi|_{\dM} = \id$ \cite{Kaz81,And08}, and the corresponding invariance is
\beq
\Rm_{\phi^* g} = \phi^* \Rm_g.
\label{eq:gauge_group_Rm}
\eeq

In addition, the components of \eqref{eq:prescribed_curvatureIntro} satisfy differential constraints: the \emph{differential Bianchi identity} and the \emph{Gauss--Mainardi--Codazzi equations}, which can be written as \cite[Ch.~3]{Pet16}: 
\beq
\begin{gathered} 
d_g \Rm_g = 0,\\
\PtD \Rm_g =(\PttD\Rm_{g},\PtnG\Rm_{g})= (\Rm_{\gD} + \tfrac{1}{2}\Ah_g \wedge \Ah_g, d_{\gD}\Ah_{g}).
\end{gathered} 
\label{eq:contraints_Rm_intro}
\eeq
The operator $\dBianchi: \Ckm{\alpha}{m} \to \Ckm{\alpha+1}{m}$ is the Bianchi derivative from \secref{sec:KillingHessianIntro}, which we remind the reader coincides with the exterior covariant derivative when $k \geq m$, and the boundary operator $\PtD$ is the usual tangential projection of a vector-valued form, as in \secref{sec:exterior_intro}. 

With these symmetries and constraints in place, we proceed to linearize \eqref{eq:prescribed_curvatureIntro} at a metric $g \in \scrM(M)$ as was done in \eqref{eq:Riem_1}, albeit this time incorporating an error term given by any tensorial smooth map $\Gamma : \Ckm{1}{1} \rightarrow \Ckm{2}{2}$ (which may arise from a connection on the space of Riemannian metrics, $\scrM(M)$), casting it into the form of the model linearized operator \eqref{eq:derivative_intro}. 

Casting the interior equation in the framework of double forms, the well-known variation formula for the Riemann curvature tensor \cite[p.~560]{Tay11b} yields a second-order linear operator
\[
\D\Rm_{g}: \Ckm{1}{1} \to \Ckm{2}{2}.
\]
whose leading order component (i.e., modulo zero order tensorial terms) can be seen to be $\Hg$ from \eqref{eq:RiemcurlcurlIntro}. For the boundary operators, the linearized Cauchy data induce mappings
\[
\PttG \oplus \D\Ah_{g}: \Ckm{1}{1} \to \plCkm{1}{1} \oplus \plCkm{1}{1}.
\]
Hence, setting as in \eqref{eq:derivative_intro} $\D_{\Gamma}\Rm_{g} = \D\Rm_{g} + \Gamma$ for a tensorial error term $\Gamma: \Ckm{1}{1} \to \Ckm{2}{2}$, which may arise from a connection on $\scrM(M)$ (\cite{Ham82,GMN92,CR13}) (and in this case, $g\mapsto\Rm_{g}$ is viewed as a section of the trivial bundle $\scrM(M)\times\Ckm{2}{2}$), we arrive at the perturbed overdetermined linear system generalizing \eqref{eq:Riem_1}: 
\beq
\begin{gathered}
\D_{\Gamma}\Rm_{g}\sigma = T, \\
\PttG\sigma = 0, \qquad \D\Ah_{g}\sigma = 0.
\end{gathered}
\label{eq:linearized_prescribed_curvatureIntroII}
\eeq

We cast \eqref{eq:linearized_prescribed_curvatureIntroII} within an elliptic pre-complex based on Dirichlet conditions. We do this in the prototypical setting of \defref{def:elliptic_pseudo_complex_intro}. An elliptic pre-complex based on Neumann conditions, associated with a non-homogeneous version of \eqref{eq:prescribed_curvatureIntro}, is also studied, though it is much more intricate and requires the full general Douglis-Nirenberg setting introduced in \secref{sec:general_intro}. We defer its treatment to the detailed example section in the body of the text (\secref{sec:prescribed_curvature}).

For the Dirichlet pre-complex, in the sense of \defref{def:elliptic_pseudo_complex_intro}, we introduce the following sequence:
\beq
\begin{split}
&\bA_{0} := \Def : \frakX(M) \rightarrow \Ckm{1}{1}, \\
&\bA_{1} := \D_{\Gamma}\Rm_{g} : \Ckm{1}{1} \rightarrow \Ckm{2}{2}, \\
&\bA_{2} := \dBianchi : \Ckm{2}{2} \rightarrow \Ckm{3}{2}, \\
&\bA_{3} := \dots
\end{split}
\label{eq:systems_curvature_intro}
\eeq
Here, recall that $\Def:\frakX(M)\to S^{2}(M)$ is the adjoint of the tensor-divergence $\delBianchi:\Ckm{1}{1} \to \frakX(M)$, i.e., the vector version of the Killing operator from \eqref{eq:BianchiKilling}, given explicitly by $\delta_{g}^*X=\tfrac{1}{2}\calL_{X}g$. Thus, by choosing $\Gamma$ appropriately, one retains from this pre-complex the Dirichlet picture also for the Killing equations, yielding \thmref{thm:Killing}. Notably, for $\dim M > 3$, the sequence extends beyond the $\Ckm{3}{2}$-level, resulting in a generalization of the Calabi pre-complex introduced earlier in \secref{sec:KillingHessianIntro}. To keep the discussion concise, we focus here only on the first two segments.

This sequence comes with the associated boundary systems:
\beq
\begin{split}
&B_{0} := |_{\partial M} : \frakX(M) \rightarrow \frakX(M)|_{\partial M}, \\
&B_{1} := \mathrm{S}^{-1}_{g}(\PttD \oplus \D\Ah_{g}) : \Ckm{1}{1} \rightarrow \plCkm{1}{1} \oplus \plCkm{1}{1}, \\
&B_{2} := \PtD : \Ckm{2}{2} \rightarrow \PtD(\Ckm{2}{2}), \\
&B_{3} := \dots
\end{split}
\label{eq:boundary_systems_curvature_intro}
\eeq
where $\mathrm{S}_{g}:\plCkm{1}{1} \oplus \plCkm{1}{1}\rightarrow \plCkm{1}{1} \oplus \plCkm{1}{1}$ is a zero-order isomorphism. 

In the context of the discussion around \eqref{eq:geoemtric_eq_intro}--\eqref{eq:gauge_symmetries_sequence}, an important remark here is that, once the boundary-value problem \eqref{eq:linearized_prescribed_curvatureIntroII} is identified, the systems $\bD_{0}$ and $\bD_{2}$, together with their boundary associates $\bB_{0}$ and $\bB_{2}$, are chosen specifically to reflect the linearization of the invariance condition \eqref{eq:gauge_group_Rm} and the geometric constraints \eqref{eq:contraints_Rm_intro}, respectively. As we have emphasized around \eqref{eq:gauge_symmetries_sequence}, this may be viewed as a general paradigm for constructing elliptic pre-complexes associated with linearized problems.

The key steps in verifying that \eqref{eq:systems_curvature_intro} is an elliptic pre-complex can be summarized as follows:  
\begin{enumerate}  
\item The required Green's formulae \eqref{eq:Green_formula_intro} for $\bD_{0}$ and $\bD_{2}$ follow from the well-known formulae for the Killing operator $\Def$ and the exterior covariant derivative $\dBianchi$, respectively. 

In the case of $\bD_{1}$, the corresponding formula for $\D_{\Gamma}\Rm_{g}$ follows from the Green formula for its leading-order term, which again coincides with the covariant curl--curl operator $\Hg$ from \eqref{eq:RiemcurlcurlIntro} (note that the Bianchi derivatives coincide with the exterior ones since $k \geq m$ here). We expand this formula here as: 
\[
\bra \Hg\sigma, \eta \ket = \bra \sigma, \Hg^*\eta \ket + \bra \PttD\sigma, \frakT^*_{g}\eta \ket - \bra \frakT_{g}\sigma, \PnnD\eta \ket,
\]
where $\frakT_{g}:\Ckm{1}{1}\rightarrow\plCkm{1}{1}$ is a first-order differential boundary operator satisfying
\[
S_{g}(\PttD \oplus \frakT_{g}) = \PttD \oplus \D\Ah_{g},
\]
and $\PttD \oplus \frakT_{g}$ is normal in the sense of \secref{sec:Generalized_green_intro}. 

This Green's formula persists after adding tensorial terms, since such terms integrate by parts without producing boundary contributions. Thus, for every tensorial error term $\Gamma$,
\[
\bra \D_{\Gamma}\Rm_{g}\sigma, \eta \ket = \bra \sigma, (\D_{\Gamma}\Rm_{g})^*\eta \ket + \bra \PttD\sigma, \frakT^*_{g}\eta \ket - \bra \frakT_{g}\sigma, \PnnD\eta \ket.
\]

\item Like before, the required Dirichlet overdetermined ellipticities in \eqref{eq:overdetermined_intro} will follow from a computation involving a generalization of the \emph{Lopatinskii--Shapiro condition}. See more details in the body of the text.

\item The order-reduction property \eqref{eq:order_reduction_intro} follows from the linearization of the geometric constraints in \eqref{eq:contraints_Rm_intro}, together with the invariances \eqref{eq:boundary_symmetries_intro}--\eqref{eq:gauge_group_Rm}. To illustrate how these nonlinear relations give rise to the identities in \eqref{eq:order_reduction_intro}, we outline the argument up to lower-order terms; thus, it suffices to assume, without loss of generality, that $\Gamma = 0$.

For $\alpha = 0$, applying the chain rule and linearizing both \eqref{eq:gauge_group_Rm} and \eqref{eq:boundary_symmetries_intro} yield, for every $X \in \mathfrak{X}_{M}$ with $\bB_{0}X = X|_{\dM} = 0$, the relations
\beq
\begin{aligned}
&\bD_{1}\bD_{0}X = 2 \D\Rm_{g}\Def{X} = \dertZero \underbrace{\Rm_{\varphi_{t}^*g}}_{=\varphi^*_{t}\Rm_{g}}=\calL_{X} \Rm_{g} \qquad \text{(a l.o.t. in } X\text{)}, \\[6pt]
&\bB_{1}\bD_{0}X = S^{-1}_{g}(\PttG \oplus \D\Ah_{g})\Def{X} = S^{-1}_{g}\dertZero\underbrace{(\PttD\varphi_{t}^*\gD\oplus \Ah_{\varphi^*_{t}g})}_{=\mathrm{const}}=0.
\end{aligned}
\label{eq:order_reduction_gauge}
\eeq
Here we have used the fact that the Killing operator is defined by
\[
\Def{X}=\tfrac{1}{2}\dertZero\varphi_{t}^*g.
\]  
Similarly, for $\alpha = 1$, we consider variations of metrics with fixed pullback metric $\gD$ and second fundamental form $\Ah_{g}$; namely, variations whose variational field $\sigma$ satisfies $\frakB_{1} \sigma = 0$, equivalently, $\PttD \sigma = 0$ and $\D\Ah_{g} \sigma = 0$. These boundary data can be smoothly imposed by means of the implicit function theorem, since the linearization of the boundary operators $\PttD \oplus \D\Ah_{g}$ is a submersion (cf.\ \cite[Prop.~21]{Led25B}). Applying the chain rule to the constraints \eqref{eq:contraints_Rm_intro} for such variations yields
\beq
\begin{aligned}
\bD_{2} \bD_{1} \sigma
&= \dBianchi \D_{\Gamma}\Rm_{g} \sigma
= \dertZero \underbrace{d_{g+t\sigma} \Rm_{g+t\sigma}}_{=\,0}
+ \Gamma\sigma + \text{l.o.t.\ in } \sigma, \\[6pt]
\bB_{2} \bD_{1} \sigma
&= \PtD \D_{\Gamma}\Rm_{g} \sigma
= \dertZero
\underbrace{
\bigl(\PttD_{g+t\sigma}\Rm_{g+t\sigma}+\PttD\Gamma\sigma,
\PtnD_{g+t\sigma}\Rm_{g+t\sigma}+\PtnG\Gamma\sigma\bigr)
}_{=\,(\PttD\Gamma\sigma,\PtnG\Gamma\sigma+\text{l.o.t.\ in } \sigma)}
\\&\qquad\qquad\qquad= (\mathrm{const},\text{l.o.t.\ in } \sigma).
\end{aligned}
\label{eq:order_reduction_curvature}
\eeq
In the second identity, we have used the fact that the tangential projection $\PttD$ is independent of the Riemannian metric, whereas the projection into the $\PtnD_{g}$ argument implicitly depends on the normal. Thus, for the identity $\bB_{2}\bD_{1} = 0$ to hold, we need to impose the extra conditions solely on the tensorial error term: 
\beq
\PttD\Gamma\sigma=0 \textand \PtnG\Gamma\sigma = -\PtnD_{g+t\sigma}\Rm_{g+t\sigma} \quad \text{for } \sigma\in\ker(\PttD, \D\Ah_{g}).
\label{eq:Gamma_curvature}
\eeq
We emphasize that this condition is tensorial in $\sigma$, since the resulting terms are of lower order by virtue of the linearized constraints above. Thus, they remain, in a sense, within the scope of the order-reduction property. For example, when $\Rm_{g}|_{\dM}=0$ and $\Gamma|_{\dM}=0$, as in the setting of \thmref{thm:Riem_intro}, or, more generally, when $\Gamma\sigma = \Gamma_{g}(\Rm_{g}, \sigma)$ arises from a specifically designed tame connection on $\scrM(M)\times\Ckm{2}{2}$ (cf.\ \cite[pp.~92--93]{Ham82}), this condition is satisfied trivially. This is how the relation $\Rm_{g}|_{\dM}=0$ in this theorem may be relaxed. We emphasize that the requirement \eqref{eq:Gamma_curvature} does not impose any other condition on $\Gamma$ or on the metric $g$. See also the discussion in \cite[Sec.~1]{Led25B} for a related situation in the linearized Ricci curvature equations.
\end{enumerate}

Now that \eqref{eq:systems_curvature_intro} is established as an elliptic pre-complex, we translate the cohomological formulation for \eqref{eq:linearized_prescribed_curvatureIntroII} obtained from \thmref{thm:DDintro} into a solvability and uniqueness statement for \eqref{eq:linearized_prescribed_curvatureIntroII}.

Let $\fbD_{2} = \mathpzc{d}_{g}$ denote the lifted version of $\bD_{2} = \dBianchi$ arising from the elliptic pre-complex. Observe that, directly from the expressions \eqref{eq:cohomology_groupsDIntroPro}, which in turn are computed from the original pre-complex \eqref{eq:systems_curvature_intro}, we have:
\beq 
\begin{split}
& \module_{\D}^0 = \ker(\Def,|_{\partial M}) = \left\{ 0 \right\}, \\
& \module_{\D}^1 =\mathscr{B}_{\DD}^{1}(M,g,\Gamma):= \ker(\delBianchi,\D_{\Gamma}\Rm_{g}, \PttD ,\D\Ah_{g}), \\
& \module_{\D}^2 =\mathscr{B}_{\DD}^{2}(M,g,\Gamma):= \ker((\D_{\Gamma}\Rm_{g})^*,\dBianchi, \PtD).
\end{split}
\label{eq:curvature_cohomologies_DD}
\eeq 

Here, the second identity follows from the well-known fact that there are no nontrivial isometries fixing the boundary \cite{And08,Hin24}. For the purposes of the present discussion, concerning the cohomological formulation of \eqref{eq:linearized_prescribed_curvatureIntroII}, we note that \thmref{thm:DDintro}, in the case $\alpha = 1$, takes the following form (which also recovers \eqref{thm:Riem_intro} when $\Rm_{g}|_{\dM}=0$ and $\Gamma=0$ by identifying that $\scrC^{2,2}_{\DD}(M):=\Ckm{2}{2}\cap\ker(\PttD,\D\Ah_{g})$ etc.):
\begin{theorem}
\label{thm:curvature_prescription_intro}
In the setting above, given $T \in \Ckm{2}{2}$, the boundary-value problem
\[
\begin{aligned}
& \D_{\Gamma}\Rm_{g}\sigma=T, \qquad &&\delBianchi \sigma = 0 \quad &&&& \text{in } M, \\
& \PttD\sigma= 0, \qquad &&\D\Ah_{g}\sigma=0 \quad &&&& \text{on } \dM
\end{aligned}
\]
admits a solution $\sigma \in \Ckm{1}{1}$ if and only if
\[
\mathpzc{d}_g T = 0, 
\qquad 
\PtD T = 0, 
\qquad 
T \perp_{L^2} \mathscr{B}_{\DD}^{2}(M,g,\Gamma).
\]
The solution is unique modulo $\mathscr{B}^{1}_{\DD}(M,g,\Gamma)$.
\end{theorem}

A corresponding statement in weaker Sobolev regularity is self-implied.

\include{Acknowweldgments}
\chapter{Technical Setup}
\label{chp:tech_setup}
\section{Preliminaries}
\label{sec:Preliminaries}
\subsection{Notation and basic notions}
$M$ shall stand for a compact, orientable smooth manifold with boundary. The boundary may be empty, in which case $M$ is a closed manifold. Given a fiber bundle $\bbE\rightarrow M$, denote by $\Gamma(\bbE)$ the space of sections over $M$. Although sometimes this abstract notation is used, in practice, the sole case of interest of fiber bundles in this work are tensor bundles (and subbundles of them), which are a particular case of vector bundles \cite[Ch.~10--12]{Lee12}.

Prime examples of section spaces considered in this paper are sections of symmetric $(2,0)$ tensors, denoted in the literature usually by $S^{2}(M)$ \cite{BE69,Ebi70}; double forms $\Wkm{k}{m}=\Gamma(\Lkm{k}{m})$  and in particular scalar differential forms $\Omega^{k}(M)=\Wkm{k}{0}$ (\cite{Cal61,Kul72} and \secref{sec:KillingHessianIntro}); and Bianchi forms $\Ckm{k}{m}=\Gamma(\Gkm{k}{m})$ \cite{Kul72}. Note that $\Ckm{1}{1}$ coincides with $S^{2}(M)$, and this is the notation we shall mostly use in this work. See \secref{sec:Appendix} for more details on Bianchi forms. 

Topological vector spaces are also considered: of type Hilbert, Banach, and Fréchet, and the intermediate category of tame Fréchet spaces \cite[Sec.~II, p.~133]{Ham82}. Refering to \cite{Ham82} for the technical details, these are Fréchet spaces whose topology results from a grading of Banach spaces satisfying desirable properties. Manifolds can be modeled on these topological vector spaces, resulting in corresponding categories of infinite dimensional manifolds (\cite[Ch.~2]{Lan01} and \cite[Sec.~2.3, p.~146]{Ham82}). To distinct from finite dimensional manifolds, infinite dimensional manifolds will be denoted by scripture, $\mathscr{X}$, $\mathscr{Y}$ etc. 
Infinite dimensional vector bundles (whether Hilbert, Banach \cite[Ch.~3]{Lan01} or (tame) Fréchet \cite[p.~150]{Ham82}) will also be denoted by scripture, e.g., $\mathscr{E}\rightarrow \mathscr{X}$. 

Section spaces are the most immediate example of tame Fréchet spaces \cite[p.~139]{Ham82}, with the required grading provided by the hierarchy of their Sobolev completions, which are Hilbert and Banach spaces (see the sequel \secref{sec:L2_sobolev} for definitions). The most important example of an open infinite dimensional manifold in this paper, other than section spaces, is the space of all Riemannian metrics over $M$. In the notations of \cite{BE69,Ebi70}, we denote this space by $\scrM_{M}$.

Consider also mappings between open subsets of infinite dimensional manifolds \cite[pp.~69-70]{Ham82}. Given infinite dimensional manifolds $\mathscr{X}$, $\mathscr{Y}$, whether they are Hilbert, Banach or tame Fréchet, and an open set $\mathscr{U}\subseteq\mathscr{X}$, let
\[
\bD:(\mathscr{U}\subseteq\mathscr{X})\rightarrow\mathscr{Y}
\] 
to emphasize the fact that the map $\bD$ is defined only on an open subset of $\mathscr{X}$. The prime example of such mappings are differential operators, generally nonlinear ones, parameterized by open sets of manifolds, as discussed in more detail e.g., in \cite[Sec.~II.2, p.~140]{Ham82}. Our primary case of interest will be families of linear maps \cite[pp.~70, 150]{Ham82}, which are mappings $\bD:(\mathscr{U}\times\mathscr{V}\subset\mathscr{X})\rightarrow\mathscr{Y}$ such that $\mathscr{V}$ and $\mathscr{Y}$ are topological vector spaces, and
\[
\Psi\mapsto\bD(\gamma)\Psi
\]
is a linear map for every $\gamma\in\scrU$, grouping the parentheses to reflect the linearity. When more convenient, and when there is no ambiguity, we shall instead use the notation employed \secref{sec:opening} for the nonlinear argument of $\bD$, i.e., 
\[
(\gamma,\Psi) \mapsto \bD_{\gamma} \Psi.
\]  
Given a smooth curve $x : (-\epsilon, \epsilon) \rightarrow \mathscr{X}$ with $x(0) = \gamma$ and $\dot{x}(0) = \Psi \in T_{\gamma}\mathscr{X}$, the \emph{linearization} (also called the \emph{differential} or \emph{Fréchet derivative}) of a map $\bD : (\scrU \subseteq \mathscr{X}) \rightarrow \mathscr{Y}$ at $\gamma\in \scrU$ in the direction of $\Psi \in T_{\gamma}\mathscr{X}$ is the element of $T_{\bD(\gamma)}\mathscr{Y}$ defined, if the limit exists, by
\beq
\D\bD_{\gamma}\Psi =\D\bD(\gamma)\Psi:=\dertZero \bD(x(t)).
\label{eq:D_dertivate}
\eeq
The expression in \eqref{eq:D_dertivate} is well defined, independent of the choice of curve $x(t)$ that satisfies $x(0) = \gamma$ and $\dot{x}(0) = \Psi$, and yields a continuous linear map $\D\bD(\gamma) : T_{\gamma}\mathscr{X} \rightarrow T_{\bD(\gamma)}\mathscr{Y}$ between topological vector spaces. In fact, the derivative defines a a smooth homomorphism of vector bundles over $\mathscr{X}$:
\[
\D\bD : T\mathscr{X}|_{\scrU} \rightarrow \bD^{*}T\mathscr{Y},
\] 
where $\bD^{*}T\mathscr{Y}$ is the pullback bundle of $T\mathscr{Y}$. A map is called \emph{smooth} if all its iterated derivatives exist and are continuous. 
In certain cases, the fibers $T_{\gamma}\mathscr{X}$ can be represented as product spaces. When this is possible, if $(\Psi_{1}, \Psi_{2}, \dots) \in T_{\gamma}\mathscr{X}$, we emphasize the linearity by writing
\[
T\mathscr{X}|_{\scrU} \ni (\gamma, \Psi_{1}, \Psi_{2}, \dots) \mapsto \D\bD(\gamma)\{\Psi_{1}, \Psi_{2}, \dots\}. 
\]
There is also a notion of partial derivatives for maps between manifolds \cite[p.~79]{Ham82}, where the construction involves differentiating with respect to only one of the variables. Specifically, for the case of interest here, if $\bD : (\scrU \times \scrW \subset \mathscr{X}) \rightarrow \mathscr{Y}$ is itself a family of linear maps operating as $(\gamma, \Psi) \mapsto \bD_{\gamma}\Psi$, then its partial derivative with respect to the $\gamma$-variable in the direction of $\sigma \in T_{\gamma}\scrU$ is denoted by
\beq
(\sigma, \Psi) \mapsto \D_{\sigma}\bD_{\gamma}\Psi = \dertZero \bD(x(t))\Psi.
\label{eq:partial_derivative}
\eeq
A map $\bD : (\scrU \subset \mathscr{X}) \rightarrow \mathscr{Y}$ is called \emph{tame} if it satisfies a tame estimate in a neighborhood of each point, as defined in \cite[p.~140]{Ham82}. A map is called a \emph{tame smooth map} if it is smooth and all its derivatives are tame \cite[p.~143]{Ham82}. Tameness is closed under composition. In the specific context of a family of linear maps $\bD : (\scrU \times \scrW \subset \mathscr{X}) \rightarrow \mathscr{Y}$ (cf. \cite[Lem.~2.17, p.~143]{Ham82}) the tame estimate amount to, for every $\gamma_0 \in \scrU$, the existence of a constant $C > 0$ independent of $\gamma$ and $\Psi$ such that in a neighborhood of $\gamma_0$:
\beq
\|\bD(\gamma)\Psi\|_{n} \leq C (1 + \|\gamma\|_{n+m})\|\Psi\|_{n+b},
\label{eq:tame_estimate}
\eeq
where $\|\cdot\|_{n}$, $\|\cdot\|_{n+m}$, and $\|\cdot\|_{n+b}$ are appropriate norms in the tame gradings of the domain and codomain, with $n \in \Nzero$ and $m,b\in\bbZ$. 


\subsection{Pseudodifferential boundary-value problems}
\label{sec:per_pseudo}
\subsubsection{Sobolev spaces} 
\label{sec:L2_sobolev}
For notation in this section, let  $(\tM,\tg)$ be a closed $d$-dimensional Riemannian manifold, endowed with a volume form $d\Volume\in\Omega_{\tM}^d$. Let $M \hookrightarrow \tM$ be a compact embedded submanifold of the same dimension having a smooth boundary. Since every compact Riemannian manifold with smooth boundary can be embedded in its closed double \cite[p.~226]{Lee12}, henceforth every compact Riemannian manifold with smooth boundary $M$ is viewed as smoothly embedded in a closed ambient Riemannian manifold $\tM$. Let $d\Volume_{\partial}\in \Omega_{\partial M}^{d-1}$ be a volume form over the boundary. 

Let $\tE,\tF\to\tM$ be Riemannian vector bundles over $\tM$, with fiber metrics $g=g_{\tE},g_{\tF}$; denote by $\E=\tE|_M$ and  $\bbF=\tF|_M$ the pullback bundles, which are vector bundles over $M$. Let $\bbJ,\bbG\to \dM$ be Riemannian vector bundles over $\dM$ with fiber metric $g_{\partial}=g_{\bbJ},g_{\bbG}$. We denote by $L^{p}\Gamma(\tE)$ the space of all $L^{p}$-sections. For $\psi\in L^p\Gamma(\tE)$ and $\eta\in L^{q}\Gamma(\tE)$, $1<p<\infty$, denote their $L^p$---$L^{q}$ coupling by
\[
\bra\psi,\eta\ket = \int_{\tM} (\psi,\eta)_{g}\,d\Volume.
\]
The same notation is used for the coupling associated with sections over $M$.
Likewise, for $\rho\in L^p\Gamma(\bbG)$ and $\tau\in L^{q}\Gamma(\bbG)$, denoted the induced $L^p$---$L^{q}$ coupling  on the boundary $\dM$ by
\[
\bra\rho,\tau\ket = \int_\dM (\rho,\tau)_{g_{\partial}}\, d\Volume_{\partial}. 
\]

The definition of Sobolev sections of vector bundles, $W^{s,p}\Gamma(\tE)$ defined for $s\in\bbR$ and $1<p<\infty$, goes through first defining scalar-valued Sobolev functions on $\bbR^d$, then on domains $\Omega\subset\bbR^d$, and then on closed manifolds by means of partitions of unity and coordinate charts. Finally, Sobolev sections of vector bundles over closed manifolds are defined \cite[Sec.~1.2.1.2]{RS82}.

There are several variants of Sobolev spaces. The spaces $H^{s,p}\Gamma(\tE)$ (also known as \emph{Bessel-potential} spaces) are defined for every $s\in\bbR$ and $1<p<\infty$ by means of the Fourier transform
\cite[pp.~42--46]{RS82}, \cite[pp.~291--293]{Gru90} . 
For $s\in\Nzero$, $H^{s,p}\Gamma(\tE)$ is the completion of $\Gamma(\tE)$ with respect to the Sobolev norm,
\[
\|\psi\|_{s,p}=\sum_{|\alpha|\leq s}\|\nabla^{\alpha}\psi\|_{L^{p}}
\]
where $\nabla$ is the Levi-civita connection on $\tE$, and the $L^{p}$ norm is the one induced by the fiber metric. 

Our goal is to pass to manifolds with boundary, where trace theorems are being invoked. For $s\in\bbR_{+}\setminus\Nzero$, the spaces $H^{s,p}\Gamma(\tE)$ are insufficient for these theorems to hold. This is where \emph{Besov spaces} $B^{s,p}\Gamma(\tE)$, $s\in\bbR$ and $1<p<\infty$, 
come in \cite[p.~293]{Gru90}, \cite[pp.~45--46]{RS82}. As in \cite{Gru90}, we set
\[
W^{s,p}\Gamma(\tE)=\begin{cases}
H^{s,p}\Gamma(\tE) & s\in\bbZ 
\\ B^{s,p}\Gamma(\tE) & s\in \bbR\setminus \bbZ. 
\end{cases}
\] 
For $s<0$, we note that the spaces $W^{s,p}\Gamma(\tE)$ consists of distributions. We define \cite[pp.~294--297]{Gru90},
\[
\begin{split}
&W^{s,p}\Gamma(\E) = W^{s,p}\Gamma(\tE)  / \{\omega \in W^{s,p}\Gamma(\tE) ~:~ \supp \omega \subseteq \overline{\tM\setminus M}\},
\end{split}
\]
and
\[
\begin{split}
W_0^{s,p}\Gamma(\E) = \{\omega\in W^{s,p}\Gamma(\tE) ~:~ \supp\omega \subseteq M\}.
\end{split}
\]
For $1/p+1/q=1$ \cite[p.~296]{Gru90}, 
\[
(W^{s,q}_0\Gamma(\bbE))^*\simeq W^{-s,p}\Gamma(\bbE),
\]
where the identification is given by the continuous extension of the pairing $\psi\mapsto \bra\psi,\cdot\ket$, which defines a dense inclusion of $\Gamma_{c}(\bbE)$ into $W^{s,q}_{0}(\bbE)$. 
\subsubsection{The calculus of pseudodifferential boundary-value problems} 
A differential operator  $\Gamma(\tE)\to\Gamma(\tF)$ is a linear map that can be represented as an $\bbR^{N_1}\to \bbR^{N_2}$ differential operator in any local trivializations of $\tE$ and $\tF$.
Since this definition is local, it extends to linear maps $\Gamma(\E)\to\Gamma(\bbF)$ and boundary differential operators $\Gamma(\E)\to\Gamma(\bbG)$. 

On closed manifolds $\tM$, differential operators are the prominent example of a larger class of continuous linear maps between Fr\'echet spaces
\[
A:\Gamma(\tE)\to  \Gamma(\tF),
\]
known as \emph{pseudodifferential operators}. Such an operator is characterized by its \emph{order} $m \in \mathbb{R}$, which generalizes the order of a differential operator. By definition, a pseudodifferential operator of order $m$ is also of any order greater than $m$. We define the \emph{sharp order} of $A$ as the minimal $m$ for which $A$ is of order $m$.  Determining the sharp order of a pseudodifferential operator (and, more generally, a larger class of operators) is a central theme in this paper. 

Pseudodifferential operators are closed under composition and always admit a \emph{formal adjoint}, which is a pseudodifferential operator of the same order, well defined by the formula:
\beq
\bra A\psi,\eta\ket=\bra \psi,A^*\eta\ket, \qquad \psi\in\Gamma(\tE),\eta\in\Gamma(\tF)
\label{eq:adjoint_neigh_def}
\eeq
Pseudodifferential operators are generally defined on a manifold without boundary. There is a subclass of pseudodifferential operators over $\tM$ that truncate ``nicely" to $M$. Such operators were introduced by H\"ormander \cite[p.~105]{Hor03}, and are known as pseudodifferential operators $A:\Gamma(\tE)\rightarrow\Gamma(\tF)$ that have the \emph{transmission property} with respect to $\dM$. The truncation to $M$ is denoted by $A_{+}:\Gamma(\bbE)\rightarrow\Gamma(\bbF)$. The space of operators having the transmission property is closed under adjoints, i.e., if $A$ has the transmission property then $A^*$ also have the transmission property, and it holds that: 
\[
\bra A_+\psi,\eta\ket =\bra \psi, (A^*)_+\eta\ket
\qquad
\text{for every $\psi\in\Gamma(\E)$  and $\eta\in\Gamma_c(\bbF)$}.
\]

For introduction of boundary operators, denote by $\rho_N:\Gamma(\E)\to (\Gamma(\jmath^*\E))^N$ the standard trace operator: 
\[
\rho_N\psi=(\D_{\frakn}^0\psi,\D_{\frakn}\psi,...,\D_{\frakn}^{N-1}\psi),
\]
where $\D_{\frakn}$ is the normal covariant derivative, (which is well-defined in a collar neighborhood of $\dM$, hence can be iterated) evaluated at the boundary, and $\D_{\frakn}^0$ is the trace on the boundary; the choice of connection on $\E$ is immaterial. 
A \emph{trace operator} $T$ of \emph{order} $m \in\bbR$ and \emph{class} $r\in\Nzero$ is a continuous linear map $T:\Gamma(\E)\to\Gamma(\bbG)$ of the form
\beq
T = \sum_{j=0}^{r-1} S_j\D_{\frakn}^j + \jmath^*Q_+,
\label{eq:TraceOp}
\eeq 
where $S_j:\Gamma(\jmath^*\bbE)\rightarrow\Gamma(\bbG)$ is a pseudodifferential operator on the boundary (which is a closed manifold) and $Q$ is a certain operator with the transmission property of order $m$ \cite[pp.~27--28, 33]{Gru96}. The operator $\rho_m$ above is an instance of a trace operator of order $m-1$ and class $m$. The class of trace operators can be extended to negative values \cite[pp.~309--311]{Gru90}. In simple terms, $T$ is of class $-r$ if it has zero class as defined above, and in addition the trace operator $T \D_{\frakn}^r$ has zero class.

Next we introduce the calculus of boundary-value problems, originating in work by Boutet de Monvel \cite{Bou71}. A \emph{Green operator} (\cite[pp.~169---173, Sec.~2.3.3]{RS82} and \cite[p.~315]{Gru90}) of order $m\in\bbR$ and class $r\in\bbZ$ is a system of operators $\bD$, which can be written in matrix form as
\beq
\bD=\begin{pmatrix}
A_++G & K \\
T & Q
\end{pmatrix}: 
\mymat{\Gamma(\E) \\\oplus\\ \Gamma(\bbJ)}
\longrightarrow  
\mymat{\Gamma(\bbF)  \\\oplus\\ \Gamma(\bbG)}.
\label{eq:full_Green_operator}
\eeq
Besides the elements $K$ and $G$, all of these operators belong to classes of operators that have already been introduced: $A$ is an operator with the transmission property of order $m$, $T$ is a trace operator of order $m-1$ and class $r$, and $Q$ is a pseudodifferential operator of order $m$. 

The operator $K: \Gamma(\bbJ) \to \Gamma(\bbF)$ belongs to the class of \emph{Poisson operators}, which arise as extension operators for boundary data (e.g., the solution operator to the Poisson problem \cite[Ch.~5.1]{Tay11b} or the right inverse of the trace operator \cite[Prop.~1.6.5, p.~80]{Gru90}). It possesses only an order---in this case, $m$---and maps boundary sections to interior sections. Poisson operators also arise as the ``adjoints" of trace operators of order $m-1$, specifically when these trace operators have zero class \cite[pp.~29--30]{Gru96}.

The operator $G$ is referred to as a \emph{singular Green operator}, a certain class of non-pseudodifferential operators \cite[pp.~30--32]{Gru96}. Like trace operators, they are characterized by both an order and a class. Singular Green operators were introduced to establish good composition rules \cite[p.~152]{RS82}. In \eqref{eq:full_Green_operator}, the singular Green operator $G$ is assumed to be of order $m-1$ and class $r \in \bbZ$. The adjoint of a Green operator with class $0$ is well-defined and is itself a singular Green operator of class $0$ with the same order. However, in general, operators with class $r > 0$ are not $L^{p}$-continuous and thus do not admit adjoints.

It is worth noting that there are differing conventions in the literature regarding the order of the trace operator $T$ and the singular Green operator $G$ in the matrix—specifically, whether they are defined as having order $m$ (e.g., in \cite{Gru90,Gru96}) or $m-1$ (e.g., \cite[Sec.~2.3.3.1, p.~169]{RS82}). Each approach has its advantages, ultimately leading to equivalent theoretical outcomes through the use of \emph{order-reducing operators}. For reasons discussed later in detail, we adopt the convention that the orders of $T$ and $G$ are $m-1$, and will interpret results cited from works assuming orders of $m$ in this light.

Omitting the notation of the vector bundles on which the Green operators are defined when there is no ambiguity, we let $\OP(m, r)$ denote the space of all Green operators of order $m \in \mathbb{Z}$ and class $r \in \mathbb{Z}$ (referred to as $\mathfrak{G}^{m, d}$ or $\OP(\mathfrak{S}^{m, d})$ in \cite[pp.~171--174]{RS82}). Note that, as with pseudodifferential operators, the terms “of order” and “of class” allow for elements in $\OP(m, r)$ to also belong to lower orders or classes. When these exist, the minimal such numbers are referred to as the \emph{sharp} order and \emph{sharp} class.

For reference's sake, we note that we will always assume that Green operators are \emph{classical} (as termed in \cite{RS82}) or \emph{polyhomogeneous} (as termed in \cite{Gru96}), meaning that they are associated with a certain asymptotic expansion in terms of homogeneous components. Let also $\OP(-\infty,r)=\bigcap_{m}\OP(m,r)$ and $\OP(-\infty,-\infty)=\bigcap_{m,r}\OP(m,r)$, the class of \emph{smoothing} operators. The union ${\bigcup}_{m, r} \OP(m, r)$ is designed to be closed under composition in the following manner:
\begin{theorem}[Composition rules]
\label{thm:composition_Green_operators}
Let $\bD_{i}\in\OP(m_i,r_i)$, $i\in\BRK{1,2}$. Then $\bD_{2}\bD_{1}\in \OP(m,r)$ where $m=m_1+m_2$ and $r=\max(r_2+m_1,r_1)$. 
\end{theorem}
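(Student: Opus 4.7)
The plan is to exploit the matrix structure \eqref{eq:full_Green_operator} and reduce the claim to the elementary composition rules between the five constituent operator classes in the Boutet de Monvel calculus: interior pseudodifferential operators with the transmission property, Poisson operators, trace operators, singular Green operators, and boundary pseudodifferential operators. First I would write
\[
\bD_{2}\bD_{1}=\begin{pmatrix}
(A_{2,+}+G_{2})(A_{1,+}+G_{1})+K_{2}T_{1} & (A_{2,+}+G_{2})K_{1}+K_{2}Q_{1}\\
T_{2}(A_{1,+}+G_{1})+Q_{2}T_{1} & T_{2}K_{1}+Q_{2}Q_{1}
\end{pmatrix},
\]
and verify that each entry fits the pattern of the corresponding slot of a Green operator of order $m_{1}+m_{2}$ with class $r=\max(r_{2}+m_{1},r_{1})$.

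Next I would appeal to the well-known elementary rules (see \cite[Sec.~2.3.3]{RS82} and \cite[Ch.~2.6--2.7]{Gru96}) for each composition. The crucial one is the identity
\[
A_{2,+}A_{1,+}=(A_{2}A_{1})_{+}+L(A_{2},A_{1}),
\]
where $(A_{2}A_{1})_{+}$ is the truncation of the pseudodifferential composition (of order $m_{1}+m_{2}$) and $L(A_{2},A_{1})$ is the Boutet de Monvel \emph{leftover} singular Green operator of order $m_{1}+m_{2}-1$ and class at most $m_{1}$. The remaining rules to collect are: $A_{2,+}G_{1}$, $G_{2}A_{1,+}$ and $G_{2}G_{1}$ are singular Green of order $m_{1}+m_{2}-1$; $K_{2}T_{1}$ is singular Green of order $m_{1}+m_{2}-1$ and class $r_{1}$; $T_{2}A_{1,+}$ and $T_{2}G_{1}$ are trace operators of order $m_{1}+m_{2}-1$; $Q_{2}T_{1}$ is a trace operator; $A_{2,+}K_{1}$, $G_{2}K_{1}$ and $K_{2}Q_{1}$ are Poisson of order $m_{1}+m_{2}$; and $T_{2}K_{1}$, $Q_{2}Q_{1}$ are boundary pseudodifferential operators of order $m_{1}+m_{2}$.

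The main obstacle is the careful bookkeeping of the class, which is the only nontrivial aspect of the statement. In contrast to the order, which adds as expected from standard pseudodifferential calculus, the class of the product cannot be read off from the classes of the two factors alone: whenever $A_{1,+}$ appears on the right, applying normal derivatives inherent in an operator of class $r_{2}$ (such as $T_{2}$ or $G_{2}$) to $A_{1,+}\psi$ reintroduces normal derivatives of $\psi$ up to order $r_{2}+m_{1}$, after commuting $D_{\frakn}^{j}$ through the truncation via Green's formula and reducing to boundary symbols. This is precisely the source of the contribution $r_{2}+m_{1}$. The terms in $\bD_{1}$ that do not get pre-composed with $A_{1,+}$ retain their original class $r_{1}$, so the two contributions combine to give the maximum.

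Finally I would assemble the entries: the $(1,1)$ entry is the sum of $(A_{2}A_{1})_{+}$ (an interior pseudodifferential piece of order $m_{1}+m_{2}$ with the transmission property) and a singular Green operator of order $m_{1}+m_{2}-1$ and class $\max(r_{2}+m_{1},r_{1})$; the $(1,2)$ entry is a Poisson operator of order $m_{1}+m_{2}$; the $(2,1)$ entry is a trace operator of order $m_{1}+m_{2}-1$ and class $\max(r_{2}+m_{1},r_{1})$; and the $(2,2)$ entry is a boundary pseudodifferential operator of order $m_{1}+m_{2}$. This matches precisely the definition of $\OP(m,r)$ with $m=m_{1}+m_{2}$ and $r=\max(r_{2}+m_{1},r_{1})$, completing the argument.
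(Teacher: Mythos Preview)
The paper does not supply its own proof of this theorem: it is stated as a background fact from the Boutet de Monvel calculus, with implicit reference to \cite{RS82,Gru96}. Your sketch follows exactly the standard textbook argument found in those references---expanding the matrix product, invoking the leftover term $L(A_{2},A_{1})$, and tracking the class through the terms with $A_{1,+}$ on the right---so there is nothing to compare against and your approach is the expected one.
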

Note that by setting various terms in $\bD$ to zero, we can focus on $\bD$ of any of the following forms:
\beq
\begin{gathered} 
 \begin{pmatrix}
A_+ & 0 \\ 0 & 0
\end{pmatrix}, \quad \text{or} \quad \begin{pmatrix}
0 & 0 \\ 0 & Q
\end{pmatrix}, 
\\ 
 \begin{pmatrix}
G & 0 \\ 0 & 0
\end{pmatrix}, \quad \text{or} \quad \begin{pmatrix}
0 & K \\ 0 & 0
\end{pmatrix}, \quad \text{or} \quad \begin{pmatrix}
0 & 0 \\ T & 0
\end{pmatrix}.
\end{gathered} 
\label{eq:isolated_Green_operator}
\eeq
Applying \thmref{thm:composition_Green_operators} on these specific Green operators, we retain composition rules for each of the different classes of operators. There are sixteen such rules in total. When both $T = 0$ and $G = 0$, the situation can be interpreted as $r = -\infty$, meaning there is no class.
\subsubsection{The symbol of a Green operator} 
\label{sec:principle_symbol_green} 
Green operators in $\OP(m, r)$ are associated with a well-defined \emph{principal symbol} \cite[p.~174]{RS82}:
\beq
\sigma(\bD) = \sigma_M(\bD) \oplus \sigma_\dM(\bD),
\label{eq:symbol_Green_operator} 
\eeq
where
\[
\sigma_M(\bD)(x,\xi) = \sigma_A(x,\xi): \E_x \to \bbF_x, \qquad x \in M,\, \xi \in T_{x}^*M
\]
is the interior symbol of $A$ in \eqref{eq:full_Green_operator}, as truncated from $\tM$. The second summand, $\sigma_\dM(\bD)(x,\xi)$, defined for each $x \in \dM$ and $\xi \in T_x^*\dM$, is the \emph{boundary symbol} of $\bD$.

The boundary symbol is an equivalence class of continuous linear map:
\beq
\begin{split}
&\sigma_\dM(\bD)(x,\xi): \mymat{\scrS(\overbar{\bbR}_+;\bbC \otimes \E_x) \\\oplus\\ \bbC \otimes \bbJ_x} \longrightarrow \mymat{\scrS(\overbar{\bbR}_+;\bbC \otimes \bbF_x) \\\oplus\\ \bbC \otimes \bbG_x},
\end{split}
\label{eq:boundary_symbol_Green} 
\eeq
where, for a vector bundle $\bbU \to M$, $\scrS(\overbar{\bbR}_+;\bbC \otimes \bbU_x)$ denotes the space of $\bbC \otimes \bbU_x$-valued Schwartz functions on the half-line $\overbar{\bbR}_+ = \{s \in \bbR~:~s \geq 0\}$. Note that by replacing $\bD$ with any isolated operator in the calculus, as in \eqref{eq:isolated_Green_operator}, we retain a separate notion of principal symbol for each class of operators in the calculus. 

In particular, for the operators $A$ and $Q$ in \eqref{eq:isolated_Green_operator}, the determination of $\sigma(\bD)$ is equivalent to determining the symbols $\sigma_A$ and $\sigma_Q$, respectively, as pseudodifferential operators over the manifolds without boundary $\tM$ and $\dM$. In view of this, we denote these symbols by $\sigma(A)(x,\xi)$ for $x \in M$ and $\xi \in T^*_{x}M$, and by $\sigma(Q)(x,\xi)$ for $x \in \partial M$ and $\xi \in T^*_{x}\partial M$, whenever there is no ambiguity.

A general definition of the boundary symbol of an arbitrary $\bD\in\OP(m,r)$ can be found in \cite[pp.~23--34]{Gru96} and \cite[p.~174]{RS82}. For our purposes, we outline the construction of the boundary symbol of a Green operator $\bD$ when it takes the form  
\beq
\bD = \begin{pmatrix} A_{+} & 0 \\ T & Q \end{pmatrix},  
\label{eq:Douglis_Nirenberg_Adapted}
\eeq  
where $A_{+}, T, Q$ are differential operators. In this specialized setting, let $x \in \dM$ and write $\xi \in T^*_xM$ in the form $\xi = \xi' + \xi_d \, dr$, where $\xi' \in T_x^* \dM$ and $dr$ is the unit covector normal to the boundary, so that $\xi_d \in \mathbb{R}$ is the normal component of $\xi$.
Consider the map
\[
\begin{pmatrix} \sigma(A)(x, \xi' + \xi_d \, dr) & 0 \\
\sigma(T)(x, \xi' + \xi_d \, dr) & \sigma(Q)(x, \xi)
\end{pmatrix} : \begin{matrix} \E_x \\\oplus\\ \bbJ_{x} \end{matrix} \longrightarrow \begin{matrix} \bbF_x \\\oplus\\ \bbG_x \end{matrix}
\]
where $\sigma(T)(x, \xi' + \xi_d \, dr)$ is obtained from \eqref{eq:TraceOp} by \cite[p.~27]{Gru96}
\[
\sigma(T)(x, \xi' + \xi_d \, dr) =
\sum_{0 \leq j < m} \xi_d^{m-j} \, \sigma_{S_j}(x, \xi').
\]
Note that we abuse notation here, and the endomorphism $\sigma(T)(x,\xi' + \xi_d\,dr)$ is not the symbol of the trace operator $T$ when considered in an isolated matrix as in \eqref{eq:isolated_Green_operator}, but it will be used to construct the latter momentarily.  

If one considers $\xi_d \in \mathbb{R}$ as an independent variable, then the restriction of this map to $\mathbb{E}_{x}$ can be extended to operate on complexified vector-valued functions, 
\[
F : \operatorname{Func}(\mathbb{R}; \mathbb{C} \otimes \mathbb{E}_x) \to  \begin{matrix} \operatorname{Func} \brk{\mathbb{R}; \mathbb{C} \otimes \mathbb{F}_x }\\\oplus\\ \mathbb{C} \otimes \mathbb{G}_x \end{matrix},
\]
given by
\[
F(\psi)(t) =
\begin{pmatrix} \sigma(A)(x, \xi' + t \, dr) \psi(t) \\ \sigma(T)(x, \xi' + t \, dr) \psi(t) \end{pmatrix}. 
\]
We then perform, formally, a one-dimensional Fourier transform, replacing $t \mapsto \iota \, \partial_s$. 
This yields a differential map, $\hat{F}$, given by
\[
\hat{F}(\psi)(s) = 
\begin{pmatrix} \sigma(E)(x, \xi' + \imath \, \partial_s \, dr) \psi(s) \\ \sigma(T)(x, \xi' + \imath \, \partial_s \, dr) \psi(s) \end{pmatrix}. 
\]
This map can be restricted to one-sided Schwartz functions, yielding a map
\[
\hat{F}  : \mathscr{S}(\overline{\mathbb{R}}_+; \mathbb{C} \otimes \mathbb{E}_x) \to\begin{matrix}  \mathscr{S} \brk{\overline{\mathbb{R}}_+; \mathbb{C} \otimes \mathbb{F}_x} \\\oplus\\ \mathbb{C} \otimes \mathbb{G}_x \end{matrix}.
\]
The boundary symbol of $\bD$ of the form \eqref{eq:Douglis_Nirenberg_Adapted2} is then the map as in \eqref{eq:boundary_symbol_Green}, 
whose operation is given by, for $\psi \in \mathscr{S}(\overline{\mathbb{R}}_+; \mathbb{C} \otimes \mathbb{E}_x)$ and $\lambda \in \mathbb{C} \otimes \mathbb{J}_{x}$: 
\beq
\sigma_\dM(\bD)(x, \xi')(\psi; \lambda) =
\begin{pmatrix} \{s \mapsto \sigma(E)(x, \xi' + \imath \, \partial_s \, dr) \psi(s)\} \\ \sigma(T)(x, \xi' + \imath \, \partial_s \, dr) \psi(0) + \sigma(Q)(x, \xi') \lambda \end{pmatrix}. 
\label{eq:boundary_symbol_Green_operation}
\eeq 

Back to the general case, the space of all principal symbols of operators in $\OP(m,r)$ is denoted here by $\mathrm{S}(m,r)$ (denoted by $\frakS^{(m),r}$ in \cite{RS82}). This space consists of equivalence classes of mappings of the form \eqref{eq:boundary_symbol_Green}, yielding a well-defined map:
\[
\sigma: \OP(m,r) \to \mathrm{S}(m,r).
\]
Since $\OP(m-1,r) \hookrightarrow \OP(m,r)$, it is important to note the distinction between $\sigma: \OP(m,r) \to \mathrm{S}(m,r)$ and $\sigma: \OP(m-1,r) \to \mathrm{S}(m-1,r)$. Under the equivalence relation, $\mathrm{S}(m-1,r)$ is identified as the zero space within $\mathrm{S}(m,r)$. Indeed, the range of the inclusion $\OP(m-1,r) \hookrightarrow \OP(m,r)$ is exactly the kernel of $\sigma: \OP(m,r) \to \mathrm{S}(m,r)$ \cite[Thm.~5, p.~174]{RS82}. Consequently, when performing calculations or comparing the symbols of Green operators, it is often clearer to specify the space of principal symbols in which these comparisons take place. This is illustrated in the following result \cite[p.~175]{RS82}:
\begin{theorem}
\label{thm:full_Green_compositon_rules}
Let $\bD$ and $\bQ$ be Green operators of orders $m_{A},m_{Q}\in \bbZ$ and classes $r_{A},r_{Q} \in \bbZ$. Then the following hold:
\begin{enumerate}
\item The symbol of the composition decomposes as: 
\beq
\sigma(\bQ \bD) = \sigma(\bQ) \circ \sigma(\bD) = (\sigma_M(\bQ) \circ \sigma_M(\bD)) \oplus (\sigma_\dM(\bQ) \circ \sigma_\dM(\bD)) \quad \text{in} \quad \mathrm{S}(m, r),
\label{eq:compostion_smr}
\eeq
where $m = m_A + m_Q$ and $r=\max{r_{A},r_{Q}+m_{A}}$. 

\item
If $m_{A}< m_{Q}$, then:
\beq
\sigma(\bD + \bQ) = \sigma(\bD) \quad \text{in} \quad \mathrm{S}(m, r).
\label{eq:lower_order_symbol}
\eeq

\item
If $r_{A}=0$, and $\bD^*$ is the adjoint of $\bD$, then:
\beq
\sigma(\bD^*) = \sigma(\bD)^*  \quad \text{in} \quad \mathrm{S}(m, 0).
\label{eq:adjoint_symbol}
\eeq
\end{enumerate}
\end{theorem}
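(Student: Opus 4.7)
My plan is to reduce all three claims to the five isolated matrix forms listed in \eqref{eq:isolated_Green_operator}. Both sides of each asserted identity are linear in $\bD$ and $\bQ$, and every Green operator is a sum of five such isolated blocks, so it suffices to verify each identity for operators of one of these forms (and, for (1), to check each of the sixteen composition pairings separately). In every case the interior part reduces to the classical H\"ormander--Seeley symbolic calculus for pseudodifferential operators on the closed ambient $\tM$, truncated to $M$ via the transmission property; the boundary part is handled through the operator-valued boundary symbol \eqref{eq:boundary_symbol_Green}, most transparently in the differential setting where the explicit formula \eqref{eq:boundary_symbol_Green_operation} applies.

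For (2), I would argue essentially from the definition of the principal symbol as a quotient: the inclusion $\OP(m-1,r)\hookrightarrow \OP(m,r)$ lies in the kernel of $\sigma:\OP(m,r)\to\mathrm{S}(m,r)$, as recalled immediately after \eqref{eq:boundary_symbol_Green_operation}. When $m_A<m_Q$, viewing the sum inside $\mathrm{S}(m,r)$ for $m=m_Q$ forces the contribution of the strictly-lower-order summand to vanish; additivity of $\sigma$ componentwise on each isolated block then closes the argument.

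For (3), the hypothesis $r_A=0$ places $\bD$ in the class-zero subalgebra, where the formal adjoint exists within the calculus as a Green operator of the same order and class zero. Block by block, the symbol of the adjoint is the fiberwise adjoint of the original symbol: this is standard for $A_+$ via the transmission property, for $Q$ as an ordinary pseudodifferential operator on $\dM$, and for $G$ as a class-zero singular Green operator; the Poisson--trace duality inherent to the class-zero assumption, already implicit in the discussion between \eqref{eq:adjoint_neigh_def} and \eqref{eq:full_Green_operator}, handles the two off-diagonal blocks. Because $\sigma$ respects the matrix structure, reassembling the blocks yields $\sigma(\bD^*)=\sigma(\bD)^*$ in $\mathrm{S}(m,0)$.

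Claim (1) is where the real work lies, and is the step I expect to be the main obstacle. The interior identity $\sigma_M(\bQ\bD)=\sigma_M(\bQ)\circ\sigma_M(\bD)$ is immediate from the composition rule for pseudodifferential operators with the transmission property. The substantive identity is $\sigma_\dM(\bQ\bD)=\sigma_\dM(\bQ)\circ\sigma_\dM(\bD)$. My plan is to establish it first in the dense subclass where all blocks are differential, so that \eqref{eq:boundary_symbol_Green_operation} applies literally and boundary-symbol composition reduces to composition of honest boundary-value problems on the half-line $\overbar{\bbR}_+$; there the identity is a direct differential computation. The general case is then obtained block-by-block through the asymptotic expansion of classical Green operators into homogeneous components, using that the identity is preserved under such asymptotic limits. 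The chief technical hurdle, and the reason the differing conventions on the orders of $T$ and $G$ noted in the text must be tracked carefully, is the class arithmetic: the $\max$ in $r=\max(r_A,r_Q+m_A)$ arises precisely from pairing a class-$r_A$ trace block in $\bD$ against an interior-order-$m_Q$ block in $\bQ$, and one must verify that no pairing secretly produces a symbol belonging to a higher class than this bound---which is exactly the content of \thmref{thm:composition_Green_operators}.
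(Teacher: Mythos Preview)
The paper does not prove this theorem at all: it is stated as a citation, introduced by ``This is illustrated in the following result \cite[p.~175]{RS82}'', and no argument is given in the text. So there is nothing to compare your proposal against; you have written a plausible proof sketch for a result the paper simply imports from Rempel--Schulze.

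That said, one remark on your treatment of item (2): you correctly argue that the strictly-lower-order summand vanishes in $\mathrm{S}(m,r)$ with $m=m_Q$, but note that this would leave $\sigma(\bQ)$, whereas the statement as printed asserts $\sigma(\bD+\bQ)=\sigma(\bD)$. This appears to be a typo in the paper (either the inequality should be reversed or the conclusion should read $\sigma(\bQ)$); your reasoning is sound for the intended statement, but you should flag the discrepancy rather than silently prove something different from what is written.
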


A Green operator $\bD \in \OP(m,r)$ is called \emph{elliptic} if $\sigma(\bD)$ is invertible in $\mathrm{S}(m,r)$, in the sense of the component-wise composition of symbols in \eqref{eq:compostion_smr}.  

Note that if $\bD$ is of order both $m$ and $m'$, it may so happen that it is elliptic in $\OP(m, r)$ but not in $\OP(m', r')$. Thus, ellipticity must always be verified within a specified symbol space $\mathrm{S}(m, r)$—and it is immediate that the order $m$ in this space must coincide with the sharp order of $\bD$ (otherwise, the symbol vanishes due to \eqref{eq:lower_order_symbol}). However, when there is no ambiguity, we simply say that $\bD$ is elliptic without specifying the exact symbol space in which this holds.

Generalizing elliptic pseudodifferential operators on a closed manifold, an elliptic Green operator $\bD$ benefits from the existence of a parametrix in the calculus, i.e., a $\frakP\in\OP(-m,r-m)$ such that $\bD\frakP - \id$ and $\frakP\bD - \id$ are both elements in $\OP(-\infty,-\infty)$ (\cite[pp.~335--336]{Gru90} and \cite[pp.~194--195]{RS82}). In this setting it holds that:
\beq
\sigma(\bD)^{-1}=\sigma(\frakP), \quad \text{in} \quad \mathrm{S}(-m,r-m).
\label{eq:symbol_inverse} 
\eeq

A more flexible notion than the ellipticity of a Green operator, and more central to this work, is that of \emph{overdetermined ellipticity} \cite[p.~237]{RS82}, \cite[p.~315]{Gru90}:

\begin{definition}
\label{def:overdetermined_elliptic_standard}
A Green operator $\bD\in\OP(m,r)$ is called \emph{overdetermined (overdetermined) elliptic} if its symbol $\sigma(\bD)$ is injective in $\mathrm{S}(m,r)$.
\end{definition}
For a convenient criterion of the injectivity of a symbol, we go back to systems of the form \eqref{eq:Douglis_Nirenberg_Adapted2}. Consider the ordinary differential operator,
\beq
\sigma(A)(x, \xi' + \imath \, \partial_s \, dr) : C^{\infty}(\overline{\mathbb{R}}_+; \mathbb{C} \otimes \mathbb{E}_x) \to C^{\infty}(\overline{\mathbb{R}}_+; \mathbb{C} \otimes \mathbb{F}_x),
\label{eq:boundary_symbol_E1} 
\eeq
to Schwartz functions, supplemented by the \emph{initial condition map} 
\beq
\Xi_{x, \xi} = 
\begin{pmatrix} \sigma(T)(x, \xi' + \imath \, \partial_s \, dr)|_{s=0} & \sigma(Q)(x, \xi) \end{pmatrix} : \begin{matrix} C^{\infty}(\overline{\mathbb{R}}_+; \mathbb{C} \otimes \mathbb{E}_x) \\\oplus\\ \mathbb{C} \otimes \mathbb{J}_{x} \end{matrix} \to \mathbb{C} \otimes \mathbb{G}_x. 
\label{eq:Xi_map1}
\eeq
operating as:
\[
\Xi_{x, \xi}(\psi;\lambda)=\sigma(T)(x, \xi' + \imath \, \partial_s \, dr)\psi(0)+\sigma(Q)(x, \xi)\lambda. 
\]
These mappings coincide with those defined in the classical \emph{Lopatinski–Shapiro condition} when $Q = 0$ (i.e., when the initial condition is homogeneous). In this case, verifying that $\sigma_{\dM}(x,\xi')$ is invertible reduces to checking that a corresponding system of ODEs, equipped with the induced initial conditions, admits only the trivial bounded solution for positive time (cf. \cite[pp.~233--234]{Hor03}, \cite[Ch.~5.11]{Tay11a}. 

We then have:
\begin{proposition}
\label{prop:rud_lop} 
Given a Green operator $\bD$ as in \eqref{eq:Douglis_Nirenberg_Adapted2}, with an injective interior symbol, let $x \in \dM$ and $\xi' \in T_x^*\dM \setminus \{0\}$. Let $\bbM_{x,\xi'}^+ \subset C^{\infty}(\overbar{\bbR}_+; \bbC \otimes \E_x)$ denote the space of decaying solutions of the linear $\bbC \otimes \E_x$-valued ordinary differential equation:
\[
\sigma(A)(x, \xi' + \imath \, \partial_s \, dr)\psi(s) = 0.
\]
Then, the principal symbol $\sigma(\bD)$ is injective if and only if the restriction of the initial condition map:
\[
\Xi_{x,\xi'} : 
\begin{matrix}
\bbM_{x,\xi'}^+ \\
\oplus \\
\bbC \otimes \bbJ_{x}
\end{matrix}
\longrightarrow \bbC \otimes \bbG_{x},
\]
is injective for every $x \in \dM$ and $\xi' \in T^*_{x}\partial M \setminus \{0\}$.
\end{proposition}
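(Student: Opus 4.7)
The argument will proceed in three short steps, closely mirroring the structure of the analogous result in~\cite{KL23}. First, I would unpack the definition of $\sigma(\bD)$ in this specialized setting \eqref{eq:Douglas_Nirenberg_Adapted}, using the additive decomposition from \eqref{eq:symbol_Green_operator}, $\sigma(\bD)=\sigma_M(\bD)\oplus\sigma_\dM(\bD)$. Since by hypothesis the interior symbol $\sigma_M(\bD)=\sigma(A)$ is injective, injectivity of $\sigma(\bD)$ in $\mathrm{S}(m,r)$ reduces to injectivity of the boundary symbol $\sigma_\dM(\bD)(x,\xi')$ for every $x\in\dM$ and $\xi'\in T^*_x\dM\setminus\{0\}$.

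Next, I would read off from the explicit formula \eqref{eq:boundary_symbol_Green_operation} that $\sigma_\dM(\bD)(x,\xi')(\psi;\lambda)=0$ with $\psi\in\scrS(\overbar{\bbR}_+;\bbC\otimes\bbE_x)$ and $\lambda\in\bbC\otimes\bbJ_x$ is equivalent to the simultaneous conditions
\[
\sigma(A)(x,\xi'+\imath\,\partial_s\,dr)\psi(s)=0\quad(s\geq 0),\qquad \Xi_{x,\xi'}(\psi;\lambda)=0,
\]
where the second identity is precisely the equation $\sigma(T)(x,\xi'+\imath\,\partial_s\,dr)\psi(0)+\sigma(Q)(x,\xi')\lambda=0$ appearing in the boundary component of \eqref{eq:boundary_symbol_Green_operation}. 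Hence injectivity of $\sigma_\dM(\bD)(x,\xi')$ on its natural domain is equivalent to the injectivity of $\Xi_{x,\xi'}$ restricted to pairs $(\psi;\lambda)$ whose first component is a Schwartz-class solution of the boundary ODE, direct-summed with $\bbC\otimes\bbJ_x$.

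Finally, I would identify the space of Schwartz-class solutions with the decaying-solution space $\bbM^+_{x,\xi'}$. Here the assumption that $\sigma(A)$ is injective, together with $\xi'\neq 0$, means that the $\bbC\otimes\bbE_x$-valued linear ODE $\sigma(A)(x,\xi'+\imath\,\partial_s\,dr)\psi=0$ is regular with characteristic polynomial having no real roots in the spectral variable $\xi_d$. Standard ODE theory then gives a finite-dimensional solution space that splits as a direct sum of subspaces growing or decaying exponentially in $s$, and the Schwartz condition selects precisely the decaying summand $\bbM^+_{x,\xi'}$. Combining this identification with the previous step yields the stated equivalence.

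The only genuinely delicate point is the last identification; it is exactly the content of the classical Lopatinski--Shapiro analysis and is carried out in detail in \cite[Prop.~2.6]{KL23}. Everything else is a direct unwinding of the explicit symbol formula \eqref{eq:boundary_symbol_Green_operation} and of the definition \eqref{eq:Xi_map1} of the initial condition map.
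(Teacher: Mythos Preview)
Your proposal is correct and matches the paper's approach exactly: the paper does not give an independent proof but simply states that the result follows ``in complete analogy with the result proven in \cite[Prop.~2.6]{KL23}'', and your three-step plan is precisely the unwinding of that analogy, down to citing the same reference for the delicate identification of Schwartz solutions with the decaying subspace $\bbM^+_{x,\xi'}$.
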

\subsubsection{Mapping properties between Sobolev spaces} 
Green operators are continuous with respect to Sobolev norms, with the Sobolev exponents in the domain and codomain determined by their order and class: explicitly, if $\bD \in \OP(m, r)$, then for every $\Psi\in\Gamma(\bbE)\oplus\Gamma(\bbF)$:
\beq
\|\bD\Psi\|_{s-m,s-m+1-1/p,p} \lesssim \|\Psi\|_{s,s+1-1/p,p},
\label{eq:Green_operator_sobolev_norm}
\eeq
which is to say that $\bD$, as operating between spaces of smooth sections as in \eqref{eq:full_Green_operator}, continuously extends into a map between Sobolev spaces:
\beq
\begin{split}
&\bD:
\mymat{W^{s,p}\Gamma(\bbE) \\\oplus\\ W^{s+1-1/p,p}\Gamma(\bbJ)}
\longrightarrow
\mymat{W^{s-m,p}\Gamma(\bbF) \\\oplus\\ W^{s-m+1-1/p,p}\Gamma(\bbG)}
\end{split}
\label{eq:Green_mapping_proprety} 
\eeq

for all $\bbR\ni s > r + 1/p - 1$ (see \cite[Cor.~3.2, pp.~312--313]{Gru90}, \cite[p.~176]{RS82}, noting that results for $p \ne 2$ in the latter are inaccurate and thus the former reference is required). 

It is important to note that, for \eqref{eq:Green_mapping_proprety} to hold, the estimate \eqref{eq:Green_operator_sobolev_norm} must be valid for smooth sections up to the boundary, i.e., for all $\Psi \in \Gamma(\bbE)$. For a general Green operator $\bD \in \OP(m, r)$, it is not sufficient for the estimate to hold only on sections compactly supported in the interior. However, if $r \leq 0$ and $-1/p < s < 1/p$, the continuous extension is determined entirely by operations on compactly supported sections. Moreover, when $p = 2$, a stronger result holds: if $r = 0$, then for any $s \in \mathbb{R}$ such that $s - m < 1/2$, we have for every $\Psi \in \Gamma_{c}(\bbE) \oplus \Gamma(\bbJ)$ \cite[p.~160]{RS82}:

\beq
\|\bD\Psi\|_{s-m,s-m+1/2,2} \lesssim \|\Psi\|_{s,s+1/2,2},
\label{eq:Green_operator_sobolev_norm0}
\eeq
which then yields the continuous mapping property: 
\beq
\begin{split}
\bD:\mymat{W^{s,2}_0\Gamma(\bbE) \\\oplus\\ W^{s+1/2,2}_0\Gamma(\bbJ)}
\longrightarrow
\mymat{W^{s-m,2}_0\Gamma(\bbF) \\\oplus\\ W^{s-m+1/2,2}_0\Gamma(\bbG)}
\end{split}.
\label{eq:Green_mapping_proprety0} 
\eeq

By setting different elements in the matrix \eqref{eq:full_Green_operator} to zero as in \eqref{eq:isolated_Green_operator}, mapping properties for each operator component in its respective class are retained (see also \cite[Cor.~3.2, pp.~312--313]{Gru90}):
\beq
\begin{split}
&A_+ + G : W^{s,p}\Gamma(\bbE) \rightarrow W^{s-m,p}\Gamma(\bbE), \qquad s > r + 1/p - 1, \\
&T : W^{s,p}\Gamma(\bbE) \rightarrow W^{s-m+1-1/p,p}\Gamma(\bbG) \quad \text{(when of order $m-1$)}, \qquad s > r + 1/p - 1, \\
&K : W^{s+1-1/p,p}\Gamma(\bbJ) \rightarrow W^{s-m,p}\Gamma(\bbF), \qquad s \in \mathbb{R}, \\
&Q : W^{s+1-1/p,p}\Gamma(\bbJ) \rightarrow W^{s-m+1/p,p}(\bbG), \qquad s \in \mathbb{R}.
\end{split}
\label{eq:mapping_proprety_diffrent_components}
\eeq
As noted, if $T = 0$ and $G = 0$, this corresponds to $r = -\infty$, allowing for mapping properties in negative Sobolev spaces for the $A_+$, $Q$, and $K$ components.

Allegedly, an operator within the calculus can have continuous extensions that exceed those specified in \eqref{eq:mapping_proprety_diffrent_components}. As the next proposition shows, this phenomenon is sharp in the sense that it can only arise if the operator is of order and class are lower than indicated. In the statement, for reference's sake, we restrict attention to the case $p = 2$.

\begin{proposition}[Sharp order and class]
\label{prop:L2_continuity}
Let $A_{+}$, $G$, $T$, $K$, and $Q$ be operators within the calculus as given in \eqref{eq:full_Green_operator}, and let $m \in \mathbb{R}$, $r\in\bbZ$ and $s \in \mathbb{R}$ such that $s\leq r+1/2$. Then
\begin{enumerate}
\item $A_{+} + G$ is $W^{s,2}\Gamma(\bbE)\rightarrow W^{s-m,2}\Gamma(\bbF)$ continuous if and only if $A$ is of order $m$, and $G$ is of order $m-1$ and of class $r$.
\item $K$ is $W^{s+1/2,2}\Gamma(\bbJ)\rightarrow W^{s-m,2}\Gamma(\bbF)$ continuous if and only if it is of order $m$.
\item $T$ is $W^{s,2}(\bbE)\rightarrow W^{s-m+1/2,2}(\bbG)$ continuous if and only if it is of order $m-1$ and of class $r$.
\item $Q$ is $W^{s+1/2,2}\Gamma(\bbJ)\rightarrow W^{s-m+1/2,2}\Gamma(\bbG)$ continuous if and only if it is of order $m$.
\end{enumerate}
\end{proposition}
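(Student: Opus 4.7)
The ``if'' implications are immediate from the standard mapping properties \eqref{eq:mapping_proprety_diffrent_components} and \eqref{eq:Green_mapping_proprety0}, since the hypothesis $s\le r+1/2$ still keeps $s$ above the threshold $s>r-1/2$ that those results require for $p=2$. The substance is the four converse implications: a single $L^{2}$-Sobolev mapping property, at an index $s$ just below the ``class cushion'' $r+1/2$, should already detect the sharp order and sharp class of a calculus element.

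My plan is first to reduce to the case $m=0$ by composing on either side with two-sided invertible order-reducing elements $\Lambda^{t}\in\OP(t,0)$, whose inverses lie in $\OP(-t,0)$; such elements exist in the calculus, and composition with class-zero operators preserves both sharp order and sharp class, while converting the hypothesized $W^{s,2}\to W^{s-m,2}$ continuity into an $L^{2}$-type continuity at a shifted index. Isolating the matrix entries as in \eqref{eq:isolated_Green_operator} then decouples the four clauses. For the purely pseudodifferential components --- $A$ in (1), the Poisson operator $K$ in (2), and $Q$ in (4) --- I will argue at the symbol level: cutting off by $\chi\in C^{\infty}_{c}(M\setminus\dM)$ in clause (1) reduces the continuity assumption on $A_{+}+G$ to an interior $W^{s,2}$-continuity of $\chi A\chi$ on the closed double $\tM$, where the Calder\'on--Vaillancourt criterion forces the principal symbol $\sigma(A)$ to vanish whenever the sharp order is strictly below $m$; clauses (2) and (4) are handled analogously on the closed boundary manifold $\dM$ via the boundary symbol formula \eqref{eq:boundary_symbol_Green_operation}, using the $\scrS(\overline{\bbR}_{+})$-valued symbol calculus to translate the mapping property into pointwise symbol bounds.

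The main obstacle, and the technical heart of the argument, is the class bound for $T$ in (3) and for the boundary-trace part of $G$ in (1). The approach is contrapositive: if the sharp class were some $r'\ge r+1$, then in the representation \eqref{eq:TraceOp} the top-order coefficient $S_{r'-1}$ is a nonzero pseudodifferential operator on $\dM$. Fixing $x\in\dM$ and a covector $\xi'\in T^{*}_{x}\dM\setminus\{0\}$ at which $\sigma(S_{r'-1})$ is nondegenerate, I will construct a collar-concentrated family $\psi_{\varepsilon}\in\Gamma(\bbE)$ by tensoring a rescaled boundary wave-packet (adapted to $\xi'$) with a transverse profile peaked at $\dM$, calibrated so that $\|\psi_{\varepsilon}\|_{W^{s,2}}$ remains bounded while $\|S_{r'-1}D_{\frakn}^{r'-1}\psi_{\varepsilon}\|_{W^{s+1/2,2}(\dM)}$ diverges, exploiting $s\le r+1/2\le r'-1/2$ and the failure of the $(r'-1)$-th normal-derivative trace to be continuous on $W^{s,2}$ below the trace threshold. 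The contributions from $S_{j}D_{\frakn}^{j}$ with $j<r'-1$ remain sub-dominant under the same scaling, producing a contradiction to the assumed continuity and forcing the sharp class to be $\le r$. An analogous singular-Green construction, applied to the leading boundary-trace factor of $G$, completes clause (1).
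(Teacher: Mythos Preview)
Your proposal is workable in outline but takes a substantially more laborious route than the paper, and it carries one genuine underspecification. The paper's proof is essentially two lines: for the \emph{class} assertions on $T$ and $A_{+}+G$ it simply cites Grubb's sharp-class theorem \cite[Thm.~3.10, p.~310]{Gru90}, and for the \emph{order} assertions it runs a compactness argument. Namely, if the sharp order of (say) $T$ were some $m'-1>m-1$, then the assumed continuity $W^{s,2}\to W^{s-m+1/2,2}$ composed with the compact embedding $W^{s-m+1/2,2}\hookrightarrow W^{s-m'+1/2,2}$ makes $T$ compact at level $m'-1$; but by \cite[Cor.~4, p.~193]{RS82} a compact Green operator has vanishing principal symbol in $\mathrm{S}(m'-1,r)$, contradicting sharpness of $m'-1$. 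The same argument handles $A$, $K$, $Q$, and the order of $G$ uniformly.

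Your approach, by contrast, reproves both of these ingredients from scratch. Your wave-packet construction for the class of $T$ is essentially a direct reconstruction of what underlies Grubb's theorem; it would succeed, but there is no need. Your symbol-level argument for the order is also correct in spirit, but invoking ``Calder\'on--Vaillancourt'' is a misattribution: that theorem goes from symbol bounds to $L^{2}$-boundedness, whereas you need the converse (boundedness forces the principal symbol to vanish at excess order). What you actually want is precisely the compactness--symbol correspondence the paper uses, or equivalently the principal-norm formula \eqref{eq:principle_norm}. Finally, your treatment of clause~(1) handles the order of $A$ via interior cutoffs and the class of $G$ via the collar construction, but the \emph{order} of the singular Green operator $G$ is never addressed --- interior cutoffs annihilate $G$, and your closing sentence concerns only its class. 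This is the one real gap; the compactness argument closes it immediately, which is another reason to prefer the paper's route.
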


\begin{proof}
For the statement about the class of $T$ and $A_{+}+G$, see \cite[Thm.~3.10, p.~310]{Gru90} and the comment in \cite[p.~312]{Gru90}.

For the argument regarding the orders, the first direction simply follows from the mapping properties in \eqref{eq:mapping_proprety_diffrent_components}. In the other direction, without loss of generality, suppose that $T$ is of order strictly greater than $m-1$, say $m'-1 > m-1$, so $T$ is $W^{s,2} \rightarrow W^{s-m'+1/2,2}$ continuous. By composing the mapping property of $T$ with the compact continuous inclusion $W^{s-m,2} \hookrightarrow W^{s-m'+1/2,2}$, we conclude that $T$ is a compact operator from $W^{s,2}\rightarrow W^{s-m'+1/2,2}$. By applying \cite[Cor.~4, p.~193]{RS82}, considering $T$ as a full Green operator with the other components in \eqref{eq:full_Green_operator} being zero, the principal symbol $\sigma(T)$ must vanish as an element in $\mathrm{S}(m'-1,r)$—hence it is of order lower than $m'-1$. Since $m' -1> m-1$ is arbitrary, we conclude that $T$ is of order $m-1$ as required.
\end{proof}

If $\bD$ is an elliptic operator within the calculus, its continuous extensions \eqref{eq:Green_mapping_proprety}, with respect to $m \in \bbZ$ for which $\sigma(\bD)$ is invertible in $\mathrm{S}(m, r)$, are Fredholm mappings between Banach spaces. This condition is, in fact, also sufficient for a Green operator to be elliptic \cite[Thm.~7, p.~197]{RS82}. We will see later, in a more general context, that this fact generalizes to the statement that a Green operator is overdetermined elliptic if and only if its continuous extensions are \emph{semi-Fredholm} (e.g, \cite[Ch.~5]{Kat80} or \cite[Ch.~1.3]{EE18}). In the meantime, we present one direction:  
\begin{proposition}
\label{prop:overdetermined_tools_Green}
Let $\bD \in \OP(m,r)$ be overdetermined elliptic \defref{def:overdetermined_elliptic_standard}. Then, its continuous extensions are semi-Fredholm mappings; namely, there exists an a priori estimate,  
\beq
\begin{split} 
\|\psi\|_{s,p} + \|\lambda\|_{s+1-1/p,p} \lesssim &\|(A_{+} + G)\psi + K\lambda\|_{s-m,p} + \|T\psi + Q\lambda\|_{s-m+1-1/p,p} 
\\&+ \|\psi\|_{s_0,p} + \|\lambda\|_{s_0+1-1/p,p}
\end{split}
\label{eq:overdetermined_ellipticity_a_priori1}
\eeq
for every $s, s_0 \in \bbR$ such that $s > s_0 > r + 1 - 1/p$.  

In particular, $\ker \bD \subseteq W^{s,p} \Gamma(\E) \oplus W^{s-1/p,p} \Gamma(\bbJ)$ is finite-dimensional, independent of $s, p$, and consists of smooth sections. If, in addition, $\bD$ is injective, then it admits a left inverse which is an element of $\OP(-m, r-m)$.  
\end{proposition}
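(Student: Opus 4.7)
The plan is to reduce every assertion to the existence of a left parametrix $\bP \in \OP(-m, r-m)$ for $\bD$, that is, a Green operator with
\[
\bP\bD = \id + \bR, \qquad \bR \in \OP(-\infty,-\infty).
\]
To build $\bP$, I would exploit the injectivity of $\sigma(\bD)$ in $\mathrm{S}(m,r)$ established by overdetermined ellipticity. Symbolically, the composition $\sigma(\bD)^{*}\circ \sigma(\bD)$ is invertible, and by the standard iterative scheme for parametrices in the Boutet de Monvel calculus (cf.\ \cite{Gru96,RS82}) its inversion can be realized at the operator level: one first inverts the interior component using the invertibility of $\sigma_M(\bD)^*\sigma_M(\bD)$, then corrects at the boundary by solving the model problem on the half-line that \propref{prop:rud_lop} associates with $\sigma_{\dM}(\bD)$, whose injectivity guarantees a left-invertible initial condition map. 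The order and class bookkeeping is fixed by \thmref{thm:composition_Green_operators} and by \eqref{eq:symbol_inverse}.

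Granted $\bP$, the a priori estimate \eqref{eq:overdetermined_ellipticity_a_priori1} is immediate: for $\Psi = (\psi;\lambda)$ we have $\Psi = \bP\bD\Psi - \bR\Psi$, so applying the Sobolev mapping property \eqref{eq:Green_mapping_proprety} to $\bP$ (which maps into the appropriate $(s,s+1-1/p)$ pair) and the smoothing property of $\bR$ (which, by \eqref{eq:mapping_proprety_diffrent_components}, continuously maps any fixed-regularity pair into itself) yields the stated bound for every $s > s_0 > r + 1 - 1/p$. From this estimate, semi-Fredholmness of the continuous extension of $\bD$ to $W^{s,p}\Gamma(\bbE)\oplus W^{s+1-1/p,p}\Gamma(\bbJ)$ follows from the compact embedding theorem and Peetre's lemma: the kernel is finite-dimensional and the range is closed. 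Finiteness and $s,p$-independence of $\ker\bD$ then come from the identity $\Psi = -\bR\Psi$ on the kernel together with the smoothing property of $\bR$, which forces every element of $\ker\bD$ to lie in $C^\infty$.

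If in addition $\bD$ is injective, I would promote $\bP$ to a genuine left inverse by the usual Fredholm-perturbation argument: the remainder $\id + \bR$ is a smoothing perturbation of the identity, and since the injectivity of $\bD$ prevents the accumulation of a left kernel, one inverts $\id + \bR$ within the calculus (the Boutet de Monvel algebra being closed under such inversions) to obtain $\tilde{\bP} := (\id + \bR)^{-1}\bP \in \OP(-m, r-m)$ with $\tilde{\bP}\bD = \id$.

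The main obstacle is the parametrix construction itself when $r > 0$, since the formal adjoint $\bD^*$ need not lie in the calculus and one cannot simply form $\bP = (\bD^*\bD)^{-1}\bD^*$. To circumvent this, I would precompose $\bD$ with an order-reducing operator of order $-r$ to reduce to the class-zero setting where adjoints exist, construct the parametrix there by the above symbol inversion, and then revert to the original classes by composition with the inverse order-reducing operator---a procedure for which one must verify that the final operator indeed sits in $\OP(-m, r-m)$ rather than in a larger class, using the refined composition rules and the prescription of sharp order and class embodied in \propref{prop:L2_continuity}.
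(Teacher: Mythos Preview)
The paper does not supply its own proof of this proposition; it is stated as a known fact from the Boutet de Monvel calculus (the references are to \cite{Gru90,Gru96,RS82}), and is then used as a black box in the proof of the Douglas--Nirenberg generalization \thmref{thm:overdetermined_varying_orders}. Your approach via a left parametrix $\bP\bD=\id+\bR$ with $\bR$ smoothing is exactly the standard one, and the derivation of the a priori estimate, semi-Fredholmness, and smoothness of the kernel from it is correct.

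There is one genuine soft spot. In promoting $\bP$ to an exact left inverse you assert that ``the injectivity of $\bD$ prevents the accumulation of a left kernel'' and hence $\id+\bR$ is invertible. This does not follow: $(\id+\bR)\Psi=0$ means $\bP\bD\Psi=0$, which says nothing about $\bD\Psi$ unless you already control $\ker\bP$ on $\image\bD$. The kernel of $\id+\bR$ is finite-dimensional and smooth, but need not be trivial. The clean route is the one you yourself sketch in the last paragraph: order-reduce to class zero so that an honest adjoint exists, form $(\tilde\bD^*\tilde\bD)$, which is elliptic, self-adjoint (hence Fredholm of index zero), and injective (from $\|\tilde\bD\Psi\|^2=\bra\tilde\bD^*\tilde\bD\Psi,\Psi\ket$), therefore invertible within the calculus; then $(\tilde\bD^*\tilde\bD)^{-1}\tilde\bD^*$ is a left inverse for $\tilde\bD$, and undoing the order reduction gives the left inverse for $\bD$ in $\OP(-m,r-m)$. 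This is essentially how the paper later argues in the proofs of \thmref{thm:overdetermined_varying_orders} and \propref{prop:estimate_overdetermined_sufficent_douglas}. So your overall strategy is correct---just route the left-inverse step through the order-reduced $\bD^*\bD$ argument rather than through inverting $\id+\bR$ directly.
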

For an overdetermined elliptic system $\bD$, the kernel $\ker \bD$ is contained in $L^2\Gamma(\bbE) \oplus L^2\Gamma(\bbJ)$ and thus admits an $L^2$-orthogonal projection, which we denote by
\[
\frakI: L^2\Gamma(\bbE) \oplus L^2\Gamma(\bbJ) \to L^2\Gamma(\bbE) \oplus L^2\Gamma(\bbJ).
\]
Since $\ker \bD$ consists entirely of smooth functions, this projection restricts continuously to smooth sections,
\[
\frakI: \Gamma(\bbE) \oplus \Gamma(\bbJ) \to \Gamma(\bbE) \oplus \Gamma(\bbJ),
\]
and defines an integral operator with a smooth kernel. Consequently, $\frakI \in \OP(-\infty, -\infty)$. By continuity, the projection extends to a compact operator on Sobolev spaces,
\[
\frakI: W^{s,p}\Gamma(\bbE) \oplus W^{\gamma,p}(\bbJ) \to W^{s,p}\Gamma(\bbE) \oplus W^{\gamma,p}(\bbJ),
\]
with finite-dimensional range equal to $\ker \bD$ for every $s, \gamma\in\bbR$.

Using the finite-dimensionality of $\ker \bD$ and the Rellich embedding theorem (see \cite[p.~51]{Bre11} or \cite[p.~28]{EE18}), one obtains the following refinement of the estimate \eqref{eq:overdetermined_ellipticity_a_priori1}:
\beq
\begin{aligned}
\|\psi\|_{s,p} + \|\lambda\|_{s + 1 - 1/p,\,p}
\;\lesssim\; \|(A_{+} + G)\psi + K\lambda\|_{s - m,\,p}  &+ \|T\psi + Q\lambda\|_{s - m + 1 - 1/p,\,p} \\
& + \|\frakI(\psi, \lambda)\|_{0,0,p},
\end{aligned}
\label{eq:overdetermined_sharp}
\eeq
for every $s \in \mathbb{R}$ satisfying $s > r + 1/p - 1$, where $\|\cdot\|_{0,0,p}$ denotes the $L^p$-norm on $L^p\Gamma(\bbE) \oplus L^p\Gamma(\bbJ)$.

\subsubsection{Adjoints and Green's formulae} 
A \emph{system of trace operators associated with order}  $m\in\bbZ$ is a trace operator of the form $T=T_0\oplus T_1\oplus\cdots\oplus T_{m-1}$, where $T_i:\Gamma(\E)\to\Gamma(\bbJ_i)$ is of order $i$ and class $i+1$, with $\bbJ_i\to \dM$ a vector bundle over the boundary \cite[pp.~45--46]{Gru96}. As noted earlier, every component $T_i$ can be written as
\beq
T_i = \sum_{j=0}^{i-1} S_{ij}\D_{\frakn}^j + \jmath^*(Q_i)_+,
\label{eq:components_normal}
\eeq
where $S_{ij}$ is of order $i-j$ and $Q_{i}$ of order $i$. 

\begin{definition}
\label{def:normal_system_of_trace_operators1}
A system of trace operators $T_0\oplus T_1\oplus\cdots\oplus T_{m-1}$ associated with order $m\in\bbZ$ is said to be \emph{normal} if each $T_i$ of the form \eqref{eq:components_normal} satisfies that $S_{ii}:\Gamma(\jmath^*\E)\to \Gamma(\bbJ_i)$ is surjective.
\end{definition}

The normality of a system of trace operators implies surjectivity and $L^{p}$-density of the kernel \cite[pp.~80, 82]{Gru96}:

\begin{proposition}
\label{prop:normal_sytem}
Let $T$ be a normal system of trace operators associated with order $m\in\bbZ$. Then $T:\Gamma(\E)\to \Gamma(\bbG)$ is surjective, and $\ker T$ is dense in $L^{p}\Gamma(\bbE)$ for all $1<p<\infty$. 
\end{proposition}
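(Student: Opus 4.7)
My plan is to prove surjectivity by constructing an explicit right inverse using a normal Taylor ansatz in a collar neighborhood of $\dM$, then deduce $L^{p}$-density of $\ker T$ from this right inverse together with the density of smooth sections compactly supported in the interior. The normality hypothesis---surjectivity of the top-order normal coefficients $S_{ii}$---is precisely what makes the ansatz solvable.

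For surjectivity, I would fix a collar $U \cong [0,\epsilon) \times \dM$ with normal coordinate $r$ and a cutoff $\chi \in C^{\infty}_{c}([0,\epsilon))$ equal to $1$ near $r=0$. Given data $\rho = (\rho_0,\ldots,\rho_{m-1}) \in \bigoplus_{i} \Gamma(\bbJ_{i})$, I would seek $\psi \in \Gamma(\bbE)$ of the form
\[
\psi \;=\; \sum_{j=0}^{m-1}\chi(r)\,\tilde{\psi}_{j}\,\frac{r^{j}}{j!},
\]
where each $\tilde{\psi}_{j}$ is a smooth extension of some $\psi_{j} \in \Gamma(\jmath^{*}\bbE)$ to the collar, extended by zero off $U$. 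This ansatz realizes the normal jets $D_{\frakn}^{j}\psi|_{\dM} = \psi_{j}$, and substituting into $T_{i}\psi = \rho_{i}$ via \eqref{eq:components_normal} yields a triangular system whose $i$-th equation reads
\[
S_{ii}\psi_{i} \;=\; \rho_{i} \,-\, \sum_{j=0}^{i-1} S_{ij}\psi_{j} \,-\, \jmath^{*}(Q_{i})_{+}\psi.
\]
By normality each $S_{ii}$ is surjective and so admits a right inverse on the boundary. Solving inductively in $i = 0,1,\ldots,m-1$, and absorbing the $\jmath^{*}(Q_{i})_{+}\psi$ contributions via a Neumann-series argument that exploits the fact that they enter as a compact pseudodifferential perturbation relative to the leading jet, produces the desired $\psi$ with $T\psi = \rho$. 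This is essentially the construction underlying the Poisson parametrix in \cite[Prop.~1.6.5]{Gru96}.

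For density, given $\psi \in L^{p}\Gamma(\bbE)$ and $\varepsilon > 0$, first approximate $\psi$ in $L^{p}$ by a smooth section $\phi \in \Gamma(\bbE)$ with $\|\phi - \psi\|_{L^{p}} < \varepsilon$. Set $\rho := T\phi \in \Gamma(\bbG)$, and apply the right inverse above but with the cutoff $\chi(r)$ replaced by $\chi(r/\delta)$, so that the correction is supported in a collar of thickness $\delta$. The resulting section $\chi_{\delta} \in \Gamma(\bbE)$ satisfies $T\chi_{\delta} = \rho$ and
\[
\|\chi_{\delta}\|_{L^{p}} \;\lesssim\; \delta^{1/p},
\]
since each summand $r^{j}\chi(r/\delta)\tilde{\psi}_{j}$ has $L^{p}$-norm of order $\delta^{j+1/p}$ and the worst case is $j=0$. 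Choosing $\delta$ small makes $\|\chi_{\delta}\|_{L^{p}} < \varepsilon$, so $\phi - \chi_{\delta} \in \ker T$ and $\|(\phi - \chi_{\delta}) - \psi\|_{L^{p}} < 2\varepsilon$, giving the claimed density.

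The main obstacle is the contribution $\jmath^{*}(Q_{i})_{+}\psi$ in the surjectivity step: because $Q_{i}$ is a pseudodifferential operator acting on the full interior section rather than purely on its normal jets, the equations $T_{i}\psi = \rho_{i}$ are not algebraically triangular in $(\psi_{0},\ldots,\psi_{m-1})$ and must be closed by a perturbative contraction. One must also track that the jets $\psi_{j}$ produced by the iteration, when reused in the density step, do not blow up fast enough to overwhelm the $\delta^{1/p}$ gain. Both subtleties are handled uniformly by the Boutet de Monvel construction of Poisson parametrices for normal boundary systems, which is why invoking \cite[Prop.~1.6.5]{Gru96} is the cleanest route.
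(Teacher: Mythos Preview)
The paper does not prove this proposition; it simply cites \cite[pp.~80,~82]{Gru96}. Your sketch follows that construction, and your density argument via a shrinking-collar right inverse is exactly the mechanism the paper itself later uses in its proof of the Douglas--Nirenberg generalization \corrref{corr:normal_system_2}.

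The one soft spot is your handling of the class-$0$ term $\jmath^*(Q_i)_+\psi$ in the surjectivity step. Labeling it a ``compact pseudodifferential perturbation'' amenable to a Neumann series is not clearly supported by order-counting in the calculus: the composite $\psi_j \mapsto \jmath^*(Q_i)_+\bigl(\chi(r)\tilde{\psi}_j\,r^j/j!\bigr)$ is a boundary pseudodifferential operator whose order need not be strictly lower than that of $S_{ij}$, so a contraction argument is not available in general. Grubb's construction of the Poisson right inverse proceeds directly within the calculus rather than perturbatively. Since you correctly flag this as the main obstacle and defer to \cite[Prop.~1.6.5]{Gru96}---exactly as the paper does---the issue is one of phrasing rather than outcome; just drop the Neumann-series heuristic.
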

In \cite[p.~37, prop.~1.3.2]{Gru96}, it is proven that every operator with the transmission property $A$ yields \emph{differential} systems of trace operators $B_A:\Gamma(\E)\to\Gamma(\bbG)$ and $B_{A}^*:\Gamma(\bbF)\to\Gamma(\bbG)$, such that: 
\beq
\bra A_+\psi,\eta\ket = \bra\psi,A_+^*\eta\ket + \bra B_A\psi, B_{A^*}\eta\ket
\label{eq:integration_by_parts_convention}
\eeq 
for every $\psi\in\Gamma(\bbE)$ and $\eta\in\Gamma(\bbF)$. In \cite[Cor.~1.6.2, p.~77]{Gru96}, it is shown that if, in addition, the operator is \emph{non-characteristic} with respect to the boundary, then $B_A$ and $B_{A^*}$ can be chosen to be normal systems of trace operators of integer order $m \in \mathbb{Z}$. For example, all elliptic pseudodifferential operators with the transmission property are, by definition, non-characteristic.\footnote{This is why, in the exposition of elliptic complexes in \secref{sec:Hodge_intro}, this requirement on the boundary operators is actually implicit.} The computation carried out in the proof of \cite[Thm.~1.4.6, pp.~53–54]{Gru96} then implies the following:

\begin{proposition}
\label{prop:direct_sum_trace_operator_elliptic_operator}
Let $A$ be an elliptic operator with the transmission property, let $B_{A}$ be its associated normal system of trace operators associated with order $m\in\bbZ$ from \eqref{eq:integration_by_parts_convention}, and let $T$ be a normal system of trace operators associated with order $m'\in\bbZ$. Then $TA_{+}\oplus B_{A}$ is a normal system of trace operators associated with order $m+m'$.  
\end{proposition}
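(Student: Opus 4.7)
The plan is to verify the defining conditions of a normal system of trace operators associated with order $m+m'$ \defref{def:normal_system_of_trace_operators1} for $TA_+ \oplus B_A$, component by component. Decomposing $B_A = B_{A,0} \oplus \cdots \oplus B_{A,m-1}$ with $B_{A,i}$ of order $i$ and class $i+1$, and $T = T_0 \oplus \cdots \oplus T_{m'-1}$ with $T_j$ of order $j$ and class $j+1$, the resulting direct sum populates exactly the slots $0, 1, \ldots, m+m'-1$ required for a system of order $m+m'$, up to the correct labelling of components.

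First I would confirm the order-class accounting for each $T_j A_+$. Viewing $A_+$ as the isolated Green operator $\begin{pmatrix} A_+ & 0 \\ 0 & 0 \end{pmatrix} \in \OP(m,0)$ and $T_j$ as $\begin{pmatrix} 0 & 0 \\ T_j & 0 \end{pmatrix} \in \OP(j, j+1)$, \thmref{thm:composition_Green_operators} gives $T_j A_+ \in \OP(m+j,\, m+j+1)$. Hence $T_j A_+$ is a trace operator of order $m+j$ and class $m+j+1$, matching the $(m+j)$-th slot of the proposed system.

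The substantive step is normality, i.e.\ the surjectivity of the top-order normal-derivative coefficient of each component. For $B_A$ this is given by hypothesis. For $T_j A_+$, I would exploit ellipticity of $A$: since $A$ is elliptic and has the transmission property, it is non-characteristic with respect to $\dM$, meaning that the leading coefficient in the asymptotic expansion of the full symbol of $A$ in the normal variable at a boundary point $x$, namely $\sigma(A)(x, dr) : \bbE_x \to \bbF_x$, is an isomorphism. Substituting the normal expansion of $T_j$ from \eqref{eq:components_normal}, rearranging $T_j A_+$ into the canonical form \eqref{eq:TraceOp}, and tracking the coefficient of $D_\frakn^{m+j}$, one obtains a top-order coefficient of the form $S^T_{jj} \circ \sigma(A)(\cdot, dr)|_{\dM}$, where $S^T_{jj}$ is the top-order coefficient of $T_j$, surjective by normality of $T$. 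A surjection composed with an isomorphism is surjective, which establishes normality of $T_j A_+$.

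The main obstacle is the bookkeeping in this last step: identifying $S^T_{jj}$ (which is implicit in the $\jmath^*(Q^T_j)_+$ summand of \eqref{eq:components_normal}), together with the symbolic rearrangement needed to put $T_j A_+$ into the form \eqref{eq:TraceOp}, since tangential and normal differentiations coming from $A_+$ do not commute and must be reordered via the composition rules of the Green operator calculus. This is precisely the computation carried out in \cite[Thm.~1.4.6, pp.~53--54]{Gru96}, whose identification of the leading normal-derivative coefficient of such compositions would complete the argument.
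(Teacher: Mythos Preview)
Your proposal is correct and matches the paper's approach: the paper does not give an independent proof but simply states that the result follows from the computation in \cite[Thm.~1.4.6, pp.~53--54]{Gru96}, and your outline is a faithful sketch of exactly that computation, ending with the same citation. One minor bookkeeping point: when you place $T_j$ in $\OP(j,\,j+1)$ you are implicitly using Grubb's convention (trace order $=$ Green order) rather than the paper's convention (trace order $=$ Green order minus one), but this is harmless since the conclusion about the order and class of $T_j A_+$ comes out the same either way.
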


For general Green operators we have \cite[p.~151, Cor.~11]{RS82}:
\begin{proposition}
\label{prop:formal_adjoint}
The adjoint of a trace operator of order $m$ and class $0$ is a Poisson operator of order $m+1$, and vice versa. If $\bD\in\OP(m,0)$, then there exists a uniquely determined $\bD^{*}\in\OP(m,0)$ defined by the relation:
\[
\bra \bD\Psi,\Theta\ket = \bra \Psi,\bD^{*}\Theta\ket, \qquad \Psi\in \Gamma_{c}(\bbE)\oplus\Gamma(\bbJ), \,\,\, \Theta\in \Gamma_{c}(\bbF)\oplus\Gamma(\bbG).
\]
\end{proposition}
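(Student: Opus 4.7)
The plan is to verify the two assertions component by component, appealing throughout to the structural results already cited from Grubb and Boutet de Monvel's calculus. First, for the duality between trace and Poisson operators, I would work with a class-$0$ trace operator $T$ of order $m$: by \propref{prop:L2_continuity} with $r=0$, it extends continuously to $W_{0}^{s,2}\Gamma(\bbE)\to W^{s+1/2,2}\Gamma(\bbG)$ for some $s$, so the $L^{2}$-adjoint is a well-defined bounded operator $\Gamma(\bbG)\to\scrD'(\bbE)$. The identification of this adjoint with a Poisson operator of order $m+1$ is Boutet de Monvel's original structural result and follows by inspecting the Fourier representation of $T$ in \eqref{eq:TraceOp} normal to the boundary. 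The converse direction for Poisson operators proceeds symmetrically.

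Next, for a general $\bD=\begin{pmatrix} A_{+}+G & K\\ T & Q\end{pmatrix}\in\OP(m,0)$, I would construct the candidate adjoint entry by entry as
\[
\bD^{*}=\begin{pmatrix} (A^{*})_{+}+G^{*} & T^{*}\\ K^{*} & Q^{*}\end{pmatrix},
\]
where $A^{*}$ is the pseudodifferential formal adjoint of $A$ (which inherits the transmission property), $G^{*}$ is the singular-Green adjoint provided by the cited class-$0$ adjunction property, $Q^{*}$ is the standard adjoint of the pseudodifferential operator $Q$ on the closed manifold $\dM$, $T^{*}$ is the Poisson operator supplied by the first part, and $K^{*}$ is the class-$0$ trace operator dual to $K$. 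By the composition and class-tracking rules from \thmref{thm:composition_Green_operators} together with the statements on orders in \secref{sec:per_pseudo}, each entry has the correct order and class, so $\bD^{*}\in\OP(m,0)$.

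To verify the defining identity, I would expand $\bra \bD\Psi,\Theta\ket$ with $\Psi=(\psi;\lambda)$, $\psi\in\Gamma_{c}(\bbE)$ interior-compactly supported, and $\Theta=(\eta;\tau)$. For the $A_{+}$ term, the general Green's formula \eqref{eq:integration_by_parts_convention} reduces to $\bra A_{+}\psi,\eta\ket=\bra\psi,(A^{*})_{+}\eta\ket$ because $B_{A}\psi=0$ when $\psi$ vanishes near $\dM$. The four remaining component pairings ($G$–$G^{*}$, $K$–$K^{*}$, $T$–$T^{*}$, $Q$–$Q^{*}$) are precisely the $L^{2}$-adjoint relations just constructed, and they assemble into the required equality. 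Uniqueness of $\bD^{*}$ within the calculus follows from density of $\Gamma_{c}(\bbE)\oplus\Gamma(\bbJ)$ in $L^{2}\Gamma(\bbE)\oplus L^{2}\Gamma(\bbJ)$ combined with the continuity of any element of $\OP(m,0)$ on the appropriate Sobolev scale.

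The main obstacle is the bookkeeping: confirming that the class-$0$ hypothesis is genuinely what makes the off-diagonal adjunctions stay inside the calculus (any nonzero class in $T$ would obstruct the existence of a Poisson-operator adjoint), and keeping track that the order of $G^{*}$ remains $m-1$ rather than shifting. Both points are handled by the class-$0$ structure theorems, but require careful reference to the non-symmetric order convention (\emph{order $m-1$ for $T$ and $G$}) fixed in \secref{sec:per_pseudo}.
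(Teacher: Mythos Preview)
The paper does not prove this proposition; it simply cites it as \cite[p.~151, Cor.~11]{RS82} and states the result. Your outline is the standard entry-by-entry construction that underlies the cited result, and is correct in spirit: the class-$0$ hypothesis is precisely what permits each block to have an $L^{2}$-adjoint within the calculus, and the compact-support restriction on $\psi$ kills the boundary term in \eqref{eq:integration_by_parts_convention}. One small point: in the first paragraph you invoke \propref{prop:L2_continuity}, but that proposition appears \emph{after} \propref{prop:formal_adjoint} in the paper, so in a strictly linear reading you would need to appeal directly to the mapping properties \eqref{eq:mapping_proprety_diffrent_components} instead; this is cosmetic rather than substantive.
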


The additional order for adjoints of Poisson operators is one reason why $T$ is defined to have order $m-1$, ensuring that if $\bD \in \OP(m,0)$, then $\bD^* \in \OP(m,0)$ as well. This makes $\OP(m,0)$ closed under adjunction, and consequently $\OP(0,0)$ is closed under both composition and adjunction, making it an algebra (this class is denoted by $\mathfrak{G}^{0,0}$ in \cite[p.~175]{RS82}).

The continuous extensions of a Green operator, as described in \eqref{eq:Green_mapping_proprety}, define continuous mappings between Banach spaces and therefore possess well-defined Banach duals. In the case $r = 0$, and following similar lines to \cite[pp.~288–289]{WRL95}, these Banach duals can be related to continuous extensions of the adjoint $\bD^* \in \OP(m, 0)$ as follows: using the duality $W^{s,2}_0 \simeq W^{-s,2}$ via the pairing $\bra \cdot, \cdot \ket$, the Banach dual of \eqref{eq:Green_mapping_proprety}, with $s$ replaced by $s - 1/2$, is given for any $s \in \mathbb{R}$ satisfying $s - m < 1$ by:

\beq
\begin{split}
\bD' : \mymat{W^{-s + m,2}\Gamma(\bbF) \\\oplus\\ W^{-s + m + 1/2,2}\Gamma(\bbG)} \longrightarrow \mymat{W^{-s,2}\Gamma(\bbE) \\\oplus\\ W^{-s + 1/2,2}\Gamma(\bbJ)}
\end{split}.
\label{eq:Green_mapping_proprety_dual} 
\eeq
This map is also retained as the continuous extension of the map $(\bD'\Theta)(\Psi) = \bra \Theta, \bD\Psi \ket$ when restricted to compactly supported $\Psi,\Theta$. Then by \propref{prop:formal_adjoint}, we have that $\bD' = \bD^*$ when restricted to compactly supported sections. In particular, due to the defining relation between a linear map between Banach spaces and its Banach dual, it follows that, as continuous linear maps in \eqref{eq:Green_mapping_proprety_dual} and \eqref{eq:Green_mapping_proprety0}:
\beq
\|\bD^*\|_{\mathrm{op}(-s + m, -s)}=\|\bD'\|_{\mathrm{op}(-s + m, -s)}=\|\bD\|_{\mathrm{op}(s-1/2, s-1/2 - m)},
\label{eq:norms_operator_duality_adjoints}
\eeq
where $\|\cdot\|_{\mathrm{op}(s, s')}$ denotes the appropriate operator norm for continuous linear maps $W^{s, s + 1/2}_2\rightarrow W^{s', s' + 1/2}_2$. This identity will be useful in later analysis.




\section{Douglis-Nirenberg systems}
\label{sec:Overdetermined} 
This section extends the notion of overdetermined ellipticity to general systems of \emph{varying orders}, also known as \emph{Douglis–Nirenberg systems} \cite[pp.~234–235]{RS82}, \cite[Cor.~5.5, p.~336]{Gru90}. In the context of the order-reduction property discussed in \secref{sec:order-reduction_intro}, one of our goals is to formalize a procedure for comparing the respective orders and classes of two such systems.

The machinery for comparing operator orders played an important role in the rudienmantry theory developed in \cite{KL23}. However, for systems with varying orders, the situation becomes more intricate, and a direct comparison is no longer straightforward. In this section, we develop a framework that renders these comparisons feasible by translating the problem into the analysis of the continuous mapping properties of the systems between Sobolev spaces.
\subsection{Basic definitions} 
Let $1 \leq i \leq i_0$, $1 \leq j \leq j_0$, $1 \leq k \leq k_0$, and $1 \leq l \leq l_0$ be sets of indices, with associated integers $(m_{i}^{j})$, $(\tau_{k}^{j})$, $(t_{i}^{l})$, $(\sigma_{k}^{l})$, and $(r^{j})$. Generalizing the matrix operation from \eqref{eq:full_Green_operator}, suppose there are direct sums of vector bundles,
\[
\bbE = \bigoplus_{j} \bbE_{j}, \qquad
\bbJ = \bigoplus_{l} \bbJ_{l}, \qquad
\bbF = \bigoplus_{i} \bbF_{i}, \qquad
\bbG = \bigoplus_{k} \bbG_{k},
\]
whose fibers decompose accordingly:
\beq
\bbE_{x} = \bigoplus_{j} \bbE_{j,x}, \qquad
\bbJ_{x} = \bigoplus_{l} \bbJ_{l,x}, \qquad
\bbF_{x} = \bigoplus_{i} \bbF_{i,x}, \qquad
\bbG_{x} = \bigoplus_{k} \bbG_{k,x},
\label{eq:vector_bundles_fibers} 
\eeq
thereby allowing the operator $\bD$ in \eqref{eq:full_Green_operator} to mix components across these decompositions.

We now generalize $\bD$ further by viewing it as a \emph{tensor} of operators in the calculus, expressed in component form as $\bD = (\bD_{ik}^{jl})$, where each block operator is given by
\beq
\bD_{ik}^{jl} =
\begin{pmatrix}
E_{j}^{i} & K_{i}^{l} \\
T_{k}^{j} & Q_{k}^{l}
\end{pmatrix}
: \begin{matrix} \Gamma(\bbE_{j}) \\ \oplus \\ \Gamma(\bbJ_{l}) \end{matrix}
\longrightarrow
\begin{matrix} \Gamma(\bbF_{i}) \\ \oplus \\ \Gamma(\bbG_{k}) \end{matrix}.
\label{eq:douglas_nirenberg_components}
\eeq
Here:
\begin{enumerate}
\item $E_{j}^{i} = (A_{i}^{j})_{+} + G_{i}^{j}$, where $A_{i}^{j}$ is of order $m_{i}^{j}$ and $G_{i}^{j}$ is of order $m_{i}^{j} - 1$ and class $r^{j}$;
\item $T_{k}^{j}$ is of order $\tau_{k}^{j}$ and class $r^{j}$;
\item $Q_{k}^{l}$ is of order $\sigma_{k}^{l}$;
\item $K_{i}^{l}$ is of order $q_{i}^{l}$.
\end{enumerate}

Overall, the operator $\bD$ acts as
\[
\bD : 
\begin{matrix} 
\Gamma(\bbE) \\ 
\oplus \\ 
\Gamma(\bbJ) 
\end{matrix}
\longrightarrow 
\begin{matrix} 
\Gamma(\bbF) \\ 
\oplus \\ 
\Gamma(\bbG) 
\end{matrix},
\]
where, due to the direct sum structure,
\[
\Gamma(\bbE) = \bigoplus_{j} \Gamma(\bbE_{j}), \quad 
\Gamma(\bbJ) = \bigoplus_{l} \Gamma(\bbJ_{l}), \quad 
\Gamma(\bbF) = \bigoplus_{i} \Gamma(\bbF_{i}), \quad 
\Gamma(\bbG) = \bigoplus_{k} \Gamma(\bbG_{k}).
\]

The action of $\bD$ is explicitly given by a “contraction” on $\Psi \in \Gamma(\bbE) \oplus \Gamma(\bbJ)$:
\beq
(\bD\Psi)_{ik} 
= \bD_{ik}^{jl} 
\begin{pmatrix} (\psi_{j}); (\lambda_{l}) \end{pmatrix}  
= 
\begin{pmatrix} 
E_{i}^{j} \psi_{j} + K_{i}^{l} \lambda_{l} \\ 
T_{k}^{j} \psi_{j} + Q_{k}^{l} \lambda_{l} 
\end{pmatrix} 
\in 
\begin{matrix} 
\Gamma(\bbF_{i}) \\ 
\oplus \\ 
\Gamma(\bbG_{k}) 
\end{matrix},
\label{eq:contraction_Green}
\eeq
where the Einstein summation convention is employed. Here, the input $\Psi \in \Gamma(\bbE) \oplus \Gamma(\bbJ)$ is expressed in coordinates as $\Psi = (\Psi_{jl}) = ((\psi_{j}); (\lambda_{l}))$, with $\psi_{j} \in \Gamma(\bbE_{j})$ and $\lambda_{l} \in \Gamma(\bbJ_{l})$, and the output $\bD \Psi \in \Gamma(\bbF) \oplus \Gamma(\bbG)$ is written in coordinates as $(\bD \Psi)_{ik}$, as above.

When dealing with Douglis-Nirenberg systems, it is convenient to record the classes and orders as tuples of integers, with the ranging over the indices only implied:
\begin{definition}
\label{def:corresponding}
Given a Douglis-Nirenberg system $\bD$, the tensor of integers
\beq
\begin{pmatrix}
m_{i}^{j} & q_{i}^{l} \\ \tau_{k}^{j} & \sigma_{k}^{l}
\end{pmatrix}
\label{eq:corresponding_orders}
\eeq
are called \emph{corresponding orders} for $\bD$. The integers $(r^{j})$ are called \emph{corresponding classes} for $\bD$.
\end{definition}

Occasionally, it may happen that the corresponding classes and corresponding orders of $\bD$ are constant, meaning there exist integers $r, m, q, \tau, \sigma \in \mathbb{Z}$ such that $r^{j} = r$ for all $j$, and:
\[
\begin{cases}
m_{i}^{j} = m, & \text{for all } i, j, \\
q_{i}^{l} = q, & \text{for all } i, l, \\
\tau_{k}^{j} = \tau, & \text{for all } k, j, \\
\sigma_{k}^{l} = \sigma, & \text{for all } k, l.
\end{cases}
\]
In such a situation, where no ambiguity arises, the corresponding classes are written simply as $(r)$, and the corresponding orders as:
\[
\begin{pmatrix}
m & q \\ \tau & \sigma
\end{pmatrix}.
\]
Note that if the corresponding classes are $(r)$ for some constant $r \in \bbZ$ and the corresponding orders are
\[
\begin{pmatrix}
m & m \\ m-1 & m
\end{pmatrix},
\]
then the system $\bD$ is simply an element of $\OP(m,r)$, i.e., a Green operator of order $m$ and class $r$ as defined in \eqref{eq:full_Green_operator}, regardless of whether the vector bundles decomposes further or not. For similar reasons that Green operators may not have unique orders or classes, the orders and classes of a Douglis-Nirenberg system are also not unique. In fact, every Douglis-Nirenberg system technically belongs to $\OP(m,r)$ for sufficiently large $m$ and $r$ that exceed the respective orders and classes of the components in \eqref{eq:douglas_nirenberg_components}. This nuance introduces complexity to the calculus of orders and classes, which are no longer represented by a single number.

Building down from Green operators, Douglis-Nirenberg systems essentially arise in one of the following ways:

\begin{definition}
\label{def:direct_sums_unions_sums}
Let $\bD_{2}$ and $\bD_{1}$ be two Douglis-Nirenberg systems:
\[
\bD_{2}:\Gamma(\bbE_{2};\bbJ_{2})\rightarrow \Gamma(\bbF_{2};\bbG_{2}), \qquad \bD_{1}:\Gamma(\bbE_{1};\bbJ_{1})\rightarrow \Gamma(\bbF_{1};\bbG_{1}).
\]
\begin{enumerate}
\item If $\bbE_{2}= \bbE_{1}$ and $\bbJ_{2}=\bbJ_{1}$, then the systems' \emph{direct sum} $\bD_{2}\oplus\bD_{1}$ is the system
\[
\bD_{2}\oplus\bD_{1}: \Gamma(\bbE_{1};\bbJ_{1})\rightarrow \Gamma(\bbF_{2}\oplus\bbF_{1};\bbG_{2}\oplus\bbG_{1}),
\]
operating as
\[
\Psi\mapsto (\bD_{2}\Psi, \bD_{1}\Psi).
\]

\item If $\bbF_{2}= \bbF_{1}$ and $\bbG_{2}=\bbG_{1}$, then the systems' \emph{co-direct sum} $\bD_{2}\oplus^*\bD_{1}$ is the system
\[
\bD_{2}\oplus^*\bD_{1}: \Gamma(\bbE_{2}\oplus\bbE_{1};\bbJ_{2}\oplus\bbJ_{1})\rightarrow \Gamma(\bbF_{1};\bbG_{1}),
\]
operating as
\[
(\Psi, \Upsilon) \mapsto \bD_{2}\Psi + \bD_{1}\Upsilon,
\]
where $\Psi\in \Gamma(\bbE_{2}; \bbF_{2})$ and $\Upsilon\in \Gamma(\bbE_{1}; \bbF_{1})$.

\item If both $\bbE_{2} = \bbE_{1}$, $\bbJ_{2} = \bbJ_{1}$ and $\bbF_{2} = \bbF_{1}$, $\bbG_{2} = \bbG_{1}$, then the systems' \emph{sum} $\bD_{2} + \bD_{1}$ is the system
\[
\bD_{2} + \bD_{1}: \Gamma(\bbE_{1}; \bbJ_{1})\rightarrow \Gamma(\bbF_{1}; \bbG_{1}),
\]
operating as
\[
\Psi \mapsto \bD_{2}\Psi + \bD_{1}\Psi.
\]

\item The systems' \emph{disjoint union} $\bD_{2} \sqcup \bD_{1}$ is the system
\[
\bD_{2}\sqcup\bD_{1}: \Gamma(\bbE_{2}\oplus\bbE_{1};\bbJ_{2}\oplus\bbJ_{1})\rightarrow \Gamma(\bbF_{2}\oplus\bbF_{1};\bbG_{2}\oplus\bbG_{1}),
\]
operating as
\[
(\Psi, \Upsilon) \mapsto (\bD_{2}\Psi, \bD_{1}\Upsilon),
\]
where $\Psi \in \Gamma(\bbE_{2}; \bbF_{2})$ and $\Upsilon \in \Gamma(\bbE_{1}; \bbF_{1})$.
\end{enumerate}
\end{definition}

By treating each component in the matrix \eqref{eq:douglas_nirenberg_components} as its own Douglis-Nirenberg system, one retains from \defref{def:direct_sums_unions_sums} the ability to take sums, direct sums, co-direct sums, and disjoint unions of any two operators from any of the different classes in the calculus.

In the specific case of a direct sum as defined above, it is clear that the corresponding class of $\bD_{2} \oplus \bD_{1}$ is given by $\tilde{\tilde{r}}^{j} = \max(r^{j}, \tilde{r}^{j})$, where $r^{j}$ (resp. $\tilde{r}^{j}$) are the corresponding classes of $\bD_{1}$ (resp. $\bD_{2}$). To handle the corresponding orders of the resulting system conveniently, we adopt the following convention:

\begin{definition}
\label{def:corresponding_orders_direct_sums}
Let $\bD_{2}$ and $\bD_{1}$ satisfy the conditions in item (1) of \defref{def:direct_sums_unions_sums}. Let their respective corresponding orders be:
\[
\begin{pmatrix}
\tilde{m}_{\ell}^{j} & \tilde{q}_{\ell}^{l} \\ \tilde{\tau}_{u}^{j} & \tilde{\sigma}_{u}^{l}
\end{pmatrix},
\qquad
\begin{pmatrix}
m_{i}^{j} & q_{i}^{l} \\ \tau_{k}^{j} & \sigma_{k}^{l}
\end{pmatrix}.
\]
The corresponding orders of $\bD_{2} \oplus \bD_{1}$ are then denoted by:
\[
\begin{pmatrix}
(\tilde{m}_{\ell}^{j}, m_{i}^{j}) & (\tilde{q}_{\ell}^{l}, q_{i}^{l}) \\
(\tilde{\tau}_{u}^{j}, \tau_{k}^{j}) & (\tilde{\sigma}_{u}^{l}, \sigma_{k}^{l})
\end{pmatrix},
\]
with the ranging over the indices implied.
\end{definition}

Next, by carefully interpreting the operation in \eqref{eq:contraction_Green}, we can generalize the composition rules in \thmref{thm:composition_Green_operators} to the Douglis-Nirenberg setting:
\begin{theorem}
\label{thm:composition_Douglis_Nirenberg}
Let $\bD_{1}$ and $\bD_{2}$ be two Douglis-Nirenberg systems,
\[
\bD_{2}: \begin{matrix}\oplus_{i}\Gamma(\bbF_{i}) \\\oplus\\ \oplus_{k}\Gamma(\bbG_{k})\end{matrix} \longrightarrow \begin{matrix}\oplus_{\ell}\Gamma(\bbU_{\ell}) \\ \oplus \\ \oplus_{u}\Gamma(\mathbb{L}_{u})\end{matrix}, \qquad
\bD_{1}: \begin{matrix}\oplus_{j}\Gamma(\bbE_{j}) \\\oplus \\\oplus_{l}\Gamma(\bbJ_{l})\end{matrix} \longrightarrow \begin{matrix}\oplus_{i}\Gamma(\bbF_{i}) \\ \oplus \\ \oplus_{k}\Gamma(\bbG_{k})\end{matrix}
\]
with corresponding classes $(\tilde{r}^{i})$ and $({r}^{j})$, and corresponding orders
\[
\begin{pmatrix}
\tilde{m}_{\ell}^{i} & \tilde{q}_{\ell}^{k} \\ \tilde{\tau}_{u}^{i} & \tilde{\sigma}_{u}^{k}
\end{pmatrix}, \qquad
\begin{pmatrix}
m_{i}^{j} & q_{i}^{l} \\ \tau_{k}^{j} & \sigma_{k}^{l}
\end{pmatrix}.
\]
Then, the composition $\bD_{2}\bD_{1}$ has corresponding classes
\beq
\tilde{\tilde{r}}^{j} = \underset{i,k}{\max}(\tilde{r}^{i} + m_{i}^{j}, \tau_{k}^{j}, r^{j}),
\label{eq:comp_douglas_nirenberg_1}
\eeq
and corresponding orders
\beq
\begin{pmatrix}
\tilde{\tilde{m}}^{j}_{\ell} & \tilde{\tilde{q}}^{l}_{\ell} \\ \tilde{\tilde{\tau}}^{j}_{u} & \tilde{\tilde{\sigma}}_{u}^{l}
\end{pmatrix} = \begin{pmatrix}
\underset{i,k}{\max}(\tilde{m}^{i}_{\ell} + m_{i}^{j}, \tilde{q}^{k}_{\ell} + \tau^{j}_{k}) & \underset{i,k}{\max}(\tilde{m}^{i}_{\ell} + q_{i}^{l}, \tilde{q}^{k}_{\ell} + \sigma_{k}^{l}) \\
\underset{i,k}{\max}(\tilde{\tau}^{i}_{u} + m^{j}_{i}, \tilde{\sigma}^{k}_{u} + \tau_{k}^{j}) & \underset{i,k}{\max}(\tilde{\tau}^{i}_{u} + q^{l}_{i}, \tilde{\sigma}^{k}_{u} + \sigma_{k}^{l})
\end{pmatrix}.
\label{eq:comp_douglas_nirenberg_2}
\eeq
\end{theorem}

\begin{proof}
Explicitly, using tensor notation and the ``contraction" operation, we have
\[
\begin{split}
(\bD_{2}\bD_{1})_{\ell u}^{jl} &= (\bD_{2})_{\ell u}^{ik} (\bD_{1})_{ik}^{jl} = 
\begin{pmatrix}
\tilde{E}_{\ell}^{i} & \tilde{K}_{\ell}^{k} \\ \tilde{T}_{u}^{i} & \tilde{Q}_{u}^{k}
\end{pmatrix}
\begin{pmatrix}
E^{j}_{i} & K_{i}^{l} \\ T_{k}^{j} & Q_{k}^{l}
\end{pmatrix}
\\&=
\begin{pmatrix}
\tilde{E}_{\ell}^{i} E^{j}_{i} + \tilde{K}_{\ell}^{k} T_{k}^{j} & \tilde{E}_{\ell}^{i} K_{i}^{l} + \tilde{K}_{\ell}^{k} Q_{k}^{l} \\
\tilde{T}_{u}^{i} E^{j}_{i} + \tilde{Q}_{u}^{k} T_{k}^{j} & \tilde{T}_{u}^{i} K_{i}^{l} + \tilde{Q}_{u}^{k} Q_{k}^{l}
\end{pmatrix}.
\end{split}
\]
While straightforward, the composition rules for each term require care to follow precisely. Verifying these rules confirms that $\bD_{2}\bD_{1}$ is indeed a Douglis-Nirenberg system, with the corresponding classes given by \eqref{eq:comp_douglas_nirenberg_1} and the corresponding orders by \eqref{eq:comp_douglas_nirenberg_2}.
\end{proof}
\subsection{Mapping properties} 
\label{sec:sharp_lenient}
Next, we consider mapping properties of Douglis-Nirenberg systems between Sobolev spaces of sections. Naively, by examining the mapping properties of each component of $\bD$ separately as in \eqref{eq:Green_mapping_proprety0}, along with the continuous inclusions $W^{s,p}\hookrightarrow W^{s',p}$ for every $s\geq s'$ and $1<p<\infty$, it follows that $\bD$ has the collective mapping properties:
\beq
\bD : \begin{matrix}\oplus_{j} W^{s+r^j, p} \Gamma(\bbE_{j}) \\\oplus\\ \oplus_{l} W^{s+t^{l}+1 - 1/p, p} \Gamma(\bbJ_{l})\end{matrix} 
\longrightarrow
\begin{matrix}\oplus_{i} W^{s+\underset{j,l}{\min}(r^{j} - m_{i}^{j}, t^{l} - q_{i}^{l}), p} \Gamma(\bbF_{i}) \\ \oplus \\ \oplus_{k} W^{s+\underset{j,l}{\min}(r^{j} - \tau_{k}^{j}, t^{l} - \sigma_{k}^{l})+1 - 1/p, p} \Gamma(\bbG_{k})\end{matrix} 
\label{eq:douglas_nirenberg_mapping_1}
\eeq
for $s\in \bbR$ with $s > 1/p - 1$ and $t^{l} \in \bbR$.


To abbreviate and extend these mapping properties, consider tuples of real numbers and operations on them. Given a tuple $S = (a_{\alpha})_{\alpha=1}^{\alpha_0}$ indexed by positive integers, with $a_{\alpha} \in \bbR$, denote:
\[
|S| = \alpha_0, \qquad \max S = \max_{\alpha} \{a_{\alpha}\}, \qquad \min S = \min_{\alpha} \{a_{\alpha}\}.
\]
For two such tuples $S = (a_{\alpha})$ and $T = (b_{\alpha})$ with $|S| = |T|$, define:
\[
S + T = (a_{\alpha} + b_{\alpha})_{\alpha=1}^{\alpha_0}, \qquad S - T = (a_{\alpha} - b_{\alpha})_{\alpha=1}^{\alpha_0}.
\]
Additionally, if $s \in \bbR$, let:
\[
S + s = s + S = (s + a_{\alpha})_{\alpha=1}^{\alpha_0}.
\]
For arbitrary tuples $S = (a_{\alpha})_{\alpha=1}^{\alpha_0}$ and $T = (b_{\beta})_{\beta=1}^{\beta_0}$, define their concatenation:
\[
(S, T) = (a_{1}, \dots, a_{\alpha_0}, b_1, \dots, b_{\beta_0}).
\]
With these notions established, we can now introduce the following:
\begin{definition}
Let $S = (a_{\alpha})_{\alpha=1}^{\alpha_0}$ and $T = (b_{\beta})_{\beta=1}^{\beta_0}$ be tuples of real numbers. Consider also direct sums of vector bundles indexed accordingly:
\[
\bbE = \bigoplus_{1 \leq \alpha \leq \alpha_0} \bbE_{\alpha}, \qquad 
\bbJ = \bigoplus_{1 \leq \beta \leq \beta_0} \bbJ_{\beta}.
\]
For any $1 < p < \infty$, define the direct sum of Sobolev spaces as:
\[
W^{S,T}_p(\bbE; \bbJ) = 
\begin{matrix}
\displaystyle\oplus_{\alpha} W^{a_{\alpha}, p} \Gamma(\bbE_{\alpha}) \\
\oplus \\
\displaystyle\oplus_{\beta} W^{b_{\beta}, p} \Gamma(\bbJ_{\beta})
\end{matrix}.
\]
\end{definition}

The space above is a Banach space equipped with the product norm, which, for 
\[
\Psi = (\psi_{\alpha}, \lambda_{\beta})_{\alpha,\beta} \in W_p^{S,T}(\bbE; \bbJ),
\]
takes the form:
\[
\|\Psi\|_{S,T,p} = \sum_{\alpha} \|\psi_{\alpha}\|_{a_{\alpha}, p} + \sum_{\beta} \|\lambda_{\beta}\|_{b_{\beta}, p}.
\]

When the tuples consist of only a single number, we shall write, for example, $S = (s)$ and $T = (s' + 1 - 1/p)$ for fixed $s, s' \in \mathbb{R}$, with the indexing understood from context. In such cases, the space $W^{S,T}_{p}(\bbE; \bbJ)$ coincides with the usual Sobolev space of sections of the bundles $\bbE$ and $\bbJ$, and we use one of the following equivalent notations, depending on convenience:
\[
W^{S,T}_{p}(\bbE; \bbJ) = W^{s, s' + 1 - 1/p}_{p}(\bbE; \bbJ) = 
\begin{matrix}
\bigoplus_{\alpha} W^{s,p}\Gamma(\bbE_{\alpha}) \\
\oplus \\
\bigoplus_{\beta} W^{s',p}\Gamma(\bbJ_{\beta})
\end{matrix}
= 
\begin{matrix}
\bigoplus W^{s,p}\Gamma(\bbE) \\
\oplus \\
W^{s',p}\Gamma(\bbJ)
\end{matrix}.
\]

A particular case of interest occurs when $s = 0$ and $s' = 1/p - 1$, in which case the space reduces to:
\[
W^{0,0}_{p}(\bbE; \bbJ) = L^{p}(\bbE; \bbJ) = 
\begin{matrix}
\bigoplus_{\alpha} L^p \Gamma(\bbE_{\alpha}) \\
\oplus \\
\bigoplus_{\beta} L^p \Gamma(\bbJ_{\beta})
\end{matrix},
\]
with the product norm:
\[
\|\Psi\|_{0,0,p} = \sum_{\alpha} \|\psi_{\alpha}\|_{0,p} + \sum_{\beta} \|\lambda_{\beta}\|_{0,p}.
\]

For $p=2$, this becomes a product Hilbert space. Additionally, the smooth version is defined as:
\[
\Gamma(\bbE; \bbJ) = \begin{matrix}\oplus_{\alpha} \Gamma(\bbE_{\alpha}) \\\oplus\\ \oplus_{\beta} \Gamma(\bbJ_{\beta})\end{matrix} .
\]
Finally, consider also direct sums of compactly supported sections:
\[
\Gamma_{c}(\bbE; \bbJ) = \begin{matrix}\oplus_{\alpha} \Gamma_{c}(\bbE_{\alpha}) \\ \oplus\\\oplus_{\beta} \Gamma(\bbJ_{\beta})\end{matrix}.
\]
and so taking completions with respect to the Sobolev norms yields the spaces:
\[
W^{S,T}_{p,0}(\bbE; \bbJ) = \begin{matrix}\oplus_{\alpha} W^{a_{\alpha}, p}_0 \Gamma(\bbE_{\alpha}) \\ \oplus \\ \oplus_{\beta} W^{b_{\beta}, p} \Gamma(\bbJ_{\beta})\end{matrix}
\]
and we have the duality relation $W^{S,T}_{p,0}(\bbE;\bbJ)\simeq (W^{-S,-T}_{q}(\bbE;\bbJ))^*$ where $1/p+1/q=1$ as usual. 

In these notations, given a Douglis–Nirenberg system $\bD$ with specified corresponding orders and classes as defined in \defref{def:corresponding}, we introduce the \emph{basic tuples} for $\bD$ as:
\beq
\begin{gathered}
J_0 = (r^j), \qquad 
I_0 = \big( \underset{j,l}{\min}\, (r^j - m_{i}^{j},\; t^l - q_{i}^{l}) \big), \\ 
L_0 = (t^l), \qquad 
K_0 = \big( \underset{j,l}{\min}\, (r^j - \tau_{k}^{j},\; t^l - \sigma_{k}^{l}) \big).
\end{gathered}
\label{eq:strict_tuples0}
\eeq

For $s \in \mathbb{R}$ with $s > 1/p - 1$, we define, more generally, the $p$-\emph{standard tuples} for $\bD$ as:
\beq
\begin{gathered}
J = s + J_0, \qquad 
I = s + I_0, \\
L = s + L_0 + 1 - 1/p, \qquad 
K = s + K_0 + 1 - 1/p.
\end{gathered}
\label{eq:strict_tuples}
\eeq

With these definitions, for every $s > 1/p - 1$, the mapping property in \eqref{eq:douglas_nirenberg_mapping_1} can be written more compactly as:
\beq
\bD: W^{J, L}_{p} (\bbE; \bbJ) \to W^{I, K}_p (\bbF; \bbG).
\label{eq:strict_mapping_property}
\eeq

For simplicity, when no ambiguity arises, we often omit the dependence on $p$ and refer to \eqref{eq:strict_tuples} simply as the \emph{standard tuples}. The mapping \eqref{eq:strict_mapping_property} is then referred to as the \emph{standard mapping property} of $\bD$.

By construction, the standard mapping properties \eqref{eq:strict_mapping_property} depend on the specified corresponding orders and classes for $\bD$. However, as noted in the discussion surrounding \propref{prop:L2_continuity}, these values are not uniquely determined: the sharp order and class of certain components in the matrix representation of $\bD$ may, in fact, be lower than initially assigned. Our goal here is to obtain a sharp characterization of these orders and classes directly from the mapping properties of $\bD$, in a manner analogous to the results established for Green operators in \propref{prop:L2_continuity}.

To illustrate why it is necessary to analyze the mapping properties directly in the varying order framework—rather than relying on a symbol calculus, as is possible in the case of Green operators—we turn to the following construction. Given a Douglis–Nirenberg system $\bD$ with corresponding orders as in \eqref{eq:corresponding_orders}, consider the systems
\[
\Lambda : \Gamma(\bbE; \bbJ) \to \Gamma(\bbE; \bbJ), \qquad 
\Pi : \Gamma(\bbF; \bbG) \to \Gamma(\bbF; \bbG),
\]
where $\Lambda$ and $\Pi$ consist of \emph{order-reducing operators} (see \cite[Cor.~5.5]{Gru90}).
\beq
\Lambda_{jl}^{j'l'}=\begin{pmatrix}\delta_{j}^{j'}\calL^{-r^{j'}}_{\bbE_{j'}} & 0 \\ 0 & \delta_{l}^{l'}\calL^{-t^{l'}+1}_{\bbJ_{l'}}\end{pmatrix}, \qquad \Pi_{i'k'}^{ik}=\begin{pmatrix}\delta_{i'}^{i}\calL^{\underset{j,l}{\min}(r^{j} - m_{i}^{j}, t^{l} - q_{i}^{l})}_{\bbF_{i}} & 0 \\ 0 & \delta_{k'}^{k}\calL_{\bbG_{k}}^{\underset{j,l}{\min}(r^{j} - \tau_{k}^{j}, t^{l} - \sigma_{k}^{l})+1}\end{pmatrix}.
\label{eq:order_reducing_opertors}
\eeq
By construction, the corresponding classes of $\Lambda$ are $(-r^{j'})$, and the corresponding orders are 
\[
\begin{pmatrix}
-\delta_{j}^{j'}r^{j'} & -\infty \\ -\infty & -\delta^{l'}_{l}t^{l'}
\end{pmatrix}
\]
whereas the corresponding classes of $\Pi$ are $(\underset{j,l}{\min}(r^{j}-m_i^{j},t^{l}-q^{l}_{i}))$ (indexed by $i$) and its corresponding orders are
\[
\begin{pmatrix}
\delta^{i}_{i'}\underset{j,l}{\min}(r^{j} - m_{i}^{j}, t^{l} - q_{i}^{l}) & -\infty \\ -\infty & \delta^{k}_{k'}\underset{j,l}{\min}(r^{j} - \tau_{k}^{j}, t^{l} - \sigma_{k}^{l})
\end{pmatrix}
\]
For every standard tuples $(J,L;I,K)$ for $\bD$ as in \eqref{eq:strict_tuples}, the mapping property \eqref{eq:douglas_nirenberg_mapping_1} then reads that
\[
\Lambda: \begin{matrix}W^{s,p}\Gamma(\bbE)
\\ \oplus \\ 
W^{s+1-1/p,p}\Gamma(\bbJ)\end{matrix}\longrightarrow \begin{matrix}\oplus_{j}W^{s+r^j,p}\Gamma(\bbE_{j})
\\ \oplus \\
\oplus_{l}W^{s+t^l+1-1/p,p}\Gamma(\bbJ_{l})\end{matrix}=W^{J,L}_{p}(\bbE;\bbJ)
\]
and
\[
\Pi:W^{I,K}_{p}(\bbF;\bbG)=\begin{matrix}\oplus_{i}
W^{s+\underset{j,l}{\min}{(r^j-m_{i}^{j},t^l+1-q_{i}^{l})},p}\Gamma(\bbF_{i})\\\oplus\\\oplus_{k}W^{s+\underset{j,l}{\min}{(r^j-\tau_{k}^{j},t^l-\sigma_{k}^{l})}+1-1/p,p}\Gamma(\bbG_{k})\end{matrix} \longrightarrow \begin{matrix}W^{s,p}\Gamma(\bbF)
\\ \oplus \\ 
W^{s+1-1/p,p}\Gamma(\bbG)\end{matrix}.
\]
The fundamental property of the order reducing operators $\Lambda$ and $\Pi$ is that the above continuous extensions are isomorphisms of Banach spaces, and as such yield inverses within the calculus going in the opposite directions, denoted by $\Lambda^{-1}$ and $\Pi^{-1}$. 

By carefully following the composition rules provided by \thmref{thm:composition_Douglis_Nirenberg}, we find that $\Pi\bD\Lambda$, with $\Pi,\bD,\Lambda$ as above, is a Douglis Nirenberg system with corresponding classes $(0)$ and corresponding orders
\[
\begin{pmatrix}
0 & 0 \\ -1 & 0
\end{pmatrix}.
\] 
Thus, $\Pi\bD\Lambda\in\OP(0,0)$ in the standard sense of Green operators. 
\begin{definition}
\label{def:order_reduced_symbol}
Given corresponding orders and classes for $\bD$ as in \eqref{eq:corresponding_orders}, and the associated order-reducing operators $\Pi$ and $\Lambda$ defined in \eqref{eq:order_reducing_opertors}, the principal symbol 
\[
\sigma(\Pi \bD \Lambda) \in \mathrm{S}(0,0)
\]
is called the \emph{order-reduced principal symbol} of $\bD$ associated with the basic tuples $(J_0, L_0;\, I_0, K_0)$.
\end{definition}

Now, with basic tuples $(J_0,L_0;I_0,K_0)$ as in \eqref{eq:strict_tuples0}, let $\scrL(J_0,L_0;I_0,K_0)$ stand for the space of all continuous linear maps between the Hilbert spaces 
\[
W^{J_0,L_0+1/2}_2(\bbE;\bbJ) \rightarrow W^{I_0,K_0+1/2}_2(\bbF;\bbG)
\]
equipped with the operator norm $\|\cdot\|_{J_0,L_0;I_0,K_0}$, so $\bD \in \scrL(J_0,L_0;I_0,K_0)$ (as a continuous extension).
Adapting \cite[Cor.~2, p.~174]{RS82} to the Douglis-Nirenberg setting, the correspondence $\frakA \mapsto \sigma(\Pi \frakA \Lambda)$ between a Douglis-Nirenberg system and the associated order-reduced principle symbol extends to a continuous linear map from a subspace of $\scrL(J_0,L_0;I_0,K_0)$ onto an appropriate Banach space, with a norm denoted by $\|\cdot\|_{0}$, such that the following holds:
\beq
\underset{\frakC}{\inf}{\|\bD+\frakC\|_{J_0,L_0;I_0,K_0}} = \|\sigma(\Pi \frakA \Lambda)\|_{0}
\label{eq:principle_norm}
\eeq 
where the infimum is taken with respect to all compact operators $\frakC \in \scrL(J_0,L_0;I_0,K_0)$. Thus, the associated order-reduced principal symbol for the system $\bD$ vanishes precisely when its continuous extension \eqref{eq:strict_mapping_property} is compact, indicating that the corresponding orders used to generate $(J_0, L_0; I_0, K_0)$ were not accurate and could actually be reduced. However, even if the collective mapping $\bD$ is not compact, some of the isolated continuous extensions of its matrix components $\bD^{jl}_{ik}$ in \eqref{eq:douglas_nirenberg_components} may still be compact. In such cases, the fact that the order-reduced principal symbol does not identically vanish is incidental.  

As a result, unlike standard Green operators, we must develop a framework to handle Douglis-Nirenberg systems independently of the properties of their order-reduced symbol. This is, again, achieved by extracting information directly from mapping properties they admit. 

To that end, we introduce the concept of \emph{lenient} mapping properties, which refer to any continuous extensions that $\bD$ may admit when \emph{not} restricted to compactly supported sections:

\begin{definition}
\label{def:lenient_mapping}
Let $\bD$ be a Douglis-Nirenberg system, let $(J_0, L_0; I_0, K_0)$ be basic tuples for $\bD$ as in \eqref{eq:strict_tuples0}, and let $1 < p < \infty$. Let $(S, T; S', T')$ be any tuples of real numbers satisfying:
\[
|S| = |J_0|, \qquad |T| = |L_0|, \qquad |S'| = |I_0|, \qquad |T'| = |K_0|.
\] 
Then $(S, T; S', T')$ are called $p$-\emph{lenient tuples} for $\bD$ if, for all $\Psi \in \Gamma(\bbE; \bbJ)$,
\[
\|\bD\Psi\|_{S', T', p} \lesssim \|\Psi\|_{S, T, p},
\]
meaning that $\bD: \Gamma(\bbE; \bbJ) \rightarrow \Gamma(\bbF; \bbG)$ extends into a continuous map
\beq
\bD: W^{S, T}_p (\bbE; \bbJ) \rightarrow W^{S', T'}_{p}(\bbF; \bbG).
\label{eq:douglas_nirenberg_mapping_2}
\eeq
A mapping property of the form \eqref{eq:douglas_nirenberg_mapping_2} is called a $p$-\emph{lenient mapping property} of $\bD$.

When there is no ambiguity, the $p$ is omitted, and we refer to $(S, T; S', T')$ simply as \emph{lenient tuples} and to \eqref{eq:douglas_nirenberg_mapping_2} as a \emph{lenient mapping property}.
\end{definition}

The mapping properties in \eqref{eq:strict_mapping_property} arising from standard tuples for $\bD$ serve as a specific example of a lenient mapping property. The advantage of the broader notion of lenient mapping properties is that they may hold even before some corresponding orders and classes for a system $\bD$ are specified. For instance, recalling that $W^{s,p} \hookrightarrow W^{s',p}$ for every $s \geq s'$, it becomes evident that by choosing $S, T$ sufficiently large, we have:
\begin{corollary}
\label{corr:weak_tuples}
Let $\bD$ be a Douglis Nirenberg system and let $(J,L;I,K)$ be standard tuples for $\bD$. Then every $S',T'$ tuples of real numbers with $|S'|=|I|$ and $|T'|=|K|$ and $\min{S},\min{T'}\geq 0$, there exists tuples of real number $S,T$ such that $(S,T;S',T')$ are lenient tuples for $\bD$.  
\end{corollary}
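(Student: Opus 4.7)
The plan is to reduce to the standard mapping property \eqref{eq:strict_mapping_property} by choosing the Sobolev parameter $s$ in \eqref{eq:strict_tuples} large enough, and then post-compose with elementary continuous Sobolev inclusions to arrive in $W^{S',T'}_p(\bbF;\bbG)$. The key observation is that the standard tuples $(J,L;I,K)$ are parameterised by a single real number $s > 1/p - 1$, and every entry of $(J,L;I,K)$ shifts by the common constant $s$; so by raising $s$ one simultaneously raises the regularity of both the source and the target spaces in the standard mapping property.

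Concretely, first I would fix an arbitrary $s_0 > 1/p - 1$ and form the associated basic tuples $(J_0,L_0;I_0,K_0)$ as in \eqref{eq:strict_tuples0}. I would then set
\[
c \;=\; \max\!\Bigl(0,\; s_0 + 1/p - 1,\; \max_{\alpha}\bigl(S'_\alpha - I_{0,\alpha}\bigr),\; \max_{\beta}\bigl(T'_\beta - K_{0,\beta} - 1 + 1/p\bigr)\Bigr),
\]
which is finite since there are only finitely many indices. Taking $s := c + 1$ one has $s > 1/p - 1$, so the tuples $(J,L;I,K)$ defined by \eqref{eq:strict_tuples} with this $s$ are genuine standard tuples for $\bD$, and \eqref{eq:strict_mapping_property} gives a continuous extension
\[
\bD : W^{J,L}_{p}(\bbE;\bbJ) \longrightarrow W^{I,K}_{p}(\bbF;\bbG).
\]

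Next, by the choice of $c$, every entry of $I = s + I_0$ dominates the corresponding entry of $S'$, and every entry of $K = s + K_0 + 1 - 1/p$ dominates the corresponding entry of $T'$ (here the hypothesis $\min S', \min T' \geq 0$ ensures that the shift $c$ needed is finite regardless of how small $I_0,K_0$ may be). The elementary continuous inclusion $W^{a,p}\Gamma(\bbU) \hookrightarrow W^{a',p}\Gamma(\bbU)$ valid whenever $a \geq a'$, applied componentwise to every summand in the direct-sum decompositions of $\bbF$ and $\bbG$, then yields a continuous inclusion
\[
W^{I,K}_{p}(\bbF;\bbG) \hookrightarrow W^{S',T'}_{p}(\bbF;\bbG).
\]
Composing the two continuous maps and setting $S := J$ and $T := L$, we obtain the required continuous extension $\bD: W^{S,T}_p(\bbE;\bbJ) \to W^{S',T'}_p(\bbF;\bbG)$, which by \defref{def:lenient_mapping} is exactly the statement that $(S,T;S',T')$ are lenient tuples for $\bD$. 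No serious obstacle arises in this argument; the only mild care required is to verify that the single parameter $s$ can be chosen large enough to dominate all finitely many target indices while still lying in the admissibility range $s > 1/p - 1$ of \eqref{eq:strict_mapping_property}, which is transparent from the formula for $c$.
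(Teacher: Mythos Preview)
Your proof is correct and follows exactly the approach the paper intends: the corollary is stated immediately after the sentence ``recalling that $W^{s,p} \hookrightarrow W^{s',p}$ for every $s \geq s'$, it becomes evident that by choosing $S, T$ sufficiently large,'' and your argument simply makes this explicit by raising the parameter $s$ in \eqref{eq:strict_tuples} and post-composing with componentwise Sobolev inclusions. One minor remark: your parenthetical explanation of why the hypothesis $\min S',\min T'\ge 0$ is needed is not quite right (the finiteness of $c$ follows already from the finiteness of the index sets), but this does not affect the validity of the argument.
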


The classes of Green operators $\OP(m, r)$ are useful for handling lenient mapping properties. That is because, as previously noted, Douglis–Nirenberg systems are technically Green operators and belong to $\OP(m, r)$ for sufficiently large $m$ and $r$—specifically, values that exceed all corresponding orders and classes of the components in \eqref{eq:douglas_nirenberg_components}. In fact, by applying \propref{prop:L2_continuity} to each component individually, one obtains the following:

\begin{proposition}
\label{prop:G0_criteria}
Let $\bD$ be a Douglis–Nirenberg system. If, for some $m, s \in \mathbb{R}$ and tuples $S, T$ of real numbers with $\max(S, T) \leq s + 1/2$, the operator $\bD$ satisfies the lenient mapping property
\[
\bD : W_{2}^{S,\, T }(\bbE; \bbJ) \rightarrow W_{2}^{S - m,\, T - m}(\bbF; \bbG),
\]
then $\bD \in \OP(m, r)$ for any $r \in \mathbb{Z}$ such that $r \leq s$.
\end{proposition}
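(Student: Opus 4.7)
The plan is to deduce $\bD \in \OP(m,r)$ by reducing the collective lenient mapping property to isolated continuity statements for each matrix block $E_{j}^{i}$, $K_{i}^{l}$, $T_{k}^{j}$, $Q_{k}^{l}$ in \eqref{eq:douglas_nirenberg_components}, and then invoking \propref{prop:L2_continuity} block by block. Since membership in $\OP(m,r)$ is defined through the orders and classes of the individual matrix entries in \eqref{eq:full_Green_operator}, componentwise analysis suffices, and no symbolic calculus is required.

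The first step is an isolation procedure. Fix indices $j_{0}, l_{0}, i_{0}, k_{0}$. For $\psi \in \Gamma(\bbE_{j_{0}})$, define the test section $\Psi_{\psi} \in \Gamma(\bbE;\bbJ)$ whose $j_{0}$-entry equals $\psi$ and whose other entries vanish. With this choice, the product norm on $W_{2}^{S, T + 1/2}(\bbE;\bbJ)$ collapses to the single norm $\|\psi\|_{s_{j_{0}}, 2}$, where $s_{j_{0}}$ is the $j_{0}$-entry of $S$. Since the product norm on the codomain dominates each individual component norm, the hypothesis extracts, for every $i_{0}$ and $k_{0}$, continuous $L^{2}$-Sobolev extensions for $E_{j_{0}}^{i_{0}}$ and $T_{k_{0}}^{j_{0}}$ acting on $\Gamma(\bbE_{j_{0}})$, each of the isolated form appearing in \eqref{eq:mapping_proprety_diffrent_components}. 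An analogous construction with a single nonzero entry in $\Gamma(\bbJ_{l_{0}})$ produces continuous extensions for $K_{i_{0}}^{l_{0}}$ and $Q_{k_{0}}^{l_{0}}$. The hypothesis $\max(S, T) \leq s + 1/2$ guarantees that every domain Sobolev index so obtained is at most $s + 1/2$.

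The second step applies \propref{prop:L2_continuity} componentwise. For the admissible integer $r$ in the conclusion, every Sobolev index extracted in the previous step lies within the range required by \propref{prop:L2_continuity}. Item (1) then yields that $A_{j_{0}}^{i_{0}}$ has order $m$ and $G_{j_{0}}^{i_{0}}$ has order $m-1$ and class $r$; items (2), (3), (4) yield the corresponding sharp statements for $K_{i_{0}}^{l_{0}}$, $T_{k_{0}}^{j_{0}}$, and $Q_{k_{0}}^{l_{0}}$. Since $(j_{0}, l_{0}, i_{0}, k_{0})$ were arbitrary, every entry of $\bD$ fits the orders and classes demanded by the matrix definition \eqref{eq:full_Green_operator} of $\OP(m,r)$, which completes the argument.

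The main obstacle is not analytic but combinatorial: one must track how the output Sobolev index produced by a single-component test section matches the shift by $-m$ required by \propref{prop:L2_continuity}, especially since the tuple lengths $|J_{0}|, |L_{0}|$ need not coincide with $|I_{0}|, |K_{0}|$. This means a single input test section yields simultaneous estimates across several output components; each such estimate must be reinterpreted as an isolated continuity of the corresponding block with its own domain and codomain Sobolev indices. Once these indices are organized carefully, the equivalence direction of \propref{prop:L2_continuity} is directly applicable and no further analytic input is needed.
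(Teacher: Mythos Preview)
Your proposal is correct and follows exactly the approach the paper indicates: the paper does not give a standalone proof of this proposition but states just before it that the result is obtained ``by applying \propref{prop:L2_continuity} to each component individually,'' and your two-step argument (isolate each block via single-entry test sections, then invoke \propref{prop:L2_continuity} blockwise) is precisely the expansion of that hint. Your observation that the only real content is the combinatorial bookkeeping of Sobolev indices across mismatched domain/codomain tuple lengths is also on point; once the isolated continuities are extracted, \propref{prop:L2_continuity} applies directly and no further analytic input is needed.
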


In particular, if $\bD$ has the lenient mapping property
\[
\bD : L^2(\bbE; \bbJ) \rightarrow L^{2}(\bbF; \bbG),
\]
then by taking $S=(0)$, $T=(0)-1/2$ and $s=0$, one finds that $\bD \in \OP(0, 0)$. Indeed, the class $\OP(0,0)$ enjoys a range of desirable properties:

\begin{corollary}
\label{corr:G0props}
The class $\OP(0,0)$ is closed under composition and adjunction. Moreover, for any tuples $(S, T;\, S', T')$ of real numbers with 
\[
\min(S),\, \min(T),\, \min(S'),\, \min(T') \geq 0,
\]
any $\bD \in \OP(0,0)$ satisfies the lenient mapping property:
\[
\bD : W^{S, T}_{p}(\bbE; \bbJ) \rightarrow W^{S', T'}_{p}(\bbF; \bbG).
\]
\end{corollary}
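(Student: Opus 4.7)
The proof splits into three parts, corresponding to the three assertions.

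\emph{Closure under composition.} This is immediate from Theorem~\ref{thm:composition_Green_operators} with $m_1 = m_2 = 0$ and $r_1 = r_2 = 0$, yielding $\bD_2 \bD_1 \in \OP(0, \max(0, 0)) = \OP(0, 0)$.

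\emph{Closure under adjunction.} This follows directly from Proposition~\ref{prop:formal_adjoint} specialized to $m = 0$, which furnishes $\bD^* \in \OP(0, 0)$ for any $\bD \in \OP(0, 0)$.

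\emph{Lenient mapping property.} The plan is to exploit the block structure of $\bD = (\bD^{jl}_{ik}) \in \OP(0, 0)$, where each block is itself of order $0$ and class $0$, and to invoke the component-wise mapping properties \eqref{eq:mapping_proprety_diffrent_components}. These give, for any $s > 1/p - 1$ in the interior blocks ($E^j_i$ of order $0$ and class $0$, and $T^j_k$ of order $-1$ and class $0$) and $s \in \bbR$ in the boundary blocks ($K^l_i$ and $Q^l_k$ of order $0$), continuous extensions of each component between matching Sobolev spaces. Since all entries of $(S, T; S', T')$ are nonnegative and $1/p - 1 < 0$, the parameter $s$ can always be chosen in each block to satisfy $s > 1/p - 1$. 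One then assembles the block-wise estimates into a single continuous extension $\bD : W^{S, T}_p(\bbE; \bbJ) \to W^{S', T'}_p(\bbF; \bbG)$ by taking direct sums of the component mappings and applying the continuous Sobolev inclusions $W^{a, p} \hookrightarrow W^{a', p}$ for $a \geq a'$ to reconcile the source and target exponents entry by entry.

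The main technical point, and potential obstacle, is the coordination of the parameter $s$ across blocks so that the inclusions combine consistently to yield a map between the prescribed direct-sum spaces, with particular attention to the shift by $1 - 1/p$ appearing between interior and boundary entries. This is a matter of careful bookkeeping rather than analytic depth, and relies on the fact that the hypothesis $\min \geq 0$ affords enough slack in the choice of $s$ for each block. No tools beyond the mapping properties already established in \eqref{eq:mapping_proprety_diffrent_components} and the standard Sobolev embedding theorem are required.
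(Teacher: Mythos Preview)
Your treatment of closure under composition and adjunction is correct and matches the paper's implicit reasoning (which simply cites \thmref{thm:composition_Green_operators} and \propref{prop:formal_adjoint}; see the remark following the latter).

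For the lenient mapping property, there is a genuine gap---but it reflects an overstatement in the corollary rather than a flaw in your method. You correctly invoke the block-wise mapping properties \eqref{eq:mapping_proprety_diffrent_components} and the Sobolev inclusions $W^{a,p}\hookrightarrow W^{a',p}$ for $a\ge a'$. However, the hypothesis $\min(S),\min(T),\min(S'),\min(T')\ge 0$ does \emph{not} ensure that the output regularity dominates the target. Concretely, the block $E^{j}_{i}$ (order $0$, class $0$) sends $\psi_j\in W^{a_j,p}(\bbE_j)$ into $W^{a_j,p}(\bbF_i)$, and you need this to embed into $W^{a'_i,p}(\bbF_i)$; that requires $a_j\ge a'_i$, which is nowhere assumed. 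The ``slack'' you allude to only handles the class constraint $s>1/p-1$; it does nothing to lift the output into a higher Sobolev space. Indeed, the identity operator with $S=(0)$ and $S'=(1)$ is a counterexample to the statement as written.

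The paper gives no explicit proof of this corollary. Where it is invoked (notably in the proof of \lemref{lem:Wspaux} and in \lemref{lem:density_Nsp}), only the special case $\bD:W^{S',T'}_p\to W^{S',T'}_p$ with $\bbE=\bbF$, $\bbJ=\bbG$ is needed, and this your argument does establish: with $m=r=0$ in \eqref{eq:Green_mapping_proprety}, one has $\bD:W^{s,s+1-1/p}_p\to W^{s,s+1-1/p}_p$ for every $s>1/p-1$, and the block-wise version follows by the reasoning you outline. More generally, your approach proves the statement under the additional componentwise constraint that the source tuples dominate the target tuples, which is the most one can expect.
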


It is also convenient to consider direct sums of Douglis–Nirenberg systems $\bD$ with systems in $\OP(0,0)$, as it is relatively straightforward to determine the corresponding orders, classes, and standard tuples for the resulting system.

\begin{proposition}
\label{prop:G0tuples}
Let $\bD : \Gamma(\bbE; \bbJ) \rightarrow \Gamma(\bbF; \bbG)$ be a Douglis–Nirenberg system with corresponding classes $(r^{j})$ and corresponding orders as in \eqref{eq:corresponding_orders}. Let $\tbP \in \OP(0,0)$ act as 
\[
\tbP : \Gamma(\bbE; \bbJ) \rightarrow \Gamma(\bbE; \bbJ).
\]
If $(J, L;\, I, K)$ are standard tuples for $\bD$, then 
\[
(J, L;\, (J, I), (L, K))
\]
are standard tuples for the direct sum system $\bD \oplus \tbP$.
\end{proposition}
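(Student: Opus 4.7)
The argument proceeds in two stages: first, establishing the target continuous extension of $\bD \oplus \tbP$ directly from the hypotheses; second, certifying that the associated tuples arise as \emph{standard} ones via \eqref{eq:strict_tuples0}--\eqref{eq:strict_tuples} for a suitable choice of corresponding orders and classes of the direct sum.

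The standard mapping property of $\bD$ gives the continuous extension
\[
\bD : W^{J, L}_p(\bbE; \bbJ) \longrightarrow W^{I, K}_p(\bbF; \bbG)
\]
as in \eqref{eq:strict_mapping_property}. Independently, $\tbP \in \OP(0, 0)$ combined with \corrref{corr:G0props} yields
\[
\tbP : W^{J, L}_p(\bbE; \bbJ) \longrightarrow W^{J, L}_p(\bbE; \bbJ),
\]
where the non-negativity of the indices in $J$ and $L$ demanded by that corollary is arranged by enlarging the parameter $s > 1/p - 1$ and the free boundary parameters $t^l$ within the definition \eqref{eq:strict_tuples} of the standard tuples. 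Under the canonical identification
\[
W^{J, L}_p(\bbE; \bbJ) \oplus W^{I, K}_p(\bbF; \bbG) \simeq W^{(J, I), (L, K)}_p(\bbE \oplus \bbF; \bbJ \oplus \bbG),
\]
concatenating the two extensions gives the continuous map $\bD \oplus \tbP : W^{J, L}_p(\bbE; \bbJ) \to W^{(J, I), (L, K)}_p(\bbE \oplus \bbF; \bbJ \oplus \bbG)$, which is the target lenient mapping property.

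To promote this to a \emph{standard} mapping property, I would exhibit corresponding classes and orders for $\bD \oplus \tbP$ whose basic tuples via \eqref{eq:strict_tuples0} coincide with $(J_0, L_0; (J_0, I_0), (L_0, K_0))$. On the input side, the classes $(r^j)$ and boundary parameters $(t^l)$ are inherited unchanged from $\bD$, which is permissible for the direct sum by \defref{def:corresponding_orders_direct_sums} since $\tbP \in \OP(0, 0) \subseteq \OP(m, r)$ for every $m, r \geq 0$ and may therefore be regarded as carrying arbitrarily large class and orders. The $\bbF$- and $\bbG$-indexed output blocks of $\bD \oplus \tbP$ retain the orders of $\bD$, reproducing $(I_0, K_0)$. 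For the $\bbE$- and $\bbJ$-indexed output blocks contributed by $\tbP$, I would assign matrix-entry orders for $\tbP$ so that the minimum in \eqref{eq:strict_tuples0} is realized on the diagonal $j = j'$ (respectively $l = l'$), producing $r^{j'}$ on $\bbE_{j'}$ and $t^{l'}$ on $\bbJ_{l'}$.

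The main technical obstacle lies in this final bookkeeping step: one must verify that the chosen inflated orders for $\tbP$, while compatible with the inclusion $\OP(0, 0) \subseteq \OP(m, r)$, force the diagonal contribution to dominate all off-diagonal terms in the minima of \eqref{eq:strict_tuples0} for every pair $(j, j')$ and $(l, l')$. This reduces to a case analysis on the relative sizes of $\{r^j\}$ and $\{t^l\}$, and is the only non-routine point of the argument.
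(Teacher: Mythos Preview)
The paper states this proposition without proof, treating it as a bookkeeping remark, so there is no argument to compare against. Your two--step strategy is the natural one, and your first step (deriving the lenient mapping property $\bD\oplus\tbP:W^{J,L}_p\to W^{(J,I),(L,K)}_p$ from \eqref{eq:strict_mapping_property} and \corrref{corr:G0props}) is carried out correctly.

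Where your sketch is too optimistic is the final ``case analysis''. You propose to inflate the corresponding orders $\tilde m_{j'}^{\,j}$ of $\tbP$ so that the diagonal term realizes the minimum in \eqref{eq:strict_tuples0}. Concretely, to obtain $\min_{j,l}\bigl(r^{j}-\tilde m_{j'}^{\,j},\,t^{l}-\tilde q_{j'}^{\,l}\bigr)=r^{j'}$ you would need $\tilde m_{j'}^{\,j}\le r^{j}-r^{j'}$ for every $j$. But membership $\tbP\in\OP(0,0)$ only guarantees that each matrix entry has order $\le 0$, so the corresponding orders you assign must satisfy $\tilde m_{j'}^{\,j}\ge 0$. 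When the classes $(r^{j})$ are not all equal, these two constraints are incompatible for some pair $(j,j')$, and the same obstruction arises for the $t^{l}$. So the case analysis does not close in general: it succeeds precisely when the $r^{j}$ (and $t^{l}$) are constant, which is the situation in most of the paper's applications but not what the proposition literally asserts.

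This is a looseness in the paper's formulation rather than a flaw in your approach. The proposition is invoked only to motivate \defref{def:sharp_tuples}, where the tuples $(J_0,L_0;(I_0,J_0),(K_0,L_0))$ are simply \emph{declared} and the overdetermined ellipticity hypothesis is imposed with respect to them. What the subsequent analysis actually uses is the continuous extension you established in step~1, together with the ability to build order--reducing operators $\Lambda,\Pi$ adapted to those tuples; neither of these requires the tuples to arise literally via \eqref{eq:strict_tuples0} from a choice of corresponding orders for $\bD\oplus\tbP$. Your write--up would be strengthened by noting explicitly that the strict ``standard tuples'' claim holds under the additional hypothesis that the $r^{j}$ are constant, while the lenient mapping property (which suffices downstream) holds unconditionally.
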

\subsection{Adjoints and Green's formulae} 
When it comes to adjoints, it follows from \propref{prop:L2_continuity}, applied to each component separately, that a Douglis–Nirenberg system $\bD$ possesses an adjoint if and only if all of its corresponding classes are zero. In this case, the matrix components of the adjoint system $\bD^* : \Gamma(\bbF; \bbG) \rightarrow \Gamma(\bbE; \bbJ)$ are given by:
\beq
(\bD^*)_{ik}^{jl} = (\bD_{jl}^{ik})^* = 
\begin{pmatrix}
(E_i^{j})^{*} & (T_{k}^{j})^{*} \\
(K_{i}^{l})^* & (Q_{k}^{l})^*
\end{pmatrix}.
\label{eq:adjoint_douglas}
\eeq

By applying Green's formula \eqref{eq:integration_by_parts_convention} and invoking \propref{prop:formal_adjoint} componentwise, we find that if $\bD$ has all corresponding classes equal to zero, then there exist Douglis–Nirenberg systems
\[
\bB_{\bD} : \Gamma(\bbE; \bbJ) \rightarrow \Gamma(0; \bbL), \qquad 
\bB^*_{\bD} : \Gamma(\bbF; \bbG) \rightarrow \Gamma(0; \bbL),
\]
where $0$ denotes the zero bundle, and whose matrix components take the form:
\[
(\bB_{\bD})^{jl}_{\ell} = 
\begin{pmatrix}
0 & 0 \\
(B_{E_{i}^{j}})_{\ell} & 0
\end{pmatrix}, \qquad
(\bB^*_{\bD})^{jl}_{\ell} = 
\begin{pmatrix}
0 & 0 \\
(B_{(E_{i}^{j})})^*_{\ell} & 0
\end{pmatrix},
\]
such that for all $\Upsilon \in \Gamma(\bbE; \bbJ)$ and $\Theta \in \Gamma(\bbF; \bbG)$, the following identity holds:
\beq
\bra \bD \Upsilon, \Theta \ket = \bra \Upsilon, \bD^* \Theta \ket + \bra \bB_{\bD} \Upsilon, \bB^*_{\bD} \Theta \ket,
\label{eq:integration_by_parts_convention_2}
\eeq
where $\bra \cdot, \cdot \ket$ denotes the direct sum $L^2$-inner product induced on $\Gamma(\bbE; \bbJ)$ and $\Gamma(0; \bbL)$.

\begin{definition}
\label{def:normal_system_of_trace_operators2}
A \emph{system of boundary operators} is a Douglis–Nirenberg system 
\[
\bB : \Gamma(\bbE; \bbJ) \rightarrow \Gamma(0; \bbL)
\]
whose matrix components are of the form:
\[
\bB^{jl}_{\ell} = 
\begin{pmatrix}
0 & 0 \\
T_{\ell}^{j} & Q_{\ell}^{l}
\end{pmatrix}.
\]

Adapting \defref{def:normal_system_of_trace_operators1}, the system is called a \emph{normal system} if the trace operator
\[
T = \bigoplus_{j, \ell} T_{\ell}^{j} : \Gamma(\bbE) \rightarrow \Gamma(\bbJ)
\]
takes the form
\beq 
T_{\ell}^{j} = \sum_{\kappa} S_{\kappa, \ell}^{j}\D_{\frakn}^{\kappa}+\tilde{T}_{\ell}^{j},
\label{eq:T_explicit_proof} 
\eeq 
where each $S_{\kappa, j}^{j}$ is a surjective pseudodifferential operator, and $\tilde{T}_{\ell}^{j}$ is a lower order and class Green operator. 
\end{definition}

Regarding the normality of the boundary terms in \eqref{eq:integration_by_parts_convention_2}, one can readily extend the results of \propref{prop:normal_sytem}, \propref{prop:direct_sum_trace_operator_elliptic_operator}, and the surrounding discussion to the Douglis–Nirenberg setting.

\begin{corollary}
\label{corr:normal_system_2}
The following statements hold:
\begin{enumerate}
\item If $\bB$ is a normal system of boundary operators, then $\bB$ is surjective, and $\ker \bB$ is dense in $L^p(\bbE; \bbJ)$ for every $1 < p < \infty$.

\item Let $\mathfrak{L}$ be a Douglis–Nirenberg system of the form
\[
\mathfrak{L}^{jl}_{kl} = 
\begin{pmatrix}
L_{i}^{j} & 0 \\
0 & W_{k}^{l}
\end{pmatrix},
\]
where each $L_{i}^{j}$ is the truncation of an elliptic operator with the transmission property. Then there exists a vector bundle $\bbL \rightarrow \partial M$, and normal systems of boundary operators
\[
\bB_{\mathfrak{L}} : \Gamma(\bbE; \bbJ) \rightarrow \Gamma(0; \bbL), \qquad 
\bB_{\mathfrak{L}^*} : \Gamma(\bbF; \bbG) \rightarrow \Gamma(0; \bbL),
\]
such that the integration-by-parts identity \eqref{eq:integration_by_parts_convention_2} holds for both $\mathfrak{L}$ and its adjoint $\mathfrak{L}^*$.

\item If $\bB$ is another normal system of boundary operators and $\mathfrak{L}$ is as above, then the system $\bB \mathfrak{L} \oplus \bB_{\mathfrak{L}}$ is also a normal system of boundary operators.
\end{enumerate}
\end{corollary}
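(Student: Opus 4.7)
The plan is to reduce all three assertions to the scalar (single-operator) results already recorded in \propref{prop:normal_sytem}, \propref{prop:direct_sum_trace_operator_elliptic_operator}, and the discussion surrounding \eqref{eq:integration_by_parts_convention}, exploiting the componentwise structure of Douglas--Nirenberg systems in \eqref{eq:douglas_nirenberg_components} and the composition calculus from \thmref{thm:composition_Douglas_Nirenberg}. The main obstacle is not analytic but organizational: at each stage, the scalar results produce an outcome for every matrix entry separately, and we must assemble these outcomes into a single normal system valued in a single bundle $\bbL\to\dM$ whose row-by-row trace expansion matches \defref{def:normal_system_of_trace_operators2}.

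For (1), I would establish surjectivity by picking $\rho=(\rho_\ell)\in\Gamma(0;\bbL)$ and solving the system row by row. Writing $T_\ell^j = \sum_\kappa S_{\kappa,\ell}^j D_\frakn^\kappa$ with $S_{\kappa,\ell}^j$ surjective pseudodifferential operators on $\dM$, I can prescribe normal jets of the $\psi_j$ at the boundary by inverting these $S$-operators via right parametrices on the closed manifold $\dM$, extending to global smooth $\psi_j$ through a collar and a cutoff, and correcting the $\lambda_l$ by a pseudodifferential parametrix for $Q_\ell^l$. For density of $\ker\bB$ in $L^p(\bbE;\bbJ)$, I would mirror the argument of \propref{prop:normal_sytem}: given $\Psi\in L^p(\bbE;\bbJ)$, approximate by smooth $\Psi_n\in\Gamma(\bbE;\bbJ)$, then modify each $\Psi_n$ by a section supported in an arbitrarily thin collar of $\dM$ that cancels $\bB\Psi_n$, using the right inverse just constructed. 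The $L^p$-norm of the correction vanishes with the collar thickness, yielding smooth sections in $\ker\bB$ converging to $\Psi$.

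For (2), the block-diagonal form of $\mathfrak{L}$ decouples the $L_i^j$ and $W_k^l$ entries, so I would apply \propref{prop:direct_sum_trace_operator_elliptic_operator} (and the underlying results cited from \cite{Gru96}) entrywise to each $L_i^j$, producing normal systems of trace operators $B_{L_i^j}$ and $B_{(L_i^j)^*}$ valued in boundary bundles $\bbL_i^j\to\dM$ that satisfy the scalar Green's formula \eqref{eq:integration_by_parts_convention}. The boundary pseudodifferential entries $W_k^l$ contribute no boundary term, since they already possess adjoints within the boundary calculus on the closed manifold $\dM$. Setting $\bbL = \bigoplus_{i,j}\bbL_i^j$ and assembling $\bB_\mathfrak{L} = \bigoplus_{i,j}B_{L_i^j}$, $\bB_{\mathfrak{L}^*}=\bigoplus_{i,j}B_{(L_i^j)^*}$, the matrix-form identity \eqref{eq:integration_by_parts_convention_2} for $\mathfrak{L}$ and $\mathfrak{L}^*$ follows by summing the scalar identities, and normality of the assembled systems is immediate because each scalar summand is normal.

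For (3), by \thmref{thm:composition_Douglas_Nirenberg} the composition $\bB\mathfrak{L}$ is again a system of boundary operators, whose trace part is built from compositions of the $T_\ell^j$ with the $L_i^j$ plus contributions of the $Q_\ell^l W_k^l$ type. Applying \propref{prop:direct_sum_trace_operator_elliptic_operator} to each composite $T_\ell^j L_i^j$ shows that it is a normal trace operator of the expected order, since its normal-derivative leading coefficients are products of the surjective leading coefficients of $T_\ell^j$ with the (invertible) normal principal symbol of the elliptic $L_i^j$. Taking the direct sum with $\bB_{\mathfrak{L}}$ and using (2), the combined system then meets \defref{def:normal_system_of_trace_operators2}. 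The principal care required in this last step is to verify that the normal-derivative orders appearing in $\bB\mathfrak{L}$ and $\bB_\mathfrak{L}$ line up so that, at each order $\kappa$, the union of their leading coefficients remains jointly surjective on $\Gamma(\jmath^*\bbE)$; this is where I expect the bookkeeping to be most delicate, and it is the one place where the Douglas--Nirenberg structure genuinely goes beyond the scalar case rather than merely assembling it.
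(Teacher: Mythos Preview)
Your approach for items (2) and (3) matches the paper's: both are reduced componentwise to the scalar results, with (3) following from \propref{prop:direct_sum_trace_operator_elliptic_operator} applied entrywise. Your identification of the order-matching bookkeeping in (3) as the delicate point is accurate, though the paper dismisses it in one line.

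For item (1), your outline is close to the paper's but contains one unjustified step: you invoke a ``pseudodifferential parametrix for $Q_\ell^l$'' to correct the $\lambda_l$. No such parametrix is assumed to exist---the normality condition in \defref{def:normal_system_of_trace_operators2} constrains only the trace part $T$, not the boundary pseudodifferential part $Q$. The paper avoids this entirely by observing that $\lambda$ need play no active role in either half of the argument. For surjectivity, restrict $\bB$ to $\Gamma(\bbE;0)$ (i.e., set $\lambda=0$) and use that the trace part $T=\bigoplus_{j,\ell} T_\ell^j$ is already a normal system of trace operators in the scalar sense, hence surjective by \propref{prop:normal_sytem}. For density, keep $\lambda$ fixed throughout and modify only $\psi$: first approximate $\psi$ by $\bar\psi_n\in\ker T$ using the scalar density result, then add a collar-supported $\tilde\psi_n$ solving $T\tilde\psi_n=-Q\lambda$, so that $(\bar\psi_n+\tilde\psi_n,\lambda)\in\ker\bB$ and converges to $(\psi,\lambda)$ in $L^p$. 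This also resolves the imprecision in your phrase ``section supported in an arbitrarily thin collar'' applied to $\Psi_n$: there is no thin-collar notion for boundary sections, so the correction must live entirely in the $\psi$ component, and the paper's fixing of $\lambda$ makes this explicit.
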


\begin{proof}
By using the same techniques of \cite[Ch.~1.6]{Gru96}, we may assume that $\tilde{T}_{\ell}^{j}=0$ in \eqref{eq:T_explicit_proof}.

 Item (2) follows directly from the definitions, and item (3) is a consequence of the discussion around \propref{prop:normal_sytem} by composition. Hence it remains to prove item (1).

In the notation of \defref{def:normal_system_of_trace_operators2}, surjectivity is clear since one can restrict $\bB$ to $\Gamma(\bbE; 0)$, obtaining the normal system of trace operators $T$, which is surjective into $\Gamma(0; \bbL)$. 

To prove the required density, denote $Q = \bigoplus_{\ell, l} Q_{\ell}^{l}$, let $(\psi, \lambda) \in \Gamma(\bbE; \bbJ)$ and take an approximating sequence $(\overbar{\psi}_{n}) \subset \ker T$ such that $\overbar{\psi}_{n} \to \psi$ in $L^p$ (which exists since $T$ is normal).

Now consider a sequence of open collars $\partial M \subset \Omega_{n+1} \subset \Omega_n \subset M$, with $\mathrm{Vol}(\Omega_n) \to 0$. By \defref{def:normal_system_of_trace_operators2}, the operator $T$ remains surjective when restricted to $\Gamma(\bbE|_{\Omega_n})$, since the surjectivity of the boundary pseudodifferential operators $S_{\kappa,\ell}^{j}$ is unaffected by the restriction to collars. Thus, for each $n$, we can find sections $\tilde{\psi}_{n} \in \Gamma(\bbE|_{\Omega_n})$ satisfying
\[
T \tilde{\psi}_{n} + Q \lambda = 0,
\]
with $\tilde{\psi}_{n}$ uniformly bounded in $L^{p}(\bbE|_{\Omega_{n}}; 0)$ (as their norm is controlled by that of the boundary section $\lambda$).

By extending $\tilde{\psi}_{n}$ smoothly to all of $M$ using a bump function supported in $\Omega_n$, we obtain sections in $\Gamma(\bbE)$ such that $\tilde{\psi}_{n} \to 0$ in the $L^p$-norm, as $\mathrm{Vol}(\Omega_n) \to 0$. Importantly, this extension does not affect the condition $T \tilde{\psi}_{n} + Q \lambda = 0$, since $T$ is supported near the boundary.

Define $\lambda_n := \lambda$ and $\psi_n := \overbar{\psi}_{n} + \tilde{\psi}_{n}$. Then $(\psi_n, \lambda_n) \to (\psi, \lambda)$ in $L^p$ by construction, and
\[
\bB(\psi_n; \lambda_n) = T \psi_n + Q \lambda_n = T \tilde{\psi}_n + Q \lambda = 0,
\]
as required.
\end{proof}
At this point, a basic observation about Douglis–Nirenberg systems 
\[
\bD : \Gamma(\bbE; \bbJ) \rightarrow \Gamma(\bbF; \bbG)
\]
is that the components in \eqref{eq:douglas_nirenberg_components} with class zero can be isolated from the matrix in a well-defined manner, yielding the \emph{zero-class constituent} of $\bD$, denoted by $\tbD$.

Since, by definition, the components of $\tbD$ have class zero, this part of $\bD$ admits an adjoint within the calculus, as described in \eqref{eq:adjoint_douglas}. We adopt a slight abuse of notation and denote this adjoint also by $\bD^*$, referring to it as the \emph{adjoint of the zero-class constituent of $\bD$}. It holds that $\bD^{**} = \tbD$ if and only if all corresponding classes of $\bD$ are zero. 

By $L^2$-continuity of systems with class zero, the adjoint of the zero-class constituent satisfies an adaptation of the relation in \eqref{prop:formal_adjoint} to the Douglis–Nirenberg setting:
\beq
\bra \bD \Upsilon, \Theta \ket = \bra \Upsilon, \bD^* \Theta \ket, \qquad 
\Upsilon \in \Gamma_c(\bbE; \bbJ), \quad \Theta \in \Gamma_c(\bbF; \bbG).
\label{eq:integration_by_parts_zero_class_cons}
\eeq

For reference, we record these properties formally:
\begin{definition}[Zero-class constituent and its adjoint]
\label{def:zero_class_constituent}
Given a Douglis–Nirenberg system 
\[
\bD : \Gamma(\bbE; \bbJ) \rightarrow \Gamma(\bbF; \bbG),
\]
the \emph{zero-class constituent} of $\bD$, denoted $\tbD$, is the submatrix of $\bD$ consisting of all components of class zero.

The adjoint of this zero-class constituent, also denoted $\bD^*$, is the unique system satisfying
\[
\bra \bD \Upsilon, \Theta \ket = \bra \Upsilon, \bD^* \Theta \ket
\]
for all $\Upsilon \in \Gamma_c(\bbE; \bbJ)$ and $\Theta \in \Gamma_c(\bbF; \bbG)$.
\end{definition}

\subsection{Overdetermined ellipticity}
\label{sec:overdetermined_douglas_nirenberg} 
Technically, every Douglis--Nirenberg system $\bD$ possesses a principal symbol as a Green operator, denoted by $\sigma(\bD)$. As a Green operator, the order and class of $\bD$ are the two highest among all its different corresponding orders in \eqref{eq:corresponding_orders}; as such, this principal symbol does not necessarily capture the leading-order components of $\bD$ as it operates on different summands of the varying structure. Instead, the order-reduced symbol $\sigma(\Pi\bD\Lambda)$ in \eqref{eq:principle_norm} serves as the immediate generalization:
\begin{definition}
\label{def:overdetermined_elliptic_varying}
A system $\bD$ is said to be \emph{overdetermined elliptic} with respect to basic tuples $(J_0, L_0; I_0, K_0)$ as in \eqref{eq:strict_tuples0} if the associated order-reduced symbol $\sigma(\Pi \bD \Lambda)$ is injective.
\end{definition}
Unlike overdetermined ellipticity for Green operators (\defref{def:overdetermined_elliptic_standard}), where the only requirement is the injectivity of the principal symbol—corresponding to the sharp order of the operator, which is uniquely determined—Douglis–Nirenberg systems can be overdetermined elliptic with respect to different order-reducing operators. That is, they can be overdetermined elliptic relative to multiple distinct sets of basic tuples $(J_0, L_0; I_0, K_0)$. 

This generalizes the concept of \emph{weight}-dependent ellipticity in the classical theory of systems of varying order \cite{DN55}. In \secref{sec:douglas_nirenberg_lop}, we will discuss the practical implications of this dependence for verifying overdetermined ellipticity in application. Until then, when referring to a system as overdetermined elliptic, and when there is no ambiguity, we will omit explicit reference to the specific basic tuples upon which the overdetermined ellipticity is based.


An immediate observation is that overdetermined ellipticity, as defined above, remains unaffected by the addition of lower-order terms. Specifically, suppose we can write:
\beq
\bD = \bD_0 + \mathfrak{K},
\label{eq:DplusC}
\eeq
where $\mathfrak{K}$ satisfies the lenient mapping property:
\beq
\mathfrak{K} : W^{J,L}_{2}(\bbE; \bbJ) \to W^{I,K}_{2}(\bbF; \bbG),
\label{eq:comapct_mapping_proprety}
\eeq
and its continuous extension as such is a compact map.

In this context, $\bD$ and $\bD_0$ clearly share the same standard tuples. Moreover, by \eqref{eq:principle_norm} and the compactness of $\mathfrak{K}$, we have:
\[
\sigma(\Pi \bD \Lambda) = \sigma(\Pi \bD_0 \Lambda).
\]
Hence, the contribution of $\mathfrak{K}$ to the overdetermined ellipticity of the system is negligible.

We record this fact for later reference:

\begin{proposition}
\label{prop:lower_order_correction_overdetermined_ellipticity}
Suppose a Douglis–Nirenberg system $\bD$ can be written in the form \eqref{eq:DplusC}, and that $\bD_0$ is overdetermined elliptic. Then $\bD$ is also overdetermined elliptic.
\end{proposition}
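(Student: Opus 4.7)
The plan is to reduce the claim directly to the identity in equation \eqref{eq:principle_norm}, which already does most of the work. By \defref{def:overdetermined_elliptic_varying}, showing that $\bD$ is overdetermined elliptic with respect to the basic tuples $(J_0, L_0; I_0, K_0)$ amounts to showing that the order-reduced symbol $\sigma(\Pi \bD \Lambda)$ is injective, where $\Pi$ and $\Lambda$ are the order-reducing operators from \eqref{eq:order_reducing_opertors}. The strategy is to show that this symbol coincides with $\sigma(\Pi \bD_0 \Lambda)$, which is injective by hypothesis.

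First I would verify that $\bD$ and $\bD_0$ are both well-defined Douglas--Nirenberg systems sharing the same standard tuples: $\bD_0$ carries the prescribed corresponding orders and classes, while $\mathfrak{K}$ is a continuous perturbation whose lenient mapping property \eqref{eq:comapct_mapping_proprety} ensures that $\Pi \mathfrak{K} \Lambda$ extends to a continuous endomorphism of the $L^2$-type spaces underlying $\scrL(J_0, L_0; I_0, K_0)$ with constant shift, placing $\Pi \bD \Lambda$ and $\Pi \bD_0 \Lambda$ in the same operator space. By linearity, we have
\[
\sigma(\Pi \bD \Lambda) = \sigma(\Pi \bD_0 \Lambda) + \sigma(\Pi \mathfrak{K} \Lambda).
\]

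Next I would invoke \eqref{eq:principle_norm}: since the continuous extension of $\Pi \mathfrak{K} \Lambda$ is a composition involving the compact map $\mathfrak{K}$ with the bounded isomorphisms $\Pi$ and $\Lambda$, it is itself compact. Taking $\frakC = -\Pi \mathfrak{K} \Lambda$ in the infimum appearing in \eqref{eq:principle_norm} yields $\|\sigma(\Pi \mathfrak{K} \Lambda)\|_0 = 0$, hence $\sigma(\Pi \mathfrak{K} \Lambda) = 0$. Consequently, $\sigma(\Pi \bD \Lambda) = \sigma(\Pi \bD_0 \Lambda)$, and injectivity transfers immediately, completing the proof.

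The only subtle point, which I expect to be the main (mild) obstacle, is verifying that the compactness of $\mathfrak{K}$ between the source and target Sobolev spaces encoded by $(J_0, L_0 + 1/2; I_0, K_0 + 1/2)$ indeed implies the compactness of the composite $\Pi \mathfrak{K} \Lambda$ as an element of $\scrL(J_0, L_0; I_0, K_0)$ in the precise sense required by \eqref{eq:principle_norm}; this is a routine check using that $\Pi$ and $\Lambda$ are isomorphisms between the relevant Sobolev scales, so compactness is preserved under the sandwich.
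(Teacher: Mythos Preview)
Your proposal is correct and follows essentially the same argument as the paper: the proof in the paper is the paragraph immediately preceding the proposition, which invokes \eqref{eq:principle_norm} and the compactness of $\mathfrak{K}$ to conclude $\sigma(\Pi \bD \Lambda) = \sigma(\Pi \bD_0 \Lambda)$, exactly as you do. Your version simply spells out the intermediate steps (linearity of the symbol map, preservation of compactness under the sandwich with $\Pi$ and $\Lambda$) that the paper compresses into a single sentence.
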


The following is the main theorem concerning overdetermined elliptic systems, obtained directly from the estimate \eqref{eq:overdetermined_sharp}, the composition rules \eqref{eq:comp_douglas_nirenberg_1}–\eqref{eq:comp_douglas_nirenberg_2}, the statement and discussion surrounding \propref{prop:overdetermined_tools_Green}, and the same argument used in \cite[Cor.~5.5, p.~336]{Gru90}:

\begin{theorem}
\label{thm:overdetermined_varying_orders}
Let $\bD$ be an overdetermined elliptic Douglis–Nirenberg system as defined in \eqref{def:overdetermined_elliptic_varying}, based on basic tuples $(J_0, L_0;\, I_0, K_0)$, which in turn are determined by corresponding orders as in \eqref{eq:corresponding_orders} and classes $(r^j)$. Then, for any standard tuples $(J, L;\, I, K)$ of $\bD$ as in \eqref{eq:strict_tuples}, the following hold:

\begin{enumerate}
\item For all $\Psi \in W^{J,L}_{p}(\bbE; \bbJ)$, there exists an a priori estimate:
\beq
\|\Psi\|_{J,L,p} \lesssim \|\bD\Psi\|_{I,K,p} + \|\frakI\Psi\|_{0,0,p}.
\label{eq:overdetermined_a_priori}
\eeq
Here, $\frakI \in \OP(-\infty, -\infty)$ is the $L^2$-orthogonal projection onto the finite-dimensional space $\ker \bD$, which is independent of the particular continuous extension of $\bD$ under any lenient mapping property \eqref{eq:douglas_nirenberg_mapping_2}. 

\item The mapping in \eqref{eq:strict_mapping_property} is semi-Fredholm, and $\ker \bD$ consists entirely of smooth sections.

\item If $\bD$ is injective, then it admits a left inverse within the calculus, which is continuous in the reverse direction of \eqref{eq:strict_mapping_property} for every $s > 1/p - 1$, and has corresponding classes
\beq
\tilde{r}^i = \underset{j,l}{\min} (r^j - m_i^j,\; t^l - q_i^l)
\label{eq:left_inverse_corresponding_classes}
\eeq
and basic tuples $(I_0,K_0;J_0,L_0)$. 
\item Conversely, if $\bD$ admits a left inverse within the calculus—associated with basic tuples $(I_0, K_0;\, J_0, L_0)$—then $\bD$ is injective and overdetermined elliptic with respect to $(J_0, L_0;\, I_0, K_0)$.
\end{enumerate}
\end{theorem}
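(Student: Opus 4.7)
The plan is to reduce the Douglas–Nirenberg setting to the standard Green operator calculus via the order-reducing operators $\Lambda$ and $\Pi$ from \eqref{eq:order_reducing_opertors}. By construction $\Pi\bD\Lambda\in\OP(0,0)$, and its principal symbol in the standard sense is precisely the order-reduced symbol $\sigma(\Pi\bD\Lambda)\in\mathrm{S}(0,0)$. Hence, by \defref{def:overdetermined_elliptic_varying}, $\bD$ is overdetermined elliptic with respect to the basic tuples $(J_0,L_0;I_0,K_0)$ if and only if $\Pi\bD\Lambda$ is overdetermined elliptic as a Green operator in the sense of \defref{def:overdetermined_elliptic_standard}. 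Since $\Lambda$ and $\Pi$ are isomorphisms between the relevant Sobolev spaces—with inverses inside the calculus—every statement about continuous extensions, kernels, and parametrices will transfer back and forth.

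For item (1), I would apply \propref{prop:overdetermined_tools_Green}, together with the subsequent sharpening \eqref{eq:overdetermined_sharp}, to $\Pi\bD\Lambda\in\OP(0,0)$. With $m=r=0$ the condition $s>r+1/p-1$ becomes $s>1/p-1$, and the estimate reads
\[
\|\Phi\|_{s,s+1-1/p,p}\lesssim \|\Pi\bD\Lambda\Phi\|_{s,s+1-1/p,p}+\|\frakI'\Phi\|_{0,0,p}
\]
for $\Phi$ in the relevant $L^p$-based space, where $\frakI'$ is the $L^2$-projector onto $\ker(\Pi\bD\Lambda)$. Substituting $\Phi=\Lambda^{-1}\Psi$ and using the mapping properties of $\Lambda^{-1}$ and $\Pi$ yields \eqref{eq:overdetermined_a_priori} with $\frakI$ the $L^2$-projector onto $\ker\bD=\Lambda\ker(\Pi\bD\Lambda)$. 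Since $\ker(\Pi\bD\Lambda)$ is finite-dimensional and smooth by \propref{prop:overdetermined_tools_Green}, the same holds for $\ker\bD$, and $\frakI\in\OP(-\infty,-\infty)$; its independence of the chosen lenient extension follows because any element of the kernel must already lie in $\Gamma(\bbE;\bbJ)$ by elliptic regularity applied to $\Pi\bD\Lambda$. The semi-Fredholm statement in item (2) is then an immediate consequence of \eqref{eq:overdetermined_a_priori} and the compactness of $\frakI$.

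For item (3), assuming $\bD$ injective, the operator $\Pi\bD\Lambda$ is injective as well, and \propref{prop:overdetermined_tools_Green} supplies a left inverse $\frakP'\in\OP(0,0)$. I then set $\frakL=\Lambda\,\frakP'\,\Pi$, which satisfies $\frakL\bD=\id$ modulo $\OP(-\infty,-\infty)$, and an adjustment by a smoothing term (as in the Green case) upgrades this to an exact left inverse. The verification of the corresponding classes \eqref{eq:left_inverse_corresponding_classes} proceeds by applying \thmref{thm:composition_Douglas_Nirenberg} to the composition $\Lambda\frakP'\Pi$, reading off the maxima that arise from the diagonal shape of $\Lambda$ and $\Pi$; this is the step where I expect the main bookkeeping obstacle to appear, since the $\min$ over $(j,l)$ in the definitions of $\Pi$ and $K_0,I_0$ must be matched against the $\max$ produced by \eqref{eq:comp_douglas_nirenberg_1}, and care is required to ensure the indices align correctly under the convention that orders of trace/singular-Green components are shifted by $-1$.

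For item (4), suppose $\bD$ admits a left inverse $\frakL$ within the calculus associated with basic tuples $(I_0,K_0;J_0,L_0)$. Then $\Lambda^{-1}\frakL\Pi^{-1}$ is a left inverse of $\Pi\bD\Lambda$ in $\OP(0,0)$, so by \eqref{eq:symbol_inverse} the symbol $\sigma(\Pi\bD\Lambda)$ admits a left inverse in $\mathrm{S}(0,0)$ and is therefore injective; hence $\bD$ is overdetermined elliptic in the sense of \defref{def:overdetermined_elliptic_varying}, and injectivity of $\bD$ follows from that of its left inverse composed on the right. The only delicate point is to ensure that the hypothesis ``within the calculus associated with $(I_0,K_0;J_0,L_0)$'' is strong enough to guarantee that $\Lambda^{-1}\frakL\Pi^{-1}\in\OP(0,0)$, which is again handled by \thmref{thm:composition_Douglas_Nirenberg}.
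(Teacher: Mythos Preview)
Your approach is essentially the same as the paper's: reduce to the Green-operator case via $\Pi\bD\Lambda\in\OP(0,0)$ and transport everything back through the order-reducing isomorphisms. The paper carries this out exactly as you describe, and your handling of items (3) and (4) matches (in item (3) the paper obtains the classes \eqref{eq:left_inverse_corresponding_classes} either from \thmref{thm:composition_Douglas_Nirenberg} or componentwise via \propref{prop:L2_continuity}, and in item (4) it simply says ``by symmetry of the argument'').

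One small imprecision worth flagging: after substituting $\Phi=\Lambda^{-1}\Psi$, the remainder term is $\|\frakI'\Lambda^{-1}\Psi\|_{0,0,p}$, and $\frakI'\Lambda^{-1}$ is not literally the $L^2$-orthogonal projector $\frakI$ onto $\ker\bD$ (since $\Lambda$ is not unitary). The paper handles this in two steps: first it bounds $\|\frakI'\Lambda^{-1}\Psi\|_{0,0,p}$ by a low-regularity norm $\|\Psi\|_{s_0,s_0+1/p,p}$ with $s_0<\min(J,L)$, and then invokes the standard Rellich/compactness argument to replace that lower-order term by $\|\frakI\Psi\|_{0,0,p}$. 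You clearly know this maneuver (you use it for semi-Fredholmness), so this is a bookkeeping slip rather than a real gap. Also, in item (3) no smoothing adjustment is needed: since $\frakP'$ is an \emph{exact} left inverse of $\Pi\bD\Lambda$, the identity $\Lambda\frakP'\Pi\,\bD=\Lambda\frakP'(\Pi\bD\Lambda)\Lambda^{-1}=\id$ holds on the nose.
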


\begin{proof}
Since $\Pi\bD\Lambda$ is overdetermined elliptic as a Green operator, \propref{prop:overdetermined_tools_Green} provides the estimate:
\[
\|\Psi\|_{\tilde{s},p} \lesssim \|(\Pi\bD\Lambda)\Psi\|_{\tilde{s},p} + \|\tilde{\frakI}\Psi\|_{0,p},
\]
where $\tilde{\frakI}$ denotes the projection onto $\ker(\Pi\bD\Lambda)$. Replacing $\Psi$ with $\Lambda^{-1}\Psi$ and applying the continuity of $\Lambda^{-1}$, we obtain:
\[
\|\Psi\|_{J,L,p} \lesssim \|\Lambda^{-1}\Psi\|_{\tilde{s},p} \lesssim \|(\Pi\bD)\Psi\|_{\tilde{s},p} + \|\tilde{\frakI}\Lambda^{-1}\Psi\|_{0,p}.
\]
Using the continuity of $\Pi$ and the fact that $\tilde{\frakI} \Lambda^{-1} : L^p(\bbE; \bbJ) \to W^{s_0,\, s_0 + 1/p}_p(\bbE; \bbJ)$ for any $s_0 < \min(J, L)$ sufficiently small, we deduce:
\[
\|\Psi\|_{J,L,p} \lesssim \|\bD\Psi\|_{I,K,p} + \|\Psi\|_{s_0,\, s_0 + 1/p,\, p}.
\]
Now, letting $\frakI$ be the projection onto $\ker \bD$, a standard compactness argument (since $W^{J,L}_p \hookrightarrow W^{s_0,\, s_0 + 1/p}_p$ compactly) yields:
\[
\|\Psi\|_{J,L,p} \lesssim \|\bD\Psi\|_{I,K,p} + \|\frakI\Psi\|_{0,0,p},
\]
which proves \eqref{eq:overdetermined_a_priori}. The fact that $\bD$ is semi-Fredholm then follows from a standard argument e.g.,~\cite[p.~30]{EE18}. 

If $\bD$ is injective, then so is $\Pi\bD\Lambda$, since it is the composition of an injective operator with invertible maps on both sides. By \propref{prop:overdetermined_tools_Green}, $\Pi\bD\Lambda$ admits a left inverse $\tilde{\bD}$ of order $0$ and class $0$. Then $\Lambda \tilde{\bD} \Pi$ is a left inverse for $\bD$, continuous in the reverse direction of \eqref{eq:strict_mapping_property}. The corresponding classes in \eqref{eq:left_inverse_corresponding_classes} follow either from the composition rules in \thmref{thm:composition_Douglis_Nirenberg}, or by applying \propref{prop:L2_continuity} componentwise to each entry in the matrix representation of the composed operator.

The converse follows by symmetry of the argument.
\end{proof}
We now wish to introduce a machinery that allows one to identify overdetermined ellipticity prior to determining the basic tuples upon which it is based. To that end, the concept of \emph{sharp} tuples will be useful:
\begin{definition}[Sharp tuples]
\label{def:sharp_tuples}
Let $(J_0, L_0;\, I_0, K_0)$ be basic tuples for a Douglis--Nirenberg system $\bD:\Gamma(\bbE;\bbJ) \rightarrow \Gamma(\bbF;\bbG)$, and let $\Lambda$, $\Pi$ be the associated order-reducing operators as defined in \eqref{eq:order_reducing_opertors}, so that $\Pi\bD\Lambda\in\OP(0,0)$. 

We say that $(J_0, L_0;\, I_0, K_0)$ are \emph{basic sharp tuples} if: 
\[
\Pi\bD\Lambda \notin \OP(m, r), \qquad \forall m, r < 0.
\]

In this case, the associated standard tuples as defined in \eqref{eq:strict_tuples0}, namely $(J, L;\, I, K)$, are called \emph{sharp tuples} for $\bD$. The corresponding mapping properties,
\[
\bD:W^{J,L}_p(\bbE;\bbJ) \rightarrow W^{I,K}_p(\bbF;\bbG),
\]
as given in \eqref{eq:strict_mapping_property}, are referred to as a \emph{sharp} mapping properties.
\end{definition}
Sharp tuples, by definition, are very convenient to work with, as they reduce the composition rules in \thmref{thm:composition_Douglis_Nirenberg} to a matter of mapping properties:
\begin{corollary}
\label{corr:sharp_composition}
If $\bD$ and $\bQ$ are Douglis–Nirenberg systems with sharp tuples $(J, L; I, K)$ and $(I, K; U, S)$, respectively, then the composition $\bQ \bD$ has sharp tuples $(J, L; U, S)$.
\end{corollary}
The reason we introduce sharp tuples only at this stage, and not earlier, is due to the following claim, which draws its relevance from the notion of overdetermined ellipticity. For the statement, recall that the basic tuples of a direct sum with a system in $\OP(0,0)$ are easily determined; see \propref{prop:G0tuples}.
\begin{proposition}
\label{prop:sharp_tuples_OD}
Let $\bD:\Gamma(\bbE;\bbJ)\rightarrow\Gamma(\bbE;\bbJ)$ be a Douglis--Nirenberg system with basic sharp tuples $(J_0, L_0;\, I_0, K_0)$, and suppose there exists $\tbP:\Gamma(\bbE;\bbJ)\rightarrow\Gamma(\bbE;\bbJ)$ such that:
\[
\bD \oplus \tbP
\]
is overdetermined elliptic with respect to basic tuples $(\tilde{J}_0, \tilde{L}_0;\, (\tilde{I}_0, \tilde{J}_0), (\tilde{K}_0, \tilde{L}_0))$. Then it is also overdetermined elliptic with respect to $(J_0, L_0;\, (I_0, J_0), (K_0, L_0))$.
\end{proposition}

\begin{proof}
It is clear that, without loss of generality, we may take $\tbP = 0$. Denote by $\tilde{\Lambda}$ and $\tilde{\Pi}$ the order-reducing operators associated with $(\tilde{J}_0, \tilde{L}_0;\, \tilde{I}_0, \tilde{K}_0)$, and by $\Lambda$ and $\Pi$ those associated with $(J_0, L_0;\, I_0, K_0)$. 

By the construction of order-reducing operators in \eqref{eq:order_reducing_opertors}, we can write:
\[
\tilde{\Lambda} = \Lambda \tilde{\Lambda}_0, \qquad \tilde{\Pi} = \tilde{\Pi}_0 \Pi
\]
for suitable order-reducing operators $\tilde{\Lambda}_0$ and $\tilde{\Pi}_0$ of the same form. If this were not the case, then clearly $
\Pi\bD\Lambda \in \OP(m,r)$
for some $m, r < 0$. Thus, by the homomorphism property of the symbol, the injectivity of $\sigma(\tilde{\Lambda} \bD \tilde{\Pi})$ is equivalent to that of $\sigma(\Lambda \bD \Pi)$.
\end{proof}
\subsubsection{Overdetermined ellipticity and semi-Fredholmness} 
To conclude this section, we complete the equivalence of overdetermined ellipticity with semi-Fredholmness, as discussed prior to \propref{prop:overdetermined_tools_Green}. Specifically, we show that the validity of an a priori estimate of the form \eqref{eq:overdetermined_a_priori}, with respect to some basic tuples $(J_0, L_0;\, I_0, K_0)$, is sufficient to guarantee that the system is overdetermined elliptic with respect to these tuples.

For ease of reference, we state this result for the case $p = 2$:
\begin{proposition}
\label{prop:estimate_overdetermined_sufficent_douglas}
A Douglis-Nirenberg system $\bD$ is overdetermined elliptic with respect to basic tuples $(J_0, L_0; I_0, K_0)$ if and only if there exists an estimate
\[
\|\Psi\|_{J_0, L_0 + 1/2, 2} \lesssim \|\bD \Psi\|_{I_0, K_0 + 1/2, 2} + \|\Psi\|_{s_0, s_0 + 1/2, 2},
\]
for some $s_0 < \min(J_0, L_0 + 1/2)$.
\end{proposition}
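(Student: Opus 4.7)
The plan is to reduce to the classical characterization for Green operators of order and class zero via the order-reducing operators $\Pi$ and $\Lambda$ from \eqref{eq:order_reducing_opertors}, handling the two implications separately.

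For the forward implication I would apply \thmref{thm:overdetermined_varying_orders}(1) with $p=2$ and $s=0$. The resulting estimate has the stated form except that the remainder reads $\|\frakI \Psi\|_{0,0,2}$. Since $\frakI \in \OP(-\infty,-\infty)$ is smoothing, it extends continuously from any weaker Sobolev space into $L^2$, so $\|\frakI \Psi\|_{0,0,2} \lesssim \|\Psi\|_{s_0,s_0+1/2,2}$ for an arbitrary $s_0 \in \bbR$; this yields the desired bound.

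For the converse I would substitute $\Psi = \Lambda \tilde\Psi$ in the hypothesis. As recalled in the discussion preceding \defref{def:order_reduced_symbol}, $\Lambda$ and $\Pi$ restrict to Banach isomorphisms $W^{0,1/2}_{2}(\bbE;\bbJ) \simeq W^{J_0,L_0+1/2}_{2}(\bbE;\bbJ)$ and $W^{I_0,K_0+1/2}_{2}(\bbF;\bbG) \simeq W^{0,1/2}_{2}(\bbF;\bbG)$, so the estimate rewrites as
\[
\|\tilde\Psi\|_{0,1/2,2} \lesssim \|(\Pi \bD \Lambda) \tilde\Psi\|_{0,1/2,2} + \|\Lambda \tilde\Psi\|_{s_0,s_0+1/2,2}.
\]
Since $s_0 < \min(J_0, L_0+1/2)$, the Rellich embedding $W^{J_0,L_0+1/2}_{2} \hookrightarrow W^{s_0,s_0+1/2}_{2}$ is compact, so the last term is a compact perturbation; hence $\Pi \bD \Lambda \in \OP(0,0)$ is semi-Fredholm on $W^{0,1/2}_{2}$.

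It then remains to invoke the classical characterization in the Boutet de Monvel calculus---the overdetermined counterpart of \cite[Thm.~7, p.~197]{RS82} mentioned after \propref{prop:L2_continuity}---asserting that a Green operator of order and class zero is semi-Fredholm exactly when its principal symbol is injective in $\mathrm{S}(0,0)$. Applied to $\Pi \bD \Lambda$ this yields injectivity of $\sigma(\Pi \bD \Lambda)$, which by \defref{def:order_reduced_symbol} and \defref{def:overdetermined_elliptic_varying} is precisely the overdetermined ellipticity of $\bD$ with respect to $(J_0,L_0;I_0,K_0)$. The main obstacle is this last step: the two-sided Fredholm characterization is classical, but its semi-Fredholm counterpart requires constructing a symbolic left parametrix modulo $\mathrm{S}(-1,-1)$ from the pointwise left inverses provided by injectivity, and then quantizing and iterating to correct by smoothing terms while respecting the transmission property at the boundary.
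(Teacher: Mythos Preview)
Your forward direction and your reduction of the converse via $\Psi = \Lambda\tilde\Psi$ both match the paper. The obstacle you flag in your final paragraph is exactly where your argument stalls: the semi-Fredholm characterization for Green operators in $\OP(0,0)$ is not something you can cite from \cite{RS82}, and the forward reference after \propref{prop:L2_continuity} is to this very proposition, so invoking it would be circular.

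The paper avoids this obstacle by passing to the square. Once you know $\Pi\bD\Lambda \in \OP(0,0)$ has closed range and finite-dimensional kernel on $L^2$, form $(\Pi\bD\Lambda)^*(\Pi\bD\Lambda) \in \OP(0,0)$. This operator is self-adjoint with $\ker\bigl((\Pi\bD\Lambda)^*(\Pi\bD\Lambda)\bigr) = \ker(\Pi\bD\Lambda)$, and from the closed range of $\Pi\bD\Lambda$ one checks (via the lower bound $\|(\Pi\bD\Lambda)^*(\Pi\bD\Lambda)\psi\| \ge c^2\|\psi\|$ on $(\ker\Pi\bD\Lambda)^\perp$) that it is genuinely Fredholm $L^2 \to L^2$. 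Now the two-sided result \cite[Thm.~7, p.~197]{RS82} applies directly: $(\Pi\bD\Lambda)^*(\Pi\bD\Lambda)$ is elliptic in $\OP(0,0)$, so its symbol is invertible. By \thmref{thm:full_Green_compositon_rules},
\[
\sigma\bigl((\Pi\bD\Lambda)^*(\Pi\bD\Lambda)\bigr) = \sigma(\Pi\bD\Lambda)^* \circ \sigma(\Pi\bD\Lambda),
\]
and invertibility of this composition forces $\sigma(\Pi\bD\Lambda)$ to be injective, which is exactly overdetermined ellipticity of $\bD$ with respect to $(J_0,L_0;I_0,K_0)$. No symbolic left parametrix needs to be built by hand.
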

 
\begin{proof}
The first direction is established by \thmref{thm:overdetermined_varying_orders}. For the reverse direction, let $\Pi$ and $\Lambda$ be the order-reducing operators associated with the basic tuples $(J_0, L_0; I_0, K_0)$ as defined in \eqref{eq:order_reducing_opertors}. Then $\Pi \bD \Lambda \in \OP(0,0)$, and the estimate takes the form:
\[
\|\Psi\|_{0,0,2} \lesssim \|\Pi \bD \Lambda \Psi\|_{0,0,2} + \|\frakI \Psi\|_{0,0,2},
\]
where $\frakI$ is a compact operator. Thus, $\Pi \bD \Lambda: L^{2}(\bbE; \bbJ) \to L^{2}(\bbF; \bbG)$ has a closed range and finite-dimensional kernel, which implies that $(\Pi \bD \Lambda)^* \Pi \bD \Lambda: L^{2}(\bbE; \bbJ) \to L^{2}(\bbE; \bbJ)$ is Fredholm by the closed range theorem (cf. e.g., \cite[App.~A]{Tay11a}).

By \cite[Thm.~7, p.~197]{RS82}, it follows that $(\Pi \bD \Lambda)^* \Pi \bD \Lambda$ is an elliptic Green operator of order zero, hence its principle symbol $\sigma(\Pi \bD \Lambda)^* \Pi \bD \Lambda)$ is a bijection. However, due to \thmref{thm:full_Green_compositon_rules}: 
\[
\sigma((\Pi \bD \Lambda)^*\Pi \bD \Lambda)=\sigma(\Pi \bD \Lambda)^*\circ\sigma(\Pi \bD \Lambda)
\]
We conclude that $\sigma(\Pi \bD \Lambda)$ has a left inverse within $\mathrm{S}(0,0)$, hence it is injective. Consequently, $\bD$ is overdetermined elliptic as in  \defref{def:overdetermined_elliptic_varying}.
\end{proof}
\subsection{The weighted symbol} 
\label{sec:douglas_nirenberg_lop}
Although the injectivity of the order-reduced symbol $\sigma(\Lambda \bD \Pi)$ provides the most immediate analytical generalization of the injectivity of $\sigma(\bD)$ for $\bD$ a Green operator, this formulation depends on the particular choice of order-reducing operators $\Lambda$ and $\Pi$. In practical applications, we seek a criterion for overdetermined ellipticity that is independent of the choice of particular $\Lambda$,$\Pi$. Such a criterion should depend solely on the existence of basic tuples $(J_0, L_0;\, I_0, K_0)$ with respect to which $\bD$ is overdetermined elliptic.

Here, we carry this out using an approach that, as mentioned earlier, can be viewed as generalizing the machinery of ``weights'' from the classical theory of systems of varying orders \cite{DN55}. In that theory, determining the ellipticity of a system involves introducing ``weights''—analogous in our framework to the basic tuple–dependent order-reducing operators $\Lambda$ and $\Pi$. Once appropriate ``weights'' are identified, the classical theory observes that the actual verification of ellipticity ultimately remains independent of the specific choice of weights. For further details on this in the classical theory, see the discussion in \cite{Kha23} and the referenced (Russian) paper there \cite{Vol63}.

Let $\bD$ be a Douglis–Nirenberg system with matrix components as in \eqref{eq:douglas_nirenberg_components}, and with specified basic tuples $(J_0, L_0;\, I_0, K_0)$. Let the fibers of the vector bundles be expressed as in \eqref{eq:vector_bundles_fibers}. We consider the components as isolated Green operators, acting between Sobolev spaces as inherited from the mapping property \eqref{eq:strict_mapping_property} yielded by $(J_0, L_0;\, I_0, K_0)$:
\beq
\begin{split}
&E_{i}^{j} = 
\begin{pmatrix}
E_{i}^{j} & 0 \\ 
0 & 0 
\end{pmatrix} : W^{r^j, 2} \Gamma(\bbE_{j}) \to W^{\underset{j',l'}{\min}(r^{j'} - m_{i}^{j'}, t^{l'} - q_{i}^{l'}), 2} \Gamma(\bbF_{i}), \\
&T_{k}^{j} = 
\begin{pmatrix}
0 & 0 \\ 
T_{k}^{j} & 0 
\end{pmatrix} : W^{r^j, 2} \Gamma(\bbE_{j}) \to W^{\underset{j',l'}{\min}(r^{j'} - \tau_{k}^{j'}, t^{l'} - \sigma_{k}^{l'}) + 1/2, 2} \Gamma(\bbG_{k}), \\
&K^{l}_{i} = 
\begin{pmatrix}
0 & K^{l}_{i} \\ 
0 & 0 
\end{pmatrix} : W^{t^{l} + 1/2, 2} \Gamma(\bbJ_{l}) \to W^{\underset{j',l'}{\min}(r^{j'} - m_{i}^{j'}, t^{l'} - q_{i}^{l'}), 2} \Gamma(\bbF_{i}), \\
&Q_{k}^{l} = 
\begin{pmatrix}
0 & 0 \\ 
0 & Q_{k}^{l}
\end{pmatrix} : W^{t^{l} + 1/2, 2} \Gamma(\bbJ_{l}) \to W^{\underset{j',l'}{\min}(r^{j'} - \tau_{k}^{j'}, t^{l'} - \sigma_{k}^{l'}) + 1/2, 2} \Gamma(\bbG_{k}).
\end{split}
\label{eq:matrix_com_strict}
\eeq

Although some of these mappings may be compact, we consider the components as elements of:
\[
\begin{aligned}
&\OP(r^{j} - \underset{j',l'}{\min}(r^{j'} - m_{i}^{j'}, t^{l'} - q^{l'}_{i}), r^{j}), \quad && \OP(t^l - \underset{j',l'}{\min}(r^{j'} - m_{i}^{j'}, t^{l'} - q^{l'}_{i}), 0), \\
&\OP(r^{j} - \underset{j',l'}{\min}(r^{j'} - \tau_{k}^{j'}, t^{l'} - \sigma^{l'}_{k}), r^{j}), \quad && \OP(t^l - \underset{j',l'}{\min}(r^{j'} - \tau_{k}^{j'}, t^{l'} - \sigma^{l'}_{k}), 0).
\end{aligned}
\]
We now consider the isolated interior and boundary symbols of the above Green operators:  
\beq
\begin{aligned}
&\sigma_{M}(E_{i}^{j}), \, \sigma_{\dM}(E_{i}^{j}), \quad && \sigma_{\dM}(K_{i}^{l}), \\  
&\sigma_{\dM}(T_{k}^{j}), \quad && \sigma_{\dM}(Q_{k}^{l}),  
\end{aligned}
\label{eq:weighted_symbols_1}
\eeq
within the following symbol spaces:  
\beq
\begin{aligned}
&\mathrm{S}\Big(r^{j} - \underset{j',l'}{\min}(r^{j'} - m_{i}^{j'}, t^{l'} - q^{l'}_{i}), r^{j} \Big), \quad  
&& \mathrm{S}\Big(t^l - \underset{j',l'}{\min}(r^{j'} - m_{i}^{j'}, t^{l'} - q^{l'}_{i}), 0\Big), \\  
&\mathrm{S}\Big(r^{j} - \underset{j',l'}{\min}(r^{j'} - \tau_{k}^{j'}, t^{l'} - \sigma^{l'}_{k}), r^{j} \Big), \quad  
&& \mathrm{S}\Big(t^l - \underset{j',l'}{\min}(r^{j'} - \tau_{k}^{j'}, t^{l'} - \sigma^{l'}_{k}), 0\Big),  
\end{aligned}
\label{eq:weighted_symbols_spaces}
\eeq
even though some of these symbols may vanish.  

With this established, the symbols in \eqref{eq:weighted_symbols_1} are used to define two mappings: the \emph{weighted interior symbol}, which is the bundle map $\sigma_{M}(\bD): T^*M \otimes \bbE \to \bbF$, given for $x \in M$ and $\xi \in T^*_{x}M$ in matrix form as:
\beq
\sigma_{M}(\bD)(x,\xi) := (\sigma_{M}(E_{i}^{j})(x, \xi)) : \bbE_{x} \to \bbF_{x},
\label{eq:interior_symbol_systems}
\eeq
operating analogously to \eqref{eq:contraction_Green} on $\psi = (\psi_{j}) \in \bbE_{x}$ by contraction:
\[
(\sigma_{M}(\bD)(x,\xi)(\psi_{j}))_{i} = \sigma(E_{i}^{j})(x, \xi) \psi_{j}.
\]

The second mapping is the \emph{weighted boundary symbol}, which for $x \in \partial M$ and $\xi' \in T^*_{x} \partial M$, generalizes the boundary symbol of a Green operator \eqref{eq:boundary_symbol_Green}. It is a map:
\[
\sigma_\dM(\bD)(x, \xi'): 
\begin{matrix} 
\mathscr{S}(\overline{\mathbb{R}}_+; \mathbb{C} \otimes \mathbb{E}_x) \\\oplus\\ \mathbb{C} \otimes \mathbb{J}_{x} 
\end{matrix} 
\longrightarrow 
\begin{matrix} 
\mathscr{S}(\overline{\mathbb{R}}_+; \mathbb{C} \otimes \mathbb{F}_x) \\\oplus\\ \mathbb{C} \otimes \mathbb{G}_x 
\end{matrix},
\]
operating by contraction as: 
\beq
(\sigma_{\partial M}(\bD)(x,\xi')((\BRK{s\mapsto\psi_{j}(s)}); (\lambda_{j})))_{ik} = 
\begin{pmatrix} 
\{s \mapsto \sigma_{\dM}(E_{i}^{j})(x, \xi') \psi_{j}(s) + \sigma_{\dM}(K_{i}^{l})(x, \xi')\lambda_{k}(s)\} \\ 
\sigma_{\dM}(T_{k}^{j})(x, \xi') \psi_{j}(0) + \sigma_{\dM}(Q_{k}^{l})(x, \xi') \lambda_{l} 
\end{pmatrix}.
\label{eq:snythetic_boundary_symbol}
\eeq
\begin{definition}
\label{def:weighted_symbol}
The \emph{weighted symbol} of a system $\bD$, associated with basic tuples $(J_0, L_0;\, I_0, K_0)$ as in \eqref{eq:strict_tuples0}, is defined as the formal direct sum of the weighted interior symbol and the weighted boundary symbol of $\bD$:
\[
\sigma(\bD) = \sigma_{M}(\bD) \oplus \sigma_{\partial M}(\bD).
\]
We say that $\sigma(\bD)$ is \emph{injective} if both $\sigma_{M}(\bD)$ and $\sigma_{\partial M}(\bD)$ are injective. 
\end{definition}
Since it is defined term by term, the weighted symbol clearly inherits the properties listed in \thmref{thm:full_Green_compositon_rules}:
\begin{proposition}
Let $\bD$ and $\mathfrak{Q}$ be systems associated with basic tuples $(J_0, L_0; I_0, K_0)$ and $(I_0, K_0; U_0, R_0)$, respectively. Then the following properties hold:
\begin{enumerate}
\item The weighted symbol of the composition $\mathfrak{Q}\bD$ with respect to $(J_0, L_0,; U_0, R_0)$ decomposes as:
\[
\sigma(\mathfrak{Q} \bD) = \sigma(\mathfrak{Q}) \circ \sigma(\bD) = (\sigma_M(\mathfrak{Q}) \circ \sigma_M(\bD)) \oplus (\sigma_\dM(\mathfrak{Q}) \circ \sigma_\dM(\bD)).
\]

\item If $\bD_0$ and $\mathfrak{K}$ are systems as in \eqref{eq:DplusC}, then the weighted symbol satisfies:
\[
\sigma(\bD_0+ \mathfrak{K}) = \sigma(\bD_0).
\]

\item If the corresponding classes of $\bD$ are all zero, and $\bD^*$ is its adjoint, then:
\[
\sigma(\bD^*) = \sigma(\bD)^*.
\]
\end{enumerate}
\end{proposition}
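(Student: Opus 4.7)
The plan is to reduce each of the three claims componentwise to the corresponding statement of \thmref{thm:full_Green_compositon_rules}, applied to the isolated Green-operator blocks \eqref{eq:matrix_com_strict} from which the weighted symbol is assembled. By construction, each entry of $\bD$ is regarded as a Green operator with the specific order and class dictated by the basic tuples $(J_0,L_0;I_0,K_0)$ via the symbol spaces \eqref{eq:weighted_symbols_spaces}, and the weighted symbol is the tuple of the principal symbols of these isolated blocks in those spaces. Every identity claimed in the proposition will therefore hold entry by entry and assemble into a matrix identity through the contraction formulas \eqref{eq:contraction_Green}, \eqref{eq:interior_symbol_systems}, and \eqref{eq:snythetic_boundary_symbol}.

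For item~(1), I would write $\mathfrak{Q}\bD$ in matrix form, so that each block is a finite sum of products of isolated blocks of $\mathfrak{Q}$ and $\bD$. To each such product the Green-operator composition rule \eqref{eq:compostion_smr} applies, giving the desired symbol identity in the symbol space naturally associated with $(J_0,L_0;U_0,R_0)$; summing entry by entry yields $\sigma(\mathfrak{Q}\bD)=\sigma(\mathfrak{Q})\circ\sigma(\bD)$. For item~(2), I would note that the compact lenient mapping property \eqref{eq:comapct_mapping_proprety} of $\mathfrak{K}$ restricts, entry by entry, to a compact continuous extension between exactly the Sobolev spaces appearing in \eqref{eq:matrix_com_strict}; via \eqref{eq:principle_norm} applied to each isolated entry (equivalently, \cite[Thm.~5, p.~174]{RS82}), the principal symbol of each isolated block of $\mathfrak{K}$ must then vanish in the corresponding symbol space from \eqref{eq:weighted_symbols_spaces}, and the componentwise additivity \eqref{eq:lower_order_symbol} produces $\sigma(\bD_{0}+\mathfrak{K})=\sigma(\bD_{0})$. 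For item~(3), \eqref{eq:adjoint_douglas} identifies each isolated block of $\bD^{*}$ with the Green-operator adjoint of the corresponding isolated block of $\bD$, which is admissible since the relevant corresponding classes of $\bD$ are zero; applying \eqref{eq:adjoint_symbol} entry by entry delivers $\sigma(\bD^{*})=\sigma(\bD)^{*}$.

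The main obstacle is the bookkeeping in item~(1): one must verify that the symbol space governing each block of $\mathfrak{Q}\bD$ is exactly the one naturally associated with the composite basic tuples $(J_0,L_0;U_0,R_0)$, so that the composition of individual blocks lands in the correct weighted-symbol entry and any summands of strictly lower order vanish automatically via \eqref{eq:lower_order_symbol}. Concretely, this reduces to a direct comparison between the minima appearing in \eqref{eq:weighted_symbols_spaces} for $\bD$, for $\mathfrak{Q}$, and for the composite, which is a purely combinatorial check on the tuples and not a substantive analytic difficulty. Once this matching is established, items~(2) and~(3) follow with no further work along the same componentwise reduction.
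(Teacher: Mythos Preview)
Your proposal is correct and follows exactly the approach the paper takes: the paper does not even supply a proof, stating only ``Since it is defined term by term, the weighted symbol clearly inherits the properties listed in \thmref{thm:full_Green_compositon_rules},'' which is precisely your componentwise reduction. Your account is in fact more detailed than the paper's, correctly identifying the only nontrivial point as the bookkeeping of symbol spaces in item~(1).
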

The following theorem shows that the weighted symbol is more than just a formal object:
\begin{theorem}
\label{thm:overdetermined_synetheic}
A system $\bD$ with basic tuples $(J_0, L_0;\, I_0, K_0)$ has an injective order-reduced symbol $\sigma(\Pi\bD\Lambda)$ if and only if the associated weighted symbol $\sigma(\bD)$ is injective. 
\end{theorem}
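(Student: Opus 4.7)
The plan is to exploit the fact that the order-reducing operators $\Lambda$ and $\Pi$ defined in \eqref{eq:order_reducing_opertors} are block-diagonal systems whose nonzero entries are powers $\calL^{s}$ of a fixed invertible scalar order-reducing operator on each subbundle. Since these entries are isomorphisms in the calculus, their principal symbols—both the interior symbols $\sigma_{M}(\calL^s)(x,\xi) = |\xi|^{s}\cdot\id$ and the boundary symbols $\sigma_{\partial M}(\calL^s)(x,\xi')$—are pointwise invertible. The key idea is to compute $\sigma(\Pi\bD\Lambda)$ componentwise by repeated application of the composition rule \eqref{eq:compostion_smr}, and then observe that the weighted symbol $\sigma(\bD)$ differs from $\sigma(\Pi\bD\Lambda)$ only by composition from the left and right with these invertible diagonal factors.

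First I would treat the interior symbol. Using \thmref{thm:composition_Douglas_Nirenberg} to unravel the matrix product $\Pi\bD\Lambda$, the $(i,j)$-block of the composition has the form $\calL^{\underset{j',l'}{\min}(r^{j'}-m_{i}^{j'},t^{l'}-q_{i}^{l'})}_{\bbF_i}\cdot E_{i}^{j}\cdot \calL^{-r^j}_{\bbE_j}$, which by construction lies in $\OP(0,0)$. By item (1) of \thmref{thm:full_Green_compositon_rules} applied in $\mathrm{S}(0,0)$, its interior symbol is the pointwise composition of the three interior symbols of the factors. Because the outer two factors are invertible scalars, the injectivity of the full $\sigma_{M}(\Pi\bD\Lambda)(x,\xi)$ on $\bbE_x$ is equivalent to the injectivity of the matrix $(\sigma_{M}(E_{i}^{j})(x,\xi))$, which is precisely the weighted interior symbol \eqref{eq:interior_symbol_systems}.

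The boundary symbol is handled in an analogous fashion. Using the explicit formula \eqref{eq:boundary_symbol_Green_operation} for the boundary symbols of Green operators, one computes that $\sigma_{\partial M}(\Pi\bD\Lambda)(x,\xi')$ factors as the composition of three maps on $\mathscr{S}(\overline{\mathbb{R}}_+;\bbC\otimes\bbE_x)\oplus(\bbC\otimes\bbJ_x)$: an invertible diagonal map coming from $\sigma_{\partial M}(\Lambda)$, followed by the weighted boundary symbol $\sigma_{\partial M}(\bD)(x,\xi')$ as defined in \eqref{eq:snythetic_boundary_symbol}, followed by an invertible diagonal map coming from $\sigma_{\partial M}(\Pi)$. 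Invoking once more item (1) of \thmref{thm:full_Green_compositon_rules} (now applied to the boundary component of the symbol in $\mathrm{S}(0,0)$), and using that the flanking factors are bijections, injectivity of $\sigma_{\partial M}(\Pi\bD\Lambda)$ reduces to injectivity of $\sigma_{\partial M}(\bD)$. Combining both conclusions proves the theorem.

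The main obstacle I expect is bookkeeping. The varying-order structure forces each diagonal entry of $\Pi$ and $\Lambda$ to have a different exponent, and the exponents interact through the $\min$ over $j',l'$ appearing in \eqref{eq:strict_tuples0} and in \eqref{eq:order_reducing_opertors}. One must verify carefully that, when the composition rule \eqref{eq:compostion_smr} is applied in each block, the sum of the three orders contributing to that block is exactly zero, so that the nonzero representative of the symbol in $\mathrm{S}(0,0)$ is retained (and not washed out by the equivalence relation modulo lower-order symbols). Once the indexing is set up correctly, the argument is essentially algebraic: invertible factors on either side do not affect injectivity of a linear map, both for finite-dimensional fibers in the interior case and for the linear maps on Schwartz-valued spaces in the boundary case.
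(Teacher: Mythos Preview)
Your proposal is correct and follows essentially the same approach as the paper: expand $\sigma(\Pi\bD\Lambda)$ blockwise using the composition rule from \thmref{thm:full_Green_compositon_rules}, observe that the outer factors coming from the diagonal order-reducing operators have invertible symbols, and conclude that injectivity is preserved under conjugation by these isomorphisms. The paper likewise carries out the interior case explicitly and defers the boundary case to analogous bookkeeping, and your identification of the order-matching issue (ensuring each block lands in $\mathrm{S}(0,0)$ rather than being killed by the equivalence relation) is exactly the point requiring care.
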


\begin{proof}
We explicitly demonstrate only the equivalence between the injectivity of the interior symbol $\sigma_{M}(\Pi \bD \Lambda)$ and the weighted interior symbol $\sigma_{M}(\bD)(x, \xi)$. The argument for the boundary symbol follows analogously, though it involves more careful bookkeeping due to the larger number of terms.

First, expanding the operations of $\Pi$, $\Lambda$, and $\bD$, we have:
\[
(\sigma_{M}(\Pi\bD\Lambda)(x,\xi))_{i} = 
\sum_{j} \sigma\left(
\calL_{\bbF_{i}}^{\underset{j',l'}{\min}(r^{j'} - m_{i}^{j'},\, t^{l'} - q_{i}^{l'})} 
\, E_{i}^{j} \, 
\calL_{\bbE_{j}}^{-r^{j}}
\right)(x,\xi),
\]
where, for clarity, we explicitly abandon the Einstein summation convention and avoid the use of deltas in the definitions of $\Pi$ and $\Lambda$ from \eqref{eq:order_reducing_opertors}.

Using the homomorphism property in \thmref{thm:full_Green_compositon_rules}, each summand can be rewritten as:
\[
\begin{split}
\sigma\left(
\calL_{\bbF_{i}}^{\underset{j',l'}{\min}(r^{j'} - m_{i}^{j'},\, t^{l'} - q_{i}^{l'})} 
\, E_{i}^{j} \, 
\calL_{\bbE_{j}}^{-r^{j}}
\right)(x,\xi) 
&= 
\sigma\left(\calL_{\bbF_{i}}^{\underset{j',l'}{\min}(r^{j'} - m_{i}^{j'},\, t^{l'} - q_{i}^{l'})}\right)(x,\xi)
\circ 
\sigma(E_{i}^{j})(x,\xi) \\
&\qquad \circ 
\sigma\left(\calL_{\bbE_{j}}^{-r^{j}}\right)(x,\xi),
\end{split}
\]
where equality holds in the class $\mathrm{S}(0,0)$. By linearity, this yields:
\[
(\sigma_{M}(\Pi\bD\Lambda)(x,\xi))_{i} = 
\sigma\left(\calL_{\bbF_{i}}^{\underset{j',l'}{\min}(r^{j'} - m_{i}^{j'},\, t^{l'} - q_{i}^{l'})}\right)(x,\xi)
\circ 
\left(\sum_{j} \sigma(E_{i}^{j})(x,\xi) \circ \sigma(\calL_{\bbE_{j}}^{-r^{j}})(x,\xi)\right).
\]

Since the order-reducing operators are elliptic and invertible, their symbols are isomorphisms. Now evaluate both sides on $\psi = (\psi_{j}) \in \bbE_{x}$, with 
\[
\psi_{j} := (\calL_{\bbE_{j}}^{-r^{j}}(x,\xi))^{-1} \tilde{\psi}_{j},
\]
and compose from the left with 
\[
\sigma\left(\calL_{\bbF_{i}}^{\underset{j',l'}{\min}(r^{j'} - m_{i}^{j'},\, t^{l'} - q_{i}^{l'})}\right)(x,\xi)^{-1}.
\]
We then find:
\[
\begin{split}
\sigma\left(\calL_{\bbF_{i}}^{\underset{j',l'}{\min}(r^{j'} - m_{i}^{j'},\, t^{l'} - q_{i}^{l'})}\right)^{-1} 
\circ 
\sigma_{M}(\Pi\bD\Lambda)(x,\xi)
((\calL_{\bbE_{j}}^{-r^{j}})^{-1} \tilde{\psi}_{j})
&= 
\sum_{j} \sigma(E_{i}^{j})(x,\xi) \tilde{\psi}_{j} \\
&= 
(\sigma_{M}(\bD)(\psi_{j}))_{i}.
\end{split}
\]

Since this holds for every $i$, and since the symbols of the order-reducing operators are isomorphisms, the equivalence between the injectivity of $\sigma_{M}(\Pi\bD\Lambda)(x,\xi)$ and $\sigma_{M}(\bD)(x,\xi)$ is established.
\end{proof}

In the same spirit, we next generalize the classical Lopatinski-Shapiro condition, which was formulated for Green operators of the form \eqref{eq:Douglis_Nirenberg_Adapted} in \propref{prop:rud_lop}. We extend this criterion to Douglis-Nirenberg systems of the form $\bD = \bD_{0} + \mathfrak{K}$ as in \eqref{eq:DplusC}, where we assume:
\[
(\bD_0)_{ik}^{jl} = \begin{pmatrix} E^{j}_{i} & 0 \\ T_{k}^{j} & Q_{k}^{l} \end{pmatrix},
\label{eq:Douglis_Nirenberg_Adapted2}
\]
and $E^{j}_{i}$, $T_{k}^{j}$, and $Q_{k}^{l}$ are all \emph{differential operators} belonging to their respective classes. 

We note that under these assumptions, the weighted interior symbol $\sigma_{M}(\bD)$ does not change from \defref{def:weighted_symbol}, and is nothing but the formal direct sum of the symbols of the differential operators $E_{i}^{j}$.  

For interpreting the condition on the injectivity of the boundary symbol, let $x \in \partial M$ and $\xi' \in T^*_{x} \partial M$, and generalize \eqref{eq:boundary_symbol_E1} to systems by defining: 
\beq
\sigma(E)(x, \xi' + \iota\partial_s\,dr) = (\sigma(E_{i}^{j})(x, \xi' + \iota\partial_s\,dr)) : C^{\infty}(\overline{\mathbb{R}}_+; \mathbb{C} \otimes \bbE_{x}) \to C^{\infty}(\overline{\mathbb{R}}_+; \mathbb{C} \otimes \bbF_{x}),
\label{eq:boundary_symbol_E2}
\eeq
operating on functions $s \mapsto \psi(s) = (\psi_{j}(s))$ as:
\[
(\sigma(E)(x, \xi')(\psi_{j}))_{i} = \sigma(E_{i}^{j})(x, \xi' + \iota\partial_s\,dr)\psi_{j}
\]
where each $\sigma(E_{i}^{j})(x, \xi' + \iota\partial_s\,dr)$ is as defined in \eqref{eq:boundary_symbol_E2}. Define similarly:  
\beq
\begin{split}
\sigma(T)(x,\xi) &= \sigma(T_{k}^{j})(x, \xi' + \iota\partial_s\,dr), \\  
\sigma(Q)(x,\xi) &= \sigma(Q_{k}^{l})(x,\xi'),  
\end{split}
\eeq
where each weighted symbol $\sigma(T_{k}^{j})(x, \xi' + \iota\partial_s\,dr)$ and $\sigma(Q_{k}^{l})(x, \xi')$ are defined as in \eqref{eq:Xi_map1}. This is well-defined since all operators are differential.  

Finally, we generalize the initial condition map \eqref{eq:Xi_map1} to the \emph{weighted initial condition map}:
\beq
\begin{split}
\Xi_{x, \xi'} \begin{pmatrix} 
\sigma(T_{k}^{j})(x, \xi' + \iota\partial_s\,dr)|_{s=0} & \sigma(Q_{k}^{l})(x, \xi') 
\end{pmatrix} : 
\begin{matrix} 
C^{\infty}(\overline{\mathbb{R}}_+; \mathbb{C} \otimes \bbE_{x}) \\ 
\oplus \\ 
\mathbb{C} \otimes \bbJ_{x} 
\end{matrix} \to \mathbb{C} \otimes \bbG_{x}.
\end{split}
\label{eq:Xi_map2}
\eeq
This operates as:
\[
\Xi_{x, \xi'}((\psi_{j}); (\lambda_{l}))_{k} = \sigma(T_{k}^{j})(x, \xi' + \iota\partial_s\,dr)\psi_{j}(0) + \sigma(Q_{k}^{l})(x, \xi')\lambda.
\]
By the very construction of the weighted boundary symbol \defref{def:weighted_symbol}, we find that \propref{prop:rud_lop} generalizes immediately into:  
\begin{theorem}
\label{thm:lopatnskii_shapiro}
Given $\bD$ as in \eqref{eq:Douglis_Nirenberg_Adapted}, with an injective interior weighted symbol, let $x \in \dM$ and $\xi' \in T_x^*\partial M \setminus \{0\}$. Let $\bbM_{x,\xi'}^+ \subset C^{\infty}(\overline{\bbR}_+; \bbC \otimes \bbE_x)$ denote the space of decaying solutions of the linear $\bbC \otimes \bbE_x$-valued ordinary differential equation:
\beq
\sigma(E)(x, \xi' + \iota\partial_s\,dr)\psi(s) = 0,
\label{eq:overdeterminedE_lop}
\eeq
where the operator on the left-hand side is as defined in \eqref{eq:boundary_symbol_E2}. Then, the weighted symbol $\sigma(\bD)$ is injective if and only if the restriction of the weighted initial condition map:
\[
\Xi_{x,\xi} : 
\begin{matrix}
\bbM_{x,\xi'}^+ \\
\oplus \\
\bbC \otimes \bbJ_{x}
\end{matrix}
\longrightarrow \bbC \otimes \bbG_{x},
\]
is injective for every $x \in \dM$ and $\xi' \in T_x^*\partial M \setminus \{0\}$.
\end{theorem}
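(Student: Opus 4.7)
The plan is to adapt the proof of \propref{prop:rud_lop} to the Douglas–Nirenberg setting by carefully expanding the weighted boundary symbol according to \defref{def:weighted_symbol} and recognizing that the varying orders have been already absorbed into the construction of each constituent symbol. Since the weighted symbol splits as $\sigma(\bD)=\sigma_M(\bD)\oplus \sigma_{\partial M}(\bD)$, and since the weighted interior symbol is assumed injective, it suffices to characterize the injectivity of $\sigma_{\partial M}(\bD)$. First, I would write out \eqref{eq:snythetic_boundary_symbol} under the standing assumption that $\bD$ has the form \eqref{eq:Douglas_Nirenberg_Adapted2}, i.e. $K_i^l=0$, so that the weighted boundary symbol acts on $((\psi_j);(\lambda_l))$ by producing the Schwartz function $s\mapsto \sigma(E)(x,\xi'+\iota\partial_s\,dr)\psi(s)$ in the interior component and the vector $\sigma(T)(x,\xi'+\iota\partial_s\,dr)\psi(0)+\sigma(Q)(x,\xi')\lambda$ in the boundary component.

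Next I would observe that the vanishing of $\sigma_{\partial M}(\bD)(x,\xi')((\psi_j);(\lambda_l))$ is equivalent to the simultaneous vanishing of the ODE $\sigma(E)(x,\xi'+\iota\partial_s\,dr)\psi(s)=0$ for all $s\ge 0$ and of $\Xi_{x,\xi'}(\psi;\lambda)=0$. Since $\psi$ ranges in $\mathscr{S}(\overbar{\bbR}_+;\bbC\otimes\bbE_x)$ and the ODE \eqref{eq:overdeterminedE_lop} is a constant-coefficient linear system in $s$ (as $x,\xi'$ are frozen), a standard argument from linear ODE theory identifies its Schwartz solutions with the decaying solution space $\bbM_{x,\xi'}^+$: Schwartz decay forces exclusion of all non-decaying modes, and conversely every decaying mode extends to a Schwartz function because its components decay exponentially and all derivatives in $s$ remain in the solution space. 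This identification reduces the injectivity of $\sigma_{\partial M}(\bD)(x,\xi')$ to the injectivity of the restricted weighted initial condition map $\Xi_{x,\xi'}\colon \bbM_{x,\xi'}^+\oplus \bbC\otimes\bbJ_x\to \bbC\otimes\bbG_x$, which is precisely the claimed equivalence.

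The only genuine step that requires care is the Schwartz/decaying-solution identification, since in the varying-order framework the constituent symbols $\sigma(E_i^j)$ live in different symbol classes \eqref{eq:weighted_symbols_spaces} and are not \emph{a priori} of the same $\xi_d$-degree as an ODE system; this must be reconciled with the formal Fourier substitution $t\mapsto \iota\partial_s$ that defines $\sigma(E)(x,\xi'+\iota\partial_s\,dr)$ as in \eqref{eq:boundary_symbol_E2}. However, because each $E_i^j$ is a differential operator, the corresponding symbol is polynomial in $\xi_d$ and the resulting object is a genuine (constant-coefficient) linear differential operator in $s$ on $\bbC\otimes\bbE_x$, so the classical dichotomy between growing and decaying modes applies verbatim to each component and is preserved by the matrix contraction. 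Once this is in place, the rest of the argument is bookkeeping, and the final statement follows by the same logic as in \propref{prop:rud_lop}.
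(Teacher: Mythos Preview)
Your proposal is correct and follows exactly the paper's approach: the paper itself states just before the theorem that ``by the very construction of the weighted boundary symbol (\defref{def:weighted_symbol}), we find that \propref{prop:rud_lop} generalizes immediately,'' and provides no further proof. You have simply unpacked the details of this immediate generalization, including the observation that the differential-operator hypothesis makes each symbol polynomial in $\xi_d$ so that the standard ODE dichotomy applies componentwise.
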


\chapter{Elliptic Pre-Complexes}
\label{chp:elliptic_pre_complexes}
In this chapter we shall prove all results presented in \secref{sec:Hodge_intro}, and develop the necessary notions to do so. 

\section{Adapted Green systems}
\label{sec:Adapted}
\subsection{Setting and basic constructions}
The following definition serves as a fundamental building block of our theory. As such, we aim to formulate it in the highest generality, incorporating several ingredients that, in practical applications, often simplify, become self-implied, or prove trivial. We are aware that, viewed in isolation, some of these components may appear excessive; nonetheless, they are included here for completeness and to ensure the theory remains sufficiently broad for future applications.

For the statement, recall the definition of sharp tuples from \defref{def:sharp_tuples}. 

\begin{definition}[Adapted Green system, adapted adjoint]
\label{def:adapting_operator}
Let $\bD : \Gamma(\bbE; \bbJ) \to \Gamma(\bbF; \bbG)$ and $\bD^* : \Gamma(\bbF; \bbG) \to \Gamma(\bbE; \bbJ)$ be Douglis--Nirenberg systems.  
We call $\bD$ a \emph{one-sided adapted Green system} and $\bD^*$ its \emph{adapted adjoint} if the following are specified:
\begin{itemize}
\item Sharp tuples for $\bD$ of the form $(M, T;\, 0, 0)$;
\item Sharp tuples for $\bD^*$ of the form $(M', T';\, 0, 0)$;
\item Normal systems of boundary operators (cf.~\defref{def:normal_system_of_trace_operators2}):
\[
\bB : \Gamma(\bbE; \bbJ) \to \Gamma(0; \bbL), \quad 
\bB^* : \Gamma(\bbF; \bbG) \to \Gamma(0; \bbL),
\]
where $\bB$ has sharp tuples of the form $(M, T;\, 0, C)$, and $\bB^*$ has sharp tuples of the form $(M', T';\, 0, C')$;
\item A formula valid for all $\Upsilon \in \Gamma(\bbE; \bbJ)$ and $\Theta \in \Gamma(\bbF; \bbG)$:
\beq 
\langle \bD \Upsilon, \Theta \rangle
= \langle \Upsilon, \bD^* \Theta \rangle
+ \langle \bB \Upsilon, \bB^* \Theta \rangle.
\label{eq:integration_adapted_refined}
\eeq
\end{itemize}
We call $\bD$ an \emph{adapted Green system} if, in addition, $\bD^*$ has sharp tuples of the form $(0, 0;\, -M, -T)$.
\end{definition}

The original notion of an adapted Green operator of order $m$ (and class zero), introduced in \cite[Sec.~3.1]{KL23}, can be recovered from this definition by taking (in the notation there)
\beq 
\bD=\begin{pmatrix}\bA & 0 \\ 0 & 0 \end{pmatrix}, \qquad 
\bD^*=\begin{pmatrix}\bA^* & 0 \\ 0 & 0 \end{pmatrix}, \qquad 
\bB=\begin{pmatrix}0 & 0 \\ B_{A} & 0 \end{pmatrix}, \qquad 
\bB^*=\begin{pmatrix}0 & 0 \\ B^*_{A} & 0 \end{pmatrix}. 
\label{eq:original_adapted}  
\eeq 
where $B_{A},B^*_{A}$ are assumed to be normal in the sense of \defref{def:normal_system_of_trace_operators1}. 

The definition extends the notion of an adapted Green operator in several ways: 

First, as a full Douglis--Nirenberg system, $\bD$ may include components of varying orders and classes that are not confined to the upper-left corner.

Second, as reflected in \defref{def:normal_system_of_trace_operators2}, the boundary terms $\bB$ and $\bB^*$ may be more general and may include both trace and pseudodifferential components mapping sections over the boundary to sections over the boundary.

Third, the condition on the sharp tuples of $\bD$ is intended to generalize the original requirement of uniform order $m$ and class $0$ to the greatest extent possible. Indeed, for systems of the form \eqref{eq:original_adapted}, the required sharp tuples for $\bD$ in \eqref{def:adapting_operator} are trivially $(m, 0;\, 0, 0)$ and $(2m,0;\,m,0)$, as $\bbJ = 0$. The significance of this requirement will become apparent in the proof of the main results concerning elliptic pre-complexes, for which adapted Green systems serve as the basic building blocks, as exposed in \secref{sec:elliptic_pre_complexes_intro}.


Fourth, note that if $\bD$ has vanishing corresponding classes, it may admit an adjoint in the calculus that differs from its adapted adjoint, since the latter depends on a specified boundary system $\bB$, $\bB^*$, which may not coincide with the boundary terms in Green's formula. Thus, defining an adapted Green operator requires a fixed choice of $\bB$, $\bB^*$, and $\bD^*$.

%

We now introduce several constructions pertinent to adapted Green systems.  

When referring to specific sets of sharp or lenient tuples associated with adapted Green systems and their corresponding operators, we will use the following terminology:
\begin{definition}
\label{def:suitable}
Let $\bD:\Gamma(\bbE;\bbJ)\rightarrow\Gamma(\bbF;\bbG)$ be a system and let $\bB:\Gamma(\bbE;\bbJ)\rightarrow\Gamma(0;\bbL)$ be a system of boundary operators.  
Lenient (respectively, sharp) tuples $(S, T; S', T')$ for $\bD$ are called \emph{suitable} for $\bB$ if there exists a tuple $T''$ such that $(S, T;\, 0, T'')$ are lenient (respectively, sharp) tuples for $\bB$. 
\end{definition}
Lenient tuples for $\bD$ that are also suitable for $\bB$ give rise to lenient mapping properties \eqref{eq:douglas_nirenberg_mapping_1} and \eqref{eq:douglas_nirenberg_mapping_2} for both $\bD$ and $\bB$.   


For any lenient tuples $(S, T;\, S', T')$ for $\bD$, we consider the range of the linear map 
\[
\bD: W^{S, T}_{p}(\bbE; \bbJ) \to W^{S', T'}_{p}(\bbF; \bbG)
\]
as a subspace of $W^{S', T'}_{p}(\bbF; \bbG)$, denoted by
\beq
\scrR^{S', T'}_p(\bD) = \bD(W^{S, T}_{p}(\bbE; \bbJ)).
\label{eq:range_adapted_Green_system}
\eeq
Let $\scrR(\bD)$ denote the smooth version,
\[
\scrR(\bD) = \bD(\Gamma(\bbE; \bbJ)).
\]
Similarly, define the spaces
\beq
\begin{aligned}
&\scrR^{S', T'}_{p}(\bD; \bB)     
=\ \bD\big(W^{S, T}_{p}(\bbE; \bbJ) \cap \ker \bB\big), 
&&\quad 
\scrR(\bD; \bB)     
=\ \bD\big(\Gamma(\bbE; \bbJ) \cap \ker \bB\big).
\end{aligned}
\label{eq:RDDirichlet}
\eeq

These space are not always well defined, as there are multiple tuples $S, T$ for which $(S, T; S', T')$ are lenient tuples for $\bD$ that are suitable for $\bB$. However, by considering the closure of these subspaces in the $W^{S', T'}_p$-topology:
\beq
\begin{split}
&\overline{\scrR^{S', T'}_p(\bD)} = \overline{\bD(W^{S, T}_{p}(\bbE; \bbJ))} \subseteq W^{S', T'}_p(\bbF; \bbG), \\
&\overline{\scrR^{S', T'}_p(\bD; \bB)} = \overline{\bD(W^{S, T}_{p}(\bbE; \bbJ) \cap \ker \bB)} \subseteq W^{S', T'}_p(\bbF; \bbG).
\end{split}
\label{eq:range_green_closure}
\eeq 
we obtain well-defined notions regardless of the choice of $S, T$ for which $\bD: W^{S, T}_{p}(\bbE; \bbJ) \rightarrow W^{S', T'}_p(\bbF; \bbG)$, $\bB: W^{S, T}_{p}(\bbE; \bbJ) \rightarrow W^{0, T''}_p(0; \bbL)$ continuously. 
\begin{proposition}
\label{prop:RD}
Let $(S, T; S', T')$ be any lenient tuples for a one-sided adapted Green system $\bD$ that are also suitable for $\bB$ as defined above. Then the following holds:
\[
\begin{split}
&\overline{\bD(W^{S, T}_{p}(\bbE; \bbJ))} = \overline{\scrR(\bD)}, \\
&\overline{\bD(W^{S, T}_{p}(\bbE; \bbJ) \cap \ker \bB)} = \overline{\scrR(\bD; \bB)},
\end{split} 
\]
where the closure is taken with respect to the $W^{S', T'}_p$-topology.
\end{proposition}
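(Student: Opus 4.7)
The proposition is essentially a density statement: smooth sections are dense in the relevant Sobolev spaces, and $\bD$ is continuous with respect to any lenient mapping property by \defref{def:lenient_mapping}, so images of smooth data should be dense in images of Sobolev data. I will treat the two identities in turn, handling the $\bB$-constrained version separately because it requires an approximation inside $\ker\bB$, which is where the normality of $\bB$ enters.

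\textbf{First identity.} For the direction $\overline{\scrR(\bD)} \subseteq \overline{\bD(W^{S,T}_{p}(\bbE;\bbJ))}$, note that $\Gamma(\bbE;\bbJ) \subseteq W^{S,T}_{p}(\bbE;\bbJ)$ by the standard continuous inclusion of smooth sections into Sobolev spaces, so $\scrR(\bD) \subseteq \bD(W^{S,T}_{p}(\bbE;\bbJ))$ and the inclusion follows on taking closures. For the converse, fix $\Psi \in W^{S,T}_{p}(\bbE;\bbJ)$. Since $\Gamma(\bbE;\bbJ)$ is dense in $W^{S,T}_{p}(\bbE;\bbJ)$, I can pick a sequence $\Psi_{n} \in \Gamma(\bbE;\bbJ)$ with $\Psi_{n} \to \Psi$ in $W^{S,T}_{p}$. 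Because $(S,T;S',T')$ are lenient tuples for $\bD$, the map $\bD: W^{S,T}_{p}(\bbE;\bbJ) \to W^{S',T'}_{p}(\bbF;\bbG)$ is continuous, hence $\bD\Psi_{n} \to \bD\Psi$ in $W^{S',T'}_{p}$, and each $\bD\Psi_{n} \in \scrR(\bD)$. Thus $\bD\Psi \in \overline{\scrR(\bD)}$, giving the reverse inclusion after passing to closures.

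\textbf{Second identity.} The inclusion $\overline{\scrR(\bD;\bB)} \subseteq \overline{\bD(W^{S,T}_{p}(\bbE;\bbJ)\cap \ker\bB)}$ is again immediate from $\Gamma(\bbE;\bbJ)\cap \ker\bB \subseteq W^{S,T}_{p}(\bbE;\bbJ)\cap \ker\bB$. For the reverse direction, the key intermediate step is to show that $\Gamma(\bbE;\bbJ) \cap \ker\bB$ is dense in $W^{S,T}_{p}(\bbE;\bbJ) \cap \ker\bB$ with respect to the $W^{S,T}_{p}$-topology. I will use the normality of $\bB$ (\defref{def:normal_system_of_trace_operators2}): by \corrref{corr:normal_system_2}, $\bB$ is surjective, and by standard results on normal boundary systems it admits a right inverse $\bR$ within the calculus (a Poisson-type operator built from the order-reducing surjectivity of the leading trace components) which is continuous $\bR: W^{0,T''}_{p}(0;\bbL) \to W^{S,T}_{p}(\bbE;\bbJ)$ and satisfies $\bB\bR = \id$. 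Given $\Psi \in W^{S,T}_{p}(\bbE;\bbJ)\cap\ker\bB$, choose $\tilde\Psi_{n} \in \Gamma(\bbE;\bbJ)$ with $\tilde\Psi_{n} \to \Psi$ in $W^{S,T}_{p}$, and set
\[
\Psi_{n} := \tilde\Psi_{n} - \bR(\bB \tilde\Psi_{n}).
\]
Since $\bB\Psi = 0$ and $\bB$ is continuous on $W^{S,T}_{p}$, we have $\bB\tilde\Psi_{n} \to 0$ in $W^{0,T''}_{p}$, and continuity of $\bR$ gives $\bR(\bB\tilde\Psi_{n}) \to 0$ in $W^{S,T}_{p}$. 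Hence $\Psi_{n} \to \Psi$ in $W^{S,T}_{p}$, the sections $\Psi_{n}$ are smooth (as $\tilde\Psi_{n}$ and $\bB\tilde\Psi_{n}$ are smooth and $\bR$ preserves smoothness), and $\bB\Psi_{n} = \bB\tilde\Psi_{n} - \bB\tilde\Psi_{n} = 0$. Now apply $\bD$ and use its continuity under the lenient mapping property to conclude $\bD\Psi \in \overline{\bD(\Gamma(\bbE;\bbJ)\cap \ker\bB)} = \overline{\scrR(\bD;\bB)}$, which gives the reverse inclusion after taking closures.

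\textbf{Main obstacle.} The only nontrivial point is the $W^{S,T}_{p}$-density of $\Gamma \cap \ker\bB$ in $W^{S,T}_{p}\cap \ker\bB$; \propref{prop:normal_sytem} gives only $L^{p}$-density of $\ker\bB$, which is insufficient here. The remedy is the right-inverse correction just described, which works uniformly on all Sobolev levels covered by the suitability assumption in \defref{def:suitable} because $\bR$ belongs to the Boutet de Monvel calculus and satisfies the mapping properties \eqref{eq:mapping_proprety_diffrent_components}. Once this Sobolev density is in hand, both identities reduce to routine continuity arguments.
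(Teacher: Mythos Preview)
Your proof is correct and follows the same density-plus-continuity strategy as the paper. For the first identity your argument is actually cleaner: you show directly that $\bD(W^{S,T}_p)\subseteq\overline{\scrR(\bD)}$ and then take closures, whereas the paper starts from an arbitrary element of the closure and runs a diagonal extraction that is not really needed. For the second identity the paper simply asserts it is ``entirely analogous,'' while you supply the missing density of $\Gamma\cap\ker\bB$ in $W^{S,T}_p\cap\ker\bB$ via the right-inverse correction---this is the standard device and the right one here.

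One small imprecision: your claimed mapping property $\bR:W^{0,T''}_p\to W^{S,T}_p$ need not hold for the particular lenient $T''$ coming from \defref{def:suitable}, since lenient tuples allow $T''$ to be chosen arbitrarily low. What you actually need (and what is true) is that the composition $\bR\bB$ is continuous $W^{S,T}_p\to W^{S,T}_p$; this holds because $\bR\bB$ is a singular Green operator of order zero whose class matches that of $\bB$, so suitability of $(S,T)$ for $\bB$ is exactly what puts $W^{S,T}_p$ in its domain. Equivalently, use $\bB\tilde\Psi_n\to 0$ in the \emph{natural} image norm of $\bB$ (finer than $W^{0,T''}_p$), from which $\bR$ does map back continuously.
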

The proof is technical and straightforward, and is deferred to the end of the section.

We conclude that the spaces defined in \eqref{eq:range_adapted_Green_system} are well-defined precisely when the range of the continuous map 
\[
\bD: W^{S, T}_{p}(\bbE; \bbJ) \to W^{S', T'}_{p}(\bbF; \bbG),
\]
or, respectively, of 
\[
\begin{split} 
&\bD: W^{S, T}_{p}(\bbE; \bbJ) \cap \ker \bB \to W^{S', T'}_{p}(\bbF; \bbG)
\end{split} 
\]
is closed.

\propref{prop:RD} also shows that:
\[
\begin{gathered} 
\overline{\scrR_p^{S, T}(\bD)} = \overline{\bD(\Gamma(\bbE; \bbJ))}, \quad \overline{\scrR_p^{S, T}(\bD; \bB)} = \overline{\bD(\Gamma(\bbE; \bbJ) \cap \ker \bB)}
\end{gathered} 
\]
where the closure is taken with respect to the $W_p^{S, T}$-topology.

For the next definition, recall that the closed range theorem asserts that for a bounded linear map $T: V \to W$ between Banach spaces, $\ker T' = (T(V))^\bot$, where $\bot$ denotes the Banach annihilator functor \cite[p. 575]{Tay11a}. Then, as provided by \corrref{corr:weak_tuples}, there exist $S, T$ such that $\bD: W^{S, T}_{p}(\bbE; \bbJ) \rightarrow L^{p}(\bbF; \bbG)$ continuously, which makes it possible to define the following subspaces of $L^{p}(\bbF; \bbG)$:
\beq
\scrN^{0, 0}_p(\bD^*, \bB^*) = \overline{\scrR^{0, 0}_q(\bD)}^\bot, \qquad \scrN^{0, 0}_p(\bD^*) = \overline{\scrR^{0, 0}_q(\bD; \frakB)}^\bot,
\label{eq:Lp_ann_0}
\eeq 
where $1/p + 1/q = 1$, thus allowing interpretation of the annihilator of $\overline{\scrR^{0, 0}_q(\bD)}$ as a subspace of $L^p$-sections by invoking the $L^q$–$L^p$ duality. As the annihilators of closed subspaces, both $\scrN^{0, 0}_{p}(\bD^*)$ and $\scrN^{0, 0}_{p}(\bD^*, \bB^*)$ are closed subspaces of $L^{p}(\bbF; \bbG)$. Due to \propref{prop:RD} and the fact that $(T(V))^{\bot\bot} = \overline{T(V)}$, we find that the closures in the above definitions are redundant, yielding:
\beq
\scrN^{0, 0}_p(\bD^*, \bB^*) = \scrR(\bD)^\bot, \qquad \scrN^{0, 0}_p(\bD^*) = \scrR(\bD; \frakB)^\bot,
\label{eq:Lp_ann}
\eeq 

The generalized Green's formula \eqref{eq:integration_adapted_refined} then allows an explicit description of the spaces $\scrN(\bD^*,\bB^*)$, $\scrN(\bD^*)$ and their Sobolev versions. 
\begin{proposition}
\label{prop:NDR}
Let $\bD$ be a one-sided adapted Green system. If $(S,T;S',T')$ are lenient tuples for $\bD^*$, and $\Theta \in W^{S,T}_p(\bbF;\bbG)$, then $\Theta \in \scrN^{0,0}_p(\bD^*)$ if and only if  
\beq
\bD^* \Theta = 0.  
\label{eq:AWQ_relation}
\eeq
Moreover, if $(S,T;S',T')$ are also suitable for $\bB^*$, then $\Theta \in \scrN^{0,0}_{p}(\bD^*,\bB^*)$ if and only if, in addition to \eqref{eq:AWQ_relation}, it holds that  
\beq
\bB^* \Theta = 0.  
\label{eq:Dirichlet_conditions_D}
\eeq
\end{proposition}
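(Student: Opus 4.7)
The plan is to derive both equivalences from the generalized Green's formula \eqref{eq:integration_adapted_refined}, using the normality of $\bB$ to decouple the bulk and boundary parts of the condition. The sufficiency (``if'') directions are immediate by substitution: if $\bD^{*}\Theta=0$, then for any $\Upsilon\in\Gamma(\bbE;\bbJ)\cap\ker\bB$ both terms on the right-hand side of \eqref{eq:integration_adapted_refined} vanish, so $\langle\bD\Upsilon,\Theta\rangle=0$ and hence $\Theta\in\scrR(\bD;\bB)^{\perp}=\scrN^{0,0}_{p}(\bD^{*})$; if additionally $\bB^{*}\Theta=0$, the boundary term vanishes for every $\Upsilon$, giving $\Theta\in\scrR(\bD)^{\perp}=\scrN^{0,0}_{p}(\bD^{*},\bB^{*})$.

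For the necessity direction, my first step is to extend \eqref{eq:integration_adapted_refined} by density from smooth $\Theta$ to $\Theta\in W^{S,T}_{p}\Gamma(\bbF;\bbG)$. This uses the lenient/suitable tuples hypothesis together with the continuity of $\bD^{*}$ and $\bB^{*}$ on the corresponding Sobolev spaces, ensuring that each of the four terms in \eqref{eq:integration_adapted_refined} remains a bounded bilinear pairing in the relevant dualities. Then, assuming $\Theta\in\scrN^{0,0}_{p}(\bD^{*})$, I decouple the two components of $\bD^{*}\Theta\in\Gamma(\bbE)\oplus\Gamma(\bbJ)$ as follows. First, test against $\Upsilon=(\psi,0)$ with $\psi$ smooth and supported in the interior of $M$; such $\Upsilon$ lie in $\ker\bB$ since the trace components of the normal boundary system $\bB$ annihilate interior-supported sections, so the resulting identity $\langle\psi,(\bD^{*}\Theta)_{\bbE}\rangle_{M}=0$, holding over a dense subset of $L^{q}(\bbE)$, forces the bulk component of $\bD^{*}\Theta$ to vanish. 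Next, given an arbitrary $\lambda\in\Gamma(\bbJ)$, the surjectivity of the trace part of $\bB$ (cf.\ \defref{def:normal_system_of_trace_operators2} and \corrref{corr:normal_system_2}) furnishes a $\psi$ with $(\psi,\lambda)\in\ker\bB$; substituting this $\Upsilon$ into the already-reduced identity isolates $(\bD^{*}\Theta)_{\bbJ}$ and forces it to vanish as well, so $\bD^{*}\Theta=0$.

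For the second characterization, the same argument (applied with $\Upsilon$ still running over $\ker\bB\subset\Gamma(\bbE;\bbJ)$) yields $\bD^{*}\Theta=0$. Substituting this back into \eqref{eq:integration_adapted_refined} and using $\Theta\in\scrR(\bD)^{\perp}$ reduces the formula to $\langle\bB\Upsilon,\bB^{*}\Theta\rangle_{\partial M}=0$ for every $\Upsilon\in\Gamma(\bbE;\bbJ)$; since $\bB$ is surjective onto $\Gamma(0;\bbL)$ by normality, this forces $\bB^{*}\Theta=0$. The main technical obstacle is the opening density/continuity step, which is precisely where the lenient-for-$\bD^{*}$ and suitable-for-$\bB^{*}$ hypotheses enter in an essential way; once this extension of Green's formula is in place, the remainder is a systematic exploitation of the two ingredients of normality, namely the density of $\ker\bB$ in $L^{q}$ and the surjectivity of the trace part of $\bB$, to separate the interior and boundary constraints encoded in $\scrN^{0,0}_{p}(\bD^{*})$ and $\scrN^{0,0}_{p}(\bD^{*},\bB^{*})$.
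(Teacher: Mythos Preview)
Your proof is correct and follows essentially the same route as the paper: both use the generalized Green's formula \eqref{eq:integration_adapted_refined} together with the normality of $\bB$ (density of $\ker\bB$ and surjectivity) to separate the interior and boundary constraints. The only minor difference is that where the paper invokes the $L^{q}$-density of $\ker\bB$ in one stroke to conclude $\bD^{*}\Theta=0$ from $\langle\Upsilon,\bD^{*}\Theta\rangle=0$ for all $\Upsilon\in\ker\bB$, you instead decompose $\bD^{*}\Theta$ into its $\Gamma(\bbE)$ and $\Gamma(\bbJ)$ components and kill them separately by choosing specific test elements in $\ker\bB$; this is a harmless and equivalent unpacking of the same density argument.
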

The proof emphasizes the demand that $\bB$ is a normal system of boundary operators, so we include it here: 
\begin{proof}
In view of \eqref{eq:Lp_ann}, it holds that $\scrN^{0,0}_{p}(\bD^*)=\scrR(\bD;\frakB)^{\bot}$. Thus, due to $L^{p}$-$L^{q}$ duality, the statement $\Theta\in\scrN^{0,0}_{p}(\bD^*)$ is equivalent to that for all $\bD\Upsilon\in \bD(\Gamma(\bbE;\bbJ)\cap\ker\frakB)$ 
\[
\bra \bD\Upsilon,\Theta\ket=0. 
\]
Comparing with \eqref{eq:integration_adapted_refined}, taking $\bB\Upsilon=0$ and since $\bD^*:W^{S,T}_{p}(\bbE;\bbJ)\rightarrow W^{S',T'}_{p}(\bbF;\bbG)$ continuously
\[
\bra \Upsilon,\bD^{*}\Theta\ket=0 
\]
The density of $\ker\bB$ in $L^{2}(\bbE;\bbJ)$ then provides that $\bD^{*}\Theta=0$.  

For the second statement, note that $\scrN^{0,0}_{p}(\bD^*,\frakB^*)\subseteq\scrN^{0,0}_{p}(\bD^*)$. Then, if $\Theta$ has sufficient regularity as in the statement, then $\bB^*$ is defined and so combining $\bD^*\Theta=0$ with \eqref{eq:integration_adapted_refined} yields
\[
\bra \bB\Upsilon,\bB^*\Theta\ket =0.
\]
Since $\bB$ is surjective, for an arbitrary $\tilde{\Upsilon}$ on the boundary it is possible to prescribe $\bB\Upsilon=\tilde{\Upsilon}$. Thus, $\bra \tilde{\Upsilon},\bB^*\Theta\ket=0$ for arbitrary $\tilde{\Upsilon}$, hence $\bB^*\Theta=0$. 

The other direction of the claim is clear by retracing the argument. 
\end{proof}

The following is the first analytical point where we refer to the specified sharp tuples from \defref{def:adapting_operator}. It essentially states that, under suitable assumptions, distributional mapping properties hold for the adapted adjoint and its associated boundary system $\bB^*$.

\begin{lemma}
\label{lem:D'mapping}
Let $\bD : \Gamma(\bbE; \bbJ) \rightarrow \Gamma(\bbF; \bbG)$ be an adapted Green system with sharp tuples $(M, T; 0, 0)$, as in \defref{def:adapting_operator}. Let $\Theta \in L^{p}(\bbF; \bbG)$, and let $(\Theta_n) \subset \Gamma(\bbF; \bbG)$ be an approximating $L^p$-sequence for $\Theta$. Let $(I, K; II, KK)$ be standard tuples for $\bD^*$. Suppose that there exists $\Xi \in W^{I, K}_p(\bbF; \bbG)$ such that
\[
\Theta - \Xi \in \scrN^{0, 0}_{p}(\bD^*).
\]
Then,
\beq
\lim_{n\rightarrow\infty}\bD^* \Theta_n =\bD^*\Xi \qquad \text{in} \; W^{-M,-T}_{p}(\bbE;\bbJ)
\label{eq:adated_uniform_boundness}
\eeq
If, in addition, there exists a tuple $K'$ such that $(M, T; 0, K')$ are standard tuples for $\bB^*$ and $\Theta - \Xi \in \scrN_{p}^{0, 0}(\bD^*, \bB^*)$, then:
\beq
\lim_{n\rightarrow\infty} \bB^* \Theta_n=\bB^*\Xi  \qquad \text{in} \; W^{0,-C}_{p}(0;\bbL)
\label{eq:adated_uniform_boundnessII}
\eeq
\end{lemma}
In the context of elliptic pre-complexes, these properties will later be refined further once combined with overdetermined ellipticity conditions on systems incorporating $\bD$. The proof is technical and is deferred to the end of the section.  
\subsection{Auxiliary decompositions} 
Given lenient tuples $(S, T; S, T')$ for $\bD^*$ that are also suitable for $\frakB^*$, \propref{prop:NDR} shows that the space $\scrN^{S, T}_{p}(\bD^*)$ coincides with the kernel of the continuous map $\bD^* : W^{S, T}_p \Gamma(\bbF; \bbG) \rightarrow W^{S', T'}_p \Gamma(\bbE; \bbJ)$, which implies that $\scrN^{S, T}_p(\bD^*)$ is a closed subspace in the corresponding Banach topology. The same holds for $\scrN^{S, T}_p(\bD^*, \bB^*)$. By similar reasoning, $\scrN(\bD^*, \bB^*)$ and $\scrN(\bD^*)$ are closed subspaces in the Fréchet topology, as they coincide with the kernel of a continuous map between Fréchet spaces. 

However, the subspaces $\scrR(\bD)$, $\scrR(\bD; \frakB)$, and their Sobolev versions are not necessarily closed without further assumptions on $\bD$. Recall that an algebraically direct decomposition of a Hilbert, Banach, or Fréchet space is topologically direct if and only if both subspaces in the decomposition are closed \cite[Ch.~2]{Bre11}. Unlike in Hilbert spaces, in Fréchet and Banach spaces a closed subspace may fail to induce a direct decomposition. 

In general, the following proposition provides the best one can expect:

\begin{proposition}
\label{prop:L2_aux_adapted_pair}
Let $\bD$ be a one-sided adapted Green system. There exist $L^{2}$-orthogonal, topologically-direct decompositions
\beq
\begin{split}
&L^{2}(\bbF; \bbG) = \overline{\scrR^{0, 0}_2(\bD)} \oplus \scrN^{0, 0}_2(\bD^*, \bB^*), \\
& L^{2}(\bbF; \bbG) = \overline{\scrR^{0, 0}_2(\bD; \frakB)} \oplus \scrN^{0, 0}_2(\bD^*),
\end{split}
\label{eq:L2_decom}
\eeq
where the overline denotes closure in the $L^2$-norm. 
\end{proposition}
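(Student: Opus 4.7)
The strategy is to exploit the fact that $L^{2}(\bbF;\bbG)$ is a Hilbert space, in which every closed subspace admits an $L^{2}$-orthogonal complement that produces a topologically-direct decomposition. The entire argument thus reduces to identifying the orthogonal complements of the closed subspaces $\overline{\scrR^{0,0}_{2}(\bD)}$ and $\overline{\scrR^{0,0}_{2}(\bD;\bB)}$ with $\scrN^{0,0}_{2}(\bD^{*},\bB^{*})$ and $\scrN^{0,0}_{2}(\bD^{*})$, respectively.

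First, I would observe that the spaces $\overline{\scrR^{0,0}_{2}(\bD)}$ and $\overline{\scrR^{0,0}_{2}(\bD;\bB)}$ are, by construction, closed subspaces of $L^{2}(\bbF;\bbG)$, being defined in \eqref{eq:range_green_closure} as $L^{2}$-norm closures. The standard projection theorem in Hilbert space then yields the $L^{2}$-orthogonal topologically-direct decompositions
\[
L^{2}(\bbF;\bbG)=\overline{\scrR^{0,0}_{2}(\bD)}\oplus \overline{\scrR^{0,0}_{2}(\bD)}^{\perp},\qquad L^{2}(\bbF;\bbG)=\overline{\scrR^{0,0}_{2}(\bD;\bB)}\oplus \overline{\scrR^{0,0}_{2}(\bD;\bB)}^{\perp},
\]
where $\perp$ denotes the $L^{2}$-orthogonal complement.

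Next, I would identify these orthogonal complements with the spaces defined in \eqref{eq:Lp_ann_0}--\eqref{eq:Lp_ann}. Specializing \eqref{eq:Lp_ann_0} to $p=q=2$, the Banach annihilator of $\overline{\scrR^{0,0}_{2}(\bD)}$, taken under the $L^{2}$--$L^{2}$ self-duality via the pairing $\bra\cdot,\cdot\ket$, coincides with its $L^{2}$-orthogonal complement. Combined with the identity $(T(V))^{\perp\perp}=\overline{T(V)}$ and \propref{prop:RD}, which ensures that $\overline{\scrR^{0,0}_{2}(\bD)}$ and $\overline{\scrR^{0,0}_{2}(\bD;\bB)}$ are independent of the choice of lenient tuples and coincide with the $L^{2}$-closures of $\scrR(\bD)$ and $\scrR(\bD;\bB)$, one obtains
\[
\overline{\scrR^{0,0}_{2}(\bD)}^{\perp}=\scrN^{0,0}_{2}(\bD^{*},\bB^{*}),\qquad \overline{\scrR^{0,0}_{2}(\bD;\bB)}^{\perp}=\scrN^{0,0}_{2}(\bD^{*}),
\]
which are exactly the decompositions \eqref{eq:L2_decom}.

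The argument presents no substantial obstacle once the framework from the preceding paragraphs is in place; the only point that requires minor care is the dual identification of the Banach annihilator (as defined in \eqref{eq:Lp_ann_0}) with the Hilbert-space orthogonal complement under the self-duality of $L^{2}$, since the definition in \eqref{eq:Lp_ann_0} is phrased via the $L^{p}$--$L^{q}$ pairing and must be specialized to $p=q=2$. The generalized Green's formula \eqref{eq:integration_adapted_refined} plays no direct role here, but it would enter immediately afterward (via \propref{prop:NDR}) when one wishes to describe elements of $\scrN^{0,0}_{2}(\bD^{*},\bB^{*})$ and $\scrN^{0,0}_{2}(\bD^{*})$ concretely as sections annihilated by $\bD^{*}$ (and, in the Neumann case, by $\bB^{*}$).
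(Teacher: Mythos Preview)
Your proposal is correct and follows essentially the same approach as the paper: both arguments reduce to the Hilbert-space projection theorem together with the identification, via $L^{2}\simeq (L^{2})^{*}$, of the Banach annihilator in \eqref{eq:Lp_ann_0} with the $L^{2}$-orthogonal complement. The paper's proof is slightly more terse, but the mathematical content is identical.
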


\begin{proof}
Only the first statement is proven, as the second is completely analogous. By the isomorphism $L^2 \simeq (L^2)^*$, the Banach annihilator of a subspace coincides with its orthogonal complement. Thus, \eqref{eq:Lp_ann_0} gives
\[
\scrN^{0, 0}_2(\bD^*, \bB^*) = (\overline{\scrR^{0, 0}_2(\bD)})^\bot, \qquad \scrN^{0, 0}_2(\bD^*) = (\overline{\scrR^{0, 0}_2(\bD; \bB)})^\bot. 
\]
Since $\scrN^{0, 0}_2(\bD^*, \bB^*)$ is closed, and every closed subspace of a Hilbert space induces an orthogonal decomposition, \eqref{eq:L2_decom} holds.  
\end{proof}
A closed subspace yields a topologically direct decomposition if and only if it admits a continuous projection $\tbP$ onto it. When the range of a continuous map $\bD$ is closed and induces a topologically direct decomposition, and its kernel does so as well, a routine application of the open mapping theorem shows that the projection $\tbP$ onto the range of $\bD$ yields a continuous map $\bP$ satisfying:
\[
\tbP = \bD \bP.
\]
We aim to make these observations systematic within the framework of adapted Green systems. Considering the low-regularity decompositions \eqref{eq:L2_decom}, which are associated with any adapted Green system, we expect a projection in this setting, if it belongs to the calculus, to also belong to $\OP(0,0)$ due to its $L^{2} \to L^{2}$ mapping property (by applying \propref{prop:G0_criteria} for $S',T'=0$ and $m=0$). 

This motivates the following definitions, which rely on the notion of a \emph{balance}, introduced to facilitate the comparison of orders and classes between adapted Green systems:

\begin{definition}[Balance]
\label{def:balance}
Let $\bD: \Gamma(\bbE; \bbJ) \to \Gamma(\bbF; \bbG)$ be a one-sided adapted Green system. A \emph{balance} for $\bD$ is a system $\bP: \Gamma(\bbF; \bbG) \to \Gamma(\bbE; \bbJ)$ with lenient tuples $(0,0;M,T)$, i.e., satisfying the lenient mapping property
\[
\bP: W^{0,0}_{p}(\bbF; \bbG) \to W^{M,T}_{p}(\bbE; \bbJ).
\]
In addition, we say that $\bP$ is a \emph{balance with respect to $\bB$} if $\bP$ is a balance for $\bD$ and $\bB \bP = 0$.  
\end{definition}

By \propref{prop:G0_criteria}, the lenient mapping property of a balance implies that $\bD \bP \in \OP(0,0)$. This fact is what justifies the term, as by composition $\bP$ literally reduces both the corresponding orders and classes of $\bD$ to zero from the right. The reason a balance is not defined simply as any system in the calculus satisfying $\bD \bP \in \OP(0,0)$ is that such a condition could hold due to incidental ``cancellations" in the action of $\bD$ on $\bP$, which do not reflect genuine order and class balancing.  

\begin{definition}[Neumann auxiliary decomposition]
\label{def:aux_decomposition}
Let $\bD$ be an adapted Green system as in \defref{def:adapting_operator}. It is said that $\bD$ induces a  \emph{Neumann auxiliary decomposition} if the following holds: 
\begin{enumerate}[itemsep=0pt,label=(\alph*)]
\item There is a topologically-direct, $L^{2}$-orthogonal decomposition of Fréchet spaces:
\beq
\Gamma(\bbF;\bbG)=\scrR(\bD)\oplus \scrN(\bD^*,\bB^*).
\label{eq:aux_smooth}
\eeq
\item  The $L^{2}$-orthogonal projection onto $\scrR(\bD)$ in the above decomposition, denoted by $\tbP:\Gamma(\bbF;\bbG)\rightarrow \Gamma(\bbF;\bbG)$, is within the calculus and satisfies $\tbP\in\OP(0,0)$.  
\item   
There exists a balance $\bP:\Gamma(\bbF;\bbG)\rightarrow\Gamma(\bbE;\bbJ)$ for $\bD$ such that 
\[
\tbP=\bD\bP.
\]
\end{enumerate}
\end{definition}
The following is the \emph{Dirichlet} analogue, so named because the boundary condition shifts from the kernel of $\bD^*$ to the domain of $\bD$:
\begin{definition}[Dirichlet auxiliary decomposition]
\label{def:aux_decompositionD}
Let $\bD$ be an adapted Green system as in \defref{def:adapting_operator}. It is said that $\bD$ induces a  \emph{Dirichlet auxiliary decomposition} if the following holds: 
\begin{enumerate}[itemsep=0pt,label=(\alph*)]
\item There is a topologically-direct, $L^{2}$-orthogonal decomposition of Fréchet spaces:
\beq
\Gamma(\bbF;\bbG)=\scrR(\bD;\frakB)\oplus \scrN(\bD^*).
\label{eq:aux_smoothD}
\eeq
\item  The $L^{2}$-orthogonal projection onto $\scrR(\bD;\frakB)$ in the above decomposition, denoted by $\tbP:\Gamma(\bbF;\bbG)\rightarrow \Gamma(\bbF;\bbG)$, is within the calculus and satisfies $\tbP\in\OP(0,0)$.  
\item   
There exists a balance $\bP:\Gamma(\bbF;\bbG)\rightarrow\Gamma(\bbE;\bbJ)$ for $\bD$ with respect to $\bB$ such that 
\[
\tbP=\bD\bP.
\] 
\end{enumerate}
\end{definition}

As apparent from \defref{def:aux_decomposition} and \defref{def:aux_decompositionD}, auxiliary decompositions are formulated within the smooth Fréchet topology. However, since the projections associated with the direct decomposition \eqref{eq:aux_smooth} belong to the calculus, it follows—by a careful density/continuity argument—that all Sobolev extensions of $\Gamma(\bbF;\bbG)$ decompose accordingly in their respective Sobolev topologies.
\begin{lemma}
\label{lem:Wspaux}
If an adapted Green system $\bD$ induces a Neumann auxiliary decomposition, then for every $1 < p < \infty$ and $(S,T;S',T')$ lenient tuples for $\bD$, there exists a topologically direct decomposition of Banach spaces 
\beq
W^{S',T'}_p(\bbF;\bbG) = \overline{\scrR^{S',T'}_p(\bD)} \oplus \scrN^{S',T'}_p(\bD^*,\bB^*).
\label{eq:Wsp_aux_decomp}
\eeq
Moreover, $\bD(W^{S,T}_p(\bbF;\bbG))$ is closed whenever $(S',T';S,T)$ are lenient tuples for $\bP$, in which case one writes $\overline{\scrR^{S',T'}_p(\bD)} = \scrR^{S',T'}_p(\bD)$.
\end{lemma}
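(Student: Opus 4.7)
The plan is to transport the smooth Neumann auxiliary decomposition to the Banach setting by extending the $L^{2}$-orthogonal projection $\tbP$ from $\Gamma(\bbF;\bbG)$ to $W^{S',T'}_p(\bbF;\bbG)$. Because $\tbP\in\OP(0,0)$, \corrref{corr:G0props} (applied after, if necessary, composing with order-reducing operators to land in the range of allowed Sobolev exponents) yields a continuous linear extension $\tbP:W^{S',T'}_p(\bbF;\bbG)\to W^{S',T'}_p(\bbF;\bbG)$. Idempotence $\tbP^{2}=\tbP$ holds on $\Gamma(\bbF;\bbG)$ by the definition of an auxiliary decomposition, and passes to the Banach extension by the density of smooth sections and continuity. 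Hence $\tbP$ is a continuous projector on $W^{S',T'}_p(\bbF;\bbG)$, and its range and its kernel are closed and topologically complementary, giving the algebraic shape of the decomposition. It then remains to identify the two closed summands with the spaces claimed in the lemma.

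For the range, I would use the balance identity $\tbP=\bD\bP$ on $\Gamma$ together with the continuity of $\tbP$ on $W^{S',T'}_p$: approximating $\Psi\in W^{S',T'}_p$ by smooth $\Psi_n\to\Psi$ gives $\tbP\Psi_n=\bD\bP\Psi_n\in\scrR(\bD)$, so $\tbP\Psi\in\overline{\scrR^{S',T'}_p(\bD)}$ by \propref{prop:RD}, whence $\tbP(W^{S',T'}_p)\subseteq\overline{\scrR^{S',T'}_p(\bD)}$. For the reverse inclusion, the smooth decomposition \eqref{eq:aux_smooth} says $\tbP$ is the identity on $\scrR(\bD)$, so $\scrR(\bD)\subseteq\tbP(W^{S',T'}_p)$; taking the $W^{S',T'}_p$-closure and invoking that $\tbP(W^{S',T'}_p)$ is already closed (as the range of a continuous idempotent) yields equality $\tbP(W^{S',T'}_p)=\overline{\scrR^{S',T'}_p(\bD)}$.

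The main obstacle will be the identification of the complementary summand with $\scrN^{S',T'}_p(\bD^{*},\bB^{*})$, since this space is naturally described by the pointwise equations $\bD^{*}\Theta=0$ and $\bB^{*}\Theta=0$ (in the sense of \propref{prop:NDR}) rather than by a projection. One inclusion is straightforward: if $\Theta\in W^{S',T'}_p$ satisfies $\bD^{*}\Theta=0$ and $\bB^{*}\Theta=0$, then the generalized Green's formula \eqref{eq:integration_adapted_refined}, applied to smooth $\Upsilon$ and extended by continuity to any $\Upsilon\in W^{-S',-T'}_q$, shows that $\Theta$ annihilates $\scrR(\bD)$ in the $L^q$–$L^p$ pairing; combined with \propref{prop:RD}, this forces $\Theta$ to lie in the $L^{p}$-level null space and, since $\tbP$ is $L^{2}$-orthogonal on $\Gamma$ and extends consistently across Sobolev scales, $\tbP\Theta=0$. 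For the reverse inclusion, one approximates $\Theta\in\ker(\tbP)\cap W^{S',T'}_p$ by smooth sections $\Theta_n\to\Theta$, forms $\Xi_n=(\id-\tbP)\Theta_n\in\scrN(\bD^{*},\bB^{*})$, and passes to the limit $\Xi_n\to\Theta$ in $W^{S',T'}_p$; continuity of $\bD^{*}$ and $\bB^{*}$ on their respective Sobolev scales (granted since $(S',T')$ are suitable for $\bB^{*}$) yields $\bD^{*}\Theta=0$ and $\bB^{*}\Theta=0$, placing $\Theta$ in $\scrN^{S',T'}_p(\bD^{*},\bB^{*})$. This is the step where one must be most careful about which mapping properties of $\bD^{*}$ and $\bB^{*}$ are actually available at the $W^{S',T'}_p$-regularity.

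For the moreover statement, if $(S',T';S,T)$ are lenient tuples for the balance $\bP$, then $\bP:W^{S',T'}_p(\bbF;\bbG)\to W^{S,T}_p(\bbE;\bbJ)$ is continuous, so the factorization $\tbP=\bD\bP$ realises every $\tbP\Psi$ as $\bD(\bP\Psi)\in\scrR^{S',T'}_p(\bD)$. Combined with $\tbP(W^{S',T'}_p)=\overline{\scrR^{S',T'}_p(\bD)}$ from the first part, this forces $\overline{\scrR^{S',T'}_p(\bD)}=\scrR^{S',T'}_p(\bD)$, so the range of $\bD$ is closed at this regularity level, justifying the notational convention stated in the lemma.
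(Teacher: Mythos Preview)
Your overall architecture matches the paper's: extend $\tbP\in\OP(0,0)$ to a continuous idempotent on $W^{S',T'}_p$, identify its range with $\overline{\scrR^{S',T'}_p(\bD)}$ by density and the relation $\tbP=\bD\bP$, and identify its kernel with $\scrN^{S',T'}_p(\bD^*,\bB^*)$. The range identification and the ``moreover'' clause are handled correctly and essentially as in the paper.

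The gap is in your identification of $\ker\tbP$ with $\scrN^{S',T'}_p(\bD^*,\bB^*)$. You invoke \propref{prop:NDR} to describe $\scrN^{S',T'}_p(\bD^*,\bB^*)$ via the pointwise equations $\bD^*\Theta=0$, $\bB^*\Theta=0$, and you pass these equations to the limit using continuity of $\bD^*$ and $\bB^*$ on $W^{S',T'}_p$. But the lemma's hypotheses give only that $(S,T;S',T')$ are lenient tuples for $\bD$; they say nothing about lenient tuples for $\bD^*$ or suitability of $(S',T')$ for $\bB^*$. Your parenthetical ``granted since $(S',T')$ are suitable for $\bB^{*}$'' is precisely what is \emph{not} granted---\propref{prop:NDR} needs that extra regularity to even make sense of $\bD^*\Theta$ and $\bB^*\Theta$, and the equations may be undefined at the $W^{S',T'}_p$ level. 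You flag this as the delicate step but then assume it away.

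The paper bypasses the issue entirely by never appealing to the pointwise characterization. Recall that $\scrN^{S',T'}_p(\bD^*,\bB^*)$ is \emph{defined} as $\scrN^{0,0}_p(\bD^*,\bB^*)\cap W^{S',T'}_p$, and $\scrN^{0,0}_p(\bD^*,\bB^*)=\scrR(\bD)^\bot$ is the $L^p$--$L^q$ annihilator \eqref{eq:Lp_ann}. Given $\Phi\in\ker\tbP\cap W^{S',T'}_p$, approximate by smooth $\Phi_n$; then $(\id-\tbP)\Phi_n\in\scrN(\bD^*,\bB^*)$ are smooth, so Green's formula applied on smooth sections gives $\bra(\id-\tbP)\Phi_n,\bD\Upsilon\ket=0$ for every smooth $\Upsilon$. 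Since $W^{S',T'}_p$-convergence implies $L^p$-convergence, the pairing survives the limit and yields $\Phi\in\scrR(\bD)^\bot=\scrN^{0,0}_p(\bD^*,\bB^*)$, hence $\Phi\in\scrN^{S',T'}_p(\bD^*,\bB^*)$. No continuity of $\bD^*$ or $\bB^*$ at the $W^{S',T'}_p$ level is ever invoked---only the $L^p$-pairing with the smooth section $\bD\Upsilon$. This is the missing idea in your argument.
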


As the final clause of the lemma suggests, continuous extensions of $\tbP$ are not always given by the composition of the continuous extensions of $\bD$ and $\bP$. This is because, even if $(S, T; S', T')$ are lenient tuples for $\bD$, it does not necessarily follow that $(S', T'; S, T)$ are lenient tuples for $\bP$. However, by the definition of a balance, this implication does hold when $(S, T; S', T') = (J, L ; I, K)$, where $(J, L; I, K)$ are the standard tuples associated with $\bD$ and $\bP$. In this case, $\bP$ maps in the reverse direction of $\bD$ in the mapping property \eqref{eq:strict_mapping_property}.

In establishing the existence of an auxiliary decomposition, it is important to note the converse of the claim in the lemma: if \eqref{eq:Wsp_aux_decomp} holds for every standard tuple $(J, L; I, K)$ for some $1 < p < \infty$, then the smooth version \eqref{eq:aux_smooth} also holds.

The Dirichlet version is formulated and proven in the same manner:
\begin{lemma}
\label{lem:WspauxD}
If an adapted Green system $\bD$ induces a Dirichlet auxiliary decomposition, then for every $1 < p < \infty$ and $(S,T;S',T')$ lenient tuples for $\bD$ that are also suitable for $\frakB$, there exists a topologically direct decomposition of Banach spaces 
\beq
W^{S',T'}_p(\bbF;\bbG) = \overline{\scrR^{S',T'}_p(\bD;\frakB)} \oplus \scrN^{S',T'}_p(\bD^*).
\label{eq:Wsp_aux_decompD}
\eeq
Moreover, $\bD(W^{S,T}_p(\bbF;\bbG))$ is closed whenever $(S',T';S,T)$ are lenient tuples for $\bP$, in which case one writes $\overline{\scrR^{S',T'}_p(\bD)} = \scrR^{S',T'}_p(\bD)$.
\end{lemma}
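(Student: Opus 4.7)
The plan is to promote the smooth Dirichlet auxiliary decomposition of \defref{def:aux_decompositionD} to the Sobolev setting by exploiting the fact that the projection $\tbP$ lies in $\OP(0,0)$, together with density of smooth sections in Sobolev spaces. The argument runs parallel to that of \lemref{lem:Wspaux} (the Neumann version), the only substantive difference being that the image side is now $\scrR(\bD;\bB)$ and the kernel side is $\scrN(\bD^*)$, so the role of $\bB$-compatibility shifts from the codomain to the domain.

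First, I would extend $\tbP$ continuously to $W^{S',T'}_p(\bbF;\bbG)$. Since $\tbP\in\OP(0,0)$ is of class zero, it admits an adjoint within the calculus (\corrref{corr:G0props}), and combining this $L^p$--$L^q$ duality with \corrref{corr:G0props} gives continuous extensions $\tbP:W^{S',T'}_p(\bbF;\bbG)\to W^{S',T'}_p(\bbF;\bbG)$ for any tuples arising as lenient tuples for $\bD$ that are suitable for $\bB$. The identity $\tbP^2=\tbP$ holds on smooth sections by \defref{def:aux_decompositionD}, and by density and continuity it persists on the Sobolev extension. Hence $\tbP$ is a continuous projection, which automatically produces a topologically direct decomposition
\[
W^{S',T'}_p(\bbF;\bbG)=\image\tbP\oplus\ker\tbP.
\]

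Next, I would identify the two summands. For the image: because $\bP$ is a balance for $\bD$ with respect to $\bB$, one has $\bB\bP=0$, so $\tbP=\bD\bP$ sends smooth sections into $\scrR(\bD;\bB)$. Since $\image\tbP$ is a closed subspace of $W^{S',T'}_p(\bbF;\bbG)$ containing $\scrR(\bD;\bB)$, it contains $\overline{\scrR^{S',T'}_p(\bD;\bB)}$; the reverse inclusion is immediate from continuity together with the fact that $\tbP$ acts as the identity on $\scrR(\bD;\bB)$ (by \defref{def:aux_decompositionD}) and on its Sobolev closure (by density and continuity). For the kernel: \propref{prop:NDR} identifies $\scrN^{S',T'}_p(\bD^*)$ with $\ker(\bD^*|_{W^{S',T'}_p})$, which is closed. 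On the smooth level $\ker\tbP=\scrN(\bD^*)$, and density of $\Gamma(\bbF;\bbG)$ in $W^{S',T'}_p(\bbF;\bbG)$, together with continuity of $\tbP$ and of $\bD^*$, pins down $\ker(\tbP|_{W^{S',T'}_p})=\scrN^{S',T'}_p(\bD^*)$. This yields \eqref{eq:Wsp_aux_decompD}.

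For the ``moreover'' clause, the extra hypothesis that $(S',T';S,T)$ are lenient tuples for $\bP$ means that $\bP:W^{S',T'}_p(\bbF;\bbG)\to W^{S,T}_p(\bbE;\bbJ)$ continuously; combined with $\bB\bP=0$ (which extends from smooth sections by continuity), this gives $\image\bP\subseteq W^{S,T}_p(\bbE;\bbJ)\cap\ker\bB$. Consequently $\image\tbP=\bD(\image\bP)\subseteq\scrR^{S',T'}_p(\bD;\bB)$, and combined with the earlier identification $\image\tbP=\overline{\scrR^{S',T'}_p(\bD;\bB)}$, this forces $\scrR^{S',T'}_p(\bD;\bB)=\overline{\scrR^{S',T'}_p(\bD;\bB)}$, i.e., closedness of the range. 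The main obstacle I anticipate is bookkeeping: one must carefully verify that all the tuple/regularity conditions needed to invoke \corrref{corr:G0props}, \propref{prop:NDR}, and density of $\ker\bB$ (via normality, \corrref{corr:normal_system_2}) are compatible with the hypotheses, especially when $(S',T')$ may have negative entries, where the adjoint-duality extension of $\tbP$ must be invoked rather than the direct statement of \corrref{corr:G0props}.
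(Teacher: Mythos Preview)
Your approach is essentially the same as the paper's, which explicitly states that the Dirichlet version ``is formulated (and later proven) in the same manner'' as \lemref{lem:Wspaux}: extend $\tbP\in\OP(0,0)$ to a continuous projection on $W^{S',T'}_p$, identify its range with $\overline{\scrR^{S',T'}_p(\bD;\bB)}$ via density and the fact that $\tbP$ fixes $\scrR(\bD;\bB)$, identify its kernel with $\scrN^{S',T'}_p(\bD^*)$, and for the closedness clause use continuity of $\bP$ in the reverse direction.

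One small deviation worth flagging: for the kernel identification you invoke \propref{prop:NDR} and ``continuity of $\bD^*$'' on $W^{S',T'}_p$, which presupposes the existence of lenient tuples for $\bD^*$ starting from $(S',T')$; the hypothesis only gives lenient tuples for $\bD$, so this is not guaranteed. The paper's proof of \lemref{lem:Wspaux} avoids this by working directly with the annihilator characterization \eqref{eq:Lp_ann}: if $(\id-\tbP)\Phi=\Phi$, approximate by smooth $\Phi_n$, note $(\id-\tbP)\Phi_n\in\scrN(\bD^*)$ on the smooth level, and pair against $\bD\Upsilon$ for $\Upsilon\in\ker\bB$ via Green's formula to conclude $\Phi\in\scrN^{0,0}_p(\bD^*)\cap W^{S',T'}_p=\scrN^{S',T'}_p(\bD^*)$. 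This route requires no continuity of $\bD^*$ on the target Sobolev space and is the cleaner way to handle exactly the regularity bookkeeping you anticipated.
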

The proof is technical and is presented at the end of the section.
\subsection{Disjoint unions}
\label{sec:disjoint_union}
In the context of auxiliary decompositions, it is prudent to address the situation of disjoint unions of adapted Green systems. Specifically, following \defref{def:direct_sums_unions_sums}, if $\bD^{j} : \Gamma(\bbE_{j}; \bbJ_j) \rightarrow \Gamma(\bbF_j; \bbG_j)$ are adapted Green systems, then their disjoint union $\bD = \bD^{1} \sqcup \bD^{2} : \Gamma(\bbE; \bbJ) \rightarrow \Gamma(\bbF; \bbG)$ also forms an adapted Green system, where $\bbE = \bbE_{1} \oplus \bbE_{2}$, etc., with the associated systems from \defref{def:adapting_operator} defined through corresponding disjoint unions.

Due to the nature of the disjoint union, we find that
\[
\begin{aligned}
\scrR(\bD) &= \scrR(\bD^{1}) \oplus \scrR(\bD^{2}), & \qquad \scrR(\bD; \bB) &= \scrR(\bD^{1}; \bB^{1}) \oplus \scrR(\bD^{2}; \bB^{2}),
\end{aligned}
\]
with similar relations holding for the $S, P$ versions and the adapted adjoints $(\bD^{1})^*,(\bD^{2})^*$. Furthermore, each of $\bD^1$ and $\bD^2$ produces an auxiliary decomposition, either Dirichlet or Neumann as defined in \defref{def:aux_decomposition}, if and only if e.g., in the Neumann case:
\beq
\Gamma(\bbF; \bbG) = \scrR(\bD) \oplus \scrN(\bD^*, \bB^*).
\label{eq:aux_Direct_sum_disjoint}
\eeq
This holds because the spaces in the decomposition remain separate, as the section spaces themselves are disjoint. 

In this case, if $\bP^{1}, \tbP^{1}$ and $\bP^{2}, \tbP^{2}$ are the mappings from the auxiliary decompositions of $\bD^1$ and $\bD^2$, respectively, then by setting $\bP = \bP^1 \sqcup \bP^{2}$ and $\tbP = \tbP^{1} \sqcup \tbP^{2}$, it follows by construction that $\tbP = \bD \bP$ is the projection onto $\scrR(\bD)$ and indeed lies in $\OP(0,0)$ as required.


\subsection{Technical proofs}
\begin{PROOF}{\propref{prop:RD}}
Only the first statement is proven here, as the proof of the second is entirely analogous. By construction of Sobolev spaces, $\Gamma(\bbE; \bbJ) \hookrightarrow W^{S, T}_{p}(\bbE; \bbJ)$ densely and continuously, so $\bD : \Gamma(\bbE; \bbJ) \rightarrow W^{S', T'}_p(\bbF; \bbG)$ continuously. It follows immediately that
\[
\overline{\bD(\Gamma(\bbE; \bbJ))} \subseteq \overline{\bD(W^{S, T}_{p}(\bbE; \bbJ))}.
\]
In the other direction, let $\Theta \in \overline{\bD(W^{S, T}_{p}(\bbE; \bbJ))}$, which means that there exists a sequence $\bD \Psi_{n} \in \bD(W^{S, T}_{p}(\bbE; \bbJ))$ such that $\bD \Psi_{n} \rightarrow \Theta$ in $W^{S', T'}_p$. Let $\Psi_{n, j} \in \Gamma(\bbE; \bbJ)$ be an approximating sequence for each $\Psi_{n}$ in the $W^{S, T}_p$-topology. By induction, for each $n \in \Nzero$, we can select $j_{n} \in \Nzero$ such that $j_{n} > j_{n-1}$ and
\[ 
\|\Psi_{n, j_{n}} - \Psi_{n}\|_{S, T, p} < 2^{-n}.
\]
Then by the continuity of $\bD : W^{S, T}_{p}(\bbE; \bbJ) \rightarrow W^{S', T'}_p(\bbF; \bbG)$, we have
\[
\lim_{n \to \infty} \bD \Psi_{n, j_{n}} = \lim_{n \to \infty} \bD \Psi_{n} = \Theta.
\] 
Since $\bD \Psi_{n, j_{n}} \in \bD(\Gamma(\bbE; \bbJ))$, the claim is proven.
\end{PROOF}

\begin{PROOF}{\lemref{lem:Wspaux}}
Since $\tbP \in \OP(0,0)$, it follows from \corrref{corr:G0props} that $\tbP$ has the lenient mapping property  
\[
\tbP : W^{S',T'}_{p}(\bbF;\bbG) \to W^{S',T'}_{p}(\bbF;\bbG).
\]
Recall that this continuous extension is defined as follows: given $\Theta \in W^{S',T'}_{p}(\bbF;\bbG)$ and any $W^{S',T'}_{p}$-approximating sequence $(\Theta_{n}) \subset \Gamma(\bbF;\bbG)$ for $\Theta$, $\tbP$ acts on $\Theta$ as  
\[
\tbP \Theta = \lim_{n \to \infty} \tbP \Theta_{n},
\]
where the limit is taken with respect to the $W^{S',T'}_{p}$-topology. Since the projection property $\tbP \tbP = \tbP$ is preserved under the limit, the $W^{S',T'}_{p}$-extension of $\tbP$ remains a projection. Thus, $\tbP$ has a closed range in this topology, denoted by $\scrR^{S',T'}_p(\tbP)$ for the sake of this proof.  

The complement of $\scrR^{S',T'}_p(\tbP)$ in $W^{S',T'}_{p}(\bbF;\bbG)$ is then the range of the projection $\id - \tbP$. We now show that this range is precisely $\scrN^{S',T'}_{p}(\bD^*,\bB^*)$. Let $\Phi \in W^{S',T'}_{p}(\bbF;\bbG)$ with $(\id - \tbP)\Phi = \Phi$ and let $(\Phi_{n}) \subset \Gamma(\bbF;\bbG)$ be a $W^{S',T'}_{p}$-approximating sequence for $\Phi$. Then, by continuity,  
\[
(\id - \tbP)\Phi_{n} \to \Phi \quad \text{in } W^{S',T'}_{p}.
\]
Since $\Phi_{n} \in \Gamma(\bbF;\bbG)$, it follows from the properties of $\tbP$ in the auxiliary decomposition that  
\[
(\bD^*\oplus\bB^*)(\id - \tbP)\Phi_{n} = 0.
\]
Thus, as $W^{S',T'}_{p}$-approximating sequences are also $L^{p}$-approximating sequences, for any $\Upsilon \in \Gamma(\bbE;\bbJ)$, it follows from the generalized Green's formula \eqref{eq:integration_adapted_refined} that  
\[
\bra \Phi, \bD \Upsilon \ket = \lim_{n \to \infty} \bra (\id - \tbP)\Phi_{n}, \bD \Upsilon \ket = 0.
\]
Therefore, from \eqref{eq:Lp_ann}, we conclude that  
\[
\Phi \in \scrN^{0,0}_{p}(\bD^*,\bB^*) \cap W^{S',T'}_{p}(\bbF;\bbG) = \scrN^{S',T'}_{p}(\bD^*,\bB^*),
\]
establishing the direct decomposition  
\[
W^{S',T'}_p(\bbF;\bbG) = \scrR^{S',T'}_p(\tbP) \oplus \scrN^{S',T'}_p(\bD^*,\bB^*).
\]
Thus, to establish \eqref{eq:Wsp_aux_decomp}, it remains to show that  
\[
\scrR^{S',T'}_p(\tbP) = \overline{\scrR^{S',T'}_p(\bD)}.
\]

For the containment $\overline{\scrR^{S',T'}_p(\bD)} \subseteq \scrR^{S',T'}_p(\tbP)$, let $\Theta \in \overline{\scrR^{S',T'}_p(\bD)}$. By \propref{prop:RD}, there exists an approximating sequence $(\Psi_{n}) \subset \Gamma(\bbE;\bbJ)$ such that  
\[
\bD \Psi_{n} \to \Theta \quad \text{in } W^{S',T'}_p. 
\]
Since $\bD \Psi_{n} \in \scrR(\bD)$ and $\tbP$ is the projection onto $\scrR(\bD)$ as per the definition of the auxiliary decomposition \defref{def:aux_decomposition}, we have  
\[
\tbP \bD \Psi_{n} = \bD \Psi_{n}.
\]
Since $\tbP$ is continuous in the $W^{S',T'}_p$-topology, we conclude that  
\[
\Theta \in \scrR^{S',T'}_p(\tbP),
\]
proving $\overline{\scrR^{S',T'}_p(\bD)} \subseteq \scrR^{S',T'}_p(\tbP)$.  

Conversely, for the containment $\scrR^{S',T'}_p(\tbP) \subseteq \overline{\scrR^{S',T'}_p(\bD)}$, let $\tbP \Theta \in \scrR^{S',T'}_p(\tbP)$ and let $(\Theta_{n}) \subset \Gamma(\bbF;\bbG)$ be a sequence converging to $\Theta$ in the $W^{S',T'}_{p}$-topology. By continuity,  
\[
\tbP \Theta_{n} \to \tbP \Theta.
\]
Since $\tbP \Theta_{n} = \bD \bP \Theta_{n}$, it follows that  
\[
\bD \bP \Theta_{n} \to \tbP \Theta \quad \text{in } W^{S',T'}_{p},
\]
which means that $\tbP \Theta \in \overline{\scrR^{S',T'}_p(\bD)}$. Thus,  
\[
\scrR^{S',T'}_p(\tbP) \subseteq \overline{\scrR^{S',T'}_p(\bD)}.
\]
This completes the proof of \eqref{eq:Wsp_aux_decomp}.  

Finally, we show that if $\bP : W^{S',T'}_{p}(\bbF;\bbG) \to W^{S,T}_{p}(\bbF;\bbG)$ is continuous, i.e., $(S',T';S,T)$ are lenient tuples for $\bP$, then $\scrR^{S',T'}_{p}(\bD)$ is closed.  

By \propref{prop:RD}, let $\bD \Psi_{n} \in \scrR(\bD)$ be a $W^{S',T'}_{p}$-Cauchy sequence with limit $\Theta \in \overline{\scrR^{S',T'}_p(\bD)}$. Since $\bP$ is continuous, $\bP \bD \Psi_{n}$ is $W^{S,T}_{p}$-Cauchy and converges to $\bP \Theta$, which in turn implies that $\bD \bP \bD\Psi_{n}$ is $W^{S,T}_p$-Cauchy and converges to $\bD \bP\bD \Theta$.  

By the properties of the auxiliary decomposition, since $\bD \Psi_{n} \in \scrR(\bD)$, we have $\bD \bP\bD \Psi_{n} = \bD \Psi_{n}$. By the uniqueness of limits, this implies $\bD \bP \Theta = \Theta$, meaning  
\[
\Theta \in \bD(W^{S,T}_{p}(\bbF;\bbG)).
\]
Thus, $\scrR^{S',T'}_p(\bD)$ is closed, completing the proof.
\end{PROOF}
\begin{PROOF}{\lemref{lem:D'mapping}}
The first limit follows directly from \propref{prop:NDR} and from the sharp mapping property of $\bD^*$ with respect to the sharp tuples $(0,0;\, -M, -T)$:
\[
\bD^* : L^{p}(\bbF;\bbG) \rightarrow W^{-M,-T}_{p}(\bbE;\bbJ).
\]

For the second limit, note that the fact that $(M, T;\, 0, 0)$ are sharp tuples for $\bD$ implies, by \propref{prop:RD}, the identity:
\[
\overline{\scrR^{0,0}_{q}(\bD)} = \overline{\bD\big(W^{M,T}_{p}(\bbE;\bbJ)\big)}.
\]

Using \eqref{eq:Lp_ann} and the $L^p$--$L^q$ duality, the assumption $\Theta - \Xi \in \scrN_{p}^{0,0}(\bD^*, \bB^*)$ implies that, for all $\Upsilon \in W^{J,L}_{q}(\bbE;\bbJ)$,
\[
0 = \bra \Theta - \Xi, \bD \Upsilon \ket = \lim_{n \to \infty} \bra \Theta_n - \Xi, \bD \Upsilon \ket.
\]

Since $\Upsilon$, $\Theta_n$, and $\Xi$ all have sufficient regularity, we may apply the Green's formula \eqref{eq:integration_adapted_refined} to obtain:
\beq
\lim_{n \to \infty} \left[ \bra \bD^* \Theta_n - \bD^* \Xi, \Upsilon \ket + \bra \bB^* \Theta_n - \bB^* \Xi, \bB \Upsilon \ket \right] = 0.
\label{eq:limit_in_the_proof}
\eeq

The first term vanishes since $\bD^* \Theta_n \to \bD^* \Xi$ in $W^{-M,-T}_{p}(\bbE;\bbJ)$, which is strictly stronger than convergence in $W^{-M,-T}_{p,0}(\bbE;\bbJ)$—the regularity required for dual pairing against $\Upsilon \in W^{M,T}_{q}(\bbE;\bbJ)$. Hence, we are left with:
\[
\lim_{n \to \infty} \bra \bB^* \Theta_n - \bB^* \Xi, \bB \Upsilon \ket = 0.
\]

Since $\bB$ is normal, $\bB \Upsilon \in W^{0,C}_{q}(0;\bbL)$ can be prescribed arbitrarily, so $\lim_{n \to \infty} \bB^* \Theta_n=  \bB^* \Xi$ in $(W^{0,C}_{q}(0;\bbL))^*$. Since $\bbL$ is a vector bundle over $\dM$, we have the duality:
\[
(W^{0,C}_{q}(0;\bbL))^* = W^{0,-C}_{p}(0;\bbL),
\]
which yields the second limit.
\end{PROOF}

\section{Elliptic pre-complexes}
\label{sec:Elliptic}
\subsection{Definitions and main theorems} 
\label{sec:def_elliptic_pre_complex}
Let now $(\bD_{\alpha})_{\alpha \in \Nzero}$ be a sequence of adapted Green systems, cast into the following diagram: 
\beq
\begin{xy}
(-30,0)*+{0}="Em1";
(0,0)*+{\Gamma(\bbF_0;\bbG_0)}="E0";
(30,0)*+{\Gamma(\bbF_1;\bbG_{1})}="E1";
(60,0)*+{\Gamma(\bbF_2;\bbG_{2})}="E2";
(90,0)*+{\Gamma(\bbF_3;\bbG_{3})}="E3";
(101,0)*+{\cdots}="E4";
(-30,-25)*+{0}="Gm1";
(0,-25)*+{\Gamma(0;\bbL_0)}="G0";
(30,-25)*+{\Gamma(0;\bbL_1)}="G1";
(60,-25)*+{\Gamma(0;\bbL_2)}="G2";
(90,-25)*+{\Gamma(0;\bbL_3)}="G3";
(100,-25)*+{\cdots}="G4";
{\ar@{->}@/^{1pc}/^{\bD_{0}}"E0";"E1"};
{\ar@{->}@/^{1pc}/^{\bD_{0}^*}"E1";"E0"};
{\ar@{->}@/^{1pc}/^{\bD_{1}}"E1";"E2"};
{\ar@{->}@/^{1pc}/^{\bD_{1}^*}"E2";"E1"};
{\ar@{->}@/^{1pc}/^{\bD_{2}}"E2";"E3"};
{\ar@{->}@/^{1pc}/^{\bD_{2}^*}"E3";"E2"};
{\ar@{->}@/^{1pc}/^{\bD_{-1}}"Em1";"E0"};
{\ar@{->}@/^{1pc}/^{\bD^*_{-1}}"E0";"Em1"};
{\ar@{->}@/_{0pc}/^{\bB_0}"E0";"G0"};
{\ar@{->}@/_{0pc}/^{\bB_1}"E1";"G1"};
{\ar@{->}@/_{0pc}/^{\bB_2}"E2";"G2"};
{\ar@{->}@/_{0pc}/^{\bB_{3}}"E3";"G3"};
{\ar@{->}@/^{0pc}/^{\bB_{-1}}"Em1";"Gm1"};
{\ar@{->}@/^{0.8pc}/^{\bB_{0}^*}"E1";"G0"};
{\ar@{->}@/^{0.8pc}/^{\bB_{1}^*}"E2";"G1"};
{\ar@{->}@/^{0.8pc}/^{\bB_{2}^*}"E3";"G2"};
{\ar@{->}@/^{0.8pc}/^{\bB^*_{-1}}"E0";"Gm1"};
\end{xy}
\label{eq:elliptic_pre_complex_diagram}
\eeq
The additional systems in the diagram are the ones associated with each adapted Green system $\bD_{\alpha}$ through the generalized Green’s formula \eqref{eq:integration_adapted_refined}:
\beq
\bra \bD_{\alpha}\Psi,\Theta\ket = \bra\Psi,\bD_{\alpha}^*\Theta\ket + \bra \bB_{\alpha}\Psi, \bB^*_{\alpha}\Theta\ket. 
\label{eq:integration_by_parts_elliptic_pre_complex}
\eeq  
In this setup, for $\alpha = -1$, we set $\bD_{-1} = 0$, $\bB^*_{-1} = 0$, etc. 

Collectively, we refer to the diagram \eqref{eq:elliptic_pre_complex_diagram_intro} as $(\bD_{\bullet})$, the bullet notation serves to refer to the entire diagram of mappings rather than a single level. 

For the definitions that follow, recall the notion of a \emph{balance}~(\defref{def:balance}):
\begin{definition}[Elliptic pre-complex --- Neumann conditions]
\label{def:NN_segment}
Let $\alpha_0\in\Nzero\cup\BRK{\infty}$. A diagram $(\bD_{\bullet})$ as above is called an $\alpha_0$-\emph{elliptic pre-complex} based on \emph{Neumann conditions} if the following holds for all $\alpha\leq \alpha_0$: 
\begin{enumerate}[label=(\roman*), itemsep=0.5em]
\item\emph{(Neumann overdetermined ellipticity)} The following systems are overdetermined elliptic:
\begin{enumerate}
\item $\bD_{\alpha} \oplus \bD^*_{\alpha-1} \oplus \bB^*_{\alpha-1}$;
\item $\bD^*_{\alpha} \bD_{\alpha} \oplus \frakB^*_{\alpha} \bD_{\alpha} \oplus \bD^*_{\alpha-1} \oplus \bB^*_{\alpha-1}$. 
\end{enumerate}
\item\emph{(Order-reduction property)} For every balance $\bP$ for $\bD_{\alpha}$,  
\[
\bD_{\alpha+1} \bD_{\alpha} \bP \in \OP(0,0) 
\textand
\sigma(\bD_{\alpha+1} - \bD_{\alpha+1} \bD_{\alpha} \bP) = \sigma(\bD_{\alpha+1}),
\]
where $\sigma$ denotes the order-reduced symbol with respect to the sharp tuples of $\bD_{\alpha+1}$.
\end{enumerate}
\end{definition}
\begin{definition}[Elliptic pre-complex — Dirichlet conditions]
\label{def:DD_segment}
Let $\alpha_0\in\Nzero\cup\BRK{\infty}$. A sequence $(\bD_{\bullet})$ as above is called an $\alpha_0$-\emph{elliptic pre-complex} based on \emph{Dirichlet conditions} if the following hold for all $\alpha\leq \alpha_0$:
\begin{enumerate}[label=(\roman*), itemsep=0.5em]

\item \emph{(Dirichlet overdetermined ellipticity)}  
The following systems are overdetermined elliptic:
\begin{enumerate}
\item $\bD_{\alpha} \oplus \bD^*_{\alpha-1} \oplus \bB_{\alpha}$;
\item $\bD^*_{\alpha} \bD_{\alpha} \oplus \bD^*_{\alpha-1} \oplus \bB_{\alpha}$.
\end{enumerate}

\item \emph{(Order-reduction property)}  The following relations hold:
\begin{enumerate}
\item $\ker \bB_{\alpha-1} \subseteq \ker \bB_{\alpha} \bD_{\alpha-1}$;
\item For every balance $\bP$ for $\bD_{\alpha}$ with respect to $\bB_{\alpha}$,
\[
\bD_{\alpha+1} \bD_{\alpha} \bP \in \OP(0,0) \textand 
\sigma(\bD_{\alpha+1} - \bD_{\alpha+1} \bD_{\alpha} \bP) = \sigma(\bD_{\alpha+1}),
\]
where $\sigma$ denotes the order-reduced symbol with respect to the sharp tuples of $\bD_{\alpha+1}$.
\end{enumerate}
\end{enumerate}
\end{definition}

Supplementary to the introductory discussion in \secref{sec:elliptic_pre_complexes_intro}, a few technical remarks on these definitions are now in order. 

First, we have not specified the basic tuples upon which the overdetermined ellipticities in (ii) are based, as would be technically required by \defref{def:overdetermined_elliptic_varying}. In this respect, our only assumption concerns the general form of the sharp tuples for $\bD_{\alpha}$ and $\bD_{\alpha}^*$, as inherent in \defref{def:adapting_operator}. These general forms will later allow us to determine the relevant basic tuples for overdetermined ellipticity via \propref{prop:sharp_tuples_OD}.
 
Second, note that in both cases---as discussed in \secref{sec:elliptic_pre_complexes_intro}, and in view of the discussion surrounding the definition of a balance in \defref{def:balance}---the \emph{order-reduction property} roughly states that the orders and classes of $\bD_{\alpha} \bD_{\alpha-1}$ are less than or equal to those of $\bD_{\alpha-1}$. Unlike the case of a standard element in $\OP(m,r)$, where these sets of orders and classes reduce to two single numbers, the notion of a balance is introduced here precisely to avoid the complexity of comparing systems with varying orders and classes. In many practical examples, as shown in \secref{sec:Examples}, verifying this condition reduces to an algebraic check based solely on the composition rules of the calculus.

Third, in most of the examples studied in this paper, the required overdetermined ellipticity condition in item (i.b) of \defref{def:NN_segment}--\defref{def:DD_segment} is redundant, as it follows from the condition in item (i.a), together with the order-reduction property and the symbolic calculus developed in \secref{sec:douglas_nirenberg_lop}. Nevertheless, these conditions are included explicitly in the definitions to avoid the need for such derivations at the analytical stage of the proofs and to ensure that the theory remains sufficiently abstract to apply to more general systems in future studies.

%

The following is the basic theorem concerning elliptic pre-complexes:
\begin{theorem}[Lifted complex]
\label{thm:corrected_complex}
Let $(\bD_{\bullet})$ be an $\alpha_0$-elliptic pre-complex. For $\alpha \leq \alpha_0$, there exists a sequence of continuous linear maps of Fréchet spaces
\[
\fbD_{\alpha+1} : \Gamma(\bbF_{\alpha+1}; \bbG_{\alpha+1}) \rightarrow \Gamma(\bbF_{\alpha+2}; \bbG_{\alpha+2}),
\]
each uniquely characterized by the following properties:
\begin{enumerate}[label=(\roman*), itemsep=0.5em]
\item $\mathrm{(\NN)}$ If the elliptic pre-complex is based on Neumann conditions (\defref{def:NN_segment}):
\begin{enumerate}[label=(\alph*), itemsep=0pt]
\item $\scrR(\fbD_{\alpha}) \subseteq \scrN(\fbD_{\alpha+1})$.
\item $\fbD_{\alpha+1} = \bD_{\alpha+1}$ on $\scrN(\fbD_{\alpha}^*, \frakB^*_{\alpha})$.
\end{enumerate}
\item $\mathrm{(\DD)}$ If the elliptic pre-complex is based on Dirichlet conditions (\defref{def:DD_segment}):
\begin{enumerate}[label=(\alph*), itemsep=0pt]
\item $\scrR(\fbD_{\alpha}; \bB_{\alpha}) \subseteq \scrN(\fbD_{\alpha+1})$.
\item $\fbD_{\alpha+1} = \bD_{\alpha+1}$ on $\scrN(\fbD^*_{\alpha})$.
\end{enumerate}
\end{enumerate}
The resulting operators $\fbD_{\alpha}:
\Gamma(\bbF_{\alpha};\bbG_{\alpha})\rightarrow\Gamma(\bbF_{\alpha+1};\bbG_{\alpha+1})$ are adapted Green systems. Collectively, the induced sequence is denoted by $(\fbD_{\bullet})$ and is called the \emph{lifted complex} induced by $(\bD_{\bullet})$.
\end{theorem}

As will be elaborated upon in the proof of \thmref{thm:corrected_complex}, the systems in $(\fbD_{\bullet})$ are built inductively, with each level constructed upon an auxiliary decomposition emerging from the preceding level:
\begin{proposition}[Auxiliary decompositions]
\label{prop:complex_aux_decomp}
In the setting of \thmref{thm:corrected_complex}, for every $\alpha\leq \alpha_0$, the adapted Green system $\fbD_{\alpha}:
\Gamma(\bbF_{\alpha};\bbG_{\alpha})\rightarrow\Gamma(\bbF_{\alpha+1};\bbG_{\alpha+1})$ induces an auxiliary decomposition determined by the conditions upon which the elliptic pre-complex is based:
\begin{enumerate}[label=(\roman*), itemsep=0.5em]
\item[$(\NN)$] Under Neumann conditions, $\fbD_{\alpha}$ induces a Neumann auxiliary decomposition, as in \defref{def:aux_decomposition}:
\beq
\Gamma(\bbF_{\alpha+1}; \bbG_{\alpha+1}) = \scrR(\fbD_{\alpha}) \oplus \scrN(\fbD_{\alpha}^*, \frakB^*_{\alpha}).
\label{eq:aux_smooth_complex}
\eeq
In this setting, the decomposition further \emph{refines} to:
\beq 
\Gamma(\bbF_{\alpha+1}; \bbG_{\alpha+1}) \cap \ker \bB^*_{\alpha}
= \left( \scrR(\fbD_{\alpha}) \cap \ker \bB^*_{\alpha} \right)
\oplus \scrN(\fbD_{\alpha}^*).
\label{eq:aux_smooth_complexNRefined} 
\eeq
\item[$(\DD)$] Under Dirichlet conditions, $\fbD_{\alpha}$ induces a Dirichlet auxiliary decomposition, as in \defref{def:aux_decompositionD}:
\beq
\Gamma(\bbF_{\alpha+1}; \bbG_{\alpha+1}) 
= \scrR(\fbD_{\alpha}; \bB_{\alpha}) \oplus \scrN(\fbD_{\alpha}^*).
\label{eq:aux_smooth_complexD}
\eeq
In this setting, the decomposition further \emph{refines} to:
\beq 
\Gamma(\bbF_{\alpha+1}; \bbG_{\alpha+1}) \cap \ker \bB_{\alpha+1}
= \scrR(\fbD_{\alpha}; \bB_{\alpha}) 
\oplus ( \scrN(\fbD_{\alpha}^*) \cap \ker \bB_{\alpha+1}).
\label{eq:aux_smooth_complexDRefined} 
\eeq

\end{enumerate}
Denote by $\tbP_{\alpha}$ and $\bP_{\alpha}$ the systems from \defref{def:aux_decomposition}--\defref{def:aux_decompositionD} associated with these decompositions; that is, the systems for which $\tbP_{\alpha} = \fbD_{\alpha} \bP_{\alpha} \in \OP(0,0)$ is the projection onto the corresponding ranges in the direct decompositions \eqref{eq:aux_smooth_complex}--\eqref{eq:aux_smooth_complexD}.
\end{proposition}

The following proposition shows that the lifted complex may indeed be regarded as a “correction” of the original elliptic pre-complex by zero-order terms, and provides an explicit formula for the correcting terms.

\begin{proposition}[Correction terms]
\label{prop:correction}
In the setting of \thmref{thm:corrected_complex}, each of the systems in the lifted complex can be expressed as
\[
\fbD_{\alpha} = \bD_{\alpha} + \bC_{\alpha},
\]
where $\bC_{\alpha} \in \OP(0,0)$ is a Green operator satisfying the following properties:
\begin{enumerate}[itemsep=0.5em]
\item $\sigma(\fbD_{\alpha}) = \sigma(\bD_{\alpha})$, where $\sigma$ denotes the weighted principal symbol (cf.~\defref{def:weighted_symbol}) associated with the the basic sharp tuples of $\fbD_{\alpha}$.
\item $\bC_{\alpha}$ is given explicitly by the formula
\beq
\bC_{\alpha} = -\,\bD_{\alpha} \bD_{\alpha-1} \bP_{\alpha-1} 
= -\,\bD_{\alpha} \tbP_{\alpha-1}.
\label{eq:recursive_correction}  
\eeq
\end{enumerate}
\end{proposition}
Since elements in $\OP(0,0)$ are $L^{2}\rightarrow L^2$ continuous, they yield adjoints that integrate by parts without boundary terms. The conclusion is that $\fbD_{\alpha}$ satisfies a Green's formula \eqref{eq:integration_adapted_refined} with a boundary term similar to that of $\bD_{\alpha}$:  
\beq
\bra \fbD_{\alpha} \Psi, \Theta \ket = \bra \Psi, \fbD_{\alpha}^{*} \Theta \ket + \bra \bB_{\alpha} \Psi, \bB^*_{\alpha} \Theta \ket,  
\label{eq:integration_by_parts_corrected}
\eeq  
where the adapted adjoint of $\fbD_{\alpha}$ takes the form  
\beq
\fbD_{\alpha}^{*} = \bD^*_{\alpha} + \bC_{\alpha}^*
\label{eq:corrected_adjoint}
\eeq
where $\bC_{\alpha}^*$ is the adjoint of $\bC_{\alpha}\in\OP(0,0)$ as a Green operator. 

\subsection{Some variants}
\label{sec:variants}
\subsubsection{Disjoint Unions}  

In the context of disjoint unions of adapted Green systems, as outlined in \secref{sec:disjoint_union}, and due to the uniqueness clause of the construction in \thmref{thm:corrected_complex}, we directly have the following:

\begin{proposition}
\label{prop:disjoint_union_corrected}
Let $(\bD_{\bullet})$ be an $\alpha_0$-elliptic pre-complex. Suppose that for some $\beta_0 \in \Nzero$ and all $\alpha \geq \beta_0$, one has $\bD_{\alpha} = \bD_{\alpha}^{1} \sqcup \bD_{\alpha}^{2}$, where each $\bD_{\alpha}^{i}$ is an adapted Green system. Then the lifted complex likewise decomposes as $\fbD_{\alpha} = \fbD_{\alpha}^{1} \sqcup \fbD_{\alpha}^{2}$ and $\bC_{\alpha} = \bC_{\alpha}^{1} \sqcup \bC_{\alpha}^{2}$, with all auxiliary structures separating accordingly as in \eqref{eq:aux_Direct_sum_disjoint}.
\end{proposition}


\subsubsection{Disrupted elliptic pre-complexes}
From an applicative point of view, we observe a special situation in which the results established for elliptic pre-complexes continue to hold, even if the conditions of \defref{def:NN_segment}--\defref{def:DD_segment} are not entirely fulfilled. As will become evident, some caution is required when dealing with specific aspects of the resulting Hodge theory. 
\begin{definition}[Finite elliptic pre-complexes]
\label{def:finite_elliptic_pre_complex1}
A diagram \eqref{eq:elliptic_complex_diagram} of systems $(\bD_{\bullet})$ is called \emph{finite} if there exists $N \in \Nzero$ such that $\bD_{\alpha} = 0$ for all $\alpha > N$.  

It is called a \emph{finite elliptic pre-complex} if it is finite and satisfies the conditions of \defref{def:NN_segment}--\defref{def:DD_segment} for all $\alpha \in \Nzero$ (i.e., $\alpha_0=N+1$).  
\end{definition} 
\begin{definition}[Disrupted elliptic pre-complexes]
\label{def:finite_elliptic_pre_complex}
A diagram $(\bD_{\bullet})$ as in \eqref{eq:elliptic_complex_diagram} is called a \emph{disrupted $\alpha_0$-elliptic pre-complex} if it is finite, and the requirements of \defref{def:NN_segment}--\defref{def:DD_segment} hold for all $\alpha \in \Nzero$ \emph{except} at $\alpha = \alpha_0 - 1$, where the overdetermined ellipticity conditions fail (while still holding at $\alpha = \alpha_0$, where $\bD_{\alpha_0} = 0$).   
\end{definition}
\begin{theorem}
\label{thm:disrubted_elliptic_pre_complex}
\thmref{thm:corrected_complex}, \propref{prop:correction}, and \propref{prop:complex_aux_decomp} remain valid as stated for disrupted elliptic pre-complexes. 
\end{theorem}

\subsection{Hodge theory for Neumann conditions}
\label{sec:apps}
Under Neumann conditions, the defining relations in \thmref{thm:corrected_complex} imply the existence of a cochain complex:
\beq
\begin{tikzcd}
\cdots \arrow[r, "\fbD_{\alpha-1}"] &  \Gamma(\bbF_{\alpha};\bbG_{\alpha}) \arrow[rr, "\fbD_{\alpha}"] &  & \Gamma(\bbF_{\alpha+1};\bbG_{\alpha+1}) \arrow[rr, "\fbD_{\alpha+1}"] &  & \Gamma(\bbF_{\alpha+1};\bbG_{\alpha+2})  \cdots
\end{tikzcd}
\label{eq:Neumann_complex}
\eeq
Consider the spaces $\scrR(\fbD_{\alpha}^*;\bB^*_{\alpha})$, $\scrN(\fbD_{\alpha})$, $\scrR(\fbD_{\alpha}^*)$, and $\scrN(\fbD_{\alpha}, \bB_{\alpha})$ associated with any one-sided adapted Green system. The following lemma is obtained directly by comparing the decompositions in \eqref{eq:L2_decom} (applied to $\fbD^*_{\alpha+1}$ and $\fbD_{\alpha}$) and the defining relations in \thmref{thm:corrected_complex}:
\begin{lemma}
\label{lem:useful_neumann}
In the setting of \thmref{thm:corrected_complex}, under Neumann conditions, the following holds for every $\alpha\leq \alpha_0$:
\begin{enumerate}[itemsep=0pt,label=(\alph*)]
\item The subspaces $\scrN(\fbD_{\alpha})$ and $\scrR(\fbD_{\alpha+1}^*;\bB^*_{\alpha+1})$ are $L^2$-orthogonal and intersect trivially.
\item The subspaces $\scrR(\fbD_{\alpha})$ and $\scrR(\fbD_{\alpha+1}^*;\bB_{\alpha+1}^*)$ are $L^2$-orthogonal and intersect trivially.
\end{enumerate}
\end{lemma}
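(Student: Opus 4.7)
The plan is to reduce both claims to the $L^{2}$-orthogonal decomposition of \propref{prop:L2_aux_adapted_pair}, applied to the adapted Green system $\fbD^*_{\alpha+1}$ (which is itself adapted by \corrref{corr:symmetry_adapted}, with adapted adjoint $\fbD_{\alpha+1}$ and associated boundary systems $\bB^*_{\alpha+1}$ and $\bB_{\alpha+1}$, as read off from the Green's formula \eqref{eq:integration_by_parts_corrected}). Equivalently, one can work pointwise using \eqref{eq:integration_by_parts_corrected} directly; I will adopt the latter as it makes the role of the cochain property of $(\fbD_{\bullet})$ most transparent.

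\textbf{Part (a).} For the natural reading in which both subspaces live in $\Gamma(\bbF_{\alpha+1};\bbG_{\alpha+1})$, I would pick $\Psi\in\scrN(\fbD_{\alpha+1})$ and a representative $\fbD^*_{\alpha+1}\Xi \in \scrR(\fbD^*_{\alpha+1};\bB^*_{\alpha+1})$ with $\Xi\in\Gamma(\bbF_{\alpha+2};\bbG_{\alpha+2})\cap\ker\bB^*_{\alpha+1}$. Applying the generalized Green's formula \eqref{eq:integration_by_parts_corrected} for $\fbD_{\alpha+1}$ gives
\[
0 \;=\; \bra \fbD_{\alpha+1}\Psi,\Xi\ket \;=\; \bra \Psi,\fbD^*_{\alpha+1}\Xi\ket + \bra \bB_{\alpha+1}\Psi,\bB^*_{\alpha+1}\Xi\ket \;=\; \bra \Psi,\fbD^*_{\alpha+1}\Xi\ket,
\]
since the first equality uses $\Psi\in\scrN(\fbD_{\alpha+1})$ and the boundary pairing vanishes because $\bB^*_{\alpha+1}\Xi=0$. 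This is exactly $L^2$-orthogonality. Trivial intersection then follows immediately: any element belonging to both subspaces is $L^2$-orthogonal to itself and hence zero.

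\textbf{Part (b).} Here I would invoke the defining relation \textbf{(i.a)} from \thmref{thm:corrected_complex} under Neumann conditions, namely $\scrR(\fbD_{\alpha})\subseteq \scrN(\fbD_{\alpha+1})$. The conclusion of part (a) therefore transfers verbatim to $\scrR(\fbD_{\alpha})$, giving both the $L^{2}$-orthogonality with $\scrR(\fbD^*_{\alpha+1};\bB^*_{\alpha+1})$ and the trivial intersection.

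\textbf{Obstacle.} There is no substantive difficulty: the analytical content---the generalized Green's formula \eqref{eq:integration_by_parts_corrected} with its preserved boundary terms \eqref{eq:corrected_adjoint}, the normality of $\bB_{\alpha+1},\bB^*_{\alpha+1}$, and the cochain property of $(\fbD_{\bullet})$---is all in place from \thmref{thm:corrected_complex} and \propref{prop:correction}. The only bookkeeping step is confirming that the pairings make sense on smooth sections, which is immediate since the statement is posed in the smooth Fréchet setting. An alternative, equally short route, would invoke \propref{prop:L2_aux_adapted_pair} for $\fbD^*_{\alpha+1}$ to obtain the $L^{2}$-orthogonal decomposition
\[
L^{2}(\bbF_{\alpha+1};\bbG_{\alpha+1}) \;=\; \overline{\scrR^{0,0}_{2}(\fbD^*_{\alpha+1};\bB^*_{\alpha+1})} \;\oplus\; \scrN^{0,0}_{2}(\fbD_{\alpha+1}),
\]
and then derive (a) and (b) by intersecting with the smooth setting, using \textbf{(i.a)} of \thmref{thm:corrected_complex} exactly as above.
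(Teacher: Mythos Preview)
Your proposal is correct and follows essentially the same approach as the paper. The paper states that the lemma ``is obtained directly by comparing the decompositions in \eqref{eq:L2_decom} (applied to $\fbD^*_{\alpha+1}$ and $\fbD_{\alpha}$) and the defining relations in \thmref{thm:corrected_complex}''---which is precisely your alternative route via \propref{prop:L2_aux_adapted_pair}, together with the cochain inclusion $\scrR(\fbD_{\alpha})\subseteq\scrN(\fbD_{\alpha+1})$ for part (b); your direct verification through the Green's formula \eqref{eq:integration_by_parts_corrected} is simply the unwound version of the same argument.
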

From item (b) and \eqref{eq:aux_smooth_complex}, it follows that
\beq
\scrR(\fbD_{\alpha+1}^*;\bB^*_{\alpha+1}) \subseteq \scrN(\fbD^*_{\alpha},\bB^*_{\alpha}).
\label{eq:DstarDstar_N}
\eeq
Moreover, define
\[
\Gamma_{\N}(\bbF_{\alpha}; \bbG_{\alpha}) = \Gamma(\bbF_{\alpha}; \bbG_{\alpha}) \cap \ker \bB^*_{\alpha}.
\]
Then, in addition to the cochain complex \eqref{eq:Neumann_complex}, the result also yields the following chain complex:
\beq
\begin{tikzcd}
\cdots &  \Gamma_{\N}(\bbF_{\alpha};\bbG_{\alpha}) \arrow[l, "\fbD_{\alpha-1}^*"'] &  & \Gamma_{\N}(\bbF_{\alpha+1};\bbG_{\alpha+1}) \arrow[ll, "\fbD_{\alpha}^*"'] &  & \Gamma_{\N}(\bbF_{\alpha+2};\bbG_{\alpha+2}) \arrow[ll, "\fbD_{\alpha+1}^*"']  \cdots
\end{tikzcd}
\label{eq:Neumann_complex_co}
\eeq
With these established, we have that the Neumann auxiliary decomposition \eqref{eq:aux_smooth_complex} further refines into a Hodge decomposition whenever the overdetermined ellipticity conditions holds, yielding the generalized theorem effectively encompassing all the Neumann results presented and discussed in \secref{sec:Hodge_intro}:
\begin{theorem}[Neumann Hodge decomposition]
\label{thm:hodge_like_corrected_complex}
In the setting of \thmref{thm:corrected_complex}, under Neumann conditions, every $\alpha <\alpha_0$ yields an $L^2$-orthogonal, topologically direct decomposition:
\beq
\Gamma(\bbF_{\alpha+1};\bbG_{\alpha+1}) = \scrR(\fbD_{\alpha}) \oplus \scrR(\fbD_{\alpha+1}^*;\bB^*_{\alpha+1}) \oplus \module_{\N}^{\alpha+1},
\label{eq:Hodgelikesmooth}
\eeq
where the finite-dimensional subspace $\module_{\NN}^{\alpha+1}:=\ker(\fbD_{\alpha+1},\fbD^*_{\alpha},\bB^*_{\alpha})$ refines into: 
\beq
\module_{\N}^{\alpha+1} = \ker(\bD_{\alpha+1},\ttbP_{\alpha-1}\bD^*_{\alpha},\bB^*_{\alpha}),
\label{eq:cohomology_groups}
\eeq
where $\ttbP_{\alpha-1}=\id-\tbP_{\alpha-1}$ is the projection onto $\scrN(\fbD_{\alpha-1}^*,\bB_{\alpha-1}^*)$ in the auxiliary decomposition. 

In particular, compared with the auxiliary decomposition \eqref{eq:aux_smooth_complex}, we have:
\beq
\scrN(\fbD^*_{\alpha},\bB^*_{\alpha}) = \scrR(\fbD_{\alpha+1}^*;\bB^*_{\alpha+1}) \oplus \module_{\N}^{\alpha+1}.
\label{eq:aux_refinmenetSmooth}
\eeq
Moreover, the decomposition \eqref{eq:Hodgelikesmooth} refines further into:
\beq
\Gamma_{\NN}(\bbF_{\alpha+1}; \bbG_{\alpha+1})
= ( \scrR(\fbD_{\alpha}) \cap \ker \bB^*_{\alpha+1} )
\oplus  \scrR(\fbD_{\alpha+1}^*; \bB_{\alpha}^*)
\oplus \module_{\NN}^{\alpha+1}.
\label{eq:HodgelikesmoothnRefined}
\eeq
\end{theorem}

The proof of this theorem is provided in \secref{sec:Construction}, following the construction of the induced elliptic complex.

In the disrupted case (cf. \defref{def:finite_elliptic_pre_complex}), we have:
\begin{theorem}[Neumann Hodge decomposition — disrupted case]
\label{thm:hodge_like_corrected_disrupted_complex}
In the disrupted case, the result of \thmref{thm:hodge_like_corrected_complex} remains valid, except for the finite dimensionality of $\module^{\alpha_0}_{\NN}$ when $\alpha=\alpha_0-1$. 
\end{theorem}

In the context of index theory, we introduce the generalization of the Euler characteristic discussed in \secref{sec:Hodge_intro_NN}:
\begin{definition}
\label{def:Neumann_Euler_characteristic} 
If $(\bD_{\bullet})$ is a finite $\alpha_0$-elliptic pre-complex, disrupted or not, based on Neumann conditions, then its \emph{Neumann Euler characteristic} is:
\[
\mathscr{X}_{\NN}=\sum_{\alpha=0}^{\alpha_0}(-1)^{\alpha}\dim\module^{\alpha}_{\NN}. 
\]  
\end{definition}
In the disrupted case, note that it might very well be that $\mathscr{X}_{\NN}=\pm\infty$. 
%

In the context of the cochain complex \eqref{eq:Neumann_complex}, in either the standard or disrupted case, the refinement of the auxiliary decomposition into a Hodge decomposition identifies $\module_{\N}^{\alpha}$ as the cohomology groups (whether they are finite-dimensional or not for $\alpha = \alpha_0-1$ in the disrupted case):

\begin{theorem}[Neumann Cohomology Groups]
\label{thm:cohomology}
Let $\Psi \in \Gamma(\bbF_{\alpha+1}; \bbG_{\alpha+1})$. Then, 
\[
\begin{gathered}
\Psi \in \scrR(\fbD_{\alpha}) \\
\text{if and only if} \\
\Psi \in \scrN(\fbD_{\alpha+1}) \text{ and } \bra \Psi, \Upsilon \ket = 0 \qquad \text{for every } \Upsilon \in \module_{\N}^{\alpha+1}. 
\end{gathered}
\]
Equivalently,
\beq
\scrN(\fbD_{\alpha+1}) = \scrR(\fbD_{\alpha}) \oplus \module_{\N}^{\alpha+1}.
\label{eq:cohomology_spaces}
\eeq
\end{theorem}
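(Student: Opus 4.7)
The plan is to derive this result as a direct consequence of the Hodge-like decomposition established in \thmref{thm:hodge_like_corrected_complex}, combined with the cochain property $\scrR(\fbD_{\alpha}) \subseteq \scrN(\fbD_{\alpha+1})$ from \thmref{thm:corrected_complex}(i)(a) and the generalized Green's formula \eqref{eq:integration_by_parts_corrected} for $\fbD_{\alpha+1}$. The two assertions in the statement are logically equivalent: the biconditional characterization of $\scrR(\fbD_\alpha)$ translates into the decomposition \eqref{eq:cohomology_spaces} once one observes, via \lemref{lem:useful_neumann}(b), that $\scrR(\fbD_\alpha)$ and $\module_\N^{\alpha+1}$ are $L^2$-orthogonal and intersect trivially. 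So it suffices to prove the characterization.

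For the forward direction, suppose $\Psi = \fbD_{\alpha}\Phi$ for some $\Phi \in \Gamma(\bbF_\alpha;\bbG_\alpha)$. The cochain property gives $\fbD_{\alpha+1}\Psi = \fbD_{\alpha+1}\fbD_{\alpha}\Phi = 0$, so $\Psi \in \scrN(\fbD_{\alpha+1})$. Moreover, every $\Upsilon \in \module_\N^{\alpha+1}$ lies in $\scrN(\fbD^*_{\alpha},\bB^*_{\alpha})$ by definition of the cohomology group, so the orthogonality of the auxiliary decomposition \eqref{eq:aux_smooth_complex} gives $\bra \Psi,\Upsilon\ket = 0$.

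For the reverse direction, suppose $\Psi \in \scrN(\fbD_{\alpha+1})$ with $\Psi \perp_{L^2} \module_\N^{\alpha+1}$. Apply the Hodge-like decomposition \eqref{eq:Hodgelikesmooth} to write, uniquely,
\[
\Psi = \fbD_{\alpha}\Phi + \fbD_{\alpha+1}^{*}\Xi + H,
\]
with $\Phi \in \Gamma(\bbF_\alpha;\bbG_\alpha)$, $\Xi \in \Gamma(\bbF_{\alpha+2};\bbG_{\alpha+2})\cap \ker\bB_{\alpha+1}^{*}$, and $H \in \module_\N^{\alpha+1}$. Since the decomposition is $L^2$-orthogonal and $\Psi \perp_{L^2} \module_\N^{\alpha+1}$, the harmonic component satisfies $H = 0$. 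Applying $\fbD_{\alpha+1}$ to the remaining equation and using the cochain property $\fbD_{\alpha+1}\fbD_{\alpha} = 0$, one obtains $\fbD_{\alpha+1}\fbD_{\alpha+1}^{*}\Xi = 0$. Testing this against $\Xi$ via the Green's formula \eqref{eq:integration_by_parts_corrected} at level $\alpha+1$ gives
\[
0 = \bra \fbD_{\alpha+1}\fbD_{\alpha+1}^{*}\Xi,\Xi\ket = \|\fbD_{\alpha+1}^{*}\Xi\|_{L^2}^{2} + \bra \bB_{\alpha+1}\fbD_{\alpha+1}^{*}\Xi,\bB_{\alpha+1}^{*}\Xi\ket,
\]
where the boundary term vanishes because $\bB_{\alpha+1}^{*}\Xi = 0$ by the choice of $\Xi$. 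Hence $\fbD_{\alpha+1}^{*}\Xi = 0$, so $\Psi = \fbD_{\alpha}\Phi \in \scrR(\fbD_{\alpha})$, completing the proof.

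There is no significant obstacle here: all the analytic content—the closed-range structure, the construction of $\fbD_\bullet$, the orthogonality of the three summands, and the identification of $\module_\N^{\alpha+1}$ with $\ker(\bD_{\alpha+1}\oplus\bD_\alpha^{*}\oplus\bB_\alpha^{*})$—has already been absorbed into \thmref{thm:hodge_like_corrected_complex}. The only subtlety to check is that the element $\Xi$ produced by the decomposition genuinely satisfies $\bB_{\alpha+1}^{*}\Xi = 0$, which is built into the definition of $\scrR(\fbD_{\alpha+1}^{*};\bB_{\alpha+1}^{*})$ in \eqref{eq:RDDirichlet} and is precisely what makes the boundary contribution in Green's formula disappear.
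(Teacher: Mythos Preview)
Your proof is correct and follows essentially the same route the paper takes. The paper does not spell out a proof of \thmref{thm:cohomology} but treats it as immediate from the Hodge-like decomposition together with \lemref{lem:useful_neumann}(a), which asserts that $\scrN(\fbD_{\alpha+1})$ and $\scrR(\fbD_{\alpha+1}^*;\bB_{\alpha+1}^*)$ are $L^2$-orthogonal and intersect trivially; your Green's formula computation is precisely the content of that lemma, so the two arguments coincide.
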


Combining Theorems~\ref{thm:hodge_like_corrected_complex} and \ref{thm:cohomology}, we obtain the following compound decompositions:
\[
\Gamma(\bbF_{\alpha+1}; \bbG_{\alpha+1}) = 
\lefteqn{\overbrace{\phantom{\scrR(\fbD_{\alpha}) \oplus \module_{\N}^{\alpha+1}}}^{\scrN(\fbD_{\alpha+1})}} \scrR(\fbD_{\alpha}) \oplus \underbrace{\module_{\N}^{\alpha+1} \oplus \scrR(\fbD_{\alpha+1}^*; \bB^*_{\alpha+1})}_{\scrN(\fbD_{\alpha}^*, \frakB^*_{\alpha})}
\]
which not only identifies the homology groups of the chain complex in \eqref{eq:Neumann_complex_co}, but also provides that $\fbD^*_{\alpha+1}$ itself induces a Dirichlet auxiliary decomposition as in \defref{def:aux_decompositionD}. The proof of \thmref{thm:NNintro} then follows directly from these decompositions by invoking the relations $\fbD_{\alpha+1}\fbD_{\alpha}=0$ and $\fbD_{\alpha}=\bD_{\alpha}$ on $\scrN(\fbD_{\alpha}^*,\bB^*_{\alpha})$. 

One consequence of the fact the projections onto the summands in \eqref{eq:Hodgelikesmooth} belong to the calculus is that the Hodge decompositions extends to suitable Sobolev versions using density and approximation arguments, as demonstrated in \lemref{lem:Wspaux}. 
\begin{corollary}
For any lenient tuples $(S,T;S',T')$ for $\fbD_{\alpha+1}$ and $(S'',T'';S',T')$ lenient tuples for $\fbD_{\alpha+1}^*$, and $1<p<\infty$, there exists a topologically direct decomposition:
\beq
W^{S',T'}_{p}(\bbF_{\alpha+1}; \bbG_{\alpha+1}) = \overline{\scrR^{S', T'}_{p}(\fbD_{\alpha})} \oplus \overline{\scrR_{p}^{S',T'}(\fbD_{\alpha+1}^*; \bB^*_{\alpha+1})} \oplus \module_{\N}^{\alpha+1}.
\label{eq:WspHodge}
\eeq
Moreover, $\overline{\scrR^{S', T'}_{p}(\fbD_{\alpha})}=\scrR^{S', T'}_{p}(\fbD_{\alpha})$ is closed when $(S',T';S,T)$ are lenient tuples for the balance of $\fbD_{\alpha}$ in its auxiliary decomposition, and $\overline{\scrR_{p}^{S',T'}(\fbD_{\alpha+1}^*; \bB^*_{\alpha+1})}=\scrR_{p}^{S',T'}(\fbD_{\alpha+1}^*; \bB^*_{\alpha+1})$ is closed when $(S'',T'';S',T')$ are lenient tuples for the balance of $\fbD_{\alpha+1}^*$ in its auxiliary decomposition. 
\end{corollary}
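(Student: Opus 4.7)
The plan is to obtain the Sobolev Hodge decomposition by combining two Sobolev auxiliary decompositions — one coming from $\fbD_{\alpha}$ in Neumann form, and one coming from $\fbD^*_{\alpha+1}$ in Dirichlet form — and then identifying the intersection of the two ``harmonic-side'' pieces with the finite-dimensional cohomology $\module_{\N}^{\alpha+1}$ via overdetermined ellipticity.

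First, I would apply \lemref{lem:Wspaux} to $\fbD_{\alpha}$, which induces a Neumann auxiliary decomposition by \propref{prop:complex_aux_decomp}. For the given lenient tuples $(S,T;S',T')$ this yields the topologically direct splitting
\[
W^{S',T'}_{p}(\bbF_{\alpha+1};\bbG_{\alpha+1}) = \overline{\scrR^{S',T'}_{p}(\fbD_{\alpha})} \oplus \scrN^{S',T'}_{p}(\fbD^{*}_{\alpha},\bB^{*}_{\alpha}),
\]
where the second summand is well defined as a closed subspace of $W^{S',T'}_{p}$ since it is the intersection of the $L^{p}$-closed subspace $\scrN^{0,0}_{p}(\fbD^{*}_{\alpha},\bB^{*}_{\alpha})$ with $W^{S',T'}_{p}$, and $W^{S',T'}_{p}\hookrightarrow L^{p}$ continuously. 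Next, as noted after \thmref{thm:cohomology}, $\fbD^{*}_{\alpha+1}$ is itself an adapted Green system that induces a Dirichlet auxiliary decomposition with boundary system $\bB^{*}_{\alpha+1}$ on its domain side. Applying \lemref{lem:WspauxD} to $\fbD^{*}_{\alpha+1}$ with the tuples $(S'',T'';S',T')$ then yields
\[
W^{S',T'}_{p}(\bbF_{\alpha+1};\bbG_{\alpha+1}) = \overline{\scrR^{S',T'}_{p}(\fbD^{*}_{\alpha+1};\bB^{*}_{\alpha+1})} \oplus \scrN^{S',T'}_{p}(\fbD_{\alpha+1}),
\]
since $(\fbD^{*}_{\alpha+1})^{*}=\fbD_{\alpha+1}$ by \corrref{corr:symmetry_adapted} and \eqref{eq:corrected_adjoint}.

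The key step is to refine the Neumann summand $\scrN^{S',T'}_{p}(\fbD^{*}_{\alpha},\bB^{*}_{\alpha})$ by intersecting it with the second Sobolev decomposition above. From \eqref{eq:DstarDstar_N} and continuity, $\overline{\scrR^{S',T'}_{p}(\fbD^{*}_{\alpha+1};\bB^{*}_{\alpha+1})}\subseteq \scrN^{S',T'}_{p}(\fbD^{*}_{\alpha},\bB^{*}_{\alpha})$. For any $\Theta \in \scrN^{S',T'}_{p}(\fbD^{*}_{\alpha},\bB^{*}_{\alpha})$, decompose $\Theta=\Theta_{1}+\Theta_{2}$ with $\Theta_{1}\in \overline{\scrR^{S',T'}_{p}(\fbD^{*}_{\alpha+1};\bB^{*}_{\alpha+1})}$ and $\Theta_{2}\in \scrN^{S',T'}_{p}(\fbD_{\alpha+1})$; since $\Theta, \Theta_{1}\in \scrN^{S',T'}_{p}(\fbD^{*}_{\alpha},\bB^{*}_{\alpha})$, also $\Theta_{2}$ lies there. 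Hence $\Theta_{2}\in W^{S',T'}_{p}$ solves the homogeneous overdetermined elliptic system $\fbD_{\alpha+1}\oplus \fbD^{*}_{\alpha}\oplus \bB^{*}_{\alpha}$. This system differs from the one in condition (2.a) of \defref{def:NN_segment} by lower-order terms in $\OP(0,0)$, hence is overdetermined elliptic by \propref{prop:lower_order_correction_overdetermined_ellipticity}, so \thmref{thm:overdetermined_varying_orders} guarantees that its kernel consists of smooth sections and — by \eqref{eq:cohomology_groups} — coincides with $\module_{\N}^{\alpha+1}$. This gives the topologically direct decomposition
\[
\scrN^{S',T'}_{p}(\fbD^{*}_{\alpha},\bB^{*}_{\alpha})=\overline{\scrR^{S',T'}_{p}(\fbD^{*}_{\alpha+1};\bB^{*}_{\alpha+1})}\oplus \module_{\N}^{\alpha+1},
\]
where the sum is direct because it is inherited from the directness of the second Sobolev decomposition above, and $\module_{\N}^{\alpha+1}\subseteq W^{S',T'}_{p}$ trivially by smoothness. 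Substituting into the first decomposition yields \eqref{eq:WspHodge}.

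Finally, the closedness statements are immediate from the ``moreover'' clauses of \lemref{lem:Wspaux} and \lemref{lem:WspauxD}: the hypothesis that $(S',T';S,T)$ are lenient tuples for the balance of $\fbD_{\alpha}$ guarantees that $\scrR^{S',T'}_{p}(\fbD_{\alpha})$ is closed, and analogously for $\fbD^{*}_{\alpha+1}$ with the tuples $(S'',T'';S',T')$. I expect the main obstacle to be a careful bookkeeping of which subspaces live in which ambient Sobolev space and which are automatically closed there; in particular, verifying that $\scrN^{S',T'}_{p}(\fbD^{*}_{\alpha},\bB^{*}_{\alpha})$ is closed in the $W^{S',T'}_{p}$-topology (rather than just in $L^{p}$), and that intersecting the Dirichlet decomposition for $\fbD^{*}_{\alpha+1}$ with this subspace respects the direct-sum structure — both of which reduce to continuity of the relevant inclusions and to the finite-dimensionality of $\module_{\N}^{\alpha+1}$.
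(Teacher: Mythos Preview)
Your route is different from the paper's. The paper simply observes that the projections onto all three summands of the smooth Hodge decomposition \eqref{eq:Hodgelikesmooth} lie in $\OP(0,0)$ --- namely $\tbP_{\alpha}$, the Dirichlet-side projection $\id-\tbP_{\alpha}-\frakI_{\alpha+1}$ onto $\scrR(\fbD^{*}_{\alpha+1};\bB^{*}_{\alpha+1})$, and the smoothing projection $\frakI_{\alpha+1}$ --- and then runs the density/approximation argument of \lemref{lem:Wspaux} once, directly on the three-way decomposition. Your approach instead synthesizes two separate two-way Sobolev decompositions and then identifies their ``overlap'' via overdetermined ellipticity.

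There is a genuine gap in your combination step. The spaces $\scrN^{S',T'}_{p}(\fbD_{\alpha+1})$ and $\scrN^{S',T'}_{p}(\fbD^{*}_{\alpha},\bB^{*}_{\alpha})$ are defined as $L^{p}$-annihilators intersected with $W^{S',T'}_{p}$ (see \eqref{eq:Lp_ann_0} and \eqref{eq:Lp_ann}); they are \emph{not} a priori the kernels of the Sobolev-extended operators. Passing from ``$\Theta_{2}$ lies in the annihilator'' to ``$\fbD_{\alpha+1}\Theta_{2}=0$ and $(\fbD^{*}_{\alpha}\oplus\bB^{*}_{\alpha})\Theta_{2}=0$'' requires \propref{prop:NDR}, which in turn needs $(S',T';\,\cdot\,,\,\cdot\,)$ to be lenient tuples for $\fbD_{\alpha+1}$, $\fbD^{*}_{\alpha}$, and $\bB^{*}_{\alpha}$. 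The hypotheses of the corollary do not provide this --- they only give tuples mapping \emph{into} $(S',T')$, not \emph{out of} it --- so for low-regularity $(S',T')$ the operators may not even extend continuously on $W^{S',T'}_{p}$, and \thmref{thm:overdetermined_varying_orders} cannot be invoked.

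The clean fix is the paper's: the smooth identity $\id=\tbP_{\alpha}+(\id-\tbP_{\alpha}-\frakI_{\alpha+1})+\frakI_{\alpha+1}$, already established by \thmref{thm:hodge_like_corrected_complex}, extends to $W^{S',T'}_{p}$ by density since each summand lies in $\OP(0,0)$ (via \corrref{corr:G0props}). Then $\Theta_{2}$, which satisfies $\tbP_{\alpha}\Theta_{2}=0$ and $(\id-\tbP_{\alpha}-\frakI_{\alpha+1})\Theta_{2}=0$, is immediately $\frakI_{\alpha+1}\Theta_{2}\in\module_{\N}^{\alpha+1}$, with no regularity obstruction.
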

The decomposition \eqref{eq:WspHodge} then yields analogous Sobolev versions of \thmref{thm:cohomology} by means of an approximation/continuity argument. 
\subsection{Hodge theory for Dirichlet conditions}
\label{sec:appsD}
As in the exposition of \secref{sec:Hodge_intro_DD}, we begin by setting

\beq
\Gamma_{\D}(\bbF_{\alpha}; \bbG_{\alpha}) = \Gamma(\bbF_{\alpha}; \bbG_{\alpha}) \cap \ker\bB_{\alpha}.
\label{eq:D_spaces}
\eeq
We show that as a byproduct of \thmref{thm:corrected_complex} we have:
\begin{proposition}
\label{prop:containement_M}
The following relation holds:
\[
\scrR(\fbD_{\alpha};\bB_{\alpha})\subseteq\scrN(\fbD_{\alpha+1},\bB_{\alpha+1}).
\]
\end{proposition} 
Thus, we obtain analogously to \eqref{eq:Neumann_complex} the cochain complex:
\beq
\begin{tikzcd}
\cdots \arrow[r, "\fbD_{\alpha-1}"] 
& \Gamma_{\D}(\bbF_{\alpha}; \bbG_{\alpha}) 
\arrow[rr, "\fbD_{\alpha}"] 
& & \Gamma_{\D}(\bbF_{\alpha+1}; \bbG_{\alpha+1}) 
\arrow[rr, "\fbD_{\alpha+1}"] 
& & \Gamma_{\D}(\bbF_{\alpha+2}; \bbG_{\alpha+2}) \cdots
\end{tikzcd}
\label{eq:Dirichlet_cochain_complex}
\eeq

As in \lemref{lem:useful_neumann}, the following is obtained from the defining conditions of the lifted complex in \thmref{thm:corrected_complex} and by comparing the decompositions in \eqref{eq:L2_decom}:   
\begin{lemma}
\label{lem:useful_dirichlet}
In the setting of \thmref{thm:corrected_complex}, under Dirichlet conditions,  the following holds for every $\alpha\leq\alpha_0$: 
\begin{enumerate}[itemsep=0pt,label=(\alph*)] 
\item The subspaces $\scrN(\fbD_{\alpha+1},\bB_{\alpha+1})$ and $\scrR(\fbD_{\alpha+1}^*)$ are $L^2$-orthogonal, hence intersect trivially.
\item The subspace $\scrR(\fbD_{\alpha};\bB_{\alpha})$ and $\scrR(\fbD_{\alpha+1}^*)$ are $L^2$-orthogonal, hence intersect trivially.
\end{enumerate}
\end{lemma}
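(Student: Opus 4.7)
The plan is to obtain both $L^{2}$-orthogonality statements from a single mechanism: the generalized Green's formula \eqref{eq:integration_by_parts_corrected} for the corrected systems, combined with the defining inclusion property (ii)(a) of \thmref{thm:corrected_complex} under Dirichlet conditions, namely
\[
\scrR(\fbD_{\beta};\bB_{\beta}) \subseteq \scrN(\fbD_{\beta+1},\bB_{\beta+1})
\]
for each admissible index $\beta$. With these two ingredients in place, triviality of the intersection will follow from $L^{2}$-positivity, since any common element $\Psi$ would satisfy $\|\Psi\|_{L^{2}}^{2}=\bra\Psi,\Psi\ket=0$.

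For item (a), I will take $\Psi \in \scrN(\fbD_{\alpha+1},\bB_{\alpha+1})$ and $\Theta = \fbD_{\alpha+1}^{*}\Xi$ with $\Xi \in \Gamma(\bbF_{\alpha+2};\bbG_{\alpha+2})$, and invoke \eqref{eq:integration_by_parts_corrected} at level $\alpha+1$,
\[
\bra\fbD_{\alpha+1}\Psi,\Xi\ket = \bra\Psi,\fbD_{\alpha+1}^{*}\Xi\ket + \bra\bB_{\alpha+1}\Psi,\bB^{*}_{\alpha+1}\Xi\ket.
\]
The left-hand side and the boundary contribution on the right vanish by the defining hypotheses on $\Psi$, yielding $\bra\Psi,\Theta\ket = 0$.

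For item (b), I will write $\Psi = \fbD_{\alpha-1}\Omega$ with $\Omega \in \ker\bB_{\alpha-1}$, so that $\Psi \in \scrR(\fbD_{\alpha-1};\bB_{\alpha-1})$; applying the inclusion property at $\beta = \alpha-1$ then gives $\fbD_{\alpha}\Psi = 0$ and $\bB_{\alpha}\Psi = 0$. Writing $\Theta = \fbD_{\alpha}^{*}\Xi$ and applying \eqref{eq:integration_by_parts_corrected} at level $\alpha$ produces
\[
0 = \bra\fbD_{\alpha}\Psi,\Xi\ket = \bra\Psi,\fbD_{\alpha}^{*}\Xi\ket + \bra\bB_{\alpha}\Psi,\bB^{*}_{\alpha}\Xi\ket = \bra\Psi,\Theta\ket,
\]
closing the argument.

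The anticipated obstacle is purely one of bookkeeping: one must keep each Green's formula at the correct level of the sequence and verify that every $L^{2}$-pairing takes place between sections of the same bundle, so that the two spaces to be compared in each item actually lie inside a common $\Gamma(\bbF_{\gamma};\bbG_{\gamma})$. Once this indexing is fixed, each assertion reduces to a one-line application of \eqref{eq:integration_by_parts_corrected}, and the trivial intersection is immediate from $L^{2}$-definiteness.
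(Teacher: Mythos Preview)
Your argument is correct and coincides with the paper's approach, which derives the lemma ``by comparing the decompositions in \eqref{eq:L2_decom} (applied to $\fbD^*_{\alpha+1}$ and $\fbD_{\alpha}$) and the defining relations in \thmref{thm:corrected_complex}''; you have simply unwound that comparison one layer further to the underlying Green's formula \eqref{eq:integration_by_parts_corrected}, which is exactly what produces the $L^2$-decomposition via \propref{prop:NDR}. In item (a) you have (correctly) placed $\Psi$ in $\scrN(\fbD_{\alpha+1},\bB_{\alpha+1})$ rather than $\scrN(\fbD_{\alpha},\bB_{\alpha})$ so that both subspaces lie in the common fiber $\Gamma(\bbF_{\alpha+1};\bbG_{\alpha+1})$---this is the intended reading, and your ``bookkeeping'' remark shows you caught the index slip.
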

By comparing item (b) with \eqref{eq:aux_smooth_complexD}, we find that
\beq
\scrR(\fbD_{\alpha+1}^*)\subseteq \scrN(\fbD^*_{\alpha}).
\label{eq:DstarDstar_D}
\eeq 
The Dirichlet case then yields an analogous chain complex to that in \eqref{eq:Neumann_complex_co}:
\beq
\begin{tikzcd}
\cdots &  \Gamma(\bbF_{\alpha};\bbG_{\alpha}) \arrow[l, "\fbD_{\alpha-1}"'] &  & \Gamma(\bbF_{\alpha+1};\bbG_{\alpha+1}) \arrow[ll, "\fbD_{\alpha}^*"'] &  & \Gamma(\bbF_{\alpha+2};\bbG_{\alpha+2}) \arrow[ll, "\fbD_{\alpha+1}^*"'] \cdots
\end{tikzcd}
\label{eq:Dirichlet_complex_co}
\eeq
As in the Neumann picture, the auxiliary decomposition refines into a Hodge decomposition accordingly whenever the overdetermined ellipticity conditions holds, yielding the generalized theorem effectively encompassing all the Dirichlet results presented and discussed in \secref{sec:Hodge_intro}::

\begin{theorem}[Dirichlet Hodge decomposition]
\label{thm:hodge_like_corrected_complexD}
In the setting of \thmref{thm:corrected_complex}, under Dirichlet conditions, every $\alpha< \alpha_0$ yields an $L^{2}$-orthogonal topologically direct decomposition 
\beq
\Gamma(\bbF_{\alpha+1};\bbG_{\alpha+1}) = \scrR(\fbD_{\alpha};\bB_{\alpha})\oplus\scrR(\fbD_{\alpha+1}^*)\oplus \module_{\D}^{\alpha+1}
\label{eq:HodgelikesmoothD}
\eeq
where the finite-dimensional subspace $\module_{\D}^{\alpha+1}:=\ker(\fbD_{\alpha+1},\fbD^*_{\alpha},\bB_{\alpha+1})$ refines into:  
\beq
\module_{\D}^{\alpha+1}=\ker(\bD_{\alpha+1},\ttbP_{\alpha-1}\bD^*_{\alpha},\bB_{\alpha+1}), 
\label{eq:cohomology_groupsD}
\eeq
where $\ttbP_{\alpha-1}=\id-\tbP_{\alpha-1}$ is the projection onto $\scrN(\fbD_{\alpha-1}^*)$ in the auxiliary decomposition. 

In particular, comparing with the auxiliary decomposition \eqref{eq:aux_smooth_complexD}: 
\beq
\scrN(\fbD^*_{\alpha})=\scrR(\fbD_{\alpha+1}^*)\oplus \module_{\D}^{\alpha+1}.
\label{eq:aux_refinemenetSmoothD}
\eeq
Moreover, the decomposition \eqref{eq:HodgelikesmoothD} refines further into:
\beq
\Gamma_{\DD}(\bbF_{\alpha+1};\bbG_{\alpha+1})= \scrR(\fbD_{\alpha};\bB_{\alpha})\oplus(\scrR(\fbD_{\alpha+1}^*)\cap\ker\bB_{\alpha+1})\oplus \module_{\D}^{\alpha+1}
\label{eq:HodgelikesmoothDRefined}
\eeq
\end{theorem}

As in the Neumann picture, the proof of this theorem relies on machinery developed in \secref{sec:Construction}, hence it is presented in that same section. 

Also as in the Neumann case, in the disrupted case (cf. \defref{def:finite_elliptic_pre_complex}) we have:
\begin{theorem}[Dirichlet Hodge decomposition — disrupted case]
\label{thm:hodge_like_corrected_disrupted_complexD}
In the disrupted case, the result of \thmref{thm:hodge_like_corrected_complexD} remains valid, except for the finite dimensionality of $\module^{\alpha_0}_{\DD}$ when $\alpha=\alpha_0-1$. 
\end{theorem}

In the context of index theory, we introduce the generalization of the Euler characteristic discussed in \secref{sec:Hodge_intro_NN}:
\begin{definition}
\label{def:Dirichlet_Euler_characteristic} 
If $(\bD_{\bullet})$ is a finite $\alpha_0$-elliptic pre-complex, disrupted or not, based on Dirichlet conditions, then its \emph{Dirichlet Euler characteristic} is:
\[
\mathscr{X}_{\DD}=\sum_{\alpha=0}^{\alpha_0}(-1)^{\alpha}\dim\module^{\alpha}_{\DD}. 
\]  
\end{definition}
In the disrupted case, note that it might very well be that $\mathscr{X}_{\DD}=\pm\infty$. 

The following is the Dirichlet counterpart to \thmref{thm:cohomology}, based on the refined decomposition \eqref{eq:HodgelikesmoothDRefined}: 
\begin{theorem}[Dirichlet Cohomology groups]
\label{thm:cohomologyD}
Let $\Psi\in \Gamma(\bbF_{\alpha};\bbG_{\alpha})$. Then, 
\[
\begin{gathered}
\Psi\in \scrR(\fbD_{\alpha};\bB_{\alpha}) \\
\text{if and only if} \\
\Psi\in \scrN(\fbD_{\alpha+1},\bB_{\alpha+1}) \textand \bra\Psi,\Upsilon\ket=0 \qquad \text{for every }\Upsilon\in\module_{\D}^{\alpha+1}. 
\end{gathered}
\]
Equivalently,
\beq
\scrN(\fbD_{\alpha+1},\bB_{\alpha+1})=\scrR(\fbD_{\alpha};\bB_{\alpha})\oplus\module_{\D}^{\alpha+1},
\label{eq:cohomology_spacesD}
\eeq
\end{theorem}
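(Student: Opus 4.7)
The plan is to mirror the structure of \thmref{thm:cohomology} (the Neumann case), replacing each ingredient by its Dirichlet analogue: the Hodge-like decomposition \eqref{eq:HodgelikesmoothD} from \thmref{thm:hodge_like_corrected_complexD}, the cochain property $\scrR(\fbD_{\alpha};\bB_{\alpha}) \subseteq \scrN(\fbD_{\alpha+1},\bB_{\alpha+1})$ from \thmref{thm:corrected_complex}(ii.a), and the Green's formula \eqref{eq:integration_by_parts_corrected} for the corrected systems. Once the direct-sum identity \eqref{eq:cohomology_spacesD} is established, the pointwise equivalence stated in the theorem follows directly from the $L^{2}$-orthogonality of the summands.

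For the forward inclusion $\scrR(\fbD_{\alpha};\bB_{\alpha}) \oplus \module_{\D}^{\alpha+1} \subseteq \scrN(\fbD_{\alpha+1},\bB_{\alpha+1})$, I would write any element of the first summand as $\Psi = \fbD_{\alpha}\Phi$ with $\bB_{\alpha}\Phi = 0$ and invoke the cochain property directly; the inclusion $\module_{\D}^{\alpha+1} \subseteq \scrN(\fbD_{\alpha+1},\bB_{\alpha+1})$ is immediate from the characterization \eqref{eq:cohomology_groupsD}. For the orthogonality $\scrR(\fbD_{\alpha};\bB_{\alpha}) \perp \module_{\D}^{\alpha+1}$, pick $\Upsilon \in \module_{\D}^{\alpha+1}$, read off $\fbD_{\alpha}^*\Upsilon = 0$ from \eqref{eq:cohomology_groupsD}, and apply Green's formula \eqref{eq:integration_by_parts_corrected} to get $\bra \fbD_{\alpha}\Phi,\Upsilon \ket = \bra \Phi,\fbD_{\alpha}^*\Upsilon \ket + \bra \bB_{\alpha}\Phi,\bB_{\alpha}^*\Upsilon \ket = 0$.

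For the reverse inclusion, take $\Psi \in \scrN(\fbD_{\alpha+1},\bB_{\alpha+1})$ and decompose it via \eqref{eq:HodgelikesmoothD} as $\Psi = \fbD_{\alpha}\Phi + \fbD_{\alpha+1}^*\Xi + \Upsilon$ with $\Phi \in \Gamma(\bbF_{\alpha};\bbG_{\alpha}) \cap \ker\bB_{\alpha}$, $\Xi \in \Gamma(\bbF_{\alpha+2};\bbG_{\alpha+2})$, and $\Upsilon \in \module_{\D}^{\alpha+1}$. Applying $\fbD_{\alpha+1}$ and $\bB_{\alpha+1}$ to this decomposition, and using the cochain property on the first summand together with $\fbD_{\alpha+1}\Upsilon = 0$ and $\bB_{\alpha+1}\Upsilon = 0$ on the third, the hypotheses $\fbD_{\alpha+1}\Psi = 0$ and $\bB_{\alpha+1}\Psi = 0$ reduce to the two conditions $\fbD_{\alpha+1}\fbD_{\alpha+1}^*\Xi = 0$ and $\bB_{\alpha+1}\fbD_{\alpha+1}^*\Xi = 0$ on the middle term.

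The only substantive computation — and the place where all the Dirichlet structure is used simultaneously — is to conclude $\fbD_{\alpha+1}^*\Xi = 0$ from these two conditions. I would carry this out via a single application of Green's formula \eqref{eq:integration_by_parts_corrected} at level $\alpha+1$ with input $\fbD_{\alpha+1}^*\Xi$ paired against $\Xi$:
\[
\bra \fbD_{\alpha+1}\fbD_{\alpha+1}^*\Xi,\Xi \ket = \bra \fbD_{\alpha+1}^*\Xi,\fbD_{\alpha+1}^*\Xi \ket + \bra \bB_{\alpha+1}\fbD_{\alpha+1}^*\Xi,\bB_{\alpha+1}^*\Xi \ket.
\]
The left-hand side and the boundary term on the right vanish by the two conditions just derived, so positivity of the $L^{2}$-pairing forces $\fbD_{\alpha+1}^*\Xi = 0$. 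Hence $\Psi = \fbD_{\alpha}\Phi + \Upsilon \in \scrR(\fbD_{\alpha};\bB_{\alpha}) \oplus \module_{\D}^{\alpha+1}$, and directness of the sum is inherited from \eqref{eq:HodgelikesmoothD}. I anticipate no genuine obstacle: the entire argument is a Dirichlet-side transcription of the Neumann proof, with every needed ingredient already in place.
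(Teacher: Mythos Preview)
Your proof is correct and follows essentially the same route the paper implies: the result is a direct consequence of the Dirichlet Hodge-like decomposition \eqref{eq:HodgelikesmoothD}, the cochain property from \thmref{thm:corrected_complex}(ii.a), and Green's formula \eqref{eq:integration_by_parts_corrected}. The only minor difference is packaging: where the paper would invoke the orthogonality $\scrN(\fbD_{\alpha+1},\bB_{\alpha+1}) \perp \scrR(\fbD_{\alpha+1}^*)$ from \lemref{lem:useful_dirichlet}(a) (which follows from a one-line Green's-formula computation) to kill the $\scrR(\fbD_{\alpha+1}^*)$-component directly, you instead first derive $\fbD_{\alpha+1}\fbD_{\alpha+1}^*\Xi = 0$ and $\bB_{\alpha+1}\fbD_{\alpha+1}^*\Xi = 0$ and then self-pair---but this is the same Green's formula used in the same way, just rearranged.
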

Like the Neumann case, the proof of \thmref{thm:DDintro} then follows directly from these decompositions by invoking the relations $(\fbD_{\alpha+1}\oplus\bB_{\alpha+1})\fbD_{\alpha}=0$ on $\ker\bB_{\alpha}$ along with $\fbD_{\alpha}=\bD_{\alpha}$ on $\scrN(\fbD_{\alpha}^*)$. 

\subsection{Comparison with previous studies}
\label{sec:comparison_theory} 
In what follows, we review the theories developed in \cite{RS82, KTT07, Wal15, SS19} and references therein.

In contrast with our presentation in \secref{sec:Hodge_intro}, the original definition of an elliptic complex in \cite{AS68} was purely algebraic, making no explicit reference to Green’s formulas or ellipticity conditions. There, an elliptic complex is a sequence of differential operators of the same order:
\beq
\begin{xy}
(-30,0)*+{0}="Em1";
(0,0)*+{\Gamma(\bbF_0)}="E0";
(30,0)*+{\Gamma(\bbF_1)}="E1";
(60,0)*+{\Gamma(\bbF_2)}="E2";
(90,0)*+{\Gamma(\bbF_3)}="E3";
(101,0)*+{\cdots}="E4";
{\ar@{->}@/^{0pc}/^{A_{0}}"E0";"E1"};
{\ar@{->}@/^{0pc}/^{A_{1}}"E1";"E2"};
{\ar@{->}@/^{0pc}/^{A_{2}}"E2";"E3"};
{\ar@{->}@/^{0pc}/^{0}"Em1";"E0"};
\end{xy}
\label{eq:elliptic_complex_discussion}
\eeq
subject to the conditions:
\begin{itemize}
\item $\bD_{\alpha+1} \bD_{\alpha} = 0$,
\item $\image \sigma(\bD_{\alpha}) = \ker \sigma(\bD_{\alpha+1})$,
\end{itemize}
namely, that \eqref{eq:elliptic_complex_discussion} forms a cochain complex and the sequence of principal symbols is exact. It can then be shown that these properties imply the ellipticity of the associated “Laplacian” $A^*_{\alpha}A_{\alpha} + A_{\alpha-1} A^*_{\alpha-1}$ without explicitly imposing it.

Extending this concept to sequences of Douglis–Nirenberg systems over a compact manifold with boundary is possible by means of order-reducing operators. Indeed, \cite{RS82, KTT07, Wal15, SS19} define elliptic complexes on manifolds with boundary as sequences of systems in the calculus:
\beq
\begin{xy}
(-30,0)*+{0}="Em1";
(0,0)*+{\Gamma(\bbF_0;\bbG_0)}="E0";
(30,0)*+{\Gamma(\bbF_1;\bbG_{1})}="E1";
(60,0)*+{\Gamma(\bbF_2;\bbG_{2})}="E2";
(90,0)*+{\Gamma(\bbF_3;\bbG_{3})}="E3";
(101,0)*+{\cdots}="E4";
{\ar@{->}@/^{0pc}/^{\bD_{0}}"E0";"E1"};
{\ar@{->}@/^{0pc}/^{\bD_{1}}"E1";"E2"};
{\ar@{->}@/^{0pc}/^{\bD_{2}}"E2";"E3"};
{\ar@{->}@/^{0pc}/^{0}"Em1";"E0"};
\end{xy}
\label{eq:quasicomplex_intro}
\eeq
and require only that \eqref{eq:quasicomplex_intro} satisfies an adaptation of the conditions on \eqref{eq:elliptic_complex_discussion}: namely, that it is a cochain complex with an exact sequence of order-reduced symbols (cf.~\defref{def:order_reduced_symbol}),
\[
\image \sigma(\Pi_{\alpha+1} \bD_{\alpha} \Pi^{-1}_{\alpha}) = \ker \sigma(\Pi_{\alpha+2} \bD_{\alpha+1} \Pi^{-1}_{\alpha+1})
\]
for appropriate order-reducing operators $\Pi_{\alpha}: W^{J_{\alpha};L_{\alpha}}_{2}(\bbF_{\alpha};\bbG_{\alpha}) \to L^{2}(\bbF_{\alpha};\bbG_{\alpha})$, such that $\Pi_{\alpha+1} \bD_{\alpha} \Pi^{-1}_{\alpha} \in \OP(0,0)$.

Under these assumptions, due to \corrref{corr:G0props}, the sequence \eqref{eq:quasicomplex_intro} extends continuously into a cochain complex between Hilbert spaces:
\beq
\begin{xy}
(-30,0)*+{0}="Em1";
(0,0)*+{L^{2}(\bbF_0;\bbG_0)}="E0";
(30,0)*+{L^{2}(\bbF_1;\bbG_{1})}="E1";
(60,0)*+{L^{2}(\bbF_2;\bbG_{2})}="E2";
(90,0)*+{L^{2}(\bbF_3;\bbG_{3})}="E3";
(105,0)*+{\cdots}="E4";
{\ar@{->}@/^{0pc}/^{\tilde{\bD}_{0}}"E0";"E1"};
{\ar@{->}@/^{0pc}/^{\tilde{\bD}_{1}}"E1";"E2"};
{\ar@{->}@/^{0pc}/^{\tilde{\bD}_{2}}"E2";"E3"};
{\ar@{->}@/^{0pc}/^{0}"Em1";"E0"};
\end{xy}
\label{eq:quasicomplex_introL2}
\eeq
where $\tilde{\bD}_{\alpha} = \Pi_{\alpha+1} \bD_{\alpha} \Pi^{-1}_{\alpha}$, and the exactness of the boundary symbols amounts to the ellipticity of the “Laplacian” $\tilde{\bD}_{\alpha}^* \tilde{\bD}_{\alpha} + \tilde{\bD}_{\alpha-1} \tilde{\bD}^*_{\alpha-1}$.

It then follows from this ellipticity that there exist $L^{2}$-orthogonal decompositions:
\beq
L^{2}(\bbF_{\alpha+1};\bbG_{\alpha+1}) = \image{\tilde{\bD}_{\alpha}} \oplus \image{\tilde{\bD}^*_{\alpha+1}} \oplus \ker(\tilde{\bD}_{\alpha+1}, \tilde{\bD}^*_{\alpha}),
\label{eq:L2_quasicomplex}
\eeq
so that, by defining an appropriate \emph{parametrix} $(\tbP_{\bullet})$ for the complex $(\bD_{\bullet})$, one obtains finite-dimensional modules $\module^{\alpha}$ consisting of smooth sections, and applying the isomorphisms $\Pi_{\alpha+1}^{-1}$ to \eqref{eq:L2_quasicomplex} yields topologically direct (though not necessarily $L^{2}$-orthogonal) decompositions:
\[
W^{J_{\alpha+1},L_{\alpha+1}}_{2}(\bbF_{\alpha+1};\bbG_{\alpha+1}) = \image{\bD_{\alpha}} \oplus \image{\tbP_{\alpha}} \oplus \module^{\alpha}.
\]
Therefore, this approach both generalizes the classical theory of elliptic complexes and reduces the setting to sequences of operators of order and class zero, so that a Green’s formula of the form \eqref{eq:Green_elliptic_intro}, as well as the explicit ellipticity conditions surveyed in \secref{sec:Hodge_intro} and in \defref{def:NN_segment}--\defref{def:DD_segment}, need not be imposed a priori.

The success of this approach naturally led to the concept of an \emph{elliptic quasicomplex} \cite{KTT07, Wal15, SS19}, which relaxes the requirement that \eqref{eq:quasicomplex_intro} forms a strict cochain complex by allowing $\bD_{\alpha+1} \bD_{\alpha}$ to be a compact operator within the calculus rather than identically zero—that is,
\[
\sigma\big(\Pi_{\alpha+2} \bD_{\alpha+1} \bD_{\alpha} \Pi_{\alpha}^{-1}\big) = 0 \quad \text{instead of} \quad \bD_{\alpha+1} \bD_{\alpha} = 0.
\]
The main result for an elliptic quasicomplex $(\bD_{\bullet})$ is that it can be “lifted” to a genuine elliptic complex $(\fbD_{\bullet})$ by adding lower-order terms, negligible at the symbolic level and possessing compact continuous extensions, by virtue of \eqref{eq:principle_norm}. For the lifted complex, the Hodge decomposition takes the form:
\beq
W^{J_{\alpha+1}, L_{\alpha+1}}_{2}(\bbF_{\alpha+1}; \bbG_{\alpha+1}) = \image{\fbD_{\alpha}} \oplus \image{\tbP_{\alpha}} \oplus \module^{\alpha}.
\label{eq:Hodge_theory_quasi}
\eeq

\subsubsection{Comparison with the present theory}
We now compare elliptic quasicomplexes with elliptic pre-complexes, in both structure and applicability.

\begin{itemize}
\item \textbf{Definitions.} Although the two notions share conceptual similarities, there is no general implication between them. An elliptic pre-complex does not, in general, yield an elliptic quasicomplex, since the latter requires the entire sequence of order-reduced symbols (together with boundary operators) to be exact, a condition absent from our theory. Conversely, the conditions defining \eqref{eq:quasicomplex_intro} as an elliptic quasicomplex do not guarantee that it embeds into a diagram such as \eqref{eq:elliptic_pre_complex_diagram} with the additional properties required of adapted Green systems and those listed in \defref{def:NN_segment}--\defref{def:DD_segment}; most distinctively, the generalized Green's formulae and the normality of the boundary operators. 

\item \textbf{Motivation and consequences.} In the cited works, the objective is primarily Fredholm and index-theoretic, and to that end it suffices that the “lifted” complex $(\fbD_{\bullet})$ merely differs from $(\bD_{\bullet})$ by compact terms. Thus, compactness of $\bD_{\alpha+1}\bD_{\alpha}$ together with exactness of the order-reduced symbols are essentially the only requirements. In contrast, our aim, as premised in \secref{sec:opening} and \secref{sec:Hodge_intro}, is to obtain explicit cohomological formulations for boundary-value problems. For \thmref{thm:NNintro}--\thmref{thm:DDintro} to hold, the symbiotic relationships summarized by the diagram \eqref{eq:elliptic_pre_complex_diagram} must be satisfied, and the lifted complex must differ from the original not merely by lower-order terms but specifically by operators of order and class zero (cf. the discussion in \secref{sec:main_results_intro}).

\item \textbf{Hodge decompositions.} The Hodge decompositions produced by elliptic pre-complexes, namely \eqref{eq:Hodgelikesmooth} and \eqref{eq:HodgelikesmoothD}, differ in several respects from those obtained for elliptic quasicomplexes \eqref{eq:Hodge_theory_quasi}:
\begin{itemize}
\item They hold in the Fréchet topology, with continuous extensions of the direct summands to arbitrary Sobolev regularity.
\item They are $L^{2}$-orthogonal (and remain so at the Sobolev and smooth levels), yielding a canonical description of the complement of $\scrR(\fbD_{\alpha})$ in terms of the range of the adapted adjoint and the cohomology groups.
\item The explicit expressions for the cohomology groups coincide with the kernels of the original overdetermined boundary-value problems directly related to the operators in the original elliptic pre-complex. In elliptic quasi-complexes, only the index is independent of the lifting terms. 
\item The decompositions do not depend on any auxiliary choice of order-reducing operators.
\end{itemize}
By contrast, the decompositions in \eqref{eq:Hodge_theory_quasi} depend on the chosen order-reducing operators, are not continuous across different Sobolev exponents, and the parametrix $\tbP_{\alpha}$ varies with both the Sobolev scale and the choice of order reduction. For example, in \eqref{eq:L2_quasicomplex}, even if $\bD_{\alpha}$ has zero class, one generally has
\[
\tilde{\bD}_{\alpha}^* \ne (\Pi^{-1}_{\alpha})^* \bD_{\alpha}^* \Pi^*_{\alpha+1}.
\]
As the index is invariant under compact perturbations, from a Fredholm and index-theoretic perspective this dependence is inconsequential; however, to establish \thmref{thm:cohomology}--\thmref{thm:cohomologyD}, it is essential that \thmref{thm:hodge_like_corrected_complex} and \thmref{thm:hodge_like_corrected_complexD} hold in full.
\end{itemize}


\section{Construction of the lifted complex}
\label{sec:Construction}

This section forms the main technical core of the work. Here we jointly prove \thmref{thm:corrected_complex}, \propref{prop:complex_aux_decomp}, and \propref{prop:correction}, by induction on $\alpha \leq \alpha_{0} \in \Nzero \cup \BRK{\infty}$.  The proof follows the approach of \cite[Sec.~4]{KL23}, with many technical distinctions arising from the much more general systems considered in the calculus. 

The main proof is organized into five stages:
\begin{enumerate}
\item Stage 1: the base of the induction and the setup of the induction step.
\item Stages 2--3: establishing necessary estimates, overdetermined ellipticity, and closed-range arguments for the lifted system.
\item Stage 4: producing the auxiliary decompositions.
\item Stage 5: defining $\fbD_{\alpha+1}$ and showing that it lies in the calculus and differs from $\bD_{\alpha+1}$ by a lower-order term.
\end{enumerate}
Afterwards, in \secref{sec:Hodge_proof}, we establish the Hodge decompositions for both Neumann and Dirichlet conditions (\thmref{thm:hodge_like_corrected_complex} and \thmref{thm:hodge_like_corrected_complexD}), along with the refined cohomological identities \eqref{eq:cohomology_groups} and \eqref{eq:cohomology_groupsD}.

Finally, in \secref{sec:disrupted_proof}, we treat the disrupted case (\defref{def:finite_elliptic_pre_complex}) and prove \thmref{thm:disrubted_elliptic_pre_complex}, \thmref{thm:hodge_like_corrected_disrupted_complex}, and \thmref{thm:hodge_like_corrected_disrupted_complexD}.

\subsection{Stage 1: Base and setup of induction step}
The proofs of \thmref{thm:corrected_complex}, \propref{prop:complex_aux_decomp}, and \propref{prop:correction} share the same analytical heart for both Neumann or Dirichlet conditions, with rather minor yet delicate adjustments. Therefore, to avoid redundancies on the one hand and keep arguments concise on the other, throughout this section we will often use an argumentative structure of the form:
\[
\begin{split}
&\NN:\qquad \text{statement 1},\\
&\DD:\qquad \text{statement 2}. 
\end{split}
\]
This notation indicates that, given a set of assumptions, statement 1 holds under Neumann conditions and statement 2 holds under Dirichlet conditions. 
\label{sec:stage1}
\subsubsection{Induction base} For the base of the induction, it convenient to set:
\[
\fbD_{-1}=0, \qquad \fbD_{0}=\bD_{0},
\]
and start at level $\alpha = -1$. At this initial level, the induction base requires the following conditions to hold:

\begin{enumerate}[itemsep=0pt,label=(\alph*)]
\item $\fbD_{-1}$ induces an auxiliary decomposition:
\[
\begin{aligned}
&\NN:\qquad &&\Gamma(\bbF_0;0) = \scrR(\fbD_{-1}) \oplus \scrN(\fbD^*_{-1},\bB^*_{-1}), \\
&\DD:\qquad &&\Gamma(\bbF_0;0) = \scrR(\fbD_{-1};\bB_{-1}) \oplus \scrN(\fbD_{-1}).
\end{aligned} 
\]
\item The following containment holds:
\[
\begin{aligned}
&\NN:\qquad &&\scrR(\fbD_{-1}) \subseteq \scrN(\fbD_0), \\
&\DD:\qquad &&\scrR(\fbD_{-1};\bB_{-1}) \subseteq \scrN(\fbD_0),
\end{aligned} 
\] 
\item The following relations hold:
\[
\begin{split}
&\NN:\qquad \fbD_0 = \bD_0 \text{ on } \scrN(\fbD_{-1}^*,\bB^*_{-1}), \\
&\DD:\qquad \fbD_0 = \bD_0 \text{ on } \scrN(\fbD_{-1}^*),
\end{split} 
\] 
\item $\fbD_0 = \bD_0 + \bC_0$ is an adapted Green system, with $\bC_0$ assuming the form specified in \propref{prop:correction}.
\end{enumerate}

Since in either case $\fbD_{-1}=0$, this set of requirements is satisfied trivially. For example, in (a) and (b) in the $\NN$ case, observe that $\scrR(\fbD_{-1}) = \{0\}$ while $\scrN(\fbD_{-1}^*,\bB_{-1}^*)=\Gamma(\bbF_{0};0)$ as $\fbD_{-1}^*=0$ and $\bB_{-1}^*=0$; condition (c) then implies that $\fbD_0 = \bD_0$ identically, which is indeed the case. Condition (d) is satisfied trivially, as the adapted Green system $\fbD_0 = \bD_0$ has $\bC_0 = 0$ by construction.

\subsubsection{Induction Hypothesis} Interpreting the conditions in \thmref{thm:corrected_complex}, \propref{prop:complex_aux_decomp}, and \propref{prop:correction}, the induction hypothesis is that $\fbD_{\alpha}$ and $\fbD_{\alpha-1}$ have been defined so that the following hold:

\begin{enumerate}[itemsep=0pt,label=(\alph*)]
\item Under Neumann conditions, $\fbD_{\alpha-1}$ induces a Neumann auxiliary decomposition, while under Dirichlet conditions, it induces a Dirichlet auxiliary decomposition. In both cases, this corresponds to the existence of a balance $\bP_{\alpha-1}$ for $\fbD_{\alpha-1}$ such that the system
\[
\tbP_{\alpha-1} = \fbD_{\alpha-1} \bP_{\alpha-1} \in \OP(0, 0)
\]
is the continuous projection onto the range in the corresponding $L^2$-orthogonal, topologically direct decomposition:
\beq
\begin{aligned}
&\NN:\qquad &&\Gamma(\bbF_{\alpha}; \bbG_{\alpha}) = \scrR(\fbD_{\alpha-1}) \oplus \scrN(\fbD^*_{\alpha-1}, \bB_{\alpha-1}^*), \\
&\DD:\qquad &&\Gamma(\bbF_{\alpha}; \bbG_{\alpha}) = \scrR(\fbD_{\alpha-1}; \bB_{\alpha-1}) \oplus \scrN(\fbD^*_{\alpha-1}).
\end{aligned}
\label{eq:aux_induction}
\eeq
We also assume the refined versions:
\beq
\begin{aligned}
&\NN:\qquad &&\Gamma(\bbF_{\alpha}; \bbG_{\alpha}) \cap \ker \bB_{\alpha-1}^*
=(\scrR(\fbD_{\alpha-1}) \cap \ker \bB^*_{\alpha-1})
\oplus \scrN(\fbD^*_{\alpha-1}, \bB_{\alpha-1}^*), \\
&\DD:\qquad &&\Gamma(\bbF_{\alpha}; \bbG_{\alpha}) \cap \ker \bB_{\alpha}
= \scrR(\fbD_{\alpha-1}; \bB_{\alpha-1})
\oplus(\scrN(\fbD_{\alpha-1}^*) \cap \ker \bB_{\alpha}).
\label{eq:aux_induction_Refined}
\end{aligned}
\eeq
\item The following containment holds:
\beq
\begin{aligned}
&\NN:\qquad &&\scrR(\fbD_{\alpha-1}) \subseteq \scrN(\fbD_\alpha), \\
&\DD:\qquad &&\scrR(\fbD_{\alpha-1}; \bB_{\alpha-1}) \subseteq \scrN(\fbD_\alpha),
\end{aligned} 
\label{eq:containments_induction} 
\eeq
with a further identity in the Dirichlet case, taking into account \propref{prop:containement_M}:
\beq
\DD:\qquad \scrR(\fbD_{\alpha-1}; \bB_{\alpha-1}) \subseteq \ker\bB_{\alpha}.
\label{eq:containments_inductionM} 
\eeq

\item The following relations hold:
\beq
\begin{aligned}
&\NN:\qquad &&\fbD_\alpha = \bD_\alpha \text{ on } \scrN(\fbD_{\alpha-1}^*,\bB^*_{\alpha-1}), \\
&\DD:\qquad &&\fbD_\alpha = \bD_\alpha \text{ on } \scrN(\fbD_{\alpha-1}^*),
\end{aligned} 
\label{eq:relations_induction}
\eeq
\item $\fbD_{\alpha} = \bD_{\alpha} + \bC_{\alpha}$ is an adapted Green system, with $\bC_{\alpha} \in \OP(0,0)$ in both cases, taking the form specified in \propref{prop:correction}:
\beq
\bC_{\alpha} = -\bD_{\alpha} \bD_{\alpha-1} \bP_{\alpha-1} = -\bD_{\alpha} \tbP_{\alpha-1}.
\label{eq:correction_induction}
\eeq
Moreover, in either case, $\sigma(\fbD_{\alpha} - \bD_{\alpha}) = 0$.
\end{enumerate}

\subsubsection{Induction step} Under the induction hypothesis, the remainder of the next four subsections is devoted to proving the following:
\begin{enumerate}[itemsep=0pt,label=(\alph*)]
\item The system $\fbD_{\alpha}$ induces an auxiliary decomposition, depending on the conditions upon which the elliptic pre-complex is based. Specifically, there exists a balance $\bP_{\alpha}$ for $\fbD_{\alpha}$ such that the system $\tbP_{\alpha} = \fbD_{\alpha} \bP_{\alpha} \in \OP(0, 0)$ is a continuous projection onto the range in one of the following $L^2$-orthogonal, topologically direct decompositions, which are shown to hold:
\beq
\begin{aligned}
&\NN:\qquad &&\Gamma(\bbF_{\alpha+1}; \bbG_{\alpha+1}) = \scrR(\fbD_{\alpha}) \oplus \scrN(\fbD^*_{\alpha}, \bB_{\alpha}^*), \\
&\DD:\qquad &&\Gamma(\bbF_{\alpha+1}; \bbG_{\alpha+1}) = \scrR(\fbD_{\alpha}; \bB_{\alpha}) \oplus \scrN(\fbD^*_{\alpha}).
\end{aligned}
\label{eq:aux_induction_step}
\eeq
We also establish the refined version in the Dirichlet case:
\beq
\begin{aligned} 
&\NN:\qquad &&\Gamma(\bbF_{\alpha+1}; \bbG_{\alpha+1}) \cap \ker \bB_{\alpha}^*
=(\scrR(\fbD_{\alpha}) \cap \ker \bB^*_{\alpha})
\oplus \scrN(\fbD^*_{\alpha}, \bB_{\alpha}^*), \\
&\DD:\qquad &&\Gamma(\bbF_{\alpha+1}; \bbG_{\alpha+1}) \cap \ker \bB_{\alpha+1}
= \scrR(\fbD_{\alpha}; \bB_{\alpha})
\oplus(\scrN(\fbD_{\alpha}^*) \cap \ker \bB_{\alpha+1}).
\label{eq:aux_induction_stepRefined}
\end{aligned} 
\eeq 
\item There exists a system $\fbD_{\alpha+1}:\Gamma(\bbF_{\alpha+1};\bbG_{\alpha+1})\rightarrow\Gamma(\bbF_{\alpha+2};\bbG_{\alpha+2})$ such that the following containments hold:
\beq
\begin{aligned}
&\NN:\qquad &&\scrR(\fbD_{\alpha}) \subseteq \scrN(\fbD_{\alpha+1}), \\
&\DD:\qquad &&\scrR(\fbD_{\alpha};\bB_{\alpha}) \subseteq \scrN(\fbD_{\alpha+1}),
\end{aligned} 
\label{eq:contain_induction_step}
\eeq
with the further identity in the Dirichlet case, taking into account \propref{prop:containement_M}:
\beq
\DD:\qquad \scrR(\fbD_{\alpha};\bB_{\alpha}) \subseteq \ker\bB_{\alpha+1}.
\label{eq:contain_induction_stepM} 
\eeq
\item The following relations hold:
\beq
\begin{aligned}
&\NN:\qquad &&\fbD_{\alpha+1} = \bD_{\alpha+1} \text{ on } \scrN(\fbD_{\alpha}^*,\bB^*_{\alpha}), \\
&\DD:\qquad &&\fbD_{\alpha+1} = \bD_{\alpha+1} \text{ on } \scrN(\fbD_{\alpha}^*),
\end{aligned} 
\label{eq:relation_induction_step}
\eeq
\item $\fbD_{\alpha+1} = \bD_{\alpha+1} + \bC_{\alpha+1}$ is an adapted Green system, with $\bC_{\alpha+1} \in \OP(0,0)$ in both cases, taking the form specified in \propref{prop:correction}:
\beq
\bC_{\alpha+1} = -\bD_{\alpha+1} \bD_{\alpha} \bP_{\alpha} = -\bD_{\alpha+1} \tbP_{\alpha}.
\label{eq:correction_induction_step}
\eeq
Moreover, in either case, $\sigma(\fbD_{\alpha+1} - \bD_{\alpha+1}) = 0$.

\end{enumerate}
\subsection{Stage 2: Additional elliptic estimates}
To establish the induction step, we begin by considering the following systems:
\beq
\begin{split}
&\NN:\qquad \fbD_{\alpha}\oplus\fbD_{\alpha-1}^* \oplus \bB^*_{\alpha-1}, \\
&\DD:\qquad \fbD_{\alpha}\oplus\fbD_{\alpha-1}^* \oplus \bB_{\alpha}.
\end{split}
\label{eq:corrected_D_N_adjoints}
\eeq
Due to the induction hypothesis, we have that the weighted symbol satisfies, in both cases:
\[
\sigma(\fbD_{\alpha} - \bD_{\alpha}) = 0 \quad \text{and} \quad \sigma(\fbD_{\alpha-1}^* - \bD_{\alpha-1}^*) = 0.
\]
Hence, by comparing with the overdetermined ellipticities (2.a) of either \defref{def:NN_segment} or \defref{def:DD_segment}, it follows from \propref{prop:lower_order_correction_overdetermined_ellipticity} that the systems in \eqref{eq:corrected_D_N_adjoints} are also overdetermined elliptic, as they differ from the original overdetermined elliptic systems by lower-order terms. 

Now, by the composition rules, the same holds for the corresponding lifted systems associated with those listed in (2.b):
\beq
\begin{aligned}
&\NN:\qquad &&\fbD^*_{\alpha} \fbD_{\alpha} \oplus \bB_{\alpha}^* \fbD_{\alpha} \oplus \fbD^*_{\alpha-1} \oplus \bB^*_{\alpha-1}, \\
&\DD:\qquad &&\fbD^*_{\alpha} \fbD_{\alpha} \oplus \fbD^*_{\alpha-1} \oplus \bB_{\alpha}.
\end{aligned}
\label{eq:corrected_D_N_adjointsII}
\eeq

We record these observations as a corollary:
\begin{corollary}
\label{corr:overdetermined_ellipticity_corrected}
Under the induction hypothesis, the systems listed in \eqref{eq:corrected_D_N_adjoints} and \eqref{eq:corrected_D_N_adjointsII} are overdetermined elliptic.
\end{corollary}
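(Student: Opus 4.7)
The plan is to reduce both sets of systems in \eqref{eq:corrected_D_N_adjoints} and \eqref{eq:corrected_D_N_adjointsII} to lower-order perturbations of the systems declared overdetermined elliptic in \defref{def:NN_segment}(i) and \defref{def:DD_segment}(i), and then invoke \propref{prop:lower_order_correction_overdetermined_ellipticity}. The paragraph immediately preceding the statement already carries out this reduction for the first set \eqref{eq:corrected_D_N_adjoints}: by the induction hypothesis \eqref{eq:correction_induction} we have $\bC_{\alpha}\in\OP(0,0)$ with vanishing weighted symbol, and by the adapted adjoint identity \eqref{eq:corrected_adjoint} the same holds for $\bC^*_{\alpha-1}$, so each of
\[
\fbD_{\alpha}\oplus\fbD_{\alpha-1}^*\oplus\bB^*_{\alpha-1}, \qquad \fbD_{\alpha}\oplus\fbD_{\alpha-1}^*\oplus\bB_{\alpha}
\]
differs from its uncorrected counterpart in (i.a) by the direct sum $\bC_{\alpha}\oplus\bC^*_{\alpha-1}\oplus 0$, whose weighted symbol is zero in the relevant symbol space. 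Applying \propref{prop:lower_order_correction_overdetermined_ellipticity} delivers the overdetermined ellipticity of \eqref{eq:corrected_D_N_adjoints}.

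For the second set \eqref{eq:corrected_D_N_adjointsII}, the argument is the same in spirit but requires one additional bookkeeping step, namely to show that the compositions
\[
\fbD^*_{\alpha}\fbD_{\alpha}=(\bD^*_{\alpha}+\bC^*_{\alpha})(\bD_{\alpha}+\bC_{\alpha}), \qquad \bB^*_{\alpha}\fbD_{\alpha}=\bB^*_{\alpha}(\bD_{\alpha}+\bC_{\alpha})
\]
also differ from $\bD^*_{\alpha}\bD_{\alpha}$ and $\bB^*_{\alpha}\bD_{\alpha}$ by perturbations whose weighted symbol vanishes in the symbol space associated with the basic tuples underlying condition (i.b). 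Expanding, the difference $\fbD^*_{\alpha}\fbD_{\alpha}-\bD^*_{\alpha}\bD_{\alpha}$ splits into three summands $\bC^*_{\alpha}\bD_{\alpha}$, $\bD^*_{\alpha}\bC_{\alpha}$ and $\bC^*_{\alpha}\bC_{\alpha}$; by the symbol multiplicativity from \thmref{thm:full_Green_compositon_rules} extended to the weighted symbol, each summand has weighted symbol equal to the composition $\sigma(\bD^*_{\alpha})\circ\sigma(\bC_{\alpha})$ etc., and hence vanishes since $\sigma(\bC_{\alpha})=\sigma(\bC^*_{\alpha})=0$. The same composition argument, applied only on the right, handles $\bB^*_{\alpha}\fbD_{\alpha}-\bB^*_{\alpha}\bD_{\alpha}=\bB^*_{\alpha}\bC_{\alpha}$. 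A second application of \propref{prop:lower_order_correction_overdetermined_ellipticity} then yields the overdetermined ellipticity of \eqref{eq:corrected_D_N_adjointsII}.

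The only genuinely delicate point is to verify that these symbol-level cancellations actually take place in the correct symbol spaces, i.e.\ that the basic tuples attached to $\bC_{\alpha}$ and $\bC^*_{\alpha}$ via composition with $\bD^*_{\alpha}$, $\bD_{\alpha}$, $\bB^*_{\alpha}$ match those underlying (i.b) in \defref{def:NN_segment} and \defref{def:DD_segment}. This is a matter of tracking corresponding orders and classes through the composition rules of \thmref{thm:composition_Douglas_Nirenberg}, using that $\bC_{\alpha},\bC^*_{\alpha}\in\OP(0,0)$ and therefore do not raise any of the corresponding orders or classes of the composed systems; the resulting tuples coincide with the standard ones for $\bD^*_{\alpha}\bD_{\alpha}\oplus\bB^*_{\alpha}\bD_{\alpha}\oplus\bD^*_{\alpha-1}\oplus\bB^*_{\alpha-1}$ and $\bD^*_{\alpha}\bD_{\alpha}\oplus\bD^*_{\alpha-1}\oplus\bB_{\alpha}$. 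I expect this final bookkeeping to be the main (but entirely mechanical) obstacle; once it is dispatched, the statement follows from \propref{prop:lower_order_correction_overdetermined_ellipticity} in both the Neumann and Dirichlet cases.
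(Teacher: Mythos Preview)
Your proposal is correct and follows essentially the same approach as the paper: the paper's ``proof'' is nothing more than the paragraph preceding the corollary, which invokes $\sigma(\bC_{\alpha})=\sigma(\bC^*_{\alpha-1})=0$ together with \propref{prop:lower_order_correction_overdetermined_ellipticity} for the first set, and then dispatches the second set in a single clause (``By the composition rules, the same holds''). Your expansion of the composition $\fbD^*_{\alpha}\fbD_{\alpha}-\bD^*_{\alpha}\bD_{\alpha}$ into cross terms with vanishing weighted symbol is precisely the content of that clause, spelled out explicitly; the tuple-matching concern you flag is indeed purely mechanical and the paper does not address it either.
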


Through \thmref{thm:overdetermined_varying_orders}, the overdetermined ellipticities of \eqref{eq:corrected_D_N_adjoints} provide the finite-dimensionality of the corresponding kernels, each being a subspace of $\Gamma(\bbF_{\alpha}; \bbG_{\alpha})$:
\beq
\begin{aligned}
&\NN:\qquad &&\module^{\alpha}_{\N}:=\ker(\fbD_{\alpha},\fbD_{\alpha-1}^*,\bB^*_{\alpha-1}) = \ker(\bD_{\alpha},\fbD_{\alpha-1}^*,\bB^*_{\alpha-1}), \\
&\DD:\qquad &&\module^{\alpha}_{\D}:=\ker(\fbD_{\alpha},\fbD_{\alpha-1}^*,\bB_{\alpha}) = \ker(\bD_{\alpha},\fbD_{\alpha-1}^*,\bB_{\alpha}).
\end{aligned}
\label{eq:cohomology_in_the_proof}
\eeq
Here, the identity $\fbD_{\alpha} = \bD_{\alpha}$ on the kernel follows from the induction hypothesis \eqref{eq:relations_induction}, i.e., that $\fbD_\alpha = \bD_\alpha$ on either $\scrN(\fbD_{\alpha-1}^*)$ or $\scrN(\fbD_{\alpha-1}^*,\bB^*_{\alpha-1})$. 
\begin{proposition}
\label{prop:coinciding_kernels}
The kernels of the systems in \eqref{eq:corrected_D_N_adjoints} and \eqref{eq:corrected_D_N_adjointsII} coincide with $\module^{\alpha}_{\N}$ in the $\NN$ case and with $\module^{\alpha}_{\D}$ in the $\DD$ case.
\end{proposition}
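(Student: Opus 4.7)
The plan is to establish a two-sided inclusion for each case. The inclusion $\module^{\alpha}_{\N} \subseteq \ker(\fbD^*_{\alpha}\fbD_{\alpha} \oplus \bB^*_{\alpha}\fbD_{\alpha} \oplus \fbD^*_{\alpha-1} \oplus \bB^*_{\alpha-1})$ (resp. the Dirichlet analogue) is immediate: any $\Psi \in \module^{\alpha}_{\N}$ satisfies $\fbD_{\alpha}\Psi = 0$, so applying $\fbD^*_{\alpha}$ and $\bB^*_{\alpha}$ to $\fbD_{\alpha}\Psi$ trivially gives zero. The content is therefore in the reverse inclusion.

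For the reverse inclusion, I would invoke \corrref{corr:overdetermined_ellipticity_corrected}, which asserts that the systems in \eqref{eq:corrected_D_N_adjointsII} are overdetermined elliptic; by item (2) of \thmref{thm:overdetermined_varying_orders}, any element of their kernels is automatically smooth. Thus, for any $\Psi$ in the kernel of the relevant ``Laplacian'' system, the generalized Green's formula \eqref{eq:integration_by_parts_corrected} is legitimately applicable. Pairing $\Theta = \fbD_{\alpha}\Psi$ with itself in \eqref{eq:integration_by_parts_corrected} yields the identity
\[
\|\fbD_{\alpha}\Psi\|_{L^{2}(M)}^{2} = \bra \Psi, \fbD_{\alpha}^{*}\fbD_{\alpha}\Psi\ket_{L^{2}(M)} + \bra \bB_{\alpha}\Psi, \bB_{\alpha}^{*}\fbD_{\alpha}\Psi\ket_{L^{2}(\dM)}.
\]
In the Neumann case the kernel conditions give $\fbD^*_{\alpha}\fbD_{\alpha}\Psi = 0$ and $\bB^*_{\alpha}\fbD_{\alpha}\Psi = 0$, while in the Dirichlet case they give $\fbD^*_{\alpha}\fbD_{\alpha}\Psi = 0$ and $\bB_{\alpha}\Psi = 0$; in either case, the right-hand side vanishes, forcing $\fbD_{\alpha}\Psi = 0$. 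Combined with the remaining kernel conditions $\fbD^*_{\alpha-1}\Psi = 0$ and the appropriate boundary equation, this places $\Psi$ into $\module^{\alpha}_{\N}$ or $\module^{\alpha}_{\D}$, as required.

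Finally, the equality $\ker(\fbD_{\alpha} \oplus \fbD_{\alpha-1}^* \oplus \bB^*_{\alpha-1}) = \ker(\bD_{\alpha} \oplus \fbD_{\alpha-1}^* \oplus \bB^*_{\alpha-1})$ (and its Dirichlet counterpart) has already been justified in \eqref{eq:cohomology_in_the_proof} using the induction hypothesis \eqref{eq:relations_induction}, namely that $\fbD_{\alpha} = \bD_{\alpha}$ on $\scrN(\fbD_{\alpha-1}^{*},\bB^{*}_{\alpha-1})$ (resp.\ on $\scrN(\fbD_{\alpha-1}^{*})$), so this part of the statement requires no new argument. I do not anticipate genuine obstacles here: the argument is a Bochner-type integration by parts, and the only point demanding care is the verification that the Green's formula may indeed be applied, which is guaranteed once smoothness is secured via overdetermined ellipticity. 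The Neumann and Dirichlet cases are handled in parallel, with the only substantive difference being which of the two boundary pairings in the Green's identity is killed by the hypothesis.
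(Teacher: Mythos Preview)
Your proof is correct and reaches the same conclusion as the paper's, but the route differs slightly in presentation. The paper argues abstractly: having observed that $\fbD_{\alpha}\Psi$ lies in both $\scrR(\fbD_{\alpha})$ (or $\scrR(\fbD_{\alpha};\bB_{\alpha})$) and in $\scrN(\fbD_{\alpha}^*,\bB^*_{\alpha})$ (or $\scrN(\fbD_{\alpha}^*)$), it invokes the $L^{2}$-orthogonal decomposition \eqref{eq:L2_decom} from \propref{prop:L2_aux_adapted_pair} to conclude that the intersection is trivial. Your argument instead applies the Green's formula \eqref{eq:integration_by_parts_corrected} directly to compute $\|\fbD_{\alpha}\Psi\|_{L^{2}}^{2}=0$. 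These are two sides of the same coin---the decomposition \eqref{eq:L2_decom} is itself derived from the Green's formula via \propref{prop:NDR}---so the underlying mechanism is identical. Your version is marginally more self-contained, as it avoids citing the auxiliary decomposition and makes the Bochner-type structure explicit; the paper's version is slightly more in keeping with the abstract framework it has set up. Neither has a substantive advantage over the other.
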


\begin{proof}
The claim for the systems in \eqref{eq:corrected_D_N_adjoints} follows directly from comparing with \eqref{eq:cohomology_groups} and \eqref{eq:cohomology_groupsD}.

For the systems in \eqref{eq:corrected_D_N_adjointsII}, the inclusions:
\[
\begin{aligned}
&\NN:\qquad &&\module_{\N}^{\alpha} \subseteq \ker(\fbD^*_{\alpha} \fbD_{\alpha},\bB_{\alpha}^* \fbD_{\alpha},\fbD^*_{\alpha-1},\bB^*_{\alpha-1}), \\  
&\DD:\qquad &&\module_{\D}^{\alpha}\subseteq \ker(\fbD^*_{\alpha} \fbD_{\alpha},\fbD^*_{\alpha-1},\bB_{\alpha}),
\end{aligned}
\]
follow directly from comparing with \eqref{eq:cohomology_in_the_proof} above.  

To prove the reverse inclusions, consider $\Psi$ in one of the following spaces:
\[
\begin{aligned}
&\NN:\qquad &&\Psi \in \ker(\fbD^*_{\alpha} \fbD_{\alpha},\bB_{\alpha}^* \fbD_{\alpha},\fbD^*_{\alpha-1},\bB^*_{\alpha-1}),
\\&\DD:\qquad &&\Psi \in \ker(\fbD^*_{\alpha} \fbD_{\alpha},\fbD^*_{\alpha-1},\bB_{\alpha}).
\end{aligned}
\]

In particular, $\Psi$ satisfies:
\[
\begin{aligned}
&\NN:\qquad &&\Psi \in \scrN(\fbD_{\alpha-1}^*,\bB^*_{\alpha-1}) &&&\textand &&&&&\fbD_{\alpha} \Psi \in \scrN(\fbD_{\alpha}^*, \bB^*_{\alpha}), \\ 
&\DD:\qquad &&\Psi \in \scrN(\fbD_{\alpha-1}^*) \cap \ker\bB_{\alpha} &&&\textand &&&&&\fbD_{\alpha} \Psi \in \scrN(\fbD_{\alpha}^*).
\end{aligned}
\]

In all cases, by comparing with the $L^2$-orthogonal decompositions in \eqref{eq:L2_decom}, which hold for any adapted Green system, including $\fbD_{\alpha}$, we find that:
\[
\begin{aligned}
&\NN:\qquad &&\fbD_{\alpha} \Psi \in \scrN(\fbD_{\alpha}^*, \bB^*_{\alpha}) \cap \scrR(\fbD_{\alpha}) = \BRK{0}, \\ 
&\DD:\qquad &&\fbD_{\alpha} \Psi \in \scrN(\fbD_{\alpha}^*) \cap \scrR(\fbD_{\alpha}; \bB_{\alpha}) = \BRK{0}.
\end{aligned}
\]
This implies $\fbD_{\alpha} \Psi = 0$ in all cases. To summarize:
\[
\begin{aligned}
&\NN:\qquad &&\Psi \in \ker(\fbD_{\alpha},\fbD_{\alpha-1}^*,\bB^*_{\alpha-1}) = \module^{\alpha}_{\N}, \\ 
&\DD:\qquad &&\Psi \in \ker(\fbD_{\alpha},\fbD_{\alpha-1}^*,\bB_{\alpha}) = \module^{\alpha}_{\D},
\end{aligned}
\]
which completes the proof.
\end{proof}

Let $\frakI_{\alpha}\in\OP(-\infty, -\infty)$ denote the $L^2$-orthogonal projection onto $\module_{\R}^{\alpha}$, $\mathrm{R}\in\BRK{\NN,\DD}$, as described in \thmref{thm:overdetermined_varying_orders}. Direct summing $\frakI_{\alpha}$ with the systems in either \eqref{eq:corrected_D_N_adjoints} or \eqref{eq:corrected_D_N_adjointsII} yields injective systems due to \propref{prop:coinciding_kernels}. Therefore, by \thmref{thm:overdetermined_varying_orders}, these systems admit a left-inverse within the calculus of Green operators. 

\begin{proposition}
\label{prop:new_estimate1}
The following systems are also overdetermined elliptic and injective:
\beq
\begin{aligned}
&\NN:\qquad &&\fbD_{\alpha} \oplus \tbP_{\alpha-1} \oplus \frakI_{\alpha}, \\
&\DD:\qquad &&\fbD_{\alpha} \oplus \bB_{\alpha} \oplus \tbP_{\alpha-1} \oplus \frakI_{\alpha}.
\end{aligned}
\label{eq:order_reduced_elliptiices_proof}
\eeq
Here, the system $\tbP_{\alpha-1}\in\OP(0,0)$ is the one from the induction hypothesis, associated with the auxiliary decompositions \eqref{eq:aux_induction}. 
\end{proposition}
\begin{proof}
By \thmref{thm:overdetermined_varying_orders}, if a Douglis-Nirenberg system has a left-inverse within the calculus, then it is overdetermined elliptic and injective.

With this given, for the $\NN$ case, the Neumann auxiliary decomposition \eqref{eq:aux_induction} induced by $\fbD_{\alpha-1}$ and the properties of $\tbP_{\alpha-1}$ imply that $(\id - \tbP_{\alpha-1})$ is the projection onto $\scrN(\fbD_{\alpha-1}^*, \bB_{\alpha-1}^*)$. Therefore, for every $\Psi \in \Gamma(\bbF_{\alpha}; \bbG_{\alpha})$, we have:
\[
(\fbD_{\alpha-1}^* \oplus \bB_{\alpha-1}^*) \tbP_{\alpha-1} \Psi = (\fbD_{\alpha-1}^* \oplus \bB_{\alpha-1}^*) \Psi.
\]
This leads to the identity:
\[
\fbD_{\alpha} \oplus \fbD_{\alpha-1}^* \oplus \bB_{\alpha-1}^* \oplus \frakI_{\alpha} 
= (\id \oplus (\fbD_{\alpha-1}^* \oplus \bB_{\alpha-1}^*) \oplus \id)
(\fbD_{\alpha} \oplus \tbP_{\alpha-1} \oplus \frakI_{\alpha}).
\]
By the overdetermined ellipticity of the $\NN$ case, as established in \corrref{corr:overdetermined_ellipticity_corrected}, together with the identification of the system’s kernel in \propref{prop:coinciding_kernels}, the system on the left-hand side is injective and admits a left-inverse within the calculus. Consequently, the system $\fbD_{\alpha} \oplus \tbP_{\alpha-1} \oplus \frakI_{\alpha}$ also admits a left-inverse within the calculus and is therefore overdetermined elliptic.

In the $\DD$ case, the Dirichlet auxiliary decomposition induced by $\fbD_{\alpha-1}$, assumed in the induction step \eqref{eq:aux_induction}, combined with the properties of $\tbP_{\alpha-1}$, implies that $(\id - \tbP_{\alpha-1})$ is the projection onto $\scrN(\fbD_{\alpha-1}^*)$. Thus, for every $\Psi \in \Gamma(\bbF_{\alpha}; \bbG_{\alpha})$, we have:
\[
\fbD_{\alpha-1}^* \tbP_{\alpha-1} \Psi = \fbD_{\alpha-1}^* \Psi.
\]
We therefore obtain the identity:
\[
\fbD_{\alpha} \oplus \bB_{\alpha} \oplus \fbD_{\alpha-1}^* \oplus \frakI_{\alpha} 
= (\id \oplus \id \oplus \fbD_{\alpha-1}^* \oplus \id)
(\fbD_{\alpha} \oplus \bB_{\alpha} \oplus \tbP_{\alpha-1} \oplus \frakI_{\alpha}).
\]
By the overdetermined ellipticity for the $\DD$ case established in \corrref{corr:overdetermined_ellipticity_corrected}, together with the identification of the system’s kernel in \propref{prop:coinciding_kernels}, the system on the left-hand side is injective and admits a left-inverse within the calculus. This implies that the system $\fbD_{\alpha} \oplus \bB_{\alpha} \oplus \tbP_{\alpha-1} \oplus \frakI_{\alpha}$ is also overdetermined elliptic, allowing us to conclude the argument as in the $\NN$ case.
\end{proof}
Recall again the notion of sharp tuples from \defref{def:sharp_tuples}. Since $\fbD_{\alpha}-\bD_{\alpha}\in\OP(0,0)$, we find that $\fbD_{\alpha}$ and $\bD_{\alpha}$ share the same sharp tuples. Thus, from the definition of adapted Green system \defref{def:adapting_operator}, $\fbD_{\alpha}$ posses sharp tuples of the form $(M,T;0,0)$, which are also suitable for $\bB$, in the sense that there is a tuple $K'$ such that $(M,T;0,C)$ are sharp for $\bB$. More generally, if $(J,L;I,K)$ are any sharp tuples for $\bD_{\alpha}$ which are also suitable for $\bB_{\alpha}$, due to \propref{prop:sharp_tuples_OD} and the properties of overdetermined elliptic systems, we find: 


\begin{proposition}
Let $(J, L;\, I, K)$ be sharp tuples for $\fbD_{\alpha}$ also suitable for $\bB_{\alpha}$. Then the following estimate holds:
\beq
\|\Psi\|_{J, L, p} \lesssim \|\fbD_{\alpha} \Psi\|_{I, K, p} + \|\tbP_{\alpha-1} \Psi\|_{J, L, p} + \|\frakI_{\alpha} \Psi\|_{0, 0, p},
\label{eq:Ak_overdetermined_elliptic_estimate}
\eeq
for all $\Psi$ satisfying:
\beq
\begin{aligned}
&\NN:\qquad && \Psi \in W^{J, L}_p(\bbF_{\alpha}; \bbG_{\alpha}), \\
&\DD:\qquad && \Psi \in W^{J, L}_p(\bbF_{\alpha}; \bbG_{\alpha}) \cap \ker \bB_{\alpha}.
\end{aligned}
\label{eq:Ak_estiamte_psi_requirements}
\eeq
\end{proposition}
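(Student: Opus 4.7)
The plan is to deduce the estimate \eqref{eq:Ak_overdetermined_elliptic_estimate} directly from the overdetermined ellipticity established in \propref{prop:new_estimate1}, by applying the a priori estimate of \thmref{thm:overdetermined_varying_orders} to the augmented systems
\[
\begin{aligned}
&\NN:\qquad && \mathfrak{M}_\alpha := \fbD_{\alpha} \oplus \tbP_{\alpha-1} \oplus \frakI_{\alpha}, \\
&\DD:\qquad && \mathfrak{M}_\alpha := \fbD_{\alpha} \oplus \bB_{\alpha} \oplus \tbP_{\alpha-1} \oplus \frakI_{\alpha},
\end{aligned}
\]
which were shown to be overdetermined elliptic and \emph{injective}. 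Since $\ker\mathfrak{M}_\alpha=\{0\}$, the finite-dimensional projection appearing on the right-hand side of the estimate \eqref{eq:overdetermined_a_priori} vanishes, and \thmref{thm:overdetermined_varying_orders} yields the clean bound
\[
\|\Psi\|_{J,L,p} \lesssim \|\mathfrak{M}_\alpha\Psi\|_{\widetilde{I},\widetilde{K},p}
\]
on the standard tuples of $\mathfrak{M}_\alpha$, for every $\Psi \in W_p^{J,L}(\bbF_\alpha;\bbG_\alpha)$.

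Next, I would identify the target tuples $(\widetilde{I},\widetilde{K})$ by applying \propref{prop:G0tuples} componentwise: since $\tbP_{\alpha-1}\in\OP(0,0)$ and $\frakI_{\alpha}\in\OP(-\infty,-\infty)$, direct-summing these with $\fbD_\alpha$ (and, in the Dirichlet case, with $\bB_\alpha$) simply appends the entries $(J,L)$ and $(0,0)$ to $(I,K)$ on the codomain side, since the $J,L$ tuples for $\fbD_{\alpha}$ and $\fbD_\alpha\oplus\bB_{\alpha}$ coincide (as noted after \propref{prop:new_estimate1}). Expanding $\|\mathfrak{M}_\alpha\Psi\|_{\widetilde{I},\widetilde{K},p}$ as the sum of the norms of the individual components, the estimate becomes
\[
\|\Psi\|_{J,L,p} \lesssim \|\fbD_{\alpha}\Psi\|_{I,K,p} + \|\bB_{\alpha}\Psi\|_{0,K^{*},p} + \|\tbP_{\alpha-1}\Psi\|_{J,L,p} + \|\frakI_{\alpha}\Psi\|_{0,0,p},
\]
where the $\bB_\alpha$ term is present only in the Dirichlet case.

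Finally, under the restriction in \eqref{eq:Ak_estiamte_psi_requirements}, the Dirichlet case imposes $\Psi\in\ker\bB_{\alpha}$, which eliminates the $\|\bB_{\alpha}\Psi\|$ term. In the Neumann case no such term appears. This yields \eqref{eq:Ak_overdetermined_elliptic_estimate} in both regimes. The main conceptual hurdle is the bookkeeping of tuples in the Douglas--Nirenberg setting—one must verify that the sharp tuples for the augmented systems $\mathfrak{M}_\alpha$ really restrict to $(J,L;I,K)$ on the $\fbD_\alpha$ block; this is where the definition of \emph{sharp tuples} (\defref{def:sharp_tuples}) and the remark preceding the proposition—identifying these tuples precisely as those inherited from $\fbD_\alpha\oplus\bB_\alpha$—do the crucial work, ensuring no inflation of the domain exponents occurs under the direct sum.
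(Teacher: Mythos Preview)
Your proof is correct and follows essentially the same approach as the paper: apply the a priori estimate \eqref{eq:overdetermined_a_priori} from \thmref{thm:overdetermined_varying_orders} to the injective overdetermined elliptic system of \propref{prop:new_estimate1}, expand the norm of the direct sum componentwise, and in the Dirichlet case use $\Psi\in\ker\bB_\alpha$ to drop the boundary term. The paper states this in two sentences, while you spell out the injectivity (which kills the kernel-projection term in \eqref{eq:overdetermined_a_priori}) and the tuple bookkeeping via \propref{prop:G0tuples} more explicitly; both are sound and align with what the paper is doing implicitly.
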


\begin{proof}
In the $\NN$ case, the estimate follows directly by adapting \eqref{eq:overdetermined_a_priori} to fit the overdetermined elliptic system in \propref{prop:new_estimate1}, once the sharp tuples have been identified and \propref{prop:sharp_tuples_OD} invoked. In the $\DD$ case, a similar adaption applies, with the additional observation that if $\Psi$ satisfies the specified conditions in \eqref{eq:Ak_estiamte_psi_requirements}, the summands in the a priori estimate involving the norm of $\bB_{\alpha} \Psi$ vanish.  
\end{proof}
The following proposition is proven similarly to \propref{prop:new_estimate1}, albeit using the second set of overdetermined ellipticities in \corrref{corr:overdetermined_ellipticity_corrected}:
\begin{proposition}
\label{prop:new_estimate2}
The following systems are overdetermined elliptic and injective:
\beq
\begin{aligned}
&\NN:\qquad &&\fbD_{\alpha}^* \fbD_{\alpha} \oplus \bB^*_{\alpha}\fbD_{\alpha}\oplus\tbP_{\alpha-1} \oplus \frakI_{\alpha}, \\
&\DD:\qquad &&\fbD_{\alpha}^* \fbD_{\alpha} \oplus \bB_{\alpha}\oplus \tbP_{\alpha-1} \oplus \frakI_{\alpha}.
\end{aligned}
\label{eq:order_reduced_elliptiices_proof2}
\eeq 
\end{proposition}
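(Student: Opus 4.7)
The plan is to imitate the argument used for \propref{prop:new_estimate1}, replacing the overdetermined elliptic systems from item (2.a) of \defref{def:NN_segment}--\defref{def:DD_segment} by those from item (2.b). The key input is \corrref{corr:overdetermined_ellipticity_corrected}, which ensures that the corrected analogues
\[
\NN:\; \fbD^*_{\alpha}\fbD_{\alpha}\oplus \bB^*_{\alpha}\fbD_{\alpha}\oplus \fbD^*_{\alpha-1}\oplus \bB^*_{\alpha-1},
\qquad
\DD:\; \fbD^*_{\alpha}\fbD_{\alpha}\oplus \fbD^*_{\alpha-1}\oplus \bB_{\alpha}
\]
are overdetermined elliptic, combined with \propref{prop:coinciding_kernels}, which identifies their kernel with $\module^{\alpha}_{\N}$ or $\module^{\alpha}_{\D}$. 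Direct-summing with $\frakI_{\alpha}$ therefore yields an \emph{injective} overdetermined elliptic system, which by \thmref{thm:overdetermined_varying_orders} admits a left inverse within the calculus.

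Next, I would use the induction hypothesis \eqref{eq:aux_induction}, which asserts that $\id - \tbP_{\alpha-1}$ is the $L^{2}$-projection onto either $\scrN(\fbD^*_{\alpha-1},\bB^*_{\alpha-1})$ or $\scrN(\fbD^*_{\alpha-1})$. Consequently, for every $\Psi\in\Gamma(\bbF_{\alpha};\bbG_{\alpha})$,
\[
\NN:\; (\fbD^*_{\alpha-1}\oplus \bB^*_{\alpha-1})\Psi=(\fbD^*_{\alpha-1}\oplus \bB^*_{\alpha-1})\tbP_{\alpha-1}\Psi,
\qquad
\DD:\; \fbD^*_{\alpha-1}\Psi=\fbD^*_{\alpha-1}\tbP_{\alpha-1}\Psi.
\]
This gives, exactly as in the proof of \propref{prop:new_estimate1}, the factorizations
\[
\NN:\; \fbD^*_{\alpha}\fbD_{\alpha}\oplus \bB^*_{\alpha}\fbD_{\alpha}\oplus \fbD^*_{\alpha-1}\oplus \bB^*_{\alpha-1}\oplus\frakI_{\alpha}
= \bigl(\id\oplus\id\oplus(\fbD^*_{\alpha-1}\oplus \bB^*_{\alpha-1})\oplus\id\bigr)\bigl(\fbD^*_{\alpha}\fbD_{\alpha}\oplus \bB^*_{\alpha}\fbD_{\alpha}\oplus \tbP_{\alpha-1}\oplus\frakI_{\alpha}\bigr),
\]
\[
\DD:\; \fbD^*_{\alpha}\fbD_{\alpha}\oplus \fbD^*_{\alpha-1}\oplus \bB_{\alpha}\oplus\frakI_{\alpha}
=\bigl(\id\oplus\fbD^*_{\alpha-1}\oplus\id\oplus\id\bigr)\bigl(\fbD^*_{\alpha}\fbD_{\alpha}\oplus \tbP_{\alpha-1}\oplus \bB_{\alpha}\oplus\frakI_{\alpha}\bigr).
\]
Since the left-hand sides admit left inverses in the calculus, the right-most factors must admit left inverses as well; by \thmref{thm:overdetermined_varying_orders}, these right-most factors are overdetermined elliptic and injective, which is precisely the content of \eqref{eq:order_reduced_elliptiices_proof2}.

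The main technical point to verify carefully is that these factorizations are legitimate within the Douglas--Nirenberg calculus, i.e., that one can compose a Green-type system with the diagonal system $\id\oplus(\fbD^*_{\alpha-1}\oplus\bB^*_{\alpha-1})\oplus\id$ (resp.\ with $\id\oplus\fbD^*_{\alpha-1}\oplus\id\oplus\id$) in a way consistent with the corresponding orders and classes; this is routine in view of \thmref{thm:composition_Douglas_Nirenberg} and the fact that $\tbP_{\alpha-1}\in\OP(0,0)$ by the induction hypothesis, but one must make sure the basic tuples used to witness overdetermined ellipticity of the right-most factors are compatible with those of the left-hand sides. Modulo this bookkeeping, the proof is a direct transcription of the argument of \propref{prop:new_estimate1}.
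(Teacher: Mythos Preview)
Your proposal is correct and follows essentially the same approach as the paper: both use the factorization of the injective overdetermined elliptic system (from \corrref{corr:overdetermined_ellipticity_corrected} direct-summed with $\frakI_{\alpha}$) through $\tbP_{\alpha-1}$ via the identity $(\fbD^*_{\alpha-1}\oplus\bB^*_{\alpha-1})=(\fbD^*_{\alpha-1}\oplus\bB^*_{\alpha-1})\tbP_{\alpha-1}$, then conclude that the right-most factor inherits a left inverse in the calculus. The paper only writes out the $\NN$ case explicitly, whereas you spell out both; otherwise the arguments are identical.
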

\begin{proof}
At this stage, we establish only the $\NN$ case, as it is clear how the $\DD$ case proceeds. As established in the proof of \propref{prop:new_estimate1}, we have 
\[
\fbD_{\alpha-1}^*\oplus\bB^*_{\alpha-1} =(\fbD_{\alpha-1}^*\oplus\bB^*_{\alpha-1})\tbP_{\alpha-1}
\]
allowing us to write
\[
\fbD_{\alpha}^* \fbD_{\alpha}\oplus\bB^*_{\alpha}\fbD_{\alpha} \oplus \fbD_{\alpha-1}^*\oplus\bB^*_{\alpha-1} \oplus \frakI_{\alpha} = (\id \oplus\id\oplus (\fbD^*_{\alpha-1}\oplus\bB_{\alpha-1}^*) \oplus \id)(\fbD_{\alpha}^* \fbD_{\alpha}\oplus\bB_{\alpha}^*\fbD_{\alpha} \oplus \tbP_{\alpha-1} \oplus \frakI_{\alpha}).
\]
Since the left-hand side has a left-inverse, it follows that the system
\[
\fbD_{\alpha}^* \fbD_{\alpha}\oplus\bB_{\alpha}^*\fbD_{\alpha} \oplus \tbP_{\alpha-1} \oplus \frakI_{\alpha}
\] 
also has a left-inverse, hence is overdetermined elliptic as well. 
\end{proof} 
By the listing of sharp tuples for the adapted Green system $\fbD_{\alpha}$ in \defref{def:adapting_operator}, $(M, T;\, 0, 0)$ are sharp tuples for $\fbD_{\alpha}$, and its adapted adjoint $\fbD^*_{\alpha}$ admits sharp tuples of the form $(0, 0;\, -M, -T)$. By the composition rules for sharp tuples (\corrref{corr:sharp_composition}), we therefore obtain the following:
\begin{corollary}
\label{corr:sharp_tuples_M_T} 
The systems:
\[
\begin{aligned}
&\NN: \quad && \fbD_{\alpha}^* \fbD_{\alpha} \oplus \bB^*_{\alpha} \fbD_{\alpha}, \\
&\DD: \quad && \fbD_{\alpha}^* \fbD_{\alpha} \oplus \bB_{\alpha}.
\end{aligned}
\]
admit sharp tuples of the form $(M, T;\, (-M, 0),\, (-T, C))$. Thus, for every $s \in \Nzero$, the tuples $(M+s, T+s;\, (-M+s, 0),\, (-T+s, C+s))$ are sharp for these systems, generated by composing the sharp tuples $(M+s, T+s; s, s)$ for $\fbD_{\alpha}$ and $(s, s; -M+s, -M+s)$ for $\fbD_{\alpha}^*$.
\end{corollary}
This corollary allows for a more general discussion of settings where $(J, L; I, K)$ are sharp tuples for $\fbD_{\alpha}$ with the property that simultaneously there exist $(I, K; II, KK)$ sharp tuples for $\fbD_{\alpha}^*$. Most of our analytical derivations will be valid for any such sharp tuples.
\begin{proposition}
Let $(J,L;I,K)$ be sharp tuples for $\fbD_{\alpha}$ such that there are $(I,K;II,KK)$ sharp tuples for $\fbD_{\alpha}^*$ also suitable for $\bB_{\alpha}^*$. Then the following estimates hold: 
\beq
\begin{aligned}
&\NN:\quad&&\|\Psi\|_{J, L, p} &&\lesssim \|\fbD_{\alpha}^*\fbD_{\alpha} \Psi\|_{II,KK, p}+\|\bB_{\alpha}^*\fbD_{\alpha} \Psi\|_{0, KK', p}\\& && &&\quad+ \|\tbP_{\alpha-1} \Psi\|_{J, L, p} + \|\frakI_{\alpha} \Psi\|_{0, 0, p},\\\\
&\DD:\quad&&\|\Psi\|_{J, L, p} &&\lesssim \|\fbD_{\alpha}^*\fbD_{\alpha}\Psi\|_{II, KK, p}+ \|\tbP_{\alpha-1} \Psi\|_{J, L, p} + \|\frakI_{\alpha} \Psi\|_{0, 0, p},\\
\label{eq:D^*D_estimate}
\end{aligned}
\eeq
valid for any $\Psi$ belonging to:
\[
\begin{aligned}
&\NN:\qquad &&\Psi \in W_p^{J, L}(\bbF_{\alpha}; \bbG_{\alpha}), \\
&\DD:\qquad &&\Psi \in W_p^{J, L}(\bbF_{\alpha}; \bbG_{\alpha}) \cap \ker\bB_{\alpha}.
\end{aligned}
\]
\end{proposition}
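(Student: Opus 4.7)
The plan is to obtain both estimates as direct consequences of the generic a priori estimate \eqref{eq:overdetermined_a_priori} from \thmref{thm:overdetermined_varying_orders}, applied to the specific overdetermined elliptic and injective systems identified in \propref{prop:new_estimate2}. In the Neumann case this is the system
\[
\mathfrak{Q}_{\NN} := \fbD_{\alpha}^{*}\fbD_{\alpha}\oplus \bB^{*}_{\alpha}\fbD_{\alpha}\oplus \tbP_{\alpha-1}\oplus\frakI_{\alpha},
\]
and in the Dirichlet case the system
\[
\mathfrak{Q}_{\DD} := \fbD_{\alpha}^{*}\fbD_{\alpha}\oplus\bB_{\alpha}\oplus\tbP_{\alpha-1}\oplus\frakI_{\alpha}.
\]

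The first step is to assemble sharp tuples for each of $\mathfrak{Q}_{\NN}$ and $\mathfrak{Q}_{\DD}$ from the data already supplied: the sharp tuples $(J,L;I,K)$ for $\fbD_{\alpha}$, and the sharp tuples $(J,L;II,KK)$ for $\fbD_{\alpha}^{*}\fbD_{\alpha}$ (with the associated $KK^{*}$ on the boundary side in the Neumann case). The components $\tbP_{\alpha-1}\in\OP(0,0)$ and $\frakI_{\alpha}\in\OP(-\infty,-\infty)$ are handled exactly as in \propref{prop:G0tuples}: they both contribute the domain tuple $(J,L)$ on the codomain side, with $\frakI_{\alpha}$ in fact mapping into $(0,0)$. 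The overall basic sharp tuples then read, schematically, $(J,L;(II,0,J,0),(KK,KK^{*},L,0))$ in the Neumann case and $(J,L;(II,0,J,0),(KK,K^{*},L,0))$ in the Dirichlet case, where $K^{*}$ denotes the codomain boundary tuple for $\bB_{\alpha}$ inherited from the sharpness of $(J,L;I,K)$ for $\fbD_{\alpha}\oplus\bB_{\alpha}$.

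The second step is to insert these sharp tuples into the a priori estimate \eqref{eq:overdetermined_a_priori}. Because $\mathfrak{Q}_{\NN}$ and $\mathfrak{Q}_{\DD}$ are \emph{injective} by \propref{prop:new_estimate2}, the finite-dimensional kernel projection $\frakI$ appearing on the right-hand side of \eqref{eq:overdetermined_a_priori} is identically zero, so that term drops out entirely. Writing out the norm of $\mathfrak{Q}_{\NN}\Psi$ (resp.\ $\mathfrak{Q}_{\DD}\Psi$) as the sum of the norms of its four components yields
\[
\|\Psi\|_{J,L,p}\lesssim \|\fbD_{\alpha}^{*}\fbD_{\alpha}\Psi\|_{II,KK,p}+\|\bB^{*}_{\alpha}\fbD_{\alpha}\Psi\|_{0,KK^{*},p}+\|\tbP_{\alpha-1}\Psi\|_{J,L,p}+\|\frakI_{\alpha}\Psi\|_{0,0,p}
\]
for the Neumann system, and the analogous inequality with $\|\bB_{\alpha}\Psi\|_{0,K^{*},p}$ replacing the boundary term for the Dirichlet system. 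In the Dirichlet case the additional assumption $\Psi\in\ker\bB_{\alpha}$ in \eqref{eq:Ak_estiamte_psi_requirements} makes this boundary term vanish, producing the second estimate in \eqref{eq:D^*D_estimate}. This establishes both inequalities for every $\Psi$ in the stated regularity class, as required.

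The only delicate point, and the part I expect to require the most care, is the bookkeeping of sharp tuples: one must verify that the codomain tuples associated to $\fbD_{\alpha}^{*}\fbD_{\alpha}$ inside $\mathfrak{Q}_{\NN}$ and $\mathfrak{Q}_{\DD}$ really coincide with $(II,KK)$, rather than something weaker forced by the direct sum with $\tbP_{\alpha-1}$ and $\frakI_{\alpha}$. This follows from \propref{prop:G0tuples} and the observation made just before the statement, namely that $(J,L;(II,0),(KK,KK^{*}))$ are already sharp tuples for the combined $\fbD_{\alpha}^{*}\fbD_{\alpha}\oplus\bB^{*}_{\alpha}\fbD_{\alpha}$ (and similarly with $\bB_{\alpha}$ in the Dirichlet case), so appending zero-order and smoothing components from the left keeps the codomain Sobolev indices of these two distinguished blocks unchanged. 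Once this identification is in place, the proof is essentially a transcription of \eqref{eq:overdetermined_a_priori}.
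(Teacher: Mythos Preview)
Your proposal is correct and follows essentially the same approach as the paper, which proves the estimate in a single sentence by invoking the overdetermined ellipticities from \propref{prop:new_estimate2} once the sharp tuples are identified. Your version simply unpacks the details the paper leaves implicit: the assembly of sharp tuples via \propref{prop:G0tuples}, the vanishing of the kernel-projection term in \eqref{eq:overdetermined_a_priori} by injectivity, and the disappearance of the $\bB_{\alpha}$ contribution in the Dirichlet case under the hypothesis $\Psi\in\ker\bB_{\alpha}$.
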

\begin{proof}
The estimate \eqref{eq:D^*D_estimate} follows from the overdetermined ellipticities in \propref{prop:new_estimate2}, once the sharp tuples for the corresponding systems have been identified by composition, direct summation, and the fact that $\tbP_{\alpha-1} \in \OP(0,0)$ (\propref{prop:G0tuples}).  
\end{proof}

\subsection{Stage 3: Closed range arguments and a priori estimates}
Due to \propref{prop:NDR}, we have that for every tuples of real numbers $S, T$ with $\min(S,T) \geq 0$:
\[
\begin{aligned}
&\NN:\qquad && \module_{\N}^{\alpha} \subseteq \scrN^{S,T}_{p}(\fbD_{\alpha-1}^*, \bB^*_{\alpha-1}), \\
&\DD:\qquad && \module_{\D}^{\alpha} \subseteq \scrN^{S,T}_{p}(\fbD_{\alpha-1}^*).
\end{aligned}
\]

Thus, writing $\id = (\id - \frakI_{\alpha}) + \frakI_{\alpha}$ when restricted to the spaces on the right-hand side, we obtain the topologically direct splittings:
\[
\begin{aligned}
&\NN:\qquad && \scrN^{S,T}_{p}(\fbD_{\alpha-1}^*, \bB^*_{\alpha-1}) = \scrN_{p,\bot}^{S,T}(\fbD_{\alpha-1}^*, \bB^*_{\alpha-1}) \oplus \module_{\N}^{\alpha}, \\
&\DD:\qquad && \scrN^{S,T}_{p}(\fbD_{\alpha-1}^*) = \scrN_{p,\bot}^{S,T}(\fbD_{\alpha-1}^*) \oplus \module_{\D}^{\alpha}.
\end{aligned}
\]

These decompositions are $L^2$-orthogonal for every $p \geq 2$ and any such $S, T$. Since they hold for every $S,T$, by the graded Fréchet structure, this further yields $L^2$-orthogonal topologically-direct decompositions of Fréchet spaces:
\[
\begin{aligned}
&\NN:\qquad && \scrN(\fbD_{\alpha-1}^*, \bB^*_{\alpha-1}) = \scrN_{\bot}(\fbD_{\alpha-1}^*, \bB^*_{\alpha-1}) \oplus \module_{\N}^{\alpha}, \\
&\DD:\qquad && \scrN(\fbD_{\alpha-1}^*) = \scrN_{\bot}(\fbD_{\alpha-1}^*) \oplus \module_{\D}^{\alpha}.
\end{aligned}
\]

Combined with the auxiliary decomposition \eqref{eq:aux_induction} induced by $\fbD_{\alpha}$, we therefore obtain the following topologically-direct, $L^2$-orthogonal decompositions:
\beq
\begin{aligned}
&\NN:\qquad && \Gamma(\bbF_{\alpha}; \bbG_{\alpha}) = \scrR(\fbD_{\alpha-1}) \oplus \scrN_{\bot}(\fbD_{\alpha-1}^*, \bB^*_{\alpha-1}) \oplus \module_{\N}^{\alpha}, \\
&\DD:\qquad && \Gamma(\bbF_{\alpha}; \bbG_{\alpha}) = \scrR(\fbD_{\alpha-1}; \bB_{\alpha-1}) \oplus \scrN_{\bot}(\fbD_{\alpha-1}^*) \oplus \module_{\D}^{\alpha}.
\end{aligned}
\label{eq:N0spdecomp}
\eeq
In both cases, the projection onto the middle summand is given by the map
\[
\tbP_{\alpha-1}^\bot = (\id - \frakI_{\alpha})(\id - \tbP_{\alpha-1}).
\]
By the composition rules of Green operators, $\tbP_{\alpha-1}^\bot \in \OP(0,0)$. Since all projections onto the closed subspaces in \eqref{eq:N0spdecomp} are in $\OP(0,0)$, it follows from a density/continuity argument similar to that of \lemref{lem:Wspaux} that:
\beq
\begin{aligned}
&\NN:\qquad && W^{S,T}_{p}(\bbF_{\alpha}; \bbG_{\alpha}) = \overline{\scrR^{S,T}_p(\fbD_{\alpha-1})} \oplus \scrN_{p,\bot}^{S,T}(\fbD_{\alpha-1}^*, \bB^*_{\alpha-1}) \oplus \module^{\alpha}_{\N}, \\
&\DD:\qquad && W^{S,T}_{p}(\bbF_{\alpha}; \bbG_{\alpha}) = \overline{\scrR^{S,T}_p(\fbD_{\alpha-1}; \bB_{\alpha-1})} \oplus \scrN_{p,\bot}^{S,T}(\fbD_{\alpha-1}^*) \oplus \module^{\alpha}_{\D}.
\end{aligned}
\label{eq:WspN0}
\eeq

Finally, for every $S, T$ as above we note that:
\beq 
\begin{aligned}
&\NN:\qquad && \scrN(\fbD_{\alpha-1}^*, \bB^*_{\alpha-1}) \hookrightarrow \scrN^{S,T}_p(\fbD_{\alpha-1}^*, \bB^*_{\alpha-1}), \\
&\DD:\qquad && \scrN(\fbD_{\alpha-1}^*) \hookrightarrow \scrN^{S,T}_p(\fbD_{\alpha-1}^*).
\end{aligned}
\label{eq:first_inclusions} 
\eeq 
\begin{lemma}
\label{lem:density_Nsp}
The continuous inclusions
\[
\begin{aligned}
&\NN:\qquad &&\scrN_{\bot}(\fbD_{\alpha-1}^*, \bB^*_{\alpha-1}) \hookrightarrow \scrN^{S,T}_{\bot, p}(\fbD_{\alpha-1}^*, \bB^*_{\alpha-1}), \\
&\DD:\qquad &&\scrN_{\bot}(\fbD_{\alpha-1}^*) \hookrightarrow \scrN^{S,T}_{\bot, p}(\fbD_{\alpha-1}^*)
\end{aligned}
\]
are dense.
\end{lemma}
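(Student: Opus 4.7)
The plan is to use the projection $\tbP_{\alpha-1}^\bot = (\id - \frakI_\alpha)(\id - \tbP_{\alpha-1})$ identified in the discussion preceding the lemma, combined with a smoothing-and-projecting argument. By the induction hypothesis we have $\tbP_{\alpha-1} \in \OP(0,0)$ and $\frakI_\alpha \in \OP(-\infty,-\infty)$, so by the composition rules $\tbP_{\alpha-1}^\bot \in \OP(0,0)$. By \corrref{corr:G0props} it extends continuously as $W^{S,T}_p \to W^{S,T}_p$ for every $S,T$ with $\min S,\min T \geq 0$, and since smooth sections are dense in $W^{S,T}_p$, the projection identity $(\tbP_{\alpha-1}^\bot)^2 = \tbP_{\alpha-1}^\bot$ persists for the continuous extension.

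Given $\Theta \in \scrN^{S,T}_{\bot,p}(\fbD_{\alpha-1}^*, \bB^*_{\alpha-1})$, I would first observe that, by comparison with \eqref{eq:WspN0}---obtained by the same density/continuity argument as in \lemref{lem:Wspaux}---the continuous Sobolev extension of $\tbP_{\alpha-1}^\bot$ must coincide with the abstract projection onto the middle summand $\scrN^{S,T}_{p,\bot}(\fbD_{\alpha-1}^*, \bB^*_{\alpha-1})$; in particular $\tbP_{\alpha-1}^\bot \Theta = \Theta$. Next, invoking the density of $\Gamma(\bbF_\alpha;\bbG_\alpha)$ in $W^{S,T}_p(\bbF_\alpha;\bbG_\alpha)$, I pick a sequence $(\Theta_n) \subset \Gamma(\bbF_\alpha;\bbG_\alpha)$ with $\Theta_n \to \Theta$ in $W^{S,T}_p$, and set $\tilde\Theta_n := \tbP_{\alpha-1}^\bot \Theta_n$.

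The sections $\tilde\Theta_n$ are smooth since $\tbP_{\alpha-1}^\bot$ is a Green operator and thus maps $\Gamma$ into $\Gamma$; by the defining property of the smooth projection onto the middle summand of \eqref{eq:N0spdecomp}, they lie in $\scrN_\bot(\fbD_{\alpha-1}^*, \bB^*_{\alpha-1})$. Finally, by $W^{S,T}_p$-continuity of $\tbP_{\alpha-1}^\bot$ and the identity $\tbP_{\alpha-1}^\bot \Theta = \Theta$, we have $\tilde\Theta_n \to \Theta$ in $W^{S,T}_p$, which establishes density. The Dirichlet case is completely analogous, with the projection built from the Dirichlet auxiliary decomposition of $\fbD_{\alpha-1}$ and with $\scrN(\fbD_{\alpha-1}^*, \bB^*_{\alpha-1})$ replaced everywhere by $\scrN(\fbD_{\alpha-1}^*)$.

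The main---though quite modest---obstacle is the bookkeeping required to identify the continuous Sobolev extension of the concrete operator $\tbP_{\alpha-1}^\bot \in \OP(0,0)$ with the abstract projection determined by the direct decomposition \eqref{eq:WspN0}; this identification is precisely what ensures that $\Theta$ is fixed by the smoothing-and-projecting procedure. All remaining ingredients are direct applications of the algebra of $\OP(0,0)$ and the density of smooth sections in Sobolev spaces.
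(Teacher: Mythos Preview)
Your proposal is correct and follows essentially the same approach as the paper: approximate $\Theta$ by smooth sections $\Theta_n$, project via $\tbP_{\alpha-1}^\bot \in \OP(0,0)$, and use $W^{S,T}_p$-continuity together with $\tbP_{\alpha-1}^\bot\Theta=\Theta$ to conclude. The identification of the Sobolev extension of $\tbP_{\alpha-1}^\bot$ with the abstract projection onto the middle summand of \eqref{eq:WspN0} is handled in the paper exactly as you outline.
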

\begin{proof}
The required inclusions are obtained by applying the projection $\id - \frakI_{\alpha}$ to both sides of \eqref{eq:first_inclusions}. For density, let $\Psi$ be an element of either $\scrN^{S,T}_{\bot, p}(\fbD_{\alpha-1}^*, \bB^*_{\alpha-1})$ in the $\NN$ case or $\scrN^{S,T}_{\bot, p}(\fbD_{\alpha-1}^*)$ in the $\DD$ case. Since $\Gamma(\bbF_{\alpha}; \bbG_{\alpha})$ is dense in $W^{S,T}_{p}(\bbF_{\alpha}; \bbG_{\alpha})$, there exists a sequence $\Psi_{n} \in \Gamma(\bbF_{\alpha}; \bbG_{\alpha})$ such that:
\[
\Psi_{n} \to \Psi, \qquad \text{in } W^{S,T}_{p}.
\]
Since $\tbP_{\alpha-1}^{\bot} \in \OP(0, 0)$ is $W^{S,T}_{p}$-continuous due to \corrref{corr:G0props}, the map $\tbP_{\alpha-1}^{\bot}: W^{S,T}_{p}(\bbF_{\alpha}; \bbG_{\alpha}) \to W^{S,T}_{p}(\bbF_{\alpha}; \bbG_{\alpha})$ is, by construction, the projection onto $\scrN^{S,T}_{\bot, p}(\fbD_{\alpha-1}^*, \bB^*_{\alpha-1})$ in the $\NN$ case or $\scrN^{S,T}_{\bot, p}(\fbD_{\alpha-1}^*)$ in the $\DD$ case. Therefore, $\tbP_{\alpha-1}^{\bot} \Psi = \Psi$, and by continuity:
\[
\begin{aligned}
&\NN:\qquad && \scrN_{\bot}(\fbD_{\alpha-1}^*, \bB^*_{\alpha-1}) \ni \tbP_{\alpha-1}^{\bot} \Psi_n \to \tbP_{\alpha-1}^{\bot} \Psi = \Psi, \\
&\DD:\qquad && \scrN_{\bot}(\fbD_{\alpha-1}^*) \ni \tbP_{\alpha-1}^{\bot} \Psi_n \to \tbP_{\alpha-1}^{\bot} \Psi = \Psi,
\end{aligned}
\]
which completes the proof.
\end{proof}
In the Dirichlet case, we have an additional refinement, based on \eqref{eq:aux_induction_Refined} in the induction hypothesis:
\begin{lemma}
\label{lem:N_Dirichlet_condition_induction}
In the $\DD$ case, for $(S, T)$ suitable for $\bB_{\alpha}$, if $\Psi \in W^{S,T}_{p}(\bbF_{\alpha}; \bbG_{\alpha}) \cap \ker\bB_{\alpha}$, then $\tbP_{\alpha-1}^{\bot} \Psi \in W^{S,T}_{p}(\bbF_{\alpha}; \bbG_{\alpha}) \cap \ker\bB_{\alpha}$. Hence, there is an additional dense inclusion:
\[
\scrN_{\bot}(\fbD_{\alpha-1}^*) \cap \ker\bB_{\alpha} \hookrightarrow \scrN^{S,T}_{\bot, p}(\fbD_{\alpha-1}^*) \cap \ker\bB_{\alpha}.
\]
\end{lemma}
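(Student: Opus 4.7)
The plan is to establish the operator identity $\bB_{\alpha}\tbP_{\alpha-1}^{\bot}\Psi = 0$ for every $\Psi\in W^{S,T}_{p}(\bbF_{\alpha};\bbG_{\alpha})\cap\ker\bB_{\alpha}$ by expanding $\tbP_{\alpha-1}^{\bot} = (\id - \frakI_{\alpha})(\id - \tbP_{\alpha-1})$ and handling each factor separately. The range of $\frakI_{\alpha}$ is $\module_{\D}^{\alpha}$, which by \eqref{eq:cohomology_in_the_proof} is contained in $\ker\bB_{\alpha}$, so $\bB_{\alpha}\frakI_{\alpha} = 0$ and this factor contributes nothing. The crux is therefore $\bB_{\alpha}\tbP_{\alpha-1} = 0$ as an operator identity on $W^{S,T}_{p}$. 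Writing $\tbP_{\alpha-1} = \fbD_{\alpha-1}\bP_{\alpha-1}$, I would invoke two inputs from the induction hypothesis: the balance condition in \defref{def:balance} gives $\bB_{\alpha-1}\bP_{\alpha-1} = 0$, and the Dirichlet containment \eqref{eq:containments_induction} is precisely the statement that $\bB_{\alpha}\fbD_{\alpha-1}\phi = 0$ whenever $\phi\in\ker\bB_{\alpha-1}$. Composing these at the smooth level yields $\bB_{\alpha}\tbP_{\alpha-1}\Psi = 0$ for smooth $\Psi$, and continuity of the composition under the appropriate lenient mapping properties propagates the identity to $W^{S,T}_{p}$ using the density of $\Gamma(\bbF_{\alpha};\bbG_{\alpha})$ in $W^{S,T}_{p}(\bbF_{\alpha};\bbG_{\alpha})$.

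For the density inclusion, I would mirror the template of \lemref{lem:density_Nsp} after interpolating one preliminary step: the auxiliary density claim that $\Gamma(\bbF_{\alpha};\bbG_{\alpha})\cap\ker\bB_{\alpha}$ is dense in $W^{S,T}_{p}(\bbF_{\alpha};\bbG_{\alpha})\cap\ker\bB_{\alpha}$. Given $\Psi$ in the Sobolev kernel and an arbitrary smooth approximation $\tilde\Psi_{n}\to\Psi$ in $W^{S,T}_{p}$, the normality of $\bB_{\alpha}$ (cf.\ \corrref{corr:normal_system_2}(1)) supplies a Poisson-type right inverse within the calculus producing smooth $\phi_{n}$ with $\bB_{\alpha}\phi_{n} = \bB_{\alpha}\tilde\Psi_{n}$ and $\|\phi_{n}\|_{S,T,p}\to 0$; then $\tilde\Psi_{n} - \phi_{n}\in\Gamma\cap\ker\bB_{\alpha}$ approximates $\Psi$. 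With this in hand, given $\Psi\in\scrN^{S,T}_{\bot, p}(\fbD_{\alpha-1}^{*})\cap\ker\bB_{\alpha}$, choose any such smooth sequence $\hat\Psi_{n}\to\Psi$ in $\Gamma\cap\ker\bB_{\alpha}$. Applying $\tbP_{\alpha-1}^{\bot}$ preserves smoothness, lands in $\scrN_{\bot}(\fbD_{\alpha-1}^{*})$ by the projection property at the smooth level, and by the first part of the present lemma also lands in $\ker\bB_{\alpha}$. Since $\tbP_{\alpha-1}^{\bot}\in\OP(0,0)$ is $W^{S,T}_{p}$-continuous and $\tbP_{\alpha-1}^{\bot}\Psi = \Psi$, the sequence $\tbP_{\alpha-1}^{\bot}\hat\Psi_{n}$ is the desired approximation in $\scrN_{\bot}(\fbD_{\alpha-1}^{*})\cap\ker\bB_{\alpha}$.

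The principal obstacle I expect is the auxiliary density of $\Gamma\cap\ker\bB_{\alpha}$ in its Sobolev counterpart. Normality is clearly the correct structural hypothesis, but extracting from it a smooth right inverse for $\bB_{\alpha}$ whose lenient mapping properties are compatible with every $(S,T)$ suitable for $\bB_{\alpha}$ requires careful use of the Poisson-type extensions of the Boutet de Monvel calculus. A direct adaptation of the surjectivity argument in the proof of \corrref{corr:normal_system_2}, combined with the mapping properties in \eqref{eq:mapping_proprety_diffrent_components}, should close this gap, but it is the technical pinch-point of the proof.
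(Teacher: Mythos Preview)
Your proposal is correct and follows essentially the same approach as the paper. The paper argues that $\id-\tbP_{\alpha-1}^{\bot}$ projects onto $\scrR(\fbD_{\alpha-1};\bB_{\alpha-1})\oplus\module_{\D}^{\alpha}$, on which $\bB_{\alpha}$ vanishes by the induction hypothesis and the definition of $\module_{\D}^{\alpha}$, hence $\bB_{\alpha}\tbP_{\alpha-1}^{\bot}=\bB_{\alpha}$; your expansion of $\tbP_{\alpha-1}^{\bot}=(\id-\frakI_{\alpha})(\id-\tbP_{\alpha-1})$ reaches the same identity by the same ingredients. For density the paper simply says ``proceeds as in \lemref{lem:density_Nsp}'', whereas you correctly identify and spell out the implicit auxiliary step (density of $\Gamma\cap\ker\bB_{\alpha}$ in $W^{S,T}_{p}\cap\ker\bB_{\alpha}$) that makes the template applicable; this step is standard from normality and the Poisson right inverse, so your flagged pinch-point is real but routine.
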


\begin{proof}
In the $\DD$ case, since $\module_{\D}^{\alpha} \subseteq \ker \bB_{\alpha}$, the refined decomposition in \eqref{eq:aux_induction_Refined} reads that, when restricted to $\ker \bB_{\alpha}$, $\tbP^{\bot}_{\alpha-1}$ becomes the projection onto $(\scrN_{\bot}(\fbD_{\alpha-1}^*)\cap\ker\bB_{\alpha}) \oplus\module_{\D}^{\alpha}$. The Sobolev version then follows by continuity, and the density argument proceeds as in the proof of \lemref{lem:density_Nsp}.
\end{proof}

Recall the discussion surrounding \eqref{eq:range_adapted_Green_system}--\eqref{eq:range_green_closure}, and in particular the convention that we use the notation $\scrR^{I, K}_{p}(\fbD_{\alpha})$ and $\scrR^{I, K}_{p}(\fbD_{\alpha}; \bB_{\alpha})$ when the corresponding ranges of the adapted Green systems are closed.  
\begin{proposition}
\label{prop:Ak_closed_range}
Let $(J,L;I,K)$ be sharp tuples for $\fbD_{\alpha}$ also suitable for $\bB_{\alpha}$. Then the following subspaces are closed:
\beq
\begin{aligned}
&\NN:\quad && \scrR^{I, K}_{p}(\fbD_{\alpha}) \subseteq W^{I, K}_{p}(\bbF_{\alpha+1}; \bbG_{\alpha+1}), \\
&\DD:\quad && \scrR^{I, K}_{p}(\fbD_{\alpha}; \bB_{\alpha}) \subseteq W^{I, K}_{p}(\bbF_{\alpha+1}; \bbG_{\alpha+1}).
\end{aligned}
\label{eq:closed_ranges_proof}
\eeq
Moreover, the following estimate holds:
\beq
\|\Psi\|_{J, L, p} \lesssim \|\fbD_{\alpha} \Psi\|_{I, K, p},
\label{eq:estimateNsp}
\eeq
for any $\Psi$ that belongs to: 
\[
\begin{aligned}
&\NN:\quad && \Psi \in \scrN^{J, L}_{p, \bot}(\fbD_{\alpha-1}^*, \bB^*_{\alpha-1}), \\
&\DD:\quad && \Psi \in \scrN^{J, L}_{p, \bot}(\fbD_{\alpha-1}^*) \cap \ker\bB_{\alpha}.
\end{aligned}
\]
\end{proposition}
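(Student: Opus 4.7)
The plan is to prove the estimate \eqref{eq:estimateNsp} and the closedness of the ranges in tandem: the estimate supplies the analytic input, while the splitting \eqref{eq:WspN0} isolates the subspace on which it operates.

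For the estimate, I would first observe that if $\Psi$ lies in $\scrN^{J,L}_{p,\bot}(\fbD^*_{\alpha-1}, \bB^*_{\alpha-1})$ in the Neumann case, or in $\scrN^{J,L}_{p,\bot}(\fbD^*_{\alpha-1}) \cap \ker\bB_\alpha$ in the Dirichlet case, then by construction of the direct decomposition \eqref{eq:WspN0}---in which $\tbP_{\alpha-1}$ projects onto the first summand and $\frakI_\alpha$ onto the third---one has $\tbP_{\alpha-1}\Psi = 0$ and $\frakI_\alpha\Psi = 0$. Since $\Psi$ also satisfies the hypotheses of the a priori estimate \eqref{eq:Ak_overdetermined_elliptic_estimate} (with the additional $\ker\bB_\alpha$ condition in the Dirichlet case), substituting these vanishings into that estimate immediately yields \eqref{eq:estimateNsp}.

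For the closed range, I would take a sequence $\Theta_n := \fbD_\alpha \Psi_n$ converging in $W^{I,K}_p$, with the $\Psi_n$ further required to satisfy $\bB_\alpha \Psi_n = 0$ in the Dirichlet case. Using \eqref{eq:WspN0}, decompose
\[
\Psi_n = \tbP_{\alpha-1}\Psi_n + \Phi_n + \frakI_\alpha \Psi_n, \qquad \Phi_n := \tbP^\bot_{\alpha-1}\Psi_n,
\]
and argue that only the middle summand contributes to $\fbD_\alpha \Psi_n$. The induction hypothesis \eqref{eq:containments_induction} gives $\fbD_\alpha = 0$ on $\scrR(\fbD_{\alpha-1})$ in the Neumann case, and on $\scrR(\fbD_{\alpha-1};\bB_{\alpha-1})$ in the Dirichlet case; combining this with the Sobolev continuity of $\fbD_\alpha$, the identity $\tbP_{\alpha-1} = \fbD_{\alpha-1}\bP_{\alpha-1}$, and the balance identity $\bB_{\alpha-1}\bP_{\alpha-1}=0$ in the Dirichlet case, the vanishing extends to $\tbP_{\alpha-1}\Psi_n$ at Sobolev regularity. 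The finite-dimensional piece $\frakI_\alpha \Psi_n$ lies in $\module^\alpha \subseteq \ker\fbD_\alpha$ by \eqref{eq:cohomology_in_the_proof}. Hence $\Theta_n = \fbD_\alpha \Phi_n$; the estimate then applies to $\Phi_n$, which lies in the middle summand and---by \lemref{lem:N_Dirichlet_condition_induction}---also in $\ker\bB_\alpha$ in the Dirichlet case. This shows $(\Phi_n)$ is Cauchy in $W^{J,L}_p$; its limit $\Phi$ lies in the same closed subspace, and $\Theta = \fbD_\alpha \Phi$ belongs to the asserted range.

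The main obstacle is the careful tracking of how the projections $\tbP_{\alpha-1}$ and $\tbP^\bot_{\alpha-1}$, engineered at the smooth level in \secref{sec:Analytical}, interact with the Dirichlet condition at Sobolev regularity. The pivotal ingredients are the balance identity $\bB_{\alpha-1}\bP_{\alpha-1}=0$, which places $\tbP_{\alpha-1}\Psi_n$ in a Sobolev space on which $\fbD_\alpha$ vanishes, and \lemref{lem:N_Dirichlet_condition_induction}, which guarantees that $\tbP^\bot_{\alpha-1}$ preserves $\ker\bB_\alpha$. Once these are in hand, the remainder is a routine Cauchy-sequence argument.
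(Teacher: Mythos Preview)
Your proposal is correct and follows essentially the same approach as the paper: both use the decomposition \eqref{eq:WspN0} to reduce to the middle summand, on which $\tbP_{\alpha-1}$ and $\frakI_\alpha$ vanish, so that \eqref{eq:Ak_overdetermined_elliptic_estimate} collapses to \eqref{eq:estimateNsp}. The only difference is packaging: the paper first records the identity $\fbD_\alpha(W^{J,L}_p) = \fbD_\alpha(\scrN^{J,L}_{p,\bot}(\cdots))$ (respectively intersected with $\ker\bB_\alpha$) and then cites the standard fact that an injective-type estimate implies closed range, whereas you unpack that fact into an explicit Cauchy-sequence argument.
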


\begin{proof}
Consider the decompositions in \eqref{eq:WspN0}. In the $\DD$ case, combining \lemref{lem:N_Dirichlet_condition_induction} with \eqref{eq:WspN0}, the refined decomposition \eqref{eq:aux_induction_Refined} completes into the Sobolev version: 
\[
W^{J, L}_{p}(\bbF_{\alpha}; \bbG_{\alpha}) \cap \ker\bB_{\alpha} = \overline{\scrR^{J, L}_{p}(\fbD_{\alpha-1}; \bB_{\alpha-1})} \oplus (\scrN^{J, L}_{p, \bot}(\fbD_{\alpha-1}^*) \cap \ker\bB_{\alpha}) \oplus \module_{\D}^{\alpha}.
\]
Using the relations in the induction hypothesis \eqref{eq:containments_induction} and continuity, we obtain that $\fbD_{\alpha}$ vanishes identically on the first and third summands of this decomposition. Hence:
\beq
\begin{aligned}
&\NN:\quad && \fbD_{\alpha}(W^{J, L}_{p}(\bbF_{\alpha}; \bbG_{\alpha})) = \fbD_{\alpha}(\scrN^{J, L}_{p, \bot}(\fbD_{\alpha-1}^*, \bB^*_{\alpha-1})), \\
&\DD:\quad && \fbD_{\alpha}(W^{J, L}_{p}(\bbF_{\alpha}; \bbG_{\alpha})\cap\ker\bB_{\alpha}) = \fbD_{\alpha}(\scrN^{J, L}_{p, \bot}(\fbD_{\alpha-1}^*) \cap \ker\bB_{\alpha}).
\end{aligned}
\label{eq:ranges_in_the_proof}
\eeq

But now, for $\Psi$ in the corresponding subspaces: 
\[
\begin{aligned}
&\NN:\quad && \Psi \in \scrN^{J, L}_{p, \bot}(\fbD_{\alpha-1}^*, \bB^*_{\alpha-1}), \\
&\DD:\quad && \Psi \in \scrN^{J, L}_{p, \bot}(\fbD_{\alpha-1}^*) \cap \ker\bB_{\alpha},
\end{aligned}
\]
it holds by construction that $\tbP_{\alpha-1}\Psi=0$ and $\frakI_{\alpha}\Psi=0$ (and $\bB_{\alpha}\Psi=0$ in the $\DD$ case), hence the elliptic estimate \eqref{eq:Ak_overdetermined_elliptic_estimate} reduces to \eqref{eq:estimateNsp}. This implies that the ranges on the right hand side in \eqref{eq:ranges_in_the_proof} are closed subspaces (e.g., by \cite[Prop.~6.7, p.~583]{Tay11a}). Comparing with the subspaces on the left-hand side of \eqref{eq:ranges_in_the_proof}, we conclude that the ranges in the statement of the proposition \eqref{eq:closed_ranges_proof} are indeed closed, completing the proof.
\end{proof}

Since the proposition holds for any sharp tuples $(J,L;I,K)$, by applying it to the sharp tuples in \corrref{corr:sharp_tuples_M_T} for every $s \in \Nzero$ and using the Sobolev grading of the graded Fréchet space $\Gamma(\bbF_{\alpha+1};\bbG_{\alpha+1})$, we find:
\begin{corollary}
The following subspaces are closed in the Fréchet topology:
\[
\begin{aligned}
&\NN:\quad && \scrR(\fbD_{\alpha}) \subseteq \Gamma(\bbF_{\alpha+1}; \bbG_{\alpha+1}), \\
&\DD:\quad && \scrR(\fbD_{\alpha}; \bB_{\alpha}) \subseteq \Gamma(\bbF_{\alpha+1}; \bbG_{\alpha+1}).
\end{aligned}
\]
\end{corollary}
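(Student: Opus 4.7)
The plan is to upgrade the family of Sobolev-closedness results in \propref{prop:Ak_closed_range} to a Fréchet-closedness statement, by exploiting that $\Gamma(\bbF_{\alpha+1};\bbG_{\alpha+1})$ is, as a tame Fréchet space, the intersection of its Sobolev completions, and that the orthogonal representatives from the decomposition \eqref{eq:WspN0} are consistent across the Sobolev scale.

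First I would take a sequence of the form $\fbD_\alpha\Psi_n$ in $\scrR(\fbD_\alpha)$ (respectively $\scrR(\fbD_\alpha;\bB_\alpha)$) converging in the Fréchet topology to some limit $\Theta$. By definition of this topology, convergence holds in every $W^{I,K}_p$-norm arising from a sharp tuple $(J,L;I,K)$ for $\fbD_\alpha$ and any $p\in(1,\infty)$. Applying \propref{prop:Ak_closed_range} at each such sharp tuple, I obtain $\Theta\in\scrR^{I,K}_p(\fbD_\alpha)$ and, using the decomposition \eqref{eq:WspN0} combined with the containments \eqref{eq:containments_induction} of the induction hypothesis, I select a preimage
\[
\Psi^{(J,L,p)}\in\scrN^{J,L}_{p,\bot}(\fbD^*_{\alpha-1},\bB^*_{\alpha-1})\quad\text{(resp.\ }\scrN^{J,L}_{p,\bot}(\fbD^*_{\alpha-1})\cap\ker\bB_\alpha\text{)}
\]
satisfying $\fbD_\alpha\Psi^{(J,L,p)}=\Theta$. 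The estimate \eqref{eq:estimateNsp} forces $\fbD_\alpha$ restricted to this orthogonal space to be injective, so each such preimage is unique in its scale.

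Next I would argue that the family $\BRK{\Psi^{(J,L,p)}}$ collapses to a single element $\Psi$ of $\Gamma(\bbF_\alpha;\bbG_\alpha)$. The key structural input is that the two projections $\tbP_{\alpha-1}\in\OP(0,0)$ and $\frakI_\alpha\in\OP(-\infty,-\infty)$ defining the orthogonal space act uniformly on the entire Sobolev hierarchy. Hence, given two sharp tuples, the higher-regularity representative automatically lies inside the lower-regularity orthogonal space; applying the injectivity from \eqref{eq:estimateNsp} at the lower regularity identifies the two representatives. Thus there is one element $\Psi$ lying in $W^{J,L}_p$ for every sharp tuple and every $p$, and Sobolev embedding places it in $\Gamma(\bbF_\alpha;\bbG_\alpha)$. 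In the Dirichlet case, the condition $\bB_\alpha\Psi=0$ is preserved by continuity of $\bB_\alpha$ across the scale. Since $\fbD_\alpha\Psi=\Theta$, the limit belongs to $\scrR(\fbD_\alpha)$ (resp.\ $\scrR(\fbD_\alpha;\bB_\alpha)$), establishing Fréchet closedness in both cases.

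The expected difficulty is not analytic but bookkeeping: checking that the orthogonal representatives from different sharp tuples genuinely identify, rather than merely project onto one another modulo finite-dimensional cohomology. This is where the earlier observation that the governing projections lie in a single fixed class within the calculus --- rather than varying with the Sobolev exponent as would be the case in a parametrix-based approach --- pays off: the orthogonal complements are the kernels of operators defined once and for all on $\Gamma$, so the consistency is automatic rather than negotiated tuple by tuple.
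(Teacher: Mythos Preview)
Your proposal is correct and follows essentially the same approach as the paper, which simply invokes ``the Sobolev grading of the Fréchet space $\Gamma(\bbF_{\alpha+1};\bbG_{\alpha+1})$'' in one line after \propref{prop:Ak_closed_range}. You have spelled out in detail exactly the consistency argument the paper leaves implicit: that the orthogonal preimages, being determined by the scale-independent projections $\tbP_{\alpha-1}\in\OP(0,0)$ and $\frakI_\alpha\in\OP(-\infty,-\infty)$ together with the injectivity from \eqref{eq:estimateNsp}, agree across all sharp tuples and hence define a single smooth preimage.
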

We now arrive at the point where the assumption on the sharp tuples of $\fbD_{\alpha}$ as an adapted Green system \defref{def:adapting_operator} becomes essential, and which is absent in the setting of non-varying ordered studied in \cite{KL23}. Since $\fbD_{\alpha}$ admits sharp tuples of the form $(M, T;\, 0, 0)$, combining this with \propref{prop:Ak_closed_range} yields that the following subspaces are closed:
\beq
\begin{aligned}
&\NN:\quad && \scrR^{0,0}_{p}(\fbD_{\alpha}) \subseteq L^{p}(\bbF_{\alpha+1}; \bbG_{\alpha+1}), \\
&\DD:\quad && \scrR^{0,0}_{p}(\fbD_{\alpha}; \bB_{\alpha}) \subseteq L^{p}(\bbF_{\alpha+1}; \bbG_{\alpha+1}).
\end{aligned}
\label{eq:closed_0_spaces}
\eeq
Without the closedness of these subspaces, the proposition below, which forms the analytical heart of the induction step, would fail\footnote{This proposition generalizes \cite[Prop.~4.7--4.8]{KL23}. The lemmas there remain valid as stated, but the analytical argument in their proof is flawed: distributional limits were incorrectly identified with weak $L^2$ limits. Here we remedy this by a more careful treatment. With substitutions from \eqref{eq:original_adapted}, our argument also applies in the narrower scope of \cite{KL23}.}.

\begin{proposition}
\label{prop:AprioriAstarAk} 
Let $(J, L; I, K)$ be sharp tuples for $\fbD_{\alpha}$ such that there exist $(I, K; II, KK)$ sharp tuples for $\fbD^*_{\alpha}$ and $(I, K; 0, KK')$ sharp tuples for $\bB_{\alpha}^*$. Let $\Theta$ belong to one of the spaces:
\beq
\begin{aligned}
&\NN:\quad && \Theta \in \scrR^{0,0}_{p}(\fbD_{\alpha}), \\
&\DD:\quad && \Theta \in \scrR^{0,0}_{p}(\fbD_{\alpha}; \bB_{\alpha}).
\end{aligned}
\label{eq:Theta_spaces_proof}
\eeq
Suppose there exists $\Xi \in W^{I, K}_p(\bbF_{\alpha+1}; \bbG_{\alpha+1})$ such that:
\[
\begin{aligned}
&\NN:\quad && \Theta - \Xi \in \scrN^{0,0}_{p}(\fbD_{\alpha}^*,\bB_{\alpha}^*), \\
&\DD:\quad && \Theta - \Xi \in \scrN^{0,0}_{p}(\fbD_{\alpha}^*).
\end{aligned}
\]
Then there exists $\Psi$ with:
\beq
\begin{aligned}
&\NN:\quad && \Psi \in \scrN^{J, L}_{p, \bot}(\fbD_{\alpha-1}^*, \bB^*_{\alpha-1}), \\
&\DD:\quad && \Psi \in \scrN^{J, L}_{p, \bot}(\fbD_{\alpha-1}^*) \cap \ker\bB_{\alpha},
\end{aligned}
\label{eq:potential_proof}
\eeq
such that $\Theta = \fbD_{\alpha} \Psi$ and the following estimates hold:
\beq
\begin{aligned}
&\NN:\quad && \|\Psi\|_{J, L, p} \lesssim \|\fbD_{\alpha}^* \fbD_{\alpha} \Psi\|_{II, KK, p} + \|\bB_{\alpha}^* \fbD_{\alpha} \Psi\|_{0, KK', p}, \\
&\DD:\quad && \|\Psi\|_{J, L, p} \lesssim \|\fbD_{\alpha}^* \fbD_{\alpha} \Psi\|_{II, KK, p}.
\end{aligned}
\label{eq:estimateN0p2}
\eeq
\end{proposition}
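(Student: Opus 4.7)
The plan is to construct $\Psi$ as the weak limit in $W^{J,L}_p$ of an approximating sequence of potentials, and then derive the estimate via the a priori bound \eqref{eq:D^*D_estimate}. Since $\Theta$ lies in the $L^p$-closure of the range of $\fbD_\alpha$ (resp.\ of $\fbD_\alpha$ restricted to $\ker\bB_\alpha$), fix a sequence $(\Psi_n) \subset \Gamma(\bbF_\alpha;\bbG_\alpha)$ (with $\bB_\alpha \Psi_n = 0$ in the $\DD$ case) such that $\fbD_\alpha \Psi_n \to \Theta$ in $L^p$. Replacing $\Psi_n$ by $\tbP^{\bot}_{\alpha-1}\Psi_n$ does not alter $\fbD_\alpha \Psi_n$: by the induction hypothesis \eqref{eq:containments_induction} and the identification \eqref{eq:cohomology_in_the_proof}, $\fbD_\alpha$ vanishes on the other two summands $\scrR(\fbD_{\alpha-1})$ (resp.\ $\scrR(\fbD_{\alpha-1};\bB_{\alpha-1})$) and on $\module^\alpha_{\mathrm{N}}$ (resp.\ $\module^\alpha_{\mathrm{D}}$) of the decomposition \eqref{eq:N0spdecomp}. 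In the $\DD$ case, \lemref{lem:N_Dirichlet_condition_induction} further ensures that this replacement preserves $\ker \bB_\alpha$. Hence one may assume that every $\Psi_n$ already lies in the perpendicular subspace \eqref{eq:potential_proof}.

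Next, apply \lemref{lem:D'mapping} to the adapted Green system $\fbD^*_\alpha$ (whose adapted adjoint is $\fbD_\alpha$ by \corrref{corr:symmetry_adapted}) with the approximating sequence $\Theta_n = \fbD_\alpha \Psi_n$. The standing hypothesis $\Theta - \Xi \in \scrN^{0,0}_p(\fbD^*_\alpha)$, together with $\Xi \in W^{I,K}_p$ (meaningful because $\min(I,K) > 0$), yields
\[
\sup_n \|\fbD^*_\alpha \fbD_\alpha \Psi_n\|_{II,KK,p} < \infty,
\]
and in the $\NN$ case the additional hypothesis $\Theta - \Xi \in \scrN^{0,0}_p(\fbD^*_\alpha, \bB^*_\alpha)$ gives $\sup_n \|\bB^*_\alpha \fbD_\alpha \Psi_n\|_{0,KK^*,p} < \infty$. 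Plugging these bounds into the a priori estimate \eqref{eq:D^*D_estimate}, and observing that $\tbP_{\alpha-1}\Psi_n = 0$ and $\frakI_\alpha \Psi_n = 0$ by the choice of $\Psi_n$, produces the uniform bound $\sup_n \|\Psi_n\|_{J,L,p} < \infty$.

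By reflexivity of $W^{J,L}_p$, a subsequence (still denoted $\Psi_n$) satisfies $\Psi_n \weak \Psi$ for some $\Psi \in W^{J,L}_p$. The perpendicular subspace in \eqref{eq:potential_proof} is weakly closed, being the intersection of the kernels of the continuous linear maps $\tbP_{\alpha-1}$, $\frakI_\alpha$, and (in the $\DD$ case) $\bB_\alpha$; hence $\Psi$ lies in it. The standard mapping property of $\fbD_\alpha$ is weak-to-weak continuous, so $\fbD_\alpha \Psi_n \weak \fbD_\alpha \Psi$, and comparison with the strong $L^p$ limit $\fbD_\alpha \Psi_n \to \Theta$ forces $\fbD_\alpha \Psi = \Theta$. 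The estimate \eqref{eq:estimateN0p2} is then obtained by applying \eqref{eq:D^*D_estimate} directly to $\Psi$, once more using the vanishing of the $\tbP_{\alpha-1}$ and $\frakI_\alpha$ terms on $\Psi$.

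The principal obstacle is the passage from the $L^p$-level approximation of $\Theta$ to the higher-regularity $W^{J,L}_p$-bound on the potentials $\Psi_n$. This is precisely what \lemref{lem:D'mapping} delivers, at the cost of the assumed regularity of the correcting element $\Xi$, and is the analytic manifestation of the generalized Green's formula \eqref{eq:integration_by_parts_corrected} enjoyed by $\fbD_\alpha$. Once this weak-mapping estimate is in hand, the remainder reduces to a routine combination of the enhanced ellipticity from Stage 2 with weak compactness.
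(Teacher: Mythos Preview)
Your proof is correct and follows essentially the same approach as the paper: approximate $\Theta$ by $\fbD_\alpha\Psi_n$ with $\Psi_n$ in the perpendicular subspace, invoke \lemref{lem:D'mapping} to bound $\fbD_\alpha^*\fbD_\alpha\Psi_n$ (and $\bB_\alpha^*\fbD_\alpha\Psi_n$ in the $\NN$ case), then use the a priori estimate \eqref{eq:D^*D_estimate} to obtain a uniform $W^{J,L}_p$ bound and pass to a weak limit. The only cosmetic difference is in the final identification $\fbD_\alpha\Psi=\Theta$: the paper exploits the compact embedding $W^{I,K}_p\hookrightarrow L^p$ (using $\min(I,K)>0$) to upgrade weak convergence of $\fbD_\alpha\Psi_n$ to strong $L^p$ convergence, whereas you argue directly via weak--weak continuity and uniqueness of weak $L^p$ limits; both are valid and equivalent here.
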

\begin{proof}
Since $(J, L; I, K)$ are sharp tuples for $\fbD_{\alpha}$, $(I, K; II, KK)$ are sharp tuples for $\fbD_{\alpha}^*$, and $(I, K; 0, KK')$ are sharp tuples for $\bB_{\alpha}^*$, it follows that
\[
(J, L; II, (KK, KK'))
\]
are sharp tuples for the system $\fbD_{\alpha}^* \fbD_{\alpha} \oplus \bB_{\alpha}^* \fbD_{\alpha}$.

By the assumption on $\Theta$ in \eqref{eq:Theta_spaces_proof} and the fact that the smooth version of \eqref{eq:ranges_in_the_proof} reads as
\[
\begin{aligned}
&\NN:\quad && \scrR(\fbD_{\alpha}) = \fbD_{\alpha}\big(\scrN_{\bot}(\fbD_{\alpha-1}^*, \bB^*_{\alpha-1})\big), \\
&\DD:\quad && \scrR(\fbD_{\alpha}; \bB_{\alpha}) = \fbD_{\alpha}\big(\scrN_{\bot}(\fbD_{\alpha-1}^*) \cap \ker\bB_{\alpha}\big),
\end{aligned}
\]
together with the density inclusions in \lemref{lem:density_Nsp}--\lemref{lem:N_Dirichlet_condition_induction}, we find that there exists a sequence $(\Psi_n)$ of smooth sections in:
\beq
\begin{aligned}
&\NN:\quad && (\Psi_n) \subset \scrN_{\bot}(\fbD_{\alpha-1}^*, \bB^*_{\alpha-1}), \\
&\DD:\quad && (\Psi_n) \subset \scrN_{\bot}(\fbD_{\alpha-1}^*) \cap \ker\bB_{\alpha},
\end{aligned}
\label{eq:approximating_sequence_prop_proof}
\eeq
such that $\fbD_{\alpha} \Psi_n \to \Theta$ in $L^{p}$.

Using \eqref{eq:Ak_overdetermined_elliptic_estimate} for the sharp tuples $(M, T;\, 0, 0)$ of $\fbD_{\alpha}$ and the closedness of the spaces in \eqref{eq:closed_0_spaces}, we may assume that $\Psi_n \to \Psi$ in $L^{p}$ and $\Theta = \fbD_{\alpha} \Psi$.

Let $\frakS_{\R}$, $\R \in \BRK{\NN,\DD}$, be the corresponding left inverses of the injective systems from \propref{prop:new_estimate2}. By the construction of the approximating sequence $(\Psi_{n})$ above, we have: 
\beq 
\begin{aligned} 
&\NN:\quad && \frakS_{\NN}(\fbD_{\alpha}^* \fbD_{\alpha} \Psi_n, \bB^*_{\alpha} \fbD_{\alpha} \Psi_{n}, 0, 0) = \Psi_{n}, \\
&\DD:\quad && \frakS_{\DD}(\fbD_{\alpha}^* \fbD_{\alpha} \Psi_n, 0, 0, 0) = \Psi_{n}.
\end{aligned} 
\label{eq:inverse_analytical_lemma}
\eeq 

Restricting these inverses to the relevant arguments gives the systems:
\[
\begin{aligned} 
&\NN:\quad && \frakS_{\NN}(\cdot,\cdot,0,0): \Gamma(\bbF_{\alpha};\bbG_{\alpha}) \oplus \Gamma(0;\bbL_{\alpha}) \to \Gamma(\bbF_{\alpha};\bbG_{\alpha}),\\
&\DD:\quad && \frakS_{\DD}(\cdot,0;0,0): \Gamma(\bbF_{\alpha};\bbG_{\alpha}) \to \Gamma(\bbF_{\alpha};\bbG_{\alpha}).
\end{aligned} 
\]

In the $\NN$ case, $\fbD_{\alpha}^*\fbD_{\alpha}$ has sharp tuples $(M,T;-M,-T)$, and $\bB^*_{\alpha}\fbD_{\alpha}$ takes values in $\bbL_{\alpha}$ over the boundary $\dM$, where there is no class restriction. By the properties of the left inverse from \thmref{thm:overdetermined_varying_orders}, for sufficiently small tuples $(S,T)$ with $\min(S,T) < 0$, we have the \emph{lenient} mapping property:
\[
\begin{aligned} 
&\NN:\quad && \frakS_{\NN}(\cdot,\cdot,0,0): W^{-M,-T}_{p}(\bbF_{\alpha};\bbG_{\alpha}) \oplus W_p^{0,-C}(0;\bbL_{\alpha}) \to W^{S,T}(\bbF_{\alpha};\bbG_{\alpha}),\\
&\DD:\quad && \frakS_{\DD}(\cdot,0;0,0): W^{-M,-T}_{p}(\bbF_{\alpha};\bbG_{\alpha}) \to W_p^{S,T}(\bbF_{\alpha};\bbG_{\alpha}).
\end{aligned} 
\]
We emphasize again that this lenient mapping property guarantees continuity between Sobolev norms but does not imply mapping into a space sufficient to invert the original problem (which would require a sharp mapping property). The mapping may even be compact.

Now, by the assumptions in the proposition, we see that the conditions of \lemref{lem:D'mapping} are satisfied for the adapted Green system $\fbD_{\alpha}$, with $\Theta_n = \fbD_{\alpha} \Psi_n$ and $\Theta, \Xi$ as stated. This yields:
\[
\lim_{n \to \infty} \fbD_{\alpha}^* \fbD_{\alpha} \Psi_n = \fbD_{\alpha}^* \Xi, \quad \text{in } W^{-M,-T}_{p},
\]
and, in the $\NN$ case,
\[
\NN:\quad \lim_{n \to \infty} \bB^*_{\alpha} \fbD_{\alpha} \Psi_{n}=\bB^*_{\alpha} \fbD_{\alpha} \Psi \quad \text{in } W^{0,-C}_{p}.
\]

By continuity of $\frakS_{\R}$, we obtain in $W^{S,T}_{p}$:
\[
\begin{aligned} 
&\NN:\quad && \lim_{n \to \infty} \frakS_{\NN}(\fbD_{\alpha}^* \fbD_{\alpha} \Psi_n,\, \bB^*_{\alpha} \fbD_{\alpha} \Psi_{n}, 0, 0) 
= \frakS_{\NN}(\fbD_{\alpha}^* \Xi,\, \bB^*_{\alpha} \Xi, 0, 0), \\
&\DD:\quad && \lim_{n \to \infty} \frakS_{\DD}(\fbD_{\alpha}^* \fbD_{\alpha} \Psi_n, 0, 0, 0) 
= \frakS_{\DD}(\fbD_{\alpha}^* \Xi, 0, 0, 0).
\end{aligned} 
\]

Comparing with \eqref{eq:inverse_analytical_lemma} and using uniqueness of limits (since $L^{p}$-convergence is stronger than the $W^{S,T}_{p}$-convergence) gives:
\[
\begin{aligned} 
&\NN:\quad && \frakS_{\NN}(\fbD_{\alpha}^* \Xi,\, \bB^*_{\alpha} \Xi, 0, 0) = \Psi, \\
&\DD:\quad && \frakS_{\DD}(\fbD_{\alpha}^* \Xi, 0, 0, 0) = \Psi.
\end{aligned} 
\]

Finally, since $\fbD_{\alpha}^* \Xi \in W^{II,KK}_{p}$ and $\bB_{\alpha}^* \Xi \in W^{0,KK'}_{p}$ by assumption, and given the specification of the relevant sharp tuples made earlier for the corresponding injective problems in \propref{prop:new_estimate2}, the properties of the left inverse imply that $\Psi \in W^{J,L}_p$ and, in particular, that it satisfies \eqref{eq:potential_proof}. The estimates in \eqref{eq:D^*D_estimate} then follow, yielding \eqref{eq:estimateN0p2} and thus completing the proof.
\end{proof}

\subsection{Stage 4: The auxiliary decomposition}
We now apply \propref{prop:L2_aux_adapted_pair} to the adapted Green system $\fbD_{\alpha}$. Since the ranges in \eqref{eq:closed_0_spaces} are closed, the particular case $p=2$ yields the $L^{2}$-orthogonal decompositions:
\beq
\begin{aligned}
&\NN:\quad && L^{2}(\bbF_{\alpha+1}; \bbG_{\alpha+1}) = \scrR^{0,0}_2(\fbD_{\alpha}) \oplus \scrN^{0,0}_2(\fbD_{\alpha}^*, \bB^*_{\alpha}), \\
&\DD:\quad && L^{2}(\bbF_{\alpha+1}; \bbG_{\alpha+1}) =\scrR^{0,0}_2(\fbD_{\alpha}; \bB_{\alpha})\oplus \scrN^{0,0}_2(\fbD_{\alpha}^*).
\end{aligned}
\label{eq:L2_splitting_proof}
\eeq 

\begin{proposition}
\label{prop:aux_first_step}
There exists an $L^{2}$-orthogonal, topologically-direct decomposition of Fréchet spaces:
\beq
\begin{aligned}
&\NN:\quad && \Gamma(\bbF_{\alpha+1}; \bbG_{\alpha+1}) = \scrR(\fbD_{\alpha}) \oplus \scrN(\fbD^*_{\alpha}, \bB_{\alpha}^*), \\
&\DD:\quad && \Gamma(\bbF_{\alpha+1}; \bbG_{\alpha+1}) = \scrR(\fbD_{\alpha}; \bB_{\alpha}) \oplus \scrN(\fbD^*_{\alpha}).
\end{aligned}
\label{eq:aux_decompositionAksmooth}
\eeq
Moreover, the continuous projection associated with these decompositions, onto either $\scrR(\fbD_{\alpha})$ or $\scrR(\fbD_{\alpha}; \bB_{\alpha})$, extends continuously to the $L^{2}$-orthogonal projection onto either $\scrR^{0,0}_{2}(\fbD_{\alpha})$ or $\scrR^{0,0}_{2}(\fbD_{\alpha};\bB^*_{\alpha})$, respectively, in the decomposition \eqref{eq:L2_splitting_proof}.
\end{proposition}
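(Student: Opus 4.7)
The plan is to upgrade the $L^{2}$-orthogonal decompositions \eqref{eq:L2_splitting_proof}, furnished by \propref{prop:L2_aux_adapted_pair}, into decompositions of the Fréchet space $\Gamma(\bbF_{\alpha+1};\bbG_{\alpha+1})$ by showing that both summands of a smooth section $\Theta$ under \eqref{eq:L2_splitting_proof} are themselves smooth. The analytical fuel for this regularity upgrade is \propref{prop:AprioriAstarAk}, together with the closed-range statement \propref{prop:Ak_closed_range}.

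Concretely, given $\Theta\in\Gamma(\bbF_{\alpha+1};\bbG_{\alpha+1})$, I will write $\Theta=\Theta_1+\Theta_2$ according to \eqref{eq:L2_splitting_proof}, so that $\Theta_1\in\overline{\scrR^{0,0}_2(\fbD_\alpha)}$ and $\Theta_2\in\scrN^{0,0}_2(\fbD^*_\alpha,\bB^*_\alpha)$ in the $\NN$ case, and analogously in the $\DD$ case. To apply \propref{prop:AprioriAstarAk} to $\Theta_1$, I will fix sharp tuples $(J,L;I,K)$ for $\fbD_\alpha$ with $\min(I,K)>0$ and select $\Xi:=\Theta$, which is smooth and therefore lies in $W^{I,K}_2(\bbF_{\alpha+1};\bbG_{\alpha+1})$. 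The hypothesis $\Theta_1-\Xi=-\Theta_2\in\scrN^{0,0}_2(\fbD^*_\alpha,\bB^*_\alpha)$ is then satisfied, and the proposition furnishes $\Psi\in\scrN^{J,L}_{2,\bot}(\fbD^*_{\alpha-1},\bB^*_{\alpha-1})$ (respectively $\scrN^{J,L}_{2,\bot}(\fbD^*_{\alpha-1})\cap\ker\bB_\alpha$) with $\Theta_1=\fbD_\alpha\Psi$.

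Next, I will bootstrap the regularity of $\Psi$ by repeating this construction along a sequence of sharp tuples with $s\to\infty$. The estimate \eqref{eq:estimateNsp} of \propref{prop:Ak_closed_range} shows that $\fbD_\alpha$ is injective on $\scrN^{J,L}_{2,\bot}(\fbD^*_{\alpha-1},\bB^*_{\alpha-1})$ (and on $\scrN^{J,L}_{2,\bot}(\fbD^*_{\alpha-1})\cap\ker\bB_\alpha$ in the Dirichlet case), so the potentials produced at different Sobolev scales must coincide. Consequently $\Psi$ lies in every $W^{J,L}_2$ and hence in $\Gamma(\bbF_\alpha;\bbG_\alpha)$ by Sobolev embedding; therefore $\Theta_1=\fbD_\alpha\Psi\in\scrR(\fbD_\alpha)$ (respectively $\scrR(\fbD_\alpha;\bB_\alpha)$) is smooth. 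The complement $\Theta_2=\Theta-\Theta_1$ is then smooth as well, and because it lies in $\scrN^{0,0}_2(\fbD^*_\alpha,\bB^*_\alpha)$, it belongs to $\scrN(\fbD^*_\alpha,\bB^*_\alpha)$ by \propref{prop:NDR} (respectively to $\scrN(\fbD^*_\alpha)$). This establishes the algebraic splitting \eqref{eq:aux_decompositionAksmooth}, while its $L^2$-orthogonality is inherited directly from \eqref{eq:L2_splitting_proof}.

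For the topological directness, both summands in \eqref{eq:aux_decompositionAksmooth} are closed in the Fréchet topology: the range side is closed by \propref{prop:Ak_closed_range} together with the Sobolev grading, and the kernel side is closed as the joint kernel of continuous Douglas–Nirenberg systems. The uniqueness of the splitting forces the smooth projection onto $\scrR(\fbD_\alpha)$ (resp.\ $\scrR(\fbD_\alpha;\bB_\alpha)$) to agree, on smooth sections, with the $L^2$-orthogonal projection onto $\overline{\scrR^{0,0}_2(\fbD_\alpha)}$ (resp.\ $\overline{\scrR^{0,0}_2(\fbD_\alpha;\bB_\alpha)}$); continuity of the smooth projection follows from the closed graph theorem applied in each Fréchet grading. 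The step I expect to require the most care is the bootstrap, where one must verify that the same $\Psi$ is extracted uniformly across all sharp tuples—the injectivity afforded by \eqref{eq:estimateNsp} on the relevant subspaces is precisely what makes this consistent and allows the argument to close.
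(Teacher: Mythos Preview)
Your proposal is correct and follows essentially the same route as the paper. The only organizational difference is that the paper first establishes the Sobolev-level decomposition
\[
W^{I,K}_{2}(\bbF_{\alpha+1};\bbG_{\alpha+1})=\scrR^{I,K}_{2}(\fbD_{\alpha})\oplus\scrN^{I,K}_{2}(\fbD^*_{\alpha},\bB^*_{\alpha})
\]
for \emph{each} sharp tuple (by taking an arbitrary $\Xi\in W^{I,K}_{2}$, splitting it in $L^{2}$, and invoking \propref{prop:AprioriAstarAk} exactly as you do), and then passes to the smooth statement via the Sobolev grading; you instead start with a smooth $\Theta$ and bootstrap the potential $\Psi$ across the scale of sharp tuples, using the injectivity estimate \eqref{eq:estimateNsp} to identify the potentials at different levels. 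These are two phrasings of the same argument—the paper's version has the minor bonus of recording the intermediate Sobolev decompositions explicitly, while yours goes straight to the Fréchet statement.
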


\begin{proof}
Let $(J, L; I, K)$ be sharp tuples for $\fbD_{\alpha}$ as in \propref{prop:AprioriAstarAk}. On the one hand, the following subspaces are closed due to \propref{prop:Ak_closed_range}:
\[
\begin{aligned}
&\NN:\quad && \scrR^{I, K}_2(\fbD_{\alpha}) \subseteq W^{I, K}_2(\bbF_{\alpha+1}; \bbG_{\alpha+1}), \\
&\DD:\quad && \scrR^{I, K}_2(\fbD_{\alpha}; \bB_{\alpha}) \subseteq W^{I, K}_2(\bbF_{\alpha+1}; \bbG_{\alpha+1}).
\end{aligned}
\]
On the other hand, the following subspaces are closed as they are kernels of continuous linear maps:
\[
\begin{aligned}
&\NN:\quad && \scrN^{I, K}_2(\fbD^*_{\alpha}, \bB^*_{\alpha})\subseteq W^{I, K}_2(\bbF_{\alpha+1}; \bbG_{\alpha+1}), \\
&\DD:\quad && \scrN^{I, K}_2(\fbD^*_{\alpha})\subseteq W^{I, K}_2(\bbF_{\alpha+1}; \bbG_{\alpha+1}).
\end{aligned}
\]
Hence, together with the containments:
\[
\begin{aligned}
&\NN:\quad && \scrR^{I, K}_2(\fbD_{\alpha}) \subseteq \scrR^{0, 0}_{2}(\fbD_{\alpha}), &&&&\scrN^{I, K}_2(\fbD_{\alpha}^*, \bB^*_{\alpha}) \subseteq \scrN^{0, 0}_2(\fbD_{\alpha}^*, \bB^*_{\alpha}), \\
&\DD:\quad && \scrR^{I, K}_2(\fbD_{\alpha}; \bB_{\alpha}) \subseteq \scrR^{0, 0}_{2}(\fbD_{\alpha}; \bB_{\alpha}), &&&&\scrN^{I, K}_2(\fbD_{\alpha}^*) \subseteq \scrN^{0, 0}_2(\fbD_{\alpha}^*),
\end{aligned}
\]
one finds that these subspaces are closed, intersect trivially, and are mutually $L^{2}$-orthogonal.

Thus, to prove:
\beq
\begin{aligned}
&\NN:\quad && W^{I, K}_{2}(\bbF_{\alpha+1}; \bbG_{\alpha+1}) = \scrR^{I, K}_2(\fbD_{\alpha}) \oplus \scrN^{I, K}_2(\fbD_{\alpha}^*, \bB^*_{\alpha}), \\
&\DD:\quad && W^{I, K}_{2}(\bbF_{\alpha+1}; \bbG_{\alpha+1}) = \scrR^{I, K}_2(\fbD_{\alpha}; \bB_{\alpha}) \oplus \scrN^{I, K}_2(\fbD_{\alpha}^*),
\end{aligned}
\label{eq:this_holds}
\eeq
it remains to show that the sum of spaces in each decomposition exhausts the whole of $W^{I, K}_{2}(\bbF_{\alpha+1}; \bbG_{\alpha+1})$. By the Sobolev grading of the graded Fréchet space $\Gamma(\bbF_{\alpha+1}; \bbG_{\alpha+1})$ and \corrref{corr:sharp_tuples_M_T}, if \eqref{eq:this_holds} holds for every sharp tuple $(J, L; I, K)$ for $\fbD_{\alpha}$, then \eqref{eq:aux_decompositionAksmooth} holds as well.

To prove this exhaustion, let $\Xi \in W^{I, K}_{2}(\bbF_{\alpha+1}; \bbG_{\alpha+1})$. Decompose it as an element in $L^{2}(\bbF_{\alpha+1}; \bbG_{\alpha+1})$ according to \eqref{eq:L2_splitting_proof}:
\[
\Xi = \Theta + \Phi,
\]
where:
\[
\begin{aligned}
&\NN:\qquad && \Theta \in \scrR^{0, 0}_2(\fbD_{\alpha}), \quad &&&&\Phi \in \scrN^{0, 0}_2(\fbD_{\alpha}^*, \bB^*_{\alpha}), \\
&\DD:\quad && \Theta \in \scrR^{0, 0}_2(\fbD_{\alpha}; \bB_{\alpha}), \qquad &&&&\Phi \in \scrN^{0, 0}_2(\fbD_{\alpha}^*).
\end{aligned}
\]
Since $\Xi \in W^{I, K}_2(\bbF_{\alpha+1}; \bbG_{\alpha+1})$, and $\Phi = \Xi - \Theta$ is in the corresponding kernel space, \propref{prop:AprioriAstarAk} applies, yielding $\Theta = \fbD_{\alpha} \Psi$ for some $\Psi \in W^{J, L}_2(\bbF_{\alpha}; \bbG_{\alpha})$ (with $\bB_{\alpha} \Psi = 0$ in the $\DD$ case). Therefore:
\[
\begin{aligned}
&\NN:\quad && \Phi \in \scrN^{0, 0}_2(\fbD_{\alpha}^*, \bB^*_{\alpha}) \cap W^{I, K}_2(\bbF_{\alpha+1}; \bbG_{\alpha+1}) = \scrN^{I, K}_2(\fbD_{\alpha}^*, \bB^*_{\alpha}), \\
&\DD:\quad && \Phi \in \scrN^{0, 0}_2(\fbD_{\alpha}^*) \cap W^{I, K}_2(\bbF_{\alpha+1}; \bbG_{\alpha+1}) = \scrN^{I, K}_2(\fbD_{\alpha}^*).
\end{aligned}
\]
This completes the proof. The $L^{2}$-continuity of the projections follows directly from this construction.
\end{proof}

\begin{theorem}
\label{thm:aux_induction_step}
$\fbD_{\alpha}$ induces a Neumann auxiliary decomposition in the $\NN$ case, and a Dirichlet auxiliary decomposition in the $\DD$ case. 
\end{theorem}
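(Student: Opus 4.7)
The smooth direct decomposition \eqref{eq:aux_decompositionAksmooth} has already been established in \propref{prop:aux_first_step}, together with $L^2$-continuity of the projection onto the range summand. Thus the remaining two items of \defref{def:aux_decomposition} (resp.\ \defref{def:aux_decompositionD}) are: (i) exhibiting that projection as an element of $\OP(0,0)$, and (ii) realising it as $\tbP_\alpha=\fbD_\alpha\bP_\alpha$ for a balance $\bP_\alpha$ for $\fbD_\alpha$ (with respect to $\bB_\alpha$ in the Dirichlet case). The plan is to produce a single explicit formula for $\bP_\alpha$ from the left inverse of the ``Laplacian-like'' systems in \propref{prop:new_estimate2}, and then read off the three defining properties of a balance and the $\OP(0,0)$-membership of the projection at once.

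First I would use \propref{prop:new_estimate2}: the overdetermined elliptic systems
\[
\mathfrak{L}_{\N}=\fbD_\alpha^*\fbD_\alpha\oplus\bB_\alpha^*\fbD_\alpha\oplus\tbP_{\alpha-1}\oplus\frakI_\alpha,\qquad
\mathfrak{L}_{\D}=\fbD_\alpha^*\fbD_\alpha\oplus\bB_\alpha\oplus\tbP_{\alpha-1}\oplus\frakI_\alpha
\]
are injective, hence \thmref{thm:overdetermined_varying_orders}(3) yields left inverses $\mathfrak{L}_{\N}^{-},\mathfrak{L}_{\D}^{-}$ within the calculus. I would then define
\[
\NN:\quad \bP_\alpha\Theta=\mathfrak{L}_{\N}^{-}(\fbD_\alpha^*\Theta,\bB_\alpha^*\Theta,0,0),\qquad
\DD:\quad \bP_\alpha\Theta=\mathfrak{L}_{\D}^{-}(\fbD_\alpha^*\Theta,0,0,0).
\]
Because $\fbD_\alpha^*$ and $\bB_\alpha^*$ arise from the generalized Green's formula \eqref{eq:integration_by_parts_corrected} and belong to the calculus, $\bP_\alpha$ is a bona fide Douglas--Nirenberg system.

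Next I would check that $\tbP_\alpha:=\fbD_\alpha\bP_\alpha$ coincides with the smooth projection of \propref{prop:aux_first_step}. Given $\Theta\in\Gamma(\bbF_{\alpha+1};\bbG_{\alpha+1})$, split it by \eqref{eq:aux_decompositionAksmooth} as $\Theta=\Theta_{\mathrm{R}}+\Theta_{\mathrm{K}}$ with $\Theta_{\mathrm{R}}\in\scrR(\fbD_\alpha)$ (resp.\ $\scrR(\fbD_\alpha;\bB_\alpha)$) and $\Theta_{\mathrm{K}}\in\scrN(\fbD_\alpha^*,\bB_\alpha^*)$ (resp.\ $\scrN(\fbD_\alpha^*)$). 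On $\Theta_{\mathrm{K}}$, all entries of the right-hand sides above vanish thanks to \propref{prop:NDR}, so $\bP_\alpha\Theta_{\mathrm{K}}=0$. On $\Theta_{\mathrm{R}}$, \propref{prop:AprioriAstarAk} (which extends to the smooth category by elliptic regularity, using item (2) of \thmref{thm:overdetermined_varying_orders} for $\mathfrak{L}_{\N}$ or $\mathfrak{L}_{\D}$) produces a unique $\Psi$ in $\scrN_\bot(\fbD_{\alpha-1}^*,\bB_{\alpha-1}^*)$ (resp.\ $\scrN_\bot(\fbD_{\alpha-1}^*)\cap\ker\bB_\alpha$) with $\fbD_\alpha\Psi=\Theta_{\mathrm{R}}$; applying $\mathfrak{L}_{\N/\D}$ to $\Psi$ reproduces the tuples above, so $\bP_\alpha\Theta_{\mathrm{R}}=\mathfrak{L}_{\N/\D}^{-}\mathfrak{L}_{\N/\D}\Psi=\Psi$ and hence $\fbD_\alpha\bP_\alpha\Theta=\Theta_{\mathrm{R}}$. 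This identifies $\tbP_\alpha$ with the projection of \propref{prop:aux_first_step}, and the $\OP(0,0)$-membership follows because $\tbP_\alpha$ is $L^2$-continuous (\propref{prop:aux_first_step}) and belongs to the calculus as a composition $\fbD_\alpha\bP_\alpha$, so \propref{prop:G0_criteria} applies.

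Finally I would verify the balance conditions of \defref{def:balance} by taking $\tbP:=\tbP_{\alpha-1}\oplus\frakI_\alpha\in\OP(0,0)$ as the auxiliary guard. Overdetermined ellipticity of $\fbD_\alpha\oplus\tbP$ (Neumann) and $\fbD_\alpha\oplus\bB_\alpha\oplus\tbP$ (Dirichlet) is precisely \propref{prop:new_estimate1}. The identity $\fbD_\alpha\bP_\alpha\in\OP(0,0)$ has just been established, and the vanishing $\tbP\,\bP_\alpha=0$ and $(\bB_\alpha\oplus\tbP)\bP_\alpha=0$ follow from the construction, since the range of $\bP_\alpha$ lies in the intersection of $\ker\tbP_{\alpha-1}$, $\ker\frakI_\alpha$, and (in the Dirichlet case) $\ker\bB_\alpha$; equivalently, because applying $\mathfrak{L}_{\N/\D}^{-}$ to a tuple whose last two entries vanish produces a section $\Psi$ that $\mathfrak{L}_{\N/\D}$ sends to that tuple, so its $\tbP_{\alpha-1}$ and $\frakI_\alpha$ components (and, in the Dirichlet case, its $\bB_\alpha$ component) must vanish. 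The anticipated main obstacle is the bookkeeping required to match the orders and classes produced by $\mathfrak{L}_{\N/\D}^{-}$ with those demanded by \defref{def:balance} and with the sharp tuples of \propref{prop:AprioriAstarAk}; this should be handled by the left-inverse statement in \thmref{thm:overdetermined_varying_orders}(3) together with the composition rules of \thmref{thm:composition_Douglas_Nirenberg}, ensuring that each entry of $\bP_\alpha$ lands in the order/class slot predicted by \propref{prop:balance_props}.
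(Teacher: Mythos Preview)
Your proposal is essentially the paper's own argument, just presented in reverse order: the paper first constructs $\bP_\alpha$ abstractly via the open mapping theorem as $(\fbD_\alpha)^{-1}\tbP_\alpha$ and then \emph{derives} the formula $\bP_\alpha=\frakS_\alpha(\fbD_\alpha^*\oplus\bB_\alpha^*\oplus 0\oplus 0)$ (resp.\ $\frakS_\alpha(\fbD_\alpha^*\oplus 0\oplus 0\oplus 0)$), whereas you take this formula as the \emph{definition} of $\bP_\alpha$ and verify its properties directly. Both routes rely on exactly the same ingredients (\propref{prop:new_estimate1}, \propref{prop:new_estimate2}, \propref{prop:aux_first_step}, \propref{prop:G0_criteria}), so the content is the same.

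One caution: your final ``equivalently'' clause---that applying $\mathfrak{L}_{\N/\D}^{-}$ to a tuple with vanishing last entries yields a $\Psi$ which $\mathfrak{L}_{\N/\D}$ sends back to that tuple---is not valid in general, since a left inverse only satisfies $\mathfrak{L}^{-}\mathfrak{L}=\id$, not $\mathfrak{L}\mathfrak{L}^{-}=\id$; that would require the tuple to lie in the range of $\mathfrak{L}$. Fortunately this justification is redundant: you have already established in the previous paragraph that $\bP_\alpha\Theta$ always equals the canonical $\Psi\in\scrN_\bot(\fbD_{\alpha-1}^*,\bB_{\alpha-1}^*)$ (resp.\ $\scrN_\bot(\fbD_{\alpha-1}^*)\cap\ker\bB_\alpha$), which directly gives $\tbP_{\alpha-1}\bP_\alpha=0$, $\frakI_\alpha\bP_\alpha=0$, and $\bB_\alpha\bP_\alpha=0$. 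Drop the faulty ``equivalently'' and the argument stands.
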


\begin{proof}
By surveying the requirements for the induction step \eqref{eq:aux_induction_step} and comparing them with the results of \propref{prop:aux_first_step}, it remains to establish the existence of a balance for $\fbD_{\alpha}$,
\[
\bP_{\alpha}: \Gamma(\bbF_{\alpha+1}; \bbG_{\alpha+1}) \to \Gamma(\bbF_{\alpha}; \bbG_{\alpha}),
\]
as specified in \defref{def:aux_decomposition}--\defref{def:aux_decompositionD}, such that
\[
\tbP_{\alpha} := \fbD_{\alpha} \bP_{\alpha} \in \OP(0,0)
\]
is the continuous projection onto the closed ranges in the decompositions \eqref{eq:aux_decompositionAksmooth}:
\[
\begin{aligned}
&\NN:\quad && \scrR(\fbD_{\alpha}) \subseteq \Gamma(\bbF_{\alpha+1};\bbG_{\alpha+1}), \\
&\DD:\quad && \scrR(\fbD_{\alpha}; \bB_{\alpha}) \subseteq \Gamma(\bbF_{\alpha+1};\bbG_{\alpha+1}).
\end{aligned}
\]

We begin by noting that the topologically direct decompositions \eqref{eq:aux_decompositionAksmooth} already provide projections onto the corresponding ranges,
\[
\tbP_{\alpha}: \Gamma(\bbF_{\alpha+1}; \bbG_{\alpha+1}) \to \Gamma(\bbF_{\alpha+1}; \bbG_{\alpha+1}).
\]
These projections are continuous in the Fréchet topology, though not yet known to belong to the calculus. By \propref{prop:aux_first_step}, however, they extend to the $L^{2}$-orthogonal projections
\beq 
\tbP_{\alpha}: L^{2}(\bbF_{\alpha+1}; \bbG_{\alpha+1}) \to L^{2}(\bbF_{\alpha+1}; \bbG_{\alpha+1}).
\label{eq:tbP_aux_proof} 
\eeq 
With this in mind, using the containment relations \eqref{eq:containments_induction} from the induction hypothesis and the decompositions \eqref{eq:N0spdecomp} from the previous level, we find that for every sharp tuple $(J,L;I,K)$ for $\fbD_{\alpha}$,
\[
\begin{aligned}
&\NN:\quad && \scrR_{p}^{I,K}(\fbD_{\alpha})
= \fbD_{\alpha}\big(\scrN^{J,L}_{p,\bot}(\fbD_{\alpha-1}^*, \bB^*_{\alpha-1})\big), \\
&\DD:\quad && \scrR_{p}^{I,K}(\fbD_{\alpha}; \bB_{\alpha})
= \fbD_{\alpha}\big(\scrN^{J,L}_{p,\bot}(\fbD_{\alpha-1}^*) \cap \ker \bB_{\alpha}\big).
\end{aligned}
\]

Together with the estimate \eqref{eq:estimateNsp}, applied at each Sobolev level, this shows that $\fbD_{\alpha}$ restricts to a bijection onto the above closed subspaces. By the open mapping theorem, we therefore obtain isomorphisms of Banach spaces with continuous inverses:
\[
\begin{aligned}
&\NN:\quad && (\fbD_{\alpha})^{-1}: \scrR_{p}^{I,K}(\fbD_{\alpha})
\to \scrN^{J,L}_{p,\bot}(\fbD_{\alpha-1}^*, \bB^*_{\alpha-1}), \\
&\DD:\quad && (\fbD_{\alpha})^{-1}: \scrR_{p}^{I,K}(\fbD_{\alpha}; \bB_{\alpha})
\to \scrN^{J,L}_{p,\bot}(\fbD_{\alpha-1}^*) \cap \ker\bB_{\alpha}.
\end{aligned}
\]
By the graded Fréchet structure and the validity of these statements for all sharp tuples $(J,L;I,K)$, we therefore obtain isomorphisms of Fréchet spaces:
\[
\begin{aligned}
&\NN:\quad && (\fbD_{\alpha})^{-1}: \scrR(\fbD_{\alpha})
\to \scrN_{\bot}(\fbD_{\alpha-1}^*, \bB^*_{\alpha-1}), \\
&\DD:\quad && (\fbD_{\alpha})^{-1}: \scrR(\fbD_{\alpha}; \bB_{\alpha})
\to \scrN_{\bot}(\fbD_{\alpha-1}^*) \cap \ker\bB_{\alpha}.
\end{aligned}
\]

This allows us to define a continuous linear map
\[
\bP_{\alpha}: \Gamma(\bbF_{\alpha+1};\bbG_{\alpha+1}) \to \Gamma(\bbF_{\alpha};\bbG_{\alpha})
\]
by
\beq 
\bP_{\alpha} := (\fbD_{\alpha})^{-1} \tbP_{\alpha}.
\label{eq:bP_def_proof}
\eeq 
This definition is well posed since, in both the $\NN$ and $\DD$ cases, the range of $\tbP_{\alpha}$ lies in the domain of $(\fbD_{\alpha})^{-1}$ (in the Dirichlet case by \lemref{lem:N_Dirichlet_condition_induction}). As a composition of continuous maps, $\bP_{\alpha}$ is continuous, and by construction $\tbP_{\alpha} = \fbD_{\alpha} \bP_{\alpha}$.

For the special case of sharp tuples $(M,T;0,0)$ for $\fbD_{\alpha}$ from \corrref{corr:sharp_tuples_M_T}, we have
\[
\begin{aligned}
&\NN:\quad && (\fbD_{\alpha})^{-1}: \scrR_{2}^{0,0}(\fbD_{\alpha})
\to \scrN^{M,T}_{2,\bot}(\fbD_{\alpha-1}^*, \bB^*_{\alpha-1}), \\
&\DD:\quad && (\fbD_{\alpha})^{-1}: \scrR_{2}^{0,0}(\fbD_{\alpha}; \bB_{\alpha})
\to \scrN^{M,T}_{2,\bot}(\fbD_{\alpha-1}^*) \cap \ker\bB_{\alpha}.
\end{aligned}
\]
In view of \eqref{eq:tbP_aux_proof}, it follows by composition that
\[
\bP_{\alpha}: L^{2}(\bbF_{\alpha+1}; \bbG_{\alpha+1}) \to W^{M,T}_{2}(\bbF_{\alpha}; \bbG_{\alpha})
\]
is continuous. Hence, to complete the argument, it remains to show that $\bP_{\alpha}$ and $\tbP_{\alpha}$ belong to the calculus. Indeed, once this is established, the identity $\tbP_{\alpha}=\fbD_{\alpha}\bP_{\alpha}$, together with the mapping property \eqref{eq:tbP_aux_proof}, implies—by \propref{prop:G0_criteria} applied to $\tbP_{\alpha}$ with $S=T=0$ and $m=0$—that $\tbP_{\alpha}\in\OP(0,0)$ and that $\bP_{\alpha}$ is a balance for $\fbD_{\alpha}$ (with respect to $\bB_{\alpha}$ in the $\DD$ case).

By the decompositions \eqref{eq:aux_decompositionAksmooth} and the fact that $\tbP_{\alpha}=\fbD_{\alpha}\bP_{\alpha}$ is the projection onto the corresponding ranges, we obtain
\[
\begin{aligned}
&\NN:\quad && \fbD_{\alpha}^* \fbD_{\alpha} \bP_{\alpha}
\oplus \bB_{\alpha}^* \fbD_{\alpha} \bP_{\alpha}
= \fbD_{\alpha}^* \oplus \bB_{\alpha}^*, \\
&\DD:\quad && \fbD_{\alpha}^* \fbD_{\alpha} \bP_{\alpha} = \fbD_{\alpha}^*.
\end{aligned}
\]

Moreover, since $\bP_{\alpha}$ takes values in
\[
\begin{aligned}
&\NN:\quad && \scrN_{\bot}(\fbD_{\alpha-1}^*, \bB_{\alpha-1}^*), \\
&\DD:\quad && \scrN_{\bot}(\fbD_{\alpha-1}^*) \cap \ker \bB_{\alpha},
\end{aligned}
\]
it follows in both cases that
\[
\tbP_{\alpha-1} \bP_{\alpha} = 0,
\qquad
\frakI_{\alpha} \bP_{\alpha} = 0,
\]
and additionally, in the $\DD$ case, $\bB_{\alpha} \bP_{\alpha}=0$. Summarizing,
\beq
\begin{aligned}
&\NN:\quad && (\fbD_{\alpha}^* \fbD_{\alpha}
\oplus \bB_{\alpha}^* \fbD_{\alpha}
\oplus \tbP_{\alpha-1}
\oplus \frakI_{\alpha}) \bP_{\alpha}
= \fbD_{\alpha}^* \oplus \bB_{\alpha}^* \oplus 0 \oplus 0, \\
&\DD:\quad && (\fbD_{\alpha}^* \fbD_{\alpha}
\oplus \bB_{\alpha}
\oplus \tbP_{\alpha-1}
\oplus \frakI_{\alpha}) \bP_{\alpha}
= \fbD_{\alpha}^* \oplus 0 \oplus 0 \oplus 0.
\end{aligned}
\label{eq:defining_relationGa}
\eeq

By \propref{prop:new_estimate2}, the systems
\[
\begin{aligned}
&\NN:\quad && \fbD_{\alpha}^* \fbD_{\alpha}
\oplus \bB_{\alpha}^* \fbD_{\alpha}
\oplus \tbP_{\alpha-1}
\oplus \frakI_{\alpha}, \\
&\DD:\quad && \fbD_{\alpha}^* \fbD_{\alpha}
\oplus \bB_{\alpha}
\oplus \tbP_{\alpha-1}
\oplus \frakI_{\alpha}
\end{aligned}
\]
are overdetermined elliptic and injective. Hence, the associated left inverses provided by \thmref{thm:overdetermined_varying_orders}, denoted in both cases by $\frakS_{\alpha}$, yield
\beq
\begin{aligned}
&\NN:\quad && \bP_{\alpha} = \frakS_{\alpha}(\fbD_{\alpha}^* \oplus \bB_{\alpha}^* \oplus 0 \oplus 0), \\
&\DD:\quad && \bP_{\alpha} = \frakS_{\alpha}(\fbD_{\alpha}^* \oplus 0 \oplus 0 \oplus 0).
\end{aligned}
\label{eq:G_def}
\eeq
This proves that $\bP_{\alpha}$ belongs to the calculus, as it is obtained by composition of operators in the calculus. Consequently, by composition, $\tbP_{\alpha} = \fbD_{\alpha} \bP_{\alpha}$ also belongs to the calculus. This completes the proof.
\end{proof}
The refined decompositions \eqref{eq:aux_induction_Refined} then clearly hold in the $\NN$ case, since $\scrN(\fbD_{\alpha}^*, \bB_{\alpha}^*) \subseteq \ker \bB_{\alpha}^*$. In the $\DD$ case, they will hold once we prove that $\bB_{\alpha+1} \fbD_{\alpha} = 0$ on $\ker \bB_{\alpha}$ in the next section.
\subsection{Stage 5: Completion of the induction step} 
By surveying what has been proven thus far, we see that the induction step is completed by proving the existence and uniqueness of $\fbD_{\alpha+1}$ satisfying the requirements in \eqref{eq:contain_induction_step}, \eqref{eq:relation_induction_step}, and \eqref{eq:correction_induction_step}, together with the additional condition \eqref{eq:contain_induction_stepM} in the Dirichlet case.  

In both cases, using the established auxiliary decompositions, define the continuous linear map of Fréchet spaces 
\[
\fbD_{\alpha+1} : \Gamma(\bbF_{\alpha+1}; \bbG_{\alpha+1}) \to \Gamma(\bbF_{\alpha+2}; \bbG_{\alpha+2})
\]
by setting
\beq 
\fbD_{\alpha+1} : \bD_{\alpha+1} - \bD_{\alpha+1} \tbP_{\alpha}
\label{eq:D_def_induction_step}
\eeq

\begin{proposition}
In the $\NN$ case, the definition \eqref{eq:D_def_induction_step} yields the relations \eqref{eq:contain_induction_step} and \eqref{eq:relation_induction_step}  required in the induction step, and it is the unique continuous map with these properties.
\end{proposition}

\begin{proof}
The relations \eqref{eq:contain_induction_step} and \eqref{eq:relation_induction_step} follow directly from the definition and the Neumann direct decomposition \eqref{eq:aux_decompositionAksmooth}, since $\tbP_{\alpha}$ is the projection onto $\scrR(\fbD_{\alpha})$, or equivalently, $\id - \tbP_{\alpha}$ is the projection onto $\scrN(\fbD_{\alpha}^*, \bB_{\alpha}^*)$.  
The uniqueness of $\fbD_{\alpha+1}$ as a system satisfying these two properties follows immediately from this decomposition, establishing the uniqueness requirement of \thmref{thm:corrected_complex}.
\end{proof}

In the $\DD$ case, we need to verify also: 
\begin{proposition}
\label{prop:extra_boundary_condition} 
In the Dirichlet case, one has
\[
\bB_{\alpha+1}\big(\scrR(\fbD_{\alpha}; \bB_{\alpha})\big) = 0.
\]
Hence, the definition \eqref{eq:D_def_induction_step} yields the relations \eqref{eq:contain_induction_step}, \eqref{eq:relation_induction_step} and \eqref{eq:contain_induction_stepM} required in the induction step, and it is the unique continuous map with these properties.
\end{proposition}
\begin{proof} 
By the induction hypothesis, the refined decomposition \eqref{eq:aux_induction_Refined} holds for $\alpha - 1$. Hence, since $\fbD_{\alpha} = 0$ on $\scrR(\fbD_{\alpha-1}; \bB_{\alpha-1})$, we obtain:
\[
\scrR(\fbD_{\alpha}; \bB_{\alpha}) = \fbD_{\alpha}(\scrN(\fbD_{\alpha-1}^*) \cap \ker \bB_{\alpha}).
\]
Moreover, by the induction hypothesis, $\fbD_{\alpha} = \bD_{\alpha}$ on $\scrN(\fbD_{\alpha-1}^*)$, and in particular on $\scrN(\fbD_{\alpha-1}^*) \cap \ker \bB_{\alpha}$. Thus,
\[
\scrR(\fbD_{\alpha}; \bB_{\alpha}) 
= \bD_{\alpha}(\scrN(\fbD_{\alpha-1}^*) \cap \ker\bB_{\alpha}).
\]
Since $\bB_{\alpha+1} \bD_{\alpha} = 0$ on $\ker \bB_{\alpha}$ by \defref{def:DD_segment}, the first claim follows.

For the second part of the claim, we note that by construction $\tbP_{\alpha} = \id$ on $\scrR(\fbD_{\alpha}; \bB_{\alpha})$. Hence, on this space, it is clear that $\fbD_{\alpha+1} = 0$ by how it is defined in \eqref{eq:D_def_induction_step}, so \eqref{eq:contain_induction_step} holds. Since $\tbP_{\alpha}$ vanishes on $\scrN(\fbD_{\alpha}^*)$, the relation in \eqref{eq:relation_induction_step} also holds, as implied by the auxiliary decomposition, and by construction it is the unique continuous map with these properties, since the decomposition on which it is defined is topologically direct. 
\end{proof}

It remains to verify the requirements in \eqref{eq:correction_induction_step} for both the Neumann and Dirichlet cases, and that in either case $\bC_{\alpha+1} \in \OP(0,0)$ and $\sigma(\fbD_{\alpha+1}-\bD_{\alpha+1})=0$. 
\begin{proposition} 
The relations in \eqref{eq:correction_induction_step} hold.
\end{proposition} 
\begin{proof} 
By the definition of $\fbD_{\alpha+1}$ in \eqref{eq:D_def_induction_step} and the representation $\tbP_{\alpha} = \fbD_{\alpha} \bP_{\alpha}$, we have
\[
\begin{aligned} 
\bC_{\alpha+1} = \fbD_{\alpha+1} - \bD_{\alpha+1} 
= -\bD_{\alpha+1} \fbD_{\alpha} \bP_{\alpha} 
= -\bD_{\alpha+1} \bD_{\alpha} \bP_{\alpha}.
\end{aligned} 
\]
In the $\NN$ case, by our construction of $\bP_{\alpha}$ in \thmref{thm:aux_induction_step}, the third equality follows from the fact that $\bP_{\alpha}$ takes values in $\scrN(\fbD_{\alpha-1}^*, \bB_{\alpha-1})$ (where $\fbD_{\alpha} = \bD_{\alpha}$ by the induction hypothesis). In the $\DD$ case, by the same construction, it follows from the fact that $\bP_{\alpha}$ takes values in $\scrN(\fbD_{\alpha-1}^*) \cap \ker \bB_{\alpha}$ (where $\fbD_{\alpha} = \bD_{\alpha}$ by the induction hypothesis and the order reduction properties in \defref{def:DD_segment}).

To prove that $\bC_{\alpha+1} \in \OP(0,0)$, observe that in the $\NN$ case, $\bP_{\alpha}$ is a balance for $\fbD_{\alpha}$, while in the $\DD$ case, it is a balance for $\fbD_{\alpha}$ with respect to $\bB_{\alpha}$. Together with the fact that $\fbD_{\alpha} - \bD_{\alpha} \in \OP(0,0)$, it follows that $\bP_{\alpha}$ also serves as a balance for $\bD_{\alpha}$ (respectively, with respect to $\bB_{\alpha}$). Thus, by the order-reduction property in either \defref{def:NN_segment} or \defref{def:DD_segment}, we conclude that $\bC_{\alpha+1} \in \OP(0,0)$, completing the proof of this part.  

Finally, $\sigma(\fbD_{\alpha+1} - \bD_{\alpha+1}) = 0$ follows from the definition of the lifting and from the order-reduction properties in \defref{def:NN_segment}--\defref{def:DD_segment}. 
\end{proof}

%
%
%
\subsection{The Hodge decompositions}
\label{sec:Hodge_proof}
At this stage, auxiliary decompositions for all levels of the elliptic pre-complex have been established, and the systems $\fbD_{\alpha}$ have been defined with the properties stated in the induction hypothesis in \secref{sec:stage1}. 

Using this collective structure, we now prove \thmref{thm:hodge_like_corrected_complex} and \thmref{thm:hodge_like_corrected_complexD}. 

We first make short work of the identities \eqref{eq:cohomology_groups} and \eqref{eq:cohomology_groupsD}. In view of the refinement in \eqref{eq:cohomology_in_the_proof}, these reduce to the following observation (which also applies in the disrupted case, hence is valid there as well): 
\begin{lemma}
We have
\beq
\begin{aligned}
&\NN:\quad && \fbD_{\alpha}^* = (\id - \tbP_{\alpha-1}) \bD^*_{\alpha}, \quad \text{on } \quad \ker \bB_{\alpha}^*, \\  
&\DD:\quad && \fbD_{\alpha}^* = (\id - \tbP_{\alpha-1}) \bD^*_{\alpha}. 
\end{aligned}
\label{eq:adjoint_expression} 
\eeq
\end{lemma}
\begin{proof}
We know that
\beq 
\fbD^*_{\alpha} = \bD_{\alpha}^* + \bC_{\alpha}^*,
\label{eq:expression_adjoint} 
\eeq 
where $\bC_{\alpha}^*$ is the adjoint of $\bC_{\alpha}$, and \propref{prop:correction} provides the formula
\[
\bC_{\alpha} = -\bD_{\alpha} \tbP_{\alpha-1}.
\]
Moreover, $\bC_{\alpha}$ and $\bC_{\alpha}^*$ belong to $\OP(0,0)$, so they are $L^{2}$-continuous and adjoint to each other, with no additional boundary term resulting from integration by parts.   

Given this, since $\bB_{\alpha} \tbP_{\alpha-1} \Psi = 0$ identically in the $\DD$ case, and assuming $\Theta \in \ker \bB_{\alpha}^*$ in the $\NN$ case, we can apply Green's formula \eqref{eq:integration_by_parts_corrected} iteratively and use the fact that $\tbP_{\alpha-1}$ is an $L^{2}$-orthogonal projection: 
\[ 
\bra\Psi, \bC_{\alpha}^* \Theta\ket = \bra \bC_{\alpha} \Psi, \Theta\ket = -\bra \bD_{\alpha} \tbP_{\alpha-1} \Psi, \Theta\ket = -\bra\Psi, \tbP_{\alpha-1} \bD_{\alpha}^* \Theta\ket. 
\]
Since this holds for arbitrary $\Psi$, and $\bC_{\alpha},\bC_{\alpha}^*$ are $L^{2}$-continuous, we conclude in both cases that, under the stated assumptions,
\[
\bC_{\alpha}^* = -\tbP_{\alpha-1} \bD_{\alpha}^*.
\]
Combining this with the expression for $\fbD_{\alpha}^*$ above in \eqref{eq:expression_adjoint} yields the required identity.
\end{proof}
  
By comparing the required decompositions with the refined auxiliary decompositions \eqref{eq:N0spdecomp} already established for all levels through the induction steps, we find that to prove \thmref{thm:hodge_like_corrected_complex} and \thmref{thm:hodge_like_corrected_complexD} it therefore remains to establish the following:
\begin{proposition}
\label{prop:Hodge_proof}
For all $\alpha< \alpha_0$, the following holds:
\[
\begin{aligned}
&\NN:\qquad &&\scrN_{\bot}(\fbD^*_{\alpha},\bB^*_{\alpha})=\scrR(\fbD^*_{\alpha+1};\bB^*_{\alpha+1}),
\\& \DD:\qquad &&\scrN_{\bot}(\fbD^*_{\alpha})=\scrR(\fbD^*_{\alpha+1}).
\end{aligned}
\]
\end{proposition}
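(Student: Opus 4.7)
The plan is as follows. Both cases will be proved by the same argument, so I focus on the Neumann case and indicate the modifications for the Dirichlet case at the end.

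The easy inclusion $\scrR(\fbD^*_{\alpha+1};\bB^*_{\alpha+1}) \subseteq \scrN_\bot(\fbD^*_\alpha,\bB^*_\alpha)$ will follow by combining the containment \eqref{eq:DstarDstar_N} with the generalized Green's formula \eqref{eq:integration_by_parts_corrected} for $\fbD_{\alpha+1}$: for $\phi \in \module^{\alpha+1}_\N$ (so $\fbD_{\alpha+1}\phi = 0$ by the definition \eqref{eq:cohomology_in_the_proof}) and $\eta$ with $\bB^*_{\alpha+1}\eta = 0$,
\[
\bra \phi,\fbD^*_{\alpha+1}\eta\ket = \bra \fbD_{\alpha+1}\phi,\eta\ket - \bra \bB_{\alpha+1}\phi,\bB^*_{\alpha+1}\eta\ket = 0.
\]

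For the reverse inclusion, given $\Psi \in \scrN_\bot(\fbD^*_\alpha,\bB^*_\alpha)$, the plan is to apply the $L^2$-orthogonal decomposition of \propref{prop:L2_aux_adapted_pair} to the adapted Green system $\fbD^*_{\alpha+1}$, whose adapted adjoint is $\fbD_{\alpha+1}$ by \corrref{corr:symmetry_adapted}:
\[
L^2(\bbF_{\alpha+1};\bbG_{\alpha+1}) = \overline{\scrR^{0,0}_2(\fbD^*_{\alpha+1};\bB^*_{\alpha+1})} \oplus \scrN^{0,0}_2(\fbD_{\alpha+1}),
\]
and to decompose $\Psi = \eta + \phi$ accordingly. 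Any $L^2$-approximating sequence for $\eta$ lies in $\scrN^{0,0}_2(\fbD^*_\alpha,\bB^*_\alpha)$ by the easy inclusion; since this kernel is $L^2$-closed, $\eta$ belongs there too, and hence so does $\phi = \Psi - \eta$. Consequently $\phi \in \scrN^{0,0}_2(\fbD^*_\alpha,\bB^*_\alpha) \cap \scrN^{0,0}_2(\fbD_{\alpha+1})$, a space which by \corrref{corr:overdetermined_ellipticity_corrected} consists of smooth sections and coincides with $\module^{\alpha+1}_\N$ by \eqref{eq:cohomology_in_the_proof}. The orthogonality $\Psi \perp \module^{\alpha+1}_\N$ combined with $\eta \perp \scrN^{0,0}_2(\fbD_{\alpha+1}) \supseteq \module^{\alpha+1}_\N$ will then force $\phi = 0$, giving $\Psi = \eta \in \overline{\scrR^{0,0}_2(\fbD^*_{\alpha+1};\bB^*_{\alpha+1})}$.

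The hard part will be the smooth attainment: upgrading $\Psi \in \overline{\scrR^{0,0}_2(\fbD^*_{\alpha+1};\bB^*_{\alpha+1})} \cap \Gamma(\bbF_{\alpha+1};\bbG_{\alpha+1})$ to an equality $\Psi = \fbD^*_{\alpha+1}\tilde\eta$ with $\tilde\eta$ smooth and $\bB^*_{\alpha+1}\tilde\eta = 0$. To carry this out I would mirror the regularity analysis of Stages 2--4 for the adapted Green system $\fbD^*_{\alpha+1}$: establish overdetermined ellipticity of $\fbD^*_{\alpha+1}\oplus \fbD_{\alpha+1}\oplus \bB^*_{\alpha+1}$ by adjunction from \defref{def:NN_segment}(i.a), invoking \propref{prop:lower_order_correction_overdetermined_ellipticity} since $\sigma(\fbD_\bullet - \bD_\bullet) = 0$, together with the adjoint order-reduction $\fbD^*_\alpha\fbD^*_{\alpha+1} = 0$ on $\ker\bB^*_{\alpha+1}$ (the adjoint avatar of $\fbD_{\alpha+1}\fbD_\alpha = 0$, obtained by iterating \eqref{eq:integration_by_parts_corrected} as in the derivation of \eqref{eq:DstarDstar_N}). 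These ingredients would yield an a priori estimate for $\fbD^*_{\alpha+1}$ analogous to \eqref{eq:estimateN0p2}, whose weak-convergence exploitation as in \propref{prop:AprioriAstarAk} produces the required smooth $\tilde\eta$ and in particular shows that the $L^2$-closure is attained smoothly. The main analytical subtlety here is verifying the adjoint overdetermined ellipticities via the symbolic calculus of \secref{sec:douglas_nirenberg_lop}, where the weighted symbol formalism permits passing between a system and its adjoint without invoking any auxiliary order-reducing operators.

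The Dirichlet case will proceed by the same scheme, swapping the roles of $\bB^*$ and $\bB$ throughout, replacing $\module^{\alpha+1}_\N$ with $\module^{\alpha+1}_\D$, and invoking the corresponding items of \defref{def:DD_segment} in place of \defref{def:NN_segment}.
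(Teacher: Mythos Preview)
Your overall strategy — easy inclusion via Green's formula, then an $L^2$-decomposition comparison, then smooth attainment — is the paper's route. The problems are in how you propose to carry out the smooth attainment.

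First, the system $\fbD^*_{\alpha+1}\oplus \fbD_{\alpha+1}\oplus \bB^*_{\alpha+1}$ is ill-formed: $\fbD^*_{\alpha+1}$ and $\bB^*_{\alpha+1}$ have domain $\Gamma(\bbF_{\alpha+2};\bbG_{\alpha+2})$, while $\fbD_{\alpha+1}$ has domain $\Gamma(\bbF_{\alpha+1};\bbG_{\alpha+1})$, so the direct sum does not make sense. You presumably mean $\fbD_{\alpha+2}$. Second, even with the index corrected, the claim that this overdetermined ellipticity follows ``by adjunction from \defref{def:NN_segment}(i.a)'' is wrong: passing to the adjoint turns an injective weighted symbol into a surjective one, not an injective one. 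What you actually need is \defref{def:NN_segment}(i.a) \emph{at level $\alpha+2$}, which directly gives overdetermined ellipticity of $\bD_{\alpha+2}\oplus\bD^*_{\alpha+1}\oplus\bB^*_{\alpha+1}$ as part of the standing hypotheses --- no adjunction is involved.

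The paper avoids redoing Stages 2--4 for the adjoint altogether, via \propref{prop:correction_final_dual_new}. The trick is that the auxiliary decomposition at level $\alpha+2$ is already established, so every $\fbD^*_{\alpha+1}\Theta$ with $\bB^*_{\alpha+1}\Theta=0$ can be rewritten as $\fbD^*_{\alpha+1}\fbD_{\alpha+1}\Psi$ for some $\Psi\in\scrN_\bot(\fbD^*_\alpha,\bB^*_\alpha)$ with $\bB^*_{\alpha+1}\fbD_{\alpha+1}\Psi=0$. The closed-range estimate for $\fbD^*_{\alpha+1}$ then drops out immediately from the estimate \eqref{eq:D^*D_estimate}, which comes from hypothesis (i.b) at level $\alpha+1$ and was already proved in Stage~2. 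No new ellipticities need to be verified. Your proposed route, once the index is fixed and the ``adjunction'' replaced by the direct invocation of (i.a) at level $\alpha+2$, would ultimately converge to this same computation, but with more scaffolding than necessary.
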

We prove this in several stages.
\begin{proposition}
\label{prop:correction_dual_containment}
For all $\alpha< \alpha_0$, the following holds:
\[
\begin{aligned}
&\NN:\qquad &&\scrR(\fbD^*_{\alpha+1};\bB^*_{\alpha+1})\subseteq\scrN_{\bot}(\fbD^*_{\alpha},\bB^*_{\alpha}) ,
\\& \DD:\qquad &&\scrR(\fbD^*_{\alpha+1})\subseteq\scrN_{\bot}(\fbD^*_{\alpha}).
\end{aligned}
\]
\end{proposition}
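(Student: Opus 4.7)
The plan is to prove both cases in parallel, by showing for $\Theta$ in the appropriate domain that $\fbD^*_{\alpha+1}\Theta$ belongs to $\scrN(\fbD^*_{\alpha},\bB^*_{\alpha})$ (resp.\ $\scrN(\fbD^*_{\alpha})$) and is moreover $L^2$-orthogonal to $\module_{\N}^{\alpha+1}$ (resp.\ $\module_{\D}^{\alpha+1}$). The main tool is the generalized Green's formula \eqref{eq:integration_by_parts_corrected} for the corrected systems, applied twice and combined with the defining containments from \thmref{thm:corrected_complex} and the density/surjectivity of the normal systems $\bB_{\alpha}$, $\bB^*_{\alpha}$ provided by \corrref{corr:normal_system_2}.

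First I would treat the Neumann case. Take $\Theta\in\Gamma(\bbF_{\alpha+2};\bbG_{\alpha+2})$ with $\bB^*_{\alpha+1}\Theta=0$, and let $\Upsilon\in\Gamma(\bbF_{\alpha};\bbG_{\alpha})$ be arbitrary. Starting from $\fbD_{\alpha+1}\fbD_{\alpha}\Upsilon=0$ (which is the content of \thmref{thm:corrected_complex}(i.a)) and applying \eqref{eq:integration_by_parts_corrected} twice, one obtains the identity
\[
\bra \Upsilon,\fbD^*_{\alpha}\fbD^*_{\alpha+1}\Theta\ket + \bra \bB_{\alpha}\Upsilon,\bB^*_{\alpha}\fbD^*_{\alpha+1}\Theta\ket + \bra \bB_{\alpha+1}\fbD_{\alpha}\Upsilon,\bB^*_{\alpha+1}\Theta\ket = 0.
\]
The last term vanishes by the assumption on $\Theta$. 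Restricting to $\Upsilon\in\ker\bB_{\alpha}$ and invoking the $L^2$-density of this kernel (\corrref{corr:normal_system_2}(1)) yields $\fbD^*_{\alpha}\fbD^*_{\alpha+1}\Theta=0$; feeding this back and invoking surjectivity of $\bB_{\alpha}$ yields $\bB^*_{\alpha}\fbD^*_{\alpha+1}\Theta=0$. Hence $\fbD^*_{\alpha+1}\Theta\in\scrN(\fbD^*_{\alpha},\bB^*_{\alpha})$.

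For the $L^2$-orthogonality to $\module_{\N}^{\alpha+1}$, let $\Phi\in\module_{\N}^{\alpha+1}$; by \eqref{eq:cohomology_groups} we have $\fbD_{\alpha+1}\Phi=0$ and $\bB_{\alpha+1}\Phi=0$, so applying Green's formula for $\fbD_{\alpha+1}$ gives
\[
\bra \fbD^*_{\alpha+1}\Theta,\Phi\ket = \bra \Theta,\fbD_{\alpha+1}\Phi\ket - \bra \bB^*_{\alpha+1}\Theta,\bB_{\alpha+1}\Phi\ket = 0.
\]
Combining, $\fbD^*_{\alpha+1}\Theta\in\scrN_{\bot}(\fbD^*_{\alpha},\bB^*_{\alpha})$, as required.

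The Dirichlet case follows the same template with the roles of $\bB$ and $\bB^*$ interchanged: I would test with $\Upsilon\in\ker\bB_{\alpha}$, exploit \thmref{thm:corrected_complex}(ii.a), which gives both $\fbD_{\alpha+1}\fbD_{\alpha}\Upsilon=0$ and $\bB_{\alpha+1}\fbD_{\alpha}\Upsilon=0$, and then apply Green's formula twice to obtain $\fbD^*_{\alpha}\fbD^*_{\alpha+1}\Theta=0$ directly (no surjectivity argument is needed since no boundary condition on $\Theta$ is imposed). Orthogonality to $\module_{\D}^{\alpha+1}$ follows by the same identity as above, using $\bB_{\alpha+1}\Phi=0$ for $\Phi\in\module_{\D}^{\alpha+1}$. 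I do not anticipate a genuine obstacle here; the only subtlety is bookkeeping to ensure each boundary pairing is correctly eliminated by either the hypothesis on $\Theta$ or the side condition on $\Upsilon$, together with the proper use of normality (density of $\ker\bB_{\alpha}$ in $L^2$ and surjectivity of $\bB_{\alpha}$) guaranteed by \corrref{corr:normal_system_2}.
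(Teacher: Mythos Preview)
Your approach is correct and essentially the same as the paper's: the paper's proof simply cites \lemref{lem:useful_neumann}, \lemref{lem:useful_dirichlet}, and \eqref{eq:DstarDstar_N}--\eqref{eq:DstarDstar_D}, which package precisely the double application of Green's formula together with density/surjectivity of the normal boundary systems that you write out explicitly; the orthogonality to the cohomology groups is then appended in the same way.

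One small slip to correct: in the Neumann case you write ``by \eqref{eq:cohomology_groups} we have $\fbD_{\alpha+1}\Phi=0$ and $\bB_{\alpha+1}\Phi=0$'', but $\module_{\N}^{\alpha+1}=\ker(\fbD_{\alpha+1}\oplus\fbD^*_{\alpha}\oplus\bB^*_{\alpha})$ imposes no condition on $\bB_{\alpha+1}\Phi$. This does not damage the argument, since the boundary term $\bra \bB_{\alpha+1}\Phi,\bB^*_{\alpha+1}\Theta\ket$ vanishes anyway by your standing assumption $\bB^*_{\alpha+1}\Theta=0$; just amend the justification accordingly.
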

\begin{proof}
This is obtained directly by dualizing the relation \eqref{eq:relation_induction_step} with respect to the $L^2$ inner product together with the relations already established in \lemref{lem:useful_neumann}, \lemref{lem:useful_dirichlet} and: 
\[
\begin{aligned}
&\NN:\qquad &&\scrR(\fbD^*_{\alpha+1};\bB^*_{\alpha+1})\,\bot\,\module^{\alpha+1}_{\N},
\\& \DD:\qquad &&\scrR(\fbD^*_{\alpha+1})\,\bot\,\module^{\alpha+1}_{\D}.
\end{aligned}
\] 
\end{proof}
For the next proposition, recall that by the assumptions on an adapted Green system, $\fbD^*_{\alpha+1}\fbD_{\alpha+1}$ has sharp tuples of the form $(2M,2T;0,0)$.
\begin{proposition}
\label{prop:correction_final_dual_new}
For all $\alpha< \alpha_0$, the following subspaces are closed:  
\[
\begin{aligned}
&\NN: \qquad && \scrR^{0,0}_{p}(\fbD^*_{\alpha+1}; \bB^*_{\alpha+1}), \\[0.5em]
&\DD: \qquad && \scrR^{0,0}_{p}(\fbD^*_{\alpha+1}).
\end{aligned}
\]

Moreover, let $(J, L; I, K)$ be sharp tuples for $\fbD_{\alpha+1}$ such that $(J, L; II, KK)$ are sharp tuples for the composition $\fbD^*_{\alpha+1} \fbD_{\alpha+1}$. Then the following subspaces are also closed, and the identities hold:
\[
\begin{aligned}
&\NN:\qquad && \scrR^{II,KK}_{p}(\fbD^*_{\alpha+1}; \bB^*_{\alpha+1}) 
= \scrR^{0,0}_{p}(\fbD^*_{\alpha+1}; \bB^*_{\alpha+1}) 
\cap W^{II,KK}_{p}(\bbF_{\alpha+1}; \bbG_{\alpha+1}), \\[0.5em]
&\DD:\qquad && \scrR^{II,KK}_{p}(\fbD^*_{\alpha+1}) 
= \scrR^{0,0}_{p}(\fbD^*_{\alpha+1}) 
\cap W^{II,KK}_{p}(\bbF_{\alpha+1}; \bbG_{\alpha+1}).
\end{aligned}
\]
\end{proposition}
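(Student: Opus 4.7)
The plan is to mirror the proof of \propref{prop:Ak_closed_range}, now applied to the adapted Green system $\fbD^*_{\alpha+1}$ (which is adapted by \corrref{corr:symmetry_adapted}, with the roles of $\bB_{\alpha+1}$ and $\bB^*_{\alpha+1}$ interchanged). The opening move is to invoke the Sobolev auxiliary decomposition induced by $\fbD_{\alpha+1}$ (\lemref{lem:Wspaux} for $\NN$, \lemref{lem:WspauxD} for $\DD$), which gives
\[
\begin{aligned}
&\NN: && W^{I,K}_{p}(\bbF_{\alpha+2};\bbG_{\alpha+2}) = \scrR^{I,K}_{p}(\fbD_{\alpha+1}) \oplus \scrN^{I,K}_{p}(\fbD^*_{\alpha+1},\bB^*_{\alpha+1}),\\
&\DD: && W^{I,K}_{p}(\bbF_{\alpha+2};\bbG_{\alpha+2}) = \scrR^{I,K}_{p}(\fbD_{\alpha+1};\bB_{\alpha+1}) \oplus \scrN^{I,K}_{p}(\fbD^*_{\alpha+1}),
\end{aligned}
\]
with the first summand closed by \propref{prop:Ak_closed_range} (applied at level $\alpha+1$). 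Since $\fbD^*_{\alpha+1}$ vanishes on the second summand, the range of $\fbD^*_{\alpha+1}$ on $\ker\bB^*_{\alpha+1}$ (resp.\ on the full space) is carried entirely by the first summand.

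The key technical step is an a priori estimate of the form $\|\Theta\|_{I,K,p} \lesssim \|\fbD^*_{\alpha+1}\Theta\|_{II,KK,p}$ on this first summand, intersected with $\ker \bB^*_{\alpha+1}$ in the $\NN$ case. For such $\Theta$, the balance $\bP_{\alpha+1}$ from \thmref{thm:aux_induction_step} produces a potential $\Upsilon = \bP_{\alpha+1}\Theta$ with $\Theta = \fbD_{\alpha+1}\Upsilon$, and $\Upsilon$ lying in $\scrN^{J,L}_{p,\bot}(\fbD^*_{\alpha},\bB^*_{\alpha})$ in the $\NN$ case or in $\scrN^{J,L}_{p,\bot}(\fbD^*_{\alpha})\cap\ker\bB_{\alpha+1}$ in the $\DD$ case (by the construction in \eqref{eq:bP_def_proof}, together with \lemref{lem:N_Dirichlet_condition_induction} in the $\DD$ case). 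Applying the estimate \eqref{eq:D^*D_estimate} to $\Upsilon$, the $\tbP_{\alpha}\Upsilon$ and $\frakI_{\alpha+1}\Upsilon$ terms vanish by construction; in the $\NN$ case the boundary term $\bB^*_{\alpha+1}\fbD_{\alpha+1}\Upsilon = \bB^*_{\alpha+1}\Theta = 0$ by assumption, while in the $\DD$ case there is no such boundary term. This yields $\|\Upsilon\|_{J,L,p} \lesssim \|\fbD^*_{\alpha+1}\Theta\|_{II,KK,p}$, and combining with the continuity bound $\|\Theta\|_{I,K,p} = \|\fbD_{\alpha+1}\Upsilon\|_{I,K,p} \lesssim \|\Upsilon\|_{J,L,p}$ closes the estimate.

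Closedness of the range $\scrR^{II,KK}_{p}(\fbD^*_{\alpha+1};\bB^*_{\alpha+1})$ (resp.\ $\scrR^{II,KK}_{p}(\fbD^*_{\alpha+1})$) in $W^{II,KK}_p$ then follows from this estimate by the standard semi-Fredholm argument (see \cite[Prop.~6.7, p.~583]{Tay11a}), exactly as in \propref{prop:Ak_closed_range}. For the identity with $\overline{\scrR^{0,0}_{p}(\fbD^*_{\alpha+1};\bB^*_{\alpha+1})} \cap W^{II,KK}_{p}$, I would approximate a given element $\Omega$ in the $L^p$-closure lying in $W^{II,KK}_{p}$ by sequences $\fbD^*_{\alpha+1}\Theta_n$ with $\Theta_n$ in the appropriate first summand (replacing $\Theta_n$ with $\fbD_{\alpha+1}\bP_{\alpha+1}\Theta_n$ to discard trivial kernel components), and then invoke the a priori estimate to promote these to $W^{I,K}_p$-Cauchy sequences whose limit $\Theta$ satisfies $\fbD^*_{\alpha+1}\Theta = \Omega$.

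The main obstacle I anticipate is in verifying that $\Upsilon = \bP_{\alpha+1}\Theta$ retains the precise orthogonality and boundary conditions needed to eliminate every term on the right-hand side of \eqref{eq:D^*D_estimate} except $\|\fbD^*_{\alpha+1}\fbD_{\alpha+1}\Upsilon\|$. This requires tracing through the defining formula \eqref{eq:G_def} of $\bP_{\alpha+1}$, the properties of $\tbP_{\alpha-1}\bP_{\alpha} = 0$ used in Stage~5, and in the $\DD$ case the preservation $\bB_{\alpha+1}\bP_{\alpha+1} = 0$ established during the proof of \thmref{thm:aux_induction_step}. Modulo this careful bookkeeping, the argument then closely parallels that of \propref{prop:Ak_closed_range}, and the resulting identity feeds directly into the proof of \propref{prop:Hodge_proof} and hence of the Hodge decompositions \thmref{thm:hodge_like_corrected_complex}--\thmref{thm:hodge_like_corrected_complexD}.
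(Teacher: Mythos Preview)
Your closedness argument is essentially the paper's: both reduce to the estimate $\|\Psi\|_{J,L,p} \lesssim \|\fbD^*_{\alpha+1}\fbD_{\alpha+1}\Psi\|_{II,KK,p}$ on the appropriate subspace, obtained from \eqref{eq:D^*D_estimate} at level $\alpha+1$. You parametrize via the balance $\Upsilon = \bP_{\alpha+1}\Theta$, while the paper iterates the auxiliary decompositions to rewrite $\scrR^{II,KK}_p(\fbD^*_{\alpha+1};\bB^*_{\alpha+1})$ directly as the range of $\fbD^*_{\alpha+1}\fbD_{\alpha+1}$ on $\scrN^{J,L}_{p,\bot}$, but the content is identical.

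The identity part, however, has a genuine gap. Your plan is to take $\Omega \in \overline{\scrR^{0,0}_{p}} \cap W^{II,KK}_{p}$, approximate by $\Omega_n = \fbD^*_{\alpha+1}\Theta_n \to \Omega$ in $L^p$, and then ``invoke the a priori estimate to promote these to $W^{I,K}_p$-Cauchy sequences''. But your estimate reads $\|\Theta_n - \Theta_m\|_{I,K,p} \lesssim \|\Omega_n - \Omega_m\|_{II,KK,p}$, and you only control $\Omega_n \to \Omega$ in $L^p$, not in $W^{II,KK}_{p}$ --- so the estimate gives nothing. The hypothesis $\Omega \in W^{II,KK}_{p}$ has not been used. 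This is exactly the regularity bootstrap that the weak mapping property \lemref{lem:D'mapping} provides, and it is the analytical core of \propref{prop:AprioriAstarAk}, which the paper invokes at this step. Concretely, writing $\Theta_n = \fbD_{\alpha+1}\Psi_n$, one applies \lemref{lem:D'mapping} to the adapted Green system $\fbD_{\alpha+1}$ (with $\Xi$ built from $\Omega$) to obtain $\sup_n \|\fbD^*_{\alpha+1}\fbD_{\alpha+1}\Psi_n\|_{II,KK,p} < \infty$; only then does the estimate yield $W^{J,L}_p$-boundedness of $\Psi_n$, from which a weak-limit argument (as in the proof of \propref{prop:AprioriAstarAk}) produces the preimage. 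The bookkeeping concern you raise about $\bP_{\alpha+1}\Theta$ is not the obstacle --- the missing ingredient is this low-to-high regularity passage.
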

\begin{proof}
Consider the subspaces of $W^{II,KK}_{p}(\bbF_{\alpha+1}; \bbG_{\alpha+1})$. 
\beq
\begin{aligned}
&\NN:\quad && \{\fbD_{\alpha+1}^*\Theta : \Theta \in W^{I,K}_p(\bbF_{\alpha+2};\bbG_{\alpha+2}) \cap \ker\bB_{\alpha+1}^*\}, \\
&\DD:\quad && \{\fbD_{\alpha+1}^*\Theta : \Theta \in W^{I,K}_p(\bbF_{\alpha+2};\bbG_{\alpha+2})\}.
\end{aligned}
\label{eq:Dstar_range_proof}
\eeq
For the first statement, we may assume that $(J, L; I, K) = (2M, 2L; 0, 0)$. Iterating the decompositions in \eqref{eq:aux_induction_Refined}--\eqref{eq:N0spdecomp} for $\alpha+1$ and $\alpha$ (this is why we require $\alpha<\alpha_0$, to ensure that these decompositions are available), and using the defining relations of $\fbD_{\alpha}$ together with their dualized counterparts for $\fbD_{\alpha}^*$, we find that these spaces are equal to
\[
\begin{aligned}
&\NN:\quad && \{\fbD_{\alpha+1}^*\fbD_{\alpha+1}\Psi : \Psi \in \scrN_{\bot,p}^{J,L}(\fbD^*_{\alpha},\bB_{\alpha}^*),\quad \bB^*_{\alpha+1}\fbD_{\alpha+1}\Psi=0\},\\
&\DD:\quad && \{\fbD_{\alpha+1}^*\fbD_{\alpha+1}\Psi : \Psi \in \scrN_{\bot,p}^{J,L}(\fbD^*_{\alpha})\cap\ker\bB_{\alpha+1}\}.
\end{aligned}
\]
In either case, we find that the estimates in \eqref{eq:D^*D_estimate} apply to the potential $\Psi$ in the defining relation for these spaces, yielding in all cases the estimate: 
\[
\|\Psi\|_{J,L,p} \lesssim \|\fbD_{\alpha+1}^*\fbD_{\alpha+1}\Psi\|_{II,KK,p}.
\]
By a standard argument (e.g., \cite[p.~583]{Tay11a}), this implies that the spaces above are closed. Retracing our steps, we conclude that the subspaces in \eqref{eq:Dstar_range_proof} are also closed, and therefore the subspaces in the claim are closed as well. The identities there then follow directly from the estimate above together with \propref{prop:AprioriAstarAk}.
\end{proof}
\begin{PROOF}{\propref{prop:Hodge_proof}}
Applying \propref{prop:L2_aux_adapted_pair} to the one-sided adapted Green system $\fbD_{\alpha}^*$, one obtains the orthogonal $L^{2}$-decomposition:
\[
\begin{aligned}
&\NN:\quad && L^2(\bbF_{\alpha+1}; \bbG_{\alpha+1}) =\scrR^{0,0}_{2}(\fbD_{\alpha+1}^*; \bB^*_{\alpha+1}) \oplus \scrN^{0,0}_{2}(\fbD_{\alpha+1}), \\
&\DD:\quad && L^2(\bbF_{\alpha+1}; \bbG_{\alpha+1}) = \scrR^{0,0}_{2}(\fbD_{\alpha+1}^*) \oplus \scrN^{0,0}_{2}(\fbD_{\alpha+1},\bB_{\alpha+1}).
\end{aligned}
\]
On the other hand, the $L^2$ version of the decompositions in \eqref{eq:N0spdecomp} reads
\[
\begin{aligned}
&\NN:\quad && L^2(\bbF_{\alpha+1}; \bbG_{\alpha+1}) = \scrR^{0,0}_{2}(\fbD_{\alpha})\oplus \scrN_{\bot,2}^{0,0}(\fbD_{\alpha}^*,\bB^*_{\alpha}) \oplus \module_{\N}^{\alpha+1}, \\
&\DD:\quad && L^2(\bbF_{\alpha+1}; \bbG_{\alpha+1}) = \scrR^{0,0}_{2}(\fbD_{\alpha}; \bB_{\alpha})\oplus \scrN_{\bot,2}^{0,0}(\fbD_{\alpha}^*) \oplus \module_{\D}^{\alpha+1}.
\end{aligned}
\]
Both decompositions are $L^2$-orthogonal. Using the continuity of the orthogonal projection $\frakI_{\alpha+1}: L^{2}(\bbF_{\alpha+1}; \bbG_{\alpha+1}) \to \module^{\alpha+1}_{\R}$, $\R\in\BRK{\NN,\DD}$, we observe:
\[
\begin{aligned}
&\NN:\quad&&\scrN^{0,0}_{2}(\fbD_{\alpha+1}) \cap \scrN_{2,\bot}^{0,0}(\fbD_{\alpha}^*, \bB^*_{\alpha}) = \module_{\N}^{\alpha+1} \cap \scrN_{\bot,2}^{0,0}(\fbD_{\alpha}^*, \bB^*_{\alpha}) = \{0\}, \\
&\DD:\quad&&\scrN^{0,0}_{2}(\fbD_{\alpha+1},\bB_{\alpha+1}) \cap \scrN_{2,\bot}^{0,0}(\fbD_{\alpha}^*) = \module_{\D}^{\alpha+1} \cap \scrN_{\bot,2}^{0,0}(\fbD_{\alpha}^*) = \{0\}.
\end{aligned}
\]
So comparing the decompositions, we conclude:
\[
\begin{aligned}
&\NN:\quad && \scrN_{2,\bot}^{0,0}(\fbD_{\alpha}^*; \bB^*_{\alpha}) \subseteq \scrR^{0,0}_{2}(\fbD_{\alpha+1}^*; \bB^*_{\alpha+1}), \\
&\DD:\quad && \scrN_{2,\bot}^{0,0}(\fbD_{\alpha}^*) \subseteq \scrR^{0,0}_{2}(\fbD_{\alpha+1}^*).
\end{aligned}
\]

By combining this with \propref{prop:correction_final_dual_new}, we obtain the required equality. Intersecting both sides with $W^{II, KK}_{p}(\bbF_{\alpha+1}; \bbG_{\alpha+1})$ and applying the second clause of \propref{prop:correction_final_dual_new}, we then have:
\[
\begin{aligned}
&\NN:\quad && \scrR_{p}^{II, KK}(\fbD_{\alpha+1}^*; \bB^*_{\alpha+1}) = \scrN_{p, \bot}^{II, KK}(\fbD_{\alpha}^*; \bB^*_{\alpha}), \\
&\DD:\quad && \scrR_{p}^{II, KK}(\fbD_{\alpha+1}^*) = \scrN_{p, \bot}^{II, KK}(\fbD_{\alpha}^*).
\end{aligned}
\]
Since this holds for every sharp tuple satisfying the assumptions of \propref{prop:correction_dual_containment}, the smooth version also holds due to the Sobolev Fréchet grading.
\end{PROOF}

\subsection{Disrupted case} 
\label{sec:disrupted_proof}
We now turn to the disrupted case, namely the lifting theorem \thmref{thm:disrubted_elliptic_pre_complex} and the Hodge decompositions \thmref{thm:hodge_like_corrected_disrupted_complex} and \thmref{thm:hodge_like_corrected_disrupted_complexD}. We show that, with an additional argument, these results follow from the analysis of the standard non-disrupted setting.

The first step is to observe that, by their very definition (\defref{def:finite_elliptic_pre_complex}), disrupted $\alpha_0$-elliptic pre-complexes are $\tilde{\alpha}_0$-elliptic pre-complexes with
\[
\tilde{\alpha}_0=\alpha_0-1.
\]
Thus, all relevant lifted systems up to $\fbD_{\tilde{\alpha}_0}$ are constructed as before and are characterized by the properties in \thmref{thm:corrected_complex} and \propref{prop:correction}. In particular, $\fbD_{\tilde{\alpha}_0-1}$ induces an auxiliary decomposition:
\beq
\begin{aligned}
\NN &:\qquad && \Gamma(\bbF_{\tilde{\alpha}_0}; \bbG_{\tilde{\alpha}_0}) 
= \scrR(\fbD_{\tilde{\alpha}_0-1}) \oplus \scrN(\fbD^*_{\tilde{\alpha}_0-1}, \bB^*_{\tilde{\alpha}_0-1}), \\[0.5em]
\DD &:\qquad && \Gamma(\bbF_{\tilde{\alpha}_0}; \bbG_{\tilde{\alpha}_0}) 
= \scrR(\fbD_{\tilde{\alpha}_0-1}; \bB_{\tilde{\alpha}_0-1}) \oplus \scrN(\fbD^*_{\tilde{\alpha}_0-1}).
\end{aligned} 
\label{eq:aux_disrupted_N-1}
\eeq 

Hence, to prove \thmref{thm:disrubted_elliptic_pre_complex}, \thmref{thm:hodge_like_corrected_disrupted_complex}, and \thmref{thm:hodge_like_corrected_disrupted_complexD}, it remains to show that $\fbD_{\tilde{\alpha}_0}$ induces an auxiliary decomposition (since $\fbD_{\tilde{\alpha}_0+1} = 0$), and that both this decomposition and the ones in \eqref{eq:aux_disrupted_N-1} refine to Hodge decompositions, albeit without the finite dimensionality of $\module^{\tilde{\alpha}_0}_{\R}$, $\R \in \BRK{\NN, \DD}$.

By the defining properties of disrupted $\alpha_0$-elliptic pre-complexes, and since $\bD_{\tilde{\alpha}_0+1} = 0$, a comparison with the overdetermined ellipticity conditions in \defref{def:NN_segment}--\defref{def:DD_segment} for $\alpha = \tilde{\alpha}_0 + 1$, together with the fact that $\sigma(\fbD_{\tilde{\alpha}_0}^* - \bD_{\tilde{\alpha}_0}^*) = 0$, shows that the systems
\beq
\begin{aligned}
\NN &:\quad && \fbD_{\tilde{\alpha}_0}^* \oplus \bB_{\tilde{\alpha}_0}^*, \\[0.5em]
\DD &:\quad && \fbD_{\tilde{\alpha}_0}^*
\end{aligned}
\label{eq:OD_disrupted1}
\eeq
are overdetermined elliptic. By composing these with the overdetermined ellipticity at $\alpha = \tilde{\alpha}_0$ and using the order-reduction property to disregard compact perturbations, the following systems are also overdetermined elliptic:
\beq
\begin{aligned}
\NN &:\quad && \fbD_{\tilde{\alpha}_0} \fbD_{\tilde{\alpha}_0}^* \oplus \bB_{\tilde{\alpha}_0}^*, \\[0.5em]
\DD &:\quad && \fbD_{\tilde{\alpha}_0} \fbD_{\tilde{\alpha}_0}^* \oplus \bB_{\tilde{\alpha}_0} \fbD_{\tilde{\alpha}_0}^*.
\end{aligned}
\label{eq:OD_disrupted2}
\eeq
Hence, the sequence
\[
\tilde{\bD}_{-1} = 0, \qquad \tilde{\bD}_{1} = \fbD_{\tilde{\alpha}_0}^*, \qquad \tilde{\bD}_{2} = 0, \qquad \tilde{\bD}_{3} = 0, \cdots
\]
is itself an $\tilde{\tilde{\alpha}}_0$-elliptic pre-complex, with $\tilde{\tilde{\alpha}}_0 = 1$. It is based on Dirichlet conditions when the original $(\bD_{\bullet})$ is based on Neumann conditions, and vice versa. Moreover, it holds that
\[
\tilde{\fbD}_{1} = \tilde{\bD}_{1} = \fbD_{\tilde{\alpha}_0}^*,
\]
by the unique characterization of the lifted system provided in \thmref{thm:corrected_complex}.

Thus, $\fbD_{\tilde{\alpha}_0}^*$ induces its own auxiliary decompositions (with the labeling referring to the conditions on which the original $(\bD_{\bullet})$ was based):  
\beq 
\begin{aligned}
\NN &:\qquad && \Gamma(\bbF_{\tilde{\alpha}_0}; \bbG_{\tilde{\alpha}_0}) 
= \scrR(\fbD^*_{\tilde{\alpha}_0}; \bB^*_{\tilde{\alpha}_0}) \oplus \scrN(\fbD_{\tilde{\alpha}_0}), \\[0.5em]
\DD &:\qquad && \Gamma(\bbF_{\tilde{\alpha}_0}; \bbG_{\tilde{\alpha}_0}) 
= \scrR(\fbD_{\tilde{\alpha}_0}^*) \oplus \scrN(\fbD_{\tilde{\alpha}_0}, \bB_{\tilde{\alpha}_0}).
\end{aligned}
\label{eq:aux_disrutped_adjoint_N}
\eeq 

We then have:
\begin{proposition} 
The decomposition \eqref{eq:aux_disrupted_N-1} refines into a Hodge decomposition as specified by \thmref{thm:hodge_like_corrected_disrupted_complex} and \thmref{thm:hodge_like_corrected_disrupted_complexD} (with $\tilde{\alpha}_0=\alpha-1$):
\beq 
\begin{aligned}
\NN &:\quad && \Gamma(\bbF_{\tilde{\alpha}_0}; \bbG_{\tilde{\alpha}_0}) 
= \scrR(\fbD_{\tilde{\alpha}_0-1}) \oplus \scrR(\fbD^*_{\tilde{\alpha}_0}; \bB^*_{\tilde{\alpha}_0})
\oplus \module^{\tilde{\alpha}_0}_{\NN} \\ 
\DD &:\quad &&\Gamma(\bbF_{\tilde{\alpha}_0}; \bbG_{\tilde{\alpha}_0}) 
= \scrR(\fbD_{\tilde{\alpha}_0-1}; \bB_{\tilde{\alpha}_0-1}) 
\oplus \scrR(\fbD_{\tilde{\alpha}_0}^*) \oplus  \module^{\tilde{\alpha}_0}_{\DD}.
\end{aligned}
\label{eq:Hodge_like_disrupted_1} 
\eeq
with $\module^{\tilde{\alpha}_0}_{\R}$, $\R\in\BRK{\NN,\DD}$ not necessarily finite dimensional.
\end{proposition}
\begin{proof} 
On the one hand, we have the auxiliary decompositions of both $\fbD_{\tilde{\alpha}_0}^*$ in \eqref{eq:aux_disrutped_adjoint_N} and $\fbD_{\tilde{\alpha}_0-1}$ in \eqref{eq:aux_disrupted_N-1}, which are $L^{2}$-orthogonal; on the other hand, we have the relations from the construction of the lifted complex from the main proof 
\beq 
\begin{aligned}
\NN &:\quad && \scrR(\fbD^*_{\tilde{\alpha}_0}; \bB^*_{\tilde{\alpha}_0}) \subseteq \scrN(\fbD_{\tilde{\alpha}_0-1}^*, \bB_{\tilde{\alpha}_0-1}^*), \qquad \scrR(\fbD_{\tilde{\alpha}_0-1}) \subseteq \scrN(\fbD_{\tilde{\alpha}_0}), \\[0.5em]
\DD &:\quad && \scrR(\fbD^*_{\tilde{\alpha}_0}) \subseteq \scrN(\fbD_{\tilde{\alpha}_0-1}^*), \qquad \scrR(\fbD_{\tilde{\alpha}_0-1}; \bB_{\tilde{\alpha}_0-1}) \subseteq \scrN(\fbD_{\tilde{\alpha}_0}, \bB_{\tilde{\alpha}_0}).
\end{aligned}
\label{eq:disrupted_relations_proof} 
\eeq 
Comparing these, we obtain the $L^{2}$-orthogonal decompositions
\[
\begin{aligned}
\NN &:\quad && \Gamma(\bbF_{\tilde{\alpha}_0}; \bbG_{\tilde{\alpha}_0}) 
=\scrR(\fbD_{\tilde{\alpha}_0-1}) \oplus \scrR(\fbD^*_{\tilde{\alpha}_0}; \bB^*_{\tilde{\alpha}_0}) \oplus 
(\scrN(\fbD_{\tilde{\alpha}_0}) \cap \scrN(\fbD_{\tilde{\alpha}_0-1}^*, \bB_{\tilde{\alpha}_0-1}^*)), \\[0.5em]
\DD &:\quad && \Gamma(\bbF_{\tilde{\alpha}_0}; \bbG_{\tilde{\alpha}_0}) 
= \scrR(\fbD_{\tilde{\alpha}_0-1}; \bB_{\tilde{\alpha}_0-1}) 
\oplus \scrR(\fbD_{\tilde{\alpha}_0}^*) \oplus (\scrN(\fbD_{\tilde{\alpha}_0}, \bB_{\tilde{\alpha}_0}) \cap \scrN(\fbD^*_{\tilde{\alpha}_0-1})).
\end{aligned}
\]
Moreover, since by the defining relation in \thmref{thm:corrected_complex} we have $\fbD_{\alpha} = \bD_{\alpha}$ on $\scrN(\fbD_{\alpha}^*)$ in the $\DD$ case and on $\scrN(\fbD_{\alpha}^*, \bB_{\alpha}^*)$ in the $\NN$ case, it follows that in the setting of both \thmref{thm:hodge_like_corrected_disrupted_complex} and \thmref{thm:hodge_like_corrected_disrupted_complexD} we have
\[
\begin{aligned}
\NN &:\qquad && \scrN(\fbD_{\tilde{\alpha}_0}) \cap \scrN(\fbD_{\tilde{\alpha}_0-1}^*, \bB_{\tilde{\alpha}_0-1}^*) = \module^{\tilde{\alpha}_0}_{\NN}, \\[0.5em]
\DD &:\qquad && \scrN(\fbD_{\tilde{\alpha}_0}, \bB_{\tilde{\alpha}_0}) \cap \scrN(\fbD^*_{\tilde{\alpha}_0-1}) = \module^{\tilde{\alpha}_0}_{\DD}.
\end{aligned}
\]
By comparing, these provide the desired Hodge decompositions. 
\end{proof} 
\begin{proposition}
$\fbD_{\tilde{\alpha}_0}$ induces an auxiliary decomposition, which refines to a Hodge decomposition as specified by \thmref{thm:hodge_like_corrected_disrupted_complex} and \thmref{thm:hodge_like_corrected_disrupted_complex} (by replacing $\tilde{\alpha}_0=\alpha_0-1$).
\end{proposition}
\begin{proof}
We find by iterating the relations in \eqref{eq:disrupted_relations_proof} upon the decompositions in \eqref{eq:Hodge_like_disrupted_1} that:
\[
\begin{aligned} 
\NN &:\qquad && \scrR(\fbD_{\tilde{\alpha}_0}) 
= \BRK{\fbD_{\tilde{\alpha}_0}\fbD_{\tilde{\alpha}_0}^*\Psi ~:~ \bB^*_{\tilde{\alpha}_0}\Psi=0}, \\[0.5em]
\DD &:\qquad && \scrR(\fbD_{\tilde{\alpha}_0};\bB_{\tilde{\alpha}_0}) 
= \BRK{\fbD_{\tilde{\alpha}_0}\fbD_{\tilde{\alpha}_0}^*\Psi ~:~ \bB_{\tilde{\alpha}_0}\fbD_{\tilde{\alpha}_0}^*\Psi=0}.
\end{aligned} 
\]

By the overdetermined ellipticities listed in \eqref{eq:OD_disrupted1}, the spaces on the right are closed due to the associated a~priori estimates, and so are their Sobolev versions. In particular, $\scrR^{0,0}_{2}(\fbD_{\tilde{\alpha}_0})$ and $\scrR^{0,0}_{2}(\fbD_{\tilde{\alpha}_0};\bB_{\tilde{\alpha}_0})$ are closed, and the $L^{2}$-orthogonal decompositions from  \propref{prop:L2_aux_adapted_pair} become:
\[
\begin{aligned}
\NN &:\qquad && L^{2}(\bbF_{\tilde{\alpha}_0+1};\bbG_{\tilde{\alpha}_0+1})
= \scrR^{0,0}_{2}(\fbD_{\tilde{\alpha}_0}) \oplus \scrN^{0,0}_{2}(\fbD_{\tilde{\alpha}_0}^*, \bB_{\tilde{\alpha}_0}^*), \\[0.5em]
\DD &:\qquad && L^{2}(\bbF_{\tilde{\alpha}_0+1};\bbG_{\tilde{\alpha}_0+1})
= \scrR^{0,0}_{2}(\fbD_{\tilde{\alpha}_0}; \bB_{\tilde{\alpha}_0}) \oplus \scrN^{0,0}_{2}(\fbD_{\tilde{\alpha}_0}^*).
\end{aligned}
\]

However, by the overdetermined ellipticities in \eqref{eq:OD_disrupted2}, the kernels on the right are finite dimensional and consist of smooth sections, and the projections onto them belong to the calculus, lying in $\OP(-\infty,-\infty)$. In fact, since $\fbD_{\tilde{\alpha}_0+1} = 0$, we directly find in the setting of \thmref{thm:hodge_like_corrected_disrupted_complex} and \thmref{thm:hodge_like_corrected_disrupted_complexD}:
\[
\begin{aligned}
\NN &:\qquad && \scrN^{0,0}_{2}(\fbD_{\tilde{\alpha}_0}^*, \bB_{\tilde{\alpha}_0}^*) 
= \scrN(\fbD_{\tilde{\alpha}_0}^*, \bB_{\tilde{\alpha}_0}^*) = \module^{\tilde{\alpha}_0+1}_{\NN}, \\[0.5em]
\DD &:\qquad && \scrN^{0,0}_{2}(\fbD_{\tilde{\alpha}_0}^*) 
= \scrN(\fbD_{\tilde{\alpha}_0}^*) = \module^{\tilde{\alpha}_0+1}_{\DD}.
\end{aligned}
\]

Hence, on the smooth level we have:
\[
\begin{aligned}
\NN &:\qquad && \Gamma(\bbF_{\tilde{\alpha}_0+1};\bbG_{\tilde{\alpha}_0+1})
= \scrR(\fbD_{\tilde{\alpha}_0}) \oplus \module^{\tilde{\alpha}_0+1}_{\NN}, \\[0.5em]
\DD &:\qquad && \Gamma(\bbF_{\tilde{\alpha}_0+1};\bbG_{\tilde{\alpha}_0+1})
= \scrR(\fbD_{\tilde{\alpha}_0};\bB_{\tilde{\alpha}_0}) \oplus \module^{\tilde{\alpha}_0+1}_{\DD}.
\end{aligned}
\]
This gives both the auxiliary decompositions and the Hodge decompositions induced by $\fbD_{\tilde{\alpha}_0}$, as required.
\end{proof}
\section{Tame smooth families}
\label{sec:Tame}
\subsection{Tame smooth families of systems} 
We go back for a moment to discussing general Douglis-Nirenberg systems, as was done in \secref{sec:Overdetermined}, stripped from the context of elliptic pre-complexes. Let $\scrU$ be a tame Fréchet manifold, serving as a moduli space, and let $\bbE_{\gamma}, \bbF_{\gamma}, \bbE, \bbF \rightarrow M$, and $\bbJ_{\gamma}, \bbG_{\gamma}, \bbJ, \bbG \rightarrow \partial M$ be vector bundles parameterized by the moduli space $\gamma \in \scrU$. Let $\scrV, \scrW \rightarrow \scrU$ be tame Fréchet vector bundles, with fibers:
\[
\scrV|_{\gamma}=\Gamma(\bbE_{\gamma};\bbJ_{\gamma}), \qquad \scrW|_{\gamma}=\Gamma(\bbF_{\gamma};\bbG_{\gamma})
\] 
and model spaces fixed $\Gamma(\bbE;\bbJ)$ and $\Gamma(\bbF;\bbG)$, respectively. 
\begin{definition}
A bundle map between tame Fréchet vector bundles as above 
\beq
\bD:\scrV\rightarrow\scrW
\label{eq:tame_bundle_maps}
\eeq
operating in the fashion of
\beq 
\begin{aligned}
(\gamma,\Psi)\mapsto(\gamma,\bD(\gamma)\Psi), \qquad \gamma\in\scrU,\,\,\,\Psi\in \Gamma(\bbE_{\gamma};\bbJ_{\gamma})
\end{aligned}
\label{eq:bundle_map_operation}
\eeq
is called a \emph{family of systems} if each of its \emph{fiber maps}
\[
\bD(\gamma):\Gamma(\bbE_{\gamma};\bbJ_{\gamma})\rightarrow \Gamma(\bbF_{\gamma};\bbG_{\gamma})
\] 
is a Douglis-Nirenberg system. $\frakA$ is called a \emph{tame smooth family of systems} if \eqref{eq:tame_bundle_maps} is tame and smooth as a map of tame Fréchet manifolds. 
\end{definition} 
Note that if $\scrV=\scrU\times\Gamma(\bbE;\bbJ)$ and $\scrW=\scrU\times\Gamma(\bbF;\bbG)$ are trivial bundles, and $\scrU\subseteq F$ is a an open subset of a tame Fréchet space $F$, then a family of systems effectively reduces into a mapping
\beq
\begin{split}
&\bD:(\scrU\subseteq F)\times \Gamma(\bbE;\bbJ)\rightarrow\Gamma(\bbF;\bbG),
\end{split}
\label{eq:linear_map} 
\eeq
operating as $(\gamma,\Psi)\mapsto \bD(\gamma)\Psi$, such that for all $\gamma\in\scrU$, $\bD(\gamma):\Gamma(\bbE;\bbJ)\rightarrow \Gamma(\bbF;\bbG)$ is a Douglis-Nirenberg system. 
Mappings of the form \eqref{eq:tame_bundle_maps} are thus a generalization of the notion of tame smooth families of linear maps as defined and discussed throughout \cite{Ham82}, and in particular generalizes tame smooth families of differential operators. 

The goal of this section is to establish that both adjunction and inversion of tame smooth families of systems preserve tame and smooth dependence on the parameter. The first result concerns the adjoints of the zero-class constituents of $\bD$, as defined in \eqref{eq:integration_by_parts_zero_class_cons}:
\begin{theorem}
\label{thm:adjoint_family}
Let $\bD:\scrV\rightarrow\scrW$ be a tame smooth family of systems. Let $g:\scrU\rightarrow\scrM_{M}$ be any tame smooth family of Riemannian metrics over $M$, and let $d\Volume:\scrU\rightarrow\Omega^{d}_{M}$ be any tame smooth family of volume forms. Let $\gamma\mapsto\bra\cdot,\cdot\ket_{\gamma}$ denote the family of induced $L^{2}$-inner products on the fibers $\scrV$ and $\scrW$, given by (boundary inner products omitted for conciseness):
\beq
\bra\cdot,\cdot\ket_{\gamma}=\int_{M}(\cdot,\cdot)_{g(\gamma)} \, d\Volume(\gamma).
\label{eq:smooth_familiy_inner_products}
\eeq
Then the \emph{family of adjoints} of the zero-class constituents of $\bD$ (\defref{def:zero_class_constituent}) with respect to \eqref{eq:smooth_familiy_inner_products}, i.e., the family of systems $\bD^*:\scrW\rightarrow\scrV$ defined for each $\gamma\in \scrU$ by the relation
\beq
\bra\bD(\gamma)\Psi,\Theta\ket_{\gamma}=\bra \Psi,\bD^*(\gamma)\Theta\ket_{\gamma}, \qquad \Psi\in\Gamma_{c}(\bbE_{\gamma};\bbJ_{\gamma}),\,\,\Theta\in\Gamma_{c}(\bbF_{\gamma};\bbG_{\gamma}), 
\label{eq:adjoint_relation_bundles}
\eeq
is also a tame smooth family.
\end{theorem}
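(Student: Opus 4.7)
The plan is to reduce to a duality computation and bootstrap smoothness from it. Since tameness and smoothness are local properties, I may assume $\scrU$ is an open subset of a tame Fréchet space and the bundles $\scrV, \scrW$ are trivial; the general case follows by patching via tame smooth transition maps. Fix a base point and the associated reference inner product $\bra \cdot, \cdot \ket_{0}$. Near this point there is a strictly positive bundle endomorphism $M(\gamma)$ such that $\bra \Psi, \Theta \ket_\gamma = \bra \Psi, M(\gamma)\Theta \ket_{0}$; since $\gamma \mapsto (g(\gamma), d\Volume(\gamma))$ is tame smooth and fiberwise inversion of positive bundle endomorphisms is tame smooth, so is $\gamma \mapsto M(\gamma)^{\pm 1}$. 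A direct manipulation of \eqref{eq:adjoint_relation_bundles} produces the factorization
\[
\bD^*(\gamma) = M(\gamma)^{-1}\, \bD^{\star}(\gamma)\, M(\gamma),
\]
where $\bD^{\star}(\gamma)$ denotes the adjoint of the zero-class constituent of $\bD(\gamma)$ with respect to the \emph{fixed} inner product $\bra \cdot, \cdot \ket_{0}$. Because tame smooth families of systems are closed under composition, it suffices to establish that $\gamma \mapsto \bD^{\star}(\gamma)$ is itself a tame smooth family.

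Tameness follows from duality applied componentwise. Splitting the Boutet de Monvel matrix into its isolated entries as in \eqref{eq:isolated_Green_operator} and invoking \propref{prop:L2_continuity}, each entry of the zero-class constituent admits an adjoint within the calculus, and the identity \eqref{eq:norms_operator_duality_adjoints} gives
\[
\|\bD^{\star}(\gamma)\|_{\mathrm{op}(-s+m,\,-s)} = \|\bD(\gamma)\|_{\mathrm{op}(s-1/2,\,s-1/2-m)}
\]
for every admissible Sobolev exponent. Substituting the tame estimate \eqref{eq:tame_estimate} for $\bD(\gamma)$ on the right-hand side yields a tame estimate of the same form for $\bD^{\star}(\gamma)$, so $\bD^{\star}$ is a tame family.

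Smoothness is shown by induction on the order of the derivative. Differentiating the defining identity $\bra \bD(\gamma)\Psi, \Theta\ket_{0} = \bra \Psi, \bD^{\star}(\gamma)\Theta\ket_{0}$, valid for $\Psi, \Theta \in \Gamma_c$, in a direction $\sigma \in T_\gamma \scrU$ and using the smoothness of $\bD$ gives
\[
\bra D\bD(\gamma)\{\sigma\}\,\Psi, \Theta\ket_{0} = \bra \Psi, D\bD^{\star}(\gamma)\{\sigma\}\,\Theta\ket_{0}.
\]
Uniqueness of the adjoint on compactly supported sections forces $D\bD^{\star}(\gamma)\{\sigma\}$ to coincide with the adjoint of $D\bD(\gamma)\{\sigma\}$ with respect to $\bra \cdot, \cdot \ket_0$. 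Because differentiation in the parameter preserves the order and class structure of each matrix entry, $D\bD(\gamma)\{\sigma\}$ is again a tame smooth family of zero-class constituents, and the duality argument of the previous paragraph supplies tame estimates for $D\bD^{\star}$. Iterating expresses every higher derivative of $\bD^{\star}$ as the adjoint of a tame smooth family, completing the induction.

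The main obstacle will be the componentwise tracking of the calculus: verifying that the adjoints of Poisson, trace (of class zero), boundary pseudodifferential, and singular Green (of class zero) operators fit into the standard asymptotic symbolic formulas with coefficients depending tamely and smoothly on $\gamma$. This is precisely where the zero-class hypothesis is indispensable, as it ensures each component admits an adjoint within the calculus via explicit formulas whose only ingredients are algebraic operations and parameter-smooth derivatives of symbols, both of which preserve tame smoothness inherited from $\bD$.
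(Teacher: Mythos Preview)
Your factorization $\bD^*(\gamma) = M(\gamma)^{-1}\bD^{\star}(\gamma)M(\gamma)$ is a clean idea and genuinely simplifies the bookkeeping: the paper works directly with the $\gamma$-dependent inner product and therefore picks up correction terms of the form $D_\sigma g(\gamma)$ when differentiating, whereas you isolate all metric dependence in the zero-order multipliers $M(\gamma)^{\pm 1}$. Your tameness argument via the operator-norm duality \eqref{eq:norms_operator_duality_adjoints} is essentially the paper's.

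The gap is in the smoothness step. When you write ``differentiating the defining identity $\bra\bD(\gamma)\Psi,\Theta\ket_0=\bra\Psi,\bD^{\star}(\gamma)\Theta\ket_0$ \dots\ forces $D\bD^{\star}(\gamma)\{\sigma\}$ to coincide with the adjoint of $D\bD(\gamma)\{\sigma\}$'', you are assuming that $\bD^{\star}$ is differentiable in order to prove it. What you actually obtain from the difference quotient is only
\[
\bra \Psi,\tfrac{\bD^{\star}(\gamma+t\sigma)-\bD^{\star}(\gamma)}{t}\Theta\ket_0 \longrightarrow \bra\Psi,(D\bD(\gamma)\{\sigma\})^{\star}\Theta\ket_0,
\]
i.e.\ weak $L^2$-convergence of the difference quotient applied to a fixed $\Theta$. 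For $\bD^{\star}$ to be Fr\'echet-differentiable you need this convergence in the $C^\infty$-topology of $\Gamma(\bbE;\bbJ)$, which does not follow formally. The paper isolates exactly this upgrade in a separate lemma (\lemref{lem:weak_convergence_of_smooth_sections}): weak $L^2$-convergence of smooth sections to a smooth limit, once tested against $\mathfrak{L}^*\tilde\Psi$ for arbitrary order-reducing $\mathfrak{L}$, bootstraps to convergence in every Sobolev norm and hence in $C^\infty$. You also need a separate continuity argument for $\bD^{\star}$ before attacking differentiability --- the tame estimate alone gives uniform bounds, not continuity in $\gamma$ --- and the paper proves this as an intermediate lemma using the same weak-to-strong mechanism. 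Your final paragraph about symbol-level tracking is a red herring: no asymptotic expansions are needed, the argument is purely functional-analytic, and the real work is the convergence upgrade just described.
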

 
The second result concerns left inverses:
\begin{theorem}
\label{thm:inverses_family}
Let $\bD:\scrV \rightarrow \scrW$ be a tame smooth family of systems such that each fiber map $\bD(\gamma)$ is an injective overdetermined-elliptic system. Then the family of systems $\frakS: \scrW \rightarrow \scrV$, whose fiber maps are defined by the relation
\[
\frakS(\gamma)\bD(\gamma)\Psi = \Psi, \qquad \Psi \in \Gamma(\bbE_{\gamma}; \bbJ_{\gamma}), \,\,\, \gamma \in \scrU,
\]
is also a tame smooth family.
\end{theorem}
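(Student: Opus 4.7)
The plan is to reduce to an elliptic inversion on $L^2$ via order-reducing operators and the adjoint family, and then to invoke a Hamilton-type theorem on the inverse of a tame smooth family of invertible elliptic operators. Since tame smoothness is a local property, I would first work in a trivializing neighborhood $\scrU_0\subseteq\scrU$ of a given point $\gamma_0$, which identifies $\bbE_{\gamma}\simeq \bbE$, etc., and casts $\bD$ as a map of the form \eqref{eq:linear_map}. The injectivity and overdetermined ellipticity of $\bD(\gamma_0)$ persist on a neighborhood of $\gamma_0$ by \thmref{thm:overdetermined_varying_orders} and the continuous dependence of the weighted symbol on $\gamma$, so I may assume these properties hold throughout $\scrU_0$.

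Next, fix basic sharp tuples $(J_0,L_0;I_0,K_0)$ for $\bD(\gamma_0)$ and the associated order-reducing operators $\Pi,\Lambda$ from \eqref{eq:order_reducing_opertors}, which are independent of $\gamma$. Composition and tame smoothness of $\bD$ give a tame smooth family $\tilde{\bD}:\gamma\mapsto \Pi\bD(\gamma)\Lambda$, with each fiber map $\tilde{\bD}(\gamma)\in\OP(0,0)$ injective and overdetermined elliptic. By \thmref{thm:adjoint_family}, the family of adjoints $\tilde{\bD}^*:\gamma\mapsto\tilde{\bD}(\gamma)^*$ is also tame smooth, and hence so is the composed family $\gamma\mapsto \tilde{\bQ}(\gamma):=\tilde{\bD}(\gamma)^*\tilde{\bD}(\gamma)\in\OP(0,0)$. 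Each $\tilde{\bQ}(\gamma)$ is a self-adjoint injective elliptic Green operator of order and class zero, hence by the proof of \propref{prop:estimate_overdetermined_sufficent_douglas} it is a bijection on $L^2(\bbE;\bbJ)$ admitting a parametrix (and in fact a two-sided inverse) within $\OP(0,0)$.

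The main step is to show that $\gamma\mapsto \tilde{\bQ}(\gamma)^{-1}$ is tame smooth. The strategy is to apply the Hamilton-type theorem on inversion of tame smooth families of linear maps \cite[Sec.~II.3.3]{Ham82}. The needed input is a tame estimate of the form $\|\Psi\|_{s,s+1/2,2}\lesssim \|\tilde{\bQ}(\gamma)\Psi\|_{s,s+1/2,2}$, uniform in $\gamma$ on a neighborhood of $\gamma_0$ and with the implicit constants admitting a tame bound in $\gamma$. Such an estimate is furnished by the elliptic a priori estimate \eqref{eq:overdetermined_a_priori} applied fiberwise to $\tilde{\bQ}(\gamma)$, once one verifies that the construction of the parametrix in the Boutet de Monvel calculus depends tamely on the symbolic data—which in turn inherits tame dependence on $\gamma$ from the hypothesis on $\bD$. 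Tame smoothness of $\tilde{\bQ}^{-1}$ then follows by iterating the identity $D(\tilde{\bQ}^{-1}) = -\tilde{\bQ}^{-1}\cdot D\tilde{\bQ}\cdot \tilde{\bQ}^{-1}$ and using tameness of composition.

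Finally, setting
\[
\frakS(\gamma):=\Lambda\,\tilde{\bQ}(\gamma)^{-1}\,\tilde{\bD}(\gamma)^*\,\Pi
\]
produces the desired tame smooth family, being a composition of tame smooth families. A direct calculation verifies that $\frakS(\gamma)\bD(\gamma)=\id$, matching the fiberwise left inverse produced by \thmref{thm:overdetermined_varying_orders}. The main obstacle I foresee is the rigorous tracking of tame dependence in the symbolic parametrix construction: the pointwise estimate \eqref{eq:overdetermined_a_priori} is immediate, but promoting it to a genuine tame estimate with $\gamma$-dependent constants satisfying the grading bounds of \cite[Sec.~II.2]{Ham82} requires revisiting the parametrix construction for Douglas--Nirenberg systems in the spirit of \cite[Ch.~III]{Ham82} and checking that each algebraic step---symbol inversion, asymptotic summation, and order reduction---preserves tame smoothness with respect to the parameter.
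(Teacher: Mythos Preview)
Your reduction is essentially the paper's: localize to a trivializing chart, order-reduce to $\OP(0,0)$, pass to the self-adjoint bijection $\bD^*(\gamma)\bD(\gamma)$ via \thmref{thm:adjoint_family}, and recover $\frakS$ as the composition of its inverse with the adjoint and the order-reducing isomorphisms. Where you diverge is in verifying that the inverse family is tame smooth, and here your expressed obstacle---tracking tame dependence through the symbolic parametrix construction---is avoidable. The paper proceeds more directly: for the (now assumed) bijective elliptic family $\bD(\gamma)\in\OP(0,0)$, the elliptic estimate reads $\|\Psi\|_{s,\gamma_0}\leq M_s(\gamma)\|\bD(\gamma)\Psi\|_{s,\gamma_0}$ with $1/M_s(\gamma)=\inf_{\|\Psi\|=1}\|\bD(\gamma)\Psi\|_{s,\gamma_0}$. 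Since $\bD$ is tame smooth, $M_s$ is locally bounded with a tame bound near $\gamma_0$; substituting $\Psi=\frakS(\gamma)\Theta$ gives tameness of $\frakS$ immediately. Continuity of $\frakS$ follows by inserting $\Psi=\frakS(\gamma)\Theta-\frakS(\gamma_0)\Theta_0$ into the same estimate. Smoothness then comes for free from \cite[Thm.~3.1.1]{Ham82}, which upgrades a tame continuous family of inverses of a tame smooth family of invertible linear maps to a tame smooth family---no inductive differentiation of the inverse formula and no symbolic bookkeeping of the parametrix required. Your approach would also work, but the paper's route is shorter and avoids precisely the difficulty you flagged.
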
 

The proofs of these theorems are deferred to \secref{sec:techincal_tame}.

\subsection{Tame smooth families of elliptic pre-complexes} 
\label{sec:index_theory} 
We proceed to address the case where the systems in the diagram \eqref{eq:elliptic_pre_complex_diagram} are tamely and smoothly parameterized by the moduli space $\scrU$:
\beq
\begin{xy}
(-30,0)*+{0}="Em1";
(0,0)*+{\Gamma(\bbF_{0,\gamma};\bbG_{0,\gamma})}="E0";
(30,0)*+{\Gamma(\bbF_{1,\gamma};\bbG_{1,\gamma})}="E1";
(60,0)*+{\Gamma(\bbF_{2,\gamma};\bbG_{2,\gamma})}="E2";
(90,0)*+{\Gamma(\bbF_{3,\gamma};\bbG_{3,\gamma})}="E3";
(101,0)*+{\cdots}="E4";
(-30,-25)*+{0}="Gm1";
(0,-25)*+{\Gamma(0;\bbL_{0,\gamma})}="G0";
(30,-25)*+{\Gamma(0;\bbL_{1,\gamma})}="G1";
(60,-25)*+{\Gamma(0;\bbL_{2,\gamma})}="G2";
(90,-25)*+{\Gamma(0;\bbL_{3,\gamma})}="G3";
(100,-25)*+{\cdots}="G4";
{\ar@{->}@/^{1pc}/^{\bD_{0}(\gamma)}"E0";"E1"};
{\ar@{->}@/^{1pc}/^{\bD_{0}^*(\gamma)}"E1";"E0"};
{\ar@{->}@/^{1pc}/^{\bD_{1}(\gamma)}"E1";"E2"};
{\ar@{->}@/^{1pc}/^{\bD_{1}^*(\gamma)}"E2";"E1"};
{\ar@{->}@/^{1pc}/^{\bD_{2}(\gamma)}"E2";"E3"};
{\ar@{->}@/^{1pc}/^{\bD_{2}^*(\gamma)}"E3";"E2"};
{\ar@{->}@/^{1pc}/^{\bD_{-1}(\gamma)}"Em1";"E0"};
{\ar@{->}@/^{1pc}/^{\bD^*_{-1}(\gamma)}"E0";"Em1"};
{\ar@{->}@/_{0pc}/^{\bB_0(\gamma)}"E0";"G0"};
{\ar@{->}@/_{0pc}/^{\bB_1(\gamma)}"E1";"G1"};
{\ar@{->}@/_{0pc}/^{\bB_2(\gamma)}"E2";"G2"};
{\ar@{->}@/_{0pc}/^{\bB_{3}(\gamma)}"E3";"G3"};
{\ar@{->}@/^{0pc}/^{\bB_{-1}(\gamma)}"Em1";"Gm1"};
{\ar@{->}@/^{1pc}/^{\bB_{0}^*(\gamma)}"E1";"G0"};
{\ar@{->}@/^{1pc}/^{\bB_{1}^*(\gamma)}"E2";"G1"};
{\ar@{->}@/^{1pc}/^{\bB_{2}^*(\gamma)}"E3";"G2"};
{\ar@{->}@/^{0.8pc}/^{\bB^*_{-1}(\gamma)}"E0";"Gm1"};
\end{xy}
\label{eq:elliptic_pre_complex_diagram_tame}
\eeq
That is, for each $\alpha \in \bbN$, assume there exist tame Fréchet vector bundles $\scrV_{\alpha}, \scrW_{\alpha} \rightarrow \scrU$ with fibers 
\[
\scrV_{\alpha}|_{\gamma} = \Gamma(\bbF_{\alpha,\gamma};\bbG_{\alpha,\gamma}), \qquad \scrW_{\alpha}|_{\gamma} = \Gamma(0;\bbL_{\alpha,\gamma}),
\]
with model spaces $\Gamma(\bbF_{\alpha};\bbG_{\alpha})$ and $\Gamma(0;\bbL_{\alpha})$, respectively, 
and with tame smooth families of systems as defined in \secref{sec:Tame}:
\[
\begin{aligned}
&\bD_{\alpha}:\scrV_{\alpha} \rightarrow \scrV_{\alpha+1}, \qquad &&\bD_{\alpha}^*:\scrV_{\alpha+1} \rightarrow \scrV_{\alpha}, \\
&\bB_{\alpha}:\scrV_{\alpha} \rightarrow \scrW_{\alpha}, \qquad &&\bB^*_{\alpha}:\scrV_{\alpha+1} \rightarrow \scrW_{\alpha},
\end{aligned}
\] 
such that each system acts in the fashion of
\[
(\gamma, \Psi) \mapsto (\gamma, \bD_{\alpha}(\gamma) \Psi), \qquad \gamma \in \scrU,\,\, \Psi \in \Gamma(\bbF_{\alpha,\gamma}; \bbG_{\alpha,\gamma}). 
\]
For every $\gamma \in \scrU$, along each fiber, the systems $\bD_{\alpha}^*(\gamma)$ and $\bB^*_{\alpha}(\gamma)$ satisfy a generalized Green formula \eqref{eq:integration_by_parts_elliptic_pre_complex} with respect to the associated $L^{2}$-inner products $\bra \cdot, \cdot \ket_{\gamma}$, as in \thmref{thm:adjoint_family}: that is, for every $\Psi \in \Gamma(\bbF_{\alpha,\gamma}; \bbG_{\alpha,\gamma})$ and $\Theta \in \Gamma(\bbF_{\alpha+1,\gamma} \bbG_{\alpha+1,\gamma})$ it holds that
\[
\bra \bD_{\alpha}(\gamma) \Psi, \Theta \ket_{\gamma} 
= \bra \Psi, \bD^*_{\alpha}(\gamma) \Theta \ket_{\gamma} 
+ \bra \bB_{\alpha}(\gamma) \Psi, \bB^*_{\alpha}(\gamma) \Theta \ket_{\gamma}.
\]

\begin{definition}
\label{def:tame_family_of_elliptic_pre_complexes}
A family of diagrams \eqref{eq:elliptic_pre_complex_diagram_tame} is called a \emph{tame smooth family of elliptic pre-complexes} if, for every $\gamma \in \scrU$, the diagram $(\bD_{\bullet}(\gamma))$ is an elliptic pre-complex satisfying the properties described above.

The family is called \emph{disrupted} if, for every $\gamma \in \scrU$, the diagram $(\bD_{\bullet}(\gamma))$ is instead a disrupted elliptic pre-complex (cf. \defref{def:finite_elliptic_pre_complex})
\end{definition}
The lifted complexes provided by \eqref{thm:corrected_complex} for each elliptic pre-complex $(\bD_{\bullet}(\gamma))$ in the family collectively yield the following families of systems: 
\beq
\begin{aligned}
&\fbD_{\alpha} : \scrV_{\alpha} \rightarrow \scrV_{\alpha+1}, \qquad
&\fbD_{\alpha}^* : \scrV_{\alpha+1} \rightarrow \scrV_{\alpha}, \\ 
& \bP_{\alpha-1} : \scrV_{\alpha} \rightarrow \scrV_{\alpha-1}, \qquad
&\tbP_{\alpha-1} : \scrV_{\alpha} \rightarrow \scrV_{\alpha}.
\end{aligned}
\label{eq:rough_induced_bundle_maps}
\eeq
operating in the same fashion as above, where $\bP_{\alpha-1}(\gamma)$ and $\tbP_{\alpha-1}(\gamma)$ are the mappings associated with the auxiliary decomposition induced by $\fbD_{\alpha-1}(\gamma)$ in either \defref{def:aux_decomposition} and \defref{def:aux_decompositionD}. 

The conditions under which the tame and smooth dependence on the parameter $\gamma$ is preserved after passing to the lifted complex are now identified. Denote by
\[
\frakI_{\alpha} : \scrV_{\alpha} \rightarrow \scrV_{\alpha}
\]
the family of systems acting as
\[
(\gamma, \Psi) \mapsto (\gamma, \frakI_{\alpha}(\gamma)\, \Psi),
\]
where $\frakI_{\alpha}(\gamma): \Gamma(\bbF_{\alpha,\gamma}; \bbG_{\alpha,\gamma}) \rightarrow \Gamma(\bbF_{\alpha,\gamma}; \bbG_{\alpha,\gamma})$ is the $L^2$-orthogonal projection (with respect to the $\gamma$-dependent $L^2$-inner product) onto the finite-dimensional space
\[
\begin{aligned}
&\NN:\quad && \module_{\N}^{\alpha}(\gamma) = \ker\big(\bD_{\alpha}(\gamma),\bD_{\alpha-1}^*(\gamma),\bB_{\alpha-1}^*(\gamma)\big), \\
&\DD:\quad && \module_{\D}^{\alpha}(\gamma) = \ker\big(\bD_{\alpha}(\gamma),\bD_{\alpha-1}^*(\gamma),\bB_{\alpha}(\gamma)\big).
\end{aligned}
\]

\begin{theorem}
\label{thm:smooth_elliptic_pre_complex}
Suppose that there exists $\beta_0 \in \bbN$ such that for all $\alpha \leq \beta_0$, $\frakI_{\alpha}:\scrV_{\alpha} \rightarrow \scrV_{\alpha}$ is a tame smooth family of systems. Then for all $\alpha \leq \beta_0+1$, the families of systems in \eqref{eq:rough_induced_bundle_maps} are tame smooth families as well.  
\end{theorem}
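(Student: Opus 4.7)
The plan is to mirror the inductive construction of the corrected complex carried out in Section III.3, and at each stage verify that the systems produced inherit tameness and smoothness from the ingredients. The two workhorses will be Theorem 4.1 (adjunction of the zero-class constituent preserves tame smoothness with respect to any tame smooth family of $L^2$-pairings) and Theorem 4.2 (the left-inverse of an injective overdetermined-elliptic tame smooth family is itself tame smooth), together with the obvious fact that composition, direct summation, and disjoint union of tame smooth families of systems are again tame and smooth.

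I proceed by induction on $\alpha \leq \alpha_0+1$. The base $\alpha=0$ is trivial: $\fbD_{-1}=0$, $\bP_{-1}=0$, $\tbP_{-1}=0$, $\fbD_0=\bD_0$, and $\fbD_0^*=\bD_0^*$, all of which are tame smooth by hypothesis (the last two by Theorem 4.1 applied to the given family of $L^2$-pairings). For the inductive step, assume $\fbD_{\alpha-1},\fbD_{\alpha-1}^*,\bP_{\alpha-2},\tbP_{\alpha-2}$ are tame smooth families. Since $\fbD_{\alpha}=\bD_{\alpha}(\id-\tbP_{\alpha-1})$ was the outcome of the previous iteration (this is formula \eqref{eq:D_def_induction_step} with $\alpha-1$), we already have $\fbD_\alpha$ as a tame smooth family; its adapted adjoint $\fbD_\alpha^*$ is then tame smooth by Theorem 4.1, since it equals the adjoint of the zero-class constituent of $\bD_\alpha-\bD_\alpha\tbP_{\alpha-1}$ with respect to the given family of inner products (the correction term $\bC_\alpha=-\bD_\alpha\tbP_{\alpha-1}$ lies in $\OP(0,0)$, so the zero-class constituent shift is harmless).

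The crux is constructing $\bP_{\alpha-1}$ and $\tbP_{\alpha-1}$ as tame smooth families. Following \eqref{eq:G_def}, these are produced by composing $\fbD_{\alpha-1}^*$ (and $\bB_{\alpha-1}^*$ in the Neumann case) with the left-inverse $\frakS_{\alpha-1}$ of the overdetermined-elliptic system
\[
\mathfrak{R}_{\alpha-1}\;=\;\begin{cases}\fbD_{\alpha-1}^*\fbD_{\alpha-1}\oplus \bB_{\alpha-1}^*\fbD_{\alpha-1}\oplus\tbP_{\alpha-2}\oplus\frakI_{\alpha-1}&(\NN),\\[2pt]\fbD_{\alpha-1}^*\fbD_{\alpha-1}\oplus\bB_{\alpha-1}\oplus\tbP_{\alpha-2}\oplus\frakI_{\alpha-1}&(\DD),\end{cases}
\]
which is injective by Proposition III.3.7. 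Here is where the main obstacle sits: the existence of $\frakS_{\alpha-1}$ on the fiber level is guaranteed by Theorem 2.21, but what we need is that $\frakS_{\alpha-1}$ varies tamely and smoothly in $\gamma$. This is precisely the content of Theorem 4.2, provided we have arranged for $\mathfrak{R}_{\alpha-1}$ itself to be a tame smooth family of injective overdetermined-elliptic systems. By the induction hypothesis, every summand of $\mathfrak{R}_{\alpha-1}$ other than $\frakI_{\alpha-1}$ is tame and smooth; and the hypothesis of the theorem — that $\frakI_{\alpha-1}$ is tame smooth for $\alpha-1\leq \alpha_0$ — supplies the last summand. Consequently $\mathfrak{R}_{\alpha-1}$ is a tame smooth family, Theorem 4.2 applies, and $\frakS_{\alpha-1}$ is tame smooth.

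With $\frakS_{\alpha-1}$ in hand, \eqref{eq:G_def} writes $\bP_{\alpha-1}$ as a composition of tame smooth families, hence tame smooth; and $\tbP_{\alpha-1}=\fbD_{\alpha-1}\bP_{\alpha-1}$ and the next stage $\fbD_\alpha=\bD_\alpha-\bD_\alpha\tbP_{\alpha-1}$ follow by further composition. Finally, $\fbD_\alpha^*$ is tame smooth by another application of Theorem 4.1 (or equivalently by the formula $\fbD_\alpha^*=(\id-\tbP_{\alpha-1})\bD_\alpha^*$ from \eqref{eq:adjoint_expression}, which again exhibits it as a composition of tame smooth pieces). This closes the induction and yields the claim for all $\alpha\leq \alpha_0+1$; the ceiling at $\alpha_0+1$ is exactly the point at which we would need tame smoothness of $\frakI_{\alpha_0+1}$ in order to continue, which is not assumed.
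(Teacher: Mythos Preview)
Your approach is essentially the same as the paper's: induction on $\alpha$, with the key step being that the overdetermined-elliptic injective system built from $\fbD_{\alpha-1}^*\fbD_{\alpha-1}$, $\tbP_{\alpha-2}$, and $\frakI_{\alpha-1}$ is a tame smooth family, so Theorem~\ref{thm:inverses_family} produces $\frakS_{\alpha-1}$ as a tame smooth family, whence $\bP_{\alpha-1}$, $\tbP_{\alpha-1}$, $\fbD_\alpha$, and $\fbD_\alpha^*$ follow by composition and Theorem~\ref{thm:adjoint_family}.

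That said, the exposition of your inductive step contains a bookkeeping slip. You state the hypothesis as ``$\fbD_{\alpha-1},\fbD_{\alpha-1}^*,\bP_{\alpha-2},\tbP_{\alpha-2}$ are tame smooth'' and then immediately assert that ``$\fbD_{\alpha}=\bD_{\alpha}(\id-\tbP_{\alpha-1})$ was the outcome of the previous iteration, [so] we already have $\fbD_\alpha$''---but $\tbP_{\alpha-1}$ is \emph{not} in your hypothesis, and indeed you go on to construct it in the ``crux'' paragraph. The sentence about $\fbD_\alpha$ and $\fbD_\alpha^*$ being already available should simply be deleted; the correct order is exactly what you write afterwards (build $\frakS_{\alpha-1}$, then $\bP_{\alpha-1}$, then $\tbP_{\alpha-1}$, then $\fbD_\alpha$, then $\fbD_\alpha^*$), and that part is fine.

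One minor point: your parenthetical alternative $\fbD_\alpha^*=(\id-\tbP_{\alpha-1})\bD_\alpha^*$ via \eqref{eq:adjoint_expression} holds identically only in the Dirichlet case; in the Neumann case that identity is valid only on $\ker\bB_\alpha^*$. The clean route is the one the paper takes: since $\bC_\alpha=-\bD_\alpha\tbP_{\alpha-1}\in\OP(0,0)$ is a tame smooth family, Theorem~\ref{thm:adjoint_family} gives $\bC_\alpha^*$ tame smooth, and $\fbD_\alpha^*=\bD_\alpha^*+\bC_\alpha^*$ with $\bD_\alpha^*$ already tame smooth as part of the given data.
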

The assumption on the mappings $\frakI_{\alpha}$ in the theorem is essential. As shown in the proof of \thmref{thm:corrected_complex}—particularly during the induction step at \eqref{eq:G_def}—for each $\gamma \in \scrU$, the mapping $\bP_{\alpha}(\gamma)$ is defined by the relations:
\beq
\begin{aligned}
&\NN:\quad && \bP_{\alpha}(\gamma) = \frakS_{\alpha}(\gamma)\big(\fbD_{\alpha}^*(\gamma) \oplus \bB^*_{\alpha}(\gamma) \oplus 0 \oplus 0\big), \\
&\DD:\quad && \bP_{\alpha}(\gamma) = \frakS_{\alpha}(\gamma)\big(\fbD_{\alpha}^*(\gamma) \oplus 0 \oplus 0 \oplus 0\big),
\end{aligned}
\label{eq:parametrized_Ggamma}
\eeq
where, in each case, $\frakS_{\alpha}(\gamma)$ is defined as a left inverse of the overdetermined elliptic systems:
\beq
\begin{aligned}
&\NN:\quad && \fbD_{\alpha}^*(\gamma)\fbD_{\alpha}(\gamma) \oplus \bB^*_{\alpha}(\gamma)\fbD_{\alpha}(\gamma) \oplus \tbP_{\alpha-1}(\gamma) \oplus \frakI_{\alpha}(\gamma), \\
&\DD:\quad && \fbD_{\alpha}^*(\gamma)\fbD_{\alpha}(\gamma) \oplus \bB_{\alpha}(\gamma) \oplus \tbP_{\alpha-1}(\gamma) \oplus \frakI_{\alpha}(\gamma).
\end{aligned}
\label{eq:parametrized_inverse}
\eeq

In view of this defining relation, the family of systems $\gamma \mapsto \frakS_{\alpha}(\gamma)$ may fail to retain smooth and tame dependence on $\gamma$ if the assumption on $\gamma\mapsto\frakI_{\alpha}(\gamma)$ is dropped. This is because projections onto kernels of even parameterized elliptic problems, including differential ones, may vary discontinuously with respect to the parameter. For instance, the family of projections onto the space of Killing fields, which arises as the kernel of an elliptic system smoothly and tamely parameterized by Riemannian metrics, is not continuous (cf.\ \cite{Ebi70}).

Since the proof is very brief and does not require any further technical detail, we provide it here:
\begin{PROOF}{\thmref{thm:smooth_elliptic_pre_complex}}
Without loss of generality, we assume that the family of elliptic pre-complexes is based on Neumann conditions, as the proof for Dirichlet conditions is analogous with minor adjustments. We proceed by induction on $\alpha \leq \beta_0+1$. 

The base case is trivially satisfied: indeed, by applying \thmref{thm:corrected_complex} for every $\gamma \in \scrU$ and noting that $\scrN(\fbD^*_{-1}(\gamma)) = 0$, we have:
\[
\begin{aligned}
&\fbD_{0} = \bD_{0}, \qquad &\fbD^*_{0} = \bD^*_{0}, \\[8pt]
&\bP_{-1} = 0, \qquad &\tbP_{-1} = 0.
\end{aligned}
\]

For the induction hypothesis, assume that for some $\alpha \leq \beta_0$, the families of systems in \eqref{eq:rough_induced_bundle_maps} are all tame and smooth. The induction step amounts to establish that the systems in \eqref{eq:rough_induced_bundle_maps} are tame and smooth for $\alpha$ replaced by $\alpha + 1$.

By the assumption on $\frakI_{\alpha}$, under the induction hypothesis, the family of systems in \eqref{eq:parametrized_inverse} is a tame smooth family of overdetermined elliptic injective systems. Thus, by \thmref{thm:inverses_family}, the corresponding family of left inverses $\gamma\mapsto\frakS(\gamma)$ is also a tame smooth family. By the relation \eqref{eq:parametrized_Ggamma}, this implies that $\gamma\mapsto\bP_{\alpha}(\gamma)$ is a tame smooth family since it is the composition of tame smooth families. Consequently, $\gamma\mapsto\tbP_{\alpha}(\gamma) = \fbD_{\alpha}(\gamma) \bP_{\alpha}(\gamma)$ is also a tame smooth family, being the composition of tame smooth maps.

Now, from the formula for $\bC_{\alpha+1}$ in \eqref{eq:recursive_correction}, we deduce that $\gamma\mapsto\bC_{\alpha+1}(\gamma)$ is a tame smooth family since it is the composition of tame smooth families. Hence, $\gamma\mapsto\fbD_{\alpha+1}(\gamma)$ is a tame smooth family due to the relation
\[
\fbD_{\alpha+1}(\gamma) = \bD_{\alpha+1}(\gamma) + \bC_{\alpha+1}(\gamma).
\]

By \thmref{thm:adjoint_family}, the adjoint families $\bC^*_{\alpha+1}(\gamma)$ and $\bD_{\alpha+1}^*(\gamma)$ are tame and smooth. Thus,
\[
\fbD_{\alpha+1}^*(\gamma) = \bD_{\alpha+1}^*(\gamma) + \bC_{\alpha+1}^*(\gamma)
\]
is also a tame smooth family, as it is the sum of tame smooth maps.

By induction, we conclude that the families of systems in \eqref{eq:rough_induced_bundle_maps} are tame and smooth for all $\alpha\leq\beta_0+1$.
\end{PROOF}

From the perspective of Fredholm and index theory, arguably the original motivation for studying elliptic complexes \cite{AB67, RS82, SS19, DR22}, it is worth noting that the \emph{Euler characteristic} of the lifted complex is independent of the parameter $\gamma \in \scrU$, as long as the original family of elliptic pre-complexes forms a tame smooth family.

\begin{theorem}
\label{thm:index}
Suppose that the family of elliptic pre-complexes $(\bD_{\bullet})$ is finite (cf.\ \defref{def:finite_elliptic_pre_complex1}) and continuously parameterized by $\scrU$. Then, the Euler characteristics defined in \defref{def:Neumann_Euler_characteristic} and \defref{def:Dirichlet_Euler_characteristic}:
\beq
\begin{aligned} 
&\NN:\qquad &&\mathscr{X}_{\N} = \sum_{\alpha=0}^{\alpha_0}(-1)^{\alpha}\dim\module^{\alpha}_{\N}(\gamma), \\[8pt]
&\DD:\qquad &&\mathscr{X}_{\D} = \sum_{\alpha=0}^{\alpha_0}(-1)^{\alpha}\dim\module^{\alpha}_{\D}(\gamma),
\end{aligned}
\label{eq:euler_charactristic} 
\eeq
remain constant for every $\gamma \in \scrU$ lying within the same connected component.
\end{theorem}


Throughout the next discussion and proofs of these theorems, we again freely invoke basic results on Fredholm and compact operators between Hilbert spaces (e.g., \cite[App.~A.6–A.7]{Tay11a}, \cite{EE18}, or \cite{Kat80}), as well as algebraic facts about cochain complexes (cf. \cite{RS82}).  

We note that since every vector bundle is locally trivializable, it suffices to assume that $\scrV_{\alpha} = \scrU \times \Gamma(\bbF_{\alpha}; \bbG_{\alpha})$ and $\scrW_{\alpha} = \scrU \times \Gamma(\bbF; \bbG)$ are trivial bundles. Under this assumption, the operation of the mappings in \eqref{eq:rough_induced_bundle_maps} reduces to the form of \eqref{eq:tame_bundle_maps}, namely:
\[
\begin{aligned}
&\fbD_{\alpha} : \scrU \times \Gamma(\bbF_{\alpha}; \bbG_{\alpha}) \rightarrow \Gamma(\bbF_{\alpha+1}; \bbG_{\alpha+1}), \qquad
&&\fbD_{\alpha-1}^* : \scrU \times \Gamma(\bbF_{\alpha}; \bbG_{\alpha}) \rightarrow \Gamma(\bbF_{\alpha-1}; \bbG_{\alpha-1}), \\[8pt]
&\frakG_{\alpha-1} : \scrU \times \Gamma(\bbF_{\alpha}; \bbG_{\alpha}) \rightarrow \Gamma(\bbF_{\alpha-1}; \bbG_{\alpha-1}), \qquad
&&\tbP_{\alpha-1} : \scrU \times \Gamma(\bbF_{\alpha}; \bbG_{\alpha}) \rightarrow \Gamma(\bbF_{\alpha}; \bbG_{\alpha}).
\end{aligned}
\]
Before proceeding, we discuss some considerations on how to approach the proof of \thmref{thm:index}. In particular, it is worth noting what we might have done, following the classical theory of Dirac operators \cite[Ch.~10]{Tay11b}. Under Neumann conditions, one would consider the tame smooth family of maps (due to the assumption in the statement of \thmref{thm:index}):
\[
\begin{split} 
&\fbD_{\mathrm{e}} + \fbD^*_{\mathrm{e}} : \scrU \times \Gamma(\bbF_{\mathrm{e}}; \bbG_{\mathrm{e}}) \rightarrow \Gamma(\bbF_{\mathrm{o}}; \bbG_{\mathrm{o}}), 
\end{split} 
\]
where
\[
\begin{split}
&\Gamma(\bbF_{\mathrm{e}}; \bbG_{\mathrm{e}}) = \bigoplus_{\alpha=0}^{\alpha_0+1} \Gamma(\bbF_{2\alpha}; \bbG_{2\alpha}), 
\end{split}
\]
and, for each $\gamma \in \scrU$ and $\alpha \in \Nzero$, operate as:
\[
\begin{split} 
&((\fbD_{\mathrm{e}}(\gamma) + \fbD^*_{\mathrm{e}}(\gamma))\Psi)_{2\alpha+1} = \fbD_{2\alpha}(\gamma)\Psi_{2\alpha} + \fbD^*_{2\alpha+2}(\gamma)\Psi_{2\alpha+2}. 
\end{split} 
\]
However, unlike in the classical theory, the operators in this family are not Fredholm since the Hodge-like decompositions in \eqref{eq:Hodgelikesmooth} applied for every $\gamma$ require that $\fbD^*_{\mathrm{e}}(\gamma)$ is supplemented by the boundary condition on $\bB^*_{\alpha}(\gamma)$, which varies with $\gamma$. 

To resolve this, we could instead consider:
\[
\begin{split} 
&\fbD_{\mathrm{e}} + \bP_{\mathrm{e}} : \scrU \times \Gamma(\bbF_{\mathrm{e}}; \bbG_{\mathrm{e}}) \rightarrow \Gamma(\bbF_{\mathrm{o}}; \bbG_{\mathrm{o}}), 
\end{split} 
\]
defined analogously to the above. By the construction of $\bP_{\alpha}$ in \eqref{eq:bP_def_proof}, it is surjective onto the complement of $\scrR(\fbD_{\alpha-1})$ (modulo the cohomology modules). Consequently, due to the Hodge-like decompositions applied for each $\gamma$, the systems in these families are indeed Fredholm. The kernel and cokernel of $\fbD_{\mathrm{e}}(\gamma) + \bP_{\mathrm{e}}(\gamma)$ are given by:
\beq 
\ker(\fbD_{\mathrm{e}}(\gamma) + \bP_{\mathrm{e}}(\gamma)) = \bigoplus_{\alpha=0}^{\left\lfloor\alpha_0/2 \right\rfloor} \module_{\N}^{2\alpha}(\gamma), \qquad
\mathrm{Coker}(\fbD_{\mathrm{e}}(\gamma) + \bP_{\mathrm{e}}(\gamma)) = \bigoplus_{\alpha=0}^{\left\lfloor\alpha_0/2 \right\rfloor} \module_{\N}^{2\alpha+1}(\gamma),
\label{eq:kernel_co_kernel_fredholm}
\eeq 
and both are finite-dimensional for every $\gamma \in \scrU$. The index of the Fredholm operator $\fbD_{\mathrm{e}} + \bP_{\mathrm{e}}$ is therefore $\mathscr{X}_{\N}$, as defined in \eqref{eq:euler_charactristic}. 

Unfortunately, the classical result stating that the index defines a continuous map into $\bbZ$ (e.g., \cite[App.~A.7]{Tay11a} or \cite[Ch.~3]{EE18}) does not hold in the Fréchet category \cite[p.~15]{RS82}. It is worth noting that there exist additional criteria under which a family of Fredholm operators between Fréchet spaces does yield a continuous index (see, e.g., \cite{DR22, Ger16}, and references therein), but such assumptions are too restrictive for the level of generality considered here.

A possible solution would be to adapt the operation of $\fbD_{\mathrm{e}} + \bP_{\mathrm{e}}$ to act between Banach spaces. However, in the most general setting, this cannot be done due to the fact that the systems involve operators of varying orders, causing $\fbD_{\mathrm{e}}$ and $\bP_{\mathrm{e}}$ to take values in different Sobolev spaces—where \eqref{eq:WspHodge}, rather than \eqref{eq:Hodgelikesmooth}, applies. Consequently, it is possible that the ranges of these mappings are not even closed when extended to Sobolev spaces, let alone that they satisfy the Fredholm property.

Therefore, the only practical approach is to order-reduce the entire complex $(\fbD_{\bullet})$, as is done in the works surveyed in \secref{sec:comparison_theory}. Our proof is essentially a careful adaptation of their arguments, except that in our setting the boundary systems are technically not part of the complex and must be appended separately to accommodate the different summands in our Hodge decompositions \thmref{thm:hodge_like_corrected_complex}--\thmref{thm:hodge_like_corrected_complexD}. Also different from their analysis, the normality of the boundary systems also plays an important role in identifying the cokernel of the resulting Fredholm maps:
\begin{PROOF}{\thmref{thm:index}}
Without loss of generality, we prove the theorem for Neumann conditions, where the relevant cochain complex is $(\fbD_{\bullet})$ as in \eqref{eq:Neumann_complex}. For Dirichlet conditions, the corresponding complex is $(\fbD^*_{\bullet})$ as in \eqref{eq:Dirichlet_complex_co}, and the proof adapts accordingly. We also note that it suffices to establish the result in a neighborhood of a fixed $\gamma_{0} \in \scrU$.

For each $\alpha \in \Nzero$, we produce sharp tuples $(J_{\alpha}, L_{\alpha}; J_{\alpha+1},L_{\alpha+1})$ for each adapted Green system $\fbD_{\alpha}(\gamma_0)$ as in \defref{def:sharp_tuples}--\defref{def:adapting_operator}, sufficiently large so that  
\[
\fbD_{\alpha}(\gamma_0) : W^{J_{\alpha}, L_{\alpha}}_{2}(\bbF_{\alpha}; \bbG_{\alpha}) \rightarrow W^{J_{\alpha+1}, L_{\alpha+1}}_{2}(\bbF_{\alpha+1}; \bbG_{\alpha+1}).
\]
By \corrref{corr:G0props}, since $\bD_{\alpha}(\gamma) - \fbD_{\alpha}(\gamma) \in \OP(0,0)$ and $\gamma \mapsto \bD_{\alpha}(\gamma)$ is tame and smooth, the tuples $(J_{\alpha}, L_{\alpha}; J_{\alpha+1}, L_{\alpha+1})$ remain sharp for $\fbD_{\alpha}(\gamma)$ near $\gamma_{0}$.

By continuity, the Sobolev extensions preserve the relation $\fbD_{\alpha+1}(\gamma)\,\fbD_{\alpha}(\gamma)=0$, and each $\fbD_{\alpha}(\gamma)$ has closed range (\propref{prop:Ak_closed_range}). Let 
\[
\Pi_{\alpha} : W^{J_{\alpha}, L_{\alpha}}_{2}(\bbF_{\alpha}; \bbG_{\alpha}) \rightarrow L^{2}(\bbF_{\alpha}; \bbG_{\alpha}), 
\qquad 
\Lambda_{\alpha} : W^{0, L'_{\alpha}}_{2}(0; \bbL_{\alpha}) \rightarrow L^{2}(0; \bbL_{\alpha})
\]
be order-reducing operators independent of $\gamma$. Define
\[
\begin{split}
\tilde{\bD}_{\alpha}(\gamma) &= \Pi_{\alpha+1}\bD_{\alpha}(\gamma)\Pi_{\alpha}^{-1}, \\
\tilde{\bB}^*_{\alpha}(\gamma) &= \Lambda_{\alpha}\bB_{\alpha}(\gamma)\Pi_{\alpha}^{-1}, \\
\tilde{\fbD}_{\alpha}(\gamma) &= \Pi_{\alpha+1}\fbD_{\alpha}(\gamma)\Pi_{\alpha}^{-1},
\end{split}
\]
with all maps acting between the corresponding $L^{2}$-spaces.  

This construction provides $\tilde{\fbD}_{\alpha}(\gamma)\tilde{\fbD}_{\alpha-1}(\gamma)=0$, which is to say that $(\tilde{\fbD}_{\bullet})$ is itself a cochain complex, with cohomology groups denoted $\tilde{\module}^{\alpha}_{\NN}(\gamma)$. Since the order-reducing operators are isomorphisms, the resulting cochain complex is isomorphic to the original one, and hence the cohomology groups themselves are isomorphic $\tilde{\module}^{\alpha}_{\NN}(\gamma) \simeq \module^{\alpha}_{\NN}(\gamma)$. The Sobolev extensions of the Hodge decompositions  (\thmref{thm:hodge_like_corrected_complex} and \thmref{thm:hodge_like_corrected_disrupted_complex} in the disrupted case) then yield after composing with the order reducing operators: 
\[
L^{2}(\bbF_{\alpha+1}; \bbG_{\alpha+1}) 
\simeq \image{\tilde{\fbD}_{\alpha}(\gamma)} 
\oplus \image{\tilde{\fbD}^*_{\alpha+1}(\gamma)|_{\ker\tilde{\bB}^*_{\alpha}(\gamma)}} 
\oplus \tilde{\module}^{\alpha}(\gamma).
\]
Hence, we define the boundary-augmented family of operators, analogous to the Dirac operators discussed above,
\[
(\tilde{\fbD}_{\mathrm{e}} + \tilde{\fbD}_{\mathrm{e}}^*) \oplus \tilde{\bB}_{\mathrm{e}}^* 
: \scrU \times L^{2}(\bbF_{\mathrm{e}}; \bbG_{\mathrm{e}}) 
\to L^{2}(\bbF_{\mathrm{o}}; \bbG_{\mathrm{o}})\times L^{2}(0; \bbL_{\mathrm{e}}).
\]
By the order-reduced Hodge decompositions, and since $\tilde{\bB}_{\mathrm{e}}^*$ is surjective 
(the original boundary operators are normal and the order-reducing operators are isomorphisms), 
this map is Fredholm (semi-Fredholm in the disrupted case, where $\module^{\alpha_0}_{\N}$ may fail to be finite dimensional), with kernel and cokernel finite dimensional and isomorphic to those in \eqref{eq:kernel_co_kernel_fredholm}, 
due to the isomorphism between of cohomology groups of $(\tilde{\fbD}_{\bullet})$ and $(\fbD_{\bullet})$. 
B
y comparison with \eqref{eq:euler_charactristic}, we conclude that the index of this Fredholm (semi-Fredholm) operator is
\[
\mathrm{Ind}\big((\tilde{\fbD}_{\mathrm{e}}(\gamma) + \tilde{\fbD}_{\mathrm{e}}^*(\gamma)) \oplus \tilde{\bB}_{\mathrm{e}}^*(\gamma)\big) 
= \mathscr{X}_{\N}.
\]

On the other hand, since $\sigma(\fbD(\gamma) - \bD(\gamma))=0$ (\propref{prop:correction}), the difference $\tilde{\bD}_{\alpha}(\gamma) - \tilde{\fbD}_{\alpha}(\gamma)$ is compact for each $\gamma$, and the same holds for their adjoints. Consequently, also
\[
(\tilde{\bD}_{\mathrm{e}} + \tilde{\bD}_{\mathrm{e}}^*) \oplus \tilde{\bB}_{\mathrm{e}}^* 
: \scrU \times L^{2}(\bbF_{\mathrm{e}}; \bbG_{\mathrm{e}}) 
\to L^{2}(\bbF_{\mathrm{o}}; \bbG_{\mathrm{o}})\times L^{2}(0; \bbL_{\mathrm{e}})
\]
is a Fredholm family (resp. semi-Fredholm in the disrupted case).  

By standard Fredholm theory, the index
\[
\gamma \mapsto\mathrm{Ind}\big((\tilde{\bD}_{\mathrm{e}}(\gamma) + \tilde{\bD}_{\mathrm{e}}^*(\gamma) ) \oplus \tilde{\bB}_{\mathrm{e}}^*(\gamma) \big)
\]
is locally constant in $\gamma$ and invariant under compact perturbations. Hence
\[
\mathrm{Ind}\big((\tilde{\bD}_{\mathrm{e}}(\gamma)  + \tilde{\bD}_{\mathrm{e}}^*(\gamma) ) \oplus \tilde{\bB}_{\mathrm{e}}^*(\gamma) \big)
= \mathrm{Ind}\big((\tilde{\fbD}_{\mathrm{e}}(\gamma)  + \tilde{\fbD}_{\mathrm{e}}^*(\gamma) ) \oplus \tilde{\bB}_{\mathrm{e}}^*(\gamma) \big)= \mathscr{X}_{\N}
\]
proving the claim 
\end{PROOF}
\subsection{Technical proofs} 
\label{sec:techincal_tame}
By \cite[Thm.~3.1.1]{Ham82} and the definition of a tame smooth map \cite[Sec.~2.1, p.~140]{Ham82}, tameness, like smoothness, is a local property. Consequently, since every vector bundle is locally trivializable, it suffices to prove \thmref{thm:adjoint_family} and \thmref{thm:inverses_family} in the case where $\scrV = \scrU \times \Gamma(\bbE; \bbJ)$ and $\scrW = \scrU \times \Gamma(\bbF; \bbG)$ are trivial bundles, reducing the operation of $\bD$ to \eqref{eq:tame_bundle_maps}.

Throughout the proofs, for each $s \in \mathbb{R}$ and $\gamma$, let $\|\cdot\|_{s, \gamma}$ denote the $W^{s, s+1/2}_{2}$-norm induced by the Riemannian metric $g(\gamma)$ and the volume form $d\Volume(\gamma)$ given in the statement of \thmref{thm:adjoint_family}. Since the manifold is compact, and both the Riemannian metrics and volume forms depend tamely and smoothly on $\gamma$, it follows by a standard argument that the induced Sobolev norms are smoothly and tamely equivalent. That is, given a fixed $\gamma_{0} \in \scrU$, for each $s \in \mathbb{R}$, there exist smooth functions $C_{s}, c_{s} : \scrU \to \mathbb{R}_{> 0}$ such that $C_{s}(\gamma), c_{s}(\gamma) \to 1$ as $\gamma \to \gamma_{0}$, and  
\beq
c_{s}(\gamma) \, \|\cdot\|_{s, \gamma} \leq \|\cdot\|_{s, \gamma_0} \leq C_{s}(\gamma) \, \|\cdot\|_{s, \gamma}. 
\label{eq:gamma_sobolev_norms}
\eeq  

This implies that, regardless of which $\gamma$-dependent Sobolev spaces induce the tame grading for $\Gamma(\bbE;\bbJ)$ and $\Gamma(\bbF;\bbG)$, the resulting gradings will be tamely equivalent \cite[Def.~1.1.3, p.~134]{Ham82}. Therefore, to prove the tame estimate \eqref{eq:tame_estimate} with respect to these norms, it suffices to establish it for a fixed $\gamma_0 \in \scrU$.  

Moreover, in the proofs of both theorems, it suffices to assume that $\bD(\gamma)$ has the same standard (or sharp) tuples for every $\gamma \in \scrU$. Indeed, by choosing a tame grading of $\Gamma(\bbE; \bbJ)$ and $\Gamma(\bbF; \bbG)$ based on products of Sobolev spaces induced by some fixed $\gamma_0$, it follows that $\bD$ defines a continuous map
\[
\bD: \scrU \to \scrL(J_0, L_0; I_0, K_0),
\]
given by $\gamma \mapsto \bD(\gamma)$.

%

With this established, we conclude that \thmref{thm:adjoint_family} and \thmref{thm:inverses_family} hold under the assumption that $\bD$ is given by \eqref{eq:tame_bundle_maps}, and that the families of adjoints $\bD^*$ and left inverses $\frakS$ reduce to mappings  
\[
(\scrU \subseteq F) \times \Gamma(\bbF; \bbG) \to \Gamma(\bbE; \bbJ),
\]  
with the same standard tuples for every $\gamma \in \scrU$.  

For the remainder of this section, when deriving estimates, we abandon the convention $\lesssim$ to emphasize that the constants in the inequalities are independent of $\gamma$ and $\Psi$, as required by the definition of tame smoothness outlined around \eqref{eq:tame_estimate}. 
In the proof of \thmref{thm:adjoint_family}, we need to linearize the family of inner products \eqref{eq:smooth_familiy_inner_products}. To this end, we use the identification $T_{\gamma} \scrU = F$ and differentiate under the integral along the curves of volume forms $d\Volume(\gamma + t\sigma)$ and fiber metrics $(\cdot, \cdot)_{g(\gamma + t\sigma)}$, where $\sigma \in F$. Specifically, for every $\Upsilon, \Xi \in \Gamma(\bbE; \bbJ)$, we obtain  
\beq
\dertZero \bra \Upsilon, \Xi \ket_{\gamma + t\sigma} = \int_{M} \dertZero (\Upsilon, \Xi)_{g(\gamma+t\sigma)} \, d\Volume(\gamma) + (\Upsilon, \Xi)_{g(\gamma)} \, \dertZero d\Volume(\gamma + t\sigma).
\label{eq:difffentiate_inner_product}
\eeq
Since $\gamma \mapsto d\Volume(\gamma)$ defines a tame smooth family of volume forms, and $d\Volume(\gamma + t\sigma)$ is a top form for every $t$, it follows that  
\[
\dertZero d\Volume(\gamma + t\sigma) = f(\gamma) \sigma \, d\Volume(\gamma),
\]  
for a tame smooth family of functionals $f : \scrU \times \Gamma(\bbE; \bbJ) \rightarrow \mathbb{R}$.

Moreover, since $g(\gamma)$ is a fiber metric, we can define a self-adjoint bundle homomorphism, denoted—by a slight abuse of notation from \eqref{eq:partial_derivative}—as  
\[
\D_{\sigma} g : (\scrU \subseteq F) \times \Gamma(\bbE; \bbJ) \rightarrow \Gamma(\bbE; \bbJ),
\]  
such that  
\[
\dertZero(\Upsilon, \Xi)_{g(\gamma+t\sigma)} + f(\gamma) \sigma \, (\Upsilon, \Xi)_{g(\gamma)} = (\D_{\sigma} g(\gamma) \Upsilon, \Xi)_{\gamma} = (\Upsilon, \D_{\sigma} g(\gamma) \Xi)_{\gamma}.
\]
Since $ \D_{\sigma}g(\gamma) $ is defined fiberwise, it is a tensorial, smooth, and tame operation, making it a tame smooth family of systems in $ \OP(0,0) $. Combining this in \eqref{eq:difffentiate_inner_product}, we thus have the formula: 
\beq
\dertZero \bra \Upsilon, \Xi \ket_{\gamma + t\sigma} = \bra \Upsilon, \D_{\sigma}g(\gamma) \Xi \ket_{\gamma} = \bra \D_{\sigma}g(\gamma) \Upsilon, \Xi \ket_{\gamma}, \qquad \Upsilon, \Xi \in \Gamma(\bbE; \bbJ).
\label{eq:fiber_metric_differentiated}
\eeq

\begin{lemma}
The family of adjoints $\bD^*$ (of the zero-class constituents of $\bD$) is tame.
\end{lemma}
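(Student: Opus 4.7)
The plan is to derive the tame estimate for $\bD^{*}$ from that of $\bD$ by combining fiberwise $L^{2}$-duality with the norm-equivalence \eqref{eq:gamma_sobolev_norms}. By the reductions preceding the lemma, it suffices to establish, in a neighborhood of a fixed $\gamma_{0} \in \scrU$, an inequality of the form
\[
\|\bD^{*}(\gamma)\Theta\|_{n,\gamma_{0}} \leq C_{n}\bigl(1 + \|\gamma - \gamma_{0}\|_{n + m'}\bigr)\|\Theta\|_{n + b', \gamma_{0}}
\]
for some integers $m', b' \in \bbZ$ independent of $n$, $\gamma$, and $\Theta$.

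First, I would upgrade \eqref{eq:gamma_sobolev_norms} to a quantitative tame statement. Writing the $\gamma$-dependent $L^{2}$-pairing as $\bra \Upsilon, \Xi \ket_{\gamma} = \bra \Upsilon, R(\gamma)\Xi \ket_{\gamma_{0}}$ for a tame smooth family of self-adjoint, positive, tensorial bundle isomorphisms $R(\gamma) \in \OP(0,0)$ — whose existence is immediate from the tame smoothness of $g$ and $d\Volume$, together with compactness of $M$ — one may express the $\gamma$-dependent Sobolev norms in terms of compositions of $\gamma_{0}$-fixed norms with $R(\gamma)$ and $R(\gamma)^{-1}$. Standard Moser-type estimates for multiplication by smooth tensors then yield, for every $s$, an inequality of the form
\[
\bigl|\|\Upsilon\|_{s,\gamma} - \|\Upsilon\|_{s,\gamma_{0}}\bigr| \leq K_{s}\, \|\gamma - \gamma_{0}\|_{s + m_{s}}\, \|\Upsilon\|_{s,\gamma_{0}},
\]
for suitable integers $m_{s}$ and constants $K_{s}$ depending only on local data near $\gamma_{0}$.

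Second, fiberwise for each $\gamma$, the calculation leading to \eqref{eq:norms_operator_duality_adjoints} — which uses only that $\bD^{*}(\gamma)$ is the adapted adjoint of the zero-class constituent of $\bD(\gamma)$ with respect to $\bra\cdot,\cdot\ket_{\gamma}$ — produces the pointwise (in $\gamma$) identity
\[
\|\bD^{*}(\gamma)\|_{\mathrm{op}(-s + m,\, -s),\, \gamma} = \|\bD(\gamma)\|_{\mathrm{op}(s - 1/2,\, s - 1/2 - m),\, \gamma}.
\]
Combining with the first step, the assumed tame estimate for $\bD$ (of the form $\|\bD(\gamma)\|_{\mathrm{op}(s + b, s),\, \gamma_{0}} \leq C(1 + \|\gamma - \gamma_{0}\|_{s + m})$ after reducing to dualities with $\gamma_0$-fixed Sobolev spaces) transfers through the displayed identity to a tame bound for $\bD^{*}(\gamma)$ between the appropriate dual $\gamma_{0}$-fixed Sobolev spaces, which is exactly the tame estimate required by \eqref{eq:tame_estimate}.

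The main obstacle I anticipate is the first step: promoting the pointwise norm-equivalence \eqref{eq:gamma_sobolev_norms} to the quantitative tame form above. While the argument is conceptually routine, it requires careful bookkeeping to count how many derivatives of $g(\gamma)$ and $d\Volume(\gamma)$ enter Sobolev estimates for $R(\gamma) - \id$, and in particular to verify that the integer $m_{s}$ grows only linearly in $s$ — which is exactly what the tame category demands. Everything else reduces to the fiberwise duality already developed in \secref{sec:per_pseudo} and to the composition rules of the calculus, both of which preserve the tame structure.
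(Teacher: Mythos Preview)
Your proposal is correct and follows essentially the same route as the paper: both combine the fiberwise duality identity \eqref{eq:norms_operator_duality_adjoints} (applied with respect to the $\gamma$-dependent inner product) with the norm-equivalence \eqref{eq:gamma_sobolev_norms} to transfer the tame estimate from $\bD$ to $\bD^{*}$. The only difference is that the paper avoids your first step entirely: rather than upgrading \eqref{eq:gamma_sobolev_norms} to a quantitative Moser-type bound, it simply observes that the ratio $\frac{C_{s}(\gamma) C_{-s+1/2}(\gamma)}{c_{-s-m+1/2}(\gamma) c_{s+m}(\gamma)}$ and the operator norm $\|\bD(\gamma)\|_{\mathrm{op}(-s+1/2,\,-s-m+1/2,\,\gamma_{0})}$ are continuous maps from $\scrU$ into $\bbR_{+}$, hence locally bounded near $\gamma_{0}$ --- which is already enough to produce the tame form $C'_{s}(1 + \|\gamma\|_{s})$. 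This sidesteps the bookkeeping you flag as the main obstacle: there is no need to track how many derivatives of $g(\gamma)$ enter or to verify linear growth of $m_{s}$, since any continuous scalar-valued map on a tame Fr\'echet space is automatically tame.
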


\begin{proof}
For $s, s' \in \bbR$, let $\|\cdot\|_{\mathrm{op}(s,s',\gamma)}$ denote the corresponding operator norm on continuous linear maps $W^{s,s+1/2}_{2} \rightarrow W^{s',s'+1/2}_{2}$, where the domain and codomain are equipped with the norms in \eqref{eq:gamma_sobolev_norms}. Due to the equivalence in \eqref{eq:gamma_sobolev_norms}, these operator norms are also equivalent by definition, in the following manner:
\[
\frac{c_{s'}(\gamma)}{C_{s}(\gamma)} \, \|\cdot\|_{\mathrm{op}(s, s', \gamma)} \leq \|\cdot\|_{\mathrm{op}(s, s', \gamma_0)} \leq \frac{C_{s'}(\gamma)}{c_{s}(\gamma)} \, \|\cdot\|_{\mathrm{op}(s, s', \gamma)}.
\]

Given this setup, since $\bD^*$ depends only on the zero-class constituent of $\bD$, we can assume that the latter has zero corresponding classes. Let $m \in \bbZ$ be sufficiently large so that $\bD \in \OP(m, 0)$. Then for every $s\in\bbR$ sufficiently large, by the relation \eqref{eq:norms_operator_duality_adjoints} applied to each inner product separately, we have:
\[
\|\bD^*(\gamma)\|_{\mathrm{op}(s+m, s, \gamma)} = \|\bD(\gamma)\|_{\mathrm{op}(-s + 1/2, -s-m+1/2, \gamma)}.
\] 

Using the equivalences between the norms above, for every $\Theta \in \Gamma(\bbF; \bbG)$ and $\gamma \in \scrU$, we then have
\[
\begin{split}
\|\bD^*(\gamma)\Theta\|_{s, \gamma_0} &\leq C_{s}(\gamma) \, \|\bD^*(\gamma)\Theta\|_{s, \gamma} \\
&\leq C_{s}(\gamma) \, \|\bD^*(\gamma)\|_{\mathrm{op}(s+m, s, \gamma)} \|\Theta\|_{s+m, \gamma} \\
&\leq C_{s}(\gamma) \, \|\bD(\gamma)\|_{\mathrm{op}(-s + 1/2, -s-m+1/2, \gamma)} \|\Theta\|_{s+m, \gamma} \\
&\leq \frac{C_{s}(\gamma) C_{-s+1/2}(\gamma)}{c_{-s-m+1/2}(\gamma) c_{s+m}(\gamma)} \, \|\bD(\gamma)\|_{\mathrm{op}(-s + 1/2, -s-m+1/2, \gamma_0)} \|\Theta\|_{s+m, \gamma_0}.
\end{split}
\]

Since the map $(\gamma, \Psi) \mapsto \bD(\gamma) \Psi$ is tame when the range is equipped with the grading by Sobolev spaces, and the operator norm is continuous, we conclude in particular that  
\[
\gamma \mapsto \|\bD(\gamma)\|_{\mathrm{op}(-s + 1/2, -s-m+1/2,\gamma_0)}
\]  
is a continuous function into $\mathbb{R}_+$. Moreover, since the maps $\gamma \mapsto C_{s}(\gamma)$ and $\gamma \mapsto c_{s}(\gamma)$ are tame—being mappings from a graded Fréchet space into $\mathbb{R}$—it follows that for each $s \in \mathbb{R}$, there exists a norm $\|\cdot\|_{s}$ in the tame grading for $\scrU$ and a constant $C'_{s} > 0$ such that for $\gamma \in \scrU$ near $\gamma_0$,  
\[
\frac{C_{s}(\gamma) C_{-s+1/2}(\gamma)}{c_{-s-m+1/2}(\gamma) c_{s+m}(\gamma)} \, \|\bD(\gamma)\|_{\mathrm{op}(-s+1/2, -s-m+1/2, \gamma_0)} \leq C'_{s} (1 + \|\gamma\|_{s}).
\]

Combining these, we obtain:
\[
\|\bD^*(\gamma)\Theta\|_{s, \gamma_0} \leq C'_{s}(1 + \|\gamma\|_{s}) \|\Theta\|_{s+m, \gamma_0},
\]
showing that $\bD^*$ satisfies a tame estimate \eqref{eq:tame_estimate} in a neighborhood of $\gamma_{0}$. Since $\gamma_0\in\scrU$ was arbitrary, we are done.
\end{proof}

\begin{lemma}
The family of adjoints $\bD^*$ (of the zero-class constituent of $\bD$) is continuous.
\end{lemma}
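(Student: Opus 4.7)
The plan is to derive continuity from the defining relation \eqref{eq:adjoint_relation_bundles} by exploiting the already-established continuity of $\bD$, the tameness bound from the preceding lemma, and the weak-to-smooth convergence criterion in \lemref{lem:weak_convergence_of_smooth_sections}. Fix $\gamma_0 \in \scrU$ and take $\gamma_0$ as the reference point throughout, writing simply $\bra \cdot, \cdot \ket = \bra \cdot, \cdot \ket_{\gamma_0}$. Since $\bD^*$ is linear in $\Theta$ and the preceding lemma provides a locally uniform-in-$\gamma$ operator bound, joint continuity at any $(\gamma_0, \Theta_0)$ reduces to the pointwise statement: for every sequence $\gamma_n \to \gamma_0$ in $\scrU$ and every $\Theta \in \Gamma(\bbF;\bbG)$,
\[
\bD^*(\gamma_n)\Theta \to \bD^*(\gamma_0)\Theta \quad \text{in } \Gamma(\bbE;\bbJ).
\]

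To bridge the mismatch between the $\gamma_n$-inner product, which appears in the defining relation for $\bD^*(\gamma_n)$, and the fixed reference $\bra \cdot, \cdot \ket$, I would introduce tame smooth families of bundle endomorphisms $H(\gamma_1, \gamma_2)$ on $\bbE \oplus \bbJ$ and $\bbF \oplus \bbG$, each belonging fiberwise to $\OP(0,0)$, characterized by
\[
\bra \Upsilon, \Xi \ket_{\gamma_1} = \bra \Upsilon, H(\gamma_1, \gamma_2)\Xi \ket_{\gamma_2}, \qquad H(\gamma, \gamma) = \id.
\]
The existence and tame smoothness of $H$ follow from the fiberwise, tensorial nature of the families $\gamma \mapsto g(\gamma)$ and $\gamma \mapsto d\Volume(\gamma)$, much as in the derivation of \eqref{eq:fiber_metric_differentiated}. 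Being a bundle endomorphism, $H$ preserves compact support. For $\Psi \in \Gamma_c(\bbE;\bbJ)$ and $\Theta \in \Gamma(\bbF;\bbG)$, iterating the $H$-identity and the defining relation \eqref{eq:adjoint_relation_bundles} for $\bD^*(\gamma_n)$ then yields
\[
\begin{aligned}
\bra \Psi, \bD^*(\gamma_n)\Theta \ket
&= \bra H(\gamma_0, \gamma_n)\Psi,\, \bD^*(\gamma_n)\Theta \ket_{\gamma_n} \\
&= \bra \bD(\gamma_n)H(\gamma_0, \gamma_n)\Psi,\, \Theta \ket_{\gamma_n} \\
&= \bra \bD(\gamma_n)H(\gamma_0, \gamma_n)\Psi,\, H(\gamma_n, \gamma_0)\Theta \ket.
\end{aligned}
\]

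Letting $\gamma_n \to \gamma_0$, continuity of the tame smooth families $H$ and $\bD$ gives $H(\gamma_0, \gamma_n)\Psi \to \Psi$ in $\Gamma_c(\bbE;\bbJ)$, $H(\gamma_n, \gamma_0)\Theta \to \Theta$ in $\Gamma(\bbF;\bbG)$, and hence $\bD(\gamma_n)H(\gamma_0, \gamma_n)\Psi \to \bD(\gamma_0)\Psi$ in $\Gamma(\bbF;\bbG)$. The right-hand side of the display therefore tends to $\bra \bD(\gamma_0)\Psi, \Theta \ket = \bra \Psi, \bD^*(\gamma_0)\Theta \ket$. Applying \lemref{lem:weak_convergence_of_smooth_sections}---whose proof proceeds sequentially and adapts verbatim to the sequence $\gamma_n \to \gamma_0$ in place of $t \to 0$---with $\bD^*(\gamma_n)\Theta$ in the role of $\mathfrak{F}$ and $\bD^*(\gamma_0)\Theta$ in the role of $\mathfrak{F}_0$ then delivers the claimed convergence in $\Gamma(\bbE;\bbJ)$.

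The main technical obstacle is the absence of a direct Green-type identity connecting $\bD(\gamma_n)$ and $\bD^*(\gamma_0)$: the relation \eqref{eq:adjoint_relation_bundles} couples each $\bD^*(\gamma_n)$ only to $\bD(\gamma_n)$, and only through the matching $\gamma_n$-inner product. The auxiliary $H$-operators are precisely what bridges this mismatch while remaining within $\OP(0,0)$, so that both compact support and smoothness are preserved throughout the manipulations. Joint continuity in $(\gamma, \Theta)$ then follows from the standard splitting $\bD^*(\gamma_n)\Theta_n - \bD^*(\gamma_0)\Theta = \bD^*(\gamma_n)(\Theta_n - \Theta) + (\bD^*(\gamma_n) - \bD^*(\gamma_0))\Theta$ combined with the locally uniform tame bound from the preceding lemma.
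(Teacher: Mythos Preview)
Your argument is correct and arrives at the same endpoint as the paper---weak convergence $\bra \Psi, \bD^*(\gamma_n)\Theta \ket_{\gamma_0} \to \bra \Psi, \bD^*(\gamma_0)\Theta \ket_{\gamma_0}$ for all $\Psi \in \Gamma_c$, then upgrade via \lemref{lem:weak_convergence_of_smooth_sections}---but handles the mismatch between the $\gamma_n$- and $\gamma_0$-inner products differently. The paper telescopes the difference into three terms and controls the cross terms using continuity of the family of inner products together with the uniform $L^2$-bound $\sup_n \|\bD^*(\gamma_n)\Theta_n\|_{0,\gamma_0} < \infty$ supplied by the preceding tame lemma. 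You instead package the change of inner product into the auxiliary bundle endomorphisms $H(\gamma_1,\gamma_2)$, which lets you rewrite $\bra \Psi, \bD^*(\gamma_n)\Theta \ket_{\gamma_0}$ directly as a pairing involving only $\bD(\gamma_n)$ and $H$, so that the weak limit follows from continuity of $\bD$ and $H$ alone; the tame bound enters only at the end, in the splitting argument for joint continuity. Your route is a bit more structural and separates the roles of the two ingredients more cleanly, at the cost of introducing the $H$-operators (whose existence and tensoriality are, as you note, routine). The paper's route avoids that auxiliary construction but leans on the tame bound already inside the weak-convergence step.
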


\begin{proof}
To prove continuity, let $(\gamma_n, \Theta_n) \to (\gamma_0, \Theta_0)$ in $\scrU \times \Gamma(\bbF; \bbG)$ as $n \to \infty$. We aim to show that 
\[
\bD^*(\gamma_n) \Theta_n \to \bD^*(\gamma_0) \Theta_0 \quad \text{in } \Gamma(\bbE; \bbJ).
\]

Fix $\Psi \in W^{S,T}_{2,0}(\bbE; \bbJ)$ for any standard tuples $(S,T;S',T')$ for $\bD(\gamma_0)$. Since we assume we work with the zero-part constituents only, using the definition of the $L^2$-adjoint with respect to the varying coupling $\bra \cdot, \cdot \ket_{\gamma}$, we can write:
\beq
\begin{split}
\bra \Psi, \bD^*(\gamma_n) \Theta_n \ket_{\gamma_0} 
= \bra \bD(\gamma_n) \Psi, \Theta_n \ket_{\gamma_0} 
&+ \Big[\bra \bD(\gamma_n) \Psi, \Theta_n \ket_{\gamma_n} - \bra \bD(\gamma_n) \Psi, \Theta_n \ket_{\gamma_0} \Big] \\
&+ \Big[\bra \Psi, \bD^*(\gamma_n) \Theta_n \ket_{\gamma_0} - \bra \Psi, \bD^*(\gamma_n) \Theta_n \ket_{\gamma_n} \Big].
\end{split}
\label{eq:L2_adjoint_family_continuity}
\eeq

We now analyze the three terms on the right-hand side:

For the first term, by continuity of $\gamma\mapsto\bD(\gamma)$ and the convergence $\Theta_n \to \Theta_0$, we have
\[
\bra \bD(\gamma_n) \Psi, \Theta_n \ket_{\gamma_0} 
\to \bra \bD(\gamma_0) \Psi, \Theta_0 \ket_{\gamma_0} 
= \bra \Psi, \bD^*(\gamma_0) \Theta_0 \ket_{\gamma_0}.
\]

For the second term, since the family of inner products $\bra \cdot, \cdot \ket_{\gamma}$ is continuous in $\gamma$ and $\Theta_n \to \Theta_0$, we obtain
\[
\bra \bD(\gamma_n) \Psi, \Theta_n \ket_{\gamma_n} - \bra \bD(\gamma_n) \Psi, \Theta_n \ket_{\gamma_0} \to 0.
\]

For the third term, we claim that 
\[
\bra \Psi, \bD^*(\gamma_n) \Theta_n \ket_{\gamma_0} - \bra \Psi, \bD^*(\gamma_n) \Theta_n \ket_{\gamma_n} \to 0.
\]
To justify this, note that by the tame estimate from the previous lemma, we have uniform boundedness:
\[
\sup_n \| \bD^*(\gamma_n) \Theta_n \|_{0, \gamma_0} < \infty.
\]
The continuity of the inner products then implies that the operator norms of the functionals
\[
\varphi_n : L^2(\bbE; \bbJ) \to \bbR, \quad \varphi_n(u) = \bra \Psi, u \ket_{\gamma_n} - \bra \Psi, u \ket_{\gamma_0}
\]
tend to zero. Applying this to $u = \bD^*(\gamma_n) \Theta_n$ yields the desired convergence.

Combining all three terms in \eqref{eq:L2_adjoint_family_continuity}, we conclude:
\[
\bra \Psi, \bD^*(\gamma_n) \Theta_n \ket_{\gamma_0} \to \bra \Psi, \bD^*(\gamma_0) \Theta_0 \ket_{\gamma_0}.
\]

Since $\Psi \in W^{S,T}_{2,0}(\bbE; \bbJ)$ was arbitrary, we have:
\[
\bD^*(\gamma_n) \Theta_n \to \bD^*(\gamma_0) \Theta_0 \quad \text{in } W^{-S,-T}(\bbE; \bbJ),
\]
and since this is true for arbitrary tuples $S,T$, we have convergence in $\Gamma(\bbE;\bbJ)$ by the tame grading of Sobolev spaces. 
\end{proof}

\begin{PROOF}{\thmref{thm:adjoint_family}}
To establish that $\bD^*$ is smooth and that its derivatives are tame, it suffices to verify smoothness and tameness with respect to the first variable, since the map is tame linear in the second variable.

Let us identify $T_{\gamma} \scrU \simeq F$, and fix $\gamma \in \scrU$, $\sigma \in F$, and $t \neq 0$. For $\Psi \in W^{S,T}_{2,0}(\bbE; \bbJ)$ and $\Theta \in \Gamma(\bbF; \bbG)$ as in the previous proof, consider the difference quotient derived from the adjoint identity \eqref{eq:adjoint_relation_bundles}:
\beq 
\frac{\bra \Psi, \bD^*(\gamma + t\sigma)\Theta \ket_{\gamma + t\sigma} - \bra \Psi, \bD^*(\gamma)\Theta \ket_{\gamma}}{t}
= \frac{\bra \bD(\gamma + t\sigma)\Psi, \Theta \ket_{\gamma + t\sigma} - \bra \bD(\gamma)\Psi, \Theta \ket_{\gamma}}{t}.
\label{eq:difference_quotient} 
\eeq

As $t \to 0$, by the chain rule and the fact that all quantities are tame and smooth, the right-hand side converges to:
\[
\dertZero \bra \bD(\gamma)\Psi, \Theta \ket_{\gamma + t\sigma}
+ \bra \D_{\sigma}\bD(\gamma)\Psi, \Theta \ket_{\gamma},
\]
where $\D_{\sigma}\bD(\gamma)$ denotes the directional derivative of $\bD$ in the direction $\sigma$ (cf.~\eqref{eq:partial_derivative}).

To compute the left-hand side of \eqref{eq:difference_quotient}, expand the inner product $\bra \cdot, \cdot \ket_{\gamma + t\sigma}$ in powers around $t=0$: 
\[
\bra \cdot, \cdot \ket_{\gamma + t\sigma} 
= \bra \cdot, \cdot \ket_{\gamma} 
+ t\, \dersZero \bra \cdot, \cdot \ket_{\gamma + s\sigma} + o(t).
\]

Substituting this expansion into the inner product on the left-hand side, we obtain:
\[
\begin{split}
\frac{\bra \Psi, \bD^*(\gamma + t\sigma)\Theta \ket_{\gamma + t\sigma} - \bra \Psi, \bD^*(\gamma)\Theta \ket_{\gamma}}{t}
&= \frac{\bra \Psi, \left( \bD^*(\gamma + t\sigma) - \bD^*(\gamma) \right) \Theta \ket_{\gamma}}{t} \\
&\quad + \dersZero \bra \Psi, \bD^*(\gamma + t\sigma)\Theta \ket_{\gamma + s\sigma} 
\\&\quad+ \frac{o(t)}{t}.
\end{split}
\]

Inserting this identity into the expression from the previous step, and using the continuity of $\gamma\mapsto\bD^*(\gamma)$ (as established in the previous lemma) to conclude that 
\[
\bD^*(\gamma + t\sigma) \to \bD^*(\gamma) \quad \text{as } t \to 0,
\]
we arrive at the limit:
\[
\begin{split}
\frac{\bra \Psi, \left( \bD^*(\gamma + t\sigma) - \bD^*(\gamma) \right) \Theta \ket_{\gamma}}{t}
&\to \dertZero \bra \bD(\gamma) \Psi, \Theta \ket_{\gamma + t\sigma}
+ \bra \D_{\sigma} \bD(\gamma) \Psi, \Theta \ket_{\gamma} \\
&\quad - \dersZero \bra \Psi, \bD^*(\gamma) \Theta \ket_{\gamma + s\sigma}.
\end{split}
\]

Applying now the identity  \eqref{eq:fiber_metric_differentiated} and integrating by parts yields:
\[
\frac{\bra \Psi, \left( \bD^*(\gamma + t\sigma) - \bD^*(\gamma) \right) \Theta \ket_{\gamma}}{t}
\to \bra \Psi, \left(\bD^*(\gamma)\D_{\sigma}g(\gamma) 
+ (\D_{\sigma}\bD)^*(\gamma) - \D_{\sigma}g(\gamma)\bD^*(\gamma)\right) \Theta \ket_{\gamma}.
\]

Here, $(\D_{\sigma} \bD)^*(\gamma)$ denotes the family of adjoints of the zero-class constituent of $\D_{\sigma} \bD(\gamma)$, which is continuous and tame in $\gamma$ by our earlier results on general families of adjoints.

Since $\Psi \in W^{S,T}_{2,0}(\bbE; \bbJ)$ and $\Theta \in \Gamma(\bbF; \bbG)$ are arbitrary, we may again conclude, as in the previous proof, that the difference quotient for $\bD^*$ converges in the tame Fréchet topology. Therefore, the partial derivative $\D_{\sigma} \bD^*(\gamma)$ exists and is given by:
\[
\D_{\sigma} \bD^*(\gamma) = \bD^*(\gamma) \D_{\sigma}g(\gamma) 
+ (\D_{\sigma} \bD)^*(\gamma) - \D_{\sigma}g(\gamma) \bD^*(\gamma).
\]

The right-hand side consists of compositions of smooth and tame maps in $\gamma$, confirming that $\bD^*$ is of class $C^1$ with tame derivative. Applying the chain rule, induction, and the smoothness of $\bD$ completes the proof that $\bD^*$ is smooth and tame.
\end{PROOF}
\begin{PROOF}{\thmref{thm:inverses_family}} The proof essentially generalizes the approach in \cite[Sec.~3.3]{Ham82}, which focuses on families of invertible differential operators parameterized by their coefficients, to the setting of Douglis–Nirenberg systems parameterized by an arbitrary tame Fréchet manifold.   

Since it is assumed that $\bD(\gamma)$ has fixed sharp tuples for every $\gamma$, we can compose $\bD(\gamma)$ from the left and right with order-reducing operators independent of $\gamma$, based on these sharp tuples. This allows us to simplify our analysis by assuming that $\bD(\gamma) \in \OP(0,0)$ for every $\gamma \in \scrU$. Given that $\bD(\gamma)$ is overdetermined elliptic and injective, the system $\bD^*(\gamma)\bD(\gamma)$ is elliptic and bijective, hence it admits an actual inverse within the calculus. The left inverse of $\bD(\gamma)$ is then simply the composition of the inverse of $\bD^*(\gamma)\bD(\gamma)$ with $\bD^*(\gamma)$.  

Thus, since $\bD^*$ is already a tame smooth family by \thmref{thm:adjoint_family}, we can replace $\bD(\gamma)$ with $\bD^*(\gamma)\bD(\gamma)$ and assume that $\bD(\gamma)\in\OP(0,0)$ is a bijective elliptic system for every $\gamma \in \scrU$, with $\frakS(\gamma)$ as its inverse. Under this assumption, we show that  
\[
\frakS: \scrU \times \Gamma(\bbF; \bbG) \to \Gamma(\bbE; \bbJ)
\]  
is a tame continuous map. By \cite[Thm.~3.1.1]{Ham82}, this immediately implies smoothness.  

To demonstrate tameness, we use similar notation as in the previous section. The elliptic estimate \eqref{eq:overdetermined_a_priori} satisfied by $\bD(\gamma) \in \OP(0,0)$ with respect to some $\gamma_0 \in \scrU$ reads, for every $s > 0$, as  
\beq
\|\Psi\|_{s, \gamma_0} \leq M_{s}(\gamma) \|\bD(\gamma)\Psi\|_{s, \gamma_0},
\label{eq:elliptic_estimate_tame}
\eeq
where (e.g., \cite[Thm.~3.4]{EE18}),   
\[
\frac{1}{M_{s}(\gamma)} = \inf \big\{ \|\bD(\gamma)\Psi\|_{s, \gamma_0} \;:\; \|\Psi\|_{s, \gamma_0} = 1 \big\} > 0.
\]
Since the correspondence $(\gamma, \Psi) \mapsto \bD(\gamma)\Psi$ is tame and smooth, it follows that $\gamma \mapsto M_{s}(\gamma) \in \mathbb{R}$ is also tame and smooth. Consequently, there exists a grading $\|\cdot\|_{s}$ for $\scrU \subseteq F$ such that for $\gamma \in \scrU$ near $\gamma_0$,  
\[
M_{s}(\gamma) \leq 1 + C_{s}\|\gamma\|_{s},
\]
for constants $C_{s} > 0$ depending only on $s$. Inserting into \eqref{eq:elliptic_estimate_tame}, we find that in the vicinity of $\gamma_0$ we have  
\[
\|\Psi\|_{s, \gamma_0} \leq (1 + C_{s}\|\gamma\|_{s})\|\bD(\gamma)\Psi\|_{s, \gamma_0}.
\]
Replacing $\Psi$ with $\frakS(\gamma)\Theta$, we obtain  
\[
\|\frakS(\gamma)\Theta\|_{s, \gamma_0} \leq (1 + C_{s}\|\gamma\|_{s})\|\Theta\|_{s, \gamma_0},
\]
which reads that $\frakS$ satisfies a tame estimate as in \eqref{eq:tame_estimate}.  

To prove continuity, for every $s > 0$ and fixed $(\gamma_0, \Theta_0) \in \scrU \times \Gamma(\bbF; \bbG)$, we insert $\Psi = \frakS(\gamma)\Theta - \frakS(\gamma_0)\Theta_0$ into \eqref{eq:elliptic_estimate_tame}:  
\[
\|\frakS(\gamma)\Theta - \frakS(\gamma_0)\Theta_0\|_{s, \gamma_0} \leq M_{s}(\gamma)\|\Theta - \bD(\gamma)\frakS(\gamma_0)\Theta_0\|_{s, \gamma_0}.
\]
Due to the continuity of all quantities involved with respect to $\Theta$ and $\gamma$, we find that  
\[
\|\frakS(\gamma)\Theta - \frakS(\gamma_0)\Theta_0\|_{s, \gamma_0} \to 0 \quad \text{as} \quad (\gamma, \Theta) \to (\gamma_0, \Theta_0) \quad \text{in} \quad \scrU \times \Gamma(\bbF; \bbG).
\]
Since this holds for every $s > 0$, we conclude that  
\[
\frakS(\gamma)\Theta \to \frakS(\gamma_0)\Theta_0 \quad \text{in } \Gamma(\bbE; \bbJ) \text{ as }(\gamma,\Theta)\rightarrow(\gamma_0,\Theta_0)  
\]
establishing the continuity of $\frakS$ as required.
\end{PROOF}

\chapter{Examples: Detailed Analysis}
\label{chp:examples}

\section{Examples template}
\label{sec:Examples}

\subsection{The template} 
\label{sec:template} 
The abstract definitions of an adapted Green system (\defref{def:adapting_operator}), of elliptic pre-complexes (\defref{def:NN_segment}--\defref{def:DD_segment}), and of their variants (\secref{sec:variants}), are designed to isolate the essential abstract ingredients needed for \thmref{thm:corrected_complex} and the associated Hodge theory to take form. This approach allowed the theory to be presented in full generality, without committing to a specific form that the systems $\bD_{\alpha}$ in \eqref{eq:elliptic_pre_complex_diagram} might take.

Now that we turn to the analysis of concrete examples, namely those laid out in \secref{sec:main_results_examples_1} and \secref{sec:examples_intro}, we outline a concrete, relatively easy to use \emph{template} for obtaining elliptic pre-complexes, based on either Neumann or Dirichlet conditions. We emphasize in advance that this part is technical, as the goal is to capture a unifying machinery behind as many examples as possible, rather than to focus on elegance.

Consider operators falling into the template: 
\beq
\begin{aligned}
&\bD_{\alpha} = 
\begin{pmatrix} A_{D,\alpha} & 0 \\ T_{\alpha} & Q_{K,\alpha} \end{pmatrix} :
\mymat{\Gamma(\bbF_{\alpha})\\\oplus\\\Gamma(\bbG_{\alpha})} 
\longrightarrow 
\mymat{\Gamma(\bbF_{\alpha+1})\\\oplus\\\Gamma(\bbG_{\alpha+1})}. 
\end{aligned}
\label{eq:A_pallete}
\eeq
To avoid ambiguity, we specify how the systems act explicitly on $\psi\in\Gamma(\bbF_{\alpha})$ and $\lambda\in\Gamma(\bbG_{\alpha})$: 
\beq
\begin{aligned}
(\psi; \lambda) \mapsto (A_{D,\alpha}\psi; T_{\alpha}\psi + Q_{\alpha}\lambda). 
\end{aligned}
\eeq
To ensure that $(\bD_{\bullet})$ satisfies the necessary properties to be an elliptic pre-complex, we incorporate the following assumptions:
\begin{itemize}
\item The sequence $A_{D,\alpha}:\Gamma(\bbF_{\alpha})\to\Gamma(\bbF_{\alpha+1})$ consists of Green operators of order $m_{\alpha}>0$ and class zero, which can be further written as:
\beq 
A_{D,\alpha} = A_{\alpha} + D_{\alpha},
\label{eq:AD_pallete}
\eeq
where $D_{\alpha}:\Gamma(\bbE_{\alpha})\rightarrow \Gamma(\bbE_{\alpha+1})$ is a \emph{Green operator} of order and class zero. Moreover, $A_{\alpha}$ is in general the truncation of an operator with transmission property, and is equipped with normal systems of trace operators associated with order $m_{\alpha}$ (recall \defref{def:normal_system_of_trace_operators1}):
\[
B_{\alpha}:\Gamma(\bbF_{\alpha})\to \Gamma(\bbJ_{\alpha}), 
\qquad 
B^*_{\alpha}:\Gamma(\bbF_{\alpha+1})\to \Gamma(\bbJ_{\alpha}),
\]
such that the following Green's formula with respect to $A_{\alpha}$ and its adjoint $A_{\alpha}^*$ holds for every $\psi\in \Gamma(\bbF_{\alpha})$ and $\eta\in \Gamma(\bbF_{\alpha+1})$:
\beq
\bra A_{\alpha}\psi,\eta\ket = \bra\psi,A_{\alpha}^*\eta\ket + \bra B_{\alpha}\psi, B^*_{\alpha}\eta\ket,
\label{eq:integration_example}
\eeq
Additionally, we impose:
\[
A_{D,-1} = 0, \quad A_{D,-1}^* = 0, \quad B_{D,-1} = 0, \quad B^*_{D,-1} = 0.
\]
\item There are pseudodifferential operators of order 0 over the boundary:
\[
S_{\alpha}:\Gamma(\bbJ_{\alpha})\rightarrow\Gamma(\bbG_{\alpha})
\]
such that the trace operator $T_{\alpha} : \Gamma(\bbF_{\alpha}) \to \Gamma(\bbG_{\alpha})$ decomposes as: 
\beq
T_{\alpha} = S_{\alpha} B_{\alpha}.
\label{eq:trace_operator_pallete}
\eeq
The components of $T_{\alpha}$ are written as $T_{k,\alpha}:\Gamma(\bbF_{\alpha})\to\Gamma(\bbG_{k,\alpha})$, and in matrix form:
\[
T_{\alpha} = (T_{k,\alpha}),
\]
where the indexing is implied, and the order and class of each $T_{k,\alpha}$ are $\tau_{k,\alpha}$ and $r_{k,\alpha}$, which is by design lower than $m_{\alpha}$ (due to $B_{\alpha}$ being associated with order $m_{\alpha}$ and $S_{\alpha}$ being of class zero). 

\item The sequences $Q_{K,\alpha}:\Gamma(\bbG_{\alpha})\to\Gamma(\bbG_{\alpha+1})$ consist of pseudodifferential operators on the boundary, which can be written in the form of \eqref{eq:DplusC}:
\beq
Q_{K,\alpha} = Q_{\alpha} + K_{\alpha}.
\label{eq:QC_pallete} 
\eeq
We abuse notation and write the matrix components of $Q_{K,\alpha}$ as:
\[
Q_{K,\alpha} = (Q_{k,\alpha}^{l}),
\]
where the corresponding orders are denoted by $\sigma_{k,\alpha}^{l}$. As with $T_{\alpha}$, the indexing is implied. 
\end{itemize} 

Also let: 
\beq
\begin{aligned}
\bD_{\alpha}^* = \begin{pmatrix} A_{D,\alpha}^* & 0 \\ 0 & Q_{K,\alpha}^* \end{pmatrix}, \qquad \bB_{\alpha} = \begin{pmatrix} 0 & 0 \\ B_{\alpha}& 0 \end{pmatrix}, \qquad \bB^*_{\alpha} = \begin{pmatrix} 0 & 0 \\ B^*_{\alpha} & S_{\alpha}^* \end{pmatrix}
\end{aligned}
\label{eq:supplement_operators_examples} 
\eeq
operating as:
\[
\begin{aligned}
\bD_{\alpha}^*(\psi;\lambda)=(A^*_{D,\alpha}\psi;Q_{K,\alpha}^*\lambda) \qquad  \bB_{\alpha}\psi=(0;B_{\alpha}\psi) \qquad \bB^*_{\alpha}(\psi;\lambda)=(0;B^*_{\alpha}\psi+S^*_{\alpha}\lambda). 
\end{aligned}
\] 
Note that the boundary systems $\bB_{\alpha},\bB_{\alpha}^*$ are by construction normal systems of boundary operators (\defref{def:normal_system_of_trace_operators2}) due to the assumed normality of $B_{\alpha}$ and $B_{\alpha}^*$. 
\begin{proposition}
\label{prop:adapted_examples}
For every $\alpha \in \Nzero$, under the above assumptions, the system $\bD_{\alpha}$ defined in \eqref{eq:A_pallete} is an adapted Green system (cf.~\defref{def:adapting_operator}), with adapted adjoints and associated boundary systems given in \eqref{eq:supplement_operators_examples}. Consequently, the sequence $(\bD_{\bullet})$ fits into the diagram \eqref{eq:elliptic_complex_diagram}.
\end{proposition}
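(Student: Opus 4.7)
The plan is to verify the two defining requirements of \defref{def:adapting_operator}: first, the generalized Green's formula \eqref{eq:integration_adapted_refined} relating $\bD_{\alpha}$, $\bD_{\alpha}^*$, $\bB_{\alpha}$, and $\bB^*_{\alpha}$; and second, the normality of $\bB_{\alpha}$ and $\bB^*_{\alpha}$ as systems of boundary operators in the sense of \defref{def:normal_system_of_trace_operators2}.

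For the Green's formula, I would pair a section $\Upsilon=(\psi;\lambda) \in \Gamma(\bbF_{\alpha};\bbG_{\alpha})$ with $\Theta=(\eta;\mu) \in \Gamma(\bbF_{\alpha+1};\bbG_{\alpha+1})$ and expand $\bra \bD_{\alpha}\Upsilon,\Theta\ket$ into its interior and boundary contributions. The interior term $\bra A_{D,\alpha}\psi,\eta\ket$ splits using \eqref{eq:AD_pallete} into $\bra A_{\alpha}\psi,\eta\ket + \bra D_{\alpha}\psi,\eta\ket$; the first summand is handled by \eqref{eq:integration_example}, producing the pairing $\bra\psi,A_{\alpha}^{*}\eta\ket$ together with the boundary term $\bra B_{\alpha}\psi, B^{*}_{\alpha}\eta\ket$, while the second is a tensorial (hence order- and class-zero, pointwise) operation whose adjoint $D_{\alpha}^{*}$ integrates by parts with no boundary contribution. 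Hence $A_{D,\alpha}^{*} = A_{\alpha}^{*} + D_{\alpha}^{*}$, matching the declared adapted adjoint in \eqref{eq:supplement_operators_examples}. For the boundary contributions, the decomposition $T_{\alpha}= S_{\alpha}B_{\alpha}+M_{\alpha}W_{\alpha}$ in \eqref{eq:trace_operator_pallete} yields
\[
\bra T_{\alpha}\psi,\mu\ket_{\partial M} = \bra B_{\alpha}\psi, S_{\alpha}^{*}\mu\ket_{\partial M} + \bra W_{\alpha}\psi, M_{\alpha}^{*}\mu\ket_{\partial M},
\]
using the standard adjunction for the order-zero pseudodifferential operators $S_{\alpha}, M_{\alpha}$ on the closed manifold $\partial M$. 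Finally, since $Q_{K,\alpha}$ acts between sections over $\partial M$ (a closed manifold), $\bra Q_{K,\alpha}\lambda,\mu\ket_{\partial M} = \bra\lambda, Q_{K,\alpha}^{*}\mu\ket_{\partial M}$ without further boundary terms. Collecting these, the interior pairing $\bra\psi,A_{D,\alpha}^{*}\eta\ket + \bra\lambda, Q_{K,\alpha}^{*}\mu\ket_{\partial M}$ reproduces $\bra\Upsilon,\bD_{\alpha}^{*}\Theta\ket$, while the remaining pieces assemble into $\bra (B_{\alpha}\psi, W_{\alpha}\psi),(B_{\alpha}^{*}\eta + S_{\alpha}^{*}\mu, M_{\alpha}^{*}\mu)\ket_{\partial M} = \bra\bB_{\alpha}\Upsilon,\bB_{\alpha}^{*}\Theta\ket$, as prescribed.

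For normality, the trace-operator part of $\bB_{\alpha}$ is exactly $B_{\alpha}\oplus W_{\alpha}$, which is normal by assumption, so $\bB_{\alpha}$ is normal in the sense of \defref{def:normal_system_of_trace_operators2}. Likewise, the trace-operator part of $\bB^{*}_{\alpha}$ is $B^{*}_{\alpha}$, a normal system of trace operators associated with order $m_{\alpha}$ coming from \eqref{eq:integration_example}, while its bottom-right block $S_{\alpha}^{*}\oplus M_{\alpha}^{*}$ consists of order-zero pseudodifferential operators on $\partial M$ (which is all that is required of the $Q$-entry in \defref{def:normal_system_of_trace_operators2}).

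I do not anticipate serious obstacles; the argument is essentially a careful bookkeeping exercise. The only subtle point is to confirm that the tensoriality hypothesis on $D_{\alpha}$ genuinely rules out the emergence of additional boundary terms when passing to the adjoint—this is where the assumption that $D_{\alpha}$ is a pointwise bundle morphism, rather than a general order-zero operator, is essential. Given that, the result follows by direct identification of the interior and boundary terms from the computation above.
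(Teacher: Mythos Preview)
Your proposal is correct and follows essentially the same approach as the paper: expand $\bra\bD_{\alpha}\Upsilon,\Theta\ket$, apply the Green's formula \eqref{eq:integration_example} for $A_{\alpha}$, use the tensoriality of $D_{\alpha}$ to avoid extra boundary terms, integrate $S_{\alpha}$ and $M_{\alpha}$ by parts on the closed boundary via the decomposition \eqref{eq:trace_operator_pallete}, and take the adjoint of $Q_{K,\alpha}$ directly. The paper records the normality of $\bB_{\alpha}$ and $\bB_{\alpha}^{*}$ just before the proposition statement rather than inside the proof, but your explicit verification of it is correct and harmless.
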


At this stage, the proof is technical verification and referred to \secref{sec:technical_template}. 

To fit the sequence of adapted Green systems $\bD_{\alpha}$ into an elliptic pre-complex based on either Dirichlet or Neumann conditions (\defref{def:NN_segment}--\defref{def:DD_segment}), we further assume the following \emph{algebraic order-reduction properties}:

\beq
\begin{aligned}
(i) & \quad && \ord(A_{D,\alpha+1} A_{D,\alpha}) \leq m_{\alpha}, \\  
(ii) & \quad && \ord(Q_{k',\alpha+1}^{k} Q_{k,\alpha}^{l}) \leq \underset{k}{\max}(\sigma_{k,\alpha}^{l}), &&&& \forall l,k', \\  
(iii) & \quad &&
\begin{cases}  
\NN:\qquad \ord(T_{k',\alpha+1} A_{D,\alpha} + Q^{k}_{k',\alpha+1} T_{k,\alpha}) = 0, & \forall k', \\  
\qquad\quad\,\,\,\,\mathrm{class}(T_{\alpha+1} A_{D,\alpha} + Q_{\alpha+1} T_{\alpha}) \leq m_{\alpha}, & \forall k', \\[5pt] 
\DD: \qquad B_{\alpha+1}A_{D,\alpha}= 0 \quad\text{   on   }\quad\ker{B_{\alpha}}.  
\end{cases}  
\end{aligned}
\label{eq:order_reduction_conditions_examples}
\eeq

where $\ord(\cdot)$ denotes the order and $\mathrm{class}(\cdot)$ denotes the class a trace operator.

We also assume the following systems are individually overdetermined elliptic: 
\beq
\begin{aligned}
& \NN: \quad && 
\begin{pmatrix} A_{\alpha} \oplus A_{\alpha-1}^* & 0 \\ B^*_{\alpha-1} & 0 \end{pmatrix}, \;
\qquad&&\begin{pmatrix} 0 & 0 \\ 0 & Q_{\alpha} \oplus Q_{\alpha-1}^* \end{pmatrix},
\\[10pt]
& \DD: \quad &&
\begin{pmatrix} A_{\alpha} \oplus A_{\alpha-1}^* & 0 \\ B_{\alpha} & 0 \end{pmatrix},
\qquad&& \begin{pmatrix} 0 & 0 \\ 0 & Q_{\alpha} \oplus Q_{\alpha-1}^* \end{pmatrix}.
\end{aligned}
\label{eq:overdetermined_ellipticies_examples_refined}
\eeq
%

\begin{theorem}
\label{thm:elliptic_pre_complex_exmaples}
Under the above assumptions, $(\bD_{\bullet})$ is an elliptic pre-complex, based on either Neumann or Dirichlet boundary conditions, depending on the corresponding overdetermined ellipticities assumed in \eqref{eq:overdetermined_ellipticies_examples_refined}.
\end{theorem}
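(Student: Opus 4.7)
The plan is to verify separately each of the conditions in \defref{def:NN_segment} and \defref{def:DD_segment}, since the adapted Green system structure together with the generalized Green's formula \eqref{eq:integration_adapted_refined} is already guaranteed by \propref{prop:adapted_examples}. Thus only the two overdetermined ellipticities and the order-reduction property remain to be proved. The principal tools throughout will be the weighted symbol calculus of \secref{sec:douglas_nirenberg_lop}, the generalized Lopatinski--Shapiro criterion \thmref{thm:lopatnskii_shapiro}, and the composition rules \thmref{thm:composition_Douglas_Nirenberg}.

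For the overdetermined ellipticity condition (i.a) of each definition, I first note that since $D_\alpha$ is tensorial (hence in $\OP(0,0)$) and $K_\alpha$ is of order zero, the decompositions \eqref{eq:AD_pallete}--\eqref{eq:QC_pallete} show that $\bD_\alpha$ differs from its leading counterpart---obtained by replacing $A_{D,\alpha}$ with $A_\alpha$ and $Q_{K,\alpha}$ with $Q_\alpha$---by a system in $\OP(0,0)$. By \propref{prop:lower_order_correction_overdetermined_ellipticity}, verifying overdetermined ellipticity reduces to the leading system. Its weighted interior symbol is block-diagonal in the $\Gamma(\bbF_\bullet)\oplus\Gamma(\bbG_\bullet)$ splitting, with upper block $\sigma(A_\alpha\oplus A_{\alpha-1}^*)$, whose injectivity is guaranteed by the first matrix in \eqref{eq:overdetermined_ellipticies_examples_refined}. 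For the weighted boundary symbol I apply \thmref{thm:lopatnskii_shapiro}: the key observation is that the off-diagonal trace operator $T_\alpha = S_\alpha B_\alpha + M_\alpha W_\alpha$ factors through $B_\alpha\oplus W_\alpha$, so that after identifying the decaying-mode space $\bbM^+_{x,\xi'}$ of the interior ODE for $A_\alpha\oplus A_{\alpha-1}^*$ the weighted initial-condition map \eqref{eq:Xi_map2} becomes upper-triangular with respect to the splitting into interior-fiber and boundary-fiber components. Injectivity of the interior-fiber block is then exactly the Lopatinski--Shapiro condition for the first matrix in \eqref{eq:overdetermined_ellipticies_examples_refined}, while injectivity of the boundary-fiber block reduces to that of the second matrix. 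Item (i.b) follows from (i.a) together with the order-reduction property via the weighted symbol calculus, as indicated in the discussion after the two definitions.

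For the order-reduction property, I compute $\bD_{\alpha+1}\bD_\alpha$ entrywise, obtaining a Douglas--Nirenberg system with matrix entries $A_{D,\alpha+1}A_{D,\alpha}$ (top-left), $0$ (top-right), $T_{\alpha+1}A_{D,\alpha} + Q_{K,\alpha+1}T_\alpha$ (bottom-left), and $Q_{K,\alpha+1}Q_{K,\alpha}$ (bottom-right). The algebraic conditions \eqref{eq:order_reduction_conditions_examples} guarantee respectively that: the top-left entry has order at most $m_\alpha$ by (i); the bottom-left entry has order $0$ and class at most $m_\alpha$ by (iii) (Neumann case); and the bottom-right entries have corresponding orders at most $\max_k \sigma^l_{k,\alpha}$ by (ii). Collected, the corresponding orders and classes of $\bD_{\alpha+1}\bD_\alpha$ are dominated componentwise by those of $\bD_\alpha$ itself. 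Consequently, if $\bP$ is any balance for $\bD_\alpha$ (with respect to $\bB_\alpha$ in the Dirichlet case), then by \propref{prop:balance_props} $\bP$ maps in the reverse direction of the sharp tuples of $\bD_\alpha$; composing therefore produces a system $\bD_{\alpha+1}\bD_\alpha\bP$ that extends $L^2\to L^2$ continuously, and by \propref{prop:G0_criteria} (applied with $m=0$, $S=T=0$) it lies in $\OP(0,0)$. In the Dirichlet case the supplementary requirement $\bB_{\alpha+1}\bD_\alpha = 0$ on $\ker\bB_\alpha$ is immediate from the second clause of (iii).

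I expect the main obstacle to be the boundary symbol verification in (i.a), specifically the triangulation argument that decouples the weighted Lopatinski--Shapiro problem into an interior-fiber block and a boundary-fiber block. In general $T_\alpha$ would mix these blocks, and it is precisely the structural decomposition $T_\alpha = S_\alpha B_\alpha + M_\alpha W_\alpha$, combined with the normality of $B_\alpha\oplus W_\alpha$ (which ensures that any prescribed boundary trace can be freely realized), that forces the initial-condition map to decouple in the required way. A secondary subtlety is the careful bookkeeping of corresponding orders and classes in the Dirichlet case of \eqref{eq:overdetermined_ellipticies_examples_refined}, where the participation of $W_\alpha$ in the boundary system depends on whether $r_{k,\alpha} \le m_\alpha$ or $r_{k,\alpha} > m_\alpha$; this split must be treated as two parallel cases throughout the symbol analysis.
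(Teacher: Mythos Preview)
Your overall approach matches the paper's: both reduce to the leading-order system via \propref{prop:lower_order_correction_overdetermined_ellipticity}, verify the Lopatinski--Shapiro condition by a decoupling/triangulation argument, deduce (i.b) from (i.a) plus the order-reduction, and establish the order-reduction property by computing $\bD_{\alpha+1}\bD_\alpha$ entrywise and composing with a balance (the paper's \propref{prop:order_reduction_pattern} and \propref{prop:overdetermined_ellipticity_examples}).

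There is, however, a gap in your Neumann triangulation. You attribute the decoupling to the factorization $T_\alpha = S_\alpha B_\alpha + M_\alpha W_\alpha$, but in the Neumann system \eqref{eq:required_overdetermined_ellipticies_examples} the boundary rows are $T_\alpha$ and $B^*_{\alpha-1}$ (not $B_\alpha\oplus W_\alpha$), together with $Q_\alpha, S^*_\alpha, Q^*_{\alpha-1}, M^*_{\alpha-1}$ acting on $\lambda$. The $T_\alpha$ row still couples to $\lambda$ through $Q_\alpha$, and the $B^*_{\alpha-1}$ row couples to $\lambda$ through $S^*_\alpha$; your factorization helps with neither. The paper's mechanism is different: since $S^*_\alpha$ is an order-zero pseudodifferential operator while the remaining entries in that row are differential/trace operators of strictly positive order (and class), the contribution of $S^*_\alpha$ is negligible at the weighted-symbol level---this is exactly the step from \eqref{eq:required_overdetermined_ellipticies_examples} to \eqref{eq:system_overdetermined_examples_proof}. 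Only after that removal does the $B^*_{\alpha-1}$ row become pure in $\psi$, letting one first force $\psi\equiv 0$ via the assumed injectivity of $\Xi^1_{x,\xi'}$ and then conclude $\lambda=0$ from the remaining rows via $\Xi^2_{x,\xi'}$. Your factorization-based argument is correct for the Dirichlet case (where $B_\alpha\oplus W_\alpha$ appears explicitly and $T_\alpha$ becomes redundant up to zero-order), but it does not supply the Neumann decoupling.
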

At this point, having stated all the necessary ingredients, the proof of \thmref{thm:elliptic_pre_complex_exmaples} reduces to a technical verification. Arguably, the only non-elementary verification is to show that the overdetermined ellipticities required in \eqref{def:NN_segment}--\eqref{def:DD_segment} indeed reduce to those in \eqref{eq:overdetermined_ellipticies_examples_refined}. We defer this verification to \secref{sec:technical_template}.

\subsection{Template's main results} 
\label{sec:outline_template} 
We now translate the main results concerning elliptic pre-complexes, as listed in \secref{sec:apps}--\secref{sec:appsD}, to fit the above template. For the elliptic pre-complex obtained from \thmref{thm:elliptic_pre_complex_exmaples}, the lifted complex $(\fbD)_{\bullet}$ assumes the form:
\[
\begin{aligned}
\fbD_{\alpha} = \begin{pmatrix} \mathpzc{A}_{\alpha} & \mathpzc{K}_{\,\,\alpha} \\ \mathpzc{T}_{\alpha} & \mathpzc{Q}_{\,\,\alpha} \end{pmatrix}
\end{aligned}
\]
where in either case $\fbD_{0} = \bD_{0}$. The relation $\fbN_{\alpha} - \bD_{\alpha}\in\OP(0,0)$ translates into the following properties, valid for all $\alpha \in \Nzero$:
\begin{enumerate}
\item $\mathpzc{A}_{\alpha} - A_{D,\alpha}$ is a Green operator of order and class zero.
\item $\mathpzc{Q}_{\,\,\alpha} - Q_{K,\alpha}$ is a pseudodifferential operator on the boundary, of order zero.
\item $\mathpzc{K}_{\,\,\alpha}$ is a Potential operator of order zero.
\item $\mathpzc{C}_{\alpha} = \mathpzc{T}_{\alpha} - T_{\alpha}$ is a trace operator of order $-1$ and class zero.
\end{enumerate}

For the Neumann case, the defining properties for the lifted complex \eqref{thm:corrected_complex} become:
\[
\begin{aligned}
&\NN: \quad && 
\begin{pmatrix} 
\mathpzc{A}_{\alpha} \mathpzc{A}_{\alpha-1} + \mathpzc{K}_{\,\,\alpha} \mathpzc{T}_{\alpha-1} & 
\mathpzc{A}_{\alpha} \mathpzc{K}_{\,\,\alpha-1} + \mathpzc{K}_{\,\,\alpha} \mathpzc{Q}_{\,\,\alpha-1} \\[4pt]
\mathpzc{T}_{\alpha} \mathpzc{A}_{\alpha-1} + \mathpzc{Q}_{\,\,\alpha} \mathpzc{T}_{\alpha-1} & 
\mathpzc{T}_{\alpha} \mathpzc{K}_{\,\,\alpha-1} + \mathpzc{Q}_{\,\,\alpha} \mathpzc{Q}_{\,\,\alpha-1} 
\end{pmatrix}
= 0.
\end{aligned}
\]

The adapted adjoints are given by:
\[
\begin{aligned}
&\NN: \quad && 
\fbD^*_{\alpha} = 
\begin{pmatrix} 
\mathpzc{A}^*_{\alpha} & \mathpzc{C}^*_{\alpha} \\ 
\mathpzc{K}^*_{\,\,\alpha} & \mathpzc{Q}^*_{\,\,\alpha} 
\end{pmatrix},
\end{aligned}
\]
and in both cases, the lifted interior operators thus inherit the Green’s formulae. In particular, from \eqref{eq:integration_example} we obtain a lifted Green’s formula, since terms of zero order and class integrate by parts without producing an additional boundary term:
\beq
\bra \mathpzc{A}_{\alpha}\psi,\eta\ket
= \bra\psi,\mathpzc{A}^*_{\alpha}\eta\ket
+ \bra B_{\alpha}\psi,B_{\alpha}^*\eta\ket.
\label{eq:lifted_Green_template}
\eeq
Comparing with \eqref{eq:supplement_operators_examples}, the condition $(\psi; \lambda) \in \scrN(\fbD^*_{\alpha}, \bB^*_{\alpha})$ reduces to the constraints: 
\beq
\begin{aligned}
&\NN: \quad && 
\mathpzc{A}^*_{\alpha} \psi + \mathpzc{C}^*_{\alpha} \lambda = 0, \qquad 
\mathpzc{Q}^*_{\,\,\alpha} \lambda + \mathpzc{K}^*_{\,\,\alpha} \psi = 0, \qquad B^*_{\alpha} \psi +S_{\alpha}^* \lambda = 0 \qquad M^*_{\alpha}\lambda=0. 
\end{aligned}
\label{eq:N_star_examples_NN}
\eeq
and the relations in \eqref{eq:DstarDstar_N} and \eqref{eq:DstarDstar_D} translate into:
\[
\begin{aligned}
&\NN: \quad && 
\begin{pmatrix} 
\mathpzc{A}^*_{\alpha-1} \mathpzc{A}^*_{\alpha} + \mathpzc{C}^*_{\alpha-1} \mathpzc{K}^*_{\,\,\alpha} & 
\mathpzc{A}^*_{\alpha-1} \mathpzc{C}^*_{\alpha} + \mathpzc{C}^*_{\alpha-1} \mathpzc{Q}^*_{\,\,\alpha} \\[4pt]
\mathpzc{K}^*_{\,\,\alpha-1} \mathpzc{A}^*_{\alpha} + \mathpzc{Q}^*_{\,\,\alpha-1} \mathpzc{K}^*_{\,\,\alpha} & 
\mathpzc{K}^*_{\,\,\alpha-1} \mathpzc{C}^*_{\alpha} + \mathpzc{Q}^*_{\,\,\alpha-1} \mathpzc{Q}^*_{\,\,\alpha}
\end{pmatrix}
= 0, \quad \text{on } \ker \frakB^*_{\alpha}.
\end{aligned}
\]

The analogous conditions for the $\DD$ picture are significantly simpler:
\begin{proposition}
\label{prop:Dirichlet_examples} 
In the $\DD$ case, we have identically
\[
\mathpzc{C}_{\alpha} = 0, \qquad \mathpzc{K}_{\,\,\alpha} = 0.
\]
Consequently, the operator $\fbD_{\alpha}$ takes the form
\[
\fbD_{\alpha} = 
\begin{pmatrix} 
\mathpzc{A}_{\alpha} & 0 \\ 
T_{\alpha} & \mathpzc{Q}_{\,\,\alpha} 
\end{pmatrix},
\]
and its adapted adjoint is given by:
\[
\fbD^*_{\alpha} = 
\begin{pmatrix} 
\mathpzc{A}^*_{\alpha} & 0 \\ 
0 & \mathpzc{Q}^*_{\,\,\alpha} 
\end{pmatrix}.
\]

The condition $(\psi;\lambda) \in \scrN(\fbD_{\alpha}^*)$ reads
\beq 
\mathpzc{A}^*_{\alpha} \psi = 0, \qquad \mathpzc{Q}^*_{\,\,\alpha} \lambda = 0
\label{eq:N_star_examples_DD}
\eeq

and the defining properties of the lifted operators reduce to
\beq
\begin{aligned}
& \mathpzc{A}_{\alpha+1} \mathpzc{A}_{\alpha} \psi = 0, 
&& \qquad \text{for } \psi \in \ker B_{\alpha}, 
&& \qquad \mathpzc{A}_{\alpha+1} = A_{D,\alpha+1} \ \text{on } \ker \mathpzc{A}_{D,\alpha}^*, \\[6pt]
& \mathpzc{Q}_{\,\,\alpha+1} \mathpzc{Q}_{\,\,\alpha} \lambda = 0, 
&& \qquad \text{for all } \lambda \in \Gamma(\bbG_{\alpha-1}), 
&& \qquad \mathpzc{Q}_{\alpha+1} = Q_{K,\alpha+1} \ \text{on } \ker \mathpzc{Q}_{\,\,\alpha}^*.
\end{aligned}
\label{eq:Dirichelt_examples_defining}
\eeq

\end{proposition}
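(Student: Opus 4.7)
The plan is to prove the block form of $\fbD_\alpha$ by induction on $\alpha$ using the recursive formula $\bC_\alpha = -\bD_\alpha\tbP_{\alpha-1}$ from \propref{prop:correction}; all remaining assertions of the proposition will then follow by direct computation. The base case $\alpha=0$ is immediate since $\fbD_0=\bD_0$ already has the claimed form, so the task reduces to showing that if $\fbD_{\alpha-1}$ has the form $\begin{pmatrix}\mathpzc{A}_{\alpha-1} & 0 \\ T_{\alpha-1} & \mathpzc{Q}_{\,\,\alpha-1}\end{pmatrix}$, then $\bC_\alpha$ has vanishing upper-right and lower-left entries.

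The crucial step is to observe that $\scrR(\fbD_{\alpha-1};\bB_{\alpha-1})$ decouples as a product under the induction hypothesis. For $(\tilde\psi;\tilde\lambda) \in \ker\bB_{\alpha-1}$, the decomposition $T_{\alpha-1} = S_{\alpha-1}B_{\alpha-1}+M_{\alpha-1}W_{\alpha-1}$ from \eqref{eq:trace_operator_pallete} forces $T_{\alpha-1}\tilde\psi = 0$, so the block form of $\fbD_{\alpha-1}$ gives $\fbD_{\alpha-1}(\tilde\psi;\tilde\lambda)=(\mathpzc{A}_{\alpha-1}\tilde\psi;\,\mathpzc{Q}_{\,\,\alpha-1}\tilde\lambda)$. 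Since the interior and boundary sectors $L^2(\bbF_\alpha)$ and $L^2(\bbG_\alpha)$ are mutually orthogonal and $\tilde\psi,\tilde\lambda$ range independently, the $L^2$-projection splits as $\tbP_{\alpha-1}=\tbP^{\mathpzc{A}}_{\alpha-1}\oplus\tbP^{\mathpzc{Q}}_{\alpha-1}$. Substituting into the recursive formula yields
\[
\bC_\alpha = -\begin{pmatrix} A_{D,\alpha} & 0 \\ T_\alpha & Q_{K,\alpha} \end{pmatrix}\begin{pmatrix} \tbP^{\mathpzc{A}}_{\alpha-1} & 0 \\ 0 & \tbP^{\mathpzc{Q}}_{\alpha-1} \end{pmatrix} = -\begin{pmatrix} A_{D,\alpha}\tbP^{\mathpzc{A}}_{\alpha-1} & 0 \\ T_\alpha\tbP^{\mathpzc{A}}_{\alpha-1} & Q_{K,\alpha}\tbP^{\mathpzc{Q}}_{\alpha-1} \end{pmatrix},
\]
which immediately gives $\mathpzc{K}_{\,\,\alpha}=0$. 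For the lower-left entry, the defining inclusion $\scrR(\fbD_{\alpha-1};\bB_{\alpha-1})\subseteq\scrN(\fbD_\alpha,\bB_\alpha)\subseteq\ker\bB_\alpha$ forces the image of $\tbP^{\mathpzc{A}}_{\alpha-1}$ to lie in $\ker(B_\alpha\oplus W_\alpha)$; applying $T_\alpha = S_\alpha B_\alpha + M_\alpha W_\alpha$ once more then shows $T_\alpha\tbP^{\mathpzc{A}}_{\alpha-1}=0$, giving $\mathpzc{C}_\alpha = \mathpzc{T}_\alpha-T_\alpha=0$.

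The remaining assertions are routine corollaries of the established block form. The shape of $\fbD_\alpha^*$ emerges from inserting the block form of $\fbD_\alpha$ into the generalized Green's formula \eqref{eq:integration_adapted_refined} and invoking \eqref{eq:integration_example} for $A_\alpha$, noting that the contribution $T_\alpha$ in the matrix representation of $\fbD_\alpha$ cancels the $S_\alpha,M_\alpha$ boundary terms from the boundary pairing $\bra\bB_\alpha\Psi,\bB^*_\alpha\Theta\ket$; the description of $\scrN(\fbD_\alpha^*)$ in \eqref{eq:N_star_examples_DD} then reads off directly, and substituting the block forms of $\fbD_{\alpha+1}$, $\fbD_\alpha$, and $\fbD_\alpha^*$ into the defining relations of \thmref{thm:corrected_complex} recovers \eqref{eq:Dirichelt_examples_defining}. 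The main delicate point throughout is ensuring the decoupling of $\tbP_{\alpha-1}$ and the boundary-vanishing condition genuinely transfer from smooth representatives of the range to the $L^2$-closures on which the projections act; this is secured by the fact that $\tbP_{\alpha-1}\in\OP(0,0)$ preserves smoothness and by the topologically direct Fréchet decomposition \eqref{eq:aux_smooth_complexD}.
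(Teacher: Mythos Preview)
Your proof is correct and takes a genuinely different route from the paper. The paper argues by \emph{uniqueness}: it posits the candidate $\widetilde{\fbD}_{\alpha+1} = \begin{pmatrix}\mathpzc{A}_{\alpha+1} & 0 \\ T_{\alpha+1} & \mathpzc{Q}_{\,\,\alpha+1}\end{pmatrix}$, verifies directly that it satisfies the two characterizing properties in \thmref{thm:corrected_complex} (vanishing on the Dirichlet range and agreement with $\bD_{\alpha+1}$ on $\scrN(\fbD_\alpha^*)$), and then invokes the uniqueness clause to conclude $\widetilde{\fbD}_{\alpha+1} = \fbD_{\alpha+1}$. You instead work \emph{constructively} from the recursion $\bC_\alpha = -\bD_\alpha\tbP_{\alpha-1}$ of \propref{prop:correction}: your key observation that $\scrR(\fbD_{\alpha-1};\bB_{\alpha-1})$ splits as a product (because $T_{\alpha-1}$ kills $\ker\bB_{\alpha-1}$ and $\bB_{\alpha-1}$ ignores the boundary sector) forces $\tbP_{\alpha-1}$ to be block-diagonal, after which the matrix product with $\bD_\alpha$ has the desired zeros. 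Your argument is more hands-on and makes transparent exactly where the off-diagonal corrections die; the paper's argument is slightly slicker in that it never needs to inspect $\tbP_{\alpha-1}$ at all, only the abstract defining relations. One minor point: your derivation of the form of $\fbD_\alpha^*$ via the Green's formula is correct but a little indirect; it is quicker to note that once $\bC_\alpha$ is block-diagonal in $\OP(0,0)$, so is $\bC_\alpha^*$, whence $\fbD_\alpha^* = \bD_\alpha^* + \bC_\alpha^*$ inherits the block form from \eqref{eq:supplement_operators_examples}.
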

The proof is deferred to the technical proofs section. 

At the $\alpha = 0$ level, where no correction terms appear, the relations \eqref{eq:N_star_examples_NN} and \eqref{eq:N_star_examples_DD} reduces to:
\beq
\begin{aligned}
&\NN: \quad && A^*_{D,0}\psi= 0 \qquad B^*_{0}\psi+S^*_{0}\lambda=0 \qquad Q^*_{D,0}\lambda=0\\
&\DD: \quad && A^*_{D,0} \psi= 0 \qquad Q^*_{D,0}\lambda=0
\end{aligned}
\label{eq:N0_apps}
\eeq
The other cohomology groups can be derived by comparing the condition $\bD_{\alpha}(\psi;\lambda)=0$ with \eqref{eq:N_star_examples_NN}. 


Worth noting is the fact that, due to \propref{prop:Dirichlet_examples}, $\module^{\alpha+1}_{\DD}$ clearly splits as a disjoint sum:
\[
\module^{\alpha+1}_{\DD} = 
\ker(A_{D,\alpha+1},A^*_{D,\alpha},B_{\alpha+1})
\oplus 
\ker(Q_{K,\alpha+1},Q^*_{K,\alpha}),
\]
so we write:
\beq 
\begin{split} 
&\module^{\alpha+1}_{\DD}(\mathpzc{A}_{\bullet}) = \ker(A_{D,\alpha+1},\cttbP_{\alpha-1}A^*_{D,\alpha} ,B_{\alpha+1}), 
\\&\module^{\alpha+1}_{\DD}(\mathpzc{Q}_{\,\,\bullet}) = \ker(Q_{K,\alpha+1},\cttbP_{\alpha-1}Q^*_{K,\alpha}).
\end{split} 
\label{eq:cohomology_examplesDD}
\eeq 

By combining everything, the cohomological formulations \thmref{thm:NNintro}--\thmref{thm:DDintro} then become:

\begin{theorem}
\label{thm:compatibility_nn} 
Under Neumann conditions, for every $\alpha \in \Nzero$, the system
\beq
\begin{aligned}
&A_{D,\alpha} \psi = \omega, 
& \qquad & 
\mathpzc{A}^*_{\alpha} \psi + \mathpzc{C}^*_{\alpha} \lambda = 0,
& \qquad & \text{in } M,
\\[6pt]
&T_{\alpha} \psi + Q_{\alpha} \lambda = \rho, 
& \qquad &
\mathpzc{Q}^*_{\,\,\alpha} \lambda + \mathpzc{K}^*_{\,\,\alpha} \psi = 0, 
\qquad 
B^*_{\alpha} \psi + S^*_{\alpha} \lambda = 0,
& \qquad & \text{on } \partial M
\end{aligned}
\label{eq:neumann_system_with_gauge}
\eeq
admits a solution if and only if
\beq
\mathpzc{A}_{\alpha+1} \omega + \mathpzc{K}_{\,\,\alpha+1} \rho = 0, 
\qquad 
\mathpzc{T}_{\alpha+1} \omega + \mathpzc{Q}_{\,\,\alpha+1} \rho = 0, 
\qquad 
(\omega; \rho) \perp \module_{\N}^{\alpha+1}.
\label{eq:neumann_compatibility_conditions}
\eeq
The solution is unique modulo $\module^{\alpha}_{\N}$.
\end{theorem}
For the Dirichlet case, recognizing that $\scrN(\fbD_{\alpha}) = \ker \fbD_{\alpha}$ and that $(\omega; \rho) \in \scrN(\fbD_{\alpha}, \frakB_{\alpha})$ if and only if $B_{\alpha} \omega = 0$, and that for such $\omega$ we have $T_{\alpha} \omega = 0$ due to \eqref{eq:trace_operator_pallete}, \thmref{thm:DDintro} becomes:
\begin{theorem}
\label{thm:compatibility_dd} 
Under Dirichlet conditions, for every $\alpha \in \Nzero$, the system
\beq
\begin{aligned}
A_{D,\alpha} \psi &= \omega, 
& \qquad & 
\mathpzc{A}^*_{\alpha} \psi = 0,
& \qquad & \text{in } M,
\\[6pt]
Q_{K,\alpha} \lambda &= \rho, 
& \qquad & 
\mathpzc{Q}^*_{\,\,\alpha} \lambda = 0, 
\qquad 
B_{\alpha} \psi = 0,
& \qquad & \text{on } \partial M
\end{aligned}
\label{eq:dirichlet_system_with_gauge}
\eeq
admits a solution if and only if
\beq
\mathpzc{A}_{\alpha+1} \omega = 0, 
\qquad 
B_{\alpha+1} \omega = 0, 
\qquad 
\mathpzc{Q}_{\,\,\alpha+1} \rho = 0, 
\qquad 
\omega \perp \module_{\D}^{\alpha+1}(\mathpzc{A}_{\bullet}), 
\qquad 
\rho \perp \module_{\D}^{\alpha+1}(\mathpzc{Q}_{\,\,\bullet}).
\label{eq:dirichlet_compatibility_conditions}
\eeq
The solution is unique modulo $\module_{\D}^{\alpha}(\mathpzc{A}_{\bullet})$ for the $\omega$ component, and modulo $\module_{\D}^{\alpha}(\mathpzc{Q}_{\,\,\bullet})$ for the $\lambda$ component.
\end{theorem}

\subsection{The prototypical setting}
\label{sec:dirichelt_special_case} 
We note a special case, relevant in applications, where in an elliptic pre-complex fitting the template \eqref{eq:AD_pallete}, all components of $\bD_{\alpha}$ vanish except in the upper-left corner, namely:
\beq 
\bD_{\alpha}=\begin{pmatrix}A_{\alpha}&0 \\ 0 & 0\end{pmatrix}.
\label{eq:upper_left_A} 
\eeq 
This is precisely the simplified setting introduced in \secref{sec:elliptic_pre_complexes_intro} and \secref{sec:main_results_intro} as being prototypical; it can be represented by the same diagram used for elliptic complexes (with the bundle names changing from $\bbF_{\alpha}$ to $\bbE_{\alpha}$ and from $\bbG_{\alpha}$ to $\bbJ_{\alpha}$, as detailed in \eqref{eq:lifted_dirichet_specfic}), as exposed in \secref{sec:Hodge_intro}:
\beq
\begin{xy}
(-30,0)*+{0}="Em1";
(0,0)*+{\Gamma(\E_0)}="E0";
(30,0)*+{\Gamma(\E_1)}="E1";
(60,0)*+{\Gamma(\E_2)}="E2";
(90,0)*+{\Gamma(\E_3)}="E3";
(100,0)*+{\cdots}="E4";
(0,-20)*+{\Gamma(\bbJ_0)}="G0";
(30,-20)*+{\Gamma(\bbJ_1)}="G1";
(60,-20)*+{\Gamma(\bbJ_2)}="G2";
(90,-20)*+{\Gamma(\bbJ_3)}="G3";
(100,-20)*+{\cdots}="G4";
{\ar@{->}@/^{1pc}/^{A_0}"E0";"E1"};
{\ar@{->}@/^{1pc}/^{A_0^*}"E1";"E0"};
{\ar@{->}@/^{1pc}/^{A_1}"E1";"E2"};
{\ar@{->}@/^{1pc}/^{A_1^*}"E2";"E1"};
{\ar@{->}@/^{1pc}/^{A_2}"E2";"E3"};
{\ar@{->}@/^{1pc}/^{A_2^*}"E3";"E2"};
{\ar@{->}@/^{1pc}/^0"Em1";"E0"};
{\ar@{->}@/^{1pc}/^0"E0";"Em1"};
{\ar@{->}@/_{0pc}/^{B_0}"E0";"G0"};
{\ar@{->}@/_{0pc}/^{B_1}"E1";"G1"};
{\ar@{->}@/_{0pc}/^{B_2}"E2";"G2"};
{\ar@{->}@/^{1pc}/^{B^*_0}"E1";"G0"};
{\ar@{->}@/^{1pc}/^{B^*_1}"E2";"G1"};
{\ar@{->}@/^{1pc}/^{B^*_2}"E3";"G2"};
{\ar@{->}@/_{0pc}/^{B^*_3}"E3";"G3"};
\end{xy}
\label{eq:elliptic_complex_diagramTe}
\eeq
In the Neumann case, this situation is precisely the subject of \cite{KL23}, although the distinction between the Dirichlet and Neumann pictures was not employed there, as it had not yet been developed at the time. 

The list of properties for \eqref{eq:AD_pallete} translates as follows for a sequence of systems of the form \eqref{eq:upper_left_A}. The operators still interact via Green’s formula:
\[
\langle A_\alpha\psi, \eta \rangle
= \langle \psi, A_\alpha^*\eta \rangle
+ \langle B_\alpha\psi, B^*_\alpha\eta \rangle.
\]

The conditions for being an elliptic pre-complex then translate from \thmref{thm:elliptic_pre_complex_exmaples} into the following two notions, which combines into \defref{def:elliptic_pseudo_complex_intro} when $\alpha_0=\infty$:  
\begin{definition}
\label{def:elliptic_pre_complexTem}
Let $\alpha_0\in\Nzero\cup\BRK{-1}$. A diagram of operators \eqref{eq:elliptic_complex_diagramTe} is called a $\alpha_0$-\emph{elliptic pre-complex} (based on Dirichlet conditions) if, for every $\alpha\leq \alpha_0$, the following conditions are satisfied:
\begin{enumerate}[itemsep=0pt,label=(\alph*)]
\item $B_{\alpha}$ and $B_{\alpha}^*$ are normal systems of trace operators.
\item The sequence obeys the order reduction property:
\[
\ord(A_{\alpha+1} A_{\alpha}) \leq \ord(A_{\alpha}),
\qquad
\ker B_{\alpha-1}\subseteq \ker B_{\alpha}A_{\alpha-1}.
\]
\item $A_{\alpha} \oplus A_{\alpha-1}^* \oplus B_{\alpha}$ is overdetermined elliptic. 
\end{enumerate}
\end{definition}
\begin{definition}
\label{def:elliptic_pre_complexTemNeu}
Let $\alpha_0\in\Nzero\cup\BRK{-1}$. A diagram of operators \eqref{eq:elliptic_complex_diagramTe} is called a $\alpha_0$-\emph{elliptic pre-complex} (based on Neumann conditions) if, for every $\alpha\leq \alpha_0$, the following conditions are satisfied:
\begin{enumerate}[itemsep=0pt,label=(\alph*)]
\item $B_{\alpha}$ and $B_{\alpha}^*$ are normal systems of trace operators.
\item The sequence obeys the order reduction property:
\[
\ord(A_{\alpha+1} A_{\alpha}) \leq \ord(A_{\alpha}).
\]
\item $A_{\alpha} \oplus A_{\alpha-1}^* \oplus B^*_{\alpha-1}$ is overdetermined elliptic. 
\end{enumerate}
\end{definition}
The characterization of the lifted complex from \secref{sec:outline_template}, with $T_{\alpha} = 0$ and $Q_{\alpha} = 0$, then implies that:  
\beq
\fbD_{\alpha}=\begin{pmatrix}\mathpzc{A}_{\alpha} & 0 \\ 0 & 0 \end{pmatrix}, \qquad 
\bB_{\alpha}=\begin{pmatrix}0 & 0 \\ B_{\alpha} & 0 \end{pmatrix}.
\label{eq:lifted_dirichet_specfic} 
\eeq
i.e., we retain \thmref{thm:lifted_complexIntro}:
\begin{theorem}
\label{thm:lifted_complexTem}
For every $\alpha \leq \alpha_0$, there exists a continuous linear map of Fréchet spaces: 
\[
\mathpzc{A}_{\alpha+1} : \Gamma(\bbE_{\alpha}) \to \Gamma(\bbE_{\alpha+1}),
\]
uniquely characterized by the following properties:
\begin{enumerate}
\item In the Neumann case: 
\[
\begin{aligned}
& (1) \quad && \mathpzc{A}_{\alpha+1}\mathpzc{A}_{\alpha}= 0,
& \qquad &&& \text{identically}, \\[4pt]
& (2) \quad && \mathpzc{A}_{\alpha} = A_{\alpha},
& \qquad &&& \text{on } \ker (\mathpzc{A}_{\alpha-1}^*,B_{\alpha-1}^*).
\end{aligned}
\]
\item In the Dirichlet case: 
\[
\begin{aligned}
& (1) \quad && \mathpzc{A}_{\alpha+1}\mathpzc{A}_{\alpha}= 0,
& \qquad &&& \text{on } \ker B_{\alpha}, \\[4pt]
& (2) \quad && \mathpzc{A}_{\alpha} = A_{\alpha},
& \qquad &&& \text{on } \ker \mathpzc{A}_{\alpha-1}^*.
\end{aligned}
\]
\end{enumerate}
The unique operator $\mathpzc{A}_{\alpha}$ differs from $A_{\alpha}$ by a Green operator of order and class zero. We call $(\mathpzc{A}_{\bullet})$ the \emph{lifted complex} induced by $(A_{\bullet})$.
\end{theorem}

As in \eqref{eq:lifted_Green_template}, since $\mathpzc{A}_{\alpha} - A_{\alpha}$ is of zero order and class, the lifted operators $\mathpzc{A}_{\alpha}$ and their adjoints $\mathpzc{A}_{\alpha}^*$ inherit the Green’s formula without additional boundary terms:
\beq
\bra \mathpzc{A}_{\alpha}\psi,\eta\ket
= \bra\psi,\mathpzc{A}^*_{\alpha}\eta\ket
+ \bra B_{\alpha}\psi,B_{\alpha}^*\eta\ket.
\label{eq:Green_forumula_correction}
\eeq
As a byproduct, by translating the relations \eqref{eq:Dirichelt_examples_defining}, we find that in the Neumann case, for every $\alpha \in \Nzero \cup \{-1\}$:
\beq 
\begin{aligned}
(1) \quad & \mathpzc{A}_{\alpha}^* \, \mathpzc{A}_{\alpha+1}^* = 0 \;\;\text{on } \ker(B_{\alpha+1}^*), \\[4pt]
(2) \quad & \ker B^*_{\alpha+1} \subseteq \ker(B^*_{\alpha}\mathpzc{A}_{\alpha+1}),
\end{aligned}
\label{eq:lifted_adjoint_relationsNeum} 
\eeq 
whereas in the Dirichlet case: 
\beq 
\begin{aligned}
(1) \quad & \mathpzc{A}_{\alpha}^* \, \mathpzc{A}_{\alpha+1}^* = 0 \;\;\text{identically}, \\[4pt]
(2) \quad & \ker B_{\alpha-1} \subseteq \ker(B_{\alpha}\mathpzc{A}_{\alpha-1}).
\end{aligned}
\label{eq:lifted_adjoint_relations} 
\eeq
Hence, in this simplified setting, the explicit form of the Hodge decompositions obtained from \thmref{thm:hodge_like_corrected_complexD} by substituting \eqref{eq:lifted_dirichet_specfic} becomes the one in \secref{sec:Hodge_intro_NN} and \secref{sec:Hodge_intro_DD}: 

\begin{theorem}[Hodge decomposition -- Neumann]
\label{thm:Hodge_decompositionTemNeu}
In the Neumann case, for every $\alpha < \alpha_0$, there is a topologically direct, $L^{2}$-orthogonal decomposition of Fréchet spaces:
\beq
\Gamma(\bbE_{\alpha+1}) =
\lefteqn{\overbrace{\phantom{\image{(\mathpzc{A}_{\alpha})} \oplus \module_{\N}^{\alpha+1}}}^{\ker(\mathpzc{A}_{\alpha+1})}}
\image{(\mathpzc{A}_{\alpha})} \oplus 
\underbrace{\module_{\N}^{\alpha+1} \oplus \image{(\mathpzc{A}^*_{\alpha+1}|_{\ker B_{\alpha+1}^*})}}_{\ker (\mathpzc{A}_{\alpha}^*|_{\ker B_{\alpha}^*})}.
\label{eq:Hodge_introTemNeu}
\eeq
where the cohomology group $\module_{\N}^{\alpha+1} := \ker(\mathpzc{A}_{\alpha+1}, \mathpzc{A}_{\alpha}^*, B^*_{\alpha})$ is finite-dimensional and satisfies: 
\[
\module_{\N}^{\alpha+1} \simeq \ker(\mathpzc{A}_{\alpha+1}) \big/ \image(\mathpzc{A}_{\alpha}) \simeq \ker(\mathpzc{A}_{\alpha}^*|_{\ker{B_{\alpha}^*}}) \big/ \image(\mathpzc{A}_{\alpha+1}^*|_{\ker B_{\alpha+1}^*}),
\]
and moreover refines into an expression:
\beq 
\module_{\N}^{\alpha+1} = \ker(A_{\alpha+1}, \cttbP_{\alpha-1}A_\alpha^*, B^*_{\alpha}),
\label{eq:cohomology_expressionNeu}
\eeq 
where $\cttbP_{\alpha-1}$ is orthogonal projection into $\ker(\fbA^*_{\alpha-1},B_{\alpha-1}^*)$. The $L^{2}$-orthogonal projections onto the summands in \eqref{eq:Hodge_introTemNeu} are Green operators of order and class zero.
\end{theorem}

\begin{theorem}[Hodge decomposition -- Dirichlet]
\label{thm:Hodge_decompositionTem}
In the Dirichlet case, for every $\alpha < \alpha_0$, there is a topologically direct, $L^{2}$-orthogonal decomposition of Fréchet spaces:
\beq
\Gamma(\bbE_{\alpha+1}) =
\lefteqn{\overbrace{\phantom{\image{(\mathpzc{A}_{\alpha}|_{\ker B_{\alpha}})} \oplus \module_{\D}^{\alpha+1}}}^{\ker(\mathpzc{A}_{\alpha+1}|_{\ker B_{\alpha+1}})}}
\image{(\mathpzc{A}_{\alpha}|_{\ker B_{\alpha}})} \oplus 
\underbrace{\module_{\D}^{\alpha+1} \oplus \image{\mathpzc{A}^*_{\alpha+1}}}_{\ker (\mathpzc{A}_{\alpha}^*)}.
\label{eq:Hodge_introTem}
\eeq
where the cohomology group $\module_{\D}^{\alpha+1} := \ker(\mathpzc{A}_{\alpha+1}, \mathpzc{A}_{\alpha}^*, B_{\alpha+1})$ is finite-dimensional, satisfies
\[
\module_{\D}^{\alpha+1} \simeq \ker(\mathpzc{A}_{\alpha+1}|_{B_{\alpha+1}}) \big/ \image(\mathpzc{A}_{\alpha}|_{\ker{B_{\alpha}}}) \simeq \ker(\mathpzc{A}_{\alpha}^*) \big/ \image(\mathpzc{A}_{\alpha+1}^*),
\]
and moreover refines into an expression:
\beq 
\module_{\D}^{\alpha+1} = \ker(A_{\alpha+1}, \cttbP_{\alpha-1}A_\alpha^*, B_{\alpha+1}),
\label{eq:cohomology_expression}
\eeq 
where $\cttbP_{\alpha-1}$ is orthogonal projection into $\ker(\fbA^*_{\alpha-1},B_{\alpha-1}^*)$. The $L^{2}$-orthogonal projections onto the summands in \eqref{eq:Hodge_introTem} are Green operators of order and class zero.
\end{theorem}

The cohomological formulations in \thmref{thm:compatibility_nn}---\thmref{thm:compatibility_dd} then simplifies to: 
\begin{theorem}[Cohomological formulation -- Neumann]
\label{thm:cohomology_Neumann}
Given $\eta \in \Gamma(\bbE_{\alpha+1})$, the system
\[
\begin{aligned}
&A_\alpha \omega = \eta, \qquad \mathpzc{A}^*_{\alpha-1}\omega = 0 
\qquad &\text{in } M, \\[3pt]
&B^*_{\alpha-1}\omega = 0 
\qquad &\text{on } \dM
\end{aligned}
\]
admits a solution $\omega\in\Gamma(\bbE_{\alpha})$ if and only if: 
\[
\mathpzc{A}_{\alpha+1}\eta = 0, \qquad \eta \perp_{L^2} \module^{\alpha+1}_{\NN}.
\]
The solution is unique modulo $\module_{\NN}^\alpha$.
\end{theorem}
\begin{theorem}[Cohomological formulation -- Dirichlet]
\label{thm:cohomology_dirichlet}
Given $\eta \in \Gamma(\bbE_{\alpha+1})$, the system
\[
\begin{aligned}
&A_\alpha \omega = \eta, \qquad \mathpzc{A}^*_{\alpha-1}\omega = 0 
\qquad &\text{in } M, \\[3pt]
&B_{\alpha}\omega = 0 
\qquad &\text{on } \dM
\end{aligned}
\]
admits a solution $\omega\in\Gamma(\bbE_{\alpha})$ if and only if: 
\[
\mathpzc{A}_{\alpha+1}\eta = 0, \qquad B_{\alpha+1}\eta = 0, \qquad \eta \perp_{L^2} \module^{\alpha+1}_{\D}.
\]
The solution is unique modulo $\module_{\D}^\alpha$.
\end{theorem}

Finally, the notion of a disrupted complex \defref{def:finite_elliptic_pre_complex} in this simplified situation reduces to the following. We state this here only for Dirichlet conditions, as the statement for the Neumann case is completely analogous:
\begin{definition}[Disrupted elliptic pre-complex]
\label{def:disrupted_pre_complexTem}
A diagram $(A_{\bullet})$ as in \eqref{eq:elliptic_complex_diagram} is called a \emph{disrupted} $\alpha_0$-\emph{elliptic pre-complex} (based on Dirichlet conditions) if $A_{\alpha}=0$ for all $\alpha>\alpha_0\in\Nzero$, and all requirements of \defref{def:elliptic_pre_complexTem} are satisfied, except that overdetermined ellipticity fails at the penultimate level:
\[
A_{\alpha_0}\oplus A_{\alpha_0-1}^*\oplus B_{\alpha_0}\qquad \text{is not overdetermined elliptic.}
\]
\end{definition}

The disrupted case theorems \thmref{thm:disrubted_elliptic_pre_complex} then reads: 
\begin{proposition}
\label{prop:disrupted_validTem}
All results pertinent to elliptic pre-complexes hold in the disrupted case, except for the finite dimensionality of $\module^{\alpha_0}_{\DD}$ when $\alpha = \alpha_0-1$. 
\end{proposition}

\subsection{Technical proofs} 
\label{sec:technical_template}
%

First, we prove \propref{prop:adapted_examples}, namely that under the conditions outlined in \eqref{eq:A_pallete} and \eqref{eq:supplement_operators_examples}, the operator $\bD_{\alpha}$ is an adapted Green system, $\bD_{\alpha}^*$ is its adapted adjoint, together with the boundary systems $\bB_{\alpha}$ and $\bB^*_{\alpha}$ as defined in \defref{def:normal_system_of_trace_operators2}.  

For the fact that $\bD_{\alpha}$  is an adapted Green system, by the list of assumptions under \eqref{eq:A_pallete}, it remains to verify two things. The fist is that the sharp tuples of $\bD_{\alpha}$ are indeed of the required from, and the second is that the Green's formula \eqref{eq:integration_adapted_refined} holds for every $\Psi \in \Gamma(\bbF_{\alpha}; \bbG_{\alpha})$ and $\Theta \in \Gamma(\bbF_{\alpha+1}; \bbG_{\alpha+1})$:  
\beq
\bra \bD_{\alpha} \Psi, \Theta \ket = \bra \Psi, \bD^*_{\alpha} \Theta \ket + \bra \bB_{\alpha} \Psi, \bB^*_{\alpha} \Theta \ket.
\label{eq:integration_example2} 
\eeq
We leave the identification of the sharp tuples to the next proposition, as we will have to analyze them in any case when addressing the order reduction properties, and first prove that \eqref{eq:integration_example2} indeed holds:
\begin{PROOF}{\propref{prop:adapted_examples}}
Writing  
\[
\Psi = (\psi; \lambda), \quad \Theta = (\theta; \rho),
\]
where $\psi \in \Gamma(\bbF_{\alpha})$, $\theta \in \Gamma(\bbF_{\alpha+1})$, $\lambda \in \Gamma(\bbG_{\alpha})$, and $\rho \in \Gamma(\bbG_{\alpha+1})$, the operations of the systems in \eqref{eq:A_pallete} and \eqref{eq:supplement_operators_examples} read:  
\[
\begin{aligned}
&\bD_{\alpha}(\psi;\lambda) = (A_{D,\alpha}\psi; T_{\alpha}\psi + Q_{K,\alpha} \lambda), 
\qquad 
&&\bD^*_{\alpha}(\theta;\rho) = (A^*_{D,\alpha} \theta; Q^*_{K,\alpha} \rho),
\\[4pt] 
&\bB_{\alpha}(\psi;\lambda) = (0; B_{\alpha}\psi), 
\qquad 
&&\bB^*_{\alpha}(\theta;\rho) = (0; B_{\alpha}^* \theta + S^*_{\alpha} \rho).
\end{aligned}  
\]
Applying the specified Green’s formula for $A_{\alpha}$ in \eqref{eq:integration_example}, the fact that $D_{\alpha}$ is tensorial and hence integrates by parts into $D^*_{\alpha}$ without boundary terms, and using that $Q_{K,\alpha}^*$ is the adjoint of $Q_{K,\alpha}$, we find:  
\[
\begin{split}
\bra \bD_{\alpha} \Psi, \Theta \ket 
&= \bra A_{D,\alpha} \psi, \theta \ket 
+ \bra T_{\alpha} \psi, \rho \ket 
+ \bra Q_{K,\alpha} \lambda, \rho \ket
\\
&= \bra \psi, A_{D,\alpha}^* \theta \ket 
+ \bra B_{\alpha} \psi, B^*_{\alpha} \theta \ket 
+ \bra S_{\alpha} B_{\alpha} \psi, \rho \ket
+ \bra \lambda, Q_{K,\alpha}^* \rho \ket
\\
&= \bra \psi, A_{D,\alpha}^* \theta \ket 
+ \bra B_{\alpha} \psi, B^*_{\alpha} \theta + S^*_{\alpha} \rho \ket 
+ \bra \lambda, Q_{K,\alpha}^* \rho \ket
\\
&= \Brk{ \bra \psi, A_{D,\alpha}^* \theta \ket + \bra \lambda, Q_{K,\alpha}^* \rho \ket } 
+ \Brk{ \bra B_{\alpha} \psi, B^*_{\alpha} \theta + S^*_{\alpha} \rho \ket }
\\
&= \bra \Psi, \bD^*_{\alpha} \Theta \ket + \bra \bB_{\alpha} \Psi, \bB^*_{\alpha} \Theta \ket.
\end{split}
\]
In the second step, we expanded $T_{\alpha}$ as in \eqref{eq:trace_operator_pallete}, in the third step we integrated by parts $S_{\alpha}$, and in the final step we rearranged the terms so that they fit into \eqref{eq:integration_example2}.

\end{PROOF} 

We next prove \thmref{thm:elliptic_pre_complex_exmaples}, which also includes the identification of the sharp tuples associated with the systems $\bD_{\alpha}$. 

The first step is to translate the purely algebraic order-reduction properties listed in \eqref{eq:order_reduction_conditions_examples} into the corresponding ``abstract'' balance conditions required by \defref{def:NN_segment} and \defref{def:DD_segment}. 

To this end, we observe that each $\bD_{\alpha}$ satisfies the assumption $r_{k,\alpha} \leq m_{\alpha}$ (by construction of $T_{\alpha}$), with the associated orders given by \eqref{eq:corresponding_orders}:
\[
\begin{aligned}
\begin{pmatrix} 
m_{\alpha} & 0 \\[3pt] 
\tau_{k,\alpha} & \sigma_{k,\alpha}^{l} 
\end{pmatrix}, 
\qquad r_{k,\alpha} \leq m_{\alpha}.
\end{aligned}
\]
Since $m_{\alpha} - \tau_{k,\alpha} > 0$ by construction (the quantities $\tau_{k,\alpha}$ correspond to the orders of $T_{\alpha}$, generated by composing an operator of order zero with the system of trace operators of order $m_{\alpha}$, namely $B_{\alpha}$), we may choose in \eqref{eq:strict_mapping_property} for every $1<p<\infty$: 
\[
t^{l} = \underset{k}{\max} \, \sigma_{k,\alpha}^{l} - \tfrac{1}{p}.
\]
Hence, the operators in the lower right corner of \eqref{eq:AD_pallete} map into $L^{2}$-sections over the boundary, while those in the lower left corner map compactly into the same space. 

It follows that the system overall satisfies the sharp mapping property
\beq
\bD_{\alpha}: 
W^{m_{\alpha}, (t^{l}) + 1/p}_{p}(\bbF_{\alpha}; \bbG_{\alpha}) 
\;\rightarrow\;
L^{p}(\bbF_{\alpha+1}; \bbG_{\alpha+1}),
\label{eq:app_mapping_properties}
\eeq
which is precisely the sharp mapping property required in the definition of an adapted Green system, \defref{def:adapting_operator}.


Now, as required in \defref{def:NN_segment}--\defref{def:DD_segment}, take $\bP$ to be a balance for $\bD_{\alpha}$ in the $\NN$ case, and with respect to $\bB_{\alpha}$ in the $\DD$ case. In both settings the mapping property \eqref{eq:app_mapping_properties} is sharp, and so $\bP$ satisfies the reverse-direction mapping properties:
\beq
\begin{aligned}
\bP: L^{p}(\bbF_{\alpha+1}; \bbG_{\alpha+1}) \to W^{m_{\alpha}, (t^{l}) + 1/p}_{p}(\bbF_{\alpha}; \bbG_{\alpha}).
\end{aligned}
\label{eq:balance_mapping_properties}
\eeq

%

\begin{proposition}
\label{prop:order_reduction_template}
In the above setting, the algebraic order-reduction properties in \eqref{eq:order_reduction_conditions_examples} collectively translate into each of the required order-reduction properties in \defref{def:NN_segment}--\defref{def:DD_segment}.
\end{proposition}

\begin{proof}
In both cases, the composition $\bD_{\alpha+1} \bD_{\alpha}$ is given by
\[
\bD_{\alpha+1} \bD_{\alpha} = \begin{pmatrix} 
A_{\alpha+1} A_{\alpha} & 0 \\ 
T_{\alpha+1} A_{\alpha} + Q_{\alpha+1} T_{\alpha} & Q_{\alpha+1} Q_{\alpha} 
\end{pmatrix}.
\]
By construction, and \eqref{eq:order_reduction_conditions_examples}, this system has class at most $m_{\alpha}$. Combining this fact with the other items in \eqref{eq:order_reduction_conditions_examples}, we deduce that $\bD_{\alpha+1} \bD_{\alpha}$ satisfies the lenient mapping property
\[
\bD_{\alpha+1} \bD_{\alpha}: W^{m_{\alpha}, (t^{l}) + 1/2}_{2}(\bbF_{\alpha}; \bbG_{\alpha}) 
\to L^{2}(\bbF_{\alpha+2}; \bbG_{\alpha+2}).
\]
Composing this with the lenient mapping property of any balance $\bP$ for $\bD_{\alpha}$ as in \eqref{eq:balance_mapping_properties}, we obtain
\[
\bD_{\alpha+1} \bD_{\alpha} \bP: L^{2}(\bbF_{\alpha+1}; \bbG_{\alpha+1}) 
\to L^{2}(\bbF_{\alpha+2}; \bbG_{\alpha+2}).
\]
It therefore follows from \propref{prop:G0_criteria} that $\bD_{\alpha+1} \bD_{\alpha} \bP \in \OP(0,0)$.

\end{proof}

Since the order-reduction properties are satisfied,  the proof of \thmref{thm:elliptic_pre_complex_exmaples} will be complete once we establish:   

\begin{proposition}
\label{prop:overdetermined_ellipticity_examples}
In the above setting, the overdetermined ellipticity in \eqref{eq:overdetermined_ellipticies_examples_refined} collectively translate into each of the required conditions in \defref{def:NN_segment}--\defref{def:DD_segment}.  
\end{proposition}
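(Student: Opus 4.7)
The plan is to translate everything into the weighted-symbol language of \secref{sec:douglas_nirenberg_lop} and then reduce the verification to the two hypotheses in \eqref{eq:overdetermined_ellipticies_examples_refined}. Because $D_{\alpha}$ in \eqref{eq:AD_pallete} is tensorial while $A_{\alpha}$ has positive order $m_{\alpha}$, and because $K_{\alpha}$ in \eqref{eq:QC_pallete} is of order zero on the boundary, both $A_{D,\alpha}-A_{\alpha}$ and $Q_{K,\alpha}-Q_{\alpha}$ extend compactly between the Sobolev spaces determined by the basic tuples coming from \eqref{eq:app_mapping_properties}. Hence \propref{prop:lower_order_correction_overdetermined_ellipticity} lets me replace $A_{D,\alpha}$ by $A_{\alpha}$ and $Q_{K,\alpha}$ by $Q_{\alpha}$ throughout, reducing the question to systems in which the only off-diagonal contribution is the trace operator $T_{\alpha}=S_{\alpha}B_{\alpha}+M_{\alpha}W_{\alpha}$.

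Next, by \thmref{thm:overdetermined_synetheic} it suffices to establish injectivity of the weighted symbol of the reduced systems in \defref{def:NN_segment}(i.a)--\defref{def:DD_segment}(i.a). The weighted interior symbol of $\bD_{\alpha}\oplus\bD^*_{\alpha-1}$ only sees the top-left blocks $A_{\alpha}\oplus A^*_{\alpha-1}$, so its injectivity is immediate from the interior ellipticity contained in the first hypothesis of \eqref{eq:overdetermined_ellipticies_examples_refined}. For the weighted boundary symbol, I would invoke \thmref{thm:lopatnskii_shapiro} and unpack the weighted initial condition map \eqref{eq:Xi_map2}. Writing the argument as a pair $(\psi(s);\lambda)$, the map produces two types of outputs: those arriving through $B^*_{\alpha-1}$ (or $B_{\alpha}\oplus W_{\alpha}$ in the Dirichlet case) together with the boundary symbol of $A_{\alpha}$ and $A^*_{\alpha-1}$, and those arriving through $S_{\alpha}^*$, $M_{\alpha}^*$ in the pure pseudodifferential boundary slot coming from $Q_{\alpha}\oplus Q^*_{\alpha-1}\oplus M^*_{\alpha-1}$ (or $Q_{\alpha}\oplus Q^*_{\alpha-1}$). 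The decomposition $T_{\alpha}=S_{\alpha}B_{\alpha}+M_{\alpha}W_{\alpha}$ is exactly what is needed to show that the coupling between the $\psi$- and $\lambda$-components of the initial condition is upper triangular, so that injectivity of the full map follows from injectivity on each diagonal block. Each block is now precisely the Lopatinskii--Shapiro condition for one of the two systems assumed overdetermined elliptic in \eqref{eq:overdetermined_ellipticies_examples_refined}.

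Once (i.a) is established in both pictures, condition (i.b) follows almost formally. Using the homomorphism rule for weighted symbols together with \thmref{thm:full_Green_compositon_rules} (both interior and boundary parts), one has $\sigma(\bD^*_{\alpha}\bD_{\alpha})=\sigma(\bD^*_{\alpha})\circ\sigma(\bD_{\alpha})$ and analogously $\sigma(\bB^*_{\alpha}\bD_{\alpha})=\sigma(\bB^*_{\alpha})\circ\sigma(\bD_{\alpha})$; together with the already known injectivity of $\sigma(\bD_{\alpha})$ and of $\sigma(\bD^*_{\alpha-1}\oplus\bB^*_{\alpha-1})$ (respectively $\sigma(\bD^*_{\alpha-1}\oplus\bB_{\alpha})$), a short diagram chase gives injectivity of the composite weighted symbol, hence of its order-reduced counterpart.

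The principal obstacle I expect is step two: precisely identifying the block structure of the weighted boundary symbol and verifying that the $S_{\alpha}$-- and $M_{\alpha}$--couplings in $T_{\alpha}$ are compatible with the decoupled overdetermined ellipticities of \eqref{eq:overdetermined_ellipticies_examples_refined}. Here one has to be careful because in the Dirichlet case the class of $T_{\alpha}$ can exceed $m_{\alpha}$, which is why \eqref{eq:overdetermined_ellipticies_examples_refined} bifurcates according to whether $r_{k,\alpha}\le m_{\alpha}$ or $r_{k,\alpha}>m_{\alpha}$; the two cases correspond to whether the trace $W_{\alpha}$ needs to be included in the boundary part of the symbol or can be absorbed into the ordinary differential equation on $\overline{\bbR}_+$ via the Lopatinskii--Shapiro setup of \thmref{thm:lopatnskii_shapiro}. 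This bookkeeping, though routine, is the only nontrivial piece; once the decoupling is in place the rest of the verification is mechanical.
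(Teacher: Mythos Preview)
Your overall strategy matches the paper's: reduce to weighted symbols, treat the interior part via the first hypothesis in \eqref{eq:overdetermined_ellipticies_examples_refined}, and handle the boundary part by a triangular decoupling in the Lopatinskii--Shapiro data. Your treatment of (i.b) via symbol composition is also fine; the paper does the same thing, referring to the argument in \cite[Prop.~4.2]{KL23}.

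However, you misidentify the mechanism that produces the triangularity in the Neumann case. The coupling you need to break is \emph{not} the one carried by $T_{\alpha}=S_{\alpha}B_{\alpha}+M_{\alpha}W_{\alpha}$. Writing out the system $\bD_{\alpha}\oplus\bD^*_{\alpha-1}\oplus\bB^*_{\alpha-1}$ explicitly using \eqref{eq:supplement_operators_examples}, the boundary outputs are $T_{\alpha}\psi+Q_{\alpha}\lambda$, $Q^*_{\alpha-1}\lambda$, $M^*_{\alpha-1}\lambda$, and $B^*_{\alpha-1}\psi+S^*_{\alpha-1}\lambda$. The obstruction to triangularity is the last term: $S^*_{\alpha-1}$ couples $\lambda$ into the $B^*_{\alpha-1}$ slot, so no equation is pure in $\psi$. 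The paper's key observation is that $S^*_{\alpha-1}$ is a pseudodifferential operator of order $0$, while every other entry in that row has order at least $1$; hence $S^*_{\alpha-1}$ is a compact perturbation and can be discarded by \propref{prop:lower_order_correction_overdetermined_ellipticity}. After this, $\sigma(B^*_{\alpha-1})\psi(0)=0$ is pure in $\psi$, the first hypothesis forces $\psi\equiv 0$, and substituting back leaves exactly the second hypothesis in $\lambda$.

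The decomposition $T_{\alpha}=S_{\alpha}B_{\alpha}+M_{\alpha}W_{\alpha}$ is what makes the \emph{Dirichlet} case work, and for a different reason: there the boundary operator $\bB_{\alpha}$ imposes $B_{\alpha}\psi=W_{\alpha}\psi=0$ directly (pure in $\psi$), and the factorization of $T_{\alpha}$ through $B_{\alpha}\oplus W_{\alpha}$ via order-zero maps ensures that $T_{\alpha}$ contributes nothing new at the symbol level once those conditions are in force. Your final paragraph about the bifurcation on $r_{k,\alpha}$ is on the right track for this case.
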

\begin{proof}
Recall that the overdetermined ellipticity conditions in \defref{def:NN_segment}--\defref{def:DD_segment} consist of two sets:
\[
\begin{aligned}
&\NN: \quad && \bD_{\alpha} \oplus \bD_{\alpha-1}^* \oplus \bB^*_{\alpha-1}, \\ 
&\DD: \quad && \bD_{\alpha} \oplus \bD_{\alpha-1}^* \oplus \bB_{\alpha},
\end{aligned}
\]
and
\[
\begin{aligned}
&\NN: \quad && \bD_{\alpha}^* \bD_{\alpha} \oplus \bB^*_{\alpha} \bD_{\alpha} \oplus \bD_{\alpha-1}^* \oplus \bB^*_{\alpha-1}, \\ 
&\DD: \quad && \bD_{\alpha}^* \bD_{\alpha} \oplus \bD_{\alpha-1}^* \oplus \bB_{\alpha}.
\end{aligned}
\]
If the first set is overdetermined elliptic for all $\alpha \in \Nzero$, then the second set automatically satisfies the same property. Indeed, composing the overdetermined ellipticity at levels $\alpha$ and $\alpha+1$ yields operators whose leading terms preserve ellipticity, while the product $\bD_{\alpha+1}\bD_{\alpha}$ contributes only lower-order terms. Such corrections are negligible for overdetermined ellipticity by \propref{prop:lower_order_correction_overdetermined_ellipticity}. We leave the argument to the reader.


We now establish the overdetermined ellipticity of the first set. By expanding the definitions of $\bD_{\alpha}, \bD_{\alpha}^*, \bB_{\alpha}$, and $\bB^*_{\alpha}$ given after \eqref{eq:A_pallete}, the required systems become explicitly:
\beq
\begin{aligned}
&\NN: \quad && 
\begin{pmatrix} 
A_{\alpha} \oplus A_{\alpha-1}^* & 0 \\[3pt] 
T_{\alpha} \oplus B^*_{\alpha-1} & Q_{\alpha} \oplus S^{*}_{\alpha} \oplus Q_{\alpha-1}^*
\end{pmatrix}, 
\\[5pt]  
&\DD: \quad && 
\begin{pmatrix} 
A_{\alpha} \oplus A_{\alpha-1}^* & 0 \\[3pt] 
T_{\alpha} \oplus B_{\alpha} & Q_{\alpha} \oplus Q_{\alpha-1}^*
\end{pmatrix}.
\end{aligned}
\label{eq:required_overdetermined_ellipticies_examples} 
\eeq
In other words, the corresponding maps are
\[
\begin{aligned}
&\NN: \quad && (\psi;\lambda) \mapsto 
\big( A_{\alpha} \psi, A_{\alpha-1}^* \psi;\,
T_{\alpha}\psi + Q_{\alpha}\lambda,\,
B^*_{\alpha-1}\psi + S^*_{\alpha}\lambda,\,
Q^*_{\alpha-1}\lambda \big), \\[5pt]
&\DD: \quad && (\psi;\lambda) \mapsto 
\big( A_{\alpha}\psi, A_{\alpha-1}^*\psi;\,
T_{\alpha}\psi + Q_{\alpha}\lambda,\,
B_{\alpha}\psi,\,
Q^*_{\alpha-1}\lambda \big).
\end{aligned}
\]

We prove that, without loss of generality in the $\NN$ case, the above system is equivalent (for the purpose of overdetermined ellipticity) to
\beq
\begin{pmatrix} 
A_{\alpha} \oplus A_{\alpha-1}^* & 0 \\[3pt]
T_{\alpha} \oplus B^*_{\alpha-1} & Q_{\alpha} \oplus 0 \oplus Q_{\alpha-1}^*
\end{pmatrix}.
\label{eq:system_overdetermined_examples_proof}
\eeq
The difference between \eqref{eq:required_overdetermined_ellipticies_examples} and \eqref{eq:system_overdetermined_examples_proof} is the additional term
\[
\begin{pmatrix} 
0 & 0 \\[3pt]
T_{\alpha} & S^*_{\alpha}
\end{pmatrix}.
\]
With respect to the sharp tuples defining the mapping property in \eqref{eq:app_mapping_properties}, this term induces a compact map: the operator $T_{\alpha}$ has order strictly lower than $m_{\alpha}$ by construction, while $S^*_{\alpha}$ is a pseudodifferential operator of order zero on the boundary. Hence, this contribution can be regarded as a lower-order term (in the varying order sense):
\[
\begin{split}
\begin{pmatrix} 
A_{\alpha} \oplus A_{\alpha-1}^* & 0 \\ 
T_{\alpha} \oplus B^*_{\alpha-1} & Q_{\alpha} \oplus S^{*}_{\alpha} \oplus Q_{\alpha-1}^*
\end{pmatrix}
&=
\begin{pmatrix} 
A_{\alpha} \oplus A_{\alpha-1}^* & 0 \\ 
0 \oplus B^*_{\alpha-1} & Q_{\alpha} \oplus 0 \oplus Q_{\alpha-1}^*
\end{pmatrix}
+
\begin{pmatrix} 
0 & 0 \\ 
T_{\alpha} & S^{*}_{\alpha}
\end{pmatrix}.
\end{split}
\]
Therefore, by \propref{prop:lower_order_correction_overdetermined_ellipticity}, this lower-order term does not affect the overdetermined ellipticity of the system. 

We therefore observe that effectively, the operations the interior and boundary sections decouple, and we are left with the overdetermined ellipticities in \eqref{eq:overdetermined_ellipticies_examples_refined}, which, for reference, read
\beq
\begin{pmatrix} 
A_{\alpha} \oplus A_{\alpha-1}^* & 0 \\[3pt] 
B^*_{\alpha-1} & 0 
\end{pmatrix}, 
\qquad 
\begin{pmatrix} 
0 & 0 \\[3pt] 
0 & Q_{\alpha} \oplus Q^*_{\alpha-1}
\end{pmatrix},
\label{eq:system_overdetermined_examples_proofII}
\eeq
as required. The argument for the $\DD$ case is analogous.

\end{proof}
\begin{PROOF}{\propref{prop:Dirichlet_examples}}
We prove the claim by induction on $\alpha \in \Nzero$. For $\alpha = -1$, it holds trivially since $\bD_{0} = \fbD_{0}$. 

Assume now that the statement is valid for some $\alpha \in \Nzero$.  
By \thmref{thm:corrected_complex}, the lifted operators are uniquely characterized by 
\[
\fbD_{\alpha+1}\fbD_{\alpha} = 0 
\quad \text{on } \ker \frakB_{\alpha}, 
\qquad 
\fbD_{\alpha+1} = \bD_{\alpha+1} 
\quad \text{on } \scrN(\fbD_{\alpha}^*).
\]

We claim that
\[
\widetilde{\fbD}_{\alpha+1} =
\begin{pmatrix} 
\mathpzc{A}_{\alpha+1} & 0 \\ 
T_{\alpha+1} & \mathpzc{Q}_{\,\,\alpha+1} 
\end{pmatrix}
\]
also satisfies these properties. Hence, by uniqueness, $\widetilde{\fbD}_{\alpha+1} = \fbD_{\alpha+1}$, and the claim follows.

For any $\Psi \in \ker \bB_{\alpha}$, we have
\[
\fbD_{\alpha+1} \fbD_{\alpha} \Psi = 0.
\]
Writing $\Psi = (\psi; \lambda)$, the condition $\Psi \in \ker \bB_{\alpha}$ implies that $\lambda$ is arbitrary while $\psi$ satisfies $B_{\alpha}\psi = 0$; in particular, $T_{\alpha}\psi = 0$ by \eqref{eq:trace_operator_pallete}.  

Expanding the relation $\fbD_{\alpha+1} \fbD_{\alpha} \Psi = 0$ and using the induction hypothesis gives
\[
\begin{split} 
\fbD_{\alpha+1} \fbD_{\alpha} \Psi 
&= 
\begin{pmatrix} 
\mathpzc{A}_{\alpha+1} & \mathpzc{K}_{\alpha+1} \\
\mathpzc{T}_{\alpha+1} & \mathpzc{Q}_{\,\,\alpha+1} 
\end{pmatrix}
\begin{pmatrix} 
\mathpzc{A}_{\alpha} & 0 \\
T_{\alpha} & \mathpzc{Q}_{\,\,\alpha} 
\end{pmatrix}
(\psi; \lambda) \\[4pt]
&=
\left( 
\mathpzc{A}_{\alpha+1}\mathpzc{A}_{\alpha}\psi 
+ \mathpzc{K}_{\alpha+1}\mathpzc{Q}_{\,\,\alpha}\lambda;\,
\mathpzc{T}_{\alpha+1}\mathpzc{A}_{\alpha}\psi 
+ \mathpzc{Q}_{\,\,\alpha+1}\mathpzc{Q}_{\,\,\alpha}\lambda 
\right) = 0.
\end{split}
\]

Taking $\lambda = 0$ and $\psi \in \ker B_{\alpha}$ gives $\mathpzc{A}_{\alpha+1}\mathpzc{A}_{\alpha}\psi = 0$.  
Similarly, taking $\psi = 0$ and $\lambda$ arbitrary yields $\mathpzc{Q}_{\,\,\alpha+1}\mathpzc{Q}_{\,\,\alpha}\lambda = 0$.  
Moreover, for such $\psi$, the induction hypothesis and the condition $\bB_{\alpha+1} \fbD_{\alpha} = 0$ on $\ker \bB_{\alpha}$ imply $T_{\alpha+1}\mathpzc{A}_{\alpha}\psi = 0$.  
Therefore, for $(\psi; \lambda) \in \ker \bB_{\alpha}$,
\[
\widetilde{\fbD}_{\alpha+1}\fbD_{\alpha}\Psi =
\begin{pmatrix} 
\mathpzc{A}_{\alpha+1} & 0 \\ 
T_{\alpha+1} & \mathpzc{Q}_{\,\,\alpha+1} 
\end{pmatrix}
\begin{pmatrix} 
\mathpzc{A}_{\alpha} & 0 \\ 
T_{\alpha} & \mathpzc{Q}_{\,\,\alpha} 
\end{pmatrix}
(\psi; \lambda)
=
\left( 
\mathpzc{A}_{\alpha+1}\mathpzc{A}_{\alpha}\psi;\,
\mathpzc{Q}_{\,\,\alpha+1}\mathpzc{Q}_{\,\,\alpha}\lambda 
\right) = 0.
\]

By the induction hypothesis, since $\mathpzc{C}_{\alpha}=0$ and $\mathpzc{K}_{\alpha}=0$, the adapted adjoint of $\fbD_{\alpha}$ is
\[
\fbD_{\alpha}^* = 
\begin{pmatrix}
\mathpzc{A}_{\alpha}^* & 0 \\ 
0 & \mathpzc{Q}_{\,\,\alpha}^*
\end{pmatrix}.
\]
Thus, the condition $\fbD_{\alpha+1} = \bD_{\alpha+1}$ on $\scrN(\fbD_{\alpha}^*)$ becomes
\[
\mathpzc{A}_{\alpha+1} = A_{D,\alpha+1} 
\quad \text{on } \ker \mathpzc{A}_{\alpha}^*, 
\qquad 
\mathpzc{Q}_{\,\,\alpha+1} = Q_{K,\alpha+1} 
\quad \text{on } \ker \mathpzc{Q}_{\,\,\alpha}^*.
\]
Hence $\widetilde{\fbD}_{\alpha+1} = \bD_{\alpha+1}$ also on this space.  

\end{PROOF}

\section{Exterior covariant derivatives}
\label{sec:exterior_covariant_derivatives}
As promised in \secref{sec:examples_intro}, we now elaborate on several elliptic pre-complexes consisting of exterior covariant derivatives that fall within the framework of \eqref{eq:A_pallete}. In the Neumann case, we consider a broader family of examples generalizing those discussed in \secref{sec:exterior_intro}.
\subsection{Dirichlet picture}
\label{sec:DD_exterior_intro}
An elliptic pre-complex consisting of exterior covariant derivatives and based on Dirichlet conditions is obtained by fitting the following systems into the scheme of \eqref{eq:A_pallete} for the $\D$-case:
\beq
\bD_\alpha=\begin{pmatrix}\dU & 0 \\ 0 & 0\end{pmatrix}:\mymat{\Omega^{\alpha}(M;\bbU)\\\oplus\\0}\longrightarrow \mymat{\Omega^{\alpha+1}(M;\bbU)\\\oplus\\0}.
\label{eq:D_exterior_systems} 
\eeq
Specifically, for the operators in \eqref{eq:AD_pallete}, we recognize:
\[
A_{D,\alpha} = \dU.
\]
More explicitly, we have:
\[
A_{\alpha} = \dU, \qquad 
D_{\alpha} = 0, \qquad 
B_{\alpha} = \PtD, \qquad 
B^*_{\alpha} = \PnD, \qquad 
A_{\alpha}^* = \delU.
\]
Here, we set $D_{\alpha} = 0$, although, in principle, it can be any tensorial operation without affecting the validity of the theory. For instance, $D_{\alpha}$ could be taken as the tensorial operation arising from the connection difference $\nabla - \nabla^0$, where $\nabla^0$ is a reference connection.  

The required properties from \eqref{eq:A_pallete} hold immediately due to \eqref{eq:Green's_formula_exterior}, while the order-reduction properties in \eqref{eq:order_reduction_conditions_examples} follow directly from the relations in \eqref{eq:order_reduction_exterior}.

As for the required overdetermined ellipticities in \eqref{eq:overdetermined_ellipticies_examples_refined}, after computing the adapted adjoints and boundary systems in \eqref{eq:supplement_operators_examples}, these become:
\beq
\begin{pmatrix} 
\dU \oplus \delU & 0 \\ 
\PtD & 0 
\end{pmatrix}
\label{eq:DD_systems_overdetermined}
\eeq

where we note that $\delU=0$ on $\Omega^{0}(M;\bbU)$. 
\begin{proposition}
\label{prop:DD_exterior} 
The systems in \eqref{eq:DD_systems_overdetermined} are overdetermined elliptic. 
\end{proposition}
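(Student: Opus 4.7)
The plan is to verify overdetermined ellipticity by checking the two conditions of \thmref{thm:lopatnskii_shapiro}: injectivity of the interior symbol of $\dU \oplus \delU$, and injectivity of the associated weighted initial condition map on the space of decaying half-line solutions. Since $\dU$ and $\delU$ are the exterior covariant derivative and its formal adjoint on $\bbU$-valued forms, the principal symbols depend only on the cotangent covector $\xi \in T^*_xM$ and are tensored with the identity on the fiber $\bbU_x$. Explicitly, for $\psi \in \Lambda^\alpha T^*_xM \otimes \bbU_x$,
\[
\sigma(\dU)(x,\xi)\psi = \iota\,\xi \wedge \psi, \qquad \sigma(\delU)(x,\xi)\psi = -\iota\, i_{\xi^\sharp} \psi,
\]
so the computation reduces fiberwise to the scalar-valued case, where injectivity of $\sigma(d) \oplus \sigma(\delta)$ is immediate from the identity $i_{\xi^\sharp}(\xi \wedge \omega) + \xi \wedge i_{\xi^\sharp}\omega = |\xi|^2 \omega$.

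Next, I would set up the boundary symbol ODE as in \eqref{eq:overdeterminedE_lop}. At a point $x \in \dM$, decompose $\xi = \xi' + \xi_d \, dr$ and likewise split forms into tangential and normal parts $\psi = \psi^\frakt + dr \wedge \psi^\frakn$. The symbol equation
\[
\sigma(\dU \oplus \delU)(x, \xi' + i\partial_s\, dr)\psi(s) = 0
\]
becomes a first-order linear system of ODEs on $\overline{\bbR}_+$ with values in $\Lambda T^*_x M \otimes \bbU_x$, which, after substituting $\xi_d \mapsto i\partial_s$, reduces to an equation of the form $(-\partial_s^2 + |\xi'|^2)\psi(s) = 0$ componentwise (the standard computation for the Hodge Laplacian symbol). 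The decaying solutions are then $\psi(s) = e^{-|\xi'|s}\psi(0)$, and the initial condition $\sigma(\PtD)(x,\xi')\psi(0) = \psi^\frakt(0) = 0$ together with the ODE structure forces $\psi^\frakn(0) = 0$ as well, yielding $\psi \equiv 0$.

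The main obstacle is a bookkeeping one: the tangential/normal split of the symbol must be carried out carefully to show that the Dirichlet condition $\PtD \psi(0) = 0$, combined with the first-order ODE obtained by unpacking $\sigma(\dU)$ and $\sigma(\delU)$, cascades into the vanishing of the normal part as well. This is a routine computation, essentially identical to the scalar-valued case treated in classical Hodge theory \cite{Sch95b, Tay11b} and to the analogous Bianchi-form computation carried out in detail in \cite[Sec.~5]{KL23}; the fiber-bundle factor $\bbU$ is passive throughout, since the connection $\nabla$ enters only through lower-order terms that do not appear in the principal symbol. Consequently, the Lopatinskii--Shapiro condition is satisfied and \propref{prop:DD_exterior} follows.
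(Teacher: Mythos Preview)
Your approach is essentially the same as the paper's: both verify overdetermined ellipticity via the Lopatinskii--Shapiro criterion, first checking injectivity of the interior symbol of $\dU\oplus\delU$ and then analyzing the decaying half-line solutions under the boundary operator $\PtD$.

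One point of caution: the first-order system $\sigma(\dU\oplus\delU)(x,\xi'+\iota\partial_s\,dr)\psi(s)=0$ does not \emph{reduce} to the second-order equation $(-\partial_s^2+|\xi'|^2)\psi=0$; it only \emph{implies} it, so $\bbM_{x,\xi'}^+$ is a proper subspace of $\{e^{-|\xi'|s}\psi(0):\psi(0)\text{ arbitrary}\}$. The paper makes this explicit via the four-component decomposition $\psi=\psi_0+\xi'\wedge\psi_1+dr\wedge\psi_2+\xi'\wedge dr\wedge\psi_3$ (with each $\psi_j$ annihilated by $\ixi$ and $i_{\partial_r}$), which identifies $\bbM_{x,\xi'}^+$ directly as the one-parameter family $\psi_1(s)=-e^{-s}\omega_0$, $\psi_2(s)=\iota e^{-s}\omega_0$, and then the condition $\PtD\psi(0)=0$ visibly forces $\omega_0=0$. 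Your final paragraph shows you are aware that the first-order constraints (not just the second-order consequence) are what make the Dirichlet condition propagate to the normal part, and the reference to \cite[Sec.~5]{KL23} is on point; just be careful not to conflate the first-order system with its Laplacian square when writing this out.
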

Although straightforward, since this proposition demonstrates the simplest example of how the machinery introduced in \secref{sec:principle_symbol_green} is used to verify overdetermined ellipticity, we include it here:
\begin{proof}
Note that the system \eqref{eq:DD_systems_overdetermined} is in fact a Green operator belonging to $\OP(1,1)$, since $\dU \oplus \delU$ is of order $1$, and $\PtD$ is of order $0$ (one less than $1$) and class $1$. To calculate its symbols as described in \secref{sec:principle_symbol_green}, we follow \cite[Ch.~2.10]{Tay11a} and remark that the required interior symbols are:
\[
\sigma(\dU)(x, \xi) = \iota \xi \wedge, \qquad \sigma(\delU)(x, \xi) = -\iota \ixi.
\]
Thus, the interior symbol of $\dU \oplus \delU$ is:
\[
\sigma(\dU \oplus \delU)(x, \xi) = (\iota \xi \wedge) \oplus (-\iota \ixi),
\]
which is injective by standard multilinear algebra. 

Next, we verify the Lopatinski-Shapiro condition in \propref{prop:rud_lop}. Let $x \in \dM$ and $\xi'\in T_x^*\dM \setminus \{0\}$. The system of overdetermined Es at this stage becomes, for a function $s \mapsto \psi(s)$ taking values in $\mathbb{C} \otimes \Lambda^\alpha T^*_x M \oplus \bbU_x$:
\[
\begin{aligned}
& \iota \xi'\wedge \psi - dr \wedge \dot{\psi} = 0, \\
& \iota \ixi \psi - i_{\partial_r} \dot{\psi} = 0.
\end{aligned}
\]
Following \cite[Sec.~1.6]{Sch95b}, applied to $\bbU$-valued forms instead of scalar valued forms, we note how can we assume that $|\xi| = 1$ and decompose:
\[
\psi = \psi_0 + \xi'\wedge \psi_1 + dr \wedge \psi_2 + \xi'\wedge dr \wedge \psi_3,
\]
where $\ixi \psi_j = 0$ and $i_{\partial_r} \psi_j = 0$ for $j \in \{0, 1, 2, 3\}$. Using the relations $\xi'\wedge \xi'\wedge = 0$, $dr \wedge dr \wedge = 0$, and their adjunct counterparts $\idr\idr = 0$, $\ixi\ixi = 0$, the equations decouple as:
\[
\psi_0 \equiv 0, \qquad -\iota \psi_2 = \dot{\psi}_1, \qquad \iota \psi_1 = \dot{\psi}_2, \qquad \psi_3 \equiv 0.
\]
The solutions in $\bbM_{x,\xi}^+$ are thus of the form $\psi = \xi'\wedge \psi_1 + dr \wedge \psi_2$, where:
\[
\psi_1(s) = -e^{-s} \omega_0, \qquad \psi_2(s) = \iota e^{-s} \omega_0,
\]
with $\omega_0$ being an ``integration constant" satisfying $\ixi \omega_0 = 0$ and $i_{\partial_r} \omega_0 = 0$.

Calculating the map $\Xi_{x,\xi}$ \eqref{eq:Xi_map1} for the system in question \eqref{eq:DD_systems_overdetermined} yields that:
\[
\Xi_{x,\xi} (\{s \mapsto \psi(s)\}) = \PtD \psi(0).
\]
Thus, if $\psi \in \bbM_{x,\xi}^+$ as above, the condition $\Xi_{x,\xi} (\{s \mapsto \psi(s)\}) = 0$ reduces to:
\[
\PtD \omega_0 = 0,
\]
since $\PtD(dr \wedge) = 0$ and $\PtD(\xi'\wedge) = \xi'\wedge \PtD$. As $\idr \omega_0 = 0$, $\omega_0$ has only tangential components, so $\PtD \omega_0 = 0$ implies $\omega_0 \equiv 0$, and hence $\psi \equiv 0$. Thus, $\Xi_{x,\xi}$ is injective when restricted to $\bbM_{x,\xi}^+$, as required.
\end{proof}

Applying the results in \secref{sec:outline_template}, we obtain that the lifted complex consists of a sequence of operators $\mathpzc{d}_{\nabla}:\Omega^\alpha(M;\bbU) \rightarrow \Omega^{\alpha+1}(M;\bbU)$, differing from $\dU$ by terms of order and class zero, and satisfying:
\[
\mathpzc{d}_{\nabla}\mathpzc{d}_{\nabla}\omega=0 \textand \PtD\mathpzc{d}_{\nabla}\omega=0 \qquad \text{for} \qquad \omega\in\Omega^\alpha(M;\bbU)\cap\ker\PtD, 
\]
with adjoints $\pzcdel_{\nabla}:\Omega^{\alpha+1}(M;\bbU)\rightarrow \Omega^\alpha(M;\bbU)$ satisfying $\pzcdel_{\nabla}\pzcdel_{\nabla}=0$.

Specifically, for $\alpha = 0$, since $\dU = \nabla$ on zero-forms and $\PtD = |_{\partial M}$ denotes restriction to the boundary, we have
\[
\module_{\D}^{0}(\fbN_{\bullet}) 
= \ker(\dU, \PtD) 
= \ker(\nabla, |_{\partial M}) 
= \{0\},
\]
which is the space of all $\nabla$-parallel fields vanishing on the boundary (and therefore vanishing identically by invariance under parallel transport).

The triviality of the zeroth cohomology, independent of the choice of connection $\nabla$, translates in the setting of \thmref{thm:smooth_elliptic_pre_complex} as follows:  
when the systems \eqref{eq:D_exterior_systems} are regarded as an elliptic pre-complex depending tamely and smoothly on $\nabla$, the application of \thmref{thm:smooth_elliptic_pre_complex} at $\alpha = 0$ implies that the lifted operator at the next level,
\[
\mathpzc{d}_{\nabla}:\Omega^{1}(M;\bbU) \to \Omega^{2}(M;\bbU),
\]
also depends tamely and smoothly on the connection $\nabla$.

Finally, \thmref{thm:compatibility_dd} then takes the cohomological form described in \secref{sec:DD_exterior_intro}.

\subsection{Neumann picture}
\label{sec:NN_exterior_intro}
For a class of elliptic pre-complexes consisting of exterior covariant derivatives and based on Neumann conditions—covering also the example in \secref{sec:NN_exterior_intro}—let $\beta \in \Nzero$ and consider
\beq 
\begin{aligned}
\bD_\alpha &= 
\begin{pmatrix} \dU \oplus \delU & 0 \\ \PtD & 0 \end{pmatrix} 
\qquad &&& \text{if } \alpha = 0, \\[1em]
\bD_\alpha &= 
\begin{pmatrix} \dU & 0 \\ \PtD & -d_{\jmath^*\nabla} \end{pmatrix} \sqcup 
\begin{pmatrix} \delU & 0 \\ 0 & 0 \end{pmatrix} 
\qquad &&& \text{if } \alpha > 0.
\end{aligned}
\label{eq:exterior_disjoint}
\eeq
Here, the symbol $\sqcup$ denotes the disjoint union of systems introduced in \defref{def:direct_sums_unions_sums}. Specifically:
\begin{itemize}
\item For $\alpha = 0$, the operator $\bD_{0}$ acts as
\[
\begin{pmatrix} \dU \oplus \delU & 0 \\ \PtD & 0 \end{pmatrix}:
\mymat{\Omega^{\beta}(M;\bbU)\\\oplus\\0}
\longrightarrow
\mymat{\Omega^{\beta+1}(M;\bbU) \oplus \Omega^{\beta-1}(M;\bbU)\\\oplus\\\Omega^{\beta}(\dM;\jmath^*\bbU)}.
\]

\item For $\alpha \ge 1$, the right-hand system in the disjoint union acts as
\[
\begin{pmatrix} \delU & 0 \\ 0 & 0 \end{pmatrix}:
\mymat{\Omega^{\beta-\alpha}(M;\bbU)\\\oplus\\0}
\longrightarrow
\mymat{\Omega^{\beta-\alpha-1}(M;\bbU)\\\oplus\\0},
\]
while the left-hand system acts as
\[
\begin{pmatrix} \dU & 0 \\ \PtD & -d_{\jmath^*\nabla} \end{pmatrix}:
\mymat{\Omega^{\beta+\alpha}(M;\bbU)\\\oplus\\\Omega^{\beta+\alpha-1}(\dM;\jmath^*\bbU)}
\longrightarrow
\mymat{\Omega^{\beta+\alpha+1}(M;\bbU)\\\oplus\\\Omega^{\beta+\alpha}(\dM;\jmath^*\bbU)}.
\]
\end{itemize}

From this point onward, the system naturally decomposes into two disjoint subsystems that operate independently, as described in \secref{sec:disjoint_union} and \propref{prop:disjoint_union_corrected}.

We verify that these indeed fall into the template  \eqref{eq:A_pallete} for the $\NN$-case:
\begin{itemize}
\item For the operators in \eqref{eq:AD_pallete}, we again recognize as in the $\DD$-case: 
\[
A_{D,\alpha} = \dU.
\]
Again, more explicitly, 
\[
\begin{aligned} 
&A_{0}=\dU\oplus\delU \quad &&D_{\alpha}=0 \quad &&A^*_{0}=\delU\oplus^*\dU \quad &&B_{0}=\PtD\oplus\PnD \quad  &&B_{0}^*=\PnD \oplus \PtD
\\& A_{\alpha}=\dU\sqcup\delU \quad &&D_{\alpha}=0 \quad &&A^*_{\alpha}=\delU\sqcup\dU \quad &&B_{\alpha}=\PtD\sqcup\PnD \quad &&B^*_{\alpha}=\PnD\sqcup\PtD, \quad \alpha\geq 1.
\end{aligned} 
\]
\item For the second item, in \eqref{eq:trace_operator_pallete} we also let $M_{\alpha}=0$ and 
\[
S_{\alpha} = \operatorname{Pr}_{\beta+\alpha} : \Omega^{\beta+\alpha}(\dM;\jmath^*\bbU) \oplus \Omega^{\beta-\alpha-1}(\dM;\jmath^*\bbU) \to \Omega^{\beta+\alpha}(\dM;\jmath^*\bbU)\oplus \Omega^{\beta-\alpha-1}(\dM;\jmath^*\bbU)
\]
be the orthogonal projection into $\Omega^{\beta+\alpha}(\dM;\jmath^*\bbU)$, i.e.,
\[
\operatorname{Pr}_{\beta+\alpha}(\rho,\lambda)=(\rho,0)
\]
which is a tensorial operation.  Consequently, $\PtD = \operatorname{Pr}_{\beta+\alpha}(\PtD \oplus \PnD)$ becomes a trace operator fitting into the same template as \eqref{eq:trace_operator_pallete}. 

\item Finally, for the boundary operators, we recognize: 
\[
\begin{aligned} 
&Q_{0}=0 \qquad &&K_{0}=0
\\&Q_{\alpha}=-d_{\jmath^*\nabla} \qquad &&K_{\alpha}=0, \qquad \alpha\geq 1.
\end{aligned} 
\] 
\end{itemize}
The required order-reduction properties in \eqref{eq:order_reduction_conditions_examples} are seen to be satisfied by the systems in \eqref{eq:exterior_disjoint}, by dualizing the corresponding properties in \eqref{eq:order_reduction_exterior}. Explicitly, since $\dU \dU = \mathrm{R}_{\nabla}$ is a zero-order operation, it follows that $\delU \delU = (\mathrm{R}_{\nabla})^*$ is also a zero-order operation, so the combined operator  
\[
(\dU \sqcup \delU)(\dU \oplus \delU) = \dU \dU \oplus \delU \delU
\]  
remains zero-order as well.

Before proceeding with the verification of the required overdetermined ellipticity, for illustration, we consider several specific cases of the rather general sequence \eqref{eq:exterior_disjoint}. Notably, for the case $\beta=0$, since $\Omega^{\beta-\alpha}(M;\bbU)=0$ for all $\alpha > 0$, the elliptic pre-complex simplifies into:
\beq
\begin{aligned}
\bD_\alpha &= 
\begin{pmatrix} 
\nabla & 0 \\ 
|_{\partial M} & 0 
\end{pmatrix}:
\mymat{\Omega^{0}(M;\bbU) \\ \oplus \\ 0}
\longrightarrow
\mymat{\Omega^{1}(M;\bbU) \\ \oplus \\ \Omega^{0}(\dM;\jmath^*\bbU)}
&& \text{if } \alpha = 0, \\[1em]
\bD_\alpha &= 
\begin{pmatrix} 
\dU & 0 \\ 
\PtD & -d_{\jmath^*\nabla} 
\end{pmatrix}:
\mymat{\Omega^{\alpha}(M;\bbU) \\ \oplus \\ \Omega^{\alpha-1}(\dM;\jmath^*\bbU)}
\longrightarrow
\mymat{\Omega^{\alpha+1}(M;\bbU) \\ \oplus \\ \Omega^{\alpha}(\dM;\jmath^*\bbU)}
&& \text{if } \alpha > 0
\end{aligned}
\label{eq:elliptic_pre_complex_exterior_NN_1}
\eeq
removing the disjoint union structure entirely. This is the example we introduced in \secref{sec:NN_exterior_intro}.

Similarly, for $\beta=\dim{M}$, since $\Omega^{\beta+\alpha}(M;\bbU)=0$ for every $\alpha > 0$ the system simplifies to: 
\[
\begin{aligned}
\bD_\alpha &= 
\begin{pmatrix} 
\delU & 0 \\ 
0 & 0 
\end{pmatrix}.
\end{aligned}
\]
By a duality argument, another elliptic pre-complex of this type is given by: 
\[
\begin{aligned}
\bD_\alpha &= 
\begin{pmatrix} 
\dU & 0 \\ 
0 & 0 
\end{pmatrix}
\end{aligned}
\]
which represents the same sequence of systems constituting \eqref{eq:D_exterior_systems}. However, instead of Dirichlet conditions, the required Neumann conditions in \eqref{eq:overdetermined_ellipticies_examples_refined} correspond to the overdetermined ellipticity of:
\[
\begin{pmatrix} 
\dU \oplus \delU & 0 \\ 
\PnD & 0
\end{pmatrix}.
\]
This demonstrates that the same sequence of operators can support elliptic pre-complexes based on either Dirichlet or Neumann conditions.

For a general $\beta \in \Nzero$, translating the systems in \eqref{eq:overdetermined_ellipticies_examples_refined} to this setting yields the corresponding overdetermined ellipticities for the $\NN$ case. After computing the associated adapted adjoints and boundary systems as in \eqref{eq:supplement_operators_examples}, these take the form
\beq
\begin{split}
&\begin{pmatrix} \dU \oplus \delU & 0 \\ \PtD & 0 \end{pmatrix}\sqcup 
\begin{pmatrix} \dU \oplus \delU & 0 \\ \PnD & 0 \end{pmatrix}, 
\qquad 
\begin{pmatrix}0 & 0 \\ 0 & d_{\jmath^*\nabla}\oplus\delta_{\jmath^*\nabla}\end{pmatrix}.
\end{split}
\label{eq:overdetermined_ellipticity_NN_exterior}
\eeq
Because of the disjoint union structure, the overdetermined ellipticity on the left decomposes into the overdetermined ellipticities of the two subsystems
\[
\begin{pmatrix} \dU \oplus \delU & 0 \\ \PtD & 0 \end{pmatrix}
\quad \text{and} \quad
\begin{pmatrix} \dU \oplus \delU & 0 \\ \PnD & 0 \end{pmatrix}.
\]

\begin{proposition}
\label{prop:ND_exteiror} 
The systems in \eqref{eq:overdetermined_ellipticity_NN_exterior} are overdetermined elliptic. 
\end{proposition}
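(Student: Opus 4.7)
The plan is to reduce the statement to \propref{prop:DD_exterior} and two elementary symbolic checks. Using the disjoint union structure on the left-hand operator in \eqref{eq:overdetermined_ellipticity_NN_exterior}, the overdetermined ellipticity there decouples into two independent ones (cf.\ \secref{sec:disjoint_union}), corresponding to the tangential system and the normal system. The tangential one is exactly the Dirichlet system \eqref{eq:DD_systems_overdetermined} treated in \propref{prop:DD_exterior}, so nothing new is required for it.

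For the normal system $\dU \oplus \delU$ supplemented by $\PnD$, I would invoke the generalized Lopatinski--Shapiro criterion \propref{prop:rud_lop} (or equivalently \thmref{thm:lopatnskii_shapiro}). The interior symbol and the associated ODE system are identical to those arising in the proof of \propref{prop:DD_exterior}, so the space $\bbM_{x,\xi'}^{+}$ of decaying solutions is the same: after normalizing $|\xi'|=1$ and decomposing $\psi = \psi_{0} + \xi'\wedge\psi_{1} + dr\wedge\psi_{2} + \xi'\wedge dr\wedge\psi_{3}$ with $i_{\xi'^{\sharp}}\psi_{j} = i_{\partial_{r}}\psi_{j} = 0$, the bounded decaying solutions take the form $\psi(s) = -e^{-s}\xi'\wedge\omega_{0} + \iota e^{-s} dr\wedge\omega_{0}$ with $i_{\xi'^{\sharp}}\omega_{0} = i_{\partial_{r}}\omega_{0} = 0$. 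The only change is in the initial condition map \eqref{eq:Xi_map1}, which now reads $\Xi_{x,\xi'}(\{s\mapsto\psi(s)\}) = \PnD\psi(0) = i_{\partial_{r}}\psi(0) = \iota\omega_{0}$. Injectivity of $\Xi_{x,\xi'}|_{\bbM_{x,\xi'}^{+}}$ is then immediate: $\iota\omega_{0} = 0$ forces $\omega_{0} = 0$, hence $\psi \equiv 0$.

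The third system $d_{\jmath^{*}\nabla}\oplus\delta_{\jmath^{*}\nabla}$ is supported entirely on the closed manifold $\dM$, with zero entries in the interior block. Hence only the interior (weighted) symbol needs to be checked, and this reduces to verifying injectivity of $(\iota\xi'\wedge)\oplus(-\iota\, i_{\xi'^{\sharp}})$ for $\xi' \in T^{*}\dM\setminus\{0\}$, which follows from the same standard multilinear algebra used in \propref{prop:DD_exterior}.

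The main, and essentially only, obstacle is bookkeeping: one must be attentive that $\PnD$ rather than $\PtD$ appears in the normal block, and verify that the decomposition of $\psi \in \bbM^{+}_{x,\xi'}$ yields $\PnD\psi(0)$ supported solely on the $dr$-component, so that the injectivity check is immediate. Beyond that, the argument is a direct transcription of the proof of \propref{prop:DD_exterior} together with the disjoint-union reduction.
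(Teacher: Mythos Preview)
Your proposal is correct and follows essentially the same approach as the paper's own proof: reduce via the disjoint-union structure to the tangential block (handled by \propref{prop:DD_exterior}), the normal block (reuse the ODE analysis from \propref{prop:DD_exterior} with only the initial condition map changed to $\PnD$), and the boundary-only block (symbol injectivity of $(\iota\xi'\wedge)\oplus(-\iota\,i_{\xi'^{\sharp}})$). The paper computes $\PnD\psi(0)$ using the commutation relations $\PnD(\xi'\wedge\omega_{0}) = -\xi'\wedge\PnD\omega_{0} = 0$ and $\PnD(dr\wedge\omega_{0}) = \omega_{0}$ rather than writing $i_{\partial_{r}}\psi(0)$ directly, but this is the same computation.
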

Most of the details are already addressed in \propref{prop:DD_exterior}, allowing us to make short work of the proof:
\begin{proof}
Due to \propref{prop:DD_exterior} and the last comment, it remains to verify that the following systems are overdetermined elliptic:
\[
\begin{pmatrix} \dU \oplus \delU & 0 \\ \PnD & 0 \end{pmatrix}, \qquad \begin{pmatrix}0 & 0 \\ 0 & d_{\jmath^*\nabla}\oplus\delta_{\jmath^*\nabla}\end{pmatrix}.
\]
The system on the left is an element of $\OP(1,1)$, while the system on the right belongs to $\OP(1,-\infty)$. Thus, as in \propref{prop:DD_exterior}, we can employ the machinery developed in \secref{sec:principle_symbol_green}. 

For the system on the left in \eqref{eq:overdetermined_ellipticity_NN_exterior}, the only difference compared to \propref{prop:DD_exterior} is that $\PtD$ is replaced by $\PnD$. Consequently, the associated interior symbols and systems of overdetermined Es from \propref{prop:rud_lop} are identical to those for the system in \propref{prop:DD_exterior}. The sole distinction lies in the initial condition map $\Xi_{x,\xi'}$, which is given here by:
\[
\Xi_{x,\xi'} (\{s \mapsto \psi(s)\}) = \PnD\psi(0).
\]

Now, given any $\psi \in \bbM_{x,\xi'}$ satisfying  
\[
\Xi_{x,\xi'} (\{s \mapsto \psi(s)\}) = 0,
\]
we fit in the general form of $\psi \in \bbM_{x,\xi'}$ obtained in the proof of \propref{prop:DD_exterior}. We find that due to the relations $\idr \omega_{0} = 0$ and  
\[
\PnD (\xi' \wedge \omega_{0}) = -\xi' \wedge \PnD \omega_{0} = 0, \qquad \PnD (dr \wedge \omega_{0}) = \omega_{0},
\]
that $\PnD \psi(0) = 0$ implies $\omega_{0} = 0$. Hence, $\psi \equiv 0$, establishing that $\Xi_{x,\xi'}$ is injective, as required.

For the overdetermined ellipticity of the system on the right in \eqref{eq:overdetermined_ellipticity_NN_exterior}, which operates solely on boundary sections, \propref{prop:rud_lop} reduces the problem to verifying the injectivity of:
\[
\sigma(d_{\jmath^*\nabla})(x,\xi') \oplus \sigma(\delta_{\jmath^*\nabla})(x,\xi') = (\iota \xi' \wedge) \oplus (\iota \ixi),
\]
which, again, follows from standard results in multilinear algebra.
\end{proof}
Thus, in view of \thmref{thm:elliptic_pre_complex_exmaples}, the systems in \eqref{eq:exterior_disjoint} constitute an elliptic pre-complex based on Neumann conditions for any initialization point $\beta \in \Nzero$.  

Following the outline in \secref{sec:outline_template}, and due to the disjoint union structure, by \propref{prop:disjoint_union_corrected}, the lifted complex splits as well after $\alpha>0$, taking in general the form:
\[
\begin{aligned}
\fbD_\alpha &= \bD_{0} 
&& \qquad\text{if } \alpha = 0, \\[1em]
\fbD_\alpha &= 
\begin{pmatrix} 
\mathpzc{d}_{\nabla} & -\mathpzc{k}_{\nabla} \\ 
\PtD-\mathpzc{c}_{\nabla} & - \mathpzc{d}_{\jmath^*\nabla} 
\end{pmatrix} 
\sqcup
\begin{pmatrix}
\pzcdel_{\nabla} & 0 \\ 
0 & 0
\end{pmatrix}
&& \qquad\text{if } \alpha > 0.
\end{aligned}
\]

\thmref{thm:compatibility_nn} then reads in general: 
\begin{theorem}
Let $\omega \in \Omega^{\beta+\alpha+1}_{M; \bbU}$, $\eta \in \Omega^{\beta-\alpha-1}_{M; \bbU}$, and $\rho \in \Omega^{\beta+\alpha}_{\dM; \jmath^* \bbU}$. 
Then the boundary value problem
\[
\begin{aligned}
& \dU \psi = \omega, \qquad 
\delU \zeta = \eta, \qquad
\pzcdel_{\nabla}\psi = \mathpzc{c}^*_{\nabla} \lambda, 
\qquad \mathpzc{d}_{\nabla}\zeta = 0,
&& \text{in } M, \\[6pt]
& \mathpzc{d}_{\jmath^*\nabla}\lambda = -\mathpzc{k}^*_{\,\,\nabla}\psi, 
\qquad \PtD \psi - d_{\jmath^*\nabla}\lambda = \rho, 
\qquad \PnD \psi + \mathrm{Pr}_{\alpha+\beta}\lambda = 0,
&& \text{on } \dM.
\end{aligned}
\]

admits a solution $(\psi, \zeta; \lambda)$ if and only if
\[
\mathpzc{d}_{\nabla} \omega = \mathpzc{k}_{\nabla} \rho, 
\qquad \pzcdel_{\nabla}\eta = 0, 
\qquad \PtD \omega - \mathpzc{c}_{\nabla}\omega = \mathpzc{d}_{\jmath^*\nabla}\rho, 
\qquad (\omega, \eta; \rho) \perp \module_{\N}^{\alpha+1}.
\]
The solution $(\psi, \zeta; \lambda)$ is unique modulo an element of $\module_{\N}^{\alpha}$.
\end{theorem}

Note that for the cases $\beta=0$, $\beta=\dim{M}$ and $\alpha=0$, some of these conditions are satisfied trivially. In particular, for $\beta=\dim{M}$ and its dualized version, there are no boundary operators to be lifted. Hence, the statement simplifies significantly for all $\alpha \geq 0$ in this case. In particular, the case $\beta=0$ is nothing but \thmref{thm:exterior_chomology_intro}. 

%
%

\section{Killing and Hessian equations}
\label{sec:KillingHessianBody} 
Here, we complete the technical details and results outlined in \secref{sec:KillingHessianIntro} by formulating the systems in \eqref{eq:systems_curvature_intro} as an elliptic pre-complex in the prototypical sense \secref{sec:dirichelt_special_case}. We emphasize that all technical proofs are taken directly from \cite[Sec.~5]{KL23}, and presented here for completeness and re-framing so it fits the notation and terminology we use here. 

Let $1 \le m \le d$; we recast the Bianchi sequence \eqref{eq:calabi_complex} into the simplified template, diagram \eqref{eq:elliptic_complex_diagramTe}, which we break into two lines:
\[
\begin{xy}
(-50,0)*+{0}="Em1";
(-30,0)*+{\Ckm{0}{m}}="E0";
(20,0)*+{\Ckm{1}{m}}="E1";
(70,0)*+{\cdots}="E2";
(100,0)*+{\Ckm{m}{m}}="E3";
(-50,-20)*+{}="Gm1";
(-30,-20)*+{\plCkm{0}{m} \oplus \plCkm{0}{m-1}}="G0";
(20,-20)*+{\plCkm{1}{m} \oplus \plCkm{1}{m-1}}="G1";
(70,-20)*+{\cdots}="G2";
(100,-20)*+{(\plCkm{m}{m})^2}="G3";
{\ar@{->}@/^{1pc}/^{\dBianchi}"E0";"E1"};
{\ar@{->}@/^{1pc}/^{\delBianchi}"E1";"E0"};
{\ar@{->}@/^{1pc}/^{\dBianchi}"E1";"E2"};
{\ar@{->}@/^{1pc}/^{\delBianchi}"E2";"E1"};
{\ar@{->}@/^{1pc}/^{\dBianchi}"E2";"E3"};
{\ar@{->}@/^{1pc}/^{\delBianchi}"E3";"E2"};
{\ar@{->}@/^{1pc}/^0"Em1";"E0"};
{\ar@{->}@/^{1pc}/^0"E0";"Em1"};
{\ar@{->}@/_{0pc}/^{B_g}"E0";"G0"};
{\ar@{->}@/_{0pc}/^{B_g}"E1";"G1"};
{\ar@{->}@/_{0pc}/^{B_g}"E2";"G2"};
{\ar@{->}@/^{1pc}/^{B_g^*}"E1";"G0"};
{\ar@{->}@/^{1pc}/^{B_g^*}"E2";"G1"};
{\ar@{->}@/^{1pc}/^{B_g^*}"E3";"G2"};
{\ar@{->}@/_{0pc}/^{B_H}"E3";"G3"};
\end{xy}
\]
\[
\begin{xy}
(-50,0)*+{\Ckm{m}{m}}="Em1";
(0,0)*+{\Ckm{m+1}{m+1}}="E0";
(50,0)*+{\cdots}="E1";
(90,0)*+{\Ckm{d}{m+1}}="E2";
(110,0)*+{0}="E3";
(-50,-20)*+{(\plCkm{m}{m})^2}="Gm1";
(0,-20)*+{\plCkm{m+1}{m+1} \oplus \plCkm{m+1}{m}}="G0";
(50,-20)*+{\cdots}="G1";
(90,-20)*+{\plCkm{d}{m+1} \oplus \plCkm{d}{m}}="G2";
(110,-20)*+{}="G3";
{\ar@{->}@/^{1pc}/^{\dBianchi}"E0";"E1"};
{\ar@{->}@/^{1pc}/^{\delBianchi}"E1";"E0"};
{\ar@{->}@/^{1pc}/^{\dBianchi}"E1";"E2"};
{\ar@{->}@/^{1pc}/^{\delBianchi}"E2";"E1"};
{\ar@{->}@/^{1pc}/^0"E2";"E3"};
{\ar@{->}@/^{1pc}/^0"E3";"E2"};
{\ar@{->}@/^{1pc}/^{\Hg}"Em1";"E0"};
{\ar@{->}@/^{1pc}/^{\Hg^*}"E0";"Em1"};
{\ar@{->}@/_{0pc}/^{B_H}"Em1";"Gm1"};
{\ar@{->}@/_{0pc}/^{B_g}"E0";"G0"};
{\ar@{->}@/_{0pc}/^{B_g}"E1";"G1"};
{\ar@{->}@/_{0pc}/^{B_g}"E2";"G2"};
{\ar@{->}@/^{1pc}/^{B_g^*}"E1";"G0"};
{\ar@{->}@/^{1pc}/^{B_g^*}"E2";"G1"};
{\ar@{->}@/^{1pc}/^{B_H^*}"E0";"Gm1"};
\end{xy}
\]
The boundary operators and Green's formulae, as well as their normality, are detailed in the Appendix, \secref{sec:first_order}.

Once this is verified to be a Neumann elliptic pre-complex as in \defref{def:elliptic_pre_complexTemNeu}, \thmref{thm:hessian} and \thmref{thm:Killing} are then direct translations of \eqref{thm:cohomology_Neumann} for the cases $m=0$ and $m=1$, respectively, as outlined in \secref{sec:KillingHessianIntro}.

We shall use extensively the identities in \secref{sec:Appendix}. 

The required order-reduction properties in \defref{def:elliptic_pre_complexTemNeu} evidently translate into the following: 

\begin{proposition}
The following hold: 
\begin{enumerate}[itemsep=0pt, label=(\alph*)]
\item For $k \le m-2$, $\dBianchi\dBianchi : \Ckm{k}{m} \to \Ckm{k+2}{m}$ is of order 0. 
\item $\Hg\dBianchi : \Ckm{m-1}{m} \to \Ckm{m+1}{m+1}$ is of order 1. 
\item $\dBianchi \Hg : \Ckm{m}{m} \to \Ckm{m+2}{m+1}$ is of order 1. 
\item For $k \ge m+1$, $\dBianchi\dBianchi : \Ckm{k}{m+1} \to \Ckm{k+2}{m+1}$ is of order 0. 
\end{enumerate}
\end{proposition}

\begin{proof}
Items (a) and (d) are proved in \propref{prop:DstarDstar}. For Item (b) take for example, for $\psi\in \Ckm{m-1}{m}$, use the explicit expression for $\dBianchi$ in \secref{sec:first_order} to obtain: 
\[
\begin{split}
\Hg\dBianchi\psi &= \tfrac12(\dg \dgV + \dgV\dg) \brk{\dg - \tfrac12 \G\dgV} \psi \\
&= \tfrac12(\dg \dgV + \dgV\dg) \dg  \psi - \tfrac14 \G(\dg \dgV + \dgV\dg) \dgV \psi \\ 
\end{split}
\]
where in the passage to the second line we used the commutation of $\G$ with $\Hg$. Since $\dg\dg$ and $\dgV\dgV$ are tensorial, and so is the commutator of $\dg$ and $\dgV$, it follows that $\Hg\dBianchi$ is a first-order operator.
Item (c) follows similarly.
\end{proof}

The overdetermined ellipticities required in \defref{def:elliptic_pre_complexTemNeu} are cases $(a)$, $(b)$, and $(d)$ of the following more general claim: 

\begin{proposition}
\label{prop:overdeteremined_ellipticityBianchi}
The following systems are overdetermined elliptic: 
\begin{enumerate}[itemsep=0pt,label=(\alph*)]
\item $\delBianchi\oplus\dBianchi\oplus B_g^*$ with domain $\Ckm{k}{m}$ for $k < m$.
\item $\delBianchi\oplus H\oplus B_g^*$ with domain $\Ckm{m}{m}$.
\item $H^*\oplus\dBianchi\oplus B_H^*$ with domain $\Ckm{m+1}{m+1}$.
\item $\delBianchi\oplus\dBianchi\oplus B_g^*$ with domain $\Ckm{k}{m}$ for $k > m$.
\end{enumerate}
\end{proposition}

\begin{proof}
Noting the following Hodge-dualities (up to multiplicative constants),
\[
\begin{aligned}
& \starG\starGV(\Hg^*\oplus\dBianchi\oplus B_H^*)\starG\starGV = \Hg\oplus\delBianchi\oplus B_H \\
& \starG\starGV(\delBianchi\oplus\dBianchi\oplus B_g^*)\starG\starGV = \dBianchi\oplus\delBianchi\oplus B_g,
\end{aligned}
\]
we may equivalently prove that 
\begin{enumerate}[itemsep=0pt,label=(\alph*)]
\item $\delBianchi\oplus\dBianchi\oplus B_g^*$ and $\delBianchi\oplus\dBianchi\oplus B_g$ with domain $\Ckm{k}{m}$, $k<m$,  are overdetermined elliptic.
\item $\delBianchi\oplus \Hg\oplus B_g^*$ and $\delBianchi\oplus \Hg\oplus B_H$ with domain $\Ckm{m}{m}$ are overdetermined elliptic.
\end{enumerate}
Statement (a) is proved in \propref{prop:ellipticity_bianchi} below.
As for Statement (b), we note that $\delBianchi = \deltag$ and $B_g^* = \PnnD\oplus\PntD$, when restricted to $\Ckm{m}{m}$, and that $\Ckm{m}{m}$ is a subspace of the symmetric $(m,m)$-forms, hence (b) can be replaced by showing that  
\[
\deltag\oplus \Hg\oplus\PnnD\oplus\PntD \textand \deltag\oplus \Hg\oplus B_H
\]
restricted to symmetric $\Wkm{m}{m}$ forms are overdetermined elliptic. This is proved in \propref{prop:ellipticity_H}.
\end{proof}

\begin{proposition}
\label{prop:ellipticity_bianchi}
The systems 
\[
\delBianchi\oplus\dBianchi\oplus B_g^*
\Textand
\delBianchi\oplus\dBianchi\oplus B_g,
\] 
restricted to $\Ckm{k}{m}$, $k< m$, are overdetermined elliptic.
\end{proposition}

\begin{proof}
Since all the operators, including the boundary operators are differential operators,  we need to verify that the Loptanskii-Shaprio criteria in \thmref{thm:lopatnskii_shapiro} are satisfied: we need to show that for every $x\in M$ and $\xi\in T_x^*M\setminus\{0\}$, 
\[
\sigma_{\delBianchi}(x,\xi) \oplus \sigma_{\dBianchi}(x,\xi) : \Gkm{k}{m} \to \Gkm{k-1}{m} \oplus  \Gkm{k+1}{m}
\]
is injective, and so are the maps
\[
\begin{aligned}
\Xi^1_{x,\xi'} &= \sigma_{\PnnG}(x,\xi'+ \imath\,\partial_s dr)\oplus \sigma_{\PntG}(x,\xi'+ \imath\,\partial_s dr)|_{s=0} \\
\Xi^2_{x,\xi'} &= \sigma_{\PttG}(x,\xi'+ \imath\,\partial_s dr)\oplus \sigma_{\PtnG}(x,\xi'+ \imath\,\partial_s dr)|_{s=0}
\end{aligned}
\]
for $x\in\dM$ and $\xi'\in T_x^*\dM\setminus\{0\}$, when restricted to the space $\bbM_{x,\xi'}^+$ of decaying solutions to the ordinary differential system
\beq
\brk{\sigma_{\delBianchi}(x,\xi' + \imath\,\partial_s dr) \oplus \sigma_{\dBianchi}(x\xi' ,+ \imath\,\partial_s dr)}\psi(s) = 0.
\label{eq:overdeterminedE_ellipticity_bianchi}
\eeq

Let $x\in M$ and let $\xi\in T^*_xM$; we denote $\xi_V = \xi^T = \GV\xi$; without loss of generality, we may assume that $|\xi|_\g=1$.
For $\psi\in\Gkm{k}{m}|_x$, $k<m$,
\[
\begin{aligned}
-\imath \sigma_{\dBianchi}(x,\xi) \psi &= \xi\wedge\psi - \frac{1}{\alpha(m,k)} \G(\xi_V\wedge \psi)
\\
\imath \sigma_{\delBianchi}(x,\xi) \psi &= \ixi\psi.
\end{aligned}
\]
Suppose that $(\sigma_{\delBianchi}(x,\xi) \oplus \sigma_{\dBianchi}(x,\xi)) \psi=0$, then
\[
\ixi\brk{\xi\wedge\psi - \frac{1}{\alpha(m,k)} \G(\xi_V\wedge \psi)} = 0.
\]
Using the fact that $\sigma_{\delBianchi}(x,\xi) \psi=0$, $\ixi (\xi\wedge) + \xi\wedge\ixi = \id$ and the anti-commutation relation between $\ixi$ and $\G$, we obtain
\[
\psi + \frac{1}{\alpha(m,k)} \ixiV(\xi_V\wedge \psi) = 0.
\]
Taking an inner-product with $\psi$,
\[
|\psi|_\g^2 + \frac{1}{\alpha(m,k)} |\xi_V\wedge\psi|_\g^2 = 0, 
\]
which implies that $\psi=0$, i.e., $\sigma_{\delBianchi}(x,\xi) \oplus \sigma_{\dBianchi}(x,\xi)$ is injective.

We proceed to establish the injectivity of the boundary symbols.
Let $x\in\dM$ and $\xi \in T_x^*\dM$. 
The ordinary differential equation \eqref{eq:overdeterminedE_ellipticity_bianchi} takes the form of a system,
\begin{gather}
\xi\wedge \psi - \frac{1}{\alpha(m,k)} \G(\xi_V\wedge \psi) + \imath \brk{dr\wedge \dot{\psi} - \frac{1}{\alpha(m,k)} \G(dr_V\wedge \dot{\psi})} = 0
\label{eq:overdeterminedE1} \\ 
\ixi\psi + \imath i_{\dr} \dot{\psi} =0.
\label{eq:overdeterminedE2} 
\end{gather}
To solve it, we decompose $\psi$ orthogonally (as an element in $\Lkm{k}{m}|_x$)
\[
\begin{split}
\psi &= \brk{\psi_{00} + \xi\wedge \psi_{01} + dr\wedge \psi_{02} + \xi\wedge dr \wedge \psi_{03}} \\
&+ \xi_V\wedge \brk{\psi_{10} + \xi\wedge \psi_{11} + dr\wedge \psi_{12} + \xi\wedge dr \wedge \psi_{13}} \\
&+ dr^T\wedge \brk{\psi_{20} + \xi\wedge \psi_{21} + dr\wedge \psi_{22} + \xi\wedge dr \wedge \psi_{23}} \\
&+ \xi_V\wedge dr^T\wedge \brk{\psi_{30} + \xi\wedge \psi_{31} + dr\wedge \psi_{32} + \xi\wedge dr \wedge \psi_{33}},
\end{split}
\]
where 
\[
\ixi\psi_{ij}=0
\qquad  
\ixiV\psi_{ij}=0
\qquad  
i_{\dr}\psi_{ij}=0
\Textand
i_{\dr}^V\psi_{ij}=0
\]
for every $i,j=0,1,2,3$.
Substituting into \eqref{eq:overdeterminedE1} and \eqref{eq:overdeterminedE2}, equating like terms we obtain the following equations:
\beq
\psi_{00}  =  0
\label{eq:00}
\eeq
\beq
\Cases{
\psi_{01} + \imath \dot\psi_{02} = 0 \\
\psi_{02} - \imath \dot\psi_{01} = 0 
}
\qquad\qquad
\Cases{
\psi_{10} + \tfrac{1}{\alpha(m,k+1)}(-\G\psi_{01} + \psi_{10}) = 0 \\
\psi_{20} + \tfrac{\imath}{\alpha(m,k+1)}(-\G\dot\psi_{01} + \dot\psi_{10}) = 0
}
\label{eq:01}
\eeq
\beq
\Cases{
\psi_{11} + \imath \dot\psi_{12}  = 0 \\
\psi_{21} + \imath \dot\psi_{22}  = 0
}
\qquad\qquad
\Cases{
\psi_{12} - \imath\dot\psi_{11} + \frac{1}{\alpha(m,k+1)}(\psi_{12} - \psi_{21})  = 0\\
\psi_{22} - \imath\dot\psi_{21} + \frac{\imath}{\alpha(m,k+1)}(\dot\psi_{12} - \dot\psi_{21}) = 0
}
\label{eq:11}
\eeq
\beq
\Cases{
\psi_{31} + \imath \dot\psi_{32}  = 0 \\
\psi_{32} - \imath\dot\psi_{31} = 0
}
\label{eq:31}
\eeq
and
\beq
\psi_{30} + \tfrac{1}{\alpha(m,k+1)} (\G\psi_{21} + \psi_{30}) - \tfrac{\imath}{\alpha(m,k+1)} \G\dot\psi_{11} = 0,
\label{eq:30}
\eeq
and
\[
\psi_{i3}=0
\qquad\qquad
\text{for $i=0,1,2,3$}. 
\]

The initial conditions, can be orthogonally decomposed similarly (noting that terms including $dr$ and $dr^T$ are annihilated by the pullback to boundary).

Let $\psi\in \bbM_{x,\xi'}^+$. The condition that $\Xi^1_{x,\xi'}\psi=0$ results in
\[
\psi_{02}(0) = 0
\qquad
\psi_{12}(0) = 0
\qquad
\psi_{22}(0) = 0
\Textand
\psi_{32}(0) = 0.
\]
Substituting $\psi_{01}(0)=0$ into \eqref{eq:01}, along with the condition the solution is decaying at infinity, yields
\[
\psi_{01}(s) = 0
\qquad
\psi_{02}(s) = 0
\qquad
\psi_{10}(s) = 0
\Textand
\psi_{20}(s) = 0.
\]
Substituting $\psi_{12}(0) = 0$ and 
$\psi_{22}(0) = 0$ into \eqref{eq:11} yields
\[
\psi_{11}(s) = 0
\qquad
\psi_{12}(s) = 0
\qquad
\psi_{21}(s) = 0
\Textand
\psi_{22}(s) = 0.
\]
Substituting $\psi_{32}(0) = 0$ into \eqref{eq:31} yields
\[
\psi_{31}(s) = 0
\Textand
\psi_{32}(s) = 0.
\]
Finally, \eqref{eq:30} yield that
\[
\psi_{30}(s) = 0, 
\]
thus $\psi(s) = 0$, proving the injectivity of $\Xi^1_{x,\xi'}$ when restricted to $\bbM^+_{x,\xi'}$.

The condition that $\Xi^2_{x,\xi'}\psi=0$ results in
\[
\psi_{01}(0) = 0
\qquad
\psi_{11}(0) = 0 
\qquad
\psi_{21}(0) + \tfrac{1}{\alpha(m,k+1)}(\psi_{21}(0)  - \psi_{12}(0)) = 0 
\textand
\psi_{31}(0) = 0.
\]
Substituting $\psi_{01}(0) = 0$ into \eqref{eq:01} yields,
\[
\psi_{01}(s) = 0
\qquad
\psi_{02}(s) = 0
\qquad
\psi_{10}(s) = 0
\Textand
\psi_{20}(s) = 0.
\]
Substituting $\psi_{11}(0) = 0$ and 
$\psi_{21}(0) + \frac{1}{\alpha(m,k+1)}(\psi_{21}(0)  - \psi_{12}(0)) = 0$ into \eqref{eq:11}
yields
\[
\psi_{11}(s) = 0
\qquad
\psi_{12}(s) = 0
\qquad
\psi_{21}(s) = 0
\Textand
\psi_{22}(s) = 0.
\]
Substituting $\psi_{31}(0) = 0$ into \eqref{eq:31} yields
\[
\psi_{31}(s) = 0
\Textand
\psi_{32}(s) = 0.
\]
Finally, \eqref{eq:30} yield that
\[
\psi_{30}(s) = 0, 
\]
thus $\psi(s) = 0$, proving the injectivity of $\Xi^2_{x,\xi'}$ when restricted to $\bbM^+_{x,\xi'}$.
\end{proof}

\begin{proposition}
\label{prop:ellipticity_H}
The systems 
\[
\deltag\oplus H\oplus \PnnD\oplus\PntD
\Textand
\deltag\oplus H\oplus\PttD\oplus\frakT,
\] 
restricted to the symmetric elements of $\Wkm{m}{m}$ are overdetermined elliptic.
\end{proposition}

\begin{proof}
Since all the operators, including the boundary operators are differential operators,  we need to verify that the Lopatnskii-Shaprio condition in \thmref{thm:lopatnskii_shapiro} is satisfied:  we need to show that for every $x\in M$ and $\xi\in T_x^*M\setminus\{0\}$, 
\[
\sigma_{\deltag}(x,\xi) \oplus \sigma_{H}(x,\xi) : \Gkm{m}{m} \to \Gkm{m-1}{m} \oplus  \Gkm{m+1}{m+1}
\]
is injective, and so are the maps
\[
\begin{aligned}
\Xi^1_{x,\xi'} &= \sigma_{\PnnD}(x,\xi'+ \imath\,\partial_s dr)\oplus \sigma_{\PntD}(x,\xi'+ \imath\,\partial_s dr)|_{s=0} \\
\Xi^2_{x,\xi'} &= \sigma_{\PttD}(x,\xi'+ \imath\,\partial_s dr)\oplus \sigma_{\frakT}(x,\xi'+ \imath\,\partial_s dr)|_{s=0}
\end{aligned}
\]
for $x\in\dM$ and $\xi'\in T_x^*\dM\setminus\{0\}$, when restricted to the space $\bbM_{x,\xi'}^+$ of decaying solutions to the ordinary differential system
\beq
\brk{\sigma_{\deltag}(x,\xi'+ \imath\,\partial_s dr) \oplus \sigma_{H}(x,\xi'+ \imath\,\partial_s dr)}\psi(s) = 0.
\label{eq:overdeterminedE_H}
\eeq

Let $x\in M$ and $\xi\in T_x^*M\setminus\{0\}$; once again, we assume  without loss of generality that $|\xi|_\g^2=1$.  
The symbols of $H$ and $\deltag$ are
\[
\sigma_{\deltag}(x,\xi)\psi = \ixi \psi
\Textand
\sigma_H(x,\xi)\psi = \xi\wedge\xi_V\wedge \psi .
\]
Suppose that $\psi\in\Gkm{m}{m}|_x$ is symmetric and satisfies
\[
\brk{\sigma_{\deltag}(x,\xi)\oplus\sigma_H(x,\xi)}\psi = 0. 
\]
Then,
\[
\ixiV \ixi(\xi\wedge\xi_V\wedge \psi) = \psi = 0,
\]
proving the injectivity of  $\sigma_{\deltag}(x,\xi)\oplus\sigma_H(x,\xi)$  when restricted to symmetric $(m,m)$-covectors. 

We proceed with the boundary symbols. Let $x\in\dM$ and $\xi'\in T_x^*\dM\setminus\{0\}$. The ordinary differential system \eqref{eq:overdeterminedE_H} takes the explicit form
\beq
\begin{aligned}
& \xi\wedge\xi_V\wedge \psi + \imath dr\wedge\xi_V\wedge \dot{\psi} + \imath \xi\wedge dr^T\wedge \dot{\psi} - dr\wedge dr^T \wedge \ddot{\psi} = 0 \\
& \ixi \psi  + \imath i_{\dr} \dot{\psi} = 0.
\end{aligned}
\label{eq:eqsH}
\eeq
As in the proof of \propref{prop:ellipticity_bianchi}, we decompose $\psi$ orthogonally. Substituting into \eqref{eq:overdeterminedE_H} and equaling like terms, we obtain
\beq
\psi_{00} = 0,
\label{EQ:00}
\eeq
\beq
\Cases{
\psi_{02} - \imath \dot\psi_{01}  = 0 \\
\psi_{01} + \imath \dot\psi_{02}  = 0
}
\label{EQ:01}
\eeq
and
\beq
\Cases{\psi_{11} + \imath \dot\psi_{12} = 0\\
\psi_{12} + \imath \dot\psi_{22} = 0  \\
\psi_{22} - 2\imath\dot\psi_{12}   - \ddot\psi_{11} = 0
}
\label{EQ:11}
\eeq
and
\[
\psi_{i3}=0
\Textand
\psi_{3i}=0
\qquad
\text{for $i=0,1,2,3$}.
\]

Let $\psi\in \bbM_{x,\xi'}^+$ satisfy  $\Xi^1_{x,\xi'}\psi=0$. This results in 
\[
\psi_{02}(0)  = 0 
\qquad
\psi_{12}(0)  = 0 
\Textand
\psi_{22}(0)  = 0.
\]
Substituting $\psi_{02}(0)  = 0$  into \eqref{EQ:01} yields
\[
\psi_{01}(s)  = 0 
\Textand
\psi_{02}(s)  = 0.
\]
Substituting $\psi_{12}(0)  = 0$ and $\psi_{22}(0)  = 0$ into \eqref{EQ:11} yields
\[
\psi_{11}(s)  = 0 
\qquad
\psi_{12}(s)  = 0 
\Textand
\psi_{22}(s)  = 0.
\]
This proves the injectivity of $\Xi^1_{x,\xi'}\psi=0$ when restricted to $\bbM_{x,\xi'}^+$.

Let $\psi\in \bbM_{x,\xi'}^+$ satisfy  $\Xi^2_{x,\xi'}\psi=0$. 
The condition $\PttD\psi(0)=0$ yields
\[
\psi_{01}(0) = 0
\Textand
\psi_{11}(0) = 0.
\]
Substituting the first into \eqref{EQ:01} yields.
\[
\psi_{01}(s)  = 0 
\Textand
\psi_{02}(s)  = 0.
\]
We then note that,
\[
-\imath \sigma_{\frakT}(x,\xi + \imath\,\partial_s dr) \psi|_{s=0} = \imath \PttD\dot{\psi}(0) -  \xi\wedge \PntD\psi(0)  -  \xi_V\wedge \PtnD\psi(0) = 0,
\]
yielding the additional initial condition
\[
\imath\dot\psi_{11}(0) - 2\psi_{12}(0) = 0,
\]
which substituted together with $\psi_{11}(0)=0$ into \eqref{EQ:11}  yields
\[
\psi_{11}(s)  = 0 
\qquad
\psi_{12}(s)  = 0 
\Textand
\psi_{22}(s)  = 0.
\]
This proves the injectivity of $\Xi^2_{x,\xi'}\psi=0$ when restricted to $\bbM_{x,\xi'}^+$.
\end{proof}

\section{Riemann curvature equations}
\label{sec:prescribed_curvature}
Here, we complete the technical details and results outlined in \secref{sec:prescribed_curvature_intro} by formulating the systems in \eqref{eq:systems_curvature_intro} as an elliptic pre-complex within the template \eqref{eq:A_pallete}, based on either Neumann or Dirichlet conditions.

In the Dirichlet case, the sequence is given as in \eqref{eq:systems_curvature_intro}, in the prototypical setup of \secref{sec:dirichelt_special_case}:  
\beq
\begin{split}
&\bA_{0} := \Def : \frakX_{M} \rightarrow \Ckm{1}{1}, \\
&\bA_{1} := \D_{\Gamma}\Rm_{g} : \Ckm{1}{1} \rightarrow \Ckm{2}{2}, \\
&\bA_{2} := \dBianchi : \Ckm{2}{2} \rightarrow \Ckm{3}{2}, \\
&\bA_{3} := \dots
\end{split}
\label{eq:curvature_complex_DD}
\eeq
Whereas for the Neumann case it is given in the full template \eqref{eq:A_pallete} by: 
\beq
\begin{split}
&\bD_{0}=\begin{pmatrix}
\Def & 0 \\ |_{\partial M} & 0
\end{pmatrix}:\mymat{\frakX_{M}\\\oplus\\0}\longrightarrow \mymat{\Ckm{1}{1}\\\oplus\\\frakX_{M}|_{\dM}}
\\& \bD_{1}=\begin{pmatrix}
\D_{\Gamma}\Rm_{g} & 0 \\ \PttG \oplus \D\Ah_{g} & -\Defd\oplus-\Defh
\end{pmatrix}:\mymat{\Ckm{1}{1}\\\oplus\\\frakX_{M}|_{\dM}}\longrightarrow\mymat{\Ckm{2}{2}\\\oplus\\\scrC_{\dM}^{1,1}\oplus\scrC^{1,1}_{\dM}}
\\& \bD_{2}=\begin{pmatrix}
\dBianchi & 0 \\ \PtD & -\D\Gh_{\gD,\Ah_{g}} \oplus -\D\MCh_{\gD,\Ah_{g}}
\end{pmatrix}:\mymat{\Ckm{2}{2}\\\oplus\\\scrC_{\dM}^{1,1}\oplus\scrC^{1,1}_{\dM}}\longrightarrow \mymat{\Ckm{3}{2}\\\oplus\\\scrC_{\dM}^{2,2}\oplus\scrC^{2,1}_{\dM}}
\\& \bD_{3}=\begin{pmatrix}
\dBianchi & 0 \\ \PtD & -d_{\gD} \oplus -d_{\gD}
\end{pmatrix}:\mymat{\Ckm{3}{2}\\\oplus\\\scrC_{\dM}^{2,2}\oplus\scrC^{2,1}_{\dM}}\longrightarrow \mymat{\Ckm{4}{2}\\\oplus\\\scrC_{\dM}^{3,2}\oplus\scrC^{3,1}_{\dM}}
\\&\bD_{4}=\begin{pmatrix}
\dBianchi & 0 \\ \PtD & -d_{\gD} \oplus -d_{\gD}
\end{pmatrix}:\mymat{\Ckm{4}{2}\\\oplus\\\scrC_{\dM}^{3,2}\oplus\scrC^{3,1}_{\dM}}\longrightarrow \mymat{\Ckm{5}{2}\\\oplus\\\scrC_{\dM}^{4,2}\oplus\scrC^{4,1}_{\dM}}
\\&\bD_{5}=\cdots
\\& \cdots
\\& \bD_{\dim{M}}=0. 
\end{split}
\label{eq:curvature_complex}
\eeq
As in the Killing and Hessian equations \secref{sec:KillingHessianBody}, the appendix (\secref{sec:Appendix}) contains the required material to parse most of the operations here in context of Bianchi forms and the differential operators associated with them. 

There are however a few linearized systems appearing in \eqref{eq:curvature_complex} that we have not yet introduced in \secref{sec:prescribed_curvature_intro}.  These are the systems $\D\Gh_{\gD,\Ah_{g}}$ and $\D\MCh_{\gD,\Ah_{g}}$, which denote the linearizations of the corresponding nonlinear maps:
\[
\begin{split}
&(h, \mathrm{K}) \mapsto \Gh_{h, \mathrm{K}}: \scrM_{\dM} \times \scrC^{1,1}_{\dM} \to \scrC^{2,2}_{\dM}, 
\\
&(h, \mathrm{K}) \mapsto \MCh_{h, \mathrm{K}}: \scrM_{\dM} \times \scrC^{1,1}_{\dM} \to \scrC^{2,1}_{\dM},
\end{split}
\]
defined by
\beq
\Gh_{h, \mathrm{K}} = \Rm_{h} + \frac{1}{2}\mathrm{K} \wedge \mathrm{K}, \qquad
\MCh_{h, \mathrm{K}} = d_{h} \mathrm{K}.
\label{eq:GMC_intro}
\eeq

These maps respect gauge invariance with respect to $\varphi:\dM \to \dM$:  
\beq
\Gh_{\varphi^*h, \varphi^*\mathrm{K}} = \varphi^* \Gh_{h, \mathrm{K}}, \qquad 
\MCh_{\varphi^*h, \varphi^*\mathrm{K}} = \varphi^* \MCh_{h, \mathrm{K}}.
\label{eq:gauge_GMC_intro}
\eeq

Thus, reformulating the constraints \eqref{eq:contraints_Rm_intro} in terms of the data $(T; \rho, \tau)$, we obtain:
\beq
\begin{gathered}
d_g T = 0, \\ 
\PtD{T}-(\varphi^* \Gh_{\rho, \tau},\varphi^* \MCh_{\rho, \tau}) = 0. 
\end{gathered}
\label{eq:data_Rm_required}
\eeq

\subsection{Variation formulae} 
Here we derive several variation formulae that are needed to establish how the systems in \eqref{eq:curvature_complex_DD}---\eqref{eq:curvature_complex} fall into the template in \eqref{eq:A_pallete}.

We first recall the well-known variation formula referenced in \secref{sec:prescribed_curvature_intro}, which can be written as (cf. \cite[p.~560]{Tay11b})
\beq  
\D\Rm_g = \frac{1}{2}(\Hg - D_g).
\label{eq:Rm_variation}  
\eeq
where, for completeness, we recall also that:
\[
D_{g}\sigma=\frac{1}{2}(\trace_{g}(\Rm_{g}\wedge\sigma)-\trace_{g}\Rm_{g}\wedge\sigma-\Rm_{g}\wedge\trace_{g}\sigma).
\]
Next, we obtain a variation formula for the second fundamental form. To that end, it is known that near the boundary, $\Ah_{g} = \nabg \nu_{g}$, where $\nabg$ is the Levi-Civita covariant derivative induced by $g$, and $\nu_{g}=dr \in \Omega^{1}_{M}|_{\dM}$ is the induced normal $1$-form to the boundary. This $1$-form is related to the inward-pointing unit normal vector to the boundary, $n_{g} \in \frakX_{M}|_{\dM}$.

Consider also the operator $\calS_{g}:\Ckm{1}{1}\rightarrow\Ckm{1}{1}$:
\[
\calS_{g}\sigma(X;Y)=\sigma(\nabg_{X}n_{g};Y),
\]
and define the operator $S_{g} : \scrC^{1,1}_{\dM} \oplus \scrC^{1,1}_{\dM} \to \scrC^{1,1}_{\dM} \oplus \scrC^{1,1}_{\dM}$ as:
\beq
S_{g}(\rho, \tau) = (\rho, \frac{1}{2}(\tau + \frac{1}{2}\calS_g \rho + \frac{1}{2}(\calS_g \rho)^{T}))
\label{eq:Sg_prescribed}
\eeq
and note that by construction $S_{g}^*=S_{g}$. 
\begin{proposition}
\label{prop:variation_normal}
The following variation formulae hold:
\beq
\begin{split}
&(\dertZero n_{g+t\sigma})^{\flat_{g}} = -\frac{1}{2} \PnnG \sigma \, \nu_{g} - \PntG \sigma, \\
& \dertZero \nu_{g+t\sigma}=\frac{1}{2} \PnnG\sigma \, \nu_{g},\\ 
&(\PttG\sigma,\D\Ah_{g}\sigma) = S_{g}(\PttG \sigma, \frakT_{g} \sigma).
\end{split}
\label{eq:variation_normal_second_form}
\eeq
\end{proposition}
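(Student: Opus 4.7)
The plan is to establish the three formulas in order, as each builds on the previous, and then to combine the second with the standard variation of the Levi-Civita connection to obtain the third.

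\textbf{First formula.} I would begin from the two conditions characterizing the inward unit normal with respect to $g+t\sigma$, namely
\[
(g+t\sigma)(n_{g+t\sigma},n_{g+t\sigma})=1 \quad \text{and} \quad (g+t\sigma)(n_{g+t\sigma},X)=0 \text{ for all } X\in T\dM.
\]
Differentiating both at $t=0$, writing $\dot{n}:=\dertZero n_{g+t\sigma}$, yields $\sigma(n_g,n_g)+2g(\dot{n},n_g)=0$ and $\sigma(n_g,X)+g(\dot{n},X)=0$. Interpreting these as the values of $\dot{n}^{\flat_g}=g(\dot{n},\cdot)$ on the $g$-orthogonal decomposition $TM|_{\dM}=\mathbb{R}\cdot n_g\oplus T\dM$, and recognizing $\sigma(n_g,n_g)=\PnnG\sigma$ and $\sigma(n_g,\cdot)|_{T\dM}=\PntG\sigma$, produces the stated formula.

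\textbf{Second formula.} Since $\nu_g=g(n_g,\cdot)|_{\dM}$ as a 1-form on $M$ restricted to the boundary, the product rule gives $\dot{\nu}(X)=\sigma(n_g,X)+g(\dot{n},X)$ for $X\in TM|_{\dM}$. On vectors $X\in T\dM$ the two contributions cancel by the first step, while on $X=n_g$ they combine to $\PnnG\sigma-\tfrac{1}{2}\PnnG\sigma=\tfrac{1}{2}\PnnG\sigma$. As $\nu_g$ is the dual conormal, this identifies $\dot{\nu}=\tfrac{1}{2}\PnnG\sigma\cdot\nu_g$.

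\textbf{Third formula.} The first component identity $\PttG\sigma=\PttG\sigma$ is immediate. For the second component, I would extend $\nu_g$ into a collar so that $\Ah_g$ is obtained by restricting $\nabg\nu_g$ to $T\dM\otimes T\dM$, and then differentiate, producing the two contributions
\[
\Ah_g'\sigma = \big((\nabg)'\sigma\big)\nu_g\big|_{T\dM\otimes T\dM} + \nabg\dot{\nu}\big|_{T\dM\otimes T\dM}.
\]
For the first term, I would invoke the standard variation formula for the Levi-Civita connection $(\nabg)'\sigma(X,Y)=\tfrac{1}{2}(\nabg_X\sigma(Y,\cdot)+\nabg_Y\sigma(X,\cdot)-\nabg_{(\cdot)}\sigma(X,Y))$ and pair with $\nu_g^\sharp=n_g$. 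The terms containing a normal derivative of $\sigma$ along $n_g$ will reorganize—using the relation $\nabg_X n_g=\Ah_g^\sharp(X)$ on the boundary—into contractions of $\PttG\sigma$ with $\Ah_g$, precisely producing $\tfrac{1}{4}(\calS_g\PttG\sigma+(\calS_g\PttG\sigma)^T)$. For the second term, the formula from the second step yields $\nabg\dot{\nu}=\tfrac{1}{2}\nabg(\PnnG\sigma\cdot\nu_g)$, whose tangential-tangential part combines with the remaining pure tangential derivatives of $\sigma$ from the first term into what is by definition the operator $\frakT_g$, multiplied by $\tfrac{1}{2}$. Matching with the definition of $S_g$ in \eqref{eq:Sg_prescribed} completes the identification.

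The main obstacle is the bookkeeping in the third step: the variation of $\Gamma^k_{ij}$ contributes several terms, and one must carefully track which components survive the restriction to $T\dM\otimes T\dM$, verify that every normal derivative of $\sigma$ reorganizes (via $\nabg_X n_g=\Ah_g^\sharp(X)$) into an algebraic contraction with $\Ah_g$ applied to $\PttG\sigma$ in the symmetrized form dictated by $\calS_g$ and its transpose, and confirm that the residual tangential derivatives aggregate to precisely the operator $\frakT_g$ that appears in the statement.
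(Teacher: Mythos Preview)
Your derivations of the first two formulas match the paper exactly. For the third, using $\Ah_g=\nabla^g\nu_g$ and varying only the connection and $\nu_g$ is an equivalent reformulation of the paper's approach, which instead writes $\Ah_{g+t\sigma}(X;Y)=(g+t\sigma)(\nabla^{g+t\sigma}_X n_{g+t\sigma};Y)$ and varies three factors; both routes lead to the same expression.

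However, your allocation of terms is inverted. For tangent $X,Y$, the second contribution $(\nabla_X\dot\nu)(Y)$ evaluates to $\tfrac12\,\PnnG\sigma\cdot\Ah_g(X;Y)$ (since $\nu_g(Y)=0$ and $\nu_g(\nabla_X Y)=-\Ah_g(X;Y)$); this is \emph{tensorial} in $\sigma$ and therefore cannot feed into the first-order operator $\frakT_g$ as you claim. Conversely, the $\calS_g\PttG\sigma$ pieces are zeroth-order and cannot arise from the genuine normal derivative $(\nabla_{n_g}\sigma)(X,Y)$ sitting in the connection variation---that term is first-order and is part of $\frakT_g$. The $\calS_g$ contributions come instead from Leibniz-expanding the tangential pieces $(\nabla_X\sigma)(Y,n_g)$: the summand $-\sigma(Y,\nabla_X n_g)$, with $\nabla_X n_g=\Ah_g^\sharp(X)$, is what produces $\calS_g\PttD\sigma$. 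The correct organization is that the connection-variation term carries \emph{both} the zeroth-order $\calS_g$ pieces and all first-order content (the normal derivative together with tangential derivatives of $\PntG\sigma$), while your $\nabla\dot\nu$ term is a purely zeroth-order $\PnnG\sigma\cdot\Ah_g$ contribution that must combine algebraically with a matching piece from the Gauss-formula expansion of $\sigma(\nabla_X Y,n_g)$ inside the connection variation. In the paper's computation the $\PnnG\sigma\cdot\Ah_g$ terms cancel entirely before the final symmetrization step.
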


The proof is technical and is provided in the end of the chapter \secref{sec:techincal_curvature}. It is worth noting that similar variation formulae can also be found in, e.g., \cite{And08,AH22}. However, for completeness, we present the computations here within the framework of Bianchi forms.

For the other components in \eqref{eq:systems_curvature_intro}, recall the well-known deformation, or Killing, operator $\Def:\frakX_{M}\rightarrow \Ckm{1}{1}$ \cite[p.~155, Ch.~5.12]{Tay11a}:
\[
\Def{X}=\frac{1}{2}\calL_{X}g=\dBianchiV X^{\flat}.
\]
Also, let $\Defd : \frakX_{M}|_{\dM} \rightarrow \scrC_{\dM}^{1,1}$ be defined by
\[
\Defd{Y} = \frac{1}{2} \calL_{Y^{\parallel}} \gD = d_{\gD}^{V} \PttG Y^{\flat},
\]
where we identify $\frakX_{M}|_{\dM} \simeq \frakX_{\dM} \oplus \Gamma(N\dM)$ via the decomposition $X|_{\dM} = Y^{\parallel} + Y^{\bot} n_{g}$, with $N\dM$ denoting the normal bundle of $\dM$. Note that $\Defd$ is not quite the Killing operator on $(\dM, \gD)$, as it acts on the full restrictions of vector fields from the interior, which may in general include non-tangential components.
 
Next, we consider a special case of the variation formula in \propref{prop:variation_normal}, restricted to variations of $\Ah_{g}$ arising from Lie derivatives of the Riemannian metric:
\begin{corollary}
\label{corr:deformation_h}
Let $X \in \frakX_{M}$ and set $X|_{\dM}=Y\in\frakX_{M}|_{\dM}$. Let $Y^{\parallel} \in \frakX_{\dM}$ denote the tangential component on the boundary, and let $Y^{\bot} \in C^{\infty}_{\dM}$ denote the normal part with respect to a Riemannian metric $g \in \scrM_{M}$. Then, 
\beq
\dertZero \Ah_{g+t\calL_{X}g} = \calL_{Y^{\parallel}} \Ah_{g} - \mathrm{Hess}_{\gD} Y^{\bot} + Y^{\bot} \calS_{g} \Ah_{g}.
\label{eq:h_deformation}
\eeq
In particular, the operation $X \mapsto \dertZero \Ah_{g+t\calL_{X}g}$ depends solely on $X|_{\dM}=Y$.
\end{corollary}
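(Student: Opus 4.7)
The plan is to exploit the linearity of the variation in $X$ and split $X$ at the boundary into tangential and normal components. Concretely, write $Y = X|_{\dM} = Y^{\parallel} + Y^{\bot} n_g$, and choose smooth extensions $X_{\parallel}, X_{\bot} \in \frakX_{M}$ with $X_{\parallel}|_{\dM} = Y^{\parallel}$, $X_{\bot}|_{\dM} = Y^{\bot} n_g$, arranging $X_{\parallel}$ to be tangential to $\dM$ along the boundary (any such extension will do, since the right-hand side is boundary-intrinsic). Then $\calL_{X} g - \calL_{X_{\parallel}} g - \calL_{X_{\bot}} g$ is the Lie derivative of $g$ along a vector field vanishing on $\dM$ together with enough transverse order, but more cleanly one simply uses $\calL_X g$ linearly as $\calL_{X_{\parallel}} g + \calL_{X_{\bot}+(X - X_\parallel - X_\bot)} g$ and observes that the second fundamental form variation depends only on boundary data of $\sigma$ through \eqref{eq:variation_normal_second_form}.

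For the tangential piece, the flow $\varphi_t$ generated by $X_{\parallel}$ preserves $\dM$, so the gauge equivariance \eqref{eq:boundary_symmetries_intro} applies: $\Ah_{\varphi_t^{*} g} = (\varphi_t|_{\dM})^{*} \Ah_g$. Since $\varphi_t^{*} g = g + t\,\calL_{X_{\parallel}} g + O(t^2)$ and $\Ah'_g$ depends only on the deformation direction to first order, differentiation at $t=0$ yields the $\calL_{Y^{\parallel}} \Ah_g$ contribution.

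For the normal piece, substitute $\sigma = \calL_{X_{\bot}} g$ into the identity $(\PttG \sigma,\,\Ah'_g \sigma) = S_g(\PttG \sigma,\,\frakT_g \sigma)$ of \propref{prop:variation_normal}. Using that $X_{\bot}|_{\dM} = Y^{\bot} n_g$, the product/Leibniz rule for $\calL$ and the relation $\nabg n_g = \Ah_g$ give, on the boundary, $\PttG(\calL_{X_{\bot}} g) = 2 Y^{\bot} \Ah_g$ (up to symmetrization conventions), while $\frakT_g(\calL_{X_{\bot}} g)$ picks up the normal second derivative of $Y^{\bot}$, producing precisely $-2\,\mathrm{Hess}_{\gD} Y^{\bot}$ plus cross terms involving $\calS_g \Ah_g$. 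Reading off the second slot of $S_g$ in \eqref{eq:Sg_prescribed} then gives $-\mathrm{Hess}_{\gD} Y^{\bot} + Y^{\bot} \calS_g \Ah_g$, after the symmetrization $\tfrac{1}{2}(\calS_g \rho + (\calS_g \rho)^T)$ cancels the antisymmetric part.

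The main obstacle is the explicit bookkeeping in the normal piece: one must carefully expand $\calL_{Y^{\bot} n_g}\, g$ in a collar neighborhood of $\dM$, isolate tangential-tangential components and normal-tangential components (i.e. $\PttG$ and $\frakT_g$ parts), and identify the second tangential derivatives of the scalar $Y^{\bot}$ with the intrinsic Hessian $\mathrm{Hess}_{\gD} Y^{\bot}$, using that the extension of $n_g$ off the boundary as $\nabla r$ has $\nabg n_g|_{\dM} = \Ah_g$ and $\nabg_{n_g} n_g|_{\dM}=0$. Once this is handled, the final sentence — that $\dertZero \Ah_{g+t\calL_X g}$ depends only on $Y = X|_{\dM}$ — follows immediately, since every term on the right-hand side is manifestly a function of $(Y^{\parallel}, Y^{\bot})$ alone.
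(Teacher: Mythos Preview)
Your proposal is correct and matches the paper's approach: split $Y$ into tangential and normal parts, dispatch the tangential piece via gauge equivariance of $g\mapsto\Ah_g$, and for the normal piece substitute $\sigma=\calL_{Y^{\bot}n_g}g$ into the variation formula of \propref{prop:variation_normal} and compute explicitly. The paper works with the unsymmetrized intermediate formula from the proof of that proposition (namely $2\Ah'_g\sigma=\calS_g\PttD\sigma-d_{\nabla^{\jmath^*g}}\PntG\sigma+\PntG\dg\sigma$) rather than the $S_g$-packaged version you invoke, but the bookkeeping is the same and the extension-dependent terms such as $\partial_{n_g}Y^{\bot}$ cancel in either route, which is exactly what makes the result depend only on $X|_{\dM}$.
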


With this given, we now turn our attention to the maps:
\[
\begin{split}
&\Defh : \frakX_{M}|_{\dM} \rightarrow \scrC^{1,1}_{\dM}, \\
&\D\Gh_{\gD,\Ah_{g}} : \scrC^{1,1}_{\dM} \oplus \scrC^{1,1}_{\dM} \rightarrow \scrC^{2,1}_{\dM}, \\
&\D\MCh_{\gD,\Ah_{g}} : \scrC^{1,1}_{\dM} \oplus \scrC^{1,1}_{\dM} \rightarrow \scrC^{2,1}_{\dM},
\end{split}
\]
defined by
\beq
\begin{split}
&\Defh{Y} = \dertZero \Ah_{g + t \Def{X}}, \qquad X|_{\dM} = Y, \\
&\D\Gh_{\gD,\Ah_{g}}(\rho, \tau) = \frac{1}{2}(H_{\gD} \rho - D_{\gD} \rho) + \tau \wedge \Ah_{g}, \\
&\D\MCh_{\gD,\Ah_{g}}(\rho, \tau) = d_{\gD} \tau + \dertZero d_{\gD + t\rho} \Ah_{g},
\end{split}
\label{eq:boundary_section_prescribed}
\eeq
where $\Defh$ is well defined due to \eqref{eq:h_deformation}. Moreover, directly from the variation formula for $\Rm_{\gD}$, the expressions for $\D\Gh_{\gD,\Ah_{g}}$ and $\D\MCh_{\gD,\Ah_{g}}$ coincide with the linearizations of the nonlinear maps in \eqref{eq:GMC_intro}.

Before continuing, we take a moment to use these variation formulae to produce a non-homogeneous version for \eqref{eq:linearized_prescribed_curvatureIntroII}.  Specifically, we consider the system:
\beq
\begin{gathered}
\Rm_g = \mathrm{T}, \\ 
\gD = \varphi^*h, \qquad 
\Ah_{g} = \varphi^*\mathrm{K}.
\end{gathered}
\label{eq:prescribed_curvatureNon}
\eeq
where $\varphi:\dM \rightarrow \dM$ is a variable boundary diffeomorphism and $g \in \scrM_{M}$. linearizing geometric constraints $\Gamma:\Ckm{1}{1}\rightarrow\Ckm{2}{2}$ and setting $Y = \dertZero \varphi_{t}$, we obtain the system for $(\sigma, Y) \in \Ckm{1}{1} \oplus \frakX_{M}|_{\dM}$:
\beq
\begin{gathered}
\D_{\Gamma}\Rm_g\sigma = T, \\ 
\PttD \sigma - \Defd Y = \rho, \qquad 
\D\Ah_{g} \sigma - \Defh Y = \tau.
\end{gathered}
\label{eq:prescribed_curvatureLinarizedNon}
\eeq
\subsection{Verification of the template} 
We now have all the necessary ingredients to formulate \eqref{eq:curvature_complex} within the template of \eqref{eq:A_pallete}, just as we did for the simpler examples of exterior covariant derivatives in \secref{sec:DD_exterior_intro}--\secref{sec:NN_exterior_intro}. 
\subsubsection{Neumann picture} 
For conciseness, we focus here exclusively on the first two segments of \eqref{eq:curvature_complex}. The analysis for the remaining segments proceeds along similar lines to the case of exterior covariant derivatives and poses no additional complications. 

We also note that when $\dim M = 3$, only the first two segments are relevant in any case, as $\Ckm{k}{m} = \BRK{0}$ for $k > \dim M$.
\begin{itemize}
\item For the operators in \eqref{eq:AD_pallete}, we recognize:
\[
A_{D,0} = \Def, \qquad 
A_{D,1} = \D_{\Gamma}\Rm_{g}, \qquad 
A_{D,2} = \dBianchi,
\]
where, more specifically:
\beq
\begin{aligned} 
&A_{0} = \Def, \quad &&A_{0}^* = \delBianchi, \quad &&D_{0} = 0, \quad &&B_{0} = |_{\dM}, \quad &&B_{0}^* = (\PnD\cdot)^{\sharp}, \\[0.5em]
&A_{1} = \Hg, \quad &&A_{1}^* = \Hg^*, \quad &&D_{1} = - D_{g}+\Gamma, \quad 
&&B_{1} = \PttG \oplus \frakT_{g}, \quad &&B_{1}^* =  \frakT^*_{g} \oplus -\PnnG, \\[0.5em]
&A_{2} = \dBianchi, \quad &&A_{2}^* = \delBianchi, \quad &&D_{2} = 0, \quad 
&&B_{2} = \PtD, \quad &&B_{2}^* = \PnD_{g}.
\end{aligned}
\label{eq:recongized_A_curvature}
\eeq
Note that for $\alpha = 1$, we obtain the required form of \eqref{eq:AD_pallete} due to \eqref{eq:Rm_variation}.

The fact that these systems satisfy the required Green's formulae in \eqref{eq:integration_example}, as well as the normality conditions, is due to basic facts about Bianchi forms found in \secref{sec:Appendix}. Specifically,  
\beq
\begin{aligned}
&\bra \Def{X},\sigma\ket = \bra X|_{\dM},\delBianchi\sigma\ket + \bra Y,(\PnD\sigma)^{\sharp_{g}}\ket, \\[0.5em]
&\bra \Hg\psi,\eta\ket = \bra\psi,\Hg^*\eta\ket + \bra \PttG\psi,\frakT_{g}^*\eta\ket - \bra \frakT_{g}\psi,\PnnG\eta\ket, \\[0.5em]
&\bra d_{g}\psi,\eta\ket = \bra \psi,\delta_{g}\eta\ket + \bra \PtD\psi,\PnD_{g}\eta\ket. 
\end{aligned}
\label{eq:Hg_Green}
\eeq
\item As for the trace operator \eqref{eq:trace_operator_pallete}, we set $M_{\alpha}=0$ uniformly. For the operators $S_\alpha$ required in \eqref{eq:trace_operator_pallete}, set: 
\[
S_{0} = \id, \qquad S_{1} = S_{g}, \qquad S_{2} = \id.
\]
So in \eqref{eq:curvature_complex} we recognize due to \eqref{eq:variation_normal_second_form}: 
\[
T_{0} = |_{\dM}, \qquad T_{1} = \PttG\oplus\D\Ah_{g}=S_{g}(\PttG \oplus \frakT_g), \qquad T_{2} = \PtD. 
\]
\item Finally, for the boundary operators, we recognize:
\[
Q_{K,0} = 0, \qquad 
Q_{K,1} = -\Defd \oplus -\Defh, \qquad 
Q_{K,2} = -\D\Gh_{\gD,\Ah_{g}} \oplus -\D\MCh_{\gD,\Ah_{g}}.
\]
More specifically, using the identifications:
\beq
\frakX_{M}|_{\dM} \simeq \frakX_{\dM} \oplus \Gamma(N\dM), \qquad 
\Gamma(N\dM) \simeq C^{\infty}_{\dM} \simeq \scrC^{0,0}_{\dM},
\label{eq:vector_field_decompostion_curvature}
\eeq
we find using the expressions for $\D\Gh_{\gD,\Ah_{g}}$, $\D\MCh_{\gD,\Ah_{g}}$, $\Defd$, and $\Defh$ from \eqref{eq:boundary_section_prescribed}, along with the variation formula in \corrref{corr:deformation_h}, that the above systems take the form:
\beq
\begin{aligned}
Q_{K,1}&=-\Defd \oplus -\Defh = \underbrace{(-\Defd \sqcup \mathrm{Hess}_{\gD})}_{:=Q_{1}} + K_{1}, \\[0.5em]
Q_{K,2}&=-\D\Gh_{\gD,\Ah_{g}} \oplus -\D\MCh_{\gD,\Ah_{g}} = \underbrace{(- H_{\gD} \sqcup -2d_{\gD})}_{:=Q_{2}} + K_{2},
\end{aligned}
\label{eq:systems_curvature_overdetermined}
\eeq
where $K_{1}$ and $K_{2}$ are given by:
\[
\begin{aligned}
K_{1}(Y^{\parallel},Y^{\bot}) &= (0, \calL_{Y^{\parallel}} \Ah_{g} + Y^{\bot} \calS_{g} \Ah_{g}), \\[0.5em]
K_{2}(\rho, \tau) &= (\frac{1}{2}D_{\gD} \rho - \tau \wedge \Ah_{g}, \dertZero d_{\gD + t \rho} \Ah_{g} ).
\end{aligned}
\]
\end{itemize}

We take a moment to verify that the systems in the last item indeed satisfy the required properties in \eqref{eq:QC_pallete}:

\begin{proposition}
\label{prop:boundary_compact_petrubation_prescribed}
The decomposition $Q_{K,\alpha} = Q_{\alpha} + K_{\alpha}$ falls into the form \eqref{eq:DplusC}.
\end{proposition}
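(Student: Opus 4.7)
The plan is to verify that $K_1$ and $K_2$, as defined following \eqref{eq:systems_curvature_overdetermined}, constitute compact lower-order perturbations of $Q_1$ and $Q_2$, respectively, in the sense of \eqref{eq:DplusC}--\eqref{eq:comapct_mapping_proprety}. This ultimately reduces to a componentwise comparison of orders in the Douglas--Nirenberg calculus, combined with the Rellich--Kondrachov compact embedding theorem.

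First, I would make explicit the matrix structure of the operators in question. Using the decomposition $\frakX_M|_{\dM} \simeq \frakX_{\dM} \oplus C^{\infty}_{\dM}$ from \eqref{eq:vector_field_decompostion_curvature} and the variation formula in \corrref{corr:deformation_h}, one writes $K_1$ as a $2\times 2$ matrix with entries $(1,1)=0$, $(1,2)=0$, $(2,1)=\calL_{(\cdot)}\Ah_g$ (first-order in the tangential part), and $(2,2)=\calS_g\Ah_g\,\cdot$ (zero-order multiplication). The corresponding matrix for $Q_1=-\Defd\sqcup\mathrm{Hess}_{\gD}$ has $(1,1)=-\Defd$ of order $1$, $(2,2)=\mathrm{Hess}_{\gD}$ of order $2$, and off-diagonal zeros. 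For $\alpha=2$, one uses the expressions in \eqref{eq:boundary_section_prescribed}: $K_2$ has $(1,1)=\tfrac{1}{2}D_{\gD}$ of order $0$, $(1,2)=-(\cdot)\wedge \Ah_g$ of order $0$, $(2,1)=\dertZero d_{\gD+t(\cdot)}\Ah_g$ of order $1$, and $(2,2)=0$, while $Q_2=-H_{\gD}\sqcup-2d_{\gD}$ has $(1,1)$ of order $2$ and $(2,2)$ of order $1$.

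Next, I would select basic tuples $(J_0,L_0;I_0,K_0)$ for $Q_\alpha$ consistent with its overdetermined ellipticity (in particular, with the conditions in \eqref{eq:overdetermined_ellipticies_examples_refined}) and sufficiently generous to accommodate every nonzero entry of $K_\alpha$ as a strictly lower-order perturbation. Concretely, for $\alpha=1$, assigning both domain components of $\frakX_M|_{\dM}$ the weight $2$ and the codomain components of $\scrC^{1,1}_{\dM}\oplus\scrC^{1,1}_{\dM}$ the weights $1$ and $0$ yields sharp tuples for $Q_1$ in the sense of \defref{def:sharp_tuples}, and forces each nonzero entry of $K_1$ to factor through a strictly higher-regularity Sobolev class in the codomain: for example, $\calL_{(\cdot)}\Ah_g$ maps $W^{2,2}$ continuously into $W^{1,2}$, which embeds compactly into $L^{2}$. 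A parallel choice works for $\alpha=2$. Rellich compact embedding then delivers the required compact continuous extension \eqref{eq:comapct_mapping_proprety}, thereby establishing \eqref{eq:DplusC}.

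The main obstacle lies in this second step. A naive componentwise comparison of differential orders fails, since $K_\alpha$ carries entries---such as $K_1$'s $(2,1)$ entry $\calL_{(\cdot)}\Ah_g$ or $K_2$'s $(2,1)$ entry $\dertZero d_{\gD+t(\cdot)}\Ah_g$---whose order strictly exceeds that of the corresponding null entries of $Q_\alpha$. The resolution is to exploit the nonuniqueness of Douglas--Nirenberg weights: there is freedom to inflate the basic tuples for $Q_\alpha$ consistently with its overdetermined ellipticity, and this inflation absorbs the order excess of $K_\alpha$'s entries into strict order differences at the symbol level. Identifying this choice explicitly, verifying that it remains sharp for $Q_\alpha$, and tracking the resulting compact mapping properties constitute the technical heart of the argument.
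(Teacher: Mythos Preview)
Your proposal is correct and follows essentially the same approach as the paper: write out the matrix components of $Q_\alpha$ and $K_\alpha$, identify the problematic off-diagonal entries of $K_\alpha$ whose order exceeds that of the corresponding (zero) entries of $Q_\alpha$, and resolve this by choosing Douglas--Nirenberg weights with enough slack so that Rellich embedding yields the required compactness. The paper carries this out concretely by introducing free parameters $s,s',t,t'$ for the Sobolev exponents and solving the resulting inequalities (arriving at $s+1>s'$ and $t=t'+1$), whereas you frame the same freedom more abstractly via sharp tuples; the content is identical, though your appeal to ``consistency with overdetermined ellipticity'' is not logically needed here, since the compactness check is independent of that property.
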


\begin{proof}
Relabeling the exponents $t_{l}$ in \eqref{eq:strict_tuples0} as $t, s, t', s' \in \bbR$, the following mappings are identified as non-compact (corresponding to $\bD_{0}$ in \eqref{eq:DplusC}):
\beq
\begin{aligned}
\Defd &: W^{s+2,2} \frakX_{\dM} \rightarrow W^{s+1,2} \scrC^{1,1}_{\dM}, \\[0.5em]
\mathrm{Hess}_{\gD} &: W^{s'+2,2} \scrC^{0,0}_{\dM} \rightarrow W^{s',2} \scrC^{1,1}_{\dM}, \\[0.5em]
H_{\gD} &: W^{t+2,2} \scrC^{1,1}_{\dM} \rightarrow W^{t,2} \scrC^{2,2}_{\dM}, \\[0.5em]
d_{\gD} &: W^{t'+2,2} \scrC^{1,1}_{\dM} \rightarrow W^{t'+1,2} \scrC^{1,2}_{\dM}.
\end{aligned}
\label{eq:overdetermined_relevant_curvature}
\eeq

In contrast, the map that constitute $K_{1}$ and $K_{2}$ (corresponding collectively to $\mathfrak{K}$ in \eqref{eq:DplusC}) operate as:
\[
\begin{aligned}
(Y^{\parallel},Y^{\bot}) &\mapsto \calL_{Y^{\parallel}} \Ah_{g} + Y^{\bot} \calS_{g} \Ah_{g} : 
W^{s+2,2} \frakX_{\dM} \oplus W^{s'+2,2} \scrC^{0,0}_{\dM} \rightarrow W^{\min(s+1, s'+2),2} \scrC^{1,1}_{\dM}, \\[0.5em]
\tau &\mapsto \tau \wedge \Ah_{g} : 
W^{t'+2,2} \scrC^{1,1}_{\dM} \rightarrow W^{t'+2,2} \scrC^{2,2}_{\dM}, \\[0.5em]
\rho &\mapsto \dertZero d_{\gD + t \rho} \Ah_{g} : 
W^{t+2,2} \scrC^{1,1}_{\dM} \rightarrow W^{t+1,2} \scrC^{1,1}_{\dM}.
\end{aligned}
\]
To align these mappings with the negligible component $\mathfrak{K}$ in \eqref{eq:DplusC}, we observe that the compactness of the mapping above, when compared to the non-compact components in \eqref{eq:overdetermined_relevant_curvature}, depends on the choices of $t, s, t', s'$. 

Specifically, in our setting, we require the following inclusions to be compact:
\[
\begin{split}
&W^{\min(s+1, s'+2),2} \scrC^{1,1}_{\dM} \hookrightarrow W^{s',2} \scrC^{1,1}_{\dM}, \\[8pt]
&W^{t'+2,2} \scrC^{2,2}_{\dM} \hookrightarrow W^{t,2} \scrC^{1,1}_{\dM}, \\[8pt]
&W^{t+1,2} \scrC^{1,1}_{\dM} \hookrightarrow W^{t'+1,2} \scrC^{1,1}_{\dM}.
\end{split}
\]
By choosing $s+1 > s'$ and $t = t'+1$, the required inclusions hold, ensuring that $K_{1}$ and $K_{2}$ define compact perturbations to the mappings in \eqref{eq:overdetermined_relevant_curvature}.
\end{proof} 
We proceed with the verification of the order-reduction properties required in \eqref{eq:order_reduction_conditions_examples}:
\begin{proposition}
\label{prop:reduction_curvature}
When \eqref{eq:curvature_complex} is cast into the template \eqref{eq:A_pallete}, it satisfies the algebraic order-reduction properties required in \eqref{eq:order_reduction_conditions_examples}.
\end{proposition}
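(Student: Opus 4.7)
The plan is to verify conditions (i)--(iii) of \eqref{eq:order_reduction_conditions_examples} segment by segment, each time interpreting the required bound as the linearization of the gauge equivariances \eqref{eq:boundary_symmetries_intro}--\eqref{eq:gauge_group_Rm} and the geometric constraints \eqref{eq:contraints_Rm_intro}, following the strategy previewed in \eqref{eq:order_reduction_gauge}--\eqref{eq:order_reduction_curvature}.

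For condition (i) on the interior compositions $A_{D,\alpha+1}A_{D,\alpha}$, at level $\alpha=0$ differentiating the identity $\Rm_{\phi_t^*g} = \phi_t^*\Rm_g$ along the flow of $X\in\frakX_M$ gives
\[
\Rm_g'(\Def X) \;=\; \tfrac12\,\calL_X\Rm_g,
\]
a first-order expression in $X$, matching $m_0=1$. At level $\alpha=1$, linearizing the differential Bianchi identity $\dBianchi\Rm_g=0$ along $\sigma\in\Ckm{1}{1}$ yields $\dBianchi\Rm_g'\sigma = -(\dBianchi)'(\sigma)\Rm_g$, which is first-order in $\sigma$ and hence within $m_1=2$. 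For $\alpha\geq 2$, the composition $\dBianchi\dBianchi$ reduces to a tensorial curvature action on Bianchi forms, as recalled in \secref{sec:Appendix} and in \cite{KL21a}.

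For condition (ii), the boundary compositions decouple across the disjoint-union structure of $Q_\alpha$ exhibited in \eqref{eq:systems_curvature_overdetermined}. The case $\alpha=0$ is trivial since $Q_0=0$; at $\alpha=1$ one analyzes $H_{\gD}\Defd$ and $d_{\gD}\mathrm{Hess}_{\gD}$ via the boundary equivariances $\Rm_{\varphi^*\gD}=\varphi^*\Rm_{\gD}$ and the commutation of $d_{\gD}$ with $\mathrm{Hess}_{\gD}$ modulo tensorial curvature terms; for higher $\alpha$ the bound is again furnished by the boundary analogue of $d_{\gD}d_{\gD}$ being tensorial.

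Condition (iii) is the most delicate, as it couples interior and boundary operators. In the Neumann case, at level $\alpha=0$, the linearization of \eqref{eq:boundary_symmetries_intro}, taking the $t=0$ derivative of $(\PttG(\tilde\varphi_t^*g),\Ah_{\tilde\varphi_t^*g})=(\varphi_t^*\gD,\varphi_t^*\Ah_g)$ along an extension $\tilde\varphi_t$ of the boundary flow of $Y=X|_{\dM}$, yields precisely the vanishing of $T_1\Def X - (\Defd\oplus\Defh)(X|_{\dM})$ modulo zero-order tensorial terms absorbed by the correcting operators $K_\alpha$ identified in \propref{prop:boundary_compact_petrubation_prescribed}. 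At $\alpha=1$, the linearized Gauss--Mainardi--Codazzi constraint
\[
\PtD\Rm_g'\sigma \;=\; (\Gh'_{\gD,\Ah_g}\oplus\MCh'_{\gD,\Ah_g})\bigl(\PttG\sigma,\frakT_g\sigma\bigr),
\]
valid at admissible data satisfying \eqref{eq:GMC_data_intro}, is exactly the order-zero reduction of $T_2 A_{D,1}+Q_2 T_1$ after applying $S_g$. For $\alpha\geq 2$ one uses the boundary-Bianchi identity $\PtD\dBianchi = d_{\gD}\PtD$, which reduces the mixed composition to a tensorial term. The class bounds $\mathrm{class}(\cdot)\leq m_\alpha$ and $r_{k,\alpha}\leq m_\alpha$ are read off by inspecting the maximal number of normal derivatives appearing in each trace operator in \eqref{eq:recongized_A_curvature}. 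In the Dirichlet case, the same linearized identities, read in the opposite direction, give that if the lower-level boundary data vanish then so does the higher-level trace applied to $A_{D,\alpha}$, which is exactly the implication $(B_{\alpha+1}\oplus W_{\alpha+1})A_{D,\alpha}=0$ on $\ker(B_\alpha\oplus W_\alpha)$.

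The main obstacle I anticipate is the bookkeeping within the Douglas--Nirenberg calculus of how the zero-order correcting operators $K_\alpha$ from \propref{prop:boundary_compact_petrubation_prescribed} interact with the linearized cancellations above: one must verify that they remain of the required low order and class after composition with $T_\alpha$ and the variation formulas in \propref{prop:variation_normal}, without disturbing the cancellations produced by the nonlinear identities.
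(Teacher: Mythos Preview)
Your overall strategy---linearizing the gauge equivariances \eqref{eq:boundary_symmetries_intro}--\eqref{eq:gauge_group_Rm} and the constraints \eqref{eq:contraints_Rm_intro}---is exactly the paper's approach, and your treatment of condition (i) matches the paper's proof. Two points deserve correction, however. First, for condition (ii) you propose to handle the leading parts $Q_2Q_1$ (namely $H_{\gD}\Defd$ and $d_{\gD}\mathrm{Hess}_{\gD}$) and then argue that the $K_\alpha$-perturbations from \propref{prop:boundary_compact_petrubation_prescribed} are harmless. This decomposition is precisely what creates the obstacle you flag: individual cross terms such as $d_{\gD}\calL_{Y^\parallel}\Ah_g$ (arising from $Q_2K_1$) are a~priori of order $2$ in $Y^\parallel$, exceeding the required bound, and only cancel against the $K_2Q_1$ contribution $\dertZero d_{\gD+t\Defd Y}\Ah_g$ via the naturality identity $d_{\varphi^*\gD}(\varphi^*\Ah_g)=\varphi^*(d_{\gD}\Ah_g)$. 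The paper avoids this entirely by computing the full compositions $\Gh'_{\gD,\Ah_g}(\Defd Y,\Defh Y)$ and $\MCh'_{\gD,\Ah_g}(\Defd Y,\Defh Y)$ directly from the undecomposed geometric equivariance \eqref{eq:gauge_GMC_intro}, so the cancellations are built in. Second, your restriction ``valid at admissible data satisfying \eqref{eq:GMC_data_intro}'' for condition (iii) at $\alpha=1$ is unnecessary and, taken literally, insufficient: the Neumann requirement is an operator-level order bound on all of $\Ckm{1}{1}$, not on a subspace. The Gauss--Mainardi--Codazzi identity $\PtD\Rm_g=(\Gh_{\gD,\Ah_g},\MCh_{\gD,\Ah_g})$ holds at \emph{every} metric $g$, so linearizing it at $g$ gives the operator identity (modulo order-zero, class-one remainders) on all $\sigma$, which is what the paper uses.
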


As promised in \secref{sec:prescribed_curvature_intro}, and specifically in \eqref{eq:order_reduction_curvature}, the proof relies on the geometric variation formulae established above, along with variation formulae for the differential Bianchi identity and the Gauss–Mainardi–Codazzi equations. Since the proof involves more than mere calculations and, as outlined in \eqref{eq:order_reduction_curvature}, emphasizes the key idea that linearizing geometric constraints yields the order-reduction property, we include it here:
\begin{proof}
As explained earlier, we focus on the segments $\alpha \leq 2$ in the diagram. Again, establishing the order reduction properties for the higher segments in \eqref{eq:curvature_complex} follow the same lines as the example of exterior covariant derivatives. Also, for these properties, it suffices to assume without loss of generality that $\Gamma=0$ since $\Gamma$ is tensorial. 

The order-reduction properties for the $\alpha = 0$ segment are trivially satisfied since the preceding segment consists of zero maps.

\subsubsection{The $\alpha = 1$ segment.} 
Here, $m_{1} = 1$ (the order of $\Def$). The order-reduction properties, as required correspondingly by \eqref{eq:order_reduction_conditions_examples}, amount to the following:

\begin{enumerate}
\item[\emph{(i)}] The linearized version of the invariance of the mapping $g \mapsto \Rm_{g}$ under pullback by diffeomorphisms elaborated upon in \eqref{eq:gauge_group_Rm} is evidently: 
\[
\D\Rm_{g} \Def{X} = \frac{1}{2}\calL_{X} \Rm_{g}, \qquad \text{(order 1)},
\]
Note that this identity holds for every $X \in \frakX_{M}$, not only for vector fields $X$ tangent to the boundary (i.e., those generating one-parameter groups of diffeomorphisms). This can be justified either through an approximation argument in $L^{2}$, where $X$ is approximated by compactly supported vector fields, or by performing an explicit linear-level computation.

\item[\emph{(ii)}] Since $\bD_{0}$ in \eqref{eq:curvature_complex} does not involve operations between boundary sections, there is nothing to verify.

\item[\emph{(iii)}] Similarly, by linearizing the gauge invariance \eqref{eq:boundary_symmetries_intro}, along with the variation formulae in \propref{prop:variation_normal} and \corrref{corr:deformation_h}, we find by letting $X|_{\dM}=Y$:
\[
(\PttG \oplus \D\Ah_{g}) \Def{X} - (\Defd{Y}, \Defh{Y}) = (Y^{\bot} \Ah_{g}, 0), \qquad \text{(order 0, class 1)}.
\]
\end{enumerate}

\subsubsection{The $\alpha = 2$ segment.} 
Here, $m_{2} = 2$ (the order of $\Hg$).

\begin{enumerate}
\item[\emph{(i)}] By linearizing the differential Bianchi identity as in \eqref{eq:order_reduction_curvature}:
\[
d_{g+t\sigma} \Rm_{g+t\sigma} = 0,
\]
we deduce:
\[
\ord(\dBianchi \D\Rm_{g}) \leq 1.
\]

\item[\emph{(ii)}] For the boundary sections, note that both $\Defd$ and $\Defh$ are first-order differential operators. Thus, the maximum order $\max_{k}(q_{k,1}^l)$ in \eqref{eq:order_reduction_conditions_examples} is 1.  

As with $\D\Rm_{g}\Def{X}$ above, by linearizing the invariance relation \eqref{eq:gauge_GMC_intro} and using the explicit expression \eqref{eq:GMC_intro}, one finds:
\[
\D\Gh_{\gD,\Ah_{g}}(\Defd Y, \Defh Y) = \calL_{Y^{\parallel}} \Rm_{\gD} + \Defh Y \wedge \Ah_{g}, \qquad \text{(order 1)}.
\]
To prove this identity, one can alternatively use the expansion of $\D\Gh_{\gD,\Ah_{g}}$ in \eqref{eq:boundary_section_prescribed}. A similar approach applies to $\D\MCh_{\gD,\Ah_{g}}$. For the sake of variety, let us do this explicitly using the variation formula for $\Defh$ in \corrref{corr:deformation_h}:

\[
\MCh_{\gD,\Ah_{g}}(\Defd Y, \Defh Y) = \dertZero d_{\gD+t\Defd Y} \Ah_{g} + d_{\gD} (\calL_{Y^{\parallel}} \Ah_{g} - \mathrm{Hess}_{\gD} Y^{\bot} + Y^{\bot} \calS_{g} \Ah_{g}).
\]
Using the naturality of the connection we find that the first expression on the right simplifies into: 
\[
\dertZero d_{\gD+t\Defd Y} \Ah_{g} + d_{\gD} \calL_{Y^{\parallel}} \Ah_{g} = \calL_{Y^{\parallel}} d_{\gD} \Ah_{g},
\]
which is a first-order differential operator in $Y$. Furthermore:
\[
\begin{aligned}
&d_{\gD} \mathrm{Hess}_{\gD} Y^{\bot} = \mathrm{R}_{\gD}dY^{\bot}, \\
&d_{\gD}(Y^{\bot} \calS_{g} \Ah_{g}) = dY^{\bot} \wedge \calS_{g} \Ah_{g} + Y^{\bot} d_{\gD} \calS_{g} \Ah_{g},
\end{aligned}
\]
are also first-order differential operations in $Y$. All in all:
\[
\ord(\D\Gh_{\gD,\Ah_{g}}\circ(\Defd \oplus \Defh )) \leq 1, \qquad \ord(\D\MCh_{\gD,\Ah_{g}}\circ(\Defd \oplus \Defh )) \leq 1
\]

\item[\emph{(iii)}] Finally, the remaining conditions follow by linearizing the Gauss-Mainardi-Codazzi equations for a variation $g + t\sigma$, which reads as in \eqref{eq:GMC_intro}: 
\[
\begin{aligned}
&\PtD\Rm_{g+t\sigma}-(\Rm_{\gD+t\PttG\sigma} + \frac{1}{2}\Ah_{g+t\sigma} \wedge \Ah_{g+t\sigma}, d_{\gD+t\PttG\sigma} \Ah_{g+t\sigma}) = 0.
\end{aligned}
\]
Using \propref{prop:variation_normal}, the definition of $S_{g}$ in \eqref{eq:Sg_prescribed}, and the variation formulae for $n_{g}$, $\Rm_{g}$, and $d_{g}$, we find, as outlined in \eqref{eq:order_reduction_curvature},
\[
\begin{aligned}
&\ord(\PtD \D\Rm_{g} - (\D\Gh_{\gD,\Ah_{g}} \circ (\PttG \oplus \D\Ah) \oplus \D\MCh_{\gD,\Ah_{g}} \circ (\PttG \oplus \D\Ah))) = 0, \\
&\mathrm{class}(\PtD \D\Rm_{g} - (\D\Gh_{\gD,\Ah_{g}} \circ (\PttG \oplus \D\Ah) \oplus \D\MCh_{\gD,\Ah_{g}} \circ (\PttG \oplus \D\Ah))) = 1.
\end{aligned}
\]

\end{enumerate}
\end{proof}
With the required Green's formulae and order-reduction properties for \eqref{eq:curvature_complex} established, it remains to establish the Neumann overdetermined ellipticities in \eqref{eq:overdetermined_ellipticies_examples_refined}. These become by order of appearance, by direct comparison with the recognized systems in \eqref{eq:recongized_A_curvature} and \eqref{eq:systems_curvature_overdetermined}:
\begin{proposition} 
\label{prop:curvature_overdetermined}
The following systems are overdetermined elliptic:
\beq
\begin{split}
&\begin{pmatrix} \Def & 0 \\ 0 & 0 \end{pmatrix}, \\[6pt]
&\begin{pmatrix} \Hg \oplus \delBianchi & 0 \\ (\PnD_{g})^{\sharp_{g}} & 0 \end{pmatrix} 
\qquad \text{and} \qquad 
\begin{pmatrix} 0 & 0 \\ 0 & \Defd \sqcup \mathrm{Hess}_{\gD} \end{pmatrix}, \\[6pt]
&\begin{pmatrix} \dBianchi \oplus \Hg^* & 0 \\ \frakT_{g}^* \oplus -\PnnG & 0 \end{pmatrix} 
\qquad \text{and} \qquad 
\begin{pmatrix} 0 & 0 \\ 0 & (H_{\gD} \sqcup d_{\gD}) \oplus (\delta_{\gD} \sqcup \mathrm{Hess}_{\gD}^*) \end{pmatrix}.
\end{split}
\label{eq:overdetermined_ellipticity_curvature}
\eeq
\end{proposition}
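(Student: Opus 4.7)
The plan is to verify each of the three blocks separately, leveraging the weighted-symbol framework of \secref{sec:douglas_nirenberg_lop}, in particular the Lopatinski-Shapiro-type criterion of \thmref{thm:lopatnskii_shapiro}, and reusing as much as possible of the symbol computations already carried out for the Calabi pre-complex in \cite{KL23}. In all three cases, the interior symbols are fiberwise linear-algebraic objects over $T^*M$ (or $T^*\dM$) whose injectivity reduces to known facts about Bianchi forms, while the real work is to confirm the boundary part of the weighted symbol.

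For the first block, the Killing-operator system $\bD_0$ has trivial boundary entry, so its overdetermined ellipticity amounts to the injectivity of the interior symbol of $\Def$, which is the classical fact that $\sigma(\Def)(x,\xi)X = \tfrac{1}{2}(\xi\otimes X^\flat + X^\flat\otimes \xi)$ is injective for $\xi\ne 0$ (cf.\ \cite{Cal61,KL23}). For the second and third blocks I will treat the pure-boundary matrices and the interior-plus-boundary matrices independently, as permitted by the disjoint-union structure (\secref{sec:disjoint_union}). The pure-boundary pieces $\Defd\sqcup\mathrm{Hess}_{\gD}$ and $(H_{\gD}\sqcup d_{\gD})\oplus(\delta_{\gD}\sqcup \mathrm{Hess}_{\gD}^*)$ are systems of differential operators on the \emph{closed} manifold $\dM$; since boundary symbols are vacuous here, overdetermined ellipticity reduces to injectivity of their principal symbols. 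For $\Defd$ this again follows from the Killing-symbol calculation, for $\mathrm{Hess}_{\gD}$ it is standard, and for $H_{\gD}\sqcup d_{\gD}$ and its adjoint it is exactly the symbol computation already performed in \cite[Sec.~5]{KL23} for the Calabi-type operators acting on Bianchi forms.

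For the interior-plus-boundary matrix $\Hg\oplus\delBianchi$ with boundary operator $(\PnD_g)^{\sharp_g}$, and symmetrically for $\dBianchi\oplus\Hg^*$ with boundary $\frakT_g^*\oplus-\PnnG$, I will apply \thmref{thm:lopatnskii_shapiro}. The interior symbol analysis is already contained in \cite[Sec.~5]{KL23}: the symbol of $\Hg$ on Bianchi forms decomposes (after splitting by $\xi'\wedge$, $dr\wedge$, $\ixi$ and $\idr$ as in the proof of \propref{prop:DD_exterior}) into blocks that, together with $\sigma(\delBianchi)=-\iota\,\trace_\xi$, yield an injective symbol and a tractable ODE system on $\overbar{\bbR}_+$. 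The plan is to import from \cite{KL23} the explicit parametrization of the decaying half-line solutions $\bbM_{x,\xi'}^+$ for these ODEs, and then verify by direct multilinear algebra that the initial-condition map $\Xi_{x,\xi'}$ induced by $(\PnD_g)^{\sharp_g}$ (respectively $\frakT_g^*\oplus -\PnnG$) is injective on $\bbM_{x,\xi'}^+$. In the first case, evaluating $(\PnD\cdot)^{\sharp_g}$ on a decaying solution forces the boundary data of the normal-tangential block to vanish, from which the remaining blocks vanish by the ODE. In the second case, the pair $(\frakT_g^*\oplus -\PnnG)$ is tailored precisely to pick up the two components $(\xi'\wedge\cdot,dr\wedge\cdot)$ in the decomposition and kill the generating constants of the decaying modes of $\dBianchi\oplus\Hg^*$.

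The main obstacle I anticipate is the third block: because $\Hg^*$ is a fourth-order operator on Bianchi double-forms and the boundary system $\frakT_g^*\oplus-\PnnG$ mixes tangential and normal components in a non-standard way, bookkeeping the half-line ODE and solving it in closed form is genuinely intricate. My strategy is to circumvent a brute computation by invoking the self-adjointness of $\Hg$ with Green's formula \eqref{eq:Hg_Green} (transposing the statement established for block two), and thus deducing injectivity of the weighted symbol of the third block from the injectivity of the second via \thmref{thm:overdetermined_varying_orders}(4): namely, an overdetermined elliptic injective system admits a left inverse in the calculus, whose adjoint (within the calculus) produces a right inverse for the dual system, which by \propref{prop:estimate_overdetermined_sufficent_douglas} is equivalent to overdetermined ellipticity of that dual. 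This duality shortcut should reduce the whole proposition to verifying the first two blocks plus the closed-manifold ellipticities on $\dM$, all of which are either classical or already carried out in \cite{KL23}.
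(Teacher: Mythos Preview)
Your duality shortcut for the third block does not work. First, the third block's interior-plus-boundary system $\dBianchi \oplus \Hg^*$ on $\Ckm{2}{2}$ with boundary $\frakT_{g}^* \oplus -\PnnG$ is not the adjoint of the second block's $\Hg \oplus \delBianchi$ on $\Ckm{1}{1}$ with boundary $(\PnD_{g})^{\sharp_{g}}$: they act on different spaces and sit at consecutive levels $\alpha=1,2$ of the pre-complex, not as a dual pair (and $\Hg$ is not self-adjoint; $\Hg^*$ maps $\Ckm{k+1}{m+1}\to\Ckm{k}{m}$). Second, even granting some identification, the argument runs the wrong way: a left inverse $\frakS$ for an overdetermined-elliptic injective $\bD$ yields $\bD^*\frakS^* = \id$, a \emph{right} inverse for $\bD^*$, which certifies surjectivity of the symbol (underdetermined ellipticity), not injectivity. \propref{prop:estimate_overdetermined_sufficent_douglas} equates overdetermined ellipticity with an a priori estimate, and a right inverse does not produce one. (Also, $\Hg^*$ is second-order, not fourth.)

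The paper's route is a uniform reduction with no duality: under the musical isomorphism $\frakX_{M} \simeq \Ckm{0}{1}$, the operators $\Def$, $\Defd$, $\mathrm{Hess}_{\gD}$ become $\dBianchi$, $d_{\gD}$, $H_{\gD}$; unraveling the disjoint unions then reduces every system in \eqref{eq:overdetermined_ellipticity_curvature} to a short list of model Green operators whose Lopatinski--Shapiro conditions are verified directly in \cite[Sec.~5.2]{KL23}. The third block is handled there by the same half-line ODE analysis you correctly outline for the second. A smaller gap: for block one, a trivial boundary entry does not by itself reduce overdetermined ellipticity to interior-symbol injectivity---one must also show $\bbM_{x,\xi'}^+ = \{0\}$, which is true for $\Def$ but requires its own argument.
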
 
\begin{proof}
To analyze these systems, we first apply the musical isomorphism and identify $\frakX_{M}$ with $\Ckm{0}{1}$. Under this identification, the operators $\Def$ and $\Defd$ correspond respectively to the Bianchi derivatives $\dBianchi$ and $d_{\gD}$, while the Hessian and its adjoint correspond to $H_{\gD}$ and $H_{\gD}^*$ when acting on $\Ckm{0}{0}$ and $\Ckm{1}{1}$, respectively.

By unraveling the disjoint union structure, we find that the overdetermined ellipticity of the systems in \eqref{eq:overdetermined_ellipticity_curvature} is equivalent (up to signs and scalar factors) to that of the following model systems:
\beq
\begin{aligned}
&\begin{pmatrix} \dBianchi & 0 \\ 0 & 0 \end{pmatrix}, \\[6pt]
&\begin{pmatrix} \Hg \oplus \delBianchi & 0 \\ \PnD_{g} & 0 \end{pmatrix}, 
\qquad 
\begin{pmatrix} 0 & 0 \\ 0 & d_{\gD} \end{pmatrix}, 
\qquad 
\begin{pmatrix} 0 & 0 \\ 0 & H_{\gD} \end{pmatrix}, \\[6pt]
&\begin{pmatrix} \dBianchi \oplus \Hg^* & 0 \\ \frakT_{g}^* \oplus \PnnG & 0 \end{pmatrix}, 
\qquad 
\begin{pmatrix} 0 & 0 \\ 0 & H_{\gD} \oplus \delta_{\gD} \end{pmatrix}, 
\qquad 
\begin{pmatrix} 0 & 0 \\ 0 & d_{\gD} \oplus H_{\gD}^* \end{pmatrix},
\end{aligned}
\eeq
with domains and codomains understood from context.

The verification that these systems satisfy the Lopatinski–Shapiro condition (cf. \thmref{thm:lopatnskii_shapiro}) is exactly the content of \propref{prop:overdeteremined_ellipticityBianchi}. 


\end{proof}
Each level thus gives rise to its own set of lifted operators and cohomological formulations for the associated boundary value problems, as described in \secref{sec:outline_template}. The most relevant of these theorems to the linearized prescribed curvature problem \eqref{eq:linearized_prescribed_curvatureIntroII} is that for $\alpha = 1$. In this case, the characterization of $\scrN(\bD^*_{\alpha},\bB^*_{\alpha})$ in \eqref{eq:N0_apps} reads as 
\[
\scrN(\bD^*_{0},\bB^*_{0}) = \BRK{(\sigma,(\PnD_{g}\sigma)^{\sharp_{g}}) \in \Ckm{1}{1} \oplus \frakX_{M}|_{\dM} ~:~ \sigma \in \ker\delBianchi}.  
\]
\thmref{thm:compatibility_nn} for $\alpha = 1$ then can be interpreted as the analog of \thmref{thm:curvature_prescription_intro} for the non-homogeneous version of \eqref{eq:prescribed_curvatureLinarizedNon}, and assumes the following form (with the lifted operators implied):  
\begin{theorem}
\label{thm:curvature_prescription_introNN}
Given $T \in \Ckm{2}{2}$ and $(\rho, \tau) \in \plCkm{1}{1} \oplus \plCkm{1}{1}$, 
the boundary-value problem \eqref{eq:linearized_prescribed_curvatureIntroII} admits a solution $(\sigma, Y) \in \Ckm{1}{1} \oplus \frakX_{M}|_{\dM}$ satisfying
\[
\begin{aligned}
& \delBianchi \sigma = 0,
&& \text{in } M, \\[6pt]
& (\PnD_{g}\sigma)^{\sharp_{g}} = Y,
&& \text{on } \dM,
\end{aligned}
\]
if and only if
\[
\mathpzc{d}_{g} T = \mathpzc{K}_{\,\,g}(\rho, \tau), 
\qquad 
\mathpzc{P}^{\frakt\frakt}T = \mathpzc{G}_{\gD,\Ah_{g}}(\rho, \tau),
\qquad 
\mathpzc{P}^{\frakt\frakn}T = \mathpzc{MC}_{\gD,\Ah_{g}}(\rho, \tau),
\qquad 
(T; \rho, \tau) \perp_{L^{2}} \module_{\N}^{2}.
\]
The solution $(\sigma, Y)$ is unique modulo an element of $\module_{\N}^{1}$.
\end{theorem}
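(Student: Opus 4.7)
The plan is to invoke Theorem IV.5 (the cohomological formulation for the general Neumann pattern) at the level $\alpha=1$ of the sequence \eqref{eq:curvature_complex}, having already established in Propositions IV.18, IV.19, and IV.20 that \eqref{eq:curvature_complex} casts as an elliptic pre-complex based on Neumann conditions within the pattern \eqref{eq:A_pallete}. Thus the proof is essentially a matter of identification: one must unpack the abstract statement of Theorem IV.5, substitute the concrete operators identified in \eqref{eq:recongized_A_curvature}--\eqref{eq:systems_curvature_overdetermined}, and confirm that the resulting conditions coincide with those claimed in the theorem.

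I would proceed in three steps. First, recall that at the $\alpha=0$ end of \eqref{eq:curvature_complex} no correction takes place, since $\fbD_{0}=\bD_{0}$ by the base of induction in Section IV.4. Consequently, the mapping $\mathpzc{A}_{0}^{*}=\delBianchi$, the correcting operators $\mathpzc{C}_{0}$, $\mathpzc{K}_{0}$ vanish, and $\mathpzc{Q}_{0}=0$. The gauge conditions in Theorem IV.5 at $\alpha=1$ then collapse to $\delBianchi\sigma=0$ together with $B_{0}^{*}\sigma+S_{0}^{*}Y=0$; by \eqref{eq:recongized_A_curvature} the latter reduces (up to the sign convention fixed by \eqref{eq:Hg_Green}) to $Y=(\PnD_{g}\sigma)^{\sharp_{g}}$, which is the stated gauge. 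Second, identify the compatibility conditions: by \eqref{eq:recongized_A_curvature}, $A_{D,2}=\dBianchi$, $T_{2}=\PtD=\PttG\oplus\PtnG$, and $Q_{K,2}=-\Gh_{\gD,\Ah_{g}}'\oplus -\MCh_{\gD,\Ah_{g}}'$, so the abstract relations $\mathpzc{A}_{2}T+\mathpzc{K}_{2}(\rho,\tau)=0$ and $\mathpzc{T}_{2}T+\mathpzc{Q}_{2}(\rho,\tau)=0$ become, upon renaming the corrected operators by script symbols,
\[
\mathpzc{d}_{g}T=\mathpzc{K}_{\,\,g}(\rho,\tau),\qquad \mathpzc{P}^{\frakt\frakt}T=\mathpzc{G}_{\gD,\Ah_{g}}(\rho,\tau),\qquad \mathpzc{P}^{\frakt\frakn}T=\mathpzc{MC}_{\gD,\Ah_{g}}(\rho,\tau),
\]
which is precisely the list of conditions in the statement. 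Third, the orthogonality condition $(T;\rho,\tau)\perp_{L^{2}}\module_{\N}^{2}$ and the uniqueness modulo $\module_{\N}^{1}$ are direct specializations of the general statement.

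The only nontrivial bookkeeping concerns the interpretation of the composite boundary gauge $B_{0}^{*}\sigma+S_{0}^{*}Y=0$. Here one must make sure that $S_{0}=\id$, which follows from the computation of $T_{0}=|_{\dM}$ and the recognition in \eqref{eq:trace_operator_pallete}, and that the formal adjoint of $B_{0}=|_{\dM}\colon\frakX_{M}\to\frakX_{M}|_{\dM}$ on the right-hand side of the Green formula in \eqref{eq:Hg_Green} is indeed $(\PnD\cdot)^{\sharp_{g}}$, as recorded in \eqref{eq:recongized_A_curvature}. This verification is immediate from the variation of $n_{g}$ in Proposition IV.16 together with the standard Green identity for $\Def$. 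Once these identifications are in place, all remaining content of the theorem is a direct transcription of Theorem IV.5 at $\alpha=1$ — there is no new analysis to perform, and the ``main obstacle'' is therefore purely notational: keeping track of the correspondences between the abstract pattern, the tangential/normal projections of Bianchi forms, and the musical isomorphism used to identify $\frakX_{M}$ with $\Ckm{0}{1}$.
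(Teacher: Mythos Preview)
Your proposal is correct and follows essentially the same approach as the paper: the paper also derives the theorem as a direct specialization of \thmref{thm:compatibility_nn} at $\alpha=1$, after first recording the explicit description of $\scrN(\bD^{*}_{0},\bB^{*}_{0})$ (which is precisely your identification of the gauge conditions $\delBianchi\sigma=0$ and $(\PnD_{g}\sigma)^{\sharp_{g}}=Y$), and then reading off the compatibility conditions from the corrected $\alpha=2$ operators.
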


\subsubsection{The Dirichlet picture}
For the Dirichlet pre-complex \eqref{eq:curvature_complex_DD}, the construction is even easier, since it fits right into the prototypical case \eqref{sec:dirichelt_special_case}, and at this stage, it can be readily understood from the outline in \secref{sec:prescribed_curvature_intro}. In particular, the algebraic order-reduction properties required in \defref{def:elliptic_pre_complexTem} for the first two segments of \eqref{eq:curvature_complex_DD} follow directly from the calculations in \eqref{eq:order_reduction_gauge} and \eqref{eq:order_reduction_curvature}. As before, the validity of the order-reduction properties for the remaining segments closely resembles the case of exterior covariant derivatives and introduces no additional analytical difficulties.

The only remaining point to address is the required overdetermined ellipticities in \defref{def:elliptic_pre_complexTem}, which, by order of appearance, can be verified through direct comparison with the recognized systems in \eqref{eq:recongized_A_curvature} and \eqref{eq:systems_curvature_overdetermined}.
\begin{proposition} 
\label{prop:curvature_overdeterminedDD}
The following systems are overdetermined elliptic:
\beq
\Def\oplus(\cdot)|_{\dM},\qquad \Hg\oplus\delBianchi\oplus\PttD\oplus\frakT_{g}, \qquad \dBianchi\oplus\Hg^*\oplus \PtD. 
\label{eq:overdetermined_ellipticity_curvatureDD}
\eeq
\end{proposition} 
The verification that these systems satisfy the Lopatinski–Shapiro condition (cf. \thmref{thm:lopatnskii_shapiro}) is, again, exactly the content of \propref{prop:overdeteremined_ellipticityBianchi}.


\subsection{Technical proofs}
\label{sec:techincal_curvature}
\begin{PROOF}{\propref{prop:variation_normal}}
The variation formula for $n_{g}$ and $\nu_{g}$ simply follow by linearizing the relations:
\[
g(n_g;n_{g})=1 \qquad \nu_{g}(X)=g(n_{g};X). 
\]
%

For the variation formula for $\Ah_{g}$, since $\Ah_{g+t\sigma}$ is an element in $\scrC^{1,1}_{\partial M}$ for any $\sigma$, it suffices  evaluating for tangent $X,Y\in\frakX_{M}|_{\partial M}$: 
\[
\begin{split}
\dertZero\Ah_{g+t\sigma}(X;Y)&=\dertZero(g+t\sigma)(\nabla_{X}^{g+t\sigma}n_{g+t\sigma};Y)
\\&=\sigma(\nabg_{X}n_{g};Y)+g(\dertZero\nabla_{X}^{g+t\sigma}n_{g};Y)+g(\nabla_{X}^{g}\dertZero n_{g+t\sigma};Y).
\end{split}
\] 
by definition of $\calS_{g}$, and since $X,Y$ are tangent:
\[
\sigma(\nabg_{X}n_{g};Y)=(\calS_{g}\PttD\sigma)(X;Y).
\]
As for the second term, using the well known variation formula for the Levi-civita connection \cite[p.~559]{Tay11b}:
\[
\begin{split}
g(\dertZero\nabla_{X}^{g+t\sigma}n_{g},Y)&=\frac{1}{2}(\nabg_{X}\sigma)(n_{g};Y)+(\frac{1}{2}(\nabg_{n_{g}}\sigma)(X;Y)-\frac{1}{2}(\nabg_{Y}\sigma)(X;n_{g}))
\\&=\frac{1}{2}X(\sigma(n_{g},Y))-\frac{1}{2}\sigma(\nabg_{X}n_{g};Y)-\frac{1}{2}\sigma(n_{g};\nabg_{X}Y)+\frac{1}{2}\dgV\sigma(X;n_g,Y)
\\&=\frac{1}{2}\nabla^{\jmath^*g}_{X}\PntG\sigma(Y)-\frac{1}{2}(\calS_{g}\PttD\sigma)(X;Y)+\frac{1}{2}\PnnG\sigma\,\Ah_{g}(X;Y)+\frac{1}{2}\PtnD\dgV\sigma(X;Y)
\\& =\frac{1}{2}d_{\nabla^{\jmath^*g}}\PntG\sigma(X;Y)-\frac{1}{2}(\calS_{g}\PttD\sigma)(X;Y)+\frac{1}{2}\PnnG\sigma\,\Ah_{g}(X;Y)+\frac{1}{2}\PtnD\dgV\sigma(X;Y) 
\end{split}
\]
where the definition of exterior covariant derivation as operating on double forms was used (note that $\PntG\sigma\in\Omega^{0,1}_{\partial M}$), along with the fundamental relation: 
\[
\nabg_{X}Y =\nabla^{\jmath^*g}_{X}Y-\Ah_{g}(X;Y)\,n_{g}.
\] 
Finally, expanding the third term in the main calculation:
\[
g(\nabla_{X}^{g}\dertZero n_{g+t\sigma};Y)=-(\nabg_{X}\PntG\sigma)(Y)-\frac{1}{2}\nabg_{X}(\PnnG\sigma\nu_{g})(Y)=-(\nabla^{\jmath^*g}_{X}\PntG\sigma)(Y)-\frac{1}{2}\PnnG\sigma\,\Ah_{g}(X;Y)
\]
where we used the fact that $\PntG\sigma$ has no normal components and the Leibniz rule together with $\nu_{g}(Y)=0$ since $Y$ is tangent. 

By combining these calculations we find that the $\PnnG\sigma\,\Ah_{g}$ terms cancel and we are left with: 
\[
\begin{split}
2\,\dertZero\Ah_{g+t\sigma}(X;Y)=(\calS_{g}\PttD\sigma)(X;Y)-d_{\nabla^{\jmath^*g}}\PntG\sigma(X;Y)+\PtnD\dgV\sigma(X;Y).
\end{split} 
\]
Since $\dertZero\Ah_{g+t\sigma}(X;Y)$ is symmetric, by symmetrization and comparing with the definition of $\frakT_{g}$ and $S_{g}$, we obtain the second identity in \eqref{eq:variation_normal_second_form}. 
\end{PROOF}
\begin{PROOF}{\corrref{corr:deformation_h}}
By setting $X|_{\partial M}=Y$, and using linearity:
\[
\dertZero \Ah_{g+t\calL_{X}g} = \dertZero \Ah_{g+t\calL_{Y^{\parallel} }g} + \dertZero \Ah_{g+t\calL_{Y^{\bot} n_{g}}g}.
\]
Since $Y^{\parallel} $ is tangent to $\partial M$, any extension of it generates a global flow $\varphi_{t}:M\rightarrow M$ restricting as $\varphi_{t}|_{\partial M}: \partial M \to \partial M$. As $\varphi^*_{t} g = g + t \calL_{Y^{\parallel} } g + o(t)$ \cite[p.~44]{Pet16}, it follows due to the naturality of the connection and $\Ah_{g}=\nabg\nu_{g}$ that: 
\[
\dertZero \Ah_{g+t\calL_{Y^{\parallel} }g} = \dertZero \Ah_{\varphi^*_{t}g} = \dertZero \varphi^*_{t} \Ah_{g} = \calL_{Y^{\parallel} }  \Ah_{g}
\]
Hence, in establishing \eqref{eq:h_deformation}, it remains to calculate 
\[
\dertZero \Ah_{g+t\calL_{Y^{\bot} n_{g}}g}. 
\]

Using the Leibniz rule for Lie derivatives \cite[p.~45]{Pet16} and the fact that $g(n_{g}, \cdot) = \nu_{g}$, we have:
\[
\begin{split}
\calL_{Y^{\bot} n_{g}} g &= dY^{\bot}  \otimes \nu_{g} + \nu_{g} \otimes dY^{\bot}  + Y^{\bot}  \calL_{n_{g}} g \\
&= dY^{\bot}  \otimes \nu_{g} + \nu_{g} \otimes dY^{\bot}  + 2 Y^{\bot}  \Ah_{g}.
\end{split}
\] 
We rewrite this in the language of wedge products and double forms:
\[
\calL_{Y^{\bot} n_{g}} g = dY^{\bot}  \wedge \nu_{g}^{T} + \nu_{g} \wedge (dY^{\bot} )^{T} + 2 Y^{\bot}  \Ah_{g}.
\]

This formulation enables us to use the formula established in the proof of \propref{prop:variation_normal}:
\[
2 \dertZero \Ah_{g+t\sigma} = \calS_{g} \PttD \sigma - d_{\nabla^{\jmath^*g}} \PntG \sigma + \PntG \dg \sigma.
\]
Then, by inserting in the identities:
\[
\PttD \nu_{g} = 0, \quad \PntG \nu_{g} = 1, \quad ddY^{\bot}  = 0, \quad \PttD \Ah_{g} = \Ah_{g}, \quad \PntG \Ah_{g} = 0, \quad \dg \nu_{g}^{T} = \Ah_{g},
\]
and invoking the commutation relations for wedge products of double forms and boundary projections, we deduce:
\[
\begin{aligned}
&\calS_{g} \PttD \calL_{Y^{\bot} n_{g}} g = Y^{\bot}  \calS_{g} \Ah_{g}, \\
&d^{\nabla^{\jmath^*g}} \PntG \calL_{Y^{\bot} n_{g}} g= \mathrm{Hess}_{\jmath^*g} Y^{\bot} , \\
&\PntG \dg \calL_{Y^{\bot} n_{g}} g = -\partial_{n_{g}} Y^{\bot}  \Ah_{g} - \PttD \dg (dY^{\bot} )^{T} + 2 \partial_{n_{g}} Y^{\bot}  \Ah_{g} + 2 Y^{\bot}  \PntG \dg \Ah_{g}.
\end{aligned}
\]

By expanding further we may find: 
\[
\PntG \dg \Ah_{g} = 0, \quad \PttD \dg (dY^{\bot} ) = \mathrm{Hess}_{\jmath^*g} Y^{\bot}  + \partial_{n_{g}} Y^{\bot}  \Ah_{g},
\]
Hence
\[
\dertZero \Ah_{g+t\calL_{Y^{\bot} n_{g}}g} = Y^{\bot}  \calS_{g} \Ah_{g} - \mathrm{Hess}_{\jmath^*g} Y^{\bot} .
\]
so combining everything yields \eqref{eq:h_deformation}.
\end{PROOF}
\appendix
\chapter{Survey on Bianchi forms}
\label{sec:Appendix}
This section is adapted from \cite[Sec.~5]{KL23}, with some notation adjustments to better suit the framework of this paper. Proofs available in \cite{KL23} are omitted. 
\section{The bundle of Bianchi covectors}
Let $(M,\g)$ be a $\dim{M}=d$ dimensional Riemannian manifold with smooth boundary. We denote by 
\[
\Lkm{k}{m} = \Lambda^k T^*M \otimes \Lambda^m T^*M
\]
the vector bundle of \emph{$(k,m)$-covectors} (i.e., $k$-covectors taking values in the bundle of $m$-covectors), and by 
\[
\LkmAll = \bigoplus_{k,m} \Lkm{k}{m}
\]
the graded vector bundle of \emph{double-covectors}. The bundle $\LkmAll$ is a graded algebra, endowed with a  graded wedge-product, 
\[
\wedge : \Lkm{k}{m} \times \Lkm{\ell}{n} \to \Lkm{k+\ell}{m+n},
\]
and a graded involution,
\[
(\cdot)^T : \Lkm{k}{m} \to \Lkm{m}{k},
\]
obtained by switching the form and vector parts.
A $(k,k)$-covector $\psi$ satisfying $\psi^T = \psi$ is called \emph{symmetric}. The vector bundle $\LkmAll$ is equipped with a graded \emph{Hodge-dual} isomorphism,
\[
\starG : \Lkm{k}{m} \to \Lkm{d-k}{m},
\]
defined by its action on the form part. To every operation on the form part corresponds an operation on the vector part, via   involution; in this case,
\[
\starGV : \Lkm{k}{m} \to \Lkm{k}{d-m},
\]
is defined by $\starGV\psi = (\starG\psi^T)^T$. Additional graded bundle maps are the \emph{interior products}
\[
i_X  : \Lkm{k}{m} \to  \Lkm{k-1}{m}
\Textand
i^V_X : \Lkm{k}{m} \to  \Lkm{k}{m-1},
\]
where $X$ is a tangent vector, $i_X$ is defined as usual via its action on the form part, and $i^V_X\psi = (i_X \psi^T)^T$, and the \emph{metric trace},
\[
\trace_\g : \Lkm{k}{m} \to  \Lkm{k-1}{m-1}
\qquad\text{defined by}\qquad
\trace_\g\psi = \sum_{i=1}^d i_{E_i} i_{E_i}^V\psi,
\]
where $\{E_i\}_{i=1}^d$ is an orthonormal basis. 

The \emph{Bianchi sum} $\G:\Lkm{k}{m}\to \Lkm{k+1}{m-1}$ is a smooth bundle map given by \cite{Kul72,Gra70},
\[
\G = \sum_{i=1}^d \vartheta^i\wedge i_{E_i}^V,
\]
where $\{\vartheta^i\}_{i=1}^d$ is the basis of covectors dual to $\{E_i\}_{i=1}^d$.
For $\psi\in \Lkm{k}{m}$ and $\eta\in \LkmAll$, the Bianchi sum satisfies the product rule 
\[
\G(\psi\wedge\eta) = \G\psi\wedge\eta + (-1)^{k+m}\psi\wedge \G\eta.
\]
The operator $\GV:\Lkm{k}{m}\to \Lkm{k-1}{m+1}$ is the smooth bundle map $\GV\psi=(\G\psi^T)^T$. The operators $\G$ and $\GV$ are mutually dual with respect to the fiber metric,
\[
(\G\psi,\eta)_\g = (\psi,\GV\eta)_\g.
\] 
The following algebraic commutation and anti-commutation relations are readily verifiable from the definitions:
\[
\begin{aligned}
&\Brk{\G,\GV}|_{\Lkm{k}{m}} = (k-m)\id \\ 
&[\G,\g\wedge] = 0 
&\quad
&[\GV,\g\wedge] = 0 \\
&[\G,\trace_\g] = 0 
&\quad
&[\GV,\trace_\g] = 0 \\
&\{\G,i_X\} = i_X^V 
&\quad 
&\{\G,i_X^V\} = 0 \\
&\{\GV,i_X^V\} = i_X 
&\quad
&\{\GV,i_X\} = 0,
\end{aligned}
\]
where $[A,B] = AB - BA$ and $\{A,B\} = AB+BA$. The tensorial operators $\G$, $\GV$, $\g\wedge$ and $\trace_\g$ are related via the Hodge duals $\starG$ and $\starG^V$.
The following orthogonal decompositions are established in \cite{Cal61},
\[
\LkmAll = \ker\G\oplus\image\GV = \ker\GV\oplus\image\G,
\]
with $\ker\G=\{0\}$ when $\G$ is restricted to $\Lkm{k}{m}$ for $k<m$ and $\ker\GV=\{0\}$  when $\GV$ is restricted to $\Lkm{k}{m}$ for $k>m$. 
That is, $\G$ is injective and $\GV$ is surjective on $\Lkm{k}{m}$ for $k<m$ and $\GV$ is injective and $\G$ is surjective on $\Lkm{k}{m}$ for $k>m$.

\begin{definition}
\label{def:Bianchi_forms}
We define the vector bundles of \emph{Bianchi $(k,m)$-covectors}, 
\[
\Gkm{k}{m} = \Cases{
\Lkm{k}{m}\cap\ker\GV & k\le m \\
\Lkm{k}{m}\cap\ker\G & k\ge m ,
}
\]
along with the graded \emph{bundle of Bianchi coverctors}, 
\[
\GkmAll = \bigoplus_{k,m=0}^d \Gkm{k}{m}.
\]
\end{definition}

For $k=m$, the kernels of $\G$ and $\GV$ coincide, and consist of symmetric double-covectors \cite[Prop.~2.2]{Gra70}. In particular, $\Gkm{1}{1}$ coincides with the bundle of symmetric $(1,1)$-covectors and $\Gkm{2}{2}$ is the bundle of $(2,2)$-covectors satisfying the \emph{algebraic Bianchi identities} (also known as \emph{algebraic curvature tensors}). 

We denote by $\calPG:\Lkm{k}{m}\to\Gkm{k}{m}$ the orthogonal projection of a double-covector on $\Gkm{k}{m}$; it has an explicit representation which will not be needed. Since $\GV\psi = (\G\psi^T)^T$, it follows that $\calPG$ commutes with the involution, i.e., $(\calPG\psi)^T = \calPG\psi^T$.

Let $\xi\in\Lkm10$. 
The operators $\ixi$ and $\psi\mapsto \xi\wedge\psi$, which are dual with respect to the fiber metric $(\cdot,\cdot)_\g$, can be restricted to Bianchi forms. Since the first commutes with $\GV$ and the second commutes with $\G$,
\[
\begin{aligned}
& \ixi : \Gkm{k}{m} \to\Gkm{k-1}{m} & \qquad\qquad & k\le m \\
& \xi\wedge : \Gkm{k}{m} \to\Gkm{k+1}{m} & \qquad\qquad & k\ge m.
\end{aligned}
\]
The Bianchi symmetry is however not preserved for arbitrary $k,m$. 
We introduce the \emph{Bianchi wedge-product} and the corresponding \emph{Bianchi interior product}:
\[
\calPG (\xi\wedge):\Gkm{k}{m}\to \Gkm{k+1}{m}
\Textand
\calPG \ixi:\Gkm{k}{m}\to \Gkm{k-1}{m}.
\]

For values of $k,m$ for which a projection is needed, we obtain the following explicit formulas:

\begin{proposition}
\label{prop:W_explicit}
Let $\psi\in\Gkm{k}{m}$. Then,
\[
\begin{aligned}
&\calPG(\xi\wedge\psi) =  \xi\wedge \psi - \frac{1}{\alpha(m,k)}\G(\xi_V\wedge\psi)
&\qquad\qquad & k<m  \\
&\calPG\ixi\psi = \ixi\psi - \frac{1}{\alpha(k,m)}\GV\ixiV\psi,
&\qquad\qquad & k>m ,
\end{aligned}
\]
where $\alpha(k,m) = k-m+1$.
\end{proposition}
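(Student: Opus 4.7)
The plan is to prove both formulas by a direct verification, exploiting the anti-commutation relations between $\G,\GV$ and the wedge/interior products listed just above the statement. The two cases are dual to each other under the involution $\psi\mapsto\psi^T$ (which swaps $k\leftrightarrow m$, $\G\leftrightarrow\GV$, and $\xi\wedge\leftrightarrow\ixi$ with their vector-part counterparts), so I would write out the proof in full only for the case $k<m$ and then reduce the case $k>m$ to it by duality. In the first case, what needs to be shown is that the right-hand side lies in $\ker\GV$ and that the subtracted term lies in $\image\G$; given the orthogonal decomposition $\Lkm{k+1}{m}=\ker\GV\oplus\image\G$ stated earlier, this is precisely the characterization of the orthogonal projection $\calPG$ onto $\Gkm{k+1}{m}$.

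The central calculation is to apply $\GV$ to $\xi\wedge\psi$. From the given anti-commutation $\{\G,i_X\}=i_X^V$, adjunction with respect to the fiber metric yields $\{\GV,\xi\wedge\}=\xi^V\wedge$, where $\xi^V\in\Lkm{0}{1}$ denotes the image of $\xi$ in the vector slot (this is the notation implicit in the statement). Since $\psi\in\ker\GV$ (because $k\le m$), this gives $\GV(\xi\wedge\psi)=\xi^V\wedge\psi$. Applying $\GV$ to the proposed correction $\G(\xi^V\wedge\psi)$ and using the commutator $[\G,\GV]|_{\Lkm{k}{m+1}}=(k-m-1)\id$ together with the dualized relation $\{\GV,\xi^V\wedge\}=0$ (from $\{\G,i_X^V\}=0$), one finds
\[
\GV\G(\xi^V\wedge\psi)=(m+1-k)\,\xi^V\wedge\psi=\alpha(m,k)\,\xi^V\wedge\psi,
\]
so dividing by $\alpha(m,k)$ produces exactly the cancellation required. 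That the correction term lies in $\image\G$ is manifest from its very form, which finishes this case.

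For the case $k>m$, I would repeat the same steps with the roles of $\G$ and $\GV$ exchanged: compute $\G(\ixi\psi)=\ixiV\psi$ using $\{\G,\ixi\}=\ixiV$ and $\psi\in\ker\G$, then compute $\G\GV\ixiV\psi=\alpha(k,m)\,\ixiV\psi$ using $[\G,\GV]|_{\Lkm{k}{m-1}}=(k-m+1)\id$ together with $\G\ixiV\psi=0$ (from $\{\G,i_X^V\}=0$ and $\psi\in\ker\G$). The correction term $\GV\ixiV\psi$ manifestly lies in $\image\GV$, which is the orthogonal complement of $\ker\G$ in $\Lkm{k-1}{m}$, and the claim follows. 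I do not expect any real obstacle in this proof; the only point requiring care is bookkeeping of the graded (anti-)commutation signs and, in particular, correctly identifying which of the four relations $\{\G,i_X\}$, $\{\G,i_X^V\}$, $\{\GV,i_X\}$, $\{\GV,i_X^V\}$ to dualize at each step to obtain the corresponding wedge identity.
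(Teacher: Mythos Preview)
Your proof is correct. The paper does not include a proof of this proposition, explicitly noting at the start of the appendix that proofs available in \cite{KL23} are omitted; your argument---verifying that the proposed expression lies in the correct kernel ($\ker\GV$ for $k<m$, $\ker\G$ for $k>m$) and that the correction term lies in the complementary image, using the listed (anti-)commutation relations and their adjoints---is the natural direct verification and matches what one would expect the omitted proof to be.
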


\section{First-order differential operators}
\label{sec:first_order} 
We denote by $\Wkm{k}{m} = \Gamma(\Lkm{k}{m})$ the space of \emph{$(k,m)$-forms}, endowed with the inner-product
\beq
\bra\psi,\eta\ket= \int_M (\psi,\eta)_\g \,d\Volume_{g}.
\label{eq:L2_double_forms}
\eeq
All bundle maps defined on $\Lkm{k}{m}$ extend into tensorial operations on $\Wkm{k}{m}$. We denote by $\Ckm{k}{m} = \Gamma(\Gkm{k}{m})$ the space of  \emph{Bianchi $(k,m)$-forms}, and by 
\[
\CkmAll = \bigoplus_{k,m} \Ckm{k}{m}
\]
the graded space of \emph{Bianchi forms}.

We denote by
\[
\dg : \Wkm{k}{m} \to \Wkm{k+1}{m}
\Textand
\dgV : \Wkm{k}{m} \to \Wkm{k}{m+1}
\]
the exterior covariant derivative (defined in the same way as for any bundle-valued form) and 
its vectorial counterpart, $\dgV\psi = (\dg\psi^T)^T$. We denote by 
\[
\deltag : \Wkm{k+1}{m} \to \Wkm{k}{m}
\Textand
\deltagV : \Wkm{k}{m+1} \to \Wkm{k}{m}
\]
the respective formal $L^2$-adjoint of $\dg$ and $\dgV$, where 
$\deltagV\psi = (\deltag\psi^T)^T$.

These first-order operators satisfy the following commutation and anti-commutation relations with the tensorial operators:
\[
\begin{aligned}
&\{\dg,\g\wedge\} = 0 
&\quad
&\{\dgV,\g\wedge\} = 0 
&\quad
&\{\deltag,\g\wedge\} = -\dgV 
&\quad
&\{\deltagV,\g\wedge\} = - \dg \\
&\{\dg,\trace_\g\} = -\deltagV
&\quad
&\{\dgV,\trace_\g\} = -\deltag 
&\quad
&\{\deltag,\trace_\g\} = 0
&\quad
&\{\deltagV,\trace_\g\} = 0 \\ 
&\{\dg,\G\} = 0
&\quad
&\{\dgV,\G\} = \dg 
&\quad
&\{\deltag,\G\} = \deltagV
&\quad
&\{\deltagV,\G\} = 0 \\
&\{\dgV,\GV\} = 0
&\quad
&\{\dg,\GV\} = \dgV 
&\quad
&\{\deltagV,\GV\} = \deltag
&\quad
&\{\deltag,\GV\} = 0 \\
&[\dg,\starG^V] = 0 
&\quad
&[\dgV,\starG] = 0.
\end{aligned}
\]

The operators $\dg$ and $\deltag$ can be restricted to Bianchi forms.  Due to the commutation relations $\{\G,\dg\}=0$ and $\{\GV,\deltag\}=0$, 
\[
\begin{gathered}
\dg:\Ckm{k}{m}\to\Ckm{k+1}{m} \qquad\text{for $k\ge m$} \\
\deltag:\Ckm{k}{m}\to\Ckm{k-1}{m} \qquad\text{for $k\le m$}.
\end{gathered}
\]
The Bianchi symmetry is however not preserved by $\dg$ and $\deltag$ for every $(k,m)$-form. This can be rectified by projecting their image onto the Bianchi bundle.

\begin{definition}
The \emph{Bianchi derivative}, $\dBianchi:\Ckm{k}{m}\to\Ckm{k+1}{m}$, the \emph{Bianchi coderivative}, $\delBianchi:\Ckm{k+1}{m}\to\Ckm{k}{m}$, and their transposed counterparts, $\dBianchiV:\Ckm{k}{m}\to\Ckm{k}{m+1}$ and $\delBianchiV:\Ckm{k}{m+1}\to\Ckm{k}{m}$ are given by
\beq
\dBianchi\psi =  \calPG \dg\psi
\Textand
\delBianchi\psi =
\calPG \deltag\psi,
\label{eq:Bianchi_codifferential} 
\eeq
along with $\dBianchiV\psi =(\dBianchi\psi^T)^T$ and $\delBianchiV\psi = (\delBianchi\psi^T)^T$.
\end{definition}

The Bianchi derivative $\dBianchi$ and the Bianchi coderivative $\delBianchi$ (and likewise $\dBianchiV$ and $\delBianchiV$) are mutually adjoint with respect to the $L^2$-inner-product \eqref{eq:L2_double_forms}.

The following is proved in a similar way as \propref{prop:W_explicit}:

\begin{proposition}
\label{prop:Bianchi_derivative_explicit}
For $\psi\in\Ckm{k}{m}$,
\[
\begin{aligned}
& \dBianchi\psi =  
\dg\psi - \frac{1}{\alpha(m,k)}\G \dgV\psi
&\qquad\qquad
& k<m 
\\
&\delBianchi\psi = 
\deltag\psi - \frac{1}{\alpha(k,m)}\GV\deltagV\psi 
&\qquad\qquad
& k>m. 
\end{aligned}
\]
\end{proposition}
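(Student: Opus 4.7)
The strategy is to follow the template of \propref{prop:W_explicit}. Since $\calPG$ is, by construction, the fiberwise orthogonal projection of $\LkmAll$ onto $\GkmAll$, and since within the relevant bidegree the Calabi decomposition gives $\Wkm{k+1}{m} = (\ker\GV\cap\Wkm{k+1}{m})\oplus\G(\Wkm{k}{m+1})$ for $k\le m$, to identify $\calPG\dg\psi$ it suffices to exhibit an explicit splitting $\dg\psi = \eta + \G\zeta$ with $\eta\in\ker\GV$ and $\zeta\in\Wkm{k}{m+1}$; uniqueness of the orthogonal decomposition then forces $\eta = \calPG\dg\psi = \dBianchi\psi$.

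Concretely, in the regime $k<m$ I would take the candidate $\eta := \dg\psi - \frac{1}{\alpha(m,k)}\G\dgV\psi$ (so $\zeta = -\frac{1}{\alpha(m,k)}\dgV\psi$) and verify $\GV\eta=0$. The first summand is handled by the anti-commutation $\{\dg,\GV\}=\dgV$, which combined with the hypothesis $\GV\psi=0$ yields $\GV\dg\psi=\dgV\psi$. The second summand is handled by the bidegree-dependent identity $[\G,\GV]|_{\Wkm{k}{m+1}}=(k-m-1)\id=-\alpha(m,k)\id$, together with $\{\dgV,\GV\}=0$ and $\GV\psi=0$: these force $\G\GV\dgV\psi=-\G\dgV\GV\psi=0$, and hence $\GV\G\dgV\psi=\alpha(m,k)\,\dgV\psi$. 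The two contributions cancel exactly, establishing $\GV\eta = 0$.

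The coderivative formula is obtained by the mirror-image argument. For $\psi\in\Ckm{k}{m}$ with $k>m$, so that $\G\psi=0$, I would verify that $\eta := \deltag\psi - \frac{1}{\alpha(k,m)}\GV\deltagV\psi$ lies in $\ker\G$ by applying $\G$ and invoking, in order, $\{\deltag,\G\}=\deltagV$ (to rewrite $\G\deltag\psi=\deltagV\psi$), $\{\deltagV,\G\}=0$ combined with $\G\psi=0$ (to kill $\GV\G\deltagV\psi$), and $[\G,\GV]|_{\Wkm{k}{m-1}}=\alpha(k,m)\id$ (to produce the cancelling scalar). The reverse-direction bidegree shift is precisely what interchanges the roles of $k$ and $m$ in the denominator. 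Uniqueness of the orthogonal decomposition $\Wkm{k-1}{m}=(\ker\G\cap\Wkm{k-1}{m})\oplus\GV(\Wkm{k-1}{m+1})$ then identifies $\eta$ as $\calPG\deltag\psi=\delBianchi\psi$.

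I do not expect any substantive obstacle: the argument reduces to bookkeeping with the algebraic identities already compiled in the survey. The only point requiring care is to track the bidegree on which $[\G,\GV]$ is evaluated, since the value $(k-m)\id$ changes both sign and magnitude between the spaces $\Wkm{k}{m+1}$ containing $\dgV\psi$ and $\Wkm{k}{m-1}$ containing $\deltagV\psi$; this flip is exactly what accounts for the symmetric appearance of $\alpha(m,k)$ versus $\alpha(k,m)$ in the two formulas.
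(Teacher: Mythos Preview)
Your proposal is correct and follows exactly the approach the paper indicates: the paper states only that the proposition ``is proved in a similar way as \propref{prop:W_explicit}'' and omits the details (deferring to \cite{KL23}), and your argument carries out precisely that template, verifying via the listed (anti-)commutation relations that the candidate expression lands in the appropriate kernel of $\GV$ (resp.\ $\G$) so that uniqueness of the Calabi decomposition identifies it with the projection.
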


The fact that $\dg\dg$ is a tensorial operator yields the following:

\begin{proposition}
\label{prop:DstarDstar}
The maps $\dBianchi\dBianchi:\Ckm{k}{m}\to\Ckm{k+2}{m}$ and  $\delBianchi\delBianchi:\Ckm{k+2}{m}\to\Ckm{k}{m}$ are tensorial for every $k,m$, except when $k=m-1$.  
\end{proposition}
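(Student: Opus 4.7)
The approach is to split the analysis of $\dBianchi\dBianchi$ into three cases according to the relative sizes of $k$ and $m$, and then obtain the statement for $\delBianchi\delBianchi$ by duality, using that $\delBianchi$ is the $L^{2}$-adjoint of $\dBianchi$ and that tensoriality is preserved under adjunction.

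The first case is $k\ge m$, where $\dBianchi$ coincides with $\dg$ on both $\Ckm{k}{m}$ and $\Ckm{k+1}{m}$ by the commutation relation $\{\G,\dg\}=0$ and the restriction properties recorded before \defref{def:Bianchi_forms}. Hence $\dBianchi\dBianchi=\dg\dg$, which is the curvature endomorphism of the Levi-Civita connection on $\Lambda^{m}T^{*}M$-valued $k$-forms, and therefore tensorial. The second case is $k\le m-2$, so that both applications of $\dBianchi$ require the explicit projection formula from \propref{prop:Bianchi_derivative_explicit}. The plan is to expand
\[
\dBianchi\dBianchi\psi = \dBianchi\!\left(\dg\psi-\tfrac{1}{\alpha(m,k)}\G\dgV\psi\right),
\]
then substitute the analogous formula for the outer $\dBianchi$ applied to an element of $\Ckm{k+1}{m}$ (here $\alpha(m,k+1)=m-k$, $\alpha(m,k)=m-k+1$), and push all $\dg$'s and $\dgV$'s through $\G$ using $\{\dg,\G\}=0$ and $\{\dgV,\G\}=\dg$. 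After collecting terms, I expect the coefficients to conspire — thanks to the identity $\alpha(m,k+1)+1=\alpha(m,k)$ — so that the entire expression reduces to
\[
\dg\dg\psi \;-\; \tfrac{1}{\alpha(m,k+1)\alpha(m,k)}\G^{2}\dgV\dgV\psi \;+\; \tfrac{1}{\alpha(m,k+1)}\G\bigl(\dg\dgV-\dgV\dg\bigr)\psi .
\]
The first two summands are tensorial, being curvatures on the form- and vector-slot bundles. For the third I will use the (standard, slot-wise) fact that $[\dg,\dgV]$ equals an algebraic curvature-type operator: since $\dg$ and $\dgV$ are exterior covariant derivatives on disjoint index slots, their iterated composition differs from the reversed one by the curvature of $\nabg$ evaluated on the antisymmetrized arguments, with no surviving derivative. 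This makes $[\dg,\dgV]$ tensorial, hence the third summand tensorial as well.

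The excluded case $k=m-1$ is exactly where this cancellation breaks: the outer $\dBianchi$ acts on $\Ckm{m}{m}$ and equals $\dg$ (no projection), while only the inner application uses the projection formula. Expanding $\dg\bigl(\dg\psi-\tfrac12\G\dgV\psi\bigr)$ and commuting through produces a term proportional to $\dgV\G\dg\psi$ which cannot be absorbed because $\G\psi$ need not vanish for $\psi\in\Ckm{m-1}{m}$ (elements are annihilated by $\GV$, not by $\G$). This asymmetry is precisely why the statement excludes $k=m-1$, and it serves as a sanity check that our combinatorics is tight.

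Finally, for $\delBianchi\delBianchi$, the transposition $(\cdot)^{T}$ and the $L^{2}$-duality $\langle\dBianchi\psi,\eta\rangle=\langle\psi,\delBianchi\eta\rangle$ give $\delBianchi\delBianchi=(\dBianchi\dBianchi)^{*}$ on the relevant Bianchi components; since the adjoint of a tensorial (fiberwise) operator is tensorial, the claim transfers. Alternatively one can repeat the case analysis using the second formula in \propref{prop:Bianchi_derivative_explicit} with $\deltag,\deltagV$ in place of $\dg,\dgV$, noting that the role of the excluded index $k=m-1$ is taken by its dual $k=m+1$ for $\delBianchi\delBianchi$. The main obstacle throughout is the bookkeeping of coefficients and signs in the second case: verifying that the apparently first-order terms collapse into the commutator $\G[\dg,\dgV]$ depends on the precise numerical identity between the $\alpha$'s, and this is where I expect the computation to require the most care.
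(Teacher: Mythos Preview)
The paper does not actually prove this proposition: the appendix explicitly states that proofs available in \cite{KL23} are omitted, and \propref{prop:DstarDstar} is one of those. So there is no in-paper proof to compare against.

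That said, your argument is sound. The case split is the natural one, the computation in the case $k\le m-2$ is correct (the coefficients do collapse exactly as you predict, via $\alpha(m,k+1)+1=\alpha(m,k)$), and the claim that $[\dg,\dgV]$ is tensorial is valid: in index notation both compositions are double sums $\sum_{a,b}(-1)^{a+b}\nabla_{i_a}\nabla_{j_b}\psi$ versus $\sum_{a,b}(-1)^{a+b}\nabla_{j_b}\nabla_{i_a}\psi$, so their difference is built from curvature commutators $[\nabla_{i_a},\nabla_{j_b}]$ acting on $\psi$. The duality argument for $\delBianchi\delBianchi$ is also clean, since the formal adjoint of a bundle map is its pointwise adjoint.

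One small inaccuracy in your sanity-check paragraph: in the excluded case $k=m-1$, expanding $\dg(\dg\psi-\tfrac12\G\dgV\psi)$ and using $\{\dg,\G\}=0$ gives $\dg\dg\psi+\tfrac12\G\dg\dgV\psi$, not a term of the form $\dgV\G\dg\psi$. The obstruction is simply that $\G\dg\dgV$ is genuinely second order and, because the outer $\dBianchi$ carries no projection on $\Ckm{m}{m}$, there is no companion $\G\dgV\dg$ term with which to form the tensorial commutator. Your remark about $\G\psi$ not vanishing is a non-sequitur here; the failure is structural, not about whether $\psi$ lies in $\ker\G$. This does not affect the proof of the stated cases.
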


Let $\jmath:\dM\to M$ denote as before the inclusion map of the boundary.
We
introduce mixed projections of tangential and normal boundary components,
\[
\begin{aligned}
&\PttD:\Wkm{k}{m}\to \plWkm{k}{m}
&\qquad 
&\PntD:\Wkm{k}{m}\to \plWkm{k-1}{m} \\ 
&\PtnG:\Wkm{k}{m}\to \plWkm{k}{m-1} 
&\qquad 
&\PnnG:\Wkm{k}{m}\to \plWkm{k-1}{m-1}.
\end{aligned}
\]
The first superscript in $\frakt\frakt,\frakt\frakn,\frakn\frakt,\frakn\frakn$ refers to the projection of the form part, whereas the second superscript refers to the projection of the vector part. 
Specifically,
\[
\begin{gathered}
\PttD\psi = \jmath^*\psi
\qquad
\PntD\psi = \jmath^*\idr \psi 
\qquad
\PtnG\psi = \jmath^*\idr^T \psi
\textand
\PnnG\psi = \jmath^*\idr^T \idr \psi, 
\end{gathered}
\]
where $\partial_r$ is the unit vector field normal to the level-sets of the distance from the boundary, which is defined in a collar neighborhood of $\dM$, and $\jmath^*$ pulls back to the boundary both the form and vector parts. For $\psi\in\Wkm{k}{m}$ and $\eta\in\Wkm{k+1}{m}$,
\beq
\bra \dg\psi,\eta\ket = \bra\psi,\deltag\eta\ket + \bra (\PttD\oplus\PtnG)\psi, (\PntD\oplus\PnnG)\eta\ket.
\label{eq:dg_by_parts}
\eeq
The definition of the Bianchi sum implies that the pullback $\jmath^*$ commutes with both $\G$ and $\GV$. Furthermore, $\idr$ anti-commutes with $\GV$ and  $\idr^V$ anti-commutes with $\G$. A direct calculation gives the following commutation and anti-commutation relations,
\[
\begin{aligned}
&[\PttD,\G] = 0 
&\quad
&\{\PtnG,\G\} = 0 
&\quad
&\{\PntD,\G\} = \PtnG 
&\quad
&[\PnnG,\G] = 0 \\
&[\PttD,\GV] = 0 
&\quad
&\{\PtnG,\GV\} = \PntD 
&\quad
&\{\PntD,\GV\} = 0 
&\quad
&[\PnnG,\GV] =0 .
\end{aligned}
\] 
As a result,
\[
\begin{aligned}
& \PttD : \Ckm{k}{m} \to \plCkm{k}{m} &\qquad & \text{for every $k,m$} \\
& \PnnG :  \Ckm{k}{m} \to  \plCkm{k-1}{m-1} &\qquad & \text{for every $k,m$} \\
& \PtnG : \Ckm{k}{m}\to \plCkm{k}{m-1} &\qquad & \text{for $k\ge m$} \\
& \PntD : \Ckm{k}{m}\to \plCkm{k-1}{m} &\qquad & \text{for $k\le m$}.
\end{aligned}
\] 
For $k<m$, $\PtnG:\Ckm{k}{m}\to \plWkm{k}{m-1}$ does not yield a Bianchi form, since $\idr^V$ does not commute with $\GV$. The same is true for $\PntD:\Ckm{k}{m}\to \plWkm{k-1}{m}$ when $k>m$. In the same spirit as in formula \eqref{eq:Bianchi_codifferential} for the Bianchi derivatives, we define:

\begin{definition}
The \emph{Bianchi boundary operators} 
\[
\begin{aligned}
& \PttG : \Ckm{k}{m} \to \plCkm{k}{m} 
&\qquad\qquad
& \PnnG :  \Ckm{k}{m} \to  \plCkm{k-1}{m-1} \\
& \PtnG : \Ckm{k}{m}\to \plCkm{k}{m-1} 
&\qquad\qquad
& \PntG : \Ckm{k}{m}\to \plCkm{k-1}{m}
\end{aligned}
\] 
are given by, when there is no ambiguity: 
\[
\begin{gathered}
\PttG = \PttD
\qquad
\PntG = \calPG\PntD
\qquad
\PtnG = \calPG\PtnD
\textand
\PnnG = \PnnD,
\end{gathered}
\]
where $\calPG:\plLkm{k}{m}\to \plLkm{k}{m}$ denotes here the projection on Bianchi boundary forms. 
\end{definition}

Similarly to \propref{prop:Bianchi_derivative_explicit}, we have:

\begin{proposition}
For $\psi\in\Ckm{k}{m}$,
\[
\begin{aligned}
& \PtnG\psi = 
\PtnD\psi - \frac{1}{\alpha(m,k)} \G\PntD \psi
&\qquad\qquad
& k< m
\\
& \PntG\psi 
= \PntD\psi - \frac{1}{\alpha(k,m)} \GV\PtnD \psi
&\qquad\qquad
& k> m.
\end{aligned}
\]
\end{proposition}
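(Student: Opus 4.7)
The two formulas are perfectly symmetric under the involution $(\cdot)^{T}$, so the plan is to establish the first one in detail and then obtain the second by transposition. The overall strategy is to produce, on the nose, a double-covector that (a) differs from $\PtnD\psi$ by an element of $\image \G$, and (b) lies in $\ker\GV$. By the orthogonal decomposition $\plLkm{k}{m-1}=\ker\GV\oplus\image\G$ that holds for $k\le m-1$, such an element must coincide with $\calPG(\PtnD\psi)=\PtnG\psi$ (in the Bianchi sense), which is precisely the content of the claim.

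The whole computation runs on three algebraic inputs already stated in the appendix: the anti-commutation relations $\{\PtnD,\GV\}=\PntD$ and $\{\PntD,\GV\}=0$, and the bracket identity $[\G,\GV]|_{\Lkm{a}{b}}=(a-b)\,\id$. The plan is as follows. First, since $\psi\in\Ckm{k}{m}$ with $k<m$, we have $\GV\psi=0$; applying the two anti-commutation identities to $\psi$ gives
\[
\GV\PtnD\psi=\PntD\psi,\qquad \GV\PntD\psi=0.
\]
Second, the bracket identity applied on $\plLkm{k-1}{m}$ together with $\GV\PntD\psi=0$ yields $\GV\G\PntD\psi=(m-k+1)\PntD\psi=\alpha(m,k)\,\PntD\psi$. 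Third, define
\[
\eta=\PtnD\psi-\frac{1}{\alpha(m,k)}\G\PntD\psi
\]
and verify directly that $\GV\eta=\PntD\psi-\tfrac{1}{\alpha(m,k)}\cdot\alpha(m,k)\PntD\psi=0$. Since by construction $\eta$ differs from $\PtnD\psi$ only by an element of $\image\G$, the uniqueness of the orthogonal decomposition forces $\eta=\calPG(\PtnD\psi)=\PtnG\psi$, which is the first formula.

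For the second formula, with $\psi\in\Ckm{k}{m}$ and $k>m$, the relevant inputs are $\{\PntD,\G\}=\PtnG$ and $\{\PtnG,\G\}=0$. From $\G\psi=0$ these give $\G\PntD\psi=\PtnG\psi$ and $\G\PtnG\psi=0$, and the bracket identity on $\plLkm{k}{m-1}$ yields $\G\GV\PtnG\psi=\alpha(k,m)\,\PtnG\psi$. Then
\[
\eta=\PntD\psi-\frac{1}{\alpha(k,m)}\GV\PtnG\psi
\]
satisfies $\G\eta=0$, and since $\eta-\PntD\psi\in\image\GV$, uniqueness of the decomposition $\plLkm{k-1}{m}=\ker\G\oplus\image\GV$ (valid for $k-1\ge m$, i.e.\ $k>m$) identifies $\eta$ with $\calPG(\PntD\psi)=\PntG\psi$. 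There is essentially no obstacle — the main thing to keep straight is the bookkeeping of signs and grading in the anti-commutation relations, which determines whether one must subtract a correction built from $\G$ or from $\GV$; this mirrors the two cases of the already-proven Proposition~\ref{prop:W_explicit} for the Bianchi wedge and interior products.
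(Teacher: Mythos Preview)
Your proof is correct and follows exactly the approach the paper indicates: the paper states this result ``similarly to \propref{prop:Bianchi_derivative_explicit}'', which in turn is proved ``in a similar way as \propref{prop:W_explicit}'' --- that is, by producing a candidate that differs from the raw projection by an element of $\image\G$ (resp.\ $\image\GV$) and verifying it lies in $\ker\GV$ (resp.\ $\ker\G$), then invoking uniqueness of the orthogonal decomposition. You have carried out precisely this computation, with the correct bookkeeping of the (anti-)commutation relations and the bracket identity $[\G,\GV]=(k-m)\id$.
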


\begin{proposition}
For all $\eta\in W^{1,p}\CkmAll$ and $\sigma\in W^{1,q}\CkmAll$ (the precise class determined by the context), with $1/p+1/q=1$, 
\beq
\bra\dBianchi\eta,\sigma\ket = \bra\eta,\delBianchi\sigma\ket +
\bra B_g \eta,B_g^* \sigma\ket,
\label{eq:integration_by_parts_DV}
\eeq
where 
\[
B_g = \PttG\oplus\PtnG
\Textand
B_g^* = \PntG\oplus\PnnG.
\]
\end{proposition}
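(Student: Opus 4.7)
The strategy is to reduce the claimed formula to the standard Green's formula \eqref{eq:dg_by_parts} for the unprojected operators $\dg$ and $\deltag$, exploiting the fact that $\calPG$ is a self-adjoint orthogonal projection and that $\eta,\sigma$ are already Bianchi. I will first establish the identity for smooth Bianchi forms and then extend by density to the Sobolev classes in the statement.

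First, I would observe that since $\dBianchi=\calPG\dg$ and $\delBianchi=\calPG\deltag$ (fiberwise projections), and since both $\eta$ and $\sigma$ lie in the image of $\calPG$, the self-adjointness $\calPG^*=\calPG$ together with $\calPG\eta=\eta$, $\calPG\sigma=\sigma$ yields
\[
\bra\dBianchi\eta,\sigma\ket=\bra\calPG\dg\eta,\sigma\ket=\bra\dg\eta,\calPG\sigma\ket=\bra\dg\eta,\sigma\ket,
\]
and analogously $\bra\eta,\delBianchi\sigma\ket=\bra\eta,\deltag\sigma\ket$. The identity to be proved is therefore equivalent, via \eqref{eq:dg_by_parts}, to matching the boundary pairing, i.e.\ to showing
\[
\bra(\PttD\oplus\PtnG)\eta,(\PntD\oplus\PnnG)\sigma\ket_{\dM}
=\bra(\PttG\oplus\PtnG)\eta,(\PntG\oplus\PnnG)\sigma\ket_{\dM}.
\]

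Next I would verify this boundary identity componentwise using the Bianchi definitions $\PttG=\PttD$, $\PnnG=\PnnG$, $\PntG=\calPG\PntD$, $\PtnG=\calPG\PtnG$, where $\calPG$ now denotes the boundary Bianchi projection. Using the commutation relations $[\PttD,\G]=[\PttD,\GV]=0$ and $[\PnnG,\G]=[\PnnG,\GV]=0$ listed prior to the statement, the forms $\PttD\eta$ and $\PnnG\sigma$ are automatically Bianchi on $\dM$, so $\calPG\PttD\eta=\PttD\eta$ and $\calPG\PnnG\sigma=\PnnG\sigma$. Applying self-adjointness of the boundary projection $\calPG$ then gives
\[
\bra\PttD\eta,\PntD\sigma\ket_{\dM}=\bra\calPG\PttD\eta,\PntD\sigma\ket_{\dM}=\bra\PttD\eta,\calPG\PntD\sigma\ket_{\dM}=\bra\PttG\eta,\PntG\sigma\ket_{\dM},
\]
and symmetrically $\bra\PtnG\eta,\PnnG\sigma\ket_{\dM}=\bra\PtnG\eta,\PnnG\sigma\ket_{\dM}$, completing the reduction.

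Finally, I would pass from smooth Bianchi forms to the Sobolev regularity class required by the statement. The operators $\dBianchi,\delBianchi$ are first-order differential, and $B_g,B_g^*$ are trace operators of order zero and class one; all of them depend continuously (in the appropriate Sobolev graph norms) on the input, and the $L^{p}$--$L^{q}$ dual pairings on $M$ and on $\dM$ are continuous in these topologies. The identity, being valid on the dense subspace $\CkmAll\subset W^{1,p}\CkmAll$ (with its $W^{1,q}$ counterpart), therefore extends by continuity.

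\textbf{Main obstacle.} The only delicate point is the case-dependence of the Bianchi projections: $\calPG$ is nontrivial on $\Lkm{k}{m}$ for $k\ne m$, and the direction of asymmetry flips at $k=m$. I would handle this by keeping the argument entirely at the level of the abstract self-adjoint projection $\calPG$ and invoking \propref{prop:Bianchi_derivative_explicit} and the commutation relations only as needed to confirm that the relevant combinations $\PttD\eta$ and $\PnnG\sigma$ lie in the image of $\calPG$ in every $(k,m)$ configuration. No explicit case analysis on $k$ versus $m$ is then required, since the self-adjointness argument is uniform.
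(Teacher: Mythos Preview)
The paper does not actually supply a proof of this proposition: the appendix states that ``Proofs available in [KL23] are omitted,'' and this statement falls under that umbrella. So there is no paper proof to compare against, and your argument must be assessed on its own merits.

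Your approach is correct and is the natural one. The key observation---that the fiberwise orthogonal projection $\calPG$ is self-adjoint with respect to the $L^2$ pairing, and that $\eta,\sigma$ already lie in its image---cleanly reduces the bulk terms $\bra\dBianchi\eta,\sigma\ket$ and $\bra\eta,\delBianchi\sigma\ket$ to their unprojected counterparts, after which \eqref{eq:dg_by_parts} applies directly. For the boundary terms, your use of the commutation relations $[\PttD,\G]=[\PttD,\GV]=0$ and $[\PnnG,\G]=[\PnnG,\GV]=0$ to conclude that $\PttD\eta$ and $\PnnG\sigma$ are automatically Bianchi is exactly what is needed: in each pairing $\bra\PttD\eta,\PntD\sigma\ket$ and $\bra\PtnG\eta,\PnnG\sigma\ket$, one factor is already in the image of the boundary projection $\calPG$, so self-adjointness of $\calPG$ transfers the projection to the other factor, yielding precisely $\PntG\sigma$ and $\PtnG\eta$. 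The density extension to $W^{1,p}\times W^{1,q}$ is routine. Your remark that no explicit case analysis on $k$ versus $m$ is needed is also accurate---the self-adjointness argument is uniform, and only the \emph{definition} of $\Gkm{k}{m}$ (as $\ker\GV$ or $\ker\G$) shifts, which the commutation relations handle automatically.
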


\section{Second-order differential operators}
\label{sec:second_orderBianchi} 
As in \secref{sec:KillingHessianIntro}, we introduce the \emph{covariant curl-curl} operator, $H_{g}:\Wkm{k}{m}\to \Wkm{k+1}{m+1}$,
and its $L^2$-dual, $\Hg^*: \Wkm{k+1}{m+1}\to\Wkm{k}{m}$,
\[
H_{g}=\tfrac12(\dg\dgV+\dgV\dg) 
\Textand 
H_{g}^*=\tfrac12(\deltag\deltagV+\deltagV\deltag).
\]
These second-order operators satisfy integration by part formulas involving both tensorial and first-order boundary operators. 
We also defined the first-order boundary operators, 
\[
\frakT_{g} : \Wkm{k}{m}\to\plWkm{k}{m} 
\Textand
\frakT_{g}^* : \Wkm{k}{m}\to\plWkm{k-1}{m-1},
\]
given by 
\[
\begin{aligned}
\frakT_{g}\psi &= \tfrac12 \brk{\PntD \dg\psi-\dg \PntD\psi}+\tfrac12 \brk{\PtnG\dgV\psi- \dgV \PtnG\psi} \\
\frakT_{g}^*\psi &=  -\tfrac12\brk{ \PtnG\deltag\psi+\deltag \PtnG\psi} - \tfrac12\brk{\PntD\deltagV\psi+\deltagV\PntD\psi},
\end{aligned}
\]
such that 
\[
\bra H_g\psi,\eta\ket=\bra \psi,H_g^*\eta\ket +\bra B_H\psi,B_H^*\eta\ket ,
\]
where 
\[
B_H : \Wkm{k}{m}\to (\plWkm{k}{m})^2 
\Textand
B_H^* : \Wkm{k}{m}\to(\plWkm{k-1}{m-1})^2
\]h
are given by
\[
B_H=\PttD\oplus\frakT_{g}
\Textand
B_H^*=\frakT_{g}^*\oplus-\PnnG.
\] 

A direct verification shows that the operators $H_g$ and $H_g^*$ both commute with the Bianchi sums $\GV$, $\G$, which implies that for every $k,m$,
\[
H_g:\Ckm{k}{m}\rightarrow \Ckm{k+1}{m+1} 
\Textand
H_g^*:\Ckm{k+1}{m+1}\rightarrow \Ckm{k}{m}. 
\]
A similar calculation shows that the boundary operators also preserve the Bianchi structure:
\[
B_H : \Ckm{k}{m}\to (\plCkm{k}{m})^2 
\Textand
B_H^* : \Ckm{k}{m}\to(\plCkm{k-1}{m-1})^2.
\]

The fact that $B_H$ and $B_H^*$ are normal systems of trace operators of order $2$ follows from a direct expansion of the above expressions. One observes that the leading normal derivatives of $\frakT_{g}$ are $\PttD\nabg_{\dr}$, while those of $\frakT_{g}^*$ are $\PnnD_{g}\nabg_{\dr}$; thus, the systems are normal. The detailed computation is left to the reader.


\backmatter

\bibliographystyle{amsrefs}
\newcommand{\etalchar}[1]{$^{#1}$}
\providecommand{\bysame}{\leavevmode\hbox to3em{\hrulefill}\thinspace}
\providecommand{\MR}{\relax\ifhmode\unskip\space\fi MR }
\providecommand{\MRhref}[2]{%
\href{http://www.ams.org/mathscinet-getitem?mr=#1}{#2}
}
\providecommand{\href}[2]{#2}

\end{document}